\documentclass [11pt,oneside,a4paper,mathscr]{amsart}

\usepackage{amscd,amsmath,amssymb,euscript}
\usepackage[frame,cmtip,curve,arrow,matrix,line,graph]{xy}
\usepackage{tikz-cd}
\usepackage{hyperref}
\usepackage{stmaryrd}
\usepackage{bm}
\usepackage{here}
\usepackage{enumitem}
\title[Dolbeault Langlands for $\GL_2$ with reduced spectral curves]{A proof of Dolbeault geometric Langlands for $\GL_2$ with reduced spectral curves}
\author{Yukinobu Toda}

\newtheorem{thm}{Theorem}[section]
\newtheorem{cor}[thm]{Corollary}
\newtheorem{prop}[thm]{Proposition}

\newtheorem{conj}[thm]{Conjecture}
\newtheorem{lemma}[thm]{Lemma}

\theoremstyle{definition}
\newtheorem{defn}[thm]{Definition}

\newtheorem{thm*}[thm]{Theorem$^*$}
\newtheorem{remark}[thm]{Remark}

\newtheorem{example}[thm]{Example}

\newtheorem{assum}[thm]{Assumption}

\newtheorem{step}{Step}
\newtheorem{sstep}{Step}

\newcommand{\comment}[1]{}

\renewcommand{\leq}{\leqslant}
\renewcommand{\geq}{\geqslant}

\newcommand{\Y}{\mathcal{Y}}
\newcommand{\cS}{\mathcal{S}}
\newcommand{\zZ}{\mathcal{Z}}
\newcommand{\U}{\mathcal{U}}

\newcommand{\LL}{\operatorname{L}}

\newcommand{\QCoh}{\operatorname{QCoh}}

\newcommand{\X}{\mathcal{X}}

\newcommand{\cH}{\mathcal{H}}

\newcommand{\cL}{\mathcal{L}}
\newcommand{\cB}{\mathcal{B}}
\newcommand{\cC}{\mathcal{C}}

\newcommand{\cP}{\mathcal{P}}
\newcommand{\cE}{\mathcal{E}}
\newcommand{\cF}{\mathcal{F}}
\newcommand{\ev}{\mathrm{ev}}
\newcommand{\bgm}{B\mathbb{G}_m}

\newcommand{\diasquare}{\ar@{}[rd]|\square}
\newcommand{\cHecke}{\mathcal{H}ecke}
\newcommand{\cHom}{\mathcal{H}om}
\newcommand{\rank}{\operatorname{rank}}

\newcommand{\rB}{\mathrm{B}}

\newcommand{\Coh}{\operatorname{Coh}}
\newcommand{\Aut}{\operatorname{Aut}}

\newcommand{\Hilb}{\operatorname{Hilb}}
\newcommand{\Ker}{\operatorname{Ker}}
\newcommand{\id}{\operatorname{id}}
\newcommand{\Ext}{\operatorname{Ext}}
\newcommand{\Ind}{\operatorname{Ind}}
\newcommand{\IndCoh}{\operatorname{IndCoh}}
\newcommand{\IndL}{\operatorname{IndL}}
\newcommand{\Hom}{\operatorname{Hom}}

\newcommand{\Spec}{\operatorname{Spec}}

\newcommand{\GL}{\operatorname{GL}}

\renewcommand{\Im}{\operatorname{Im}}

\newcommand{\cl}{\mathrm{cl}}

\newcommand{\inclusion}{\ar@<-0.3ex>@{^{(}->}[r]}
\newcommand{\iinclusion}{\ar@<-0.3ex>@{^{(}->}[rr]}
\newcommand{\linclusion}{\ar@<0.3ex>@{_{(}->}[l]}
\newcommand{\dinclusion}{\ar@<-0.3ex>@{^{(}->}[d]}
\newcommand{\uinclusion}{\ar@<-0.3ex>@{^{(}->}[u]}

\newcommand{\ssslash}{/\!\!/}

\newcommand{\wt}{\mathrm{wt}}
\newcommand{\Hig}{\mathrm{Higgs}}
\newcommand{\Bun}{\mathrm{Bun}}

\makeatletter
\newcommand{\colim@}[2]{%
  \vtop{\m@th\ialign{##\cr
    \hfil$#1\operator@font colim$\hfil\cr
    \noalign{\nointerlineskip\kern1.5\ex@}#2\cr
    \noalign{\nointerlineskip\kern-\ex@}\cr}}%
}
\newcommand{\colim}{%
  \mathop{\mathpalette\colim@{}}\nmlimits@
}
\makeatother

\usetikzlibrary{positioning,fit,calc}
\tikzstyle{block}=[draw=black, width=1cm, minimum height=2cm, align=center] 
\tikzstyle{block2}=[draw=black, text width=2cm, minimum height=1cm, align=center] 
\tikzstyle{block3}=[draw=black, text width=2cm, minimum height=1cm, align=center] 

\makeatletter

\@addtoreset{equation}{section}	
\makeatother

\begin{document}

\begin{abstract}
In our previous paper with Tudor P\u{a}durariu, we introduced the notion of limit categories for moduli stacks of Higgs bundles and formulated the Dolbeault geometric Langlands correspondence. These limit categories are expected to provide an effective “classical limit” of the categories of 
D-modules on the moduli stack of bundles, and our formulation links categorical Donaldson–Thomas theory with the geometric Langlands correspondence.

In this paper, we prove the above Dolbeault geometric Langlands correspondence for $\GL_2$
 over the locus in the Hitchin base where the spectral curves are reduced. This is the first non-trivial case in which the relevant moduli stacks are not quasi-compact, and the use of limit categories is essential to the formulation and proof of the correspondence. 
 
 Our approach also outlines a strategy for proving the correspondence in greater generality and explains the current obstructions to such an extension.
\end{abstract}

\maketitle

 \setcounter{tocdepth}{2}
\tableofcontents

\section{Introduction}
\subsection{Main result}
Let $C$ be a smooth projective curve of genus $g$, and let $G$ be a reductive group with Langlands
dual ${}^{L}G$.
Let
\begin{align*}
h \colon \Hig_G \to \mathrm{B}
\end{align*}
be the derived moduli stack of $G$-Higgs bundles on $C$, with Hitchin fibration $h$.
For each connected component $\Hig_G(\chi)$ with $\chi \in \pi_1(G)$,
we have the quasi-compact open substack of semistable Higgs bundles
\begin{align*}
    \Hig_G(\chi)^{\mathrm{ss}} \subset \Hig_G(\chi).
\end{align*}
However, the stack $\Hig_G(\chi)$ of \emph{all} Higgs bundles is not quasi-compact unless $G$ is a torus.

In our previous paper with Tudor P\u{a}durariu~\cite{PTlim}, we introduced a dg-category (called a
\emph{limit category})
\begin{align}\label{intro:limit}
    \IndL_{\mathcal{N}}(\Hig_G(\chi))_w \subset \IndCoh_{\mathcal{N}}(\Hig_G(\chi))_w
\end{align}
for $\chi \in \pi_1(G)$ and $w\in Z_G^{\vee}$, where $Z_G$ is the center of $G$ and
$\mathcal{N}$ denotes the nilpotent singular support condition~\cite{AG}.
The category \eqref{intro:limit} is expected to provide an effective ``classical limit'' of the categories of
$\mathrm{D}$-modules on the moduli stack of $G$-bundles, and it overcomes several subtleties arising from
(quasi- or ind-)coherent sheaves on stacks that are not quasi-compact.

We then proposed a version of the \emph{Dolbeault geometric Langlands conjecture}
(with nilpotent singular support), which may be regarded as
a classical limit of the 
geometric Langlands correspondence, as follows:
\begin{conj}\emph{(\cite{PTlim})}\label{conj:intro}
    There is a $\rB$-linear equivalence
    \begin{align}\label{intro:IndL}
        \IndCoh_{\mathcal{N}}(\Hig_{{}^{L}G}(w)^{\mathrm{ss}})_{-\chi} \stackrel{\sim}{\to}
        \IndL_{\mathcal{N}}(\Hig_G(\chi))_w
    \end{align}
    which is compatible with Wilson/Hecke operators.
\end{conj}

So far, for $G=\GL_r$, Conjecture~\ref{conj:intro} is only known over the elliptic locus
$\rB^{\mathrm{ell}} \subset \rB$ by~\cite{Ardual}, where the spectral curves are integral (irreducible and reduced). 
Over the elliptic locus, we have
\begin{align}\label{intro:ssst}
    \Hig_G^{\mathrm{st}}\times_{\rB}\rB^{\mathrm{ell}}= \Hig_G^{\mathrm{ss}}\times_{\rB}\rB^{\mathrm{ell}}= \Hig_G\times_{\rB}\rB^{\mathrm{ell}}.
\end{align}
Here $(-)^{\mathrm{st}}$ stands for the stable part. In particular, $\Hig_G(\chi)$ is
quasi-compact over the elliptic locus, indeed it is ``almost'' a smooth symplectic variety, up to the trivial derived structure $k[-1]$ and $\mathbb{G}_m$-gerbe. 

Let 
$\mathrm{B}^{\mathrm{red}} \subset \mathrm{B}$
be the open subset corresponding to reduced (but not necessarily irreducible) spectral curves. 
The condition (\ref{intro:ssst}) essentially fails in this case; 
there are strictly semistable Higgs bundles in the left-hand side of (\ref{intro:IndL}), and the stack 
$\Hig_G(\chi)$ is not quasi-compact in the right-hand side of (\ref{intro:IndL})
because of infinitely many Harder–Narasimhan strata. 
In this paper, we prove the following result:
\begin{thm}\emph{(Theorem~\ref{thm:gl2})}\label{thm:intro}
For $G=\GL_2$, Conjecture~\ref{conj:intro} holds over the locus $\mathrm{B}^{\mathrm{red}} \subset \mathrm{B}$.
\end{thm}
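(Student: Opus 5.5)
Throughout, $G = {}^{L}G = \GL_2$. The plan is to reduce the statement over $\rB^{\mathrm{red}}$ to a gluing problem along a stratification of the Hitchin base, and to treat each stratum with known ingredients. Over $\rB^{\mathrm{red}}$ a spectral curve $\Sigma_b \subset \mathrm{Tot}(K_C)$ is either integral, which is the elliptic locus where Arinkin's result \cite{Ardual} applies, or a union $\Sigma_1 \cup \Sigma_2$ of two distinct sections $\Sigma_i \cong C$. The second case is the image of $\rB_{\GL_1\times\GL_1}=H^0(K_C)^{2}$ minus the diagonal.

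On the reducible locus I would describe both sides through the Levi $T=\GL_1\times\GL_1$. On the automorphic side, a Higgs bundle with spectral curve $\Sigma_1\cup\Sigma_2$ is a torsion-free rank-one sheaf on the nodal-type curve. Its Harder--Narasimhan strata are indexed by the degrees $(d_1,d_2)$ of its restrictions to the components, with $d_1+d_2$ fixed up to the intersection correction. Each stratum is an extension stack built from $\Hig_T$ together with the gluing data along $\Sigma_1\cap\Sigma_2$.

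The limit category $\IndL_{\mathcal N}(\Hig_{\GL_2}(\chi))_w$ is by definition (from \cite{PTlim}) a limit over quasi-compact opens of quasi-BPS-type subcategories cut out by weight windows. The first key step is to show that over this locus it decomposes as a semiorthogonal-type gluing, or limit, of pieces attached to strata. Each piece is identified, via window/magic window theorems for the $\mathbb{G}_m$-attracting loci, with $\IndCoh$ of the $T$-Higgs stack with a fixed central weight. On the spectral side, $\Hig_{\GL_2}(w)^{\mathrm{ss}}$ over this locus is quasi-compact. Its strictly semistable points are polystable sums $L_1\oplus L_2$, and I would describe the category near them by a local model, namely the derived zero locus in a $\mathbb{G}_m$-quotient of $\Ext^1$'s, using Koszul duality or a local quasi-BPS description.

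The equivalence itself would be constructed as an integral transform. On the elliptic part it is given by the Poincaré sheaf of \cite{Ardual}. On the reducible part it is given by Fourier--Mukai for the relative Jacobians of $\Sigma_1,\Sigma_2$, i.e. abelian duality for $\GL_1$ with its dual, combined with gluing along the nodes. I would check compatibility by showing that the Poincaré sheaf extends over $\rB^{\mathrm{red}}$ (maximal Cohen--Macaulay extension, as in Arinkin's compactified Jacobian argument). Then, by $\rB$-linearity and descent in the base, it suffices to prove fully faithfulness and essential surjectivity formally locally at each point of the reducible locus. Wilson/Hecke compatibility then follows from the corresponding statement for Fourier--Mukai on Jacobians, plus the fact that both sides are generated by objects supported over $\rB^{\mathrm{ell}}$ together with images from the Levi.

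The main obstacle I expect is the local matching at points of $\rB^{\mathrm{red}}\setminus\rB^{\mathrm{ell}}$. There the infinitely many HN strata on the automorphic side must assemble, through the limit, into exactly the category of the finitely many strictly semistable pieces on the spectral side. Concretely, one must show that the $\chi$-weight window on $\Hig_{\GL_2}(\chi)$ selects precisely the $T$-weights that are dual to the degrees $(d_1,d_2)$ allowed by semistability on the Langlands side, with a shift coming from the $-\chi$ and $w$ twists. I would attack this by computing both sides explicitly on the formal neighborhood. That neighborhood is a Jacobian of a two-component curve with $2g-2$ nodes (counted appropriately), times a formal disc. There I would compare the limit category with $\IndCoh$ of the compactified Jacobian of fixed multidegree, using the known derived equivalences for fine compactified Jacobians of reducible nodal curves (Melo--Rapagnetta--Viviani type autoduality). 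The nontrivial point is extending these equivalences to non-fine (strictly semistable) degrees, which I would handle by the $\mathbb{G}_m$-gerbe weight decomposition.
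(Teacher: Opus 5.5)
There is a genuine gap: your plan never produces a functor into $\IndL_{\mathcal N}(\Hig_{\GL_2}(\chi))_w$, and the local tools you name cannot identify that category. You are right that the kernel should be the Cohen--Macaulay extension of the Poincar\'e sheaf, and the paper uses exactly this. The resulting transform, however, a priori lands only in $\Coh(\cH(\chi))_{w'}$. Landing in the limit category means checking weight windows at the centers $M_1\oplus M_2$ of infinitely many Harder--Narasimhan strata.

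Your local tools do not do this. Melo--Rapagnetta--Viviani concerns only fine compactified Jacobians (one general polarization, quasi-compact, stable $=$ semistable), so it says nothing about the whole non-quasi-compact stack. The $\mathbb{G}_m$-gerbe weight decomposition does not remove strictly semistable points either: at $L_1\oplus L_2$ the stabilizer is $\mathbb{G}_m^2$, so $\Hig(w)^{\mathrm{ss}}$ is not a gerbe over its good moduli space there. The reduction to formal neighbourhoods of points of $\rB$ is also unjustified for a limit over QCA opens, because the Hitchin fibre over such a point is itself not quasi-compact. Finally, your stratum-by-stratum decomposition omits the open piece, the quasi-BPS category of $\Hig(\chi)^{\mathrm{ss}}$. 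That piece is genuinely $\GL_2$ (it contains indecomposable stable sheaves on $\Sigma_1\cup\Sigma_2$), not a $T$-Higgs category.

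The missing idea is the left adjoint $s_!=j_!\overline{s}_!$ of the Hitchin section, which exists only in the limit category, together with the Whittaker normalization $\Phi(\mathcal O_{\cH(w)^{\mathrm{ss}}})\cong s_!\mathcal O_{\cB}$. Iterated Wilson operators on $\mathcal O_{\cH(w)^{\mathrm{ss}}}$ generate the spectral side, and Hecke operators preserve $\LL_{\mathcal N}$. So Wilson/Hecke compatibility forces $\Phi$ into the limit category and gives $\mathrm{W}^R_{d,c}\Phi^R\Phi\cong\mathrm{H}^R_{d,c}\Phi\cong\mathrm{W}^R_{d,c}$. Full faithfulness then follows from conservativity of $\mathrm{W}^R_{d,c}$. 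Essential surjectivity follows from conservativity of $\mathrm{H}^R_{d,c}$, which the paper proves using the stratification $\cH(\chi)_{\leq k}$, the chain-ring condition on conductors, and Haiman's isospectral Hilbert scheme.

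In the paper, MRV is used only to prove the normalization, on an \'etale neighbourhood of the base, and this requires several further steps:
\begin{itemize}
\item perturbing the polarization to $h_\varepsilon$ and $h'_\varepsilon$;
\item showing, by a $\lambda$-weight argument based on a vector-bundle resolution of $\cP_E$, that local cohomology along the $\Theta$-strata vanishes;
\item showing that an object of the quasi-BPS category which vanishes off the polystable locus is zero.
\end{itemize}

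Your route to Wilson/Hecke compatibility also fails. The Hitchin fibre over $\Sigma_1\cup\Sigma_2$ is a stack of rank-one torsion-free sheaves on the singular curve: generalized Jacobians with affine group factors, plus boundary strata. It is not a glued product of Jacobians of the $\Sigma_i$, so the compatibility is not inherited from Fourier--Mukai on $\mathrm{Pic}(\Sigma_i)$, and Groechenig's proof uses integrality of the spectral curve. Your generation claim ``by objects supported over $\rB^{\mathrm{ell}}$'' is false: such objects have no maps to objects supported over the closed set $\rB^{\mathrm{red}}\setminus\rB^{\mathrm{ell}}$.

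The paper instead proves the compatibility in three steps. It realizes $\cP_E$ through Haiman's isospectral Hilbert scheme, reduces formally locally over $\mathrm{Sym}^d(S)$ to an integral plane curve with the same singularities, and compares the two kernels as maximal Cohen--Macaulay sheaves that agree outside codimension two.

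Lastly, reducible spectral curves are not nodal in general. $\Sigma_1\cap\Sigma_2$ is the zero divisor of $a_1-a_2\in H^0(\Omega_C)$, and a zero of order $m$ gives the singularity $y^2=x^{2m}$. Your nodal local model misses these. The paper handles them through the resolution with $0\leq n_i\leq m_i$ and a degeneration argument from the nodal case.
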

The result of Theorem~\ref{thm:intro} is the first non-trivial case beyond the elliptic locus in which the condition~(\ref{intro:ssst}) essentially fails. In particular, the relevant moduli stacks are not quasi-compact, yet the framework still admits Hecke operators. The result of Theorem~\ref{thm:intro} and its proof show that the notion of limit category from~\cite{PTlim} is essential both for the formulation
and for the proof.

\begin{remark}\label{rmk:introdl}
(1) The equivalence~(\ref{intro:IndL}) is also expected to be compatible with parabolic induction functors, and hence with the semiorthogonal decompositions into quasi-BPS categories constructed in~\cite{PTlim}. One can also verify this compatibility in the setting of Theorem~\ref{thm:intro}, but we omit the proof since it is not needed for the proof of Theorem~\ref{thm:intro}. However, compatibility with parabolic induction is expected to be necessary beyond $\rB^{\mathrm{red}}$; see Subsection~\ref{subsec:toward}.

(2) In~\cite{PTlim}, we also proposed a version without nilpotent singular support:
\begin{align}\label{intro:renorm}
    \Phi \colon \IndCoh(\Hig_{{}^{L}G}(w)^{\mathrm{ss}})_{-\chi} \stackrel{\sim}{\to}
        \IndL(\Hig_G(\chi))_w,
\end{align}
which may be regarded as a classical limit of the renormalized geometric Langlands correspondence in~\cite[Remark~1.6.9]{GLC1}.
Our approach in this paper, together with compatibility with parabolic induction, should also prove~(\ref{intro:renorm}) over $\mathrm{B}^{\mathrm{red}}$ for $G=\GL_2$.
\end{remark}

\subsection{Summary of the ingredients of Theorem~\ref{thm:intro}}

An outline of the proof of Theorem~\ref{thm:intro} will be given in Subsection~\ref{subsec:outline}.
Here we summarize the main ingredients:

\vspace{2mm}

\noindent{\bf (i) Limit categories on non-quasi-compact Higgs stacks.}
Beyond the elliptic locus, the stack $\Hig_G(\chi)$ is not quasi-compact,
due to the presence of infinitely many Harder--Narasimhan strata.
The categories of (quasi- or ind-)coherent sheaves on such a stack are 
not well-behaved in general,
whereas the limit category introduced in~\cite{PTlim} is well-behaved (e.g.\ compactly generated) and admits Hecke operators,
making it the correct framework beyond the elliptic locus.

\vspace{2mm}

\noindent{\bf (ii) A left adjoint to the Hitchin section.}
Using the formalism of limit categories, we construct a left adjoint
$s_!$ to $s^*$ for the Hitchin section $s\colon \rB \to \Hig_G$.
The object $s_!\mathcal O_{\rB}$ plays the role of the ``classical limit'' of the vacuum Poincar\'e sheaf in the geometric Langlands correspondence~\cite{GLC1},
and it is crucial for the Whittaker normalization.

\vspace{2mm}

\noindent{\bf (iii) Fourier--Mukai transform via the (extended) Arinkin sheaf.}
We use Arinkin's Cohen--Macaulay extension of the Poincar\'e bundle~\cite{Ardual, MLi} (and its extension to the
generically regular locus) to construct the Fourier--Mukai functor
$\Phi^{\mathrm P}$ from the semistable locus to the generically regular locus.

\vspace{2mm}

\noindent{\bf (iv) Wilson/Hecke compatibility for $\Phi^{\mathrm P}$.}
We prove that $\Phi^{\mathrm P}$ intertwines Wilson and Hecke operators.
The argument uses an explicit description of the Arinkin sheaf via Haiman's isospectral Hilbert scheme~\cite{Ha}.

\vspace{2mm}

\noindent{\bf (v) Generation/conservativity and Whittaker normalization on $\rB^{\mathrm{red}}$.}
We exploit compact generation by iterated Wilson operators and, over the locus of reduced spectral curves, 
the conservativity of right adjoints of Hecke operators; together with a proof of the Whittaker
normalization for $\GL_2$, this yields Theorem~\ref{thm:intro}.

\subsection{History}
The Dolbeault geometric Langlands conjecture was proposed by Donagi--Pantev~\cite{DoPa}
as a classical limit of the geometric Langlands correspondence. Recall that the geometric Langlands correspondence is an equivalence 
\begin{align*}
    \IndCoh_{\mathcal{N}}(\mathrm{LocSys}_{^{L}G}) \simeq \text{D-mod}(\Bun_G)
\end{align*}
proposed in~\cite{BD0, AG} and proved in~\cite{GLC1, GLC2, GLC3, GLC4, GLC5}.
In its original form, the Dolbeault geometric Langlands conjecture is stated as an equivalence, 
see~\cite[Conjecture~1.3]{ZN} and also~\cite{KapWit} for a motivation from physics 
\begin{align}\label{intro:DP}
    \QCoh(\Hig_{{}^{L}G}) \simeq \QCoh(\Hig_G).
\end{align}

Note that we have two Hitchin fibrations over the common base $\rB$:
\begin{align}\notag
    \xymatrix{
    \Hig_{{}^{L}G} \ar[rd]_-{{}^{L}h} & & \Hig_G \ar[ld]^-{h} \\
    & \rB. &
    }
\end{align}
It was proved in~\cite{DoPa} that these fibrations are dual abelian fibrations over the locus
$\rB^{\mathrm{sm}} \subset \rB$ consisting of smooth spectral curves.
Consequently, the equivalence \eqref{intro:DP} over $\rB^{\mathrm{sm}}$ follows immediately from the classical
Fourier--Mukai transform~\cite{Mu1}.
Arinkin~\cite{Ardual} further extended this to the elliptic locus
$\rB^{\mathrm{ell}} \subset \rB$. 

However, beyond the elliptic locus $\rB^{\mathrm{ell}} \subset \rB$, essentially nothing is known about an
equivalence of the form \eqref{intro:DP}.
Indeed, outside $\rB^{\mathrm{ell}}$ the stack $\Hig_G$ is not quasi-compact, and the dg-categories of
quasi-coherent (or ind-coherent) sheaves on it are not well-behaved; for instance, they may fail to be compactly generated.
At present, it seems unrealistic to prove \eqref{intro:DP} beyond $\rB^{\mathrm{ell}}$ within this naive framework.
By contrast, it is proved in~\cite{PTlim} that the dg-category \eqref{intro:limit} is compactly generated,
even though $\Hig_G(\chi)$ is not quasi-compact.

The limit category \eqref{intro:limit} was introduced in~\cite{PTlim} precisely to circumvent these categorical
subtleties when working with (quasi- or ind-)coherent sheaves on non-quasi-compact stacks.
For $g\geq 2$, we have open immersions
\begin{align*}
    \rB^{\mathrm{sm}} \subsetneq \rB^{\mathrm{ell}} \subsetneq \rB^{\mathrm{red}} \subsetneq \rB.
\end{align*}
Theorem~\ref{thm:intro} gives the first instance in which the Dolbeault geometric Langlands correspondence
is proved beyond $\rB^{\mathrm{ell}}$, in a framework that is appropriate for non-quasi-compact stacks.

We also remark that Melo--Rapagnetta--Viviani~\cite{MRVF2}, building on ideas of Arinkin~\cite{Ardual}, proved a derived equivalence for the compactified Jacobians of reduced planar curves; see Theorem~\ref{thm:MRV}. In their setting, one considers compactified Jacobians for which semistability coincides with stability, and hence their result does not cover the statement of Theorem~\ref{thm:intro}. Moreover, the derived category of compactified Jacobians does not admit Hecke operators. Nevertheless, we use the result of~\cite{MRVF2} in the final step of the proof of Theorem~\ref{thm:intro}, namely in establishing the Whittaker normalization.

Our formulation in Conjecture~\ref{conj:intro} also reveals a strong connection between categorical
Donaldson--Thomas theory~\cite{T} and the geometric Langlands correspondence;
see~\cite[Section~1]{PTlim} for further discussion.
Moreover, the limit category \eqref{intro:limit} admits Hecke operators, and this turns out to be essential
in the proof of Theorem~\ref{thm:intro}. Also see~\cite[Section~1.4]{PTlim} for the relation to 
topological mirror symmetry by Hausel--Thaddeus~\cite{HauTha, HauICM}.

\subsection{Outline of the proof of Theorem~\ref{thm:intro}}\label{subsec:outline}
Here we outline the proof of Theorem~\ref{thm:intro}.
We first point out that the statement is trivial for $g\leq 1$;
for $g=0$ we have $\mathrm{B}^{\rm{red}}=\emptyset$ and for 
$g=1$ the reduced spectral curve $\mathcal{C}_b$
is a disjoint union of the elliptic curve $C$, so the classical 
Fourier--Mukai transform~\cite{Mu1} applies. In what follows, 
we assume that $g\geq 2$.
In Step 1 to Step 5 below, we assume $G=\GL_r$ with $r\geq 2$.

\begin{step}
Construction of the functor.
\end{step}
We first construct a functor
\begin{align}\label{intro:Phi}
    \Phi^{\mathrm P} \colon \Coh(\Hig_G(w)^{\mathrm{ss}})_{-\chi'} \to \Coh(\Hig_G(\chi)^{\mathrm{greg}})_{w'},
\end{align}
where $\chi'=\chi+(r^2-r)(g-1)$ and
\begin{align*}
    \Hig_G^{\mathrm{greg}} \subset \Hig_G
\end{align*}
is the open substack of generically regular Higgs bundles.
The functor \eqref{intro:Phi} is defined as a Fourier--Mukai transform with kernel given by
Arinkin's Cohen--Macaulay extension of the Poincar\'e line bundle on the locus of regular Higgs bundles;
see~\cite{Ardual, MLi}.

\begin{step}
Compatibility with Wilson/Hecke operators.
\end{step}
We next show that the functor \eqref{intro:Phi} is compatible with Wilson/Hecke operators over $\rB^{\mathrm{red}}$. 
Consider the following diagram 
\begin{align}\label{intro:WHcompati:!}
\xymatrix{
\Coh(S) \otimes \Coh(\Hig_G^{\mathrm{ss}}) \ar[r]^-{\mathrm W} \ar[d]_-{\id\otimes\Phi^{\mathrm P}} &
\Coh(\Hig_G^{\mathrm{ss}}) \ar[d]^-{\Phi^{\mathrm P}} \\
\Coh(S) \otimes \Coh(\Hig_G^{\mathrm{greg}}) \ar[r]^-{\mathrm H} &
\Coh(\Hig_G^{\mathrm{greg}}).
}
\end{align}
Here $S=\mathrm{Tot}(\Omega_C)$ is a (non-compact) Calabi--Yau surface.
The functor $\mathrm W$ is the \emph{Wilson operator}, defined via the universal Higgs bundle as a correspondence.
The functor $\mathrm H$ is the \emph{Hecke operator}, defined via the stack of Hecke correspondences.
Using an explicit construction of the Arinkin sheaf via
Haiman's isospectral Hilbert scheme~\cite{Ha}, we prove the following:

\begin{thm}\label{intro:prop:WH}\emph{(Theorem~\ref{prop:WH})}
The diagram (\ref{intro:WHcompati:!}) commutes over $\rB^{\mathrm{red}} \subset 
\rB.$
\end{thm}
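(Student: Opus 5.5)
Both composites in (\ref{intro:WHcompati:!}) are integral transforms, so the plan is to compute each kernel over $S\times \Hig_G^{\mathrm{ss}}\times \Hig_G^{\mathrm{greg}}$ and compare them over $\rB^{\mathrm{red}}$. Let $\mathcal{P}$ be the Arinkin kernel on $\Hig_G^{\mathrm{ss}}\times_{\rB}\Hig_G^{\mathrm{greg}}$ and $\mathcal{E}$ the universal Higgs bundle, viewed as a sheaf on $S\times \Hig_G$ supported on the universal spectral curve $\mathcal{C}\subset S\times\rB$. The Wilson composite $\Phi^{\mathrm P}\circ \mathrm W$ has kernel $\mathcal{E}\boxtimes_{\rB}\mathcal{P}$, pulled back from $\Hig_G^{\mathrm{ss}}$ along the first factor. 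The Hecke composite $\mathrm H\circ(\id\otimes\Phi^{\mathrm P})$ has kernel obtained by pulling $\mathcal{P}$ back along the Hecke correspondence $\cHecke\to \Hig^{\mathrm{greg}}\times S\times \Hig^{\mathrm{greg}}$ (elementary modification at a point $x\in S$ of the spectral sheaf) and pushing forward. So the statement reduces to a \emph{Hecke eigen-property} of $\mathcal{P}$: for $(E,F,x)$ with $F'\subset F$ a colength-one modification at $x$, we want
\[
(\text{pushforward along }\cHecke)(\mathcal{P}) \simeq \mathcal{E}_x\otimes \mathcal{P}
\]
naturally over $\rB^{\mathrm{red}}$. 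Over $\rB^{\mathrm{sm}}$ this is the classical theorem-of-the-square property of the Poincaré bundle, $\mathcal{P}_{F(x)}\simeq \mathcal{P}_F\otimes \mathcal{O}(x)$ restricted to the dual point.

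To extend this, I will use the explicit Haiman description. Over $\rB^{\mathrm{red}}$ the regular locus of $\Hig^{\mathrm{greg}}$ is where $\mathcal{P}$ is defined by the Arinkin/Li formula. There, $\mathcal{P}$ is the descent of a sheaf built from $\mathcal{O}$ of the isospectral Hilbert scheme of $S$ (or rather of the spectral curves) via a pushforward from $\mathcal{C}^{n}\times\dots$ and taking $\mathfrak{S}_n$-invariants/sign-isotypic parts. In this formula the Hecke modification at $x$ corresponds to adding the point $x$ to the relevant Hilbert scheme of points, i.e. the nested Hilbert scheme $\Hilb^{n,n+1}$. Haiman's results on the isospectral Hilbert scheme (Cohen--Macaulay, flatness, and the identification of the nested fiber) then give the eigen-isomorphism by the same formal argument as in the smooth case. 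The key steps are:
\begin{enumerate}[label=(\roman*)]
\item Write both kernels explicitly and reduce to the eigen-isomorphism.
\item Prove the eigen-isomorphism on the locus where both Higgs bundles are regular, using the Haiman construction and base change. On $\rB^{\mathrm{red}}$ the spectral curves are reduced planar curves, so the required fibers have the expected dimension and Tor-independence holds.
\item Extend to all of $\Hig^{\mathrm{ss}}\times\Hig^{\mathrm{greg}}$ by Cohen--Macaulayness and codimension.
\end{enumerate}

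For step (iii) the plan is to show that both kernels are maximal Cohen--Macaulay sheaves on the same Cohen--Macaulay support, namely the fiber product over $\rB^{\mathrm{red}}$ with $\mathcal{C}$. For the Hecke side this uses that $\cHecke$ is finite and flat over $\Hig^{\mathrm{greg}}\times S$ over $\rB^{\mathrm{red}}$, since fibers are $\mathbb{P}(\text{fiber of spectral sheaf at }x)$, which is a point generically. An isomorphism of such sheaves off a locus of codimension $\geq 2$ then extends uniquely, because $j_*j^*$ is the identity for maximal CM sheaves. The codimension estimate on the complement of the regular$\times$regular locus in the reduced-spectral-curve part is the input here. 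Finally, the isomorphism is compatible with the $\mathbb{G}_m$-gerbe weights ($-\chi'$, $w'$), so it descends to the weighted categories.

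I expect the main obstacle to be step (ii)/(iii) at points where the spectral curve is reducible and the semistable bundle is strictly semistable. There the codimension-two argument may fail, because the strictly semistable locus and the non-regular locus can have small codimension in the relevant fiber product. There I would first try to verify the eigen-property directly, using the decomposition of the spectral sheaf along the components $\mathcal{C}_b=\mathcal{C}_1\cup\mathcal{C}_2$ together with the local structure of $\mathcal{P}$ from the Haiman description.
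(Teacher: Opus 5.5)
Your overall skeleton is the one the paper uses: compare the two kernels, identify them on a locus where the relevant sheaves are line bundles via the determinant formula, and extend by uniqueness of maximal Cohen--Macaulay extensions across codimension two. Step (iii), however, has a genuine gap on the Hecke side.

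You justify the Cohen--Macaulay property of the Hecke kernel by asserting that $\cHecke$ is finite and flat over $\Hig^{\mathrm{greg}}\times S$ over $\rB^{\mathrm{red}}$. This is false. Over a point $(x,F)$ the fiber is the space of nonsplit extensions $0\to F\to F'\to\mathcal{O}_x\to 0$, i.e.\ $\mathbb{P}(\Ext^1(\mathcal{O}_x,F))\cong\mathbb{P}^{\mu-1}$, where $\mu$ is the number of local generators of $F$ at $x$. One has $\mu\geq 2$ whenever $x$ is a singular point of $\cC_b$ at which $F$ is not locally free. Already for $\GL_2$ at a node, with $F$ locally the maximal ideal, the fiber is a $\mathbb{P}^1$. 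So the correspondence is only generically finite, and it is not flat. Consequently the derived pushforward of the pulled-back Arinkin sheaf is a priori a complex. Neither its concentration in degree zero nor its maximal Cohen--Macaulayness on $\cH^{\mathrm{ss}}\times_{\cB}\cH\times_{\cB}\cC$ follows from anything you state, and without it the codimension-two extension argument cannot be applied to the Hecke kernel. This is exactly the non-formal content of the theorem (Proposition~\ref{prop:PE} in the paper).

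The paper fills this gap in four steps.
\begin{enumerate}
\item For $d\gg 0$, $\cP_E$ is the descent along the smooth Abel--Jacobi map of Haiman's sheaf $\mathcal{Q}^d_E=(\psi_*\eta^*E^{\boxtimes d})^{\mathrm{sign}}\otimes\det\mathcal{A}^{-1}$.
\item The dual Hecke correspondence is realized, through the stable-pair stack $\cH^{\dag}$, by the nested Hilbert scheme $\Hilb^{d,d-1}(S)$.
\item The resulting statement is formally local over $\mathrm{Sym}^d(S)$. One may therefore replace $(\cC_b,E)$ by an integral plane curve $D\subset\mathbb{P}^2$ with formally isomorphic singularities and a formally isomorphic torsion-free sheaf $F$ (Lemma~\ref{lem:intmodel}).
\item On $D$, Groechenig's Hecke-eigen property $\mathrm{H}^{\vee}_D(\cP_F)\cong\cP_F\boxtimes F$, which rests on Arinkin's equivalence over the elliptic locus, yields the maximal Cohen--Macaulay property.
\end{enumerate}

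Your appeal to Haiman ``by the same formal argument as in the smooth case'' does not supply this. You place it in step (ii), on the regular locus, where it is unnecessary: there the correspondence is an isomorphism, and the identification is just the determinant formula for $\cP^{\sharp}$ together with the sequence $0\to F\to F'\to\mathcal{O}_x\to 0$. You give no mechanism at the singular points, which is where it is needed.

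The obstacle you anticipate, strictly semistable points on reducible spectral curves, is not one. Semistability is an open condition and plays no role in the codimension count. The complement of the locus where the first sheaf is a line bundle and the second is locally free at $x$ has codimension at least two uniformly. It lies over the singular-curve locus, which has codimension one in the base, and it has fiberwise codimension at least one by density of the regular locus (Remark~\ref{rmk:dense}).
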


\begin{step}
Classical limit of the vacuum Poincar\'e sheaf.
\end{step}
In this step we essentially use the limit category.
Let $s$ be the Hitchin section
\begin{align*}
    s \colon \rB \to \Hig_G
\end{align*}
defined by the universal spectral curve.
The map $s$ is not proper and its image is not closed outside the semistable locus; nevertheless, the theory of limit categories allows us to construct
a \emph{left adjoint} to $s^*$,
\begin{align*}
    s_{!} \colon \Coh(\rB) \to \LL(\Hig_G)
    = \IndL(\Hig_G)^{\mathrm{cp}},
\end{align*}
where $(-)^{\mathrm{cp}}$ denotes the full subcategory of compact objects.
The existence of $s_!$ is one of the key features of the limit category; such a left adjoint to $s^*$ does not exist
in the setting of (quasi- or ind-)coherent sheaves.

\begin{step}
Whittaker normalization.
\end{step}
We expect that, under the conjectural equivalence of Conjecture~\ref{conj:intro},
the structure sheaf $\mathcal O_{\Hig_{{}^{L}G}(w)^{\mathrm{ss}}}$ corresponds to
$s_{!}\mathcal O_{\rB}$ (up to a shift of weights). We refer to this expected identification as the
\emph{Whittaker normalization}.

We now restrict to the locus of reduced spectral curves. Fix an open subset
\begin{align*}
    \cB \subset \rB^{\mathrm{red, cl}},
\end{align*}
and write $\cH=\Hig_G\times_{\rB}\cB$, etc.
Then $\cH^{\mathrm{greg}}=\cH$ (since any Higgs bundle with reduced spectral curve is generically regular), so the functor \eqref{intro:Phi} induces a functor
\begin{align}\label{intro:Phi2}
    \Phi \colon \Coh(\cH(w)^{\mathrm{ss}})_{-\chi'} \to \Coh(\cH(\chi))_{w'}.
\end{align}
The following conjecture formulates the Whittaker normalization over reduced spectral curves:

\begin{conj}\emph{(Conjecture~\ref{conj:norm})}\label{conj:intro:w}
There is an isomorphism in $\LL_{\mathcal{N}}(\cH(\chi_0))_{w'}$, 
where $\chi_0=(r^2-r)(1-g)$.
\begin{align*}
    \Phi(\mathcal O_{\cH(w)^{\mathrm{ss}}}) \cong s_{!}\mathcal O_{\cB}.
\end{align*}
\end{conj}

\begin{step}
Equivalence under the Whittaker normalization.
\end{step}
Fix $c\in C$.
By applying the Wilson operator to $\mathcal O_{\cH(w)^{\mathrm{ss}}}$ $d$ times (with $r \mid (d+\chi)$)
and then taking the right adjoint, we construct a functor
\begin{align}\notag
    \mathrm W^{R}_{d,c} \colon \IndCoh_{\mathcal N}(\cH(w)^{\mathrm{ss}})_{-\chi'} \to \QCoh(S^{\times d}).
\end{align}
We show that the objects obtained by iterated Wilson transforms of $\mathcal O_{\cH(w)^{\mathrm{ss}}}$
compactly generate $\IndCoh_{\mathcal N}(\cH(w)^{\mathrm{ss}})$; this implies that $\mathrm W^{R}_{d,c}$ is conservative for $d\gg 0$.

Similarly, by applying the Hecke operator to $s_{!}\mathcal O_{\cB}$ $d$ times and taking the right adjoint,
we construct a functor
\begin{align}\notag
    \mathrm H^{R}_{d,c} \colon \IndL_{\mathcal N}(\cH(\chi))_{w'} \to \QCoh(S^{\times d}).
\end{align}
We also prove the conservativity of $\mathrm H^{R}_{d,c}$ under an additional assumption
(Assumption~\ref{assum:Bcirc}) on the singularities of the spectral curves corresponding to points of $\cB$,
which is automatically satisfied for $G=\GL_2$.

Then Theorem~\ref{intro:prop:WH}, together with Conjecture~\ref{conj:intro:w} and the compact generation statement above,
yields the following commutative diagram:
\begin{align}\notag
\xymatrix{
\IndCoh_{\mathcal N}(\cH(w)^{\mathrm{ss}})_{-\chi'} \ar[r]^-{\mathrm W^{R}_{d,c}} \ar@<-0.5ex>[d]_-{\Phi} &
\QCoh(S^{\times d}) \ar@{=}[d] \\
\IndL_{\mathcal N}(\cH(\chi))_{w'} \ar[r]^-{\mathrm H^{R}_{d,c}} \ar@<-0.5ex>[u]_-{\Phi^{R}} &
\QCoh(S^{\times d}).
}
\end{align}
Here $\Phi^{R}$ denotes the right adjoint of $\Phi$, which exists formally by the adjoint functor theorem.

Combining this diagram with the conservativity of the right adjoints of the Wilson and Hecke operators, we obtain:

\begin{thm}\label{thm:intro:Phi}\emph{(Theorem~\ref{thm:eq})}
Assume Conjecture~\ref{conj:intro:w}. Then the functor \eqref{intro:Phi2} induces a fully faithful functor
\begin{align}\label{intro:Phi3}
    \Phi \colon \IndCoh_{\mathcal N}(\cH(w)^{\mathrm{ss}})_{-\chi'} \hookrightarrow \IndL_{\mathcal N}(\cH(\chi))_{w'}.
\end{align}
Moreover, under Assumption~\ref{assum:Bcirc} (which is satisfied for $\GL_2$), the functor \eqref{intro:Phi3} is an equivalence.
\end{thm}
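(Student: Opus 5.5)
The proof is a Barr–Beck-type comparison through $\QCoh(S^{\times d})$. Fix $c\in C$ and $d\gg 0$ with $r\mid(d+\chi)$.

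\emph{Extension of $\Phi$.} First extend \eqref{intro:Phi2} to a continuous functor on ind-completions. To do this, check that $\Phi$ sends $\Coh_{\mathcal N}(\cH(w)^{\mathrm{ss}})_{-\chi'}$ into compact objects of $\IndL_{\mathcal N}(\cH(\chi))_{w'}$. This follows from the compatibility of the Arinkin kernel with singular supports, together with the fact that generators obtained by iterated Wilson operators go to iterated Hecke operators applied to $\Phi(\mathcal O)\cong s_!\mathcal O_{\cB}$. The latter lie in $\LL_{\mathcal N}$ by the construction of $s_!$ and the stability of $\LL$ under Hecke operators. Ind-extending then gives $\Phi$ with a continuous right adjoint $\Phi^R$.

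\emph{The diagram.} Set
\[
\mathcal W_d := \mathrm W^{\circ d}(\mathcal O_{S^{\times d}}\boxtimes \mathcal O_{\cH(w)^{\mathrm{ss}}}),
\qquad
\mathcal H_d := \mathrm H^{\circ d}(\mathcal O_{S^{\times d}}\boxtimes s_!\mathcal O_{\cB}),
\]
viewed as kernels, so that $\mathrm W_{d,c}\colon \QCoh(S^{\times d})\to \IndCoh_{\mathcal N}(\cH(w)^{\mathrm{ss}})$ and similarly $\mathrm H_{d,c}$, with right adjoints $\mathrm W^R_{d,c}$ and $\mathrm H^R_{d,c}$. Applying Theorem~\ref{intro:prop:WH} $d$ times gives $\Phi\circ \mathrm W_{d,c}\cong \mathrm H_{d,c}\circ\Phi_{\mathrm{pt}}$, where $\Phi_{\mathrm{pt}}$ is $\Phi$ applied to $\mathcal O_{\cH(w)^{\mathrm{ss}}}$. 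By Conjecture~\ref{conj:intro:w} this yields $\Phi\circ\mathrm W_{d,c}\cong\mathrm H_{d,c}$. Passing to right adjoints gives $\mathrm W^R_{d,c}\circ\Phi^R\cong \mathrm H^R_{d,c}$.

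\emph{Full faithfulness.} By the compact generation statement, the objects $\mathrm W_{d,c}(\mathcal E)$ for compact $\mathcal E$ generate the source. So it suffices to show that
\[
\Hom(\mathrm W_{d,c}\mathcal E,\mathrm W_{d,c}\mathcal E')\to\Hom(\Phi\mathrm W_{d,c}\mathcal E,\Phi\mathrm W_{d,c}\mathcal E')
\]
is an isomorphism, possibly with different $d$ and $d'$; the mismatch can be absorbed by adding points at $c$. The left side is $\Hom(\mathcal E,\mathrm W^R\mathrm W\mathcal E')$. The right side is $\Hom(\mathcal E,\mathrm H^R\mathrm H\mathcal E')=\Hom(\mathcal E,\mathrm W^R\Phi^R\Phi\mathrm W\mathcal E')$. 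Hence it suffices that the unit $\mathrm{id}\to\Phi^R\Phi$ is an isomorphism after applying $\mathrm W^R$ on objects of the form $\mathrm W\mathcal E'$, and that these identifications match. The cleanest route is to show the monads coincide: $\mathrm W^R\mathrm W\cong \mathrm H^R\mathrm H$, computed via $\Phi$. Then the unit map becomes an isomorphism after $\mathrm W^R$, and conservativity of $\mathrm W^R_{d,c}$ for $d\gg0$ shows the unit is an isomorphism. This yields full faithfulness.

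\emph{Essential surjectivity.} Under Assumption~\ref{assum:Bcirc}, $\mathrm H^R_{d,c}$ is conservative. Consider the counit $\Phi\Phi^R\mathcal F\to\mathcal F$ and the cone $\mathcal K$ of this map. Since $\Phi$ is fully faithful, $\Phi^R\mathcal K=0$. Therefore $\mathrm H^R_{d,c}\mathcal K=\mathrm W^R_{d,c}\Phi^R\mathcal K=0$, so $\mathcal K=0$ and $\Phi$ is an equivalence.

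\emph{Main obstacle.} The delicate step is the compatibility of the adjunction data. Deducing that the monad map $\mathrm W^R\mathrm W\to\mathrm W^R\Phi^R\Phi\mathrm W$ is an isomorphism requires knowing that the isomorphism from Theorem~\ref{intro:prop:WH} and Conjecture~\ref{conj:intro:w} is compatible with the $\Hom$ maps induced by $\Phi$. I would address this by checking it on endomorphisms of $\mathcal O$: $\Phi$ induces the natural map $\Gamma(\mathcal O_{\cH^{\mathrm{ss}}})\to\Hom(s_!\mathcal O,s_!\mathcal O)=\Gamma(s^*s_!\mathcal O)$, and both sides are $\mathcal O_{\cB}$ after pushforward to the base. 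The general case then follows by $S^{\times d}$-linearity.
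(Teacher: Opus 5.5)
Your essential-surjectivity step is fine: it only uses $\mathrm{W}^R_{d,c}\circ\Phi^R\cong\mathrm{H}^R_{d,c}$, which follows formally from $\Phi\circ\mathrm{W}_{d,c}\cong\mathrm{H}_{d,c}$ by adjunction. The full-faithfulness step, however, has a genuine gap.

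To show that the unit $\id\to\Phi^R\Phi$ becomes an isomorphism after applying $\mathrm{W}^R_{d,c}$, you need an independent identification of $\mathrm{W}^R_{d,c}\Phi^R\Phi\cong\mathrm{H}^R_{d,c}\Phi$ with $\mathrm{W}^R_{d,c}$. In other words, $\Phi$ must also intertwine the \emph{right adjoints}: $\mathrm{H}^R_{d,c}\circ\Phi\cong\mathrm{W}^R_{d,c}$. This does not follow from the Wilson/Hecke compatibility by adjunction. Your monad comparison ``$\mathrm{W}^R\mathrm{W}\cong\mathrm{H}^R\mathrm{H}$ computed via $\Phi$'' is circular: $\mathrm{H}^R\mathrm{H}\cong\mathrm{W}^R\Phi^R\Phi\mathrm{W}$, and matching this with $\mathrm{W}^R\mathrm{W}$ through the unit is precisely full faithfulness on generators. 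Your proposed fix (check on $\Hom(\mathcal{O},\mathcal{O})$, then extend by $S^{\times d}$-linearity) only handles $d=0$, which is essentially Lemma~\ref{lem:pulls} ($s^*\circ\Phi\cong h_*$). For $d>0$, the Hom spaces between Wilson images are governed by the full kernel of $\mathrm{W}^R_d\mathrm{W}_d$ on $S^{\times d}\times S^{\times d}$. Moreover, the right adjoint of $\mathrm{W}_A$ is tensoring with the \emph{dual} tautological complex, a Wilson-type operator for $\cE^{\vee}$ rather than $\cE$. So no linearity argument reduces the problem to $\Gamma(\mathcal{O})$.

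The paper supplies this as a separate geometric input. Proposition~\ref{prop:comad} shows that $\Phi$ intertwines $\widetilde{\mathrm{W}}^R=p_{\cH}^*(-)\otimes\cE^{\vee}[1]$ with $\widetilde{\mathrm{H}}^R=(\ev_2,\ev_3)_*\ev_1^!$. It does so by rerunning the Cohen--Macaulay kernel comparison of Theorem~\ref{prop:WH}, after transporting Proposition~\ref{prop:PE} through the duality $E\mapsto\mathbb{D}_S(E)[1]$ on $\cH$ and on the Hecke stack. Together with $s^*\Phi\cong h_*$, this gives $\mathrm{H}^R_{d,c}\Phi\cong\mathrm{W}^R_{d,c}$ (Proposition~\ref{prop:adWH}).

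Even then, applying $\mathrm{W}^R_{d,c}$ to the triangle $\id\to\Phi^R\Phi\to\Upsilon$ only produces an endomorphism $\gamma$ of $\mathrm{W}^R_{d,c}$. One must still prove $\gamma$ is invertible before conservativity kills $\Upsilon$. The paper does this via Lemma~\ref{lem:Funcon}, identifying $\gamma-\id$ with an endomorphism of the kernel $\cE^{(d)}$. This kernel is flat over $\cB$, and the endomorphism vanishes over $\cB^{\mathrm{sm}}$, where everything reduces to the classical Fourier--Mukai transform on Jacobians. Its image would therefore be a subsheaf supported over $\cB\setminus\cB^{\mathrm{sm}}$, which flatness forbids unless it is zero.

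Your ``main obstacle'' paragraph locates the right issue, but your sketch supplies neither the right-adjoint intertwining nor this control of $\gamma$.
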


\begin{step}
Proof of the Whittaker normalization for $G=\GL_2$.
\end{step}
Since both sides of \eqref{intro:Phi3} are unchanged (up to equivalence) under the weight shift
$(\chi,w)\mapsto(\chi',w')$, in the case $G=\GL_2$ it remains to prove Conjecture~\ref{conj:intro:w}.
More precisely, we prove:

\begin{thm}\label{intro:thmW}\emph{(Proposition~\ref{prop:norm})}
Conjecture~\ref{conj:intro:w} holds for $G=\GL_2$.
\end{thm}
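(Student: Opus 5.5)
To prove Conjecture~\ref{conj:intro:w} for $G=\GL_2$, I would compare both objects on the open locus where they are known to agree and then show that both are determined by that restriction. Set $\chi_0=2-2g$ and $\cH=\cH(\chi_0)$. On the elliptic locus $\cB^{\mathrm{ell}}=\cB\cap\rB^{\mathrm{ell}}$ we have $\cH^{\mathrm{ss}}=\cH$, and $\Phi$ is Arinkin's Fourier--Mukai equivalence. There the Arinkin sheaf restricted to $\{\mathcal{O}\}\times\cH$ is the structure sheaf, so $\Phi(\mathcal{O}_{\cH(w)^{\mathrm{ss}}})$ is the pushforward along the fiberwise Fourier--Mukai of the family of structure sheaves of compactified Jacobians. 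By Arinkin (and \cite{MRVF2}) this equals the skyscraper along the zero section, namely $s_*\mathcal{O}$ up to a shift and a line-bundle twist. Since $s$ is a closed embedding that is l.c.i. over the stable locus, this agrees with $s_!\mathcal{O}_{\cB^{\mathrm{ell}}}$, which is $s_*$ of a twist by the determinant of the normal bundle. This fixes the normalization of weights and shifts.

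To extend from $\cB^{\mathrm{ell}}$ to $\cB$, I would use the explicit structure of $\GL_2$ with reduced spectral curves. A point of $\cB\setminus\cB^{\mathrm{ell}}$ has a spectral curve $\mathcal{C}_b=C_1\cup C_2$ with two components, each a section of $\Omega_C$ meeting the other in finitely many points. The non-quasi-compactness of $\cH(\chi_0)$ then comes only from Harder--Narasimhan strata of the form $L_1\subset E$ with $L_i$ pushed from $C_i$. In $\LL(\cH(\chi_0))_{w'}$, objects are characterized through the limit over quasi-compact opens with weight conditions on the strata from \cite{PTlim}. So it suffices to construct a morphism $s_!\mathcal{O}_{\cB}\to \Phi(\mathcal{O}_{\cH^{\mathrm{ss}}})$ and to check that it is an isomorphism. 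I would define this morphism by adjunction as the image of a map $\mathcal{O}_{\cB}\to s^*\Phi(\mathcal{O}_{\cH^{\mathrm{ss}}})$. Computing $s^*\Phi(\mathcal{O})$ amounts to pushing forward the Arinkin sheaf restricted to $\cH^{\mathrm{ss}}\times_{\cB}s(\cB)$. At the Hitchin section, which is the regular locus, the Arinkin sheaf is the Poincar\'e line bundle and is trivial along $s$, so $s^*\Phi(\mathcal{O})\simeq h_*\mathcal{O}_{\cH^{\mathrm{ss}}}$ up to the normalizing twist. I would then show $h_*\mathcal{O}_{\cH(w)^{\mathrm{ss}}}\simeq\mathcal{O}_{\cB}$ in the correct weight. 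This follows from the good moduli space of $\cH^{\mathrm{ss}}$ having rational singularities and connected fibers over $\cB$, combined with the weight-$0$ part of the $\mathbb{G}_m$-gerbe.

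To check that the morphism is an isomorphism, both sides are compact and supported over $\cB$, so it suffices to test after base change to each $b\in\cB$, or after applying the conservative functors $\mathrm{H}^R_{d,c}$ from Theorem~\ref{thm:intro:Phi}, whose conservativity holds for $\GL_2$. Testing directly on fibers over $b\in\cB\setminus\cB^{\mathrm{ell}}$, the question reduces to a statement about compactified Jacobians of the reducible nodal-type curve $\mathcal{C}_b$. There I would invoke \cite[Theorem~\ref{thm:MRV}]{MRVF2}: choosing a generic polarization makes semistability equal to stability, and the Fourier--Mukai transform of the structure sheaf is again the skyscraper at the trivial sheaf. The remaining task is to compare that fine compactified Jacobian with the stack $\cH^{\mathrm{ss}}_b$ and with the limit category on $\cH_b$. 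The degree $\chi_0$ here is exactly the non-generic degree in which strictly semistable objects appear.

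I expect the main obstacle to be this comparison at points with strictly semistable loci. Concretely, one must show that the cone of $s_!\mathcal{O}\to\Phi(\mathcal{O})$ vanishes in $\LL_{\mathcal N}$. My first approach would be to use the semiorthogonal decomposition of the limit category into quasi-BPS pieces. I would verify that the cone has zero image in each piece by restricting to the strata $L_1\oplus L_2$, where everything reduces to Fourier--Mukai on the Jacobians of $C_1$ and $C_2$ and the Mukai computation applies.
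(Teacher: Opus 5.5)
There is a genuine gap, and it lies where the real content of the statement is: on the Harder--Narasimhan strata of $\cH(\chi_0)$ outside the semistable locus. The object $s_!\mathcal{O}_{\cB}=j_!\overline{s}_!\mathcal{O}_{\cB}$ is defined only as a left adjoint in the limit category. It has no explicit description beyond $\cH(\chi_0)^{\mathrm{ss}}$ (cf.\ Remark~\ref{rmk:sshrink}), so none of your three tests can actually be run:
\begin{itemize}
\item Defining $s_!\mathcal{O}_{\cB}\to\Phi(\mathcal{O}_{\cH(w)^{\mathrm{ss}}})$ by adjunction presupposes $\Phi(\mathcal{O}_{\cH(w)^{\mathrm{ss}}})\in\LL(\cH(\chi_0))_{w'}$, which you never check.
\item Fiberwise testing still requires knowing $j_!$ on each fiber.
\item Testing with $\mathrm{H}^R_{d,c}$ gives you no independent handle. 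That functor is built from $\Hom(s_!\mathcal{O}_{\cB},-)$, so evaluating it on $s_!\mathcal{O}_{\cB}$ means computing Homs between Hecke transforms of $s_!\mathcal{O}_{\cB}$. The paper identifies $\mathrm{H}^R_{d,c}\circ\Phi$ with $\mathrm{W}^R_{d,c}$ only after assuming the normalization (Proposition~\ref{prop:adWH}).
\end{itemize}

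The missing idea is Proposition~\ref{prop:induceT}: show that $\Phi(\mathcal{O}_{\cH(w)^{\mathrm{ss}}})\in j_!\LL(\cH(\chi_0)^{\mathrm{ss}})_{w'}$. Concretely, this means the weight bounds of Remark~\ref{rmk:jshrink}, strict on the left, for every $\nu\colon\bgm\to\cH(\chi_0)$ with image $M_1\oplus M_2$. Since $j_!$ is fully faithful, this reduces everything to $\cH(\chi_0)^{\mathrm{ss}}$.

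Proving it requires controlling the $\mathbb{G}_m$-weights of the Arinkin sheaf $\cP_M$ at non-locally-free points. The paper resolves $\cP_E$ by copies of $\mathcal{V}_E=\bigotimes_i\cF_{c_i}^{\otimes n_i}\otimes\cP_{\mathcal{L}}$ (Corollary~\ref{cor:resolP}, obtained from the Hecke sequences $0\to\cP_E\to\cF_c\otimes\cP_{E'}\to\cP_E\to 0$ of Proposition~\ref{prop:exact2}). It then bounds the weights of $\mathcal{V}_E$ using semistability of $E$ (Proposition~\ref{prop:VL}). Restricting to $L_1\oplus L_2$ and doing a Mukai computation on the Jacobians of $C_1$, $C_2$ does not replace this. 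The source of the transform is still the compactified Jacobian of the reducible curve $\cC_t$, and the weights of $\cP_{L_1\oplus L_2}$ are exactly what must be computed.

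Even on the semistable locus, your appeal to \cite{MRVF2} is incomplete.
\begin{itemize}
\item A generic polarization changes the source stack. $\Phi(\mathcal{O}_{\cH(w)^{\mathrm{ss}}})$ and $\Phi^{\varepsilon}(\mathcal{O}_{\cH(w)^{\varepsilon\text{-ss}}})$ differ by the local cohomology term along the $\Theta$-stratum $\cS=\cH(w)^{\mathrm{ss}}\setminus\cH(w)^{\varepsilon\text{-ss}}$. Showing this term vanishes at the relevant target points is again a $\lambda$-weight computation with $\mathcal{V}_M$ (Lemma~\ref{lem:vanish2}).
\item The resulting comparison with $\overline{s}_!\mathcal{O}$ holds only on $\cH(\chi_0)^{\varepsilon\text{-ss}}$ and $\cH(\chi_0)^{\varepsilon'\text{-ss}}$, for the two opposite perturbations. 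These are defined after an \'etale base change that orders the two components.
\item You therefore still need that restriction away from the strictly polystable locus $\mathcal{Z}$ is conservative on the quasi-BPS category. This follows from its semiorthogonal decomposition $\langle p_*q^*\LL(\mathcal{Z})_{\delta},\mathcal{W}\rangle$.
\end{itemize}
Your overall shape (adjunction map, then \cite{MRVF2} for a generic polarization) does match the last step of the paper, but only after these reductions.

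Finally, your claim $h_*\mathcal{O}_{\cH(w)^{\mathrm{ss}}}\simeq\mathcal{O}_{\cB}$ is false: over $\cB^{\mathrm{sm}}$ the fibers are abelian varieties, so the higher direct images are nonzero. You only need the unit $\mathcal{O}_{\cB}\to h_*\mathcal{O}_{\cH(w)^{\mathrm{ss}}}$, which is what produces the map in the paper.
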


We prove this by constructing an explicit resolution of the Arinkin sheaf by vector bundles and evaluating their weights.
Along the way, we also show that the functor $\Phi$ takes the structure sheaf to the quasi-BPS category considered in~\cite{PTlim, PThiggs}. It reduces the Whittaker normalization to an analogous problem for a perturbed stability condition for which semistability coincides with stability, after passing to an
 \'{e}tale cover of the Hitchin base. Then we use the result of~\cite{MRVF2} to deduce Theorem~\ref{intro:thmW}.
At present, the argument of Theorem~\ref{intro:thmW} relies on special features of the case $G=\GL_2$.

\subsection{Toward a proof of Conjecture~\ref{conj:intro}}\label{subsec:toward}

Our proof in this paper suggests a strategy for a complete proof of
Conjecture~\ref{conj:intro} in the case $G=\GL_r$.

\vspace{5mm}

\noindent
(1) First, construct a functor
\begin{align}\label{funct:general}
    \Phi \colon \IndCoh_{\mathcal{N}}\bigl(\Hig_{^{L}G}(w)^{\mathrm{ss}}\bigr)_{-\chi'}
    \to \IndCoh\bigl(\Hig_{G}(\chi)\bigr)_{w'}
\end{align}
given by an extension of the Arinkin sheaf.

\noindent
(2) Show that $\Phi$ and its right adjoint $\Phi^{R}$ are compatible with
Wilson/Hecke operators, as well as with the parabolic induction functors
considered in~\cite{PTlim}.

\noindent
(3) Prove the Whittaker normalization
\[
\Phi\bigl(\mathcal{O}_{\Hig_{^{L}G}(w)^{\mathrm{ss}}}\bigr)
\cong s_{!}\mathcal{O}_{\mathrm{B}}.
\]

\noindent
(4) Show that the dg-category
$\IndCoh_{\mathcal{N}}\bigl(\Hig_{\GL_r}(w)^{\mathrm{ss}}\bigr)_{-\chi'}$
is compactly generated by iterated applications of Wilson operators to
$\mathcal{O}_{\Hig_G(w)^{\mathrm{ss}}}$, together with the essential images of
the parabolic induction functors.

\noindent
(5) Steps (1)--(4), together with an induction on $r$, imply that the functor
$\Phi$ in fact gives a fully faithful functor
\begin{align}\label{intro:strategy}
    \Phi \colon \IndCoh_{\mathcal{N}}\bigl(\Hig_{^{L}G}(w)^{\mathrm{ss}}\bigr)_{-\chi'}
    \to \IndL_{\mathcal{N}}\bigl(\Hig_{G}(\chi)\bigr)_{w'}.
\end{align}

\noindent
(6) Show that the dg-category
$\IndL_{\mathcal{N}}\bigl(\Hig_{\GL_r}(\chi)\bigr)_{w'}$
is compactly generated by iterated applications of Hecke operators to
$s_{!}\mathcal{O}_{\mathrm{B}}$, together with the essential images of the
parabolic induction functors. This would imply that the fully faithful functor
\eqref{intro:strategy} is essentially surjective, hence an equivalence.

\vspace{5mm}

Step~(1) may be the most difficult. In the case $G=\GL_2$ and $L$-twisted Higgs
bundles, a Cohen--Macaulay extension of the Arinkin sheaf is constructed
in~\cite{MLi}; however, this remains open in other cases. Note that in~\cite{GLC1}
the geometric Langlands functor is constructed by producing an action of
$\mathrm{QCoh}(\mathrm{LocSys}_{^{L}G})$ on $\text{D-mod}(\Bun_G)$ via the
geometric Satake correspondence. There has been recent progress on the classical limit of the geometric Satake correspondence~\cite{CW, LiangSatakeTypeA}; however, at this moment it seems extremely hard to use it to produce 
an action of $\QCoh(\Hig_{^{L}G}^{\mathrm{ss}})$ on 
$\IndL_{\mathcal{N}}(\Hig_G)$. 

Over the locus $\mathrm{B}^{\mathrm{red}}$, steps~(1), (2), (4) and (5) are already
established for any $G=\GL_r$, and in that setting one does not need to include parabolic
induction in~(4). The remaining obstructions are to prove~(3) and~(6), which we
established for $G=\GL_2$ in this paper.

\subsection{Acknowledgements}
The author is grateful to Tudor P\u{a}durariu for the joint work~\cite{PTlim}, many discussions related to the subject of this paper, and several comments on the first draft of this paper. In particular, the idea of using the Arinkin sheaf and Wilson/Hecke operators emerged from discussions with him.

A large part of this work was carried out while the author was visiting the Morningside Center of Mathematics, Chinese Academy of Sciences, as a Distinguished Visiting Professor from October 23 to November 24, 2025. The author would like to thank the Morningside Center of Mathematics for its hospitality.

The author is supported by the World Premier International Research Center Initiative (WPI Initiative), MEXT, Japan, the Inamori Research Institute for Science, and JSPS KAKENHI Grant Number JP24H00180.

    \subsection{Notation and Conventions}\label{subsec:notation0}
    \subsubsection*{Conventions on stacks}
We work over an algebraically closed field $k$ of characteristic $0$.
Unless stated otherwise, all (derived) stacks are locally Noetherian and quasi-separated over $k$.
For a stack $\X$ over $k$, we write $\X(k)$ for the set of $k$-valued points of $\X$.
We denote by $\X^{\mathrm{cl}}$ its classical truncation. 
A stack is called \textit{QCA} if it is quasi-compact with affine geometric stabilizer groups.

For a scheme $Y$ with an action of an algebraic group $G$, we write $Y/G$ for the associated quotient stack.
We also use the notion of good moduli spaces of stacks, see~\cite{MR3237451} (and also~\cite{ahlqvist2023good} for derived stacks).
For a point $y\in Y$, we denote by $\widehat{Y}_y:=\Spec \widehat{\mathcal{O}}_{Y, y}$, where $\widehat{\mathcal{O}}_{Y, y}$
is the formal completion at the maximal ideal. 

For a morphism of derived stacks $f\colon \X \to \Y$, let $\mathbb{L}_{f}$ and $\mathbb{T}_{f}$ denote the $f$-relative cotangent and tangent complexes. The morphism $f$ is called \textit{quasi-smooth} if $\mathbb{L}_{f}$ is perfect of cohomological amplitude $[-1, 1]$. 
When $\Y=\Spec k$, we write $\mathbb{L}_{\X}=\mathbb{L}_{f}$ and $\mathbb{T}_{\X}=\mathbb{T}_f$. 
A derived stack $\X$ over $k$ is 
\textit{quasi-smooth} if $\X \to \Spec k$ is quasi-smooth. 
For $m \ge 0$, its $m$-shifted cotangent stack is
\[
  \Omega_{\X}[m] := \Spec_{\X}\,\mathrm{Sym}\big(\mathbb{T}_{\X}[-m]\big) \longrightarrow \X .
\]
For a finite-dimensional $k$-vector space $V$ and $m \ge 0$, set
\[
  V[-m] := \Spec \mathrm{Sym}\big(V^{\vee}[m]\big),
\]
where $\mathrm{Sym}\big(V^{\vee}[m]\big)$ has zero differentials.

All the fiber products are derived fiber products, and we use the notation $\square$ for the 
Cartesian square. We also use the following \textit{reduced fiber product}:
For a morphism of stacks
$\X\to \mathcal{Z}\leftarrow \Y$, its \textit{reduced fiber product} is defined to be 
$(\X\times_{\mathcal{Z}} \Y)^{\mathrm{cl, red}}$, and we use the notation $\square^{\mathrm{red}}$ for the 
reduced Cartesian square.

\subsubsection{Notations for (ind and quasi)coherent sheaves}\label{subsec:notation}
For a dg-category $\cC$, denote by $\cC^{\mathrm{cp}} \subset \cC$ the subcategory of compact objects. The dg-category $\cC$ is called \textit{compactly generated} if $\cC=\Ind(\cC^{\mathrm{cp}})$, where $\Ind(-)$ is the ind-completion. 
A dg-functor $F \colon \cC_1 \to \cC_2$ is called \textit{continuous} 
if it commutes with taking direct sums. 

We use the notation of~\cite{MR3701352} for (ind and quasi)coherent sheaves. 
For a derived stack $\X$, we denote by 
$\Coh(\X)$ the dg-category of coherent sheaves,
by $\QCoh(\X)$ the dg-category of quasi-coherent sheaves,
and by $\IndCoh(\X)$ the dg-category of ind-coherent sheaves. 
The heart of a standard t-structure on $\Coh(\X)$ is denoted by 
$\Coh^{\heartsuit}(\X)$. 

For a morphism $f\colon \X \to \Y$, we use the standard $*$-pull-back and $*$-push-forward
\begin{align*}
    f^* \colon \QCoh(\Y) \to \QCoh(\X), \ f_{*} \colon \QCoh(\X) \to \QCoh(\Y).
\end{align*}
If $f$ is quasi-smooth (resp.\ proper), the above functors restrict to 
\begin{align*}
    f^* \colon \Coh(\Y) \to \Coh(\X), \ f_{*} \colon \Coh(\X) \to \Coh(\Y).
\end{align*}
If $f$ is quasi-smooth and proper, the functor $f_{*}$ admits a right adjoint $f^!$ and 
$f^*$ admits a left adjoint $f_{!}$
\begin{align*}
    f^!\colon \Coh(\mathcal{Y}) \to \Coh(\mathcal{X}), \ 
    f_{!} \colon \Coh(\mathcal{X}) \to \Coh(\mathcal{Y}).
\end{align*}
They are given by $f^!(-)=f^*(-)\otimes \omega_f[\dim f]$, 
$f_{!}(-)=f_{*}(-\otimes \omega_f[\dim f])$, where $\omega_f=\det \mathbb{L}_f$. 
For $A\in \Coh(\Y)$, we often use $A|_{\X}$ for the $*$-pull-back $f^*A$.
Sometimes for $A\in \Coh^{\heartsuit}(\Y)$, we may also write $A|_{\X}$ for the \textit{classical} pull-back $\cH^0(f^*A)\in \Coh^{\heartsuit}(\X)$.

For a closed substack $\zZ \subset \X$, we denote by $\QCoh_{\zZ}(\X)$ 
the subcategory of $\QCoh(\X)$ consisting of objects $A$ such that 
$A|_{\mathcal{X}\setminus \mathcal{Z}}=0$. 
We have the \textit{local cohomology functor}
\begin{align}\label{loc:coh}
    \Gamma_{\mathcal{Z}}(-) \colon \QCoh(\X) \to \QCoh_{\mathcal{Z}}(\X)
\end{align}
which gives a right adjoint of the inclusion $\QCoh_{\mathcal{Z}}(\X) \subset \QCoh(\X)$. 

For derived stacks $\mathcal{X}$ and $\mathcal{Y}$ such that either one of them is QCA, there is an equivalence, 
see~\cite[Corollary~4.3.4]{MR3037900}:
\begin{align}\notag
    \QCoh(\mathcal{X})\otimes \QCoh(\mathcal{Y}) \stackrel{\sim}{\to}\QCoh(\mathcal{X}\times \mathcal{Y}).
\end{align}
We use the notation $\Coh(\X) \otimes \Coh(\Y)$ for the subcategory of the left 
hand side, which corresponds to $\Coh(\X\times \Y)$ 
under the above equivalence. 
For $\cE\in \QCoh(\X\times \Y)$ and $A\in \QCoh(\Y)$, 
we often write $\cE\boxtimes A$ for $\cE\otimes p_{\Y}^* A$
where $p_{\Y}$ is the projection to $\Y$. 
If $\mathcal{X}$ is quasi-smooth, then 
there is also a duality equivalence: 
\begin{align}\notag
\mathbb{D}_{\X} \colon \Coh(\mathcal{X})^{\mathrm{op}} \stackrel{\sim}{\to}
\Coh(\mathcal{X}), \ (-)\mapsto \cHom(-, \mathcal{O}_{\X}).
\end{align}

\subsubsection{\texorpdfstring{Notations for $\mathbb{G}_m$-weights}{Notation on Gm-weights}}\label{subsec:Gmwt}
For a $\mathbb{G}_m$-gerbe $\X \to X$, there is an orthogonal weight 
decomposition 
\begin{align*}
    \Coh(\X)=\bigoplus_{w\in \mathbb{Z}}\Coh(\X)_w,
\end{align*}
where $\Coh(\X)_w$ is the subcategory of $\mathbb{G}_m$-weights $w$. 
For the Brauer class $\beta$ corresponding to the $\mathbb{G}_m$-gerbe $\X \to X$, 
we can identify $\Coh(\X)_w$ with the dg-category of $\beta^w$-twisted coherent sheaves 
$\Coh(X, \beta^w)$, see~\cite[Section~2.1.3]{MR2309155}.

For an object $A \in \Coh(\bgm)$,
let $A=\oplus_{w\in \mathbb{Z}} A_w$ 
be its weight decomposition. We write $A^{>0}=\oplus_{w>0} A_w$. 
We denote by $\wt(A)\subset \mathbb{Z}$ the set of $w\in \mathbb{Z}$ such that $A_w\neq 0$, 
and $\wt^{\mathrm{max}}(A)\in \mathbb{Z}$ is the maximal element in $\wt(A)$. 
We will use the following map 
\begin{align*}
c_1 \colon K(\bgm)_{\mathbb{Q}} \to H^2(\bgm, \mathbb{Q})=\mathbb{Q}, 
\end{align*}
which is given by $c_1(A)=\sum_w w \cdot \dim A_w$.  
\section{Preliminary}
In this section, we recall Higgs bundles, their moduli stacks, limit categories, 
and the formulation of Dolbeault geometric Langlands correspondence in~\cite{PTlim}. We refer to~\cite{Hitchin, Hitchin1987} for some basics of Higgs bundles, and~\cite{BeNaRa} for the spectral construction. 
We also recall Arinkin's construction of Cohen--Macaulay extension of Poincar\'e line bundle~\cite{Ardual}, 
and the associated Fourier--Mukai functor. 

\subsection{Moduli stacks of Higgs bundles}\label{subsec:higgs}
Let $C$ be a smooth projective curve over $k$ of genus $g\geq 2$. For $G=\GL_r$, a \textit{$G$-Higgs bundle}
consists of 
\begin{align}\label{def:higgs}
    (F, \theta), \ \theta \colon F\to F\otimes \Omega_C
\end{align}
where $F \to C$ is a rank $r$ vector bundle. 
We denote by 
\begin{align*}
    \Hig_{G}=\coprod_{\chi\in \mathbb{Z}} \Hig_{G}(\chi)
\end{align*}
the derived moduli stack of $G$-Higgs bundles, 
and each $\Hig_{G}(\chi)$ is the connected component 
corresponding to (\ref{def:higgs}) such that $\deg F=\chi$. 
It is a zero-shifted symplectic stack~\cite{PTVV}; indeed we have 
\begin{align*}
    \Hig_G=\Omega_{\Bun_G}
\end{align*}
where $\Bun_G$ is the moduli stack of $G$-bundles. 

A Higgs bundle (\ref{def:higgs}) is called \textit{regular} at $p\in C$ if 
$\theta|_{p} \colon F|_{p} \to F|_{p} \otimes \Omega_C|_{p} \cong F|_{p}$ is a regular 
matrix. It is \textit{regular} (resp. \textit{generically regular}) if $\theta|_{p}$ is regular for any 
$p\in C$ (resp. regular at the generic point $p\in C$). 
We denote by 
\begin{align*}
    \Hig_G(\chi)^{\mathrm{reg}} \subset \Hig_G(\chi)^{\mathrm{greg}} \subset \Hig_G(\chi)
\end{align*}
the open substack of regular Higgs bundles, generically regular Higgs bundles, respectively. 

We have the (derived) Hitchin map 
\begin{align*}
   h \colon \Hig_G \to \mathrm{B}=\bigoplus_{i=1}^r \Gamma(\Omega_C^i)=\mathrm{B}^{\mathrm{cl}}\times k[-1].
\end{align*}
Here $\rB^{\mathrm{cl}}=\oplus_{i=1}^r H^0(\Omega_C^i)$, and the 
$k[-1]$ factor comes from $H^1(\Omega_C)=k$. 
We have the following structure result of Hitchin map, 
which is mentioned in~\cite[Theorem~2.2.4]{BD0}: 
\begin{thm}\emph{(\cite{Hitchin, Faltings1993, Ginzburg2001})}\label{thm:higgs:basic}
We have the Cartesian square
\begin{align}\label{dia:HigB}
    \xymatrix{
\Hig_G^{\mathrm{cl}} \inclusion \ar[d] \diasquare & \Hig_G \ar[d] \\
\rB^{\mathrm{cl}} \inclusion & \rB.
    }
\end{align}
Moreover the left vertical arrow is flat and an lci morphism, 
in particular $\Hig_G^{\mathrm{cl}}$ is Cohen--Macaulay. 
Moreover 
$\dim \Hig_G^{\mathrm{cl}}=(2g-2)r^2+1$, 
$\dim \rB^{\mathrm{cl}}=(g-1)r^2+1$.
\end{thm}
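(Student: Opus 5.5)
Theorem~\ref{thm:higgs:basic} is attributed to Hitchin, Faltings and Ginzburg, so the plan is to assemble those inputs rather than prove anything new. I will argue in four steps.

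\textbf{Step 1: the derived structure.} I use the presentation $\Hig_G=\Omega_{\Bun_G}=\mathrm{Tot}_{\Bun_G}(\mathbb{T}_{\Bun_G}^\vee)$. Locally over $\Bun_G$ this is the derived zero locus of the section $\theta$ of the two-term complex $R\Gamma(\mathrm{End}(F)\otimes\Omega_C)$. Its $H^1$ is the only source of derived structure. By Serre duality
\begin{align*}
H^1(\mathrm{End}(F)\otimes \Omega_C)=H^0(\mathrm{End}(F))^\vee .
\end{align*}
The trace part of this contributes $H^1(\Omega_C)=k$, and it is exactly matched by the $k[-1]$ factor of $\rB$. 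So the Hitchin map $h$ is a composite of the classical Hitchin map with the trace map on $H^1$.

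\textbf{Step 2: the Cartesian square \eqref{dia:HigB}.} To show the square is Cartesian, I would check that the relative cotangent complex of $\Hig_G\to\rB$ has no $H^{-1}$. Equivalently, the obstruction space $H^1$ of the deformation complex of $(F,\theta)$, taken modulo the trace, is killed after fixing the characteristic polynomial. Concretely, the obstruction for the Higgs pair lies in the cokernel of
\begin{align*}
[\theta,-]\colon H^0(\mathrm{End}(F)) \to H^0(\mathrm{End}(F)\otimes\Omega_C),
\end{align*}
which is dual to the centralizer of $\theta$. The map $H^1(\text{def})\to H^1(\Omega_C)$ induced by the trace should be an isomorphism on the part not cut out by $h$. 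This makes the derived fiber product of $\Hig_G$ over $\rB$ with $\rB^{\mathrm{cl}}$ classical, so $\Hig_G\times_{\rB}\rB^{\mathrm{cl}}\simeq\Hig_G^{\mathrm{cl}}$.

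\textbf{Step 3: flatness and lci (the main obstacle).} The key input is the dimension estimate for the global nilpotent cone: Laumon--Faltings--Ginzburg show that $h^{-1}(0)$ is isotropic, of dimension $(g-1)r^2$. Combining this with the $\mathbb{G}_m$-scaling action contracting $\rB^{\mathrm{cl}}$ to $0$, upper semicontinuity gives that every fiber of $h^{\mathrm{cl}}$ has dimension
\begin{align*}
(g-1)r^2=\dim\Hig_G^{\mathrm{cl}}-\dim\rB^{\mathrm{cl}} .
\end{align*}
The stack $\Hig_G$ is quasi-smooth of virtual dimension $2(g-1)r^2$ (the derived $T^*$ of a stack of dimension $(g-1)r^2$). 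Dividing by the trace $k[-1]$ factor, $\Hig_G^{\mathrm{cl}}$ is a quasi-smooth-turned-classical stack whose actual dimension equals its expected dimension. A derived stack with this property is classical and lci, which also gives the Cartesian square of Step 2 a second time. Flatness then follows from miracle flatness: $\Hig_G^{\mathrm{cl}}$ is Cohen--Macaulay (being lci), the base $\rB^{\mathrm{cl}}$ is smooth, and the fibers are equidimensional. The estimate on the global nilpotent cone is the deep input, and I would cite it rather than reprove it.

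\textbf{Step 4: dimensions.} We have $\dim\Bun_G=r^2(g-1)$, and $\rB^{\mathrm{cl}}=\oplus_{i=1}^r H^0(\Omega_C^i)$ has dimension
\begin{align*}
g+\sum_{i=2}^r (2i-1)(g-1)=g+(r^2-1)(g-1)=(g-1)r^2+1 .
\end{align*}
The classical dimension of $\Hig_G^{\mathrm{cl}}$ is twice $\dim\Bun_G$ plus $1$, where the extra $1$ comes from removing the $H^1$ trace obstruction. This gives $(2g-2)r^2+1$, consistent with the fiber dimension $(g-1)r^2$.
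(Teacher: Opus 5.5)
\textbf{Step~2 fails, and Step~3 uses its conclusion before proving it.} The paper does not prove this statement; it only cites Hitchin, Faltings, Ginzburg and \cite[Theorem~2.2.4]{BD0}. Your Steps~3--4 are essentially the standard Beilinson--Drinfeld deduction behind that citation.

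\textbf{Step~2.} Your ``equivalently'' makes Step~2 a pointwise claim: that the relative obstruction space $H^{-1}(\mathbb{L}_h|_x)$ vanishes at every $x$. That would make $h$ smooth, which it is not. Two failures:
\begin{itemize}
\item At $x=(F,0)$ with $r\geq 2$, every coefficient of the characteristic polynomial except $\mathrm{tr}\,\theta$ is homogeneous of degree $\geq 2$ in $\theta$. So the differential $dh_x$ has rank $g$. By the long exact sequence of $h^*\mathbb{L}_{\rB}\to\mathbb{L}_{\Hig_G}\to\mathbb{L}_h$, $H^{-1}(\mathbb{L}_h|_x)$ surjects onto $\ker(dh_x^{\vee})$, which has dimension $\dim\rB^{\mathrm{cl}}-g=(g-1)(r^2-1)>0$.
\item At any $x$, $H^{-1}(\mathbb{L}_h|_x)$ contains $\mathfrak{aut}(F,\theta)/k\cdot\mathrm{id}$. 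This is nonzero for direct sums of Higgs bundles, so the traceless obstruction is not killed by fixing the characteristic polynomial.
\end{itemize}
Also, the obstruction space is $\mathrm{coker}([\theta,-]\colon H^1(\mathrm{End}F)\to H^1(\mathrm{End}F\otimes\Omega_C))$, which is Serre dual to the centralizer. The map on $H^0$ that you wrote computes part of the tangent space, not the obstructions.

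Classicality here is a global expected-dimension phenomenon, not a pointwise vanishing of obstructions. So discard Step~2 and get the square from Step~3. The trace splitting of $\mathbb{T}_{\Bun_G}$, as in Lemma~\ref{lem:reduced}, gives $\Hig_G\simeq\mathcal{X}\times k[-1]$. Here $\mathcal{X}$ is quasi-smooth of virtual dimension $(2g-2)r^2+1$, and the $k[-1]$-component of $h$ is the projection. Hence $\Hig_G\times_{\rB}\rB^{\mathrm{cl}}\simeq\mathcal{X}$, and everything reduces to showing that $\mathcal{X}$ is classical.

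\textbf{Step~3.} To prove that $\mathcal{X}$ is classical, the argument needs reordering.
\begin{itemize}
\item Semicontinuity gives only the upper bound $\dim h^{-1}(b)\leq (g-1)r^2$. Since $h$ is not proper, it must be run on the source, not via the contraction of $\rB^{\mathrm{cl}}$. The locus where $\dim_x h^{-1}(h(x))\geq n$ is closed and stable under $\theta\mapsto t\theta$, so it contains a limit point $(F,0)\in h^{-1}(0)$.
\item This yields the upper bound $\dim\Hig_G^{\mathrm{cl}}\leq\dim\rB^{\mathrm{cl}}+(g-1)r^2=(2g-2)r^2+1$.
\item The lower bound is that every component of $\mathcal{X}^{\mathrm{cl}}=\Hig_G^{\mathrm{cl}}$ has dimension $\geq\operatorname{vdim}\mathcal{X}=(2g-2)r^2+1$.
\end{itemize}
Only the combination of the two bounds gives equidimensionality. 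From it, $\mathcal{X}$ is classical and lci, since smooth-locally it is the zero locus of a now-regular section on a smooth chart. The fibres are then equidimensional of dimension exactly $(g-1)r^2$, and miracle flatness gives flatness.

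In particular, the value of $\dim\Hig_G^{\mathrm{cl}}$ quoted in Step~4 is an output of these two bounds. In Step~3 you use it, via $(g-1)r^2=\dim\Hig_G^{\mathrm{cl}}-\dim\rB^{\mathrm{cl}}$, before it has been established.
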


Each point $b\in \rB^{\mathrm{cl}}$ corresponds to the 
spectral curve 
\begin{align*}
    \cC_b \subset S:=\mathrm{Tot}_C(\Omega_C).
\end{align*}
By the spectral construction, the Hitchin fiber 
$h^{-1}(b)$ corresponds to 
sheaves supported on $\cC_b$. 
Under the above correspondence, regular Higgs bundles correspond to 
line bundles on $\cC_b$, generically regular Higgs bundles correspond
to rank one torsion-free sheaves on $\cC_b$ which are line bundles at the generic points of $\cC_b$, respectively. 

We denote by $\mathrm{C} \to \rB$
the universal spectral curve. 
We have the following diagram 
\begin{align}\notag
\xymatrix{
\mathrm{C} \ar@{^{(}->}[r] \ar[rd]_-{\pi} & S \times \rB \ar[d] \ar[rd] & \\
& C\times \rB \ar[r] & \rB.
}
\end{align}
Here the left horizontal arrow is a closed immersion, and the other arrows are 
projections.
We have the universal Higgs bundle
\begin{align}\label{univ:F}
    (\mathrm{F}, \vartheta), \ \mathrm{F} \to C\times \Hig_G, \ 
    \vartheta \colon \mathrm{F} \to \mathrm{F} \boxtimes \Omega_C.
\end{align}
Here $\mathrm{F}$ is a rank $r$ vector bundle and $\vartheta$ is the universal 
Higgs field. By the spectral construction, it gives a universal sheaf
\begin{align}\label{univ:E0}
    \mathrm{E} \in \Coh(\mathrm{C}\times_{\rB} \Hig_G)
\end{align}
such that $\pi_{*}\mathrm{E}\cong \mathrm{F}$. By abuse of notation, 
we also write $\mathrm{E}$ for its push-forward to $S\times \Hig_G$. 

From the diagram (\ref{dia:HigB}), the stack $\Hig_G$ is almost classical; there is only 
 a factor of $k[-1]$. By the following lemma, it actually splits: 
\begin{lemma}\label{lem:reduced}
There is an equivalence 
\begin{align*}
    \Hig_G^{\cl}\times k[-1] \stackrel{\sim}{\to} \Hig_G.
\end{align*}
\end{lemma}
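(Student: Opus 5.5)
The plan is to split off the $k[-1]$ factor directly, using the fact that it comes from the trace part of the Higgs field. In the derived structure of $\Hig_G$, the obstruction space at $(F,\theta)$ is
\[
H^1\bigl(C,\ \mathrm{Cone}(\mathrm{ad}_\theta\colon \mathcal{E}nd(F)\to \mathcal{E}nd(F)\otimes\Omega_C)\bigr),
\]
and its $k[-1]$-part is the cokernel of $H^1(\mathcal{E}nd F)\to H^1(\mathcal{E}nd F\otimes\Omega_C)$. The relevant copy of $k=H^1(\Omega_C)$ is reached through the trace map $\mathrm{tr}\colon \mathcal{E}nd F\otimes\Omega_C\to\Omega_C$, and the image of $\mathrm{ad}_\theta$ is traceless. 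So I would decompose $\Hig_G$ using the trace.

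\textbf{Decomposition.} Let $\Hig_G^{0}$ be the derived stack of pairs $(F,\theta)$ with $\theta$ traceless, meaning $\mathrm{tr}\,\theta=0$ is imposed as a derived condition in $\Gamma(\Omega_C)=H^0(\Omega_C)\oplus k[-1]$. Since $\mathrm{tr}(\theta+\mathrm{id}\otimes\omega)=\mathrm{tr}\,\theta+r\omega$ and $r$ is invertible in characteristic $0$, the map $(F,\theta_0,\omega)\mapsto(F,\theta_0+\tfrac1r\mathrm{id}\otimes\omega)$ should give
\[
\Hig_G\simeq \Hig_G^{0}\times\Gamma(\Omega_C).
\]
To make this rigorous, I would write $\Hig_G$ as the total space $\Spec_{\Bun_G}\mathrm{Sym}(\mathbb{T}_{\Bun_G})$, where $\mathbb{T}_{\Bun_G}=R\Gamma(\mathcal{E}nd F)[1]$. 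The trace gives a splitting of complexes of quasi-coherent sheaves on $\Bun_G$:
\[
R\Gamma(\mathcal{E}nd F)=R\Gamma(\mathcal{E}nd^0F)\oplus R\Gamma(\mathcal{O}_C).
\]
Dually, $\mathbb{L}_{\Bun_G}$ splits, so the linear stack $\Hig_G=\mathbb{V}(\mathbb{L}_{\Bun_G}[-1]$-dual$)$ splits as a fiber product over $\Bun_G$. The $\mathcal{O}_C$ summand is a trivial complex $R\Gamma(\mathcal{O}_C)\otimes\mathcal{O}_{\Bun_G}$, and its associated linear stack is $\Bun_G\times\Gamma(\Omega_C)$ by Serre duality. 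This gives $\Hig_G\simeq\Hig_G^{0}\times\Gamma(\Omega_C)$, and in turn $\Gamma(\Omega_C)=H^0(\Omega_C)\times k[-1]$, since the derived vector space of a two-term complex with zero differential splits.

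\textbf{Classical truncation.} Set $\mathcal{Y}=\Hig_G^{0}\times H^0(\Omega_C)$, so that $\Hig_G\simeq\mathcal{Y}\times k[-1]$. It remains to identify $\mathcal{Y}$ with $\Hig_G^{\cl}$. Taking classical truncations gives $\Hig_G^{\cl}\simeq\mathcal{Y}^{\cl}$, because $k[-1]^{\cl}=\Spec k$. So the claim reduces to showing that $\mathcal{Y}$ is classical. For this I would use Theorem~\ref{thm:higgs:basic}. The projection $\mathcal{Y}\to\rB^{\cl}$ is the base change of $h$ along $\rB^{\cl}\hookrightarrow\rB$. This holds because the Hitchin map is compatible with the splitting: its $k[-1]$-coordinate is exactly the derived trace component, since $H^1(\Omega_C)$ appears only in $\Gamma(\Omega_C^1)$ through $\mathrm{tr}$. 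The Cartesian square \eqref{dia:HigB} then identifies $\mathcal{Y}\simeq\Hig_G\times_{\rB}\rB^{\cl}\simeq\Hig_G^{\cl}$. The composite is the desired equivalence $\Hig_G^{\cl}\times k[-1]\stackrel{\sim}{\to}\Hig_G$.

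\textbf{Main obstacle.} The hard step is checking that, after the splitting, the derived Hitchin map really factors as (a map $\mathcal{Y}\to\rB^{\cl}$) $\times$ (the identity on $k[-1]$). Concretely, the higher coefficients $\mathrm{tr}\wedge^i\theta$ for $i\ge2$ land in $\Gamma(\Omega_C^i)$, which is classical because $H^1(\Omega_C^{i})=0$ for $i\geq 2$ when $g\ge2$. So only the $i=1$ coordinate carries derived structure, and that coordinate is the trace. I would verify this at the level of the linear stacks over $\Bun_G$, using that the trace is a map of complexes $R\Gamma(\mathcal{E}nd F\otimes\Omega_C)\to R\Gamma(\Omega_C)$ split by $\tfrac1r\mathrm{id}$.
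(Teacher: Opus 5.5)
Your proposal is correct and is essentially the paper's argument. The paper also splits $\mathbb{T}_{\Bun_G}=p_{*}\cHom(\mathrm{F},\mathrm{F})[1]$ by the trace, phrased as a splitting of the tangent triangle of the rigidification $\Bun_G\to\Bun_G^{\mathrm{rig}}$, and peels off $\Spec\mathrm{Sym}(\mathcal{O}_{\Bun_G}[1])=k[-1]$; the complement $\Spec\mathrm{Sym}((\pi^{\mathrm{rig}})^{*}\mathbb{T}_{\Bun_G^{\mathrm{rig}}})$ is exactly your $\mathcal{Y}=\Hig_G^{0}\times H^0(\Omega_C)$.

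Your extra step, identifying $\mathcal{Y}$ with $\Hig_G^{\cl}$ through the Cartesian square of Theorem~\ref{thm:higgs:basic}, spells out what the paper leaves implicit, and it is easier than you fear. Since $\rB^{\cl}\simeq\rB\times_{k[-1]}\mathrm{pt}$, one has $\Hig_G\times_{\rB}\rB^{\cl}\simeq\Hig_G\times_{k[-1]}\mathrm{pt}$, so only the trace component of $h$ enters. The higher coefficients $\mathrm{tr}\wedge^i\theta$ therefore play no role, which is just as well, because ``the target is classical'' would not by itself show they ignore the $k[-1]$ direction.
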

\begin{proof}
    The moduli stack $\Hig_G$ is equivalent to 
    \begin{align*}
        \Hig_G \simeq\Omega_{\Bun_G}=\Spec \mathrm{Sym}(\mathbb{T}_{\Bun_G}).
    \end{align*}
    Let $\pi^{\mathrm{rig}}$ be the $\mathbb{G}_m$-rigidification 
    \begin{align*}
    \pi^{\mathrm{rig}} \colon \Bun_G \to \Bun_G^{\mathrm{rig}},
    \end{align*}
    which is a $\mathbb{G}_m$-gerbe. We have an exact triangle 
    \begin{align}\label{seq:T}
        \mathbb{T}_{\pi^{\mathrm{rig}}} \to \mathbb{T}_{\Bun_G} \to (\pi^{\mathrm{rig}})^* \mathbb{T}_{\Bun_G^{\mathrm{rig}}}.
    \end{align}
    We show that the above sequence (\ref{seq:T}) splits. 
The first map of the above sequence is the natural map 
\begin{align*}
    \mathcal{O}_{\Bun_G}[1] \to p_{\Bun*}\cHom(\mathrm{F}, \mathrm{F})[1]
\end{align*}
where $\mathrm{F} \to C\times \Bun_G$ is the universal vector bundle
and $p_{\Bun} \colon C\times \Bun_G \to \Bun_G$ is the projection. 
The natural map 
\begin{align*}
    \mathcal{O}_{C\times \Bun_G} \to \cHom(\mathrm{F}, \mathrm{F})
\end{align*}
is a split injection, where the splitting is given by $(-) \to \mathrm{tr}(-)/r$.
Therefore we have
\begin{align*}
    \cHom(\mathrm{F}, \mathrm{F})\cong\mathcal{O}_{C\times \Bun_G} \oplus 
    \cHom(\mathrm{F}, \mathrm{F})_0
\end{align*}
where the second factor is the traceless part. 
It follows that 
\begin{align*}
  \mathbb{T}_{\Bun_G}&\cong p_{\Bun*}\mathcal{O}_{C\times \Bun_G}[1] \oplus 
    p_{\Bun*}\cHom(\mathrm{F}, \mathrm{F})_0[1] \\
    &\cong \mathcal{O}_{\Bun_G}[1]\oplus (H^1(\mathcal{O}_C) \otimes \mathcal{O}_{\Bun_G}) \oplus 
    p_{\Bun*}\cHom(\mathrm{F}, \mathrm{F})_0[1], 
\end{align*}
which implies that the sequence (\ref{seq:T}) splits. 

Then we have 
\begin{align*}
    \Omega_{\Bun_G}&\simeq\Spec \mathrm{Sym}((\pi^{\mathrm{rig}})^*\mathbb{T}_{\Bun_G^{\mathrm{rig}}}\oplus\mathcal{O}_{\Bun_G}[1])
    \simeq \Hig_G^{\mathrm{cl}}\times k[-1].
\end{align*}
\end{proof}

\subsection{Moduli stacks of semistable Higgs bundles}
A $\GL_r$-Higgs bundle $(F, \theta)$ is called \textit{(semi)stable} if 
for any sub-Higgs bundle $(F', \theta') \subset (F, \theta)$ 
we have 
\begin{align*}
    \frac{\deg F'}{\rank F'} <(\leq) \frac{\deg F}{\rank F}. 
\end{align*}
We denote by 
\begin{align*}
\Hig_G(\chi)^{\mathrm{st}} \subset 
    \Hig_G(\chi)^{\mathrm{ss}} \subset \Hig_G(\chi)
\end{align*}
the open substack of (semi)stable Higgs bundles. 
It is an Artin stack of finite type, in particular it is 
quasi-compact. 

\begin{remark}\label{rmk:semistable}
Let $\cC_b \subset S$ be the spectral curve of $(F, \theta)$, 
and $E\in \Coh^{\heartsuit}(\cC_b)$ the corresponding rank one torsion-free sheaf. Then $(F, \theta)$ is (semi)stable if and only if $E$ is Gieseker (semi)stable with respect to the polarization $\pi^*h$ where $\pi \colon \cC_b \to C$ is the projection and $h$ is an ample divisor on $C$, see~\cite{Hu} for the notion of Gieseker (semi)stability. 
Below, (semi)stable sheaf on $\Coh^{\heartsuit}(\cC_b)$ means Gieseker
(semi)stable sheaf with respect to $\pi^* h$. 
\end{remark}

We have the following diagram 
    \begin{align}\label{dia:gmoduli}
        \xymatrix{
        \Hig_G(\chi)^{\mathrm{st}} \inclusion \ar[d] \diasquare & \Hig_G(\chi)^{\mathrm{ss}} \ar[d] \ar[dr] & \\
        \mathrm{H}_G(\chi)^{\mathrm{st}} \inclusion &  \mathrm{H}_{G}(\chi)^{\mathrm{ss}} \ar[r] & \rB
        }
    \end{align}
    where the vertical arrows are good moduli space morphisms. 
    The derived scheme $\mathrm{H}_G(\chi)^{\mathrm{ss}}$
    is projective over $\rB$, the horizontal arrows are open 
    immersions, 
    and the left vertical arrow is a $\mathbb{G}_m$-gerbe. 
Moreover the classical truncation of $\mathrm{H}_G(\chi)^{\mathrm{st}}$
is a smooth symplectic variety. 
If $(r, \chi)$ is coprime, then 
we have $\mathrm{H}_G(\chi)^{\mathrm{st}}=\mathrm{H}_G(\chi)^{\mathrm{ss}}$, but 
in general the inclusion is strict. 

\subsection{Open locus of Hitchin base}
There are derived open subschemes 
\begin{align*}
    \mathrm{B}^{\mathrm{sm}} \subsetneq \mathrm{B}^{\mathrm{ell}} \subsetneq \mathrm{B}^{\mathrm{red}} \subsetneq \mathrm{B}
\end{align*}
where $\mathrm{B}^{\mathrm{red}}$ (resp. $\mathrm{B}^{\mathrm{ell}}, \mathrm{B}^{\mathrm{sm}}$) corresponds to reduced (resp. integral, smooth) spectral curves. 
The stack $\Hig_G(\chi)$ is quasi-compact over $\mathrm{B}^{\mathrm{ell}}$, but over 
$\mathrm{B}^{\mathrm{red}}$ it is not quasi-compact. 

\begin{example}\label{exam:g_2}
Let $G=\mathrm{GL}_2$ and $g=2$. 
For $b\in \mathrm{B}^{\mathrm{red}}$, 
suppose that $\cC_b$ is a reducible 
nodal curve
$\cC_b=C_1 \cup C_2$, where $C_1 \cap C_2$ consists of 
two points $x_1, x_2$. 
The stack 
$h_{\chi}^{-1}(b):=h^{-1}(b)\cap \Hig_{\GL_2}(\chi)$ contains an open substack 
\begin{align*}
    h_{\chi}^{-1}(b)^{\mathrm{reg}}=\coprod_{l_1+l_2=\chi+2}\cP^{l_1, l_2}
\end{align*}
where $\cP^{l_1, l_2}$ consists of line bundles $\mathcal{L}$ on $\cC_b$ such that $l_i=\deg \mathcal{L}|_{C_i}$. It is easy to see that 
\begin{align*}
\cP^{l_1, l_2}\cong \mathrm{Pic}^{l_1}(C_1)\times \mathrm{Pic}^{l_2}(C_2)\times \mathbb{G}_m \times \bgm.
\end{align*}
Here $\mathrm{Pic}^{l_i}(C_i)$ is the moduli space of degree $l_i$-line bundles on $C_i$, 
which is a $g$-dimensional abelian variety since $C_i \stackrel{\cong}{\to} C$. 

We have the following diagram 
\begin{align*}
\xymatrix{
& \widetilde{\cC}_{b, 1} \ar[rd]^-{\nu_1} & \\
\widetilde{\cC}_b \ar[rr]_-{\nu} 
\ar[ru] \ar[rd]  & & \cC_b. \\
& \widetilde{\cC}_{b, 2} \ar[ru]_-{\nu_2} & 
}
\end{align*}
Here $\nu$ is the normalization of $\cC_b$ and each $\nu_i$ is a partial normalization at $x_i$. 
We denote by 
$\cP_1^{l_1, l_2}, \ \cP_2^{l_1, l_2}, \ \cP_{12}^{l_1, l_2}$
the moduli stacks of line bundles on $\widetilde{\cC}_{b, 1}, \widetilde{\cC}_{b, 2}, \widetilde{\cC}_b$ respectively, 
whose restrictions to $C_i$ have degree $l_i$. 
We have isomorphisms 
\begin{align*}
    &\cP_i^{l_1, l_2} \cong \mathrm{Pic}^{l_1}(C_1)\times \mathrm{Pic}^{l_2}(C_2)\times \bgm, \\ &\cP_{12}^{l_1, l_2}\cong 
    \mathrm{Pic}^{l_1}(C_1)\times \mathrm{Pic}^{l_2}(C_2)\times \bgm\times \bgm.
\end{align*}
Then the closure of $\cP^{l_1, l_2}$ in $h^{-1}(b)$ is described as (cf.~\cite[Figure~1]{Viviani})
\begin{align*}
    \overline{\cP}^{l_1, l_2}=&\cP^{l_1, l_2}\cup \nu_{1*}\cP_1^{l_1, l_2-1}\cup \nu_{1*}\cP_1^{l_1-1, l_2}
    \cup \nu_{2*}\cP_2^{l_1, l_2-1}\cup \nu_{2*}\cP_2^{l_1-1, l_2} \\
    &\cup \nu_{*}\cP_{12}^{l_1-2, l_2} \cup \nu_{*}\cP_{12}^{l_1-1, l_2-1} \cup \nu_{*}\cP_{12}^{l_1, l_2-2}.
\end{align*}

The above situation is visualized as follows (here $\leadsto$ stands for the degeneration)
\begin{align*}
    \xymatrix{
& \cP^{1, -1} \ar@{~>}[ld] \ar@{~>}[d] \ar@{~>}[rd] \ar@{~>}[rrd] & &  &  \cP^{0, 0}  \ar@{~>}[lld] \ar@{~>}[ld] \ar@{~>}[d] \ar@{~>}[rd]&  \\
\cP_1^{1, -2}\ar@{~>}[rd] \ar@{~>}[rrd] & \cP_2^{1, -2} \ar@{~>}[d] \ar@{~>}[rd] & \cP_1^{0, -1}\ar@{~>}[d]\ar@{~>}[rd] & \cP_2^{0, -1} \ar@{~>}[ld] \ar@{~>}[d] & \cP_1^{-1, 0}\ar@{~>}[ld] \ar@{~>}[d]  & \cP_2^{-1, 0} \ar@{~>}[lld] \ar@{~>}[ld] & \\
 & \cP_{12}^{1, -3} & \cP_{12}^{0, -2}  & \cP_{12}^{-1, -1}  & \cP_{12}^{-2, 0} &  &
    }
\end{align*}

From the above picture, $h_{\chi}^{-1}(b)$ is obtained by gluing the closures of 
$\cP^{l_1, l_2}$, and it is connected. Because there are infinitely many pairs $(l_1, l_2)$ 
for a given $\chi$, the stack $h_{\chi}^{-1}(b)$ is not quasi-compact. 
\end{example}

\begin{remark}\label{rmk:dense}
    In general for $b\in \rB^{\mathrm{red}}$, the open 
    substack $h^{-1}(b)^{\mathrm{reg}} \subset h^{-1}(b)$ is dense. 
    This follows from the density of the 
Cartier-divisor locus in the Hilbert scheme of points on a reduced locally planar curve~\cite[Fact~2.4]{MRF3}. 
Indeed, any rank-one torsion-free sheaf is of the form $I_Z\otimes L$ for a line 
bundle $L$ on $\cC_b$ and a zero-dimensional subscheme $Z\subset \cC_b$, and then one deforms $Z$ inside $\cC_b$ to a subscheme in $\cC_b$ with invertible ideal sheaf. 
\end{remark}
\subsection{Limit categories}
Here we recall limit categories defined in~\cite{PTlim}, which itself is inspired and based on 
the works~\cite{SvdB, hls}. 
Let $\mathfrak{M}$ be a quasi-smooth derived stack which is of finite presentation over $k$, and 
its cotangent complex is self-dual
\begin{align}\label{L:cotangent}
    \mathbb{L}_{\mathfrak{M}} \simeq \mathbb{T}_{\mathfrak{M}}.
\end{align}
For $x \in \mathfrak{M}(k)$, 
we set 
\begin{align*}
T_x:=
\cH^{0}(\mathbb{T}_{\mathfrak{M}}|_{x}), \ 
\mathfrak{g}_x:=
\cH^{-1}(\mathbb{T}_{\mathfrak{M}}|_{x}).
\end{align*}
Note that $\mathfrak{g}_x$ is the Lie algebra of 
$G_x:=\mathrm{Aut}(x)$. 

Given a map 
\begin{align}\notag\nu \colon \bgm\to \mathfrak{M}, \ 
\mathrm{pt} \mapsto x
\end{align}
we have the corresponding cocharacter 
$\mathbb{G}_m \to G_x$. 
We let $\mathbb{G}_m$ act on $\mathfrak{g}_x, T_x$
through the above cocharacter, and regard them as elements of $K(\bgm)$. 
The map $\nu$ may not be quasi-smooth, but it extends to a quasi-smooth map
(unique up to 2-isomorphisms)
\begin{align}\label{nu:reg}
\nu^{\mathrm{reg}} \colon \mathfrak{g}_x^{\vee}[-1]/\mathbb{G}_m \to \mathfrak{M}
\end{align}
called \textit{regularization}, see~\cite[Construction~3.2.2]{HalpK32}, 
\cite[Construction-Definition~3.1]{PTlim}.
It satisfies that the map 
\begin{align*}
    \mathbb{L}_{\mathfrak{M}}|_{x} \to 
    \mathbb{L}_{\mathfrak{g}_x^{\vee}[-1]/\mathbb{G}_m}|_{0}=\mathfrak{g}_x[1]
\end{align*}
induces an isomorphism on $\cH^{-1}(-)$. 
We also denote by $\iota$ the natural 
morphism 
\begin{align*}
    \iota \colon \mathfrak{g}_x^{\vee}[-1]/\mathbb{G}_m \to 
    \bgm.
\end{align*}

Let $\delta \in \mathrm{Pic}(\mathfrak{M})_{\mathbb{R}}$. 
Recall the notations about $\mathbb{G}_m$-weights in Subsection~\ref{subsec:Gmwt}.
\begin{defn}\label{def:Lcat}(\cite[Definition~3.3]{PTlim})
When $\mathfrak{M}$ is QCA, the subcategory 
\begin{align}\label{def:L}
    \mathrm{L}(\mathfrak{M})_{\delta} \subset \Coh(\mathfrak{M})
\end{align}
is defined to consist of objects $\cE$ 
such that, for any map
$\nu \colon \bgm \to \mathfrak{M}$ 
with $\nu(0)=x$ and regularization (\ref{nu:reg}),
we have 
\begin{align}\label{wt:nu}
\wt(\iota_{\ast}\nu^{\mathrm{reg}\ast}(\cE))
\subset \left[\frac{1}{2} c_1 (T_x^{<0}),
\frac{1}{2} c_1 (T_x^{>0}) \right]+
\frac{1}{2}c_1(\mathfrak{g}_x)+c_1(\nu^*\delta).
\end{align}
\end{defn}

\begin{remark}\label{rmk:perfect}
The regularization (\ref{nu:reg}) is needed since the naive pull-back $\nu^* \cE$ is not necessarily
bounded and $\wt(\nu^{*} \cE)$ is not necessarily a finite set. 

On the other hand if 
$\cE$ is perfect then $\nu^* \cE$ is bounded and the above issue is avoided. 
In this case, the condition (\ref{wt:nu}) can be simplified as
\begin{align}\label{wt:cond2}
    \wt(\nu^* \cE) \subset \left[\frac{1}{2} c_1 (\nu^{\ast}\mathbb{L}_{\mathfrak{M}}^{<0}), 
    \frac{1}{2} c_1 (\nu^{\ast}\mathbb{L}_{\mathfrak{M}}^{>0})
    \right]+c_1(\nu^{\ast}\delta).
\end{align}
The proof is similar to~\cite[Lemma~3.7]{PTlim}, see Lemma~\ref{lem:perfect}. 
\end{remark}

For non-quasi-compact case, the limit category is defined as follows: 
\begin{defn}(\cite[Section~3.7]{PTlim})
For a general (not necessarily quasi-compact) quasi-smooth derived stack 
$\mathfrak{M}$ satisfying (\ref{L:cotangent}), we define 
\begin{align*}
    \IndL(\mathfrak{M})_{\delta}:=\lim_{\mathcal{U}\subset \mathfrak{M}} \Ind(\LL(\mathcal{U})_{\delta})
\end{align*}
where the limit is over all the QCA open substacks $\mathcal{U}\subset \mathfrak{M}$, and define 
\begin{align*}
    \LL(\mathfrak{M})_{\delta} \subset \IndL(\mathfrak{M})_{\delta}
\end{align*}
to be the subcategory of compact objects. 
For a smooth stack $\mathcal{X}$, we 
call $\LL(\Omega_{\X})_{\delta}$ the \textit{limit category}.
\end{defn}

Let $\Omega_{\mathfrak{M}}[-1]$ be the $(-1)$-shifted cotangent of $\mathfrak{M}$, and 
let 
\begin{align*}
    \mathcal{N} \subset \Omega_{\mathfrak{M}}[-1]
\end{align*}
be the nilpotent cone; a fiber of $\Omega_{\mathfrak{M}}[-1] \to \mathfrak{M}$
at $x \in \mathfrak{M}(k)$ is the Lie algebra $\mathfrak{g}_x$ of $\Aut(x)$ by the condition (\ref{L:cotangent}),
and the fiber of $\mathcal{N} \to \mathfrak{M}$ at $x$ consists of nilpotent elements of $\mathfrak{g}_x$. 
We also have the subcategories with nilpotent singular supports~\cite{AG}:
\begin{align*}
    \LL_{\mathcal{N}}(\mathfrak{M})_{\delta} \subset \LL(\mathfrak{M})_{\delta}, \ 
    \IndL_{\mathcal{N}}(\mathfrak{M})_{\delta} \subset \IndL(\mathfrak{M})_{\delta}.
\end{align*}

\begin{remark}\label{rmk:Ltilde}
Instead of compact objects, we can also consider
\begin{align*}
    \widetilde{\LL}_{\mathcal{N}}(\mathfrak{M})_{\delta}
    :=\lim_{\mathcal{U}\subset \mathfrak{M}}\LL_{\mathcal{N}}(\mathcal{U})_{\delta}, 
    \  \widetilde{\LL}(\mathfrak{M})_{\delta}
    :=\lim_{\mathcal{U}\subset \mathfrak{M}}\LL(\mathcal{U})_{\delta}.
\end{align*}
They contain $\LL_{\mathcal{N}}(\mathfrak{M})_{\delta}$, 
$\LL(\mathfrak{M})_{\delta}$ respectively, but in general 
they are strictly larger, see~\cite[Remark~3.18]{PTlim}.
\end{remark}

\subsection{Dolbeault geometric Langlands conjecture}\label{subsec:dlconj}
We recall the formulation of Dolbeault geometric Langlands conjecture 
proposed in~\cite{PTlim}, specialized to the case of $G=\GL_r$ and with 
nilpotent singular supports. 

For $G=\GL_r$, we have the decomposition 
\begin{align*}
    \IndCoh(\Hig_G)=\bigoplus_{w\in \mathbb{Z}} \IndCoh(\Hig_G)_{w}
\end{align*}
where each summand corresponds to objects with $\mathbb{G}_m$-weight $w$
for the diagonal torus 
$\mathbb{G}_m \subset \Aut(F, \theta)$. 
We have the subcategory 
\begin{align}\label{indl}
    \IndL_{\mathcal{N}}(\Hig_G(\chi))_w \subset \IndCoh(\Hig_G(\chi))_w
\end{align}
by setting $\delta$ to be 
\begin{align*}
    \delta=\det (\mathrm{F}_c)^{\otimes w/r} \in \mathrm{Pic}(\Hig_G)_{\mathbb{Q}}.
\end{align*}
Here $\mathrm{F}_c$ is the rank $r$-vector bundle
\begin{align*}
    \mathrm{F}_c:=\mathrm{F}|_{c\times \Hig_G} \to \Hig_G
\end{align*}
for a fixed $c\in C$ and $\mathrm{F}$ is the universal bundle (\ref{univ:F}). 
The subcategory (\ref{indl}) is independent of a choice of $c\in C$. 
\begin{thm}\emph{(\cite[Theorem~1.15]{PTlim})}\label{thm:cpt}
    The dg-category (\ref{indl}) is compactly generated by compact objects 
    \begin{align*}
        \LL_{\mathcal{N}}(\Hig_G(\chi))_w \subset \IndL_{\mathcal{N}}(\Hig_G(\chi))_w.
    \end{align*}
\end{thm}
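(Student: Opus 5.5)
The statement is \cite[Theorem~1.15]{PTlim}, so the plan is to recall the argument from that paper, adapted to the $\GL_r$ setting here. The approach is to present $\IndL_{\mathcal{N}}(\Hig_G(\chi))_w$ as a limit over QCA opens that stabilizes in a controlled way. The natural exhaustion is by Harder--Narasimhan truncations: for each bound $N$, let $\mathcal{U}_N \subset \Hig_G(\chi)$ be the open substack of Higgs bundles whose HN slopes lie in $[\chi/r - N, \chi/r+N]$. Each $\mathcal{U}_N$ is QCA and they exhaust $\Hig_G(\chi)$. Any QCA open lies in some $\mathcal{U}_N$, so the limit may be taken over $N$:
\[
\IndL_{\mathcal{N}}(\Hig_G(\chi))_w=\lim_N \Ind(\LL_{\mathcal{N}}(\mathcal{U}_N)_w).
\]

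The key step is to show that the restriction functors $\rho_N \colon \LL(\mathcal{U}_{N+1})_w \to \LL(\mathcal{U}_N)_w$ admit fully faithful left adjoints (or at least that restriction is essentially surjective with compatible sections). The tool is the window / magic-category theorem for $\Theta$-stratifications (Halpern-Leistner, \v{S}penko--Van den Bergh), applied to the HN stratum $\mathcal{S}=\mathcal{U}_{N+1}\setminus\mathcal{U}_N$ with center $\mathcal{Z}$ and cocharacter $\lambda$. On each stratum, one computes the $\lambda$-weight interval imposed by Definition~\ref{def:Lcat}. Since $\Hig_G$ is $0$-shifted symplectic and $\mathbb{L}$ is self-dual, the interval $[\tfrac12 c_1(T^{<0}),\tfrac12 c_1(T^{>0})]+\tfrac12 c_1(\mathfrak{g})+c_1(\nu^*\delta)$ has length $\tfrac12 c_1(\mathbb{L}^{>0}_{\lambda})$ on each side. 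Because $\delta=\det(\mathrm{F}_c)^{w/r}$ is central up to the weight twist, the interval sits inside the window width $\eta=\langle\lambda,\mathbb{L}^{>0}\rangle$ of the quasi-smooth window theorem of Halpern-Leistner. Hence an object in $\LL(\mathcal{U}_{N+1})$ is determined by its restriction to $\mathcal{U}_N$ together with window data, and conversely every object of $\LL(\mathcal{U}_N)$ extends to the window subcategory. One must check that the extension stays inside $\LL$, i.e.\ satisfies (\ref{wt:nu}) at all other cocharacters, which should follow by semicontinuity along the attractor $\mathcal{S}\to\mathcal{Z}$. The nilpotent singular support condition is preserved because restriction and window extension commute with the action of $H^*(\mathbb{T}[-1])$ on Hom-complexes.

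Granting fully faithful left adjoints $\rho_N^L$, the limit of the $\Ind$-categories along the right adjoints of the continuous functors $\Ind(\rho_N^L)$ is a colimit in $\mathrm{Pr}^L$ of compactly generated categories along compact-preserving functors. That colimit is compactly generated by the images of $\LL_{\mathcal{N}}(\mathcal{U}_N)_w$, and its compact objects are exactly $\LL_{\mathcal{N}}(\Hig_G(\chi))_w$, which gives the claim.

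The main obstacle will be the weight comparison on the HN strata. One has to verify that the interval in (\ref{wt:nu}) is contained in a single window and also nonempty; boundary effects arise when it hits endpoints, which is why the $\tfrac12 c_1(\mathfrak{g}_x)$ term matters. One also has to verify that this holds uniformly for all $N$, where the stabilizers $\mathfrak{g}_x$ of HN-type points are not just $\lambda$-weight-zero but carry a Levi structure. I would first check the rank two case, with strata $\mathcal{Z}=\Hig_{\GL_1}(d_1)\times\Hig_{\GL_1}(d_2)$, explicitly, and then argue in general by induction on Levi subgroups.
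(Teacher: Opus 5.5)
Your overall architecture is the shape of the argument in \cite{PTlim} that the paper cites: exhaust $\Hig_G(\chi)$ by the HN-truncated QCA opens $\mathcal{U}_N$, produce fully faithful left adjoints to the restrictions $\LL_{\mathcal{N}}(\mathcal{U}_{N+1})_w\to\LL_{\mathcal{N}}(\mathcal{U}_N)_w$, and rewrite the limit as a colimit in $\mathrm{Pr}^L$ along compact-preserving functors. The final formal step is fine. The gap is in the mechanism you propose for the left adjoint.

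\textbf{The interval does not fit in one window.} The closed weight interval does \emph{not} fit inside one half-open window of width $\eta=c_1(\nu^*\mathbb{L}^{>0})$. For $\GL_2$, take $\nu\colon \mathrm{pt}\mapsto M_1\oplus M_2$ with weights $(1,0)$. The perfect form (\ref{wt:cond2}) of the condition is then $\wt\subset[\,w/2-(g-1),\,w/2+(g-1)\,]$ (compare Proposition~\ref{prop:VL}). For $w$ even this contains $2g-1$ integers, whereas $\eta=2g-2$. (Also, for (\ref{wt:nu}) itself the half-width is $\frac{1}{2}c_1(T_x^{>0})$, not $\frac{1}{2}c_1(\mathbb{L}^{>0})$; the two are related by Lemma~\ref{lem:perfect}.)

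Hence your conclusion that an object of $\LL(\mathcal{U}_{N+1})_w$ is determined by its restriction to $\mathcal{U}_N$ is false. Objects $p_*q^*(-)$ coming from the stratum, with weight exactly at the left endpoint, lie in $\LL(\mathcal{U}_{N+1})_w$ and restrict to zero. What is true, for a single HN stratum $\mathcal{S}$ with center $\mathcal{Z}$, is a semiorthogonal decomposition $\LL(\mathcal{U}_{N+1})_w=\langle p_*q^*\LL(\mathcal{Z})_{\delta'},\mathcal{W}\rangle$ with $\mathcal{W}\stackrel{\sim}{\to}\LL(\mathcal{U}_N)_w$. Here $\mathcal{W}$ is cut out by requiring the condition to be strict on the left at the HN cocharacter. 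This is the description of $j_!$ in Remark~\ref{rmk:jshrink}, and the decomposition is used in the proof of Proposition~\ref{prop:norm}. The left adjoint is the inverse of $\mathcal{W}\to\LL(\mathcal{U}_N)_w$. So the boundary weight is not an obstacle to be removed but exactly the kernel of restriction, and your planned verification that the interval lies in a single window cannot succeed.

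\textbf{The substantive steps are only asserted.} First, you need the window part of an object to satisfy (\ref{wt:nu}) for \emph{every} $\nu$, and this is not a semicontinuity statement. Degenerating along the attractor can move the test point from $\mathcal{S}$ to $\mathcal{Z}$. But at a point of $\mathcal{Z}$ the window only constrains weights of the HN cocharacter, not of the other cocharacters of the Levi-type stabilizer.

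Second, singular support is local. Full faithfulness of the extension plus compatibility with a global $\mathcal{O}(\Omega[-1])$-action does not by itself bound the support over $\mathcal{S}$. You would need an extra input: for example, that $\mathcal{N}$ is cut out by global fibrewise-homogeneous functions such as $\xi\mapsto\mathrm{tr}(\xi^k)$ on the fibre of $\mathrm{F}_c$, together with a global-to-local nilpotence argument.

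The paper does not reprove any of this. It imports compact generation from \cite[Theorems~1.15, 7.18]{PTlim}, and for the nilpotent singular support version it appeals to \cite[Theorem~7.17]{PTlim} (Remark~\ref{rmk:cpt}).
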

\begin{remark}\label{rmk:cpt}
In~\cite[Theorem~1.15, Theorem~7.18]{PTlim}, the compact generation is stated 
for limit categories without nilpotent singular support condition. However using~\cite[Theorem~7.17]{PTlim}, the same argument of loc. cit. applies to show 
the compact generation with nilpotent singular supports.
\end{remark}

The following is a version of Dolbeault geometric Langlands conjecture in~\cite{PTlim}, 
specialized to $G=\GL_r$ and with nilpotent singular supports:
\begin{conj}\emph{(\cite[Conjecture~1.1]{PTlim})}\label{conj:dl}
For $G=\GL_r$ and $(\chi, w)\in \mathbb{Z}^2$, there is a $\rB$-linear equivalence 
\begin{align*}
    \IndCoh_{\mathcal{N}}(\Hig_G(w)^{\mathrm{ss}})_{-\chi} \simeq \IndL_{\mathcal{N}}(\Hig_G(\chi))_w.
\end{align*}
\end{conj}

\begin{remark}\label{rmk:equiv}
By Theorem~\ref{thm:cpt} and the compact generation of ind-coherent sheaves 
for QCA stacks~\cite{MR3037900} (also see~\cite[Corollary~9.2.7]{AG} in the case of nilpotent singular supports and global complete intersection type, which is applied to $\Hig_G(w)^{\mathrm{ss}}$), Conjecture~\ref{conj:dl} is equivalent to an equivalence
between compact objects
    \begin{align}\notag
    \Coh_{\mathcal{N}}(\Hig_G(w)^{\mathrm{ss}})_{-\chi} \simeq \LL_{\mathcal{N}}(\Hig_G(\chi))_w.
\end{align}
\end{remark}

\begin{remark}\label{rmk:period}
    There exist equivalences 
    \begin{align*}
        \IndL_{\mathcal{N}}(\Hig_G(\chi))_w \stackrel{\sim}{\to}
         \IndL_{\mathcal{N}}(\Hig_G(\chi))_{w+r}
    \end{align*}
    by taking $\otimes \det \mathrm{F}_c$. Also there is an equivalence 
    \begin{align*}
        \Hig_G(\chi) \stackrel{\sim}{\to} \Hig_G(\chi+r), \ (F, \theta) \mapsto (F, \theta)\otimes \mathcal{O}_C(c)
    \end{align*}
    which preserves the semistability. Therefore we can regard $(\chi, w)$
    as an element of $(\mathbb{Z}/r\mathbb{Z})^{2}$. 
\end{remark}

For a quasi-smooth derived scheme $\cB$ which is either 
an open subscheme in $\rB$ or $\rB^{\mathrm{cl}}$,  
we simply write 
\begin{align}\label{sH}
    \cH:=\Hig_G\times_{\rB} \cB.
\end{align}
We also write $\cH(\chi)$, $\cH(\chi)^{\mathrm{ss}}$ in a similar way. 
We say that 
\textit{DL holds over $\cB$} if there is a $\cB$-linear equivalence
\begin{align}\label{equiv:H}
    \IndCoh_{\mathcal{N}}(\cH(w)^{\mathrm{ss}})_{-\chi} \simeq \IndL_{\mathcal{N}}(\cH(\chi))_w.
\end{align}
Here see Subsection~\ref{subsec:variants} for the definition of the limit category when $\cB \subset \rB^{\mathrm{cl}}$ is an open subset.
\begin{remark}\label{rmk:Bcl}
For an open subscheme $\cB \subset \mathrm{B}$, suppose that
the DL holds over $\cB^{\mathrm{cl}} \subset \rB^{\mathrm{cl}}$. Then DL also 
holds over $\cB$.  
Indeed an equivalence (\ref{equiv:H}) for $\cB$ follows from 
that for $\cB^{\mathrm{cl}}$ by taking 
$\otimes \QCoh(k[-1])$ by Lemma~\ref{lem:reduced} and the diagram (\ref{dia:HigB}). 
\end{remark}

\subsection{Quasi-BPS categories}
By applying the definition of limit category to the 
quasi-compact moduli stack $\Hig_G(\chi)^{\mathrm{ss}}$, we obtain 
the \textit{quasi-BPS category} (with nilpotent singular support)
\begin{align*}
    \LL_{\mathcal{N}}(\Hig_G(\chi)^{\mathrm{ss}})_w \subset \Coh_{\mathcal{N}}(\Hig_G(\chi)^{\mathrm{ss}})_w.
\end{align*}
The quasi-BPS category is related to categorification of BPS invariants in 
Donaldson--Thomas theory, see~\cite{PThiggs, PThiggs2, PTlim}. 

We have the open immersion 
\begin{align*}
    j\colon \Hig_G(\chi)^{\mathrm{ss}}\subset \Hig_G(\chi)
\end{align*}
which induces the pull-back functor 
\begin{align}\label{funct:jast}
j^* \colon \LL_{\mathcal{N}}(\Hig_G(\chi))_w\to \LL_{\mathcal{N}}(\Hig_G(\chi)^{\mathrm{ss}})_{w}.
\end{align}
The following is one of the key properties of limit categories:
\begin{thm}\emph{(\cite[Theorem~7.19]{PTlim})}\label{thm:left}
The functor (\ref{funct:jast}) admits a fully-faithful left adjoint 
\begin{align*}
    j_{!} \colon \LL_{\mathcal{N}}(\Hig_G(\chi)^{\mathrm{ss}})_w\hookrightarrow\LL_{\mathcal{N}}(\Hig_G(\chi))_w.
\end{align*}
    
\end{thm}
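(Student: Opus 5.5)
This result is quoted from \cite[Theorem~7.19]{PTlim}; the plan below is how I would reprove it. The idea is to build $j_!$ one Harder--Narasimhan stratum at a time, using window/magic categories in the style of Halpern-Leistner and \v{S}penko--Van den Bergh \cite{hls, SvdB}.

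First stratify $\Hig_G(\chi)$ by the $\Theta$-stratification by HN types. Choose an increasing exhausting sequence of quasi-compact open substacks
\begin{align*}
\Hig_G(\chi)^{\mathrm{ss}}=\U_0\subset \U_1\subset \U_2\subset\cdots, \qquad \U_{i+1}=\U_i\sqcup \cS_{i+1},
\end{align*}
where each $\cS_{i+1}$ is a single closed HN stratum of $\U_{i+1}$. Every QCA open substack lies in some $\U_i$. Hence $\IndL_{\mathcal{N}}(\Hig_G(\chi))_w=\lim_i \Ind(\LL_{\mathcal{N}}(\U_i)_w)$, and it suffices to construct compatible left adjoints at each finite stage.

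The key local step is as follows. For the open immersion $j_i\colon\U_i\subset\U_{i+1}$, show that the restriction $j_i^*\colon \LL_{\mathcal{N}}(\U_{i+1})_w\to \LL_{\mathcal{N}}(\U_i)_w$ admits a fully faithful left adjoint $j_{i!}$. Let $\lambda$ be the cocharacter of the stratum, with center $\zZ$ and attracting stack $\cS_{i+1}\cong\Y\to\zZ$. Consider the quasi-smooth self-dual setting (\ref{L:cotangent}). The window condition (\ref{wt:nu}) applied at $\lambda$ pins down weights in the interval $[\tfrac12 c_1(T^{<0}),\tfrac12 c_1(T^{>0})]+\tfrac12 c_1(\mathfrak g)+c_1(\delta)$. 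This interval has length $\eta=c_1(\mathbb{L}^{>0}_{\mathfrak{M}}|_{\zZ})$, the weight of the conormal determinant, but closed at both ends.

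I would use the semiorthogonal decomposition of $\Coh(\U_{i+1})$ from the $\Theta$-stratum (quasi-smooth version, as in Halpern-Leistner's derived $\Theta$-stratification theory). It has the form
\begin{align*}
\Coh(\U_{i+1})=\langle \Coh_{\cS}(\U_{i+1})^{<a},\ \mathbb{G}_{a},\ \Coh_{\cS}(\U_{i+1})^{\geq a}\rangle,
\end{align*}
where $\mathbb{G}_a\simeq \Coh(\U_i)$ via $j_i^*$.

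Because of self-duality, $T^{>0}$ and $T^{<0}$ are dual. So the interval is symmetric around $c_1(\delta)$ plus the $\mathfrak g$-correction, and $\LL(\U_{i+1})$ sits inside a suitable window $\mathbb{G}_a$ together with at most the boundary weight pieces. The candidate left adjoint is $j_{i!}(E)=$ the projection to $\LL(\U_{i+1})$ of the window lift of $E\in\LL(\U_i)$.

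Then check three things. First, the lift satisfies the weight conditions at all $\nu$, not only at $\lambda$; for $\nu$ landing in $\U_i$ this follows from $E\in\LL(\U_i)$ plus the fact that the window lift is determined by restriction. Second, $\Hom(j_{i!}E,F)=\Hom(E,j_i^*F)$ for $F\in\LL(\U_{i+1})$; this follows since the cone of $F\to$(lift of $j_i^*F$) is supported on $\cS$ with weights outside the allowed window by one side. Third, $j_i^*j_{i!}\simeq\id$, which gives full faithfulness.

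Nilpotent singular support should be preserved because the window projections are $\QCoh(\U_{i+1})$-linear up to the stratum, and singular support of objects supported on $\cS$ can be controlled by \cite[Theorem~7.17]{PTlim}.

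Finally, set $j_!=\colim_i\, j_{i!}\circ\cdots\circ j_{0!}$, or rather the compatible system in the limit. Compatibility of $j_{i+1!}j_{i!}$ with restriction gives an object of the limit that is compact, since it is compact at each stage and by Theorem~\ref{thm:cpt}. Adjunction and full faithfulness pass to the limit.

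I expect the main obstacle to be the boundary of the weight interval. The window has closed ends, so it is \emph{larger} than a single magic window when $\eta$ is an integer, and the inclusion $\LL\subset\mathbb{G}_a$ may fail. I would handle this by showing that on objects supported on $\cS$, the $\LL$-condition forces weights into a half-open interval. This uses the extra symmetric term $\tfrac12 c_1(\mathfrak g)$ coming from the regularization $\nu^{\mathrm{reg}}$: the stabilizer direction of $\lambda$ contributes a shift making the relevant endpoint strict for HN strata, which are never semistable. I would verify this first on the $\GL_2$ strata of Example~\ref{exam:g_2}.
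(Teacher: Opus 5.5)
The overall architecture you propose is consistent with how \cite{PTlim} obtains the result (cf.\ Lemma~\ref{lem:sodsum}, where $\mathrm{Im}(j_!)$ is the window for $k_{\zZ}=-\frac12 c_1(\nu^*\det\mathbb{L}^{>0})$). That architecture is: exhaust by quasi-compact opens adding one HN stratum at a time, use Halpern-Leistner's window decomposition for each $\Theta$-stratum, and assemble in the limit. But the step you single out as the crux is resolved by a false claim.

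The term $\frac12 c_1(\mathfrak{g}_x)$ creates no asymmetry. By Lemma~\ref{lem:perfect}, it is exactly what converts (\ref{wt:nu}) into the symmetric closed condition (\ref{wt:cond2}). The former is an interval of length $c_1(T_x^{>0})$, not $\eta$; the latter has length $\eta$, with both endpoints allowed.

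Moreover, nonzero objects of $\LL(\U_{i+1})$ supported on $\cS$ do exist whenever the endpoints are attainable weights. Take $A$ on $\zZ$ of pure $\lambda$-weight equal to the lower endpoint, satisfying the weight conditions on $\zZ$. The restriction of $p_*q^*A$ to the center is (locally, via Koszul) filtered by $A\otimes\wedge^k N^{\vee}_{\cS}[k]$. So its weights fill the closed interval and attain \emph{both} endpoints: on objects supported on $\cS$, the $\LL$-condition forces the opposite of what you want.

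If your claim held, you would get $\LL(\U_{i+1})\subset\mathbb{G}_a$, hence $j_i^*$ would be an equivalence. That is false in general. By Remark~\ref{rmk:jshrink}, $\mathrm{Im}(j_!)$ is the subcategory where the condition is strict on the left, and its complement is the boundary (parabolic induction) piece. Compare the decomposition $\langle p_*q^*\LL(\zZ)_\delta,\mathcal{W}\rangle$ in the proof of Proposition~\ref{prop:norm}, and Proposition~\ref{prop:VL}, where the lower bound is attained for strictly semistable $E$ (e.g.\ $\GL_2$, $w$ even).

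The repair is that you never need $\LL(\U_{i+1})\subset\mathbb{G}_a$. Your second check already contains the right mechanism, and your last paragraph contradicts it.
\begin{itemize}
\item Choose $a$ so that $[a,a+\eta)$ is the $\LL$-interval with the lower endpoint removed, normalizing so that $N^\vee_{\cS}$ has positive $\lambda$-weights.
\item Every $F\in\LL(\U_{i+1})$ then has $\lambda$-weights $<a+\eta$. So it is right orthogonal to $\Coh_{\cS}(\U_{i+1})^{\geq a}$ and sits in a triangle $F_{\mathbb{G}}\to F\to F'$. Here $F_{\mathbb{G}}\in\mathbb{G}_a$ is the window lift of $j_i^*F$ and $F'\in\Coh_{\cS}(\U_{i+1})^{<a}$ has boundary weight.
\item Since $\Hom(\mathbb{G}_a,\Coh_{\cS}(\U_{i+1})^{<a})=0$, this gives $\Hom(j_{i!}E,F)\cong\Hom(E,j_i^*F)$.
\end{itemize}
Note that the map goes from the lift to $F$, not $F\to$ lift. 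Also, $j_{i!}$ is simply the window lift; no ``projection to $\LL$'' is needed.

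What genuinely remains is to show that the window lift of $E\in\LL(\U_i)$ satisfies (\ref{wt:nu}) for \emph{all} $\nu$. This includes $\nu$ through points of $\cS$ with cocharacters other than $\lambda$, such as non-split points of $\cS$ or points of $\zZ$ with strictly semistable graded pieces. Your first check only treats $\nu$ landing in $\U_i$ and $\nu=\lambda$. The remaining case needs an argument with commuting cocharacters on the associated graded.

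Finally, compactness of $j_!E$ in the limit does not follow from compactness of its components (cf.\ Remark~\ref{rmk:Ltilde}). It follows because restriction $\IndL_{\mathcal{N}}(\Hig_G(\chi))_w\to\Ind(\LL_{\mathcal{N}}(\U_0)_w)$ is continuous, so its left adjoint preserves compact objects.
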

\begin{remark}\label{rmk:jshrink}
From the proof of~\cite[Theorem~7.18, 7.19]{PTlim}, the image of $j_!$
is described as follows: for any map $\nu \colon \bgm \to \Hig_G(\chi)$
corresponding to a \textit{Harder--Narasimhan filtration}, the 
weight condition (\ref{wt:nu}) is strict on the left. 
Such a map $\nu$ is of the form 
\begin{align}\label{map:nupt}
    \nu\colon \mathrm{pt} \mapsto \bigoplus_{i=1}^a E_i, \ \frac{\deg E_1}{\rank E_1}>\cdots>\frac{\deg E_a}{\rank E_a}
\end{align}
where each $E_i$ is semistable, and 
with $\mathbb{G}_m$ weight $\mu_i$ on $E_i$ 
satisfying $\mu_1>\cdots>\mu_a$.

In particular for 
a perfect complex $\cE$ on $\mathfrak{M}=\Hig_G(\chi)$, 
it is in the image of $j_!$ if it satisfies the 
following stronger condition than (\ref{wt:cond2})
for any map $\nu \colon \bgm \to \mathfrak{M}$ as in (\ref{map:nupt}):
\begin{align}\notag
    \wt(\nu^* \cE) \subset \left(\frac{1}{2} c_1 (\nu^{\ast}\mathbb{L}_{\mathfrak{M}}^{<0}), 
    \frac{1}{2} c_1 (\nu^{\ast}\mathbb{L}_{\mathfrak{M}}^{>0})
    \right]+c_1(\nu^{\ast}\delta).
\end{align}
\end{remark}

\subsection{Some variants of limit categories}\label{subsec:variants}
We also use some variants of limit categories for derived stacks whose cotangent complexes are 
not self dual. First let $\mathfrak{M}$ be a quasi-smooth derived stack with $\mathbb{L}_{\mathfrak{M}} \simeq \mathbb{T}_{\mathfrak{M}}$. Suppose that it is written as 
\begin{align*}
    \mathfrak{M} \simeq \mathfrak{M}_{\circ}\times k[-1].
\end{align*}
In this case, for any $\nu \colon \bgm \to \mathfrak{M}_{\circ}$ with image $x\in \mathfrak{M}_{\circ}(k)$, 
we have the decomposition 
$\mathfrak{g}_x^{\vee}=(\mathfrak{g}_x)^{\vee}_0 \oplus k$
such that the regularization map of $\nu' \colon \bgm \stackrel{\nu}{\to} \mathfrak{M}_{\circ} \to \mathfrak{M}$ is of the form 
\begin{align*}
    (\nu')^{\mathrm{reg}}=\nu^{\mathrm{reg}} \times \id \colon 
    (\mathfrak{g}_x)^{\vee}_0[-1]/\mathbb{G}_m \times k[-1] \to \mathfrak{M}_{\circ}\times k[-1]. 
\end{align*}
When $\mathfrak{M}_{\circ}$ is QCA, the subcategory 
\begin{align*}
    \LL(\mathfrak{M}_{\circ})_{\delta} \subset \Coh(\mathfrak{M}_{\circ})
\end{align*}
is defined as in $\LL(\mathfrak{M})_{\delta}$ using the above definition of the regularization map $\nu^{\mathrm{reg}}$. 
In general, one defines 
\begin{align*}
    \IndL(\mathfrak{M}_{\circ})_{\delta}=\lim_{\mathfrak{U}_{\circ} \subset \mathfrak{M}_{\circ}}\IndL(\mathfrak{U}_{\circ})_{\delta}
\end{align*}
where the limit is over all QCA open substacks $\mathfrak{U}_{\circ}$. 

We also have the decomposition 
\begin{align*}
    \Omega_{\mathfrak{M}}[-1]=\Omega_{\mathfrak{M}_{\circ}}[-1] \times \mathbb{A}^1
\end{align*}
under which $\mathcal{N}$ is written as $\mathcal{N}=\mathcal{N}_{\circ}\times \{0\}$.
The subcategory 
\begin{align}\label{subcat:circ}
    \IndL_{\mathcal{N}}(\mathfrak{M}_{\circ})_{\delta} \subset \IndL(\mathfrak{M}_{\circ})_{\delta}
\end{align}
is defined to be the subcategory of objects with singular supports in $\mathcal{N}_{\circ}$. 
Since $k[-1]$ is a scheme, the factor $k[-1]$-factor does not affect the weight bounds from the maps from $\bgm$. Then the above subcategory satisfies that 
\begin{align*}
 \IndL_{\mathcal{N}}(\mathfrak{M}_{\circ})_{\delta} \otimes \mathrm{QCoh}(k[-1])\stackrel{\sim}{\to} 
    \IndL_{\mathcal{N}}(\mathfrak{M})_{\delta}.
\end{align*}

Suppose that there is a morphism 
$\mathfrak{M}_{\circ} \to \cB$
for a smooth scheme $\cB$. Let $T$ be another smooth scheme with 
an \'{e}tale morphism $T\to \cB$ and define 
\begin{align*}
    \mathfrak{M}_{\circ, T}:=\mathfrak{M}_{\circ}\times_{\cB}T.
\end{align*}
We use the same notation $\delta$ for its pull-back to $\mathfrak{M}_{\circ, T}$. 
For each $\nu \colon \bgm \to \mathfrak{M}_{\circ, T}$, let $\nu'$ be the composition 
$
    \nu' \colon \bgm \stackrel{\nu}{\to} \mathfrak{M}_{\circ, T} \to \mathfrak{M}_{\circ}
$
with image $x\in \mathfrak{M}_{\circ}(k)$.
Then the map 
\begin{align*}(\nu')^{\mathrm{reg}}
\colon (\mathfrak{g}_x^{\vee})_0[-1]/\mathbb{G}_m \to \mathfrak{M}_{\circ}
\end{align*}
factors through the quasi-smooth map 
\begin{align*}
    \nu^{\mathrm{reg}} \colon (\mathfrak{g}_x^{\vee})_0[-1]/\mathbb{G}_m \to \mathfrak{M}_{\circ, T}
\end{align*}
and the subcategory 
\begin{align*}
    \IndL(\mathfrak{M}_{\circ, T})_{\delta} \subset \IndCoh(\mathfrak{M}_{\circ, T})
\end{align*}
is defined in the same way as $\IndL(\mathfrak{M})_{\delta}$ 
using $\nu^{\mathrm{reg}}$.
The subcategory 
\begin{align}\label{ind:circ}
    \IndL_{\mathcal{N}}(\mathfrak{M}_{\circ, T})_{\delta} \subset \IndL(\mathfrak{M}_{\circ, T})
\end{align}
is defined to be the subcategory with singular support contained in 
\begin{align}\label{nsupport}
    \mathcal{N}_{\circ}\times_{\cB}T \subset \Omega_{\mathfrak{M}_{\circ, T}}[-1].
\end{align}
The subcategory 
\begin{align}\label{cpt:1}
    \LL(\mathfrak{M}_{\circ, T})_{\delta} \subset \IndL(\mathfrak{M}_{\circ, T})
\end{align}
is defined to be the subcategory of compact objects, and 
\begin{align}\label{cpt:2}\LL_{\mathcal{N}}(\mathfrak{M}_{\circ, T})_{\delta}
\subset \LL(\mathfrak{M}_{\circ, T})_{\delta}
\end{align}
is the subcategory with singular supports in (\ref{nsupport}). 

\begin{remark}\label{rmk:variant}
The above construction will be applied for $\mathfrak{M}=\Hig_{\GL_r}$
and $\mathfrak{M}_{\circ}=\Hig_{\GL_r}^{\mathrm{cl}}$ for $g\geq 2$.
Note that the argument of the proof of Theorem~\ref{thm:cpt} 
and 
Theorem~\ref{thm:left} are not affected 
by taking the above variants, i.e. removing the $k[-1]$-factor and \'{e}tale base change. In particular in this case, the categories in (\ref{ind:circ}) are 
compactly generated with compact objects (\ref{cpt:1}), (\ref{cpt:2}). Also 
for the above $\mathfrak{M}$
and the pull-back of the semistable locus $\mathfrak{M}_{\circ, T}^{\mathrm{ss}}$, 
there is a fully-faithful left adjoint 
\begin{align*}
    j_{T!} \colon \LL_{\mathcal{N}}(\mathfrak{M}_{\circ, T}^{\mathrm{ss}})_{\delta}
    \hookrightarrow \LL_{\mathcal{N}}(\mathfrak{M}_{\circ, T})_{\delta}
\end{align*}
for the open immersion $j_T \colon \mathfrak{M}_{\circ, T}^{\mathrm{ss}} \hookrightarrow \mathfrak{M}_{\circ, T}$, whose image is described 
as in Remark~\ref{rmk:jshrink}.
\end{remark}
\subsection{Arinkin sheaf}\label{subsec:Arsheaf}
Let $\cB \subset \rB^{\mathrm{cl}}$ be an open subset, and define $\cH$ as in (\ref{sH}). 
Consider 
\begin{align*}
    (\cH\times_{\cB}\cH)^{\sharp}:=
    (\cH\times_{\cB}\cH^{\mathrm{reg}}) \cup (\cH^{\mathrm{reg}}\times_{\cB}\cH).
\end{align*}
Then there is a line bundle 
\begin{align*}
    \cP^{\sharp} \to  (\cH\times_{\cB}\cH)^{\sharp}
\end{align*}
whose fiber at $(A_1, A_2)$ for $A_i \in \Coh^{\heartsuit}(\cC_b)$
for $b\in \cB$ is 
\begin{align}\label{def:Psharp}
     \cP^{\sharp}|_{(A_1, A_2)}=\det \chi(A_1\otimes A_2)\otimes \det \chi(A_1)^{-1}\otimes \det \chi(A_2)^{-1}\otimes \det \chi(\mathcal{O}_{\cC_b}).
\end{align}
The above formula makes sense since either $A_1$ or $A_2$ is a line bundle on $\cC_b$, 
so that $\chi(A_1\otimes A_2)$ makes sense. 

Formally let $\cC:=\mathrm{C}\times_{\rB} \cB$ and  
\begin{align*}
    \cE:=\mathrm{E}|_{\cC\times_{\cB}\cH} \in \Coh^{\heartsuit}(\cC\times_{\cB}\cH).
\end{align*}
Then on $\cH\times_{\cB}\cH^{\mathrm{reg}}$ we have 
\begin{align*}
    \cP^{\sharp}=\det p_{13*}(p_{12}^* \cE \otimes p_{23}^* \cE)
    \otimes (\det p_{13*}p_{12}^* \cE)^{-1} \otimes 
    (\det p_{13*}p_{23}^* \cE)^{-1} \otimes 
    \det p_{13*}\mathcal{O}.
\end{align*}
Here $p_{ij}$ are the projections from $\cH\times_{\cB} \cC \times_{\cB} \cH^{\mathrm{reg}}$
onto the corresponding factors. 

Arinkin~\cite{Ardual} constructed the maximal Cohen--Macaulay 
extension of $\cP^{\sharp}$ to $\cH\times_{\cB}\cH$
when $\cB=\rB^{\mathrm{ell, cl}}$. Outside the locus $\rB^{\mathrm{ell, cl}}$, i.e. when $\mathrm{B}^{\mathrm{ell, cl}} \subsetneq \cB \subset \rB^{\mathrm{cl}}$, it is observed in~\cite[Corollary~3.2.3]{MLi} (also see Remark~\ref{rmk:rk2}) that $\cP^{\sharp}$ can be extended 
to the generically regular Higgs locus, namely on 
\begin{align*}
    (\cH\times_{\cB}\cH)^{\flat}:=
    (\cH\times_{\cB}\cH^{\mathrm{greg}}) \cup (\cH^{\mathrm{greg}}\times_{\cB}\cH)
\end{align*}

\begin{thm}\emph{(\cite{Ardual, MLi})}\label{thm:Pflat}
The line bundle $\cP^{\sharp}$ extends to a maximal 
Cohen--Macaulay sheaf
\begin{align}\label{Arsheaf0}
    \cP^{\flat}\in \Coh^{\heartsuit}( (\cH\times_{\cB}\cH)^{\flat}).
\end{align}
Moreover it is flat over both factors of $\cH$. 
\end{thm}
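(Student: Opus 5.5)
The extension is built by reducing to a local statement on the spectral curves and then applying Arinkin's construction, in the version of~\cite{MLi}.

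First, use symmetry and locality. The locus $(\cH\times_{\cB}\cH)^{\flat}$ is the union of the two opens $\cH\times_{\cB}\cH^{\mathrm{greg}}$ and $\cH^{\mathrm{greg}}\times_{\cB}\cH$, exchanged by the swap involution. So it is enough to construct an extension $\cP$ of $\cP^{\sharp}$ on $\cH\times_{\cB}\cH^{\mathrm{greg}}$. Because $\cH^{\mathrm{cl}}$ is Cohen--Macaulay and flat over $\cB$ (Theorem~\ref{thm:higgs:basic}), a maximal Cohen--Macaulay sheaf on a Cohen--Macaulay stack is determined by its restriction to an open subset whose complement has codimension at least $2$; explicitly, $\cP=j_*\cP^{\sharp}$ for the inclusion $j$. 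Hence the extensions on the two opens automatically agree on the overlap, and they glue.

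Second, write down the candidate kernel by Arinkin's formula. Work with the universal spectral curve $\cC\to\cB$, which is locally planar, and with the universal sheaves $\cE_1$ and $\cE_2$ pulled back from the two factors. Following~\cite{Ardual}, take the isospectral Hilbert scheme $\cC^{[n]}\times_{\cC^{(n)}}\cC^n$ of $n$ points on the fibers of $\cC/\cB$ for $n\gg0$. Haiman's result~\cite{Ha}, extended to planar curves by Arinkin, shows that pushing this forward gives a Cohen--Macaulay sheaf $\mathcal{Q}$ on $\cC^{[n]}$, carrying an $S_n$-action. Locally on $\cH^{\mathrm{greg}}$, write $A_2\cong I_Z^{\vee}\otimes L$, where $Z$ is a length-$n$ subscheme of the spectral curve (Remark~\ref{rmk:dense}). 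Then define
\[
\cP|_{A_1,A_2}:=\bigl(\mathcal{Q}\otimes \textstyle\boxtimes_{i}\, \cE_1|_{p_i}\bigr)^{\mathrm{sign}}\otimes(\text{normalizing determinant}).
\]
One checks three things about this local definition.
\begin{enumerate}
\item It agrees with $\cP^{\sharp}$ wherever $A_1$ or $A_2$ is a line bundle, by comparison with the formula~(\ref{def:Psharp}).
\item It is independent of the choice of $(Z,L)$.
\item It is maximal Cohen--Macaulay. This holds because it is a direct summand (an $S_n$-isotypic part) of the pushforward of a Cohen--Macaulay sheaf along a finite flat map.
\end{enumerate}

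Third, prove flatness over each factor. Flatness over the second factor $\cH^{\mathrm{greg}}$ is clear from the construction, since $\mathcal{Q}$ is flat over $\cC^{[n]}$. Flatness over the first factor $\cH$ I would prove fibrewise over a point $A_2$ of $\cH^{\mathrm{greg}}$. The restriction $\cP|_{\cH_b\times\{A_2\}}$ is computed by the derived version of the same formula. A Cohen--Macaulay sheaf of full support on a Cohen--Macaulay base whose derived restrictions to the fibres are concentrated in degree $0$ is flat; this uses the local flatness criterion together with the dimension count $\dim\cH=2\dim\cB-\dim$ (Theorem~\ref{thm:higgs:basic}).

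The main obstacle is this last vanishing together with the Cohen--Macaulay property over the non-integral locus $\cB\setminus\rB^{\mathrm{ell}}$. Arinkin's codimension estimates for the non-regular locus were proved only for integral curves, and in the reducible case the non-regular locus of $\cH_b$ can have small codimension. The generic regularity hypothesis is what should rescue this: on $\cH^{\mathrm{greg}}$ the sheaf $A_2$ is a line bundle at the generic point of every component. I would therefore try to verify the required codimension bound $\operatorname{codim}\ge 2$ stratum by stratum, using the local structure of the Hitchin fibre near a singular point of the spectral curve, as in~\cite[Corollary~3.2.3]{MLi}.
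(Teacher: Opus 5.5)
The paper does not reprove this statement. It quotes it from \cite{Ardual, MLi}, and Remark~\ref{rmk:rk2} notes that Li's construction applies verbatim: Haiman's isospectral Hilbert scheme plus descent along the Abel--Jacobi map, as recalled around (\ref{QEd}). Your outline follows that same construction, but as a proof it has two genuine gaps.

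\textbf{Cohen--Macaulayness (your item (iii)).} Haiman's theorem says that $\widetilde{\Hilb}^n(S)$ is Cohen--Macaulay and that $\psi$ is finite flat. That controls $\psi_*\mathcal{O}$, but says nothing by itself about $\psi_*\eta^*(E^{\boxtimes n})$ when $E$ is not locally free. What you need is Arinkin's key lemma: for every sheaf $E$ in the family, $L\eta^*(E^{\boxtimes n})$, with $E^{\boxtimes n}$ regarded on $S^{\times n}$ as in (\ref{QEd}), is a sheaf, Cohen--Macaulay of codimension $n$.
\begin{itemize}
\item Its proof combines the Cohen--Macaulayness of $\widetilde{\Hilb}^n(S)$ with a dimension estimate: length-$n$ subschemes of $S$ supported on $\cC_b$ form an $n$-dimensional family.
\item Only then does ``direct summand of a finite pushforward of a Cohen--Macaulay sheaf'' finish the argument, with Lemma~\ref{lem:CM-family} passing to the family over $\cH$.
\item The relevant space is Haiman's scheme for the smooth surface $S$, restricted over $\Hilb^n(\cC/\cB)$. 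It is not the curve-level fibre product $\cC^{[n]}\times_{\cC^{(n)}}\cC^n$, to which Haiman's theorem does not apply. That fibre product is not even flat over $\cC^{[n]}$: for $n=2$, its fibre over a length-two subscheme supported at a node has length $3$.
\end{itemize}

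\textbf{Flatness.} Your argument has the two factors the wrong way round.
\begin{itemize}
\item Flatness over the arbitrary factor $\cH$ is the easy direction. The construction commutes with derived base change in the variable $A_1$, and by the lemma above each derived fibre $L\eta^*(A_1^{\boxtimes n})$ is a sheaf.
\item Flatness over $\cH^{\mathrm{greg}}$ does \emph{not} follow from flatness of $\psi$. The sheaf $\cE^{\boxtimes n}$ is not flat over $S^{\times n}$, and the fibre of $\psi_*L\eta^*(\cE^{\boxtimes n})$ over a point $Z$ is a derived restriction to the non-reduced scheme $\psi^{-1}(Z)$.
\item In Arinkin's setting this direction is obtained from the first by symmetry. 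The sheaf $\cP$ is invariant under swapping the two factors on $\cH^{\mathrm{greg}}\times_{\cB}\cH^{\mathrm{greg}}$, which is all of $\cH\times_{\cB}\cH$ over $\rB^{\mathrm{red}}$. This invariance comes from uniqueness of maximal Cohen--Macaulay extensions of the symmetric $\cP^{\sharp}$.
\item Your proposed fibrewise check over points $A_2$ computes derived fibres of the projection to the \emph{second} factor. So it addresses flatness over $\cH^{\mathrm{greg}}$, not over $\cH$.
\end{itemize}

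\textbf{The codimension bound is not the obstacle.} You single out the codimension-two bound as the main difficulty. It is needed for $\cP=j_*\cP^{\sharp}$, your gluing in Step~1, independence of choices, and the symmetry. But it is a routine count, by the same argument as Lemma~\ref{lem:codimension}:
\begin{itemize}
\item over reduced spectral curves the regular locus is fibrewise dense (Remark~\ref{rmk:dense});
\item singular curves lie over a proper closed subset of $\cB$, so pairs with both entries non-regular have codimension at least three;
\item non-reduced spectral curves occur in codimension at least two in $\cB$ for $g\geq 2$.
\end{itemize}
The real content is in the two points above.
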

We use the symbol $\cP$ to denote its restriction to 
$\cH^{\mathrm{ss}}\times_{\cB}\cH^{\mathrm{greg}}$;
\begin{align}\label{Arsheaf0.5}
    \cP:=\cP^{\flat}|_{\cH^{\mathrm{ss}}\times_{\cB}\cH^{\mathrm{greg}}}\in \Coh^{\heartsuit}(\cH^{\mathrm{ss}}\times_{\cB}\cH^{\mathrm{greg}}). 
\end{align}

\begin{remark}\label{rmk:rk2}
The main theorem of~\cite{MLi} is stated for rank-two \(L\)-twisted Higgs bundles with
\(\deg L>2g\), but in this paper we only use the construction of Section 3 of
\cite{MLi}.  That construction is the Hilbert-scheme realization of Arinkin's
Poincar\'e sheaf on the generically-regular locus.  It uses only the planar
spectral curve \(C_b\subset \operatorname{Tot}(L)\), the universal rank-one
torsion-free sheaf, and Haiman's isospectral Hilbert scheme.  Hence it applies
without change to \(\GL_r\) and to the untwisted case \(L=\Omega_C\).  The
rank-two hypothesis and the condition \(\deg L>2g\) in \cite{MLi} enter only
in the further extension to the non-generically-regular locus.
The Cohen--Macaulay extension for compactified Jacobians on reduced planar curve is also stated in~\cite[Fact~4.8]{MRVF2}.
\end{remark}
We will often use the following standard properties of Cohen--Macaulay
sheaves.  Although the references state them for schemes, the same
statements apply to classical Artin stacks locally of finite type: indeed,
the assertions are smooth-local and can be checked after
pulling back to a smooth atlas.

\begin{lemma}\emph{(\cite[Lemma~2.1]{Ardual})}\label{lem:CM-family}
Let $\mathcal{X}$ be a classical Artin stack locally of finite type and let
$Y$ be a scheme of pure dimension. Suppose that $Y$ is Cohen--Macaulay.
Let
\[
M\in \Coh^{\heartsuit}(\mathcal{X}\times Y)
\]
and suppose that, for every point $y\in Y$, the restriction
$
M|_{\mathcal{X}\times \{y\}}
$
is Cohen--Macaulay of a fixed codimension $d$. Then $M$ is
Cohen--Macaulay of codimension $d$.
\end{lemma}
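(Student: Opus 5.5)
The statement is local and algebraic, so I will prove it by reducing to commutative algebra over a smooth atlas. I will also use the standard fiberwise criterion for Cohen--Macaulayness in flat families.

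First I reduce to schemes. Both the hypothesis and the conclusion are smooth-local on $\mathcal{X}$. If $U\to\mathcal{X}$ is a smooth atlas, then $U\times Y\to \mathcal{X}\times Y$ is smooth. Cohen--Macaulayness of a coherent sheaf, and its codimension, are preserved and reflected under smooth pullback, once codimension is measured relative to the local dimension. The fiber restrictions $M|_{U\times\{y\}}$ are the smooth pullbacks of $M|_{\mathcal{X}\times\{y\}}$. So I may assume $\mathcal{X}=X$ is a scheme of finite type, and then work locally at a point $(x,y)\in X\times Y$.

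The main step is a depth computation along the projection $q\colon X\times Y\to Y$. Set $R=\mathcal{O}_{X\times Y,(x,y)}$, $S=\mathcal{O}_{Y,y}$, and $N=M_{(x,y)}$. The key point is flatness: I would show that $N$ is flat over $S$, using the local criterion of flatness together with the hypothesis that the fibers have the expected dimension. Given flatness, the standard formulas for a flat local map $S\to R$ give
\[\operatorname{depth}_R N=\operatorname{depth} S+\operatorname{depth} N/\mathfrak{m}_S N\]
and the analogous additivity of dimensions. Since $Y$ is Cohen--Macaulay, $\operatorname{depth}S=\dim S$. The fiber module $N/\mathfrak{m}_SN$ is a stalk of $M|_{X\times\{y\}}$, which is Cohen--Macaulay of codimension $d$. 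Hence its depth equals its dimension, which is $\dim X$ near $x$ minus $d$. Adding these shows $\operatorname{depth}N=\dim N$, and that $\dim N=\dim R-d$.

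To use the fiber hypothesis at non-closed points $y$, I will apply it at all scheme points, as stated, after base change to the residue field. Alternatively, I can deduce it from the closed points by openness of the CM locus.

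I expect flatness to be the main obstacle. The hypothesis only constrains fibers, and without flatness the depth additivity fails. My first attempt is induction on $\dim S$. Choose a nonzerodivisor $t\in\mathfrak{m}_S$, which exists since $S$ is CM, and show that $t$ is $N$-regular. An associated prime $\mathfrak{p}$ of $N$ containing $t$ would have $\dim R/\mathfrak{p}$ too small compared with the pure-dimensional support forced by the CM fibers of codimension $d$; this is the dimension count in Arinkin's argument. Then $N/tN$ satisfies the same hypotheses over the CM ring $S/t$, and induction gives flatness via the local criterion. If that proves delicate, I will fall back on the argument in \cite[Lemma~2.1]{Ardual}, which the statement cites, and simply transport it through the smooth atlas as indicated.
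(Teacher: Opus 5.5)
The gap is the flatness step. You propose to derive it from the fiber hypothesis by a dimension count on associated primes, and that step fails.

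Your smooth-atlas reduction is exactly the paper's argument; the paper then simply invokes \cite[Lemma~2.1]{Ardual} on the atlas. The depth and dimension additivity you quote for a module flat over $S$ is also the right mechanism.

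\textbf{Why the flatness argument fails.} If $M|_{\mathcal{X}\times\{y\}}$ is read as the classical fiber $M\otimes_{\mathcal{O}_Y}k(y)$, the statement is false, so no dimension count can work.

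Take $\mathcal{X}=\Spec k$, $Y=\mathbb{A}^1$ and $M=\mathcal{O}_{\mathbb{A}^1}\oplus\mathcal{O}_0$. Every classical fiber is $k(y)$ or $k^{\oplus 2}$, hence Cohen--Macaulay of codimension $0$ on a point. Yet $M_0$ has depth $0$ and dimension $1$. Here $t=y$ is a nonzerodivisor of $S$, and the maximal ideal is an embedded associated prime of $N$ containing $t$. Nothing in the fibers excludes it: fiberwise conditions do not see an embedded component sitting inside a special fiber.

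Pure support does not help either. With $\mathcal{X}=\Spec k$ and $Y=\mathbb{A}^2$, the ideal sheaf $\mathfrak{m}_0\subset\mathcal{O}_{\mathbb{A}^2}$ is torsion-free and all its classical fibers are Cohen--Macaulay of codimension $0$. But it has depth $1$ and dimension $2$ at the origin. Cutting by $t=x$ gives, locally, $\mathfrak{m}_0/x\mathfrak{m}_0\cong\mathcal{O}_{\mathbb{A}^1,0}\oplus k$. That is the first example again, one step down your induction.

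\textbf{The fix.} Flatness is part of the hypothesis, not something to prove. The paper uses the lemma in the proof of Theorem~\ref{prop:WH}, applying it to an object of $\Coh$ and concluding it is a $T$-flat family. So the restriction there means the derived pullback $M\otimes^{\mathbf{L}}_{\mathcal{O}_Y}k(y)$, required to be a Cohen--Macaulay sheaf; equivalently, $M$ is flat over $Y$.

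With that reading, $\mathcal{H}^{-1}$ of the restriction is $\operatorname{Tor}_1^{\mathcal{O}_Y}(M,k(y))$. So $\operatorname{Tor}_1^S(N,k(y))=0$ for the local homomorphism $S\to R$. The local criterion of flatness for finite $R$-modules then gives $N$ flat over $S$ at once.

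Your formulas $\operatorname{depth}_R N=\operatorname{depth}S+\operatorname{depth}(N/\mathfrak{m}_SN)$ and $\dim N=\dim S+\dim(N/\mathfrak{m}_SN)$ then finish the proof, including the codimension count. Replace the associated-prime induction by this Tor argument, or state flatness over $Y$ explicitly.
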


\begin{lemma}\emph{(\cite[Lemma~2.2]{Ardual})}\label{lem:MCM-extension}
Let $\mathcal{X}$ be a classical Artin stack of pure dimension, and let
$
M\in \Coh^{\heartsuit}(\mathcal{X})
$
be a maximal Cohen--Macaulay sheaf on $\mathcal{X}$. Let
$\mathcal{Z}\subset \mathcal{X}$ be a closed substack of codimension at
least two, and let
$
j\colon \mathcal{X}\setminus \mathcal{Z}\hookrightarrow \mathcal{X}
$
be the open immersion. Then the natural morphism
$
M \to j^{\heartsuit}_{*}(M|_{\mathcal{X}\setminus \mathcal{Z}})
$
is an isomorphism, where
$
j^{\heartsuit}_{*}:=\mathcal{H}^0(j_*).
$
\end{lemma}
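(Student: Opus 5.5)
The statement is Lemma~\ref{lem:MCM-extension}, which is \cite[Lemma~2.2]{Ardual} extended to classical Artin stacks. The plan is to reduce to the scheme case by smooth descent and then run the standard depth argument.

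\emph{Reduction to schemes.} Choose a smooth atlas $p\colon U\to\mathcal{X}$ with $U$ a scheme, locally of pure dimension. Smooth pullback preserves the relevant data:
\begin{itemize}[label={}, leftmargin=0pt]
\item it preserves maximal Cohen--Macaulayness, because depth and dimension of local rings both increase by the relative dimension along a smooth morphism;
\item it preserves codimension of closed substacks;
\item it commutes with $\mathcal{H}^0(j_*)$ by flat base change.
\end{itemize}
Since isomorphisms of sheaves can be checked on $U$, it suffices to treat the case where $\mathcal{X}=X$ is a scheme and $M$ is a maximal Cohen--Macaulay sheaf on $X$.

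\emph{Depth argument.} On $X$ we use the exact sequence
\[
0\to\mathcal{H}^0_{Z}(M)\to M\to j^{\heartsuit}_*(M|_{X\setminus Z})\to\mathcal{H}^1_{Z}(M)\to 0.
\]
By Grothendieck's vanishing criterion, $\mathcal{H}^i_Z(M)=0$ for $i<2$ as soon as $\operatorname{depth} M_x\ge 2$ at every point $x\in Z$. Work locally at $x\in Z$. Since $M$ is maximal Cohen--Macaulay, its depth equals its dimension:
\[
\operatorname{depth} M_x=\dim M_x=\dim\mathcal{O}_{X,x}.
\]
Since $X$ is of pure dimension and $Z$ has codimension at least two, $\dim\mathcal{O}_{X,x}\ge\operatorname{codim}(\overline{\{x\}})\ge 2$. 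Hence $\operatorname{depth} M_x\ge 2$, and the natural map $M\to j^{\heartsuit}_*(M|_{X\setminus Z})$ is an isomorphism.

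\emph{Main subtlety.} The one point that needs care is the support of $M$. The convention must be that a maximal Cohen--Macaulay sheaf has full support, so that $\dim M_x=\dim\mathcal{O}_{X,x}$ on each component. If $M$ vanished at a point of $Z$, the statement would hold trivially there; this is consistent with the argument, since both sides are then zero near that point.
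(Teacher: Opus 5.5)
Your proposal is correct and follows essentially the same route as the paper. The paper also passes from stacks to schemes by observing that every ingredient (maximal Cohen--Macaulayness, codimension, and $\mathcal{H}^0(j_*)$ via flat base change) is smooth-local, and then cites \cite[Lemma~2.2]{Ardual}. You spell out the scheme case via the local cohomology sequence and Grothendieck's depth criterion, which is the standard argument behind that lemma. Note that full support is not actually needed: the condition $\operatorname{depth} M_x=\dim\mathcal{O}_{X,x}\geq 2$ at points $x\in\mathcal{Z}\cap\operatorname{Supp}M$ suffices, and points outside the support impose no condition.
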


\subsection{Fourier--Mukai functor via Arinkin sheaf}
For $b\in \cB$ and $E\in \Coh^{\heartsuit}(\cC_b)$, we have 
$\deg \pi_{*}E=w$ if and only if 
\begin{align*}\deg_{\cC_b}(E)&:=\chi(E)-\chi(\mathcal{O}_{\cC_b}) \\
& =w+(r^2-r)(g-1).
\end{align*}
Here $\deg_{\cC_b}$ is the degree as a sheaf on $\cC_b$. 
By the description (\ref{def:Psharp}),
the Arinkin sheaf (\ref{Arsheaf0.5}) on 
$\cH(w)\times_{\cB}\cH(\chi)$ has bi-weight 
$(\chi', w')$, 
\begin{align*}
    \chi':=\chi+(r^2-r)(g-1), \ w':=w+(r^2-r)(g-1).
\end{align*}
Therefore it induces the $\cB$-linear functor 
\begin{align}\label{induce:Phi0}
\Phi \colon 
    \QCoh(\cH(w)^{\mathrm{ss}})_{-\chi'} \to \QCoh(\cH(\chi)^{\mathrm{greg}})_{w'}, 
\end{align}
defined by the Fourier--Mukai functor
\begin{align*}
    \Phi(-)=p_{2*}(p_1^{*}(-)\otimes \cP).
\end{align*}
The functor (\ref{induce:Phi0}) restricts to the functor 
\begin{align}\label{induce:Phi}
    \Phi \colon   \Coh(\cH(w)^{\mathrm{ss}})_{-\chi'} \to \Coh(\cH(\chi)^{\mathrm{greg}})_{w'}
\end{align}
since $\cH(w)^{\mathrm{ss}}$ admits a good moduli space which is projective over $\cB$
and $\cP$ is flat over $\cH$. 

\begin{remark}\label{rmk:reduced}
If $\cB\subset \rB^{\mathrm{red, cl}}$, then 
$\cH=\cH^{\mathrm{greg}}$ since any rank one torsion-free sheaf on a reduced curve is generically a line bundle. Therefore we have the maximal 
Cohen--Macaulay extension $\cP$
on $\cH\times_{\cB}\cH$. 
In particular in this case, the Arinkin sheaf induces the functor 
\begin{align}\notag
    \Phi \colon   \Coh(\cH(w)^{\mathrm{ss}})_{-\chi'} \to \Coh(\cH(\chi))_{w'}.
\end{align}
\end{remark}

\begin{lemma}\label{lem:commute}
The functor $\Phi$ in 
(\ref{induce:Phi0}) fits into the commutative diagram 
\begin{align}\label{dia:comfit}
\xymatrix{
\Coh(\cH(w)^{\mathrm{ss}})_{-\chi'} \ar[r]^-{\Phi} \ar[d]^-{\sim} & \Coh(\cH(\chi)^{\mathrm{greg}})_{w'} \ar[d]^-{\sim} \\
\Coh(\cH(w+r)^{\mathrm{ss}})_{-\chi'} \ar[r]^-{\Phi}  & \Coh(\cH(\chi)^{\mathrm{greg}})_{w'+r}
}
\end{align}
    Here the left vertical arrow is induced by 
    the isomorphism $\cH(w)^{\mathrm{ss}} \stackrel{\cong}{\to} \cH(w+r)^{\mathrm{ss}}$
    given by $\otimes \mathcal{O}_C(c)$ for $c\in C$ and the right vertical arrow is 
    given by $\otimes \det \mathcal{F}_c$, where $\mathcal{F}=\mathrm{F}|_{C\times \cH}$, 
    see Remark~\ref{rmk:period}. 
    \end{lemma}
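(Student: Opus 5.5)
To prove Lemma~\ref{lem:commute}, I will compare the Fourier--Mukai kernels. Write $\tau\colon \cH(w)^{\mathrm{ss}}\stackrel{\cong}{\to}\cH(w+r)^{\mathrm{ss}}$ for the isomorphism $(F,\theta)\mapsto (F,\theta)\otimes\mathcal{O}_C(c)$. On spectral sheaves it is $A\mapsto A\otimes \pi^*\mathcal{O}_C(c)$, where $\pi\colon\cC_b\to C$.

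The left vertical arrow is $\tau_*$. So the composition down-then-across is the Fourier--Mukai transform with kernel $(\tau\times\id)^*\cP$ on $\cH(w)^{\mathrm{ss}}\times_{\cB}\cH(\chi)^{\mathrm{greg}}$. The composition across-then-down has kernel $\cP\otimes p_2^*\det\mathcal{F}_c$. It therefore suffices to construct an isomorphism
\begin{align*}
(\tau\times\id)^*\cP \cong \cP\otimes p_2^*\det \mathcal{F}_c,
\end{align*}
possibly up to a line bundle pulled back from $\cB$ that can be trivialized.

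I will first check this on the open locus $(\cH\times_{\cB}\cH)^{\sharp}$, using the formula (\ref{def:Psharp}). Put $L:=\pi^*\mathcal{O}_C(c)$, so that $L|_{\pi^{-1}(c)}$ is trivialized canonically up to the choice of a local generator. Then
\begin{align*}
\frac{\det\chi(A_1\otimes L\otimes A_2)}{\det\chi(A_1\otimes L)}\cdot\frac{\det\chi(A_1)}{\det\chi(A_1\otimes A_2)}
\cong \frac{\det (A_2|_{\pi^{-1}(c)})}{\det(\mathcal{O}_{\pi^{-1}(c)})}
\cong \det (F_2|_c)\otimes \det(\mathcal{O}_{\pi^{-1}(c)})^{-1}.
\end{align*}
This follows from the exact sequence $0\to A\to A\otimes L\to A\otimes L|_{\pi^{-1}(c)}\to 0$, applied to $A=A_1\otimes A_2$ and to $A=A_1$. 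Where $A_1$ is a line bundle, it is locally trivial near the finite scheme $\pi^{-1}(c)$, so its contributions cancel; the case where $A_2$ is a line bundle is symmetric, with $\chi(A_1\otimes A_2\otimes L)$ flat. The remaining factor $\det(\mathcal{O}_{\pi^{-1}(c)})=\det(\pi_*\mathcal{O})|_c$ is pulled back from $\cB$. Using the trace-free identification of the spectral cover, $\pi_*\mathcal{O}_{\cC_b}=\oplus_i\Omega_C^{-i}$, this factor is canonically trivial over $\cB$.

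These isomorphisms are functorial, so they globalize via determinants of pushforwards to an isomorphism of line bundles on $(\cH\times_{\cB}\cH)^{\sharp}$.

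To extend the isomorphism to all of $\cH^{\mathrm{ss}}\times_{\cB}\cH^{\mathrm{greg}}$, I will use that both sides are maximal Cohen--Macaulay there. The left side is the pullback of $\cP^{\flat}$ by an isomorphism, and the right side is the tensor product of $\cP^{\flat}$ with a line bundle, both by Theorem~\ref{thm:Pflat}. The complement of the $\sharp$-locus inside the $\flat$-locus has codimension at least two, because the non-regular locus in each factor has codimension at least one fiberwise, following \cite{Ardual, MLi}. Lemma~\ref{lem:MCM-extension} then extends the isomorphism uniquely.

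The main obstacle is the codimension-two statement for the complement beyond the elliptic locus. If it is not available directly, I will instead invoke the uniqueness built into the construction of \cite[Section~3]{MLi}. There the kernel is $\cP^{\flat}$, defined by a Hilbert-scheme formula that is visibly compatible with twisting by line bundles pulled back from $C$, and the same local computation at $\pi^{-1}(c)$ applies.

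The final point to check is that the weights match: twisting by $\det\mathcal{F}_c$ shifts weight by $r$, as required.
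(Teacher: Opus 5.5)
This is essentially the paper's proof: the paper also compares the two kernels on $(\cH\times_{\cB}\cH)^{\sharp}$ using \eqref{def:Psharp} and then invokes uniqueness of maximal Cohen--Macaulay extensions (Lemma~\ref{lem:MCM-extension}), and your computation at $\pi^{-1}(c)$, including the triviality of $\det(\pi_*\mathcal{O}_{\cC_b})|_c$ over $\cB$, supplies details the paper leaves implicit. One correction to your codimension justification: over non-reduced spectral curves the non-regular locus need not have fiberwise codimension one (e.g.\ the $\theta=0$ component of the nilpotent cone in $\cH^{\mathrm{ss}}_0$), but the codimension-two bound still holds, because such $b$ form a closed subset of codimension at least two in $\cB$ for $g\geq 2$ and $\cH^{\mathrm{ss}}\times_{\cB}\cH^{\mathrm{greg}}\to\cB$ is flat, while over $\rB^{\mathrm{red}}$ your fiberwise argument is exactly Remark~\ref{rmk:dense}.
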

    \begin{proof}
    The commutativity of (\ref{dia:comfit}) easily follows by comparing the kernel objects 
    of both compositions on $(\cH \times_{\cB}\cH)^{\sharp}$, using the formula 
    for $\cP^{\sharp}$, and the uniqueness of Cohen--Macaulay extension
    in Lemma~\ref{lem:MCM-extension}. 
\end{proof}

By applying the construction (\ref{Arsheaf0.5}) for $\cB=\rB^{\mathrm{cl}}$, we obtain 
\begin{align*}
    \cP\in \Coh^{\heartsuit}((\Hig_G^{\mathrm{ss}}\times_{\rB} \Hig_G^{\mathrm{greg}})^{\mathrm{cl}}). 
\end{align*}
By Lemma~\ref{lem:reduced}, we have 
\begin{align*}
    \Hig_G^{\mathrm{ss}}\times_{\rB} \Hig_G^{\mathrm{greg}}\simeq 
    (\Hig_G^{\mathrm{ss}}\times_{\rB} \Hig_G^{\mathrm{greg}})^{\mathrm{cl}} \times k[-1].
\end{align*}
We define 
\begin{align}\label{ArsheafP}
    \mathrm{P}:=\cP\boxtimes \mathcal{O}_{k[-1]}\in \Coh(\Hig_G^{\mathrm{ss}}\times_{\rB} \Hig_G^{\mathrm{greg}}). 
\end{align}
Note that $\mathrm{P}$ is not in the heart of the standard t-structure. 
We have the $\rB$-relative Fourier--Mukai transform 
\begin{align*}
    \Phi^{\mathrm{P}} \colon \QCoh(\Hig_G^{\mathrm{ss}}) \to \QCoh(\Hig_G^{\mathrm{greg}}), \ 
      (-)\mapsto  p_{2*}(p_1^*(-)\otimes \mathrm{P}).
\end{align*}
Here $p_i$ are the projections onto the corresponding factors. 
Similarly to the above, it restricts to the functor 
\begin{align}\label{Phi:P}
    \Phi^{\mathrm{P}} \colon \Coh(\Hig_G^{\mathrm{ss}}) \to \Coh(\Hig_G^{\mathrm{greg}}).
\end{align}

\subsection{Fourier--Mukai transform of compactified Jacobians of reduced curves}
Let $X$ be a reduced projective curve with at worst planar singularities 
(e.g. $X$ is the spectral curve $X=\cC_b$ for $b\in \mathrm{B}^{\mathrm{red}}$). 
Here we recall Fourier--Mukai transform of compactified Jacobians of $X$ 
via Arinkin sheaf, proved in~\cite{MRVF2}. Below we use the notation in~\cite[Section~1.1]{MRVF2}. 

Let $\{C_i\}_{i\in I}$ be the irreducible components of $X$. A \textit{polarization} is a tuple of 
rational numbers $\underline{q}=\{\underline{q}_{C_i}\}_{i\in I}$ for each $C_i \subset X$
such that $\lvert \underline{q} \rvert:=\sum_{i\in I}\underline{q}_{C_i} \in \mathbb{Z}$.
A torsion-free rank one sheaf $I\in \Coh^{\heartsuit}(X)$ of Euler characteristic $\chi(I)=\lvert \underline{q} \rvert$ 
is called \textit{$\underline{q}$-semistable} (resp.\textit{~stable})
if for any subcurve $Y\subsetneq X$ we have 
\begin{align*}
    \chi(I_Y) \geq  \sum_{C_i \subset Y}\underline{q}_{C_i}, \quad 
    \left(\mbox{resp. }  \chi(I_Y) >  \sum_{C_i \subset Y}\underline{q}_{C_i}\right).
\end{align*}
Here $I_Y$ is the maximal torsion-free quotient of $I|_{Y}$. 
A polarization $\underline{q}$ is called \textit{general} if there are no strictly $\underline{q}$-semistable sheaves. 
A \textit{fine compactified Jacobian} of $X$ is the fine moduli space $\overline{J}_X(\underline{q})$
of torsion-free rank one sheaves of Euler characteristic $\lvert \underline{q}\rvert$ which are $\underline{q}$-semistable (or equivalently $\underline{q}$-stable) with respect to a general polarization $\underline{q}$ on $X$. 

A fine compactified Jacobian is a good moduli space of the moduli stack of 
rank one semistable sheaves $\overline{\mathcal{J}}_X(\underline{q})$. 
The good moduli space morphism 
\begin{align*}
    \overline{\mathcal{J}}_X(\underline{q}) \to \overline{J}_X(\underline{q})
\end{align*}
is a $\mathbb{G}_m$-gerbe. Indeed it is a trivial $\mathbb{G}_m$-gerbe 
because of the existence of universal sheaf on $X\times \overline{J}_X(\underline{q})$, 
see~\cite[Section~2.1]{MRVF2}. 

The construction of the Arinkin sheaf in the previous subsection yields the 
Cohen--Macaulay sheaf 
\begin{align*}
    \cP_{\mathcal{J}} \in \Coh^{\heartsuit}( \overline{\mathcal{J}}_X(\underline{q}) \times 
     \overline{\mathcal{J}}_X(\underline{q}')) 
\end{align*}
of bi-weight $(\lvert \underline{q}'\rvert+p_a-1,  \lvert \underline{q}\rvert+p_a-1)$, 
where $p_a$ is the arithmetic genus of $X$.

\begin{thm}\label{thm:MRV}\emph{(\cite[Theorem~A]{MRVF2})}
The Fourier--Mukai functor 
   \begin{align}\label{Phi:J}
\Phi_{\mathcal{J}} \colon \Coh( \overline{\mathcal{J}}_X(\underline{q}))_{-\lvert \underline{q}'\rvert-p_a+1} \to \Coh( \overline{\mathcal{J}}_X(\underline{q}'))_{\lvert \underline{q}\rvert+p_a-1}
\end{align} 
with kernel $\cP_{\mathcal{J}}$ is an equivalence. Moreover the inverse of 
$\Phi_{\mathcal{J}}$ is given by the kernel object 
$\mathbb{D}(\cP_{\mathcal{J}})[p_a]$. 
\end{thm}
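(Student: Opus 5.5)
The statement is Theorem~\ref{thm:MRV}, a citation of \cite[Theorem~A]{MRVF2}. I would reproduce the argument of Arinkin~\cite{Ardual}, as adapted by Melo--Rapagnetta--Viviani, rather than develop a new one.

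The plan has three steps.

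\textbf{Step 1: reduction to a composition.} Write $\Psi$ for the Fourier--Mukai functor with kernel $\mathbb{D}(\cP_{\mathcal{J}})[p_a]$. It suffices to show that $\Psi\circ\Phi_{\mathcal{J}}\cong\id$ and $\Phi_{\mathcal{J}}\circ\Psi\cong\id$. By symmetry between $\underline{q}$ and $\underline{q}'$, it is enough to treat one composition. Its kernel on $\overline{\mathcal{J}}_X(\underline{q})\times\overline{\mathcal{J}}_X(\underline{q})$ is
\begin{align*}
\mathcal{K}=p_{13*}\bigl(p_{12}^*\cP_{\mathcal{J}}\otimes p_{23}^*\mathbb{D}(\cP_{\mathcal{J}})\bigr)[p_a].
\end{align*}
We must show $\mathcal{K}\cong\mathcal{O}_{\Delta}$ in the correct weight.

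\textbf{Step 2: support.} I would prove that $\mathcal{K}$ is supported on the diagonal. At a point $(I_1,I_2)$ this becomes the vanishing of $H^*(\overline{J}_X(\underline{q}'),\cP_{I_1}\otimes\cP_{I_2}^{\vee})$ for $I_1\neq I_2$. Off the locus of line bundles, this is handled by the ``theorem of the square'' type identity and translation by the generalized Jacobian. One uses that $\cP^{\sharp}$ is multiplicative along the Jacobian action on the regular locus, and that the cohomology of the nontrivial translation-invariant sheaf $\cP_{L}$ vanishes for a nontrivial line bundle $L$. The general case then follows from flatness of $\cP_{\mathcal{J}}$ (Theorem~\ref{thm:Pflat}) and a semicontinuity and dimension argument.

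\textbf{Step 3: identification with $\mathcal{O}_\Delta$.} Using that fine compactified Jacobians of planar curves are Cohen--Macaulay, with trivial dualizing sheaf and dimension $p_a$, Lemma~\ref{lem:CM-family} gives Cohen--Macaulayness of $\cP_{\mathcal{J}}$. Grothendieck duality then shows $\mathcal{K}$ is a sheaf (concentrated in one degree) of codimension $p_a$ supported on $\Delta$. Next compute the fiber at a point of the regular locus, reducing to the classical Mukai computation, which gives rank one. A Cohen--Macaulay sheaf on $\Delta$ that is generically of rank one, with a natural map to $\mathcal{O}_\Delta$ coming from the trace, is then isomorphic to $\mathcal{O}_\Delta$. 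Here I use that $\Delta\cong\overline{J}_X(\underline{q})$ is Cohen--Macaulay, together with Lemma~\ref{lem:MCM-extension} to extend the isomorphism from the complement of codimension $\geq 2$.

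\textbf{Main obstacle.} The hardest step is the support statement in Step 2 at non-Cartier points, where no translation structure is available. Arinkin handles this using Haiman's isospectral Hilbert scheme together with a dimension estimate on the locus where fibers jump, deduced from the codimension bounds for strata of sheaves that fail to be locally free. For reducible $X$, the extra input needed is the genericity of $\underline{q}$, so that the stacks are $\mathbb{G}_m$-gerbes over proper fine moduli spaces. I would follow \cite{MRVF2} exactly at this point.
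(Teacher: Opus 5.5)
Your Step~2 carries the whole content of the theorem, and as sketched it does not go through.

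\textbf{Line bundles.} For two line bundles $I_1\neq I_2$ you reduce to $H^*(\overline{J}_X(\underline{q}'),\cP_L)=0$ with $L=I_1\otimes I_2^{-1}$ and appeal to translation by the generalized Jacobian $J_X$. Translation only gives $t_N^*\cP_L\cong\cP_L\otimes\langle L,N\rangle$, that is, $J_X$-equivariance twisted by a multiplicative line bundle. This forces vanishing only when the twist is nontrivial.
\begin{itemize}
\item For $L$ in the unipotent part of $J_X$ there is no such twist. This part is present at every non-nodal singularity, e.g.\ $y^2=x^m$ with $m\geq 3$, which are exactly the $\GL_2$ spectral singularities this paper needs.
\item Already for a cuspidal cubic, $J_X\cong\mathbb{G}_a$. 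Since $\Hom(\mathbb{G}_a,\mathbb{G}_m)=\Ext^1(\mathbb{G}_a,\mathbb{G}_m)=0$, the sheaf $\cP_L$ is honestly $\mathbb{G}_a$-equivariant, so translation gives nothing even though $H^*(\cP_L)=0$ holds.
\item Arinkin obtains this vanishing by a separate argument via Hilbert schemes of points.
\end{itemize}

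\textbf{Non-locally-free points.} When $I_1$ or $I_2$ is not locally free, flatness of $\cP_{\mathcal J}$ plus semicontinuity on the fixed curve $X$ only shows that the bad locus is closed. Arinkin's support argument needs more: $X$ must sit in a family (a semiuniversal deformation) whose relative compactified Jacobian is smooth. He then combines the codimension bounds for the $\delta$-strata of the base with a Cohen--Macaulay support lemma for the convolution kernel.

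\textbf{Step~3.} For a fixed singular $X$ the non-locally-free locus of $\Delta\cong\overline{J}_X(\underline q)$ is a divisor. So knowing $\mathcal{K}$ on the regular locus and invoking Lemma~\ref{lem:MCM-extension} does not determine it; compare $\mathcal{O}_\Delta(-D)\to\mathcal{O}_\Delta$. What does work is surjectivity of the trace map $\mathcal{K}\to\mathcal{O}_\Delta$. On the fiber at $(I,I)$ this map is Serre dual, using $\omega\cong\mathcal{O}$, to $k\to\mathrm{End}(\cP_I)$, $1\mapsto\id$, hence nonzero. Surjectivity together with Cohen--Macaulayness and generic length one gives the isomorphism.

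Since you hand the decisive step to \cite{MRVF2} anyway, your proposal is the same citation as the paper's rather than an independent proof.

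As a citation, it misses the only point the paper actually argues, in Remark~\ref{rmk:MRV2}. \cite[Theorem~A]{MRVF2} is stated for $\lvert\underline q\rvert=\lvert\underline q'\rvert=1-p_a$. In that case both weights in (\ref{Phi:J}) vanish, and $\Phi_{\mathcal J}$ is a functor $\Coh(\overline J_X(\underline q))\to\Coh(\overline J_X(\underline q'))$ between the coarse spaces. For general $\underline q,\underline q'$ the weight pieces are categories of twisted sheaves. The paper handles this as follows:
\begin{itemize}
\item the gerbes $\overline{\mathcal J}_X(\underline q)\to\overline J_X(\underline q)$ are trivial because a universal sheaf exists;
\item so the weight pieces are identified, non-canonically, with untwisted coherent sheaves on the coarse spaces;
\item the remark following MRV's Theorem~A then says the resulting functor is still an equivalence.
\end{itemize}
Your phrase ``in the correct weight'' does not supply this. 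You need either that argument, or a twist by line bundles on $X$ moving $\underline q$ and $\underline q'$ to total degree $1-p_a$. Under such a twist the kernel changes by a line bundle on each factor, as in Lemma~\ref{lem:commute}.
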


\begin{remark}\label{rmk:MRV2}
In~\cite[Theorem~A]{MRVF2}, the result is stated for $\lvert \underline{q}\rvert=\lvert \underline{q}'\rvert=1-p_a$; in this case the functor (\ref{Phi:J}) is canonically identified 
with 
\begin{align*}
    \Phi_J \colon \Coh(\overline{J}_X(\underline{q})) \stackrel{\sim}{\to} \Coh(\overline{J}_X(\underline{q}')). 
\end{align*}

In other cases, the functor (\ref{Phi:J}) is canonically identified with 
   \begin{align}\label{PhiJ_2}
     \Phi_J \colon \Coh(\overline{J}_X(\underline{q}), {\beta}^{-\lvert \underline{q}'\rvert-p_a+1}) \to \Coh(\overline{J}_X(\underline{q}'), {\beta'}^{\lvert \underline{q}\rvert+p_a-1}).    
   \end{align} 
   Here $\beta, \beta'$ are the Brauer classes corresponding to $\mathbb{G}_m$-gerbes 
   $\overline{\mathcal{J}}_X(\underline{q}), \overline{\mathcal{J}}_X(\underline{q}')$
   respectively. Since $\beta, \beta'$ are trivial classes, 
   after choosing trivializations of both gerbes, the functor is \textit{uncanonically} identified with 
   \begin{align*}
       \Coh(\overline{J}_X(\underline{q})) \to \Coh(\overline{J}_X(\underline{q}')). 
   \end{align*}
   
   The source of the uncanonicity is a choice of trivializations of $\beta, \beta'$; 
   as stated just below~\cite[Theorem~A]{MRVF2}, the above non-canonical functor 
   is still an equivalence, which implies that Theorem~\ref{thm:MRV} holds without
   the condition $\lvert \underline{q}\rvert=\lvert \underline{q}'\rvert=1-p_a$. 
   \end{remark}

   \begin{remark}\label{rmk:gieseker}
   The notion of $\underline{q}$-stability includes Gieseker semistable sheaves. 
   Indeed for a $\mathbb{Q}$-ample divisor $H$ on $X$, and $E\in \Coh^{\heartsuit}(X)$, 
   let $\underline{q}_{C_i}=c\cdot \deg(H|_{C_i})$, where $c\in \mathbb{Q}$ is determined by 
   $c\cdot \sum_{i\in I} \deg(H|_{C_i})=\chi(E)$. 
   Then $E$ is $H$-Gieseker (semi)stable if and only if it is $\underline{q}$-(semi)stable. 
   
       In particular if we denote by $\mathcal{M}_X(\chi')^{H\text{-ss}}$ the moduli stack of $H$-semistable 
       sheaves $E$ on $X$ with degree $\chi'$, the result of Theorem~\ref{thm:MRV}
       yields an equivalence for generic $H$ (i.e. when $H$-semistable equals $H$-stable)
       \begin{align}\label{equiv:M}
           \Coh(\mathcal{M}_X(w')^{H\text{-ss}})_{-\chi'} \stackrel{\sim}{\to}   \Coh(\mathcal{M}_X(\chi')^{H\text{-ss}})_{w'}.
       \end{align}
   \end{remark}

   \subsection{Relative Fourier--Mukai transform}
We will use a relative version of the equivalence (\ref{equiv:M}), which we formulate here. 

Let $T$ be a $k$-scheme of finite type, and let $X\to T$ be a flat family of reduced projective curves with at worst planar singularities 
(e.g.\ $X\to T$ is the universal spectral curve $\cC\to \cB$ for $\cB \subset \rB^{\mathrm{red,cl}}$).
For a $T$-relative $\mathbb{Q}$-ample divisor $H$ on $X$ and $\chi \in \mathbb{Z}$, let 
\begin{align}\label{map:T}
\mathcal{M}_{X/T}(\chi')^{H\text{-ss}} \to T
\end{align}
be the moduli stack of rank-one torsion-free $H$-semistable sheaves on the fibers of $X\to T$ of degree $\chi'$. 
Suppose that $H$ is generic, i.e.\ that $\mathcal{M}_{X/T}(\chi')^{H\text{-ss}}$
consists of $H$-stable sheaves. 
Then the good moduli space morphism
\begin{align*}
    \mathcal{M}_{X/T}(\chi')^{H\text{-ss}} \to M_{X/T}(\chi')^{H\text{-ss}}
\end{align*}
is a $\mathbb{G}_m$-gerbe with Brauer class $\beta$, and we have the decomposition into $\mathbb{G}_m$-weight subcategories
\begin{align*}
    \Coh(\mathcal{M}_{X/T}(\chi')^{H\text{-ss}})
    =
    \bigoplus_{w\in \mathbb{Z}}
    \Coh(\mathcal{M}_{X/T}(\chi')^{H\text{-ss}})_w,
\end{align*}
where each summand is equivalent to the category of $\beta^w$-twisted coherent sheaves on $M_{X/T}(\chi')^{H\text{-ss}}$.

The construction of the Arinkin sheaf in the previous subsection yields a Cohen--Macaulay sheaf
\begin{align*}
    \cP_{X/T} \in
    \Coh^{\heartsuit}\bigl(
   \mathcal{M}_{X/T}(w')^{H\text{-ss}}
    \times_T
    \mathcal{M}_{X/T}(\chi')^{H\text{-ss}}
    \bigr)
\end{align*}
of bi-weight $(\chi',w')$.

\begin{prop}\label{prop:MRV}
The Fourier--Mukai functor
\begin{align}\notag
\Phi_{X/T} \colon
\Coh( \mathcal{M}_{X/T}(w')^{H\text{-ss}})_{-\chi'}
\to
\Coh(\mathcal{M}_{X/T}(\chi')^{H\text{-ss}})_{w'}
\end{align}
with kernel $\cP_{X/T}$ is an equivalence. Moreover, the inverse of 
$\Phi_{X/T}$ is given by the kernel object
\begin{align}\label{kernel:inv}
\mathbb{D}(\cP_{X/T})\otimes p_1^*\omega_{M/T}[p_a],
\end{align}
where $p_a$ is the arithmetic genus of the fibers of $X\to T$, and $M=M_{X/T}(w')^{H\text{-ss}}$.
\end{prop}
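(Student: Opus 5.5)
The plan is to deduce the relative statement from the absolute equivalence of Theorem~\ref{thm:MRV} (in the form (\ref{equiv:M}) of Remark~\ref{rmk:gieseker}), using the standard fiberwise criterion for Fourier--Mukai kernels over a base. Write $\mathcal{M}:=\mathcal{M}_{X/T}(w')^{H\text{-ss}}$, $\mathcal{M}':=\mathcal{M}_{X/T}(\chi')^{H\text{-ss}}$, and let $\Psi$ be the $T$-linear Fourier--Mukai functor $\Coh(\mathcal{M}')_{w'}\to \Coh(\mathcal{M})_{-\chi'}$ with kernel $\mathcal{Q}:=\mathbb{D}(\cP_{X/T})\otimes p_1^*\omega_{M/T}[p_a]$. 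Both $\mathcal{M}\to T$ and $\mathcal{M}'\to T$ are $\mathbb{G}_m$-gerbes over good moduli spaces that are proper over $T$. Since $H$ is generic, they are flat families of fine compactified Jacobians, hence Cohen--Macaulay and Gorenstein relative to $T$. So the convolutions $\Psi\circ\Phi_{X/T}$ and $\Phi_{X/T}\circ\Psi$ are again $T$-relative integral transforms, with kernels $\mathcal{K}_1:=p_{13*}(p_{12}^*\cP_{X/T}\otimes p_{23}^*\mathcal{Q})$ on $\mathcal{M}\times_T\mathcal{M}$ and $\mathcal{K}_2$ on $\mathcal{M}'\times_T\mathcal{M}'$. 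The goal is to show $\mathcal{K}_1\cong \mathcal{O}_{\Delta}$ and $\mathcal{K}_2\cong\mathcal{O}_\Delta$, restricted to the correct weight summands.

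\textbf{Step 1: base change.} Since $\cP_{X/T}$ is flat over each factor (Theorem~\ref{thm:Pflat} applied relatively), its derived restriction to the fiber over $t\in T$ is the absolute Arinkin sheaf $\cP_{\mathcal{J}}$ for $X_t$. This uses uniqueness of the Cohen--Macaulay extension, Lemma~\ref{lem:MCM-extension}, since the restriction is Cohen--Macaulay by Lemma~\ref{lem:CM-family} and agrees with $\cP^{\sharp}$ on the regular locus. Relative duality commutes with base change for flat Cohen--Macaulay sheaves, and $\omega_{M/T}$ restricts to $\omega_{M_t}$. Hence $\mathcal{Q}|_t$ is $\mathbb{D}(\cP_{\mathcal{J}})[p_a]$ twisted by $\omega_{M_t}$. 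Because $M_t$ is a fine compactified Jacobian of a planar curve, $\omega_{M_t}$ is trivial, so this twist is harmless fiberwise. The $\omega$ factor is needed only to correct global, base-dependent line bundles, and this is exactly what it does for the relative dualizing sheaf in Grothendieck duality for $p_2$.

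\textbf{Step 2: fiberwise identity, then globalize.} By Theorem~\ref{thm:MRV} and Remark~\ref{rmk:MRV2}, which allow arbitrary weights, $\mathcal{K}_1|_{t}\cong\mathcal{O}_{\Delta_t}$ on the correct weight summand for every closed point $t$. One then applies the standard lemma (as in Bridgeland/Mukai, or~\cite[Section~7]{Ardual}) that a kernel on $\mathcal{M}\times_T\mathcal{M}$ whose derived restrictions to all closed fibers are $\mathcal{O}_{\Delta_t}$ is a sheaf, flat over $T$, supported on $\Delta$, and isomorphic to $\mathcal{O}_\Delta\otimes L$ for some line bundle $L$ pulled back along $\mathcal{M}\to T$ locally. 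The same applies to $\mathcal{K}_2$.

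\textbf{Main obstacle.} The hardest part is pinning down $L$ globally. This requires either exhibiting a canonical map $\mathcal{K}_1\to\mathcal{O}_\Delta$ (or its adjoint unit) that is fiberwise an isomorphism, or computing $L$ directly. I would first construct the unit of the adjunction $\Phi_{X/T}\dashv\Phi_{X/T}^R$. By Grothendieck duality, $\Phi_{X/T}^R$ is precisely the transform with kernel $\mathcal{Q}$, and this is where $\omega_{M/T}$ and the shift $[p_a]=\dim M/T$ appear. The unit $\id\to\Phi^R\Phi$ is then a natural map whose base change to each fiber is the absolute unit, which is an isomorphism by Theorem~\ref{thm:MRV}. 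Therefore $\Phi_{X/T}$ is fully faithful, by Nakayama over $T$ applied to the cone, which is a $T$-perfect kernel with vanishing fibers. The same argument for the counit shows that $\Phi^R$ is conservative, hence $\Phi_{X/T}$ is an equivalence with inverse (\ref{kernel:inv}).
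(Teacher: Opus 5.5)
Your argument is correct, and its decisive part (your final paragraph) is exactly the paper's proof. The paper uses Grothendieck duality for the proper lci morphism to $T$ to see that the right adjoint $\Phi_{X/T}^R$ has kernel (\ref{kernel:inv}), notes that the unit and counit commute with base change to fibers, and concludes they are isomorphisms because they are fiberwise isomorphisms by Theorem~\ref{thm:MRV}. The convolution kernels $\mathcal{K}_1,\mathcal{K}_2$ and the determination of the twist $L$ in your Step~2 are therefore an unnecessary detour. Also, Lemma~\ref{lem:CM-family} runs from fibers to the total space, so it is not what shows $\cP_{X/T}|_t$ is the absolute Arinkin sheaf; that is a base-change property of the construction itself.
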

\begin{proof}
    The case where $T=\Spec k$ is~\cite[Theorem~A]{MRVF2}, and we can reduce to this case. Indeed, the morphism \eqref{map:T} is proper and locally complete intersection by~\cite[Theorem~A]{MRF3}. Therefore the functor $\Phi_{X/T}$ admits a right adjoint
    \begin{align*}
        \Phi_{X/T}^R \colon
        \Coh(\mathcal{M}_{X/T}(\chi')^{H\text{-ss}})_{w'}
        \to
        \Coh( \mathcal{M}_{X/T}(w')^{H\text{-ss}})_{-\chi'}
    \end{align*}
    given by the Fourier--Mukai functor with kernel object (\ref{kernel:inv}). 
    The natural transformation
    \begin{align}\label{nat:PhiJ}
    \id \Rightarrow \Phi_{X/T}^R \circ \Phi_{X/T}
    \end{align}
    is compatible with base change, and its cone is zero on the fiber over each $t\in T$ by the result for $T=\Spec k$. Therefore \eqref{nat:PhiJ} is an isomorphism. 
    Similarly, $\Phi_{X/T} \circ \Phi_{X/T}^R \Rightarrow \id$ is an isomorphism. Hence $\Phi_{X/T}$ is an equivalence.
\end{proof}

\begin{remark}\label{rmk:dualizing}
    The relative dualizing sheaf of $M_{X/T}(w')^{H\text{-ss}}$ is trivial 
    on the fiber over any $t\in T$ by~\cite[Theorem~A]{MRF3}. 
    Therefore $\omega_{M/T}$ in Proposition~\ref{prop:MRV} is locally trivial on $T$.
\end{remark}

\section{Compatibility of Wilson/Hecke operators}
In this section, we prove the compatibility of Wilson/Hecke operators under 
the Fourier--Mukai functor given by the Arinkin sheaf. Throughout this section, 
$C$ is a smooth projective curve of genus $g\geq 2$ over 
$k$, and $G=\GL_r$.

\subsection{Wilson operator}
Recall from (\ref{univ:E0}) that we have the universal sheaf 
\begin{align*}
    \mathrm{E} \in \Coh(S\times \Hig_G^{\mathrm{ss}}).
\end{align*}
It is a perfect complex, and 
is a push-forward by the closed immersion 
\begin{align}\label{support:C}
\mathrm{C}\times_{\rB} \Hig_G^{\mathrm{ss}}
\hookrightarrow S\times \Hig_G^{\mathrm{ss}}.
\end{align}

The \textit{(left) Wilson operator} is given by the functor 
\begin{align*}
    \mathrm{W} \colon \Coh(S)\otimes \Coh(\Hig_G^{\mathrm{ss}}) \to \Coh(\Hig_G^{\mathrm{ss}})
\end{align*}
defined by 
\begin{align*}
    (-) \mapsto p_{\mathrm{H}*}((-)\otimes \mathrm{E})
\end{align*}
where $p_{\mathrm{H}} \colon S\times \Hig_G^{\mathrm{ss}} \to \Hig_G^{\mathrm{ss}}$ is the projection. Note that $p_{\mathrm{H}}$ is not proper, but 
the functor $\mathrm{W}$ preserves coherent sheaves 
since $\mathrm{E}$ is supported on (\ref{support:C})
which is proper over $\Hig_G^{\mathrm{ss}}$.

\subsection{Hecke operator}\label{subsec:hecke2}
Now we consider Hecke operators.  
Let $\mathrm{Hecke}_G$ be the derived moduli stack
classifying Higgs bundles $(F_i, \theta_i)$ for $i=1, 2$ with 
injections 
\begin{align}\label{seq:Fi}
    (F_1, \theta_1) \subset (F_2, \theta_2)
\end{align}
such that $F_2/F_1$ is isomorphic to a skyscraper sheaf of a closed point in $C$. 
By the spectral construction, it is equivalent to the derived
moduli stack which classifies 
exact sequences in $\Coh^{\heartsuit}(S)$
\begin{align}\label{seq:Ei}
    0 \to E_1 \to E_2 \to \mathcal{O}_x \to 0
\end{align}
where $E_i$ are compactly supported pure one-dimensional sheaves on $S$ and $x\in S$.

We have the evaluation morphisms 
\begin{align}\label{dia:greg}
    \xymatrix{
\mathrm{Hecke}_G \ar[d]_{(\ev_3^{\sharp}, \ev_2^{\sharp})} \ar[r]^{\ev_1^{\sharp}} & \Hig_G \\
\mathfrak{M}(1) \times \Hig_G. &
    }
\end{align}
Here 
$\mathfrak{M}(1)$ is the derived moduli stack which classifies skyscraper sheaves $\mathcal{O}_x$ for 
$x\in S$, which is explicitly described as 
\begin{align*}\mathfrak{M}(1)\simeq S\times k[-1]\times \bgm.\end{align*}
The map $\ev_i^{\sharp}$ for $i=1, 2$ sends (\ref{seq:Ei}) to $E_i$
and $\ev_3^{\sharp}$ sends it to $\mathcal{O}_x$. 

We modify the stack $\mathrm{Hecke}_G$ and define the following: 
\begin{align}\label{Hecke2}
\mathrm{Hecke}^{\prime}_G:=\mathrm{Hecke}_G\times_{\mathfrak{M}(1)}(S\times \bgm).
\end{align}
It admits the following evaluation maps 
\begin{align}\label{dia:gregprime}
    \xymatrix{
\mathrm{Hecke}^{\prime}_G \ar[d]_{(\ev_3, \ev_2)} \ar[r]^{\ev_1} & \Hig_G\ar[d] \\
S \times \Hig_G\ar[r] & \rB.
    }
\end{align}
Here $\ev_1, \ev_2$ are induced from $\ev_1^{\sharp}, \ev_2^{\sharp}$ and 
$\ev_3$ is the composition of the projection to $S\times \bgm$ 
with the projection onto $S$. 
We have the following lemma, whose proof will be given in Subsection~\ref{subsec:qsmooth}: 
\begin{lemma}\label{lem:qsmooth}
The maps $\ev_1, \ev_2$ are quasi-smooth 
and proper of relative virtual dimension $1$. 
The maps $(\ev_3, \ev_2), (\ev_3, \ev_1)$ are also quasi-smooth and proper of relative virtual dimension 
$-1$. 
\end{lemma}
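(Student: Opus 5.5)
We will prove the statement by computing the relative cotangent complexes of the evaluation maps, using the deformation theory of the sequences \eqref{seq:Ei} in $\Coh^{\heartsuit}(S)$.

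\textbf{Properness.} First we check that each map is representable and proper.
\begin{itemize}
\item[($\ev_2$)] For $\ev_2$, the fiber over $E_2$ is the Quot scheme of length-one quotients $E_2 \twoheadrightarrow \mathcal{O}_x$. Equivalently, it is $\mathbb{P}(E_2)$, i.e.\ $\mathrm{Proj}$ of $E_2$ over its support curve. This is proper because $E_2$ has proper support.
\item[($\ev_1$)] For $\ev_1$, the fiber over $E_1$ parametrizes extensions $0\to E_1\to E_2\to \mathcal{O}_x\to 0$ with $E_2$ pure, up to scaling, together with the point $x$. This is the projectivization of $\mathcal{E}xt^1(\mathcal{O}_x,E_1)$ over $S$. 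Note that $\mathcal{E}xt^1(\mathcal{O}_x, E_1)\cong \mathcal{E}xt^2(E_1,\mathcal{O}_x)^\vee\otimes\omega_S$, and $\omega_S$ is trivial. So this is a relative $\mathrm{Quot}$ of $\mathcal{E}xt^1(E_1,\mathcal{O}_S)$, and it is proper since $E_1$ is compactly supported. The purity of $E_2$ is automatic: a torsion subsheaf of $E_2$ would split the extension locally, and split extensions are excluded by nonvanishing of the extension class.
\item[($(\ev_3,\ev_i)$)] For the maps $(\ev_3,\ev_i)$, fixing $x$ just cuts out a closed substack of these fibers, so properness follows from the cases above.
\end{itemize}
The extra $\bgm$ in \eqref{Hecke2} causes no problem. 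The automorphisms of $\mathcal{O}_x$ act on the extension class by scaling, so after base change to $S\times\bgm$ the stack $\mathrm{Hecke}'_G$ is a projective-bundle-type object, with representable fibers, over $\Hig_G$.

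\textbf{Quasi-smoothness and dimension counts.} Next, we use that $\Hig_G$ is the derived moduli of compactly supported pure one-dimensional sheaves on $S$, with $\mathbb{T}=R\Hom(E,E)[1]$.
\begin{enumerate}
\item The derived Hecke stack is the derived moduli of two-term filtrations. Its tangent complex at \eqref{seq:Ei} is $R\Hom_{\mathrm{filt}}(E_\bullet,E_\bullet)[1]$, which sits in a triangle with $R\Hom(E_1,E_1)$, $R\Hom(\mathcal{O}_x,\mathcal{O}_x)$ and $R\Hom(\mathcal{O}_x,E_1)$, or dually with $R\Hom(E_1,\mathcal{O}_x)$.
\item Replacing $\mathfrak{M}(1)$ by $S\times\bgm$ removes the $k[-1]$-factor, i.e.\ the $\Ext^2(\mathcal{O}_x,\mathcal{O}_x)$ part.
\item The relative tangent complex of $\ev_2$ is $R\Hom(E_1,\mathcal{O}_x)$ modulo the $\bgm$-direction. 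The relative tangent complex of $\ev_1$ is $R\Hom(\mathcal{O}_x,E_1)[1]$ combined with the tangent directions $T_xS$ of the point.
\item We compute the relevant Ext groups. Since $E_i$ has homological dimension $1$ on the surface $S$ and $\omega_S\cong\mathcal{O}_S$, Serre duality gives $\Ext^j(E_1,\mathcal{O}_x)\cong \Ext^{2-j}(\mathcal{O}_x,E_1)^\vee$. These groups are concentrated in degrees $0$ and $1$ with $\chi(E_1,\mathcal{O}_x)=0$.
\item Hence $\mathbb{T}_{\ev_i}$ has amplitude $[0,1]$, so $\mathbb{L}_{\ev_i}$ has amplitude $[-1,0]$ and the maps are quasi-smooth.
\item The virtual dimension of $\ev_i$ is $\chi(E_1,\mathcal{O}_x)+\dim S-1=0+2-1=1$. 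Here $\dim S$ comes from moving $x$, and $-1$ comes from the $\bgm$ scaling.
\item For $(\ev_3,\ev_i)$ we subtract the $\dim S=2$ directions of $x$, giving relative virtual dimension $1-2=-1$.
\end{enumerate}

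The main obstacle is making the identification of $\mathbb{T}_{\ev_i}$ rigorous on the derived Hecke stack, including the correct treatment of the $k[-1]$ and $\bgm$ factors. For this we plan to use the standard filtered-objects description of the derived stack of extensions: $\mathrm{Hecke}_G\to\mathfrak{M}(1)\times\Hig_G$ is the total space $\Spec\mathrm{Sym}\bigl(R\Hom(\mathcal{O}_x,E_1)[1]\bigr)^\vee$, restricted to the open locus of nonsplit purity. From there we will read off the cotangent complexes and check amplitudes fiberwise at closed points.
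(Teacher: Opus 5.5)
Your properness argument and your description of $\mathbb{T}_{\ev_1}$ are fine; the latter is built from $R\Hom(\mathcal{O}_x,E_1)[1]$, $T_xS$ and the $\bgm$-direction. The quasi-smoothness step fails as written on the $\ev_2$ side, exactly at the obstacle you flagged.

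The map $\mathrm{Hecke}'_G\to\mathrm{Hecke}_G$ is a base change of $\mathrm{pt}\hookrightarrow k[-1]$, whose relative tangent complex is $k[-2]$. This map is not quasi-smooth, so passing to $\mathrm{Hecke}'_G$ is not a harmless deletion of the $\Ext^2(\mathcal{O}_x,\mathcal{O}_x)$-term.

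In item 3 you take $R\Hom(E_1,\mathcal{O}_x)$ for $\ev_2$. That is the tangent complex of the fiber of $\ev_2^{\sharp}$, i.e.\ of the derived Quot scheme of $E_2$. It is an extension of $R\Hom(\mathcal{O}_x,\mathcal{O}_x)[1]$ by $R\Hom(E_2,\mathcal{O}_x)$. So it already contains the motion of $x$, the $\mathbb{G}_m$-direction and the $k[-1]$-direction, and your bookkeeping ($+\dim S$, $-1$ for $\bgm$) double counts; the number $1$ comes out right only by accident. Correctly,
\[
\mathbb{T}_{\ev_2}=\mathrm{fib}\bigl(R\Hom(E_1,\mathcal{O}_x)\to\Ext^2(\mathcal{O}_x,\mathcal{O}_x)[-1]\bigr).
\]
This complex can a priori have nonzero $H^2$, and item 5 does not address that. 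To kill it you need the map $\Ext^1(E_1,\mathcal{O}_x)\to\Ext^2(\mathcal{O}_x,\mathcal{O}_x)$, given by composition with the extension class $e$, to be surjective. It is: under Serre duality it is dual to $\Hom(\mathcal{O}_x,\mathcal{O}_x)\to\Ext^1(\mathcal{O}_x,E_1)$, $1\mapsto e$, which is injective since $e\neq 0$. Separately, $\mathcal{E}xt^2(E_1,\mathcal{O}_x)$ in your properness paragraph should be $\Ext^1(E_1,\mathcal{O}_x)$; the former vanishes by purity.

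The cleaner repair, which is how the paper is organized, never computes through $\mathrm{Hecke}'_G\to\mathrm{Hecke}_G$:
\begin{enumerate}
\item Show that $(\ev_3^{\sharp},\ev_2^{\sharp})$ and $(\ev_3^{\sharp},\ev_1^{\sharp})$ are quasi-smooth over $\mathfrak{M}(1)\times\Hig_G$.
\item Base change along $S\times\bgm\times\Hig_G\to\mathfrak{M}(1)\times\Hig_G$; quasi-smoothness survives any base change.
\item Compose with the smooth projections to $S\times\Hig_G$ and to $\Hig_G$.
\end{enumerate}
In your language, $(\ev_3^{\sharp},\ev_2^{\sharp})$ is open in the linear stack of maps $E_2\to\mathcal{O}_x$, with relative tangent complex $R\Hom(E_2,\mathcal{O}_x)$. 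That is the complex your count $\chi+\dim S-1$ should be applied to. The map $(\ev_3^{\sharp},\ev_1^{\sharp})$ is your extension stack, with relative tangent complex $R\Hom(\mathcal{O}_x,E_1)[1]$. Both complexes have amplitude $[0,1]$, because $\Hom(\mathcal{O}_x,E_i)=0=\Ext^2(E_2,\mathcal{O}_x)$ by purity, and both have Euler characteristic $0$. Since $S\times\bgm\to S$ has relative dimension $-1$, you get $-1$ for $(\ev_3,\ev_i)$ and $1$ for $\ev_i$.

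The paper obtains these quasi-smoothness statements, and the properness of $\ev_i^{\sharp}$, without any Ext computation. It identifies the left and right Higgs Hecke diagrams with the $(-1)$-shifted cotangent correspondences $\Omega_{(f,g)}[-1]$ of the bundle Hecke diagram $\mathcal{C}(1)\times\Bun_G\leftarrow\mathrm{Hecke}^{\rB}_G\to\Bun_G$, with $f$ smooth and $g$ smooth proper. It then counts dimensions as $\operatorname{vdim}\ev_1^{\sharp}=0$ plus $1$ from $\mathrm{pt}\hookrightarrow k[-1]$. Your relative-Quot argument is a good substitute for the properness part; the $(\ev_3,\ev_i)$ case follows by cancellation against the separated projection $S\times\Hig_G\to\Hig_G$.
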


The \textit{(left) Hecke operator}
is the functor 
\begin{align*}
    \mathrm{H} \colon \Coh(S) \otimes \Coh(\Hig_G) \to \Coh(\Hig_G)
\end{align*}
defined by 
\begin{align*}
    (-)\mapsto \ev_{1*}(\ev_3, \ev_2)^*(-). 
\end{align*}
Note that the functor $\mathrm{H}$ preserves coherent sheaves by 
Lemma~\ref{lem:qsmooth}. Moreover we have the following lemma, which is not used in this section but will be relevant 
in the next section. 
It is essentially proved in~\cite[Proposition~9.8]{PTlim}, but the statement in loc. cit. 
is different, so we will give its proof in Subsection~\ref{subsec:HeckeL}. 
\begin{lemma}\label{lem:HeckeL}
The functor $\mathrm{H}$ restricts to the functor of limit categories 
\begin{align*}
    \mathrm{H} \colon \Coh(S) \otimes \LL(\Hig_G(\chi))_w \to \LL(\Hig_G(\chi-1))_w.
\end{align*}
\end{lemma}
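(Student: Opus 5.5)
The statement is local on QCA open substacks, so I first reduce to the quasi-compact case. By definition, $\LL(\Hig_G(\chi))_w$ consists of the compact objects of $\lim_{\U}\Ind(\LL(\U)_w)$. The Hecke correspondence is compatible with restriction to opens: for a QCA open $\U'\subset \Hig_G(\chi-1)$, the preimage $\ev_1^{-1}(\U')$ is quasi-compact. Indeed, $\ev_1$ is proper by Lemma~\ref{lem:qsmooth}, and $\ev_2(\ev_1^{-1}(\U'))$ is contained in some QCA open $\U\subset \Hig_G(\chi)$, because modifying $F_1\subset F_2$ at one point changes the Harder--Narasimhan type only by a bounded amount. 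Hence $\mathrm{H}$ induces compatible functors $\Coh(S)\otimes \LL(\U)_w\to \Coh(\U')$. It suffices to show that these land in $\LL(\U')_w$ and preserve compactness, and the latter is automatic since $\ev_{1*}(\ev_3,\ev_2)^*$ preserves coherence.

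The main step is the weight estimate. Fix $\nu\colon \bgm\to \U'$ with image $x=(F_1,\theta_1)$ and regularization $\nu^{\mathrm{reg}}$. Since $\ev_1$ is proper and quasi-smooth of relative virtual dimension one, I would compute $\iota_*\nu^{\mathrm{reg}*}\ev_{1*}(\ev_3,\ev_2)^*(A\boxtimes\cE)$ by base change along $\nu^{\mathrm{reg}}$. This is computed on the fiber $\ev_1^{-1}(x)$, a $\mathbb{P}$-type space of extensions $F_1\subset F_2$ with $F_2/F_1\cong \mathcal{O}_p$. It has a $\mathbb{G}_m$-action, and I would localize to its $\mathbb{G}_m$-fixed loci. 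Each fixed point $y$ corresponds to a map $\nu'\colon \bgm\to \Hig_G(\chi)$ lifting $\nu$, where $F_2=F_1+\mathcal{O}_p$ is graded compatibly and the new one-dimensional quotient carries some weight $\mu$. By the condition on $\cE\in\LL(\U)_w$, the weights of $\nu'^{\mathrm{reg}*}\cE$ lie in the window $\frac12[c_1(T^{<0}_{y}),c_1(T^{>0}_{y})]+\frac12 c_1(\mathfrak{g}_{y})+c_1(\nu'^*\delta)$. The weights contributed by pushing forward along $\ev_1$ lie in the window given by the weights of the normal and tangent directions of the fiber, i.e.\ $\mathbb{L}_{\ev_1}$ at $y$. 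Note that $\delta=\det(\mathrm F_c)^{w/r}$ has the same weight for $F_1$ and $F_2$ up to the central character, so its contribution matches, using that $c\neq p$ can be chosen by the independence of $c$.

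The identity to check is an exact additivity: the triangle
\[
\nu^*\mathbb{L}_{\Hig_G}\to \nu'^*\mathbb{L}_{\mathrm{Hecke}'}\to \mathbb{L}_{\ev_1}
\]
and the self-duality of $\mathbb{L}_{\Hig_G}$ should give that the half-sums of positive and negative weights for $x$ equal those for $y$ plus those of $\mathbb{L}_{\ev_1}$ and $\mathbb{L}_{(\ev_3,\ev_2)}$. The required symmetry here is provided by the fact that $(\ev_3,\ev_2)$ and $\ev_1$ are ``dual'' (both quasi-smooth, of virtual dimension $-1$ and $1$). This is the argument of \cite[Proposition~9.8]{PTlim}, adapted to our normalization of $\mathrm{Hecke}'_G$ via (\ref{Hecke2}), which removes the $k[-1]\times\bgm$ factor of $\mathfrak{M}(1)$.

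I expect the main obstacle to be making this weight bookkeeping rigorous with regularizations. The objects involved are not perfect, so the simplified condition (\ref{wt:cond2}) is not available. I would first treat perfect $\cE$ using Lemma~\ref{lem:perfect}, compute the weights of $\mathbb{L}_{\ev_1}|_y$ explicitly from $\Ext^*(\mathcal{O}_p,F_1)$ and $\Ext^*(F_2,\mathcal{O}_p)$, and then pass to general objects of $\LL$ by the compatibility of regularization with the quasi-smooth maps in the correspondence.
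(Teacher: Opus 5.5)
Your route is different from the paper's, and as written it has gaps. The paper does no new weight estimate. It takes from \cite[Section~9.4]{PTlim} that the $\Omega$-normalized correspondence $(\ev_1^{\rB})^{\Omega}_*(\ev_3^{\rB},\ev_2^{\rB})^{\Omega!}$ maps $\LL(\mathfrak{M}(1))_0\otimes\LL(\Hig_G(\chi))_{\delta\otimes\omega^{1/2}_{\Bun_G}}$ to $\LL(\Hig_G(\chi-1))_{\delta\otimes\omega^{1/2}_{\Bun_G}}$; this is already a statement about compact objects of the limit categories. It then identifies this functor with $(\ev_1^{\sharp})_*\bigl((\ev_3^{\sharp},\ev_2^{\sharp})^*(-)\otimes\omega_{(\ev_3^{\rB},\ev_2^{\rB})}[d]\bigr)$ and conjugates by $\omega^{1/2}_{\Bun_G}$. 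What is left is a twist by $\mathcal{L}=\omega_{(\ev_3^{\rB},\ev_2^{\rB})}\otimes\omega^{1/2}_{\Bun_G(\chi)}\otimes\omega^{-1/2}_{\Bun_G(\chi-1)}$. The real content is the fiberwise computation, using $[F_1]=[F_2]-[\mathcal{O}_x]$, that $\mathcal{L}\cong\det\chi_C(F_2,\mathcal{O}_x)^{-1/2}\otimes\det\chi_C(F_2,\omega_C\otimes\mathcal{O}_x)^{1/2}\otimes\det\chi_C(\mathcal{O}_x,\mathcal{O}_x)^{1/2}$. This comes from a $\mathbb{Q}$-line bundle $\mathcal{L}'$ on $\mathfrak{M}(1)\times\Hig_G(\chi)$ that is trivial over $\Omega_{U\times\bgm}\times\Hig_G(\chi)$ whenever $\omega_C|_U$ is trivial. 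Hence $c_1(\nu^*\mathcal{L}')=0$ for every $\nu$, the twist preserves the source, and one finishes by composing with $i_*\colon\Coh(S)\to\Coh(S\times k[-1])$. You describe the adaptation of \cite[Proposition~9.8]{PTlim} as merely removing the $k[-1]\times\bgm$ factor, and that misses exactly this step. The functor there carries the extra factor $\omega_{(\ev_3^{\rB},\ev_2^{\rB})}[d]$ and acts on $\omega^{1/2}_{\Bun_G}$-shifted windows. Your check that $\delta$ has matching weights at $F_1$ and $F_2$ addresses neither point.

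Read as a self-contained argument, the proposal is missing two steps.

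\textbf{Compactness is not automatic.} Compatible functors $\Coh(S)\otimes\LL(\U)_w\to\LL(\U')_w$ on QCA opens only give a functor landing in $\widetilde{\LL}(\Hig_G(\chi-1))_w$. That category can be strictly larger than $\LL(\Hig_G(\chi-1))_w$ (Remark~\ref{rmk:Ltilde}), since coherence on every QCA open does not imply compactness in $\IndL$. You would need, for instance, that the opposite correspondence $(\ev_3,\ev_2)_*\ev_1^!$ also preserves $\IndL$, so that $\mathrm{H}$ has a continuous right adjoint there.

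\textbf{The window estimate is only asserted} (``should give''), and it is the heart of the matter.
\begin{itemize}
\item Localizing to $\mathbb{G}_m$-fixed points of $\ev_1^{-1}(x)$ does not by itself bound the weights of a proper pushforward. You would need attracting and repelling Bia{\l}ynicki-Birula-type filtrations on a derived fiber whose fixed loci are positive-dimensional.
\item The regularizations at $x$ and at the lifts $y$ involve different Lie algebras $\mathfrak{g}_x$ and $\mathfrak{g}_y$, and you give no comparison between them.
\item Non-perfect objects of $\LL$ are not reached by Lemma~\ref{lem:perfect}. Lemma~\ref{lem:perfect2} would require a colimit presentation by perfect complexes with uniformly bounded weights, which you do not have.
\item The symmetry you invoke, that $\ev_1$ and $(\ev_3,\ev_2)$ have opposite virtual dimensions, does not by itself force any cancellation of weights. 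In the paper that cancellation is the explicit statement that $\mathcal{L}'$ has weight zero along every $\nu$.
\end{itemize}
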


There is an open substack 
\begin{align*}
    \mathrm{Hecke}_G^{\mathrm{greg}} \subset \mathrm{Hecke}_G
\end{align*}
consisting of (\ref{seq:Fi}) such that either (equivalently, both) of the $\theta_i$ is/are generically regular. 
Then we have the following Cartesian squares 
\begin{align}\label{dia:Hgreg}
    \xymatrix{
S\times \Hig_G^{\mathrm{greg}} \dinclusion \diasquare
& \mathrm{Hecke}_G^{\prime \mathrm{greg}} \ar[l]_-{(\ev_3, \ev_2)}
\ar[r]^-{\ev_1} \dinclusion\diasquare & \Hig_G^{\mathrm{greg}} \dinclusion \\
  S\times \Hig_G & \ar[l]_-{(\ev_3, \ev_2)} \mathrm{Hecke}_G^{\prime} \ar[r]^-{\ev_1} & \Hig_G.
    }
\end{align}
Therefore we also have the left Hecke operator on the generically regular 
locus 
\begin{align*}
    \mathrm{H} \colon \Coh(S) \otimes \Coh(\Hig_G^{\mathrm{greg}}) \to \Coh(\Hig_G^{\mathrm{greg}})
\end{align*}
by $\mathrm{H}= \ev_{1*}(\ev_3, \ev_2)^*$ in the upper diagram of (\ref{dia:Hgreg}).

\subsection{Wilson/Hecke compatibility}
Recall the functor (\ref{Phi:P}) constructed by the Arinkin sheaf. 
We have the following diagram of $\rB$-linear functors
\begin{align}\label{com:WH}
\xymatrix{
\Coh(S) \otimes \Coh(\Hig^{\mathrm{ss}}_G) \ar[r]^-{\mathrm{W}} \ar[d]_-{\id\otimes\Phi^{\mathrm{P}}} & \Coh(\Hig_G^{\mathrm{ss}}) \ar[d]^-{\Phi^{\mathrm{P}}} \\
\Coh(S) \otimes \Coh(\Hig_G^{\mathrm{greg}}) \ar[r]^-{\mathrm{H}} & \Coh(\Hig_G^{\mathrm{greg}}). 
}
\end{align}
The purpose of this section is to prove the above diagram commutes: 
\begin{thm}\label{prop:WH}
The diagram (\ref{com:WH}) commutes over $\rB^{\mathrm{red}} \subset \rB$. 
\end{thm}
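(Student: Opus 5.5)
Both compositions in (\ref{com:WH}) are $\rB$-linear integral transforms with coherent kernels on $S\times \Hig_G^{\mathrm{ss}}\times_{\rB}\Hig_G^{\mathrm{greg}}$. The plan is therefore to compare the two kernels, and to show they agree over $\rB^{\mathrm{red}}$.

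First we strip off the derived structure. By Lemma~\ref{lem:reduced} and the definition (\ref{ArsheafP}) of $\mathrm{P}$, everything is obtained from the classical situation by $\boxtimes\,\mathcal{O}_{k[-1]}$. It thus suffices to work over an open $\cB\subset \rB^{\mathrm{red,cl}}$, where $\cH=\cH^{\mathrm{greg}}$ and $\cP$ is the maximal Cohen--Macaulay sheaf of Remark~\ref{rmk:reduced}.

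Next we compute the two kernels via base change. The upper composition $\Phi\circ\mathrm{W}$ has kernel
\[
\cK_1=(\mathrm{E}\boxtimes\mathcal{O})\otimes \cP
\]
on $S\times\cH^{\mathrm{ss}}\times_{\cB}\cH$. This is supported on $\cC\times_{\cB}\cH^{\mathrm{ss}}\times_{\cB}\cH$, and $\cP$ is flat over $\cH^{\mathrm{ss}}$ by Theorem~\ref{thm:Pflat}. The lower composition $\mathrm{H}\circ(\id\otimes\Phi)$ has kernel
\[
\cK_2=(\id\times\ev_1)_*(\id\times(\ev_3,\ev_2))^*\cP,
\]
using the Hecke correspondence on the second factor. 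Lemma~\ref{lem:qsmooth} makes both maps well behaved: the pullback is quasi-smooth and the pushforward is proper. Both kernels are then supported on the locus where the point $x\in S$ lies on the spectral curve $\cC_b$.

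The heart of the argument is to show that $\cK_1$ and $\cK_2$ are both maximal Cohen--Macaulay, of the same codimension, on the support $\cC\times_{\cB}\cH^{\mathrm{ss}}\times_{\cB}\cH$, and that they agree on an open subset whose complement has codimension $\geq 2$. Lemma~\ref{lem:MCM-extension} will then identify them. We would also need the identification to be canonical, so that it is functorial.

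On the regular locus the comparison is a line-bundle computation. We take $A_2$ a line bundle on $\cC_b$, and also allow $A_1$ to be a line bundle. A Hecke modification $E_1\subset E_2$ at $x\in\cC_b$, with $E_2$ locally free at $x$, is determined by $x$. Formula (\ref{def:Psharp}) then gives
\[
\cP|_{(A_1,E_1)}\cong \cP|_{(A_1,E_2)}\otimes A_1|_x^{-1}
\]
up to the correct twists, which matches the fiber of $\cK_1$, namely $A_1|_x$, after dualizing conventions. We expect a sign or dual to need adjusting here. Where $x$ is a smooth point of $\cC_b$, the fiber of $\ev_1$ is a single point and the derived structure is trivial, so the two kernels agree there. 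The complementary locus, where $x$ is a singular point of $\cC_b$ or where both sheaves fail to be locally free, has codimension $\geq 2$ in the support, because singular points of reduced spectral curves are finite in each fiber.

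The main obstacle is proving that $\cK_2$ is Cohen--Macaulay, and in particular that it is a sheaf with no higher cohomology. To attack this, I would use the Haiman isospectral Hilbert scheme description of $\cP$ from \cite{MLi}. Locally,
\[
\cP=\bigl(\text{pullback of the Procesi-type bundle from }\Hilb^n(S)\bigr)^{\mathrm{sign}}\text{-isotypic}.
\]
The Hecke modification then corresponds to the nested Hilbert scheme $\Hilb^{n,n+1}(S)$, so $\cK_2$ becomes the analogous construction for $\Hilb^{n+1}$ restricted along the universal point. Haiman's results give Cohen--Macaulayness and flatness of the isospectral Hilbert scheme and of the nested variant. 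Combined with Lemma~\ref{lem:CM-family}, applied fiberwise over $S\times\cH^{\mathrm{ss}}$, this should yield Cohen--Macaulayness of $\cK_2$. Cohen--Macaulayness of $\cK_1$ follows more directly: $\cP$ is Cohen--Macaulay and flat over $\cH^{\mathrm{ss}}$, and $\mathrm{E}$ is Cohen--Macaulay on $\cC\times\cH^{\mathrm{ss}}$. Finally, the $\mathbb{G}_m$-weights must be checked to match on both sides.
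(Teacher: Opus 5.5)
Your overall architecture is the paper's. You pass to classical $\cB\subset\rB^{\mathrm{red,cl}}$, identify both composites by their kernels on $\cH^{\mathrm{ss}}\times_{\cB}\cH\times_{\cB}\cC$, show both kernels are maximal Cohen--Macaulay, check they agree off a codimension-two locus, and conclude by Lemma~\ref{lem:MCM-extension}. Your line-bundle identity $\cP|_{(A_1,E_2)}\cong\cP|_{(A_1,E_1)}\otimes A_1|_x$ is exactly what is needed, with no dual or sign adjustment.

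The gap is the step you single out as the main obstacle. You need the Hecke-side kernel $\cK_2$ (pointwise in $E\in\cH^{\mathrm{ss}}$, the object $\mathrm{H}^{\vee}(\cP_E)$) to lie in the heart and be maximal Cohen--Macaulay on $\cH_b\times\cC_b$. Haiman's theorems (flatness of $\widetilde{\Hilb}^d(S)\to\Hilb^d(S)$, Cohen--Macaulayness, and the base-changed statements over the nested Hilbert scheme) control the sheaf \emph{before} the Hecke push-forward. The push-forward itself, along $(\ev_1^{\mathrm{H}},\ev_3^{\mathrm{H}})\colon\Hilb^{d,d-1}(S)\to\Hilb^{d-1}(S)\times S$ (equivalently along $(\ev_3,\ev_1)$ on $\cHecke$), is proper but not finite. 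Over $(E_1,x)$ with $x$ a singular point of $\cC_b$ and $E_1$ not locally free at $x$, the fiber $\mathbb{P}(\Ext^1(\mathcal{O}_x,E_1))$ is positive-dimensional. Cohen--Macaulayness upstairs gives neither vanishing of higher direct images nor the Cohen--Macaulay property downstairs. Lemma~\ref{lem:CM-family} concerns families over a Cohen--Macaulay base with Cohen--Macaulay fibers and does not address push-forwards. So ``this should yield Cohen--Macaulayness of $\cK_2$'' is unsupported exactly where the content lies.

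The paper closes this with a different input. It first transports the problem to Hilbert schemes, using stable pairs, smoothness of the Abel--Jacobi map for $d\gg 0$, and the base change (\ref{isom:HQP}). The resulting statement, Lemma~\ref{lem:heckeQ}, is formally local over $\mathrm{Sym}^d(S)$. One then realizes the formal singularities of $\cC_b$ at $x_1,\dots,x_d$, together with $E$, on an \emph{integral} plane curve $D\subset\mathbb{P}^2$ (Lemma~\ref{lem:intmodel}). There, Groechenig's Wilson--Hecke compatibility for Arinkin's autoequivalence gives the Hecke-eigen property $\mathrm{H}_D^{\vee}(\cP_F)\cong\cP_F\boxtimes F$, which is visibly maximal Cohen--Macaulay. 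This transfers back through the identification of formal completions. You need this, or a genuine substitute that controls the higher direct images along the Hecke correspondence.

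Two smaller corrections:
\begin{itemize}
\item Your formula for $\cK_2$ does not typecheck. $\cP$ must be pulled back along $\id\times\ev_2$ and pushed forward along $\id\times(\ev_3,\ev_1)$.
\item The codimension-two claim needs more than finiteness of singular points. It needs both that the bad locus lies over $\cB\setminus\cB^{\mathrm{sm}}$, and that it has codimension $\geq 1$ in each fiber (via Remark~\ref{rmk:dense} for the non-locally-free sheaves).
\end{itemize}
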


We reduce Theorem~\ref{prop:WH} to the statement for classical stacks. 
We fix an open subset $\cB \subset \rB^{\mathrm{cl}}$ and use the notation 
as in Subsection~\ref{subsec:dlconj}, especially the notation for the 
stack $\cH$ in (\ref{sH}). Note that $\cH$ is classical by the diagram (\ref{dia:HigB}). 
We have the Wilson operator over $\cB$
\begin{align}\label{funct:W}
    \mathrm{W} \colon \Coh(S) \otimes \Coh(\cH^{\mathrm{ss}})\to 
    \Coh(\cH^{\mathrm{ss}}) 
\end{align}
defined to be 
\begin{align*} (-) \mapsto p_{\cH*}(p_S^*(-) \otimes \cE), \ \cE:=
    \mathrm{E}|_{S \times \cH^{\mathrm{ss}}} \in \Coh^{\heartsuit}(S\times \cH^{\mathrm{ss}}).
\end{align*}
Here $p_{\cH}, p_S$ are the projections from  
$S\times \cH^{\mathrm{ss}}$. 

The Hecke operator over $\cB$ is given as follows.  
By taking the base change of the diagram (\ref{dia:gregprime}) with respect to 
$\cB \hookrightarrow \rB$, we have the commutative diagram 
\begin{align}\label{redHecke}
\xymatrix{
  \cHecke^{\mathrm{greg}} \ar[r]^-{\ev_1} \ar[d]_{(\ev_3,\ev_2)} &
  \cH^{\mathrm{greg}} \ar[d] \\
  S \times \cH^{\mathrm{greg}} \ar[r] &
  \cB. 
}
\end{align}
Here (by abuse of notation) we have used the same notation $\ev_i$ when pulled back via $\cB \subset \rB$. 
We have the Hecke operator over $\cB$ 
\begin{align}\label{hecke:greg}
    \mathrm{H} \colon \Coh(S) \otimes \Coh(\cH^{\mathrm{greg}})
    \to \Coh(\cH^{\mathrm{greg}})
\end{align}
defined to be 
\begin{align*}
    (-)\mapsto \ev_{1*}(\ev_3, \ev_2)^*(-).
\end{align*}

Recall the Fourier--Mukai functor $\Phi$ as in (\ref{induce:Phi}), defined 
by the Arinkin sheaf:
\begin{align*}
    \Phi \colon \Coh(\cH^{\mathrm{ss}}) \to \Coh(\cH^{\mathrm{greg}}).
\end{align*}
We have the following lemma: 
\begin{lemma}
The commutativity of (\ref{com:WH}) 
over $\cB \times k[-1] \subset \rB$
follows if the following diagram commutes: 
\begin{align}\label{com:WH3}
\xymatrix{
\Coh(S) \otimes \Coh(\cH^{\mathrm{ss}}) 
  \ar[r]^-{\mathrm{W}}
  \ar[d]_-{\mathrm{id}\otimes \Phi} 
& \Coh(\cH^{\mathrm{ss}}) 
  \ar[d]_-{\Phi} \\
\Coh(S) \otimes \Coh(\cH^{\mathrm{greg}}) 
  \ar[r]^-{\mathrm{H}} 
& \Coh(\cH^{\mathrm{greg}}). 
}
\end{align}
\end{lemma}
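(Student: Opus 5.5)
The plan is to use the product decomposition of Lemma~\ref{lem:reduced} to split every object in (\ref{com:WH}) over $\cB\times k[-1]$ into a classical factor and a $k[-1]$ factor. We then check that each functor in (\ref{com:WH}) is the $\QCoh(k[-1])$-linear extension, i.e.\ of the form $(-)\boxtimes \id_{\Coh(k[-1])}$, of the corresponding functor in (\ref{com:WH3}). Once this is done, commutativity of (\ref{com:WH3}) gives commutativity of (\ref{com:WH}) by tensoring with $\Coh(k[-1])$.

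\textbf{Step 1: the stacks.} By Lemma~\ref{lem:reduced} and the Cartesian square (\ref{dia:HigB}) we have, over $\cB\times k[-1]$,
\begin{align*}
\Hig_G^{\mathrm{ss}}\times_{\rB}(\cB\times k[-1])\simeq \cH^{\mathrm{ss}}\times k[-1], \qquad \Hig_G^{\mathrm{greg}}\times_{\rB}(\cB\times k[-1])\simeq \cH^{\mathrm{greg}}\times k[-1].
\end{align*}
Both identifications are compatible with the maps to $\cB\times k[-1]$. The same holds for the fiber product:
\begin{align*}
\Hig_G^{\mathrm{ss}}\times_{\rB}\Hig_G^{\mathrm{greg}}\simeq (\cH^{\mathrm{ss}}\times_{\cB}\cH^{\mathrm{greg}})\times k[-1].
\end{align*}

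\textbf{Step 2: the Fourier--Mukai functors.} By definition (\ref{ArsheafP}), $\mathrm{P}=\cP\boxtimes\mathcal{O}_{k[-1]}$. The projections are products of the classical projections with $\id_{k[-1]}$, and the relative fiber product over $\rB$ absorbs a single copy of $k[-1]$. Hence, under Step 1, $\Phi^{\mathrm{P}}\simeq \Phi\boxtimes\id$.

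\textbf{Step 3: the Wilson operator.} The universal sheaf $\mathrm{E}$ on $S\times \Hig_G^{\mathrm{ss}}$ should be the pullback of its restriction $\cE$ along the projection $S\times\cH^{\mathrm{ss}}\times k[-1]\to S\times \cH^{\mathrm{ss}}$. We will verify this by checking that the splitting in Lemma~\ref{lem:reduced} comes from the trace part of $\cHom(\mathrm{F},\mathrm{F})$, which acts trivially on the universal family. With that in hand, $\mathrm{W}\simeq \mathrm{W}_{\cB}\boxtimes \id$.

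\textbf{Step 4: the Hecke operator.} We will show that $\mathrm{Hecke}'_G\times_{\rB}(\cB\times k[-1])\simeq \cHecke^{\mathrm{greg}}\times k[-1]$, with each $\ev_i$ equal to the classical map times $\id_{k[-1]}$. This is the main obstacle: a priori $\mathrm{Hecke}'_G$ has its own derived structure, and $\ev_1,(\ev_3,\ev_2)$ are only quasi-smooth (Lemma~\ref{lem:qsmooth}). The approach is as follows.
\begin{itemize}
\item The $k[-1]$ factor comes from $H^1(\Omega_C)$, i.e.\ from the trace of the Higgs field.
\item This factor is pulled back from $\rB$, so the Hecke stack over $\rB$ has it split off in the same way as $\Hig_G$.
\item The modification (\ref{Hecke2}) replaces $\mathfrak{M}(1)=S\times k[-1]\times\bgm$ by $S\times\bgm$. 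This removes the extra $k[-1]$ coming from the point sheaf, so that only the one $k[-1]$ of the base remains.
\end{itemize}
Concretely, we compare virtual dimensions via Lemma~\ref{lem:qsmooth} and compare cotangent complexes, checking that the classical truncation $\cHecke^{\mathrm{greg}}$ together with the base-change map yields an equivalence. Granting this, base change gives $\mathrm{H}\simeq\mathrm{H}_{\cB}\boxtimes\id$.

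\textbf{Conclusion.} All four functors in (\ref{com:WH}) are $(-)\boxtimes\id_{\Coh(k[-1])}$ applied to those of (\ref{com:WH3}). A commutativity isomorphism for (\ref{com:WH3}) therefore induces one for (\ref{com:WH}) over $\cB\times k[-1]$.
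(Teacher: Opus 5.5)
Your overall route is the paper's: its proof is just the sentence that tensoring (\ref{com:WH3}) with $\Coh(k[-1])$ yields (\ref{com:WH}). But two of the claims you use to justify this are false. In Step~3 you assert that $\mathrm{E}$ is the pullback of $\cE$ along the projection $S\times\cH^{\mathrm{ss}}\times k[-1]\to S\times\cH^{\mathrm{ss}}$. In Step~4 you assert that $\ev_3$ is the classical map times $\id_{k[-1]}$.

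The Step~3 claim contradicts your own Step~1. The identification $\cH^{\mathrm{ss}}\times k[-1]\simeq \Hig^{\mathrm{ss}}_G\times_{\rB}(\cB\times k[-1])$ is classified by the universal sheaf pulled back to $S\times\cH^{\mathrm{ss}}\times k[-1]$. If that sheaf were $\mathrm{pr}^*\cE$, the identification would factor through the classical stack $\cH^{\mathrm{ss}}$. The Hitchin map would then factor through it too, so its $k[-1]$-component would be constant. That contradicts the compatibility with $\cB\times k[-1]$ you assert in Step~1.

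Concretely, the $k[-1]$-direction \emph{is} the derived trace of the Higgs field. Along it the universal Higgs field acquires a nonzero multiple of $[\omega]\,\epsilon\cdot\id$, where $[\omega]$ spans $H^1(C,\Omega_C)$ and $\epsilon$ is the degree $-1$ coordinate. No automorphism of the family removes this, since the trace is an invariant. So the trace part does not act trivially on the universal family, and the check you propose would fail.

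What is true is that this component acts on spectral sheaves by a derived fibrewise translation. This is an automorphism $\tau$ of $S\times k[-1]$ over $k[-1]$, coming from $[\omega]\in H^1(C,\Omega_C)\subset H^1(S,T_{S/C})$. One has $\mathrm{E}\simeq(\tau\times\id)_*\mathrm{pr}^*\cE$. In a Hecke modification $0\to E_1\to E_2\to\mathcal{O}_x\to 0$, the point $x$ is translated together with $E_1,E_2$. Hence $\ev_1$ and $\ev_2$ split as products, while $\ev_3$ is twisted by the same $\tau$. This gives $\mathrm{W}\simeq(\mathrm{W}_{\cB}\otimes\id)\circ\tau^*$ and $\mathrm{H}\simeq(\mathrm{H}_{\cB}\otimes\id)\circ\tau^*$, with one common autoequivalence $\tau^*$ of the $\Coh(S)\otimes\Coh(k[-1])$ factor.

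Since $\tau^*$ commutes with $\id\otimes\Phi^{\mathrm{P}}=\id\otimes\Phi\otimes\id$, commutativity of (\ref{com:WH3}) tensored with $\Coh(k[-1])$ still gives commutativity of (\ref{com:WH}); this repairs the argument. A clean way to get all of this, including your Step~4 (which is currently only a plan), is the derived action of $\Gamma(C,\Omega_C)$ by $\theta\mapsto\theta+\alpha\cdot\id$ on $\Hig_G$, $\mathrm{Hecke}'_G$, $S$ and $\rB$. Its $k[-1]$-part translates the $k[-1]$-factor of $\rB$ freely. So every equivariant stack over $\rB$ splits off $k[-1]$ via the action map, which is in effect how the splitting of Lemma~\ref{lem:reduced} arises. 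No virtual-dimension comparison is needed.
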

\begin{proof}
The lemma follows since tensor product of the diagram (\ref{com:WH3}) with $\Coh(k[-1])$ gives the diagram (\ref{com:WH}). 
\end{proof}

\subsection{Dual operators}\label{subsec:dual}
We will reduce the commutativity of the diagram (\ref{com:WH3}) to a commutativity of 
a similar diagram for \textit{dual} Wilson/Hecke operators. 

The \textit{dual Wilson operator} 
\begin{align*}
    \mathrm{W}^{\vee} \colon \Coh(\cH^{\mathrm{ss}})\to \Coh(\cH^{\mathrm{ss}})\otimes \Coh(S)
\end{align*}
is defined to be 
\begin{align*}
    (-) \mapsto p_{\cH}^*(-)\otimes \cE. 
\end{align*}
The \textit{dual Hecke operator} 
\begin{align*}
    \mathrm{H}^{\vee} \colon \Coh(\cH^{\mathrm{greg}})
    \to \Coh(\cH^{\mathrm{greg}})\otimes \Coh(S)
\end{align*}
is defined to be 
\begin{align*}
    (-)\mapsto (\ev_1, \ev_3)_{*}\ev_2^*(-).
\end{align*}
The above functor is associated with the following 
diagram (compare with (\ref{redHecke}))
\begin{align}\notag
    \xymatrix{
\cHecke^{\mathrm{greg}}\ar[r]^-{\ev_2} \ar[d]_{(\ev_1, \ev_3)} & \cH^{\mathrm{greg}} \ar[d] \\
 \cH^{\mathrm{greg}} \times S\ar[r] & \cB.
    }
\end{align}
As in Lemma~\ref{lem:qsmooth}, both of the maps $\ev_2$ and $(\ev_1, \ev_3)$ are quasi-smooth and proper. 

\begin{lemma}\label{lem:dual}
Suppose that the diagram 
    \begin{align}\label{com:WH4}
\xymatrix{
\Coh(\cH^{\mathrm{ss}}) 
  \ar[r]^-{\mathrm{W}^{\vee}} 
  \ar[d]_-{\Phi} 
& \Coh(\cH^{\mathrm{ss}}) \otimes \Coh(S)
  \ar[d]^-{\Phi\otimes \id} \\
\Coh(\cH^{\mathrm{greg}}) 
  \ar[r]^-{\mathrm{H}^{\vee}} 
& \Coh(\cH^{\mathrm{greg}})\otimes \Coh(S)
}
\end{align}
commutes. Then the diagram (\ref{com:WH3}) commutes. 
\end{lemma}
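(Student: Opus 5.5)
The plan is to show that the functors in (\ref{com:WH3}) and (\ref{com:WH4}) are both Fourier--Mukai type functors given by integral kernels. Then (\ref{com:WH3}) is obtained from (\ref{com:WH4}) by a \emph{partial transpose in the $S$-variable}, that is, by pairing the $\Coh(S)$ output with an input from $\Coh(S)$ and integrating over $S$.

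\textbf{Step 1: kernels.} We write each functor as a kernel.
\begin{itemize}
\item The Wilson operator $\mathrm{W}$ has kernel $\cE$ on $S\times \cH^{\mathrm{ss}}\times \cH^{\mathrm{ss}}$, pushed forward along the diagonal of $\cH^{\mathrm{ss}}$. The dual operator $\mathrm{W}^{\vee}$ has the same kernel, read as a functor $\Coh(\cH^{\mathrm{ss}})\to\Coh(\cH^{\mathrm{ss}}\times S)$.
\item The Hecke operator $\mathrm{H}$ has kernel $(\ev_3,\ev_2,\ev_1)_*\mathcal{O}_{\cHecke^{\mathrm{greg}}}$ on $S\times\cH^{\mathrm{greg}}\times\cH^{\mathrm{greg}}$. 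The dual operator $\mathrm{H}^{\vee}=(\ev_1,\ev_3)_*\ev_2^*$ has the same kernel, read with source $\cH^{\mathrm{greg}}$ (the $\ev_2$ factor) and target $\cH^{\mathrm{greg}}\times S$.
\end{itemize}
In both cases the functor with input $\Coh(S)\otimes(-)$ is obtained from the dual one by the recipe
\begin{align*}
F(A\boxtimes B)=p_{*}\bigl(F^{\vee}(B)\otimes p_S^*A\bigr).
\end{align*}
Here $p$ forgets the $S$-factor. It is proper on the relevant supports, because the kernels are supported on the universal spectral curve, which is proper over $\cH$. This holds for general objects of $\Coh(S)\otimes\Coh(\cH)\simeq \Coh(S\times \cH)$ by the same formula with $p_S^*A$ replaced by the object itself. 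In short,
\begin{align*}
\mathrm{W}=(p_{\cH})_*\circ\bigl((\mathrm{W}^{\vee}\otimes\id)(-)\otimes\Delta_S\bigr),\qquad \mathrm{H}=(p)_*\circ\bigl((\mathrm{H}^{\vee}\otimes\id)(-)\otimes\Delta_S\bigr),
\end{align*}
suitably interpreted. Equivalently, $\mathrm{W}$ and $\mathrm{H}$ are the composites of $\mathrm{W}^{\vee}\otimes\id_{\Coh(S)}$ and $\mathrm{H}^{\vee}\otimes \id_{\Coh(S)}$ with the pairing functor
\begin{align*}
\mathrm{ev}\colon \Coh(S)\otimes\Coh(S)\to \mathrm{Vect},\qquad (A,A')\mapsto \Gamma(S,A\otimes A'),
\end{align*}
restricted to objects with proper support. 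I would verify this by base change in the diagram (\ref{redHecke}) and the projection formula.

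\textbf{Step 2: compatibility of $\Phi$ with the pairing.} The functor $\Phi\otimes\id_{\Coh(S)}$ acts only on the $\cH$-factor, so it commutes with tensoring by objects pulled back from $S$ and with $p_*$ along the $S$-factor. The latter uses base change for the $\cB$-relative Fourier--Mukai functor, which is valid because $\cP$ is flat over each factor and $\cH^{\mathrm{ss}}$ has a good moduli space projective over $\cB$. Applying $\Phi\otimes\id$ and then the pairing with $A\in\Coh(S)$ to (\ref{com:WH4}) gives
\begin{align*}
\Phi(\mathrm{W}(A\boxtimes B))\cong p_*\bigl((\Phi\otimes\id)\mathrm{W}^{\vee}(B)\otimes p_S^*A\bigr)\cong p_*\bigl(\mathrm{H}^{\vee}\Phi(B)\otimes p_S^*A\bigr)\cong \mathrm{H}(A\boxtimes\Phi(B)).
\end{align*}
This is functorial in $A$ and $B$. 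Since $\Coh(S)\otimes\Coh(\cH^{\mathrm{ss}})$ is generated under finite colimits and retracts by exterior products, and all functors involved are exact, the isomorphism extends to the whole category. Alternatively, I would run the same computation directly at the level of kernels on $S\times\cH^{\mathrm{ss}}\times\cH^{\mathrm{greg}}$, which avoids the generation step entirely.

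\textbf{Main obstacle.} The delicate point is the non-properness of $S$. The pairing and the push-forward $p_*$ along $S$ are defined only because the kernels $\cE$ and $(\ev_1,\ev_3)_*\mathcal{O}$ are supported on the universal spectral curve, which is proper over $\cH$. One therefore has to check that the isomorphism of kernels supplied by (\ref{com:WH4}) preserves this support condition, so that each base-change and projection-formula step is legitimate. I expect this to follow because both kernels are set-theoretically supported on $\cC\times_{\cB}\cH$. Once that is checked, the remaining verifications are formal.
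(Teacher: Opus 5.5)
Your proposal is correct and is essentially the paper's proof. The paper also recovers $\mathrm{W}_A$ and $\mathrm{H}_A$ from $\mathrm{W}^{\vee}$ and $\mathrm{H}^{\vee}$ via $p_{\cH*}((-)\otimes p_S^*A)$ (using the projection formula), and alternatively notes that the kernels of the corresponding compositions in the two diagrams coincide; the support issue you flag is harmless, since all the objects involved are coherent and supported on $\cC\times_{\cB}\cH$, which is proper over $\cH$.
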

\begin{proof}
It is straightforward to check that the 
kernel objects of the clockwise (resp. anticlockwise) compositions of the diagrams
(\ref{com:WH3}) and (\ref{com:WH4}) are the same. 

Alternatively, for $A\in \Coh(S)$ let 
    \begin{align*}
        \mathrm{W}_A\colon \Coh(\cH^{\mathrm{ss}}) \to 
        \Coh(\cH^{\mathrm{ss}})
    \end{align*}
    be the corresponding Wilson operator. It is recovered from the dual 
    Wilson operator 
    $\mathrm{W}^{\vee}$ by 
    \begin{align*}
    \mathrm{W}_A(-)=p_{\cH*}(\mathrm{W}^{\vee}(-)\otimes p_S^{*}A).
    \end{align*}
    Similarly the corresponding Hecke functor 
    \begin{align*}
        \mathrm{H}_A\colon 
        \Coh(\cH^{\mathrm{greg}})\to \Coh(\cH^{\mathrm{greg}})
    \end{align*}
    is recovered from the dual Hecke operator $\mathrm{H}^{\vee}$ by 
    \begin{align*}
        \mathrm{H}_A(-)=p_{\cH*}(\mathrm{H}^{\vee}(-)\otimes p_S^* A).
    \end{align*}
    Therefore the lemma holds. 
\end{proof}

\begin{remark}\label{rmk:commute2}
The commutativity of the diagram (\ref{com:WH4}) over the elliptic locus 
is proved in~\cite[Theorem~6.3.6]{Gdual}. However the argument in loc. cit. does not apply beyond the elliptic locus, even for the reduced locus, 
since it essentially uses versal deformation of compactified Jacobians of integral curves, and the fact that $\Phi$ is an equivalence over the elliptic locus. Instead we prove the commutativity of (\ref{com:WH4}) by using the explicit construction of the Arinkin sheaf, and the reduction 
to the case of integral curves. 
\end{remark}

\subsection{Hecke operators to the Arinkin sheaf}\label{subsec:hecke}
By Lemma~\ref{lem:dual}, the result of Theorem~\ref{prop:WH} is reduced to showing the commutativity 
of (\ref{com:WH4}). We first show the commutativity of (\ref{com:WH4})
for points over $\Hig_G^{\mathrm{ss}}$. 

For a Higgs bundle $(F, \theta)$, let $E\in \Coh^{\heartsuit}(S)$
be the corresponding sheaf. Let
\begin{align}\label{tauE}
    \tau_E \colon \Spec k \to \cH
\end{align}
be the corresponding map. Then using the Arinkin sheaf (\ref{Arsheaf0}), we have 
\begin{align}\label{PE}
    \cP_E :=(\tau_E\times \id)^* \cP^{\flat}\in \Coh^{\heartsuit}(\cH_b^{\mathrm{greg}})
    \hookrightarrow \Coh^{\heartsuit}(\cH^{\mathrm{greg}}).
\end{align}
Here $b\in \cB$ corresponds to the spectral curve $\cC_b$ of $E$, 
$\tau_E \times \id$ is the map 
\begin{align*}
    \tau_E \times \id \colon \Spec k \times_{\cB}\cH^{\mathrm{greg}}=\cH_b^{\mathrm{greg}} \hookrightarrow \cH\times_{\cB}\cH^{\mathrm{greg}}
\end{align*}
and $\cH_b:=b\times_{\cB}\cH$.

The sheaf (\ref{PE}) is constructed in~\cite{Ardual, MLi} explicitly using Haiman's isospectral 
Hilbert schemes, which we recall now. 
Recall that the Hilbert scheme of points
$\Hilb^d(S)$
parametrizes zero-dimensional closed subschemes $Z\subset S$ with length $d$. 
The \textit{isospectral Hilbert scheme} $\widetilde{\Hilb}^d(S)$ is defined to be the
reduced fiber product, see Subsection~\ref{subsec:notation0}
\begin{align*}
    \xymatrix{
\widetilde{\Hilb}^d(S) \ar[r]^-{\psi} \ar[d]_-{\eta}\diasquare^{\mathrm{red}} & \Hilb^d(S) \ar[d] \\
S^{\times d} \ar[r] & \mathrm{Sym}^d(S).
    }
\end{align*}

By the result of Haiman~\cite{Ha}, the map $\psi$ is a flat finite map 
of degree $d!$, and there is an $\mathfrak{S}_d$-action on 
$\widetilde{\Hilb}^d(S)$.  Then the object
\begin{align}\label{QEd}
    \mathcal{Q}_E^d:=(\psi_{*}\eta^* (E^{\boxtimes d}))^{\mathrm{sign}} 
    \otimes \det \mathcal{A}^{-1}\in \Coh^{\heartsuit}(\Hilb^d(S))
\end{align}
is a Cohen--Macaulay sheaf supported on the closed subscheme 
\begin{align*}\mathrm{Hilb}^d(\cC_b) \subset 
\Hilb^d(S),
\end{align*}
where $\cC_b \subset S$
is the spectral curve 
of $E$. 
Here $\det \mathcal{A}$ is a line bundle on $\Hilb^d(S)$ whose fiber 
at $Z\subset S$ is $\det \chi(\mathcal{O}_Z)$, and $(-)^{\mathrm{sign}}$ 
 stands for the anti-invariant part with respect to
the action of $\mathfrak{S}_d$. 
Then for each quasi-compact open substack 
$U\subset \cH^{\mathrm{greg}}$, 
after applying the isomorphism given by 
\begin{align*}
   \phi_L \colon \cH^{\mathrm{greg}} \stackrel{\cong}{\to} \cH^{\mathrm{greg}}, \ 
   (-) \mapsto (-)\otimes L
\end{align*}
for a sufficiently ample line bundle $L$ on $C$, 
the sheaf $\cP_E|_{\phi_L(U)}$ is a 
descend of $\mathcal{Q}_E^d$ for $d\gg 0$ by the Abel Jacobi map 
(which is a smooth map for $d\gg 0$)
\begin{align}\label{AJ}
\mathrm{AJ} \colon 
    \Hilb^d(\cC/\cB) \to \cH^{\mathrm{greg}}. 
\end{align}
Here $\cC \to \cB$ is the universal spectral curve 
and $\mathrm{AJ}$ sends $Z\subset \cC_b$ to 
the dual of $I_Z\subset \mathcal{O}_{\cC_b}$.
We refer to~\cite[Section~4.1]{Ardual}, \cite[Section~3.2]{MLi} for more details. 

We now take $\cB \subset \rB^{\mathrm{red, cl}}$ and a Higgs bundle 
$(F, \theta)$ over $b\in \cB$ with corresponding sheaf $E\in \Coh^{\heartsuit}(S)$, so that its spectral curve $\cC_b \subset S$ 
is reduced. Recall that $\cH^{\mathrm{greg}}=\cH$ over such $\cB$. 
Using the above explicit construction of $\cP_E$, we show the following 
proposition: 
\begin{prop}\label{prop:PE}
The sheaf $\cP_E \in \Coh^{\heartsuit}(\cH)$ satisfies 
that 
\begin{align*}
    \mathrm{H}^{\vee}(\cP_E) \in \Coh^{\heartsuit}(\cH \times S)
\end{align*}
and it is a maximal Cohen--Macaulay sheaf on $\cH_b \times \cC_b$. 
\end{prop}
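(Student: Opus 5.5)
We will prove Proposition~\ref{prop:PE} by a dimension count combined with a fibrewise Cohen--Macaulay criterion, reducing to properties of the maps in the dual Hecke diagram. Restricted to the fibre over $b$, the correspondence $\cHecke_b$ parametrizes pairs $(E_1\subset E_2)$ with $E_2/E_1\cong\mathcal{O}_x$, $x\in\cC_b$. Since $\cC_b$ is reduced, every rank one torsion-free sheaf on $\cC_b$ is generically a line bundle. The map $(\ev_1,\ev_3)\colon \cHecke_b\to \cH_b\times \cC_b$ has fibre over $(E_1,x)$ equal to $\mathbb{P}(\Ext^1(\mathcal{O}_x,E_1))$, up to the $\bgm$ factors. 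This is a point when $E_1$ is locally free at $x$, and a $\mathbb{P}^{1}$ or a larger projective space at singular points where $E_1$ is not locally free.

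\textbf{Step 1: the pull-back $\ev_2^*\cP_E$.} We first show that $\ev_2^*\cP_E$ is a sheaf on $\cHecke_b$ concentrated in degree $0$ and maximal Cohen--Macaulay. The map $\ev_2$ is quasi-smooth of relative virtual dimension $1$ by Lemma~\ref{lem:qsmooth}. We check that on the classical locus $\cHecke_b$ is of the expected dimension, namely $\dim\cH_b+1$. Then $\ev_2$ is an lci morphism of that dimension, so the derived and classical stacks agree. Given that, the pull-back of a Cohen--Macaulay sheaf along a flat or lci map of the correct dimension stays Cohen--Macaulay, provided Tor-independence holds.

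For this we use the explicit model. Via $\mathrm{AJ}$ from~\eqref{AJ}, after twisting by $\phi_L$ we replace $\cH_b$ by $\Hilb^d(\cC_b)$ and $\cP_E$ by $\mathcal{Q}^d_E$. The Hecke correspondence then lifts to the nested Hilbert scheme $\Hilb^{d,d+1}(\cC_b)\subset \Hilb^d(S)\times\Hilb^{d+1}(S)$. Here we use the identity $\psi_*\eta^*$ of Haiman's isospectral Hilbert scheme. The nested isospectral scheme over $\Hilb^{d,d+1}$ is $\widetilde{\Hilb}^{d+1}$, with the $\mathfrak{S}_d\subset\mathfrak{S}_{d+1}$-action. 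Consequently the push-forward of $\ev_2^*\mathcal{Q}^{d+1}_E$ to $\Hilb^d\times S$ computes
\[
(\psi_*\eta^*E^{\boxtimes (d+1)})^{\mathrm{sign}_d}\otimes(\text{line bundle}),
\]
which is a direct summand of a Cohen--Macaulay sheaf by Haiman's result.

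\textbf{Step 2: push-forward along $(\ev_1,\ev_3)$.} We then show that $(\ev_1,\ev_3)_*\ev_2^*\cP_E$ has no higher cohomology and is maximal Cohen--Macaulay on $\cH_b\times\cC_b$. The two ingredients are the following:
\begin{itemize}
\item Over the locus where the fibre of $(\ev_1,\ev_3)$ is a point, namely where $E_1$ is locally free at $x$ (in particular for $x$ a smooth point of $\cC_b$), the map is an isomorphism.
\item Over the complement, which has codimension $\geq 2$ in $\cH_b\times\cC_b$, we use Step 1 together with the Hilbert scheme model. There the push-forward is identified with $\mathcal{Q}$-type sheaves, which are Cohen--Macaulay of the correct dimension. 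This identification also yields vanishing of $R^{>0}$.
\end{itemize}
By Lemma~\ref{lem:MCM-extension}, the result is then the unique maximal Cohen--Macaulay extension from the open locus. On that open locus it equals $\cP_{E}$ twisted by the Poincaré line bundle of $\mathcal{O}$-type, and this is manifestly maximal Cohen--Macaulay. Finally, Lemma~\ref{lem:CM-family} handles the family direction, by varying over $x\in\cC_b$ and over quasi-compact opens $U$.

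\textbf{Main obstacle.} The hard part is the vanishing of higher direct images, together with the Cohen--Macaulay property over the singular points of $\cC_b$. These are the points where $\cC_b$ is reducible, so the argument of~\cite{Ardual} using integrality is unavailable. We first try the Haiman nested-Hilbert-scheme identification above. It works at any planar point and uses only reducedness, via density of the Cartier locus (Remark~\ref{rmk:dense}), so that the dimension counts hold.
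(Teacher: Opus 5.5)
There is a genuine gap, and it sits in Step~2, which is the entire content of the proposition. Over the codimension-two locus where $E_1$ is not locally free at a singular point $x$, the fibres of $(\ev_1,\ev_3)$ are positive-dimensional. Nothing in your argument controls the higher direct images or the depth of the push-forward there.

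Neither lemma you invoke can close this:
\begin{itemize}
\item Lemma~\ref{lem:MCM-extension} presupposes that $\mathrm{H}^{\vee}(\cP_E)$ is already maximal Cohen--Macaulay.
\item Lemma~\ref{lem:CM-family}, varying $x\in\cC_b$, needs Cohen--Macaulayness of the restrictions to $\cH_b\times\{x\}$ for singular $x$, which is exactly the claim.
\end{itemize}

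The only substantive argument you give is the ``Haiman nested-Hilbert-scheme identification'' in Step~1, and as stated it is wrong:
\begin{itemize}
\item $\widetilde{\Hilb}^{d+1}$ is not a cover of $\Hilb^{d+1,d}$. At a non-curvilinear fat point there is no canonical length-$d$ subscheme, and limits along different directions disagree.
\item The scheme that actually computes $\ev_2^{*}\mathcal{Q}^{d+1}_E$ by base change is $\widetilde{\Hilb}^{d+1}\times_{\Hilb^{d+1}}\Hilb^{d+1,d}$, which has degree $(d+1)!$ over $\Hilb^{d+1,d}$.
\item The natural $\mathfrak{S}_d$-cover $\Hilb^{d+1,d}\times_{\Hilb^{d}}\widetilde{\Hilb}^{d}$ has degree $d!$. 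It is related to $\widetilde{\Hilb}^{d+1}$ only through a map $g$ with $g_*\mathcal{O}=\mathcal{O}$, not an isomorphism.
\item Your output $(\psi_*\eta^*E^{\boxtimes(d+1)})^{\mathrm{sign}_d}$ is a sheaf on $\Hilb^{d+1}(S)$, not on $\Hilb^{d}(S)\times S$.
\end{itemize}
To turn this heuristic into a proof you would have to compare $\mathfrak{S}_{d+1}$-anti-invariants on the first cover with $\mathfrak{S}_d$-anti-invariants on the second. You would also need vanishing of higher direct images along $\Hilb^{d+1,d}\to\Hilb^d\times S$ at non-curvilinear points. Together these amount to a direct proof of the Hecke eigen property on the Hilbert-scheme side, and none of it is supplied.

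The missing idea is how the paper gets around exactly this obstacle. First it transports $\mathrm{H}^{\vee}(\cP_E)$ to $\mathrm{H}^{\vee}_{\Hilb}(\mathcal{Q}^d_E)$, using the stable-pair stack $\cH^{\dag}$, the Cartesian square (\ref{dia:dag2}), base change, and smoothness of $\mathrm{AJ}$ for $d\gg 0$. It then observes that the question is formally local over $\mathrm{Sym}^d(S)$. Near a cycle $x_1+\cdots+x_d$ it replaces $(\cC_b,E)$ by an integral plane curve $D\subset\mathbb{P}^2$ with formally isomorphic singularities, together with a rank one torsion-free sheaf $F$ formally isomorphic to $E$ (Lemma~\ref{lem:intmodel}). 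Finally it imports the Hecke eigen property $\mathrm{H}^{\vee}_D(\cP_F)\cong\cP_F\boxtimes F$ from \cite[Theorem~6.3.6]{Gdual}, which ultimately rests on Arinkin's autoduality for integral curves.

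So your remark that the integrality argument is ``unavailable'' at reducible spectral curves is precisely what the paper circumvents: integrality is lost globally but not formally locally. That external input is what delivers both the vanishing of higher direct images and the maximal Cohen--Macaulay property at the singular points.
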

\begin{proof}
We define the derived stack over $\cH^{\dag}$
\begin{align}\notag
p^{\dag} \colon 
   \cH^{\dag} \to \cH
\end{align}
which classifies pairs 
\begin{align*}
    (E, s), \ E\in \Coh^{\heartsuit}(S), \ s \colon \mathcal{O}_S \to E
\end{align*}
where $E$ is compactly supported pure one-dimensional sheaf with support $\cC_b$ for $b\in \cB$ and $s$ is 
generically surjective. The map $p^{\dag}$ forgets the section $s$. 
    Its classical truncation is nothing but the moduli space of 
    Pandharipande-Thomas stable pairs~\cite{MR2545686} on $S$. Therefore by~\cite[Proposition~B.8]{PT3}, 
    we have 
    \begin{align*}
        (\cH^{\dag})^{\mathrm{cl}} \cong \bigcup_{d\geq 0} \Hilb^d(\cC/\cB).
    \end{align*}
    
We consider the following commutative diagram 
\begin{align}\label{dia:dag}
\xymatrix{
\cH^{\dag} \ar[d]^-{p^{\dag}}
&
    \ar[r]^-{(\ev_1^{\dag}, \ev_3^{\dag})} \diasquare
    \ar[d] \cHecke^{\dag} 
    \ar[l]_-{\ev_2^{\dag}} 
&\cH^{\dag}\times S \ar[d]^-{p^{\dag}\times \id_S} \\
\cH 
& \cHecke^{\mathrm{greg}} 
    \ar[r]^-{(\ev_1, \ev_3)} 
    \ar[l]_-{\ev_2} 
& \cH\times S
}
\end{align}
    Here $\cHecke^{\dag}$
    is defined by the right Cartesian square, and it classifies the diagrams
    \begin{align}\label{seq:section}
        \xymatrix{
& \mathcal{O}_S \ar@{=}[r] \ar[d]& \mathcal{O}_S \ar[d]& &\\
0 \ar[r] & E_1 \ar[r] & E_2 \ar[r] & \mathcal{O}_x \ar[r] & 0.      
        }
    \end{align}
    Here the bottom sequence is as in (\ref{seq:Ei}) and each vertical 
    arrow is generically surjective. 
    The right square in (\ref{dia:dag}) is Cartesian but the left square in (\ref{dia:dag}) is not Cartesian. 

    Let 
    \begin{align*}
        \overline{S}:=\mathbb{P}(\mathcal{O}_C \oplus \Omega_C)
        \to C
    \end{align*}
    be the projective compactification of $S$. There is a morphism 
    \begin{align}\notag
       \cH^{\dag} \to 
        \mathbf{Hilb}(\overline{S})=\bigcup_{d\geq 0}\mathbf{Hilb}^d(\overline{S})
    \end{align}
    where $\mathbf{Hilb}^d(\overline{S})$ is the derived 
    moduli stack of ideal sheaves $I_Z \subset \mathcal{O}_{\overline{S}}$ of zero-dimensional subscheme 
    $Z\subset \overline{S}$ with length $d$. 
The above morphism is given by 
\begin{align*}
    (\mathcal{O}_S \stackrel{s}{\to} E) \mapsto \mathbb{D}_{\overline{S}}(I)\otimes \det I
\end{align*}
where $I=(\mathcal{O}_{\overline{S}} \stackrel{s}{\to}
E)$ is the two term 
complex on $\overline{S}$, and $\mathbb{D}_{\overline{S}}(I)$ is its derived dual. 
For a closed subscheme $Z\subset S$ of dimension zero, we have 
\begin{align*}
    \Ext_{\overline{S}}^2(I_Z, I_Z)=\Hom(I_Z, I_Z\otimes \omega_{\overline{S}})=H^0(\omega_{\overline{S}})=0.
\end{align*}
    Therefore $\mathbf{Hilb}(\overline{S})$ is classical, i.e.  
    \begin{align*}
        \mathbf{Hilb}(\overline{S})=\Hilb(\overline{S}).
    \end{align*}
    Moreover there is a factorization 
    \begin{align*}
        \cH^{\dag} \hookrightarrow 
         \cB\times\Hilb(S)\hookrightarrow \cB\times\Hilb(\overline{S})
    \end{align*}
    where the first arrow is a closed immersion and the second arrow is an open immersion. 

    Similarly let $\mathbf{Hilb}^{d, d-1}(\overline{S})$ be the 
    derived moduli stack which classifies exact sequences 
    \begin{align}\label{IZ}
        0\to I_{Z_2} \to I_{Z_1} \to \mathcal{O}_x \to 0
    \end{align}
    where $Z_1 \subset Z_2\subset \overline{S}$ are zero-dimensional subschemes with 
    length $d-1$, $d$, respectively. It is quasi-smooth~\cite[Proposition~3.8]{PoSa}, and its classical truncation 
    has the expected dimension $2d$ by~\cite[Proposition~3.5.2]{Ha}.
    Therefore  
    $\mathbf{Hilb}^{d, d-1}(\overline{S})$ is also classical 
    \begin{align*}
       \mathbf{Hilb}^{d, d-1}(\overline{S})=\mathrm{Hilb}^{d, d-1}(\overline{S}) 
    \end{align*}
    where the right-hand side is the classical nested Hilbert 
    scheme which parametrizes zero-dimensional subschemes $Z_1 \subset Z_2 \subset \overline{S}$. Let
    \begin{align*}
        \mathrm{Hilb}^{d, d-1}(S) \subset \Hilb^{d, d-1}(\overline{S})
    \end{align*}
    be the open subscheme corresponding to $Z_1 \subset Z_2 \subset S$, and 
    set 
    \begin{align*}
        \Hilb^{\mathrm{nest}}(S):=\bigcup_{d\geq 1}\Hilb^{d, d-1}(S).
    \end{align*}
    
    We have the evaluation morphisms 
    \begin{align*}
        \Hilb^d(S) \stackrel{\ev_2^{\mathrm{H}}}{\leftarrow} \Hilb^{d, d-1}(S) \stackrel{(\ev_1^{\mathrm{H}}, \ev_3^{\mathrm{H}})}{\to}
        \Hilb^{d-1}(S)\times S.
    \end{align*}
   By taking the dual of the sequence (\ref{seq:section}), we obtain 
   \begin{align}\label{seq:DP}
       0\to \mathbb{D}_{\overline{S}}(P_{2}) \otimes L \to \mathbb{D}_{\overline{S}}(P_1)\otimes L \to \mathcal{O}_x \to 0
   \end{align}
   where $P_i$ and $L$ are given by 
   \begin{align*}P_i=(\mathcal{O}_{\overline{S}} \to E_i)\in \Coh(\overline{S}), \ 
   L=\det P_1=\det P_2. 
   \end{align*}
   The sequence (\ref{seq:DP}) is of the form (\ref{IZ}), and 
   the above construction induces the closed immersion 
   \begin{align*}
      \cHecke^{\dag}\hookrightarrow 
       \cB\times\Hilb^{\mathrm{nest}}(S)
   \end{align*}
   such that there is a commutative diagram 
   \begin{align}\label{dia:dag2}
\xymatrix{\cB\times\Hilb(S)  \diasquare
& \cB\times\Hilb^{\mathrm{nest}}(S)
    \ar[r]^-{(\ev_1^{\mathrm{H}}, \ev_3^{\mathrm{H}})} 
    \ar[l]_-{\ev_2^{\mathrm{H}}} 
& \cB\times\Hilb(S)\times S \\
\cH^{\dag} \uinclusion_-{i}& \cHecke^{\dag} 
\uinclusion    \ar[r]^-{(\ev_1^{\dag}, \ev_3^{\dag})} 
    \ar[l]_-{\ev_2^{\dag}} 
&\cH^{\dag}\times S \uinclusion_-{i\times \id_S}
}
\end{align}
Here the vertical arrows are closed immersions and the left square is Cartesian (since $Z_2 \subset \cC_b$ implies $Z_1 \subset \cC_b$), but the right square is not.

From the following diagram 
\begin{align*}
    \cH \stackrel{p^{\dag}}{\leftarrow}\cH^{\dag} \stackrel{i}{\hookrightarrow}\cB\times\Hilb(S)
\end{align*}
we define the following object 
\begin{align*}
    \widetilde{\mathcal{Q}}_E:=i_{*}(p^{\dag})^* \cP_E \in \Coh(\{b\}\times\Hilb(S)).
\end{align*}
Here $b\in \cB$ corresponds to the spectral curve of $E$. 
We denote by 
\begin{align*}\widetilde{\mathcal{Q}}_E^d\in \Coh(\Hilb^d(S))
\end{align*}
the component of $\widetilde{\mathcal{Q}}_E$. By the construction of 
$\cP_E$ as a descent of $\mathcal{Q}_E^d$ for $d\gg 0$, we have $\mathcal{Q}_E^d=\widetilde{\mathcal{Q}}_E^d$ for $d\gg 0$. 

Let $\mathrm{H}^{\vee}_{\Hilb}$ be the functor 
\begin{align}\label{HEvee}
    \mathrm{H}^{\vee}_{\Hilb}\colon \Coh(\Hilb(S)) \to \Coh(\Hilb(S)) \otimes \Coh(S)
\end{align}
defined by 
\begin{align*}
    (-) \mapsto (\ev_1^{\mathrm{H}}, \ev_3^{\mathrm{H}})_{*}(\ev_2^{\mathrm{H}})^*(-). 
\end{align*}
Then by the diagrams (\ref{dia:dag}), (\ref{dia:dag2}) and base change, we have 
\begin{align}\label{isom:HQP}
    (i\times \id_S)_{*}(p^{\dag}\times \id_S)^* \mathrm{H}^{\vee}(\cP_E)\cong \mathrm{H}^{\vee}_{\Hilb}(\widetilde{\mathcal{Q}}_E).
\end{align}
Therefore by the smoothness of the Abel Jacobi map (\ref{AJ}) for $d\gg 0$, 
the proposition follows from Lemma~\ref{lem:heckeQ} below. 
\end{proof}

\begin{lemma}\label{lem:heckeQ}
For $d\gg 0$, we have
\begin{align}\label{hecke:Q}
    \mathrm{H}_{\Hilb}^{\vee}(\mathcal{Q}_E^d)
    \in \Coh^{\heartsuit}(\Hilb^{d-1}(S)\times S),
\end{align}
and it is a maximal Cohen--Macaulay sheaf on
$\Hilb^{d-1}(\cC_b)\times \cC_b$.
\end{lemma}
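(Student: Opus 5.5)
The plan is to compute $\mathrm{H}^{\vee}_{\Hilb}(\mathcal{Q}_E^d)$ directly through Haiman's isospectral Hilbert schemes. Then reduce the Cohen--Macaulay assertion to a fiberwise statement over $S$, using Lemma~\ref{lem:CM-family} and Lemma~\ref{lem:MCM-extension}.

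\textbf{Step 1: lift the correspondence.} Consider the nested isospectral scheme $\widetilde{\Hilb}^{d,d-1}(S)$, the reduced fiber product of $\Hilb^{d,d-1}(S)$ with $\widetilde{\Hilb}^{d-1}(S)\times S$ over $\Hilb^{d-1}(S)\times S$. The point $x$ is the last coordinate, and it is $\mathfrak{S}_{d-1}$-equivariant. By Haiman's results (the nested Hilbert scheme $\Hilb^{d,d-1}(S)$ is smooth of dimension $2d$ for a surface, and $\widetilde{\Hilb}^d(S)\to\Hilb^{d,d-1}(S)$ identifies with the quotient by $\mathfrak{S}_{d-1}$ in the last-point-marking sense), we have
\[
\widetilde{\Hilb}^d(S)/\mathfrak{S}_{d-1}\cong \Hilb^{d,d-1}(S)\times_{S}\ldots.
\]
More precisely, $(\ev_2^{\mathrm{H}})^*\psi_*\eta^*E^{\boxtimes d}$ should be computed on $\widetilde{\Hilb}^d(S)$ itself, viewed over $\Hilb^{d,d-1}(S)$. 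I then decompose the $\mathfrak{S}_d$-sign part as an $\mathfrak{S}_{d-1}$-sign part. This should identify
\[
\mathrm{H}^{\vee}_{\Hilb}(\mathcal{Q}_E^d)\cong \mathcal{Q}_E^{d-1}\boxtimes E\otimes(\text{line bundle}),
\]
possibly only after restricting to the open locus where $x\notin Z_1$. The line bundle comes from comparing $\det\mathcal{A}$ on $\Hilb^d$ and $\Hilb^{d-1}$, which differ by the tautological line $I_{Z_1}/I_{Z_2}\cong\mathcal{O}_x$.

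\textbf{Step 2: the locus where the correspondence is trivial.} Over the open subset $U\subset \Hilb^{d-1}(S)\times S$ where $x\notin Z_1$, the map $(\ev_1^{\mathrm{H}},\ev_3^{\mathrm{H}})$ is an isomorphism and $\eta^*E^{\boxtimes d}$ factorizes. This gives $\mathrm{H}^{\vee}_{\Hilb}(\mathcal{Q}_E^d)|_U\cong(\mathcal{Q}^{d-1}_E\boxtimes E)|_U$ up to a line bundle. This object is in the heart and is maximal Cohen--Macaulay on $(\Hilb^{d-1}(\cC_b)\times\cC_b)\cap U$, since $\mathcal{Q}_E^{d-1}$ is Cohen--Macaulay, $E$ is maximal Cohen--Macaulay on the reduced planar curve $\cC_b$, and Lemma~\ref{lem:CM-family} applies. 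The complement has codimension $\ge2$ in $\Hilb^{d-1}(\cC_b)\times\cC_b$, because $x\in Z_1$ is codimension one in the second factor and $\Hilb^{d-1}(\cC_b)$ has dense Cartier locus by Remark~\ref{rmk:dense}.

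\textbf{Step 3: global Cohen--Macaulayness (the main obstacle).} It remains to show the full pushforward has no higher cohomology and is Cohen--Macaulay across $x\in Z_1$. Once that holds, Lemma~\ref{lem:MCM-extension} identifies it with the unique extension from Step 2. I would argue this as follows.
\begin{itemize}
\item $(\ev_1^{\mathrm{H}},\ev_3^{\mathrm{H}})$ is proper with fibers $\mathbb{P}(\mathrm{Hom}(I_{Z_1},\mathcal{O}_x))$.
\item $(\ev_2^{\mathrm{H}})^*\mathcal{Q}^d_E$ is Cohen--Macaulay of the right codimension, since $\ev_2^{\mathrm{H}}$ is flat over the relevant locus; this uses Haiman's flatness of $\psi$ together with smoothness of $\Hilb^{d,d-1}(S)$.
\item Apply Grothendieck duality: the dual $\mathbb{D}(\mathcal{Q}_E^d)$ is again of the same form, with $E$ replaced by $\mathbb{D}_S(E)$ (Arinkin's self-duality), and its pushforward is also concentrated in one degree.
\end{itemize}
Vanishing of higher direct images on both sides then forces Cohen--Macaulayness. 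For the higher direct images I would use the Haiman description, in which the fibers of $\ev$ are handled by the polygraph/nested vanishing theorem $R^{>0}=0$ for $\mathcal{O}$ twisted by $\mathcal{O}(-1)^{\le 1}$ on the projectivized fibers. I would take $d\gg0$ so that everything descends through the smooth Abel--Jacobi map.
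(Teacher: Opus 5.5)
\paragraph{Gap in Step 3.} Step 3, which you rightly call the main obstacle, carries the whole content of the lemma, and it is not carried out. You must show two things about $(\ev_1^{\mathrm{H}},\ev_3^{\mathrm{H}})_*$ of the sign component over the incidence locus $\{x\in Z_1\}$, including over the singular points of $\cC_b$: that it has no higher direct images, and that it is Cohen--Macaulay there. On that locus the fibers $\mathbb{P}(\Hom(I_{Z_1},\mathcal{O}_x))$ are positive-dimensional. For this you invoke a ``polygraph/nested vanishing theorem'' that you neither state precisely nor locate. It is not available in \cite{Ha} in the form you need, and proving it is essentially the lemma itself. The duality step only moves the same unproved vanishing over to $\mathbb{D}(\mathcal{Q}^d_E)$.

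\paragraph{Incorrect supporting claims.} First, $\ev_2^{\mathrm{H}}$ is not flat over $\Hilb^d(\cC_b)$. It is a morphism between smooth varieties of dimension $2d$ whose fiber over $Z_2$ is $\coprod_x\mathbb{P}(\mathrm{Soc}_x\mathcal{O}_{Z_2})$. This fiber is a $\mathbb{P}^1$ when $\cC_b$ has a node at $x$ and $Z_2$ is locally $V(\mathfrak{m}_x^2)$. Your conclusion may survive via a dimension count, but not for the reason you give.

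Second, there is no natural morphism $\widetilde{\Hilb}^d(S)\to\Hilb^{d,d-1}(S)$. A point of the isospectral scheme does not pick out a colength-one ideal when the relevant socle is at least two-dimensional. The right object is the fiber product $W^d$ of (\ref{dia:Sd}).

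Third, the codimension count in Step 2 is false. $\Hilb^{d-1}(\cC_b)\times\cC_b$ has pure dimension $d$, and the incidence locus is finite over $\Hilb^{d-1}(\cC_b)$, so it is a divisor. Hence Lemma~\ref{lem:MCM-extension} cannot be applied from $U$. Your computation on $U$, though correct, does not determine the global sheaf.

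\paragraph{The missing idea.} The paper reduces to integral curves and so never controls this pushforward directly. By (\ref{HQ}) it suffices to treat (\ref{ets:Q}), and that statement is local over $\mathrm{Sym}^d(S)$. So it can be checked after formal completion at $x_1+\cdots+x_d$. There Lemma~\ref{lem:intmodel} replaces $(\cC_b,E,S)$ by $(D,F,\mathbb{P}^2)$, where $D$ is an integral plane curve with formally isomorphic singularities and $F$ is formally isomorphic to $E$.

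For integral $D$ the Hecke-eigen property $\mathrm{H}^{\vee}_D(\cP_F)\cong\cP_F\boxtimes F$ is already a theorem: it is the Wilson--Hecke compatibility of Arinkin's autoequivalence \cite{Gdual}. It is transported to $\mathrm{H}^{\vee}_{\Hilb'}(\mathcal{Q}^d_F)$ via $\cH^{\prime\dag}$ and (\ref{HQ'}). This is exactly where $d\gg 0$ is needed: for the smoothness of $p^{\prime\dag}$ and for $\widetilde{\mathcal{Q}}^d_F=\mathcal{Q}^d_F$. In your argument, by contrast, $d\gg0$ plays no actual role.

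The maximal Cohen--Macaulay property then transfers back to $\cC_b$. This works because Hilbert, nested and isospectral Hilbert schemes are formally local over the symmetric product, and completion commutes with finite pushforward and with the sign idempotent.
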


\begin{proof}
Consider the following diagram:
\begin{align}\label{dia:Sd}
    \xymatrix{
S^{\times d} \ar[d]\diasquare^{\mathrm{red}}&
\widetilde{\Hilb}^d(S) \ar[l]^-{\eta}  \ar[d]_-{\psi} \diasquare&
W^d \ar[d]_-{\psi_2} \ar[l]^-{\eta_2} \ar@/_18pt/[ll]_-{\eta_3} \\
\mathrm{Sym}^d(S) \ar@{=}[d]&
\Hilb^d(S) \ar[l]&
\Hilb^{d,d-1}(S) \ar[l]^-{\ev_2^{\mathrm{H}}}
\ar[d]^{(\ev_1^{\mathrm{H}}, \ev_3^{\mathrm{H}})} \\
\mathrm{Sym}^d(S)&
\mathrm{Sym}^{d-1}(S) \times S \ar[l]&
\Hilb^{d-1}(S)\times S \ar[l].
    }
\end{align}
Here the bottom arrows are the addition map for symmetric powers of $S$
and the Hilbert--Chow map for the Hilbert scheme of points.
By the base change, we have
\begin{align*}
    \mathrm{H}^{\vee}_{\Hilb}(\mathcal{Q}_E^d)
    &=
    \mathrm{H}^{\vee}_{\Hilb}
    \bigl((\psi_* \eta^* E^{\boxtimes d})^{\mathrm{sign}}
    \otimes \det \mathcal{A}^{-1}\bigr)\\
    &=
    (\ev_1^{\mathrm{H}}, \ev_3^{\mathrm{H}})_*
    \ev_2^{\mathrm{H}*}
    \bigl((\psi_* \eta^* E^{\boxtimes d})^{\mathrm{sign}}
    \otimes \det \mathcal{A}^{-1}\bigr)\\
    &\cong
    (\ev_1^{\mathrm{H}}, \ev_3^{\mathrm{H}})_*
    \bigl((\psi_{2*}\eta_3^* E^{\boxtimes d})^{\mathrm{sign}}
    \otimes \ev_2^{\mathrm{H}*}\det \mathcal{A}^{-1}\bigr).
\end{align*}
Moreover,
\begin{align*}
    \ev_2^{\mathrm{H}*}\det \mathcal{A}
    \cong
    (\ev_1^{\mathrm{H}}, \ev_3^{\mathrm{H}})^*
    (\det \mathcal{A}\boxtimes \mathcal{O}_S),
\end{align*}
since the line bundle on $S$ whose fiber at $x\in S$ is
$\det \chi(\mathcal{O}_x)$ is trivial. Therefore,
\begin{align}\label{HQ}
      \mathrm{H}^{\vee}_{\Hilb}(\mathcal{Q}_E^d)
      \cong
      (\ev_1^{\mathrm{H}}, \ev_3^{\mathrm{H}})_*
      \bigl((\psi_{2*}\eta_3^* E^{\boxtimes d})^{\mathrm{sign}}\bigr)
      \otimes \det \mathcal{A}^{-1}.
\end{align}

It is enough to show that
\begin{align}\label{ets:Q}
    (\ev_1^{\mathrm{H}}, \ev_3^{\mathrm{H}})_*
    \bigl((\psi_{2*}\eta_3^* E^{\boxtimes d})^{\mathrm{sign}}\bigr)
    \in \Coh^{\heartsuit}(\Hilb^{d-1}(S)\times S)
\end{align}
and that it is a maximal Cohen--Macaulay sheaf on
$\Hilb^{d-1}(\cC_b)\times \cC_b$.
Note that the diagram (\ref{dia:Sd}) is over $\mathrm{Sym}^d(S)$, and the 
above question is local over $\mathrm{Sym}^d(S)$. 
Therefore it is enough to check this formally locally on $\mathrm{Sym}^d(S)$. 
We reduce it to the
Hecke-eigen property for compactified Jacobians of integral curves,
proved in~\cite[Theorem~6.3.6]{Gdual}.

Let $(x_1,\ldots,x_d)\in \cC_b$. By Lemma~\ref{lem:intmodel}, 
we can find data
\begin{align*}
    (D,S'=\mathbb{P}^2, y_1,\ldots,y_d), \quad D\subset S', \quad y_i\in D,
\end{align*}
where $D$ is an integral plane curve, $D\in \lvert \mathcal{O}_{\mathbb{P}^2}(m) \rvert$
for $m\gg 0$, and the singularity of $D$ at $y_i$ is formally
isomorphic to the singularity of $\cC_b$ at $x_i$, namely
\begin{align*}
    \widehat{\mathcal{O}}_{\cC_b,x_i}
    \cong
    \widehat{\mathcal{O}}_{D,y_i}.
\end{align*}
Moreover, we may choose a rank-one torsion-free sheaf $F$ on $D$ such
that
\begin{align}\label{isom:EF}
    E|_{\widehat{\cC}_{b,x_i}}
    \cong
    F|_{\widehat{D}_{y_i}},
    \qquad 1\leq i\leq d.
\end{align}

Let $\overline{J}_D$ be the compactified Jacobian of $D$, regarded as
the classical moduli space of rank-one torsion-free sheaves on $D$.
Let $Hecke_D$ be the classical moduli space of exact sequences on $D$
\begin{align*}
    0\to L_1 \to L_2 \to \mathcal{O}_y \to 0,
\end{align*}
where $L_i$ are rank-one torsion-free sheaves and $y\in D$.
We have evaluation maps
\begin{align*}
    \overline{J}_D
    \stackrel{\ev_{D2}}{\leftarrow}
    Hecke_D
    \stackrel{(\ev_{D1},\ev_{D3})}{\longrightarrow}
    \overline{J}_D \times D.
\end{align*}
Let $\mathrm{H}_D^{\vee}$ be the functor
\begin{align*}
    \mathrm{H}_{D}^{\vee}
    =
    (\ev_{D1}, \ev_{D3})_*\ev_{D2}^*
    \colon
    \Coh(\overline{J}_D)
    \to
    \Coh(\overline{J}_D)\otimes \Coh(D).
\end{align*}
Let $\cP_F\in \Coh^{\heartsuit}(\overline{J}_D)$ be the corresponding
Arinkin sheaf. In~\cite[Theorem~6.3.6]{Gdual}, the Wilson--Hecke
compatibility for Arinkin's autoequivalence~\cite{Ardual} is proved.
As a consequence, we have the Hecke-eigen property
\begin{align}\label{eigen:D}
    \mathrm{H}_D^{\vee}(\cP_F)
    \cong
    \cP_F\boxtimes F.
\end{align}

We can interpret the above Hecke-eigen property in terms of the Hecke
correspondence for derived moduli stacks. 
Let $b' \in \cB' \subset \lvert \mathcal{O}_{\mathbb{P}^2}(m) \rvert$, where
$\cB'$ is the open subset corresponding to integral plane curves, and suppose
that $b'$ corresponds to $D$. 
Replacing $S$ by $S'$ and $\cB$ by $\cB'$, we obtain the diagram of Hecke
correspondences for derived moduli stacks
\begin{align*}
\xymatrix{
 \cHecke' \ar[r]^-{\ev_2'} \ar[d]_-{(\ev_1', \ev_3')} & \cH' \ar[d] \\
 S'\times \cH' \ar[r] & \cB'.
}
\end{align*}
Here $\cH'$ is the derived moduli stack of pure one-dimensional sheaves on
$S'$ whose support curve lies in $\cB'$, and $\cHecke'$ is the derived moduli
stack of exact sequences
\begin{align*}
    0\to E_1 \to E_2 \to \mathcal{O}_x \to 0,
    \quad E_i \in \Coh^{\heartsuit}(S'), \quad x\in S',
\end{align*}
where each $E_i$ is pure one-dimensional. Then, as in
Lemma~\ref{lem:qsmooth}, the maps $\ev_1'$ and $\ev_2'$ are quasi-smooth and
proper of relative virtual dimension one. Moreover:
\begin{itemize}[leftmargin=1.5em]
    \item The stack $\cH'$ is smooth, and in particular classical. Indeed,
    $\Ext_{\mathbb{P}^2}^2(E,E)=0$ for a one-dimensional sheaf $E$ on
    $\mathbb{P}^2$ with integral support. Moreover, $\cH' \to \cB'$ is flat,
    and its fiber over $b'$ is $\overline{J}(D)\times \bgm$; this follows
    from~\cite[Theorem~5.5(ii)]{MRF3}.
    
    \item The stack $\cHecke'$ is classical, and $\cHecke' \to \cB'$ is flat
    with fiber over $b'$ equal to $Hecke_D \times \bgm$; this follows
    from~\cite[Lemma~6.3.11]{Gdual}.
\end{itemize}
Therefore the Hecke operator for $S'$,
\begin{align*}
    \mathrm{H}^{'\vee}
    =(\ev_1', \ev_3')_{*}(\ev_2')^*
    \colon \Coh(\cH') \to \Coh(\cH') \otimes \Coh(S'),
\end{align*}
also satisfies the Hecke-eigen property \eqref{eigen:D}:
\begin{align*}
    \mathrm{H}^{'\vee}(\cP_F) \cong \cP_F \boxtimes F.
\end{align*}
In particular, $\mathrm{H}^{'\vee}(\cP_F)$ is a maximal
Cohen--Macaulay sheaf on $\cH'_{b'}\times D$.

Let $\mathrm{H}_{\Hilb'}^{\vee}$ be the functor
\begin{align*}
    \Coh(\Hilb(S'))
    \to
    \Coh(\Hilb(S'))\otimes \Coh(S')
\end{align*}
defined as in~\eqref{HEvee} for $S'$. Consider the diagram
\begin{align*}
    \cH'
    \stackrel{p^{'\dag}}{\leftarrow}
    \cH^{'\dag}
    \stackrel{i'}{\hookrightarrow}
    \Hilb(S').
\end{align*}
Here $\cH^{'\dag}$ is defined as $\cH^{\dag}$, by replacing $S$ with $S'$. 
Set
\begin{align*}
    \widetilde{\mathcal{Q}}_F
    :=
    i'_{*}(p'^{\dag})^*\cP_F
    \in \Coh(\Hilb(S')).
\end{align*}
By the same argument as in~\eqref{isom:HQP}, we have an isomorphism
\begin{align}\label{HQ'}
    (i'\times \id_{S'})_*
    (p^{'\dag}\times \id_{S'})^*
    \mathrm{H}^{'\vee}(\cP_F)
    \cong
    \mathrm{H}_{\Hilb'}^{\vee}(\widetilde{\mathcal{Q}}_F).
\end{align}
By~\eqref{eigen:D}, for $d\gg 0$ the left-hand side on $\Hilb^d(S')$ is a maximal Cohen--Macaulay sheaf
on $\Hilb^{d-1}(D)\times D$, since $p'^{\dag}$
is smooth on $\cH'^{\dag} \cap \Hilb^d(S')$ for $d\gg 0$ and $i'$ is a closed immersion. 
Also let 
\begin{align*}
    \mathcal{Q}_F^d \in \Coh^{\heartsuit}(\Hilb^d(S'))
\end{align*}
be defined as in (\ref{QEd}) for the pair $(S', F)$. 
Then as before, we have $\widetilde{\mathcal{Q}}_F^d=\mathcal{Q}_F^d$ for $d\gg 0$. 
It follows that, for $d\gg 0$ we have 
\begin{align*}
     \mathrm{H}_{\Hilb'}^{\vee}(\mathcal{Q}_F^d) \in \Coh^{\heartsuit}(\Hilb^{d-1}(S') \times S')
\end{align*}
and it is a maximal Cohen--Macaulay sheaf on $\Hilb^{d-1}(D) \times D$. 
Equivalently, the same property holds for the object (\ref{ets:Q}) 
constructed from the pair $(S', F)$. 

Finally, the diagram~\eqref{dia:Sd} and the analogous diagram for $S'$
are identified after taking formal completions over the corresponding
points of the symmetric powers:
\begin{align}\label{isom:formal}
    \widehat{\mathrm{Sym}^d(S)}_{(x_1+\cdots+x_d)}
    \cong
    \widehat{\mathrm{Sym}^d(S')}_{(y_1+\cdots+y_d)}.
\end{align}
Under this identification, and using~\eqref{isom:EF}, the
objects (\ref{ets:Q}) 
corresponding to $(\cC_b,E,S)$ and $(D,F,S')$, pulled back under the base-change maps 
\begin{align}\label{map:formal}
     \widehat{\mathrm{Sym}^d(S)}_{(x_1+\cdots +x_d)}\to 
     \mathrm{Sym}^d(S), \ 
    \widehat{\mathrm{Sym}^d(S')}_{(y_1+\cdots +y_d)}
    \to \mathrm{Sym}^d(S')
\end{align}
are isomorphic. 
Indeed, Hilbert schemes of
points and nested Hilbert schemes are formally local over the symmetric
power: their formal completions over a zero-dimensional cycle depend only
on the formal neighborhoods of the support points. The same is true for
the isospectral Hilbert schemes and for the diagram~\eqref{dia:Sd}, since
they are obtained from these Hilbert schemes by fiber products. Moreover,
completion commutes with finite push-forward and with the sign idempotent
in characteristic zero. Therefore the objects~\eqref{ets:Q}
for $(\cC_b,E,S)$ pulled back by the first map in (\ref{map:formal}) 
is identified with the corresponding object for $(D,F,S')$
pulled back under the second map in (\ref{map:formal}). 
Since the desired maximal Cohen--Macaulay properties hold
for the latter by maximal Cohen--Macaulay property of (\ref{HQ'}),
they also hold for
$\mathrm{H}_{\Hilb}^{\vee}(\mathcal{Q}_E^d)$ 
at every point lying over the cycle $x_1+\cdots+x_d$.
As the point was arbitrary, the lemma follows.
\end{proof}

\subsection{Proof of Theorem~\ref{prop:WH}}\label{subsec:proofWH}
\begin{proof}
We take an open subset $\cB \subset \rB^{\mathrm{red, cl}}$. Since $\cH^{\mathrm{greg}}=\cH$ in this case, we omit $\mathrm{greg}$ from the notation. 
By Lemma~\ref{lem:dual}, it is enough to prove that (\ref{com:WH4}) commutes. 

Consider the composition 
\begin{align}\notag
\Coh(\cH^{\mathrm{ss}}) \stackrel{\mathrm{W}^{\vee}}{\to} \Coh(\cH^{\mathrm{ss}})\otimes \Coh(S) 
\stackrel{\Phi\otimes \id}{\to}\Coh(\cH)\otimes \Coh(S).
\end{align}
Its kernel object $\cP_{\mathrm{W}}$ relative over $\cB$ is given as follows. 
For the diagram 
\begin{align*}
\xymatrix{
   \cH^{\mathrm{ss}}\times_{\cB}\cH \times S \ar[r]^-{p_{13}} \ar[d]_-{p_{12}} & 
   \cH^{\mathrm{ss}}\times S \\
 \cH^{\mathrm{ss}}\times_{\cB}\cH & 
      }
\end{align*}
We have 
\begin{align}\label{formula:PW}
    \cP_{\mathrm{W}}=p_{12}^*\cP\otimes p_{13}^* \cE \in 
    \Coh(\cH^{\mathrm{ss}}\times_{\cB}\cH \times S).
\end{align}
The above object lies in $\Coh(-)$ since $p_{12}$ is flat and $\cE$ is a perfect 
complex. 

Suppose that $E\in \Coh^{\heartsuit}(\cC_b)$ is semistable, 
so that the map $\tau_E$ in (\ref{tauE}) factors through 
$\cH^{\mathrm{ss}} \subset \cH$.
Then the pull-back of $\cP_{\mathrm{W}}$ by the map 
\begin{align}\label{tauEid}
\tau_E \times \id \colon
    \Spec k \times_{\cB} \cH \times S=\cH_b\times S
    \to \cH^{\mathrm{ss}}\times_{\cB} \cH \times S
\end{align}
is isomorphic to 
\begin{align}\label{OE}
\cP_E\boxtimes E, \ E\in \Coh^{\heartsuit}(S).
\end{align}
Since it is a Cohen--Macaulay sheaf, 
for any Cohen--Macaulay $k$-scheme $T$ with a morphism $T\to \cH^{\mathrm{ss}}$, 
the pull-back of $\cP_{\mathrm{W}}$ to $T\times_{\cB}\cH\times S$ is a 
$T$-flat family of Cohen--Macaulay sheaves by Lemma~\ref{lem:CM-family}.
Since $\cH^{\mathrm{ss}}$ is Cohen--Macaulay, 
we have  
\begin{align*}
    \cP_{\mathrm{W}} \in \Coh^{\heartsuit}(\cH^{\mathrm{ss}}\times_{\cB}\cH \times S)
\end{align*}
and it is a Cohen--Macaulay sheaf flat over $\cH^{\mathrm{ss}}$. 
Since $E$ is supported on the corresponding spectral curve $\cC_b$, we indeed 
have 
\begin{align*}
      \cP_{\mathrm{W}} \in \Coh^{\heartsuit}(\cH^{\mathrm{ss}}\times_{\cB}\cH \times_{\cB} \cC)
\end{align*}
which is a maximal Cohen--Macaulay sheaf, under the obvious fully-faithful functor 
\begin{align*}
    \Coh^{\heartsuit}(\cH^{\mathrm{ss}}\times_{\cB}\cH \times_{\cB} \cC) 
    \hookrightarrow \Coh^{\heartsuit}(\cH^{\mathrm{ss}}\times_{\cB}\cH \times S).
\end{align*}

Similarly consider the composition 
\begin{align}\notag
\Coh(\cH^{\mathrm{ss}}) \stackrel{\Phi}{\to} \Coh(\cH) 
\stackrel{\mathrm{H}^{\vee}}{\to}\Coh(\cH)\otimes \Coh(S).
\end{align}
Its kernel object $\cP_{\mathrm{H}}$ relative over $\cB$ is given as follows. 
Consider the following diagram 
\begin{align*}
  \xymatrix{
\mathcal{M} \ar[r] \ar[d]_-{p_2} \diasquare \ar@/^20pt/[rr]^-{q_3} \ar@/_40pt/[dd]_-{p_3}&
   \cHecke^{\mathrm{greg}} \ar[r]_-{q} \ar[d]_-{p} &
   \cH \times S \\
\cH^{\mathrm{ss}} \times_{\cB} \cH
   \ar[r] \ar[d]  &
   \cH \\
\cH^{\mathrm{ss}} &
}
\end{align*}
We have the induced morphism 
\begin{align}\label{mappq}
(p_3, q_3) \colon \mathcal{M} \to \cH^{\mathrm{ss}} \times_{\cB} \cH\times S    
\end{align}
and 
\begin{align*}
    \cP_{\mathrm{H}}=(p_3, q_3)_{*}p_2^*\cP \in \Coh(\cH^{\mathrm{ss}} \times_{\cB} \cH\times S  ).
\end{align*}
The above object lies in $\Coh(-)$ since the map (\ref{mappq}) is proper (which follows from the properness of $q$) and $p_2$ is 
quasi-smooth. 
By Proposition~\ref{prop:PE}, the pull-back of $\cP_{\mathrm{H}}$ under 
the map (\ref{tauEid}) satisfies that 
\begin{align*}
    (\tau_E \times \id)^* \cP_{\mathrm{H}} 
    \cong \mathrm{H}^{\vee}(\cP_E)\in \Coh^{\heartsuit}(\cH \times S)
\end{align*}
and is a maximal Cohen--Macaulay sheaf on $\cH_b \times \cC_b$. 
Therefore by the same argument for $\cP_{\mathrm{W}}$, we have 
\begin{align*}
      \cP_{\mathrm{H}} \in \Coh^{\heartsuit}(\cH^{\mathrm{ss}}\times_{\cB}\cH \times_{\cB} \cC)
\end{align*}
and is a maximal Cohen--Macaulay sheaf. 

It is enough to show that both kernel objects are isomorphic, i.e. 
\begin{align}\label{isom:PWH}
    \cP_{\mathrm{W}} \cong \cP_{\mathrm{H}}.
\end{align}
Note that $\cH^{\mathrm{ss}}\times_{\cB}\cH \times_{\cB} \cC$
has pure-dimension since $\cH$ has pure-dimension and 
$\cH \to \cB$, $\cC \to \cB$ are flat. 
Therefore by the uniqueness of Cohen--Macaulay extension in Lemma~\ref{lem:MCM-extension}, 
see~\cite[Lemma~2.2]{Ardual}, 
it is enough to show that $\cP_{\mathrm{W}}=\cP_{\mathrm{H}}$
outside a codimension two closed substack.
Note that the stack 
\begin{align*}
    \cH^{\mathrm{ss}}\times_{\cB}\cH \times_{\cB} \cC
\end{align*}
classifies tuples
\begin{align}\label{F12x}
    (F_1, F_2, x), \ F_i \in \Coh^{\heartsuit}(\cC_b), \ b\in \cB, \ x\in \cC_b
\end{align}
such that $F_1$ is a semistable sheaf and $F_2$ is generically a line bundle on $\cC_b$. 
We define the open substack 
\begin{align*}
    \mathcal{U} \subset \cH^{\mathrm{ss}}\times_{\cB}\cH \times_{\cB} \cC
\end{align*}
to be consisting of (\ref{F12x}) such that $F_1$ is a line bundle on $\cC_b$ 
and $F_2$ is a line bundle at $x$. Since the above open immersion is an isomorphism over $\cB^{\mathrm{sm}}$, and the complement has 
codimension at least one on each fiber over $b\in \cB$ by 
Remark~\ref{rmk:dense}, 
the complement of $\mathcal{U}$ is of codimension greater than or equal to two. Since both sides of (\ref{isom:PWH}) are maximal Cohen--Macaulay sheaves on $\cH^{\mathrm{ss}} \times_{\cB} \cH \times_{\cB} \cC$, 
by the uniqueness of the Cohen--Macaulay extension outside a codimension-two locus in Lemma~\ref{lem:MCM-extension}, it is enough to show that 
\begin{align}\label{PWHU}
\cP_{\mathrm{W}}|_{\mathcal{U}}\cong \cP_{\mathrm{H}}|_{\mathcal{U}}.
\end{align}

Over the open substack $\mathcal{U}$, the morphism (\ref{mappq}) gives 
an isomorphism 
\begin{align*}
    (p_3, q_3) \colon (p_3, q_3)^{-1}(\mathcal{U}) \stackrel{\cong}{\to} \mathcal{U}.
\end{align*}
Indeed the fiber of (\ref{mappq}) at (\ref{F12x})
is given by 
non-trivial exact sequences 
\begin{align}\label{ext:F2}
    0\to F_2 \to F_2' \to \mathcal{O}_x \to 0. 
\end{align}
Since $\cC_b$ is Gorenstein and $F_2$ is a line bundle on $\cC_b$ at $x$, 
we have 
\begin{align*}
    \Ext_{\cC_b}^1(\mathcal{O}_x, F_2)=\Hom(F_2, \mathcal{O}_x\otimes \omega_{\cC_b})^{\vee}=k.
\end{align*}
It follows that we have 
\begin{align*}
(p_3, q_3)^{-1}(F_1, F_2, x)=\mathbb{G}_m/\mathbb{G}_m=\mathrm{pt}. 
\end{align*}
Therefore over $\mathcal{U}$, the object $\cP_{\mathrm{H}}|_{\mathcal{U}}$ is a line 
bundle whose fiber is 
\begin{align*}
    \cP_{\mathrm{H}}|_{(F_1, F_2, x)} =
    \det \chi(F_1\otimes F_2')\otimes \det \chi(F_1)^{\vee}\otimes 
    \det \chi(F_2')^{\vee} \otimes \det \chi(\mathcal{O}_{\cC_b}),
\end{align*}
where $F_2'$ is the unique non-trivial extension (up to scalar)
in (\ref{ext:F2}).
Using the exact sequence (\ref{ext:F2}), the right-hand side is canonically identified with 
\begin{align*}
    \det \chi(F_1\otimes F_2)\otimes \det \chi(F_1)^{\vee}\otimes 
    \det \chi(F_2)^{\vee} \otimes \det \chi(\mathcal{O}_{\cC_b})\otimes 
    \det \chi(F_1|_{x}).
\end{align*}
It is nothing but $\cP_{\mathrm{W}}|_{(F_1, F_2, x)}$ by the formula (\ref{formula:PW})
for $\cP_{\mathrm{W}}$, therefore (\ref{PWHU}) holds. 
\end{proof}

\subsection{A variant}
We will also use the following variant of the diagram (\ref{com:WH4}). 
Again we take $\cB \subset \rB^{\mathrm{red, cl}}$.

Let $\widetilde{\mathrm{W}}^R$ be the functor 
\begin{align*}
    \widetilde{\mathrm{W}}^R \colon \Coh(\cH^{\mathrm{ss}}) \to \Coh(\cH^{\mathrm{ss}})\otimes \Coh(S)
\end{align*}
defined by 
\begin{align*}
    \widetilde{\mathrm{W}}^R(-)=p_{\cH}^{!}(-)\otimes \mathbb{D}_{S\times \cH}(\cE)
    =p_{\cH}^*(-)\otimes\cE^{\vee}[1].
\end{align*}
Here $\cE^{\vee}=\cH^1(\mathbb{D}_{S\times \cH}(\cE))\in \Coh^{\heartsuit}(S\times \cH^{\mathrm{ss}})$, which is a $\cH^{\mathrm{ss}}$-flat family of pure one-dimensional sheaves on $S$. 
Note that $p_{\cH}$ is not proper, but the above formula makes 
sense since $\mathbb{D}_{S\times \cH}(\cE)$ is supported on 
$\cC\times_{\cB}\cH^{\mathrm{ss}}$ which is proper over 
$\cH^{\mathrm{ss}}$. The functor $\widetilde{\mathrm{W}}^R$ gives a right adjoint 
of the functor $\mathrm{W}$ in (\ref{funct:W}). 

Similarly let $\widetilde{\mathrm{H}}^R$ be the functor 
\begin{align*}
\widetilde{\mathrm{H}}^R \colon \Coh(\cH) \to \Coh(\cH)\otimes \Coh(S)
\end{align*}
defined by 
\begin{align}\label{HRtilde}
\widetilde{\mathrm{H}}^R(-)=(\ev_2, \ev_3)_{*}\ev_1^!(-).
\end{align}
Note that both $\ev_1$ and $(\ev_2, \ev_3)$ are quasi-smooth and proper by Lemma~\ref{lem:qsmooth}, therefore the above functors make sense. 
The functor 
\begin{align*}
    \ev_1^! \colon \Coh(\cH) \to \Coh(\cHecke^{\mathrm{greg}})
\end{align*}
is a right adjoint of $\ev_{1*}$ and given by 
\begin{align}\label{formula:ev1}
    \ev_1^!(-)=\ev_1^* \otimes \omega_{\ev_1}[1]
\end{align}
since $\ev_1$ has virtual dimension one. 
We note the following: 
\begin{lemma}\label{lem:omegaev}
    The line bundle $\omega_{\ev_1}$ is of the form $(\ev_3, \ev_2)^* \mathcal{L}$ for a line bundle $\mathcal{L}$ on $S\times \cH$. 
\end{lemma}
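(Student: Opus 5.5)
The plan is to compute $\omega_{\ev_1}$ as the determinant of the relative cotangent complex of $\ev_1$, written in terms of the universal objects. Then we check that the resulting line bundle depends only on the data $(x, E_2)$ recorded by $(\ev_3,\ev_2)$.

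Work on $\cHecke^{\mathrm{greg}}$, which classifies sequences $0\to E_1\to E_2\to \mathcal{O}_x\to 0$ as in (\ref{seq:Ei}). The map $\ev_1$ remembers $E_1$, so its fiber over $E_1$ parametrizes extensions of $\mathcal{O}_x$ by $E_1$ with $x\in S$ varying. Deformation theory of the sequence with $E_1$ fixed gives
\begin{align*}
\mathbb{T}_{\ev_1}\simeq \mathbf{R}\Hom_S(E_2,\mathcal{O}_x) \text{ (relative to fixing } E_1\text{)},
\end{align*}
or equivalently the cone of $\mathbf{R}\Hom(E_1,\mathcal{O}_x)\to$ deformations of $(E_2\to\mathcal{O}_x)$. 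Concretely, I expect the triangle
\begin{align*}
\mathbf{R}\Hom_S(\mathcal{O}_x,\mathcal{O}_x)\to \mathbf{R}\Hom_S(E_2,\mathcal{O}_x)\to \mathbb{T}_{\ev_1}[?],
\end{align*}
where the first term accounts for moving $x$ and the $\bgm$ automorphism (the gerbe factor $\mathfrak{M}(1)$ has been replaced by $S\times\bgm$ in (\ref{Hecke2})). It is safest to derive this from the standard formula for the tangent complex of the stack of exact sequences, $\mathbb{T}=\mathbf{R}\Hom(E_2,E_2)\times_{\mathbf{R}\Hom(E_2,\mathcal{O}_x)}\mathbf{R}\Hom(\mathcal{O}_x,\mathcal{O}_x)$ shifted by $1$, relative to $\mathbb{T}$ of the $E_1$-factor. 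This would be set up in the proof of Lemma~\ref{lem:qsmooth} (Subsection~\ref{subsec:qsmooth}), and I would reuse it. The upshot is that $\mathbb{T}_{\ev_1}$ is expressed by universal complexes built from $E_2$, $\mathcal{O}_x$, and possibly $E_1$.

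Taking determinants, every term built only from $E_2$ and $x$ is visibly pulled back along $(\ev_3,\ev_2)$. For example, $\det\mathbf{R}\Hom_S(E_2,\mathcal{O}_x)=\det(\mathbb{D}(E_2)|_x)$ is the pullback of the line bundle $\det(\mathbb{D}_{S\times\cH}(\mathrm{E})|_{\Delta})$ on $S\times\cH$, and $\det\mathbf{R}\Hom(\mathcal{O}_x,\mathcal{O}_x)=\det(\wedge^\bullet T_S|_x)^{\pm}$ comes from $S$; note $\omega_S$ is trivial. Any term involving $E_1$ I would eliminate by the sequence $0\to E_1\to E_2\to\mathcal{O}_x\to0$. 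Determinants are multiplicative, so $\det\mathbf{R}\Hom(E_1,\mathcal{O}_x)\cong\det\mathbf{R}\Hom(E_2,\mathcal{O}_x)\otimes(\det\mathbf{R}\Hom(\mathcal{O}_x,\mathcal{O}_x))^{-1}$, which is again of the required form.

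The weight of the $\bgm$ of the point $\mathcal{O}_x$ must also be tracked. The universal $\mathcal{O}_x$ on $S\times\bgm$ carries a weight-one twist, and the above determinants could have nonzero weight there, so they would not obviously descend to $S$. The resolution is that $\mathbf{R}\Hom(E_2,\mathcal{O}_x)$ and $\mathbf{R}\Hom(\mathcal{O}_x,\mathcal{O}_x)$ have total ranks giving the correct weight cancellation. Alternatively, $\mathcal{O}_x$ can be normalized using the section $S\times\bgm\to$ chosen in (\ref{Hecke2}), which by construction identifies the $\bgm$-factor with the automorphism $\mathbb{G}_m$ of $E_1$ relative to $E_2$, itself a function of $E_2$-data. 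I expect this bookkeeping of the $\bgm$-weight, together with setting up the relative tangent complex precisely enough, to be the main obstacle. The rest is formal multiplicativity of $\det$.
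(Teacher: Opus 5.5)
Your proposal is correct and is essentially the paper's argument. The paper computes fiberwise $\omega_{\ev_1}=\det\chi(\mathcal{O}_x[-1],E_2)^{\vee}=\det\chi(\mathcal{O}_x,E_2)$ and notes that this depends only on $(x,E_2)$. This uses $\mathbb{T}_{\ev_1}\simeq\mathbf{R}\Hom_S(\mathcal{O}_x,E_2)[1]$, up to the trivial $k[-1]$-direction removed in passing to $\mathrm{Hecke}'_G$. That complex is Serre dual, up to shift, to the $\mathbf{R}\Hom_S(E_2,\mathcal{O}_x)$ you wrote, and it has the same determinant, so your misidentification is harmless.

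Your $\bgm$-weight concern is settled exactly by your first remedy, since $\chi(E_2,\mathcal{O}_x)=\chi(\mathcal{O}_x,\mathcal{O}_x)=0$. The ``alternative'' normalization via the $\bgm$-factor is therefore unnecessary, and it is also not right as stated: that factor parametrizes $\Aut(\mathcal{O}_x)$, not anything attached to $E_1$ or $E_2$.
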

\begin{proof}
    This follows from 
    \begin{align*}
        \omega_{\ev_1}|_{(0\to E_1 \to E_2 \to \mathcal{O}_x \to 0)}
        &=\det \chi(\mathcal{O}_x[-1], E_2)^{\vee} \\
        &=\det \chi(\mathcal{O}_x, E_2) \\
        &=\det \chi(\mathcal{O}_x, E_1)\otimes \det \chi(\mathcal{O}_x, \mathcal{O}_x).
    \end{align*}
    The last expression is a line bundle on $S\times \cH$. 
\end{proof}
The functor (\ref{HRtilde}) is a right adjoint functor of $\mathrm{H}$ given by (\ref{hecke:greg}). 
The proof of the following proposition is almost the same as the proof of the commutativity of (\ref{com:WH4}). In Subsection~\ref{subsec:modify}, we explain where to modify the proof. 
\begin{prop}\label{prop:comad}
The following diagram is commutative: 
\begin{align}\label{com:WH3ad}
\xymatrix{
\Coh(\cH^{\mathrm{ss}}) 
  \ar[r]^-{\widetilde{\mathrm{W}}^R}
  \ar[d]_-{\Phi} 
& \Coh(\cH^{\mathrm{ss}}) \otimes \Coh(S)
  \ar[d]^-{\Phi\otimes \id} \\
\Coh(\cH) 
  \ar[r]^-{\widetilde{\mathrm{H}}^R} 
& \Coh(\cH)\otimes \Coh(S). 
}
\end{align}
\end{prop}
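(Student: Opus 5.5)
We follow the proof of the commutativity of (\ref{com:WH4}) in Subsection~\ref{subsec:proofWH}, replacing $\cE$ by $\cE^{\vee}[1]$ and $\ev_2^*$ by $\ev_1^!$. The plan is to compute both kernel objects relative over $\cB$ on $\cH^{\mathrm{ss}}\times_{\cB}\cH\times S$. We then show that, up to a common shift $[1]$, each is a maximal Cohen--Macaulay sheaf on $\cH^{\mathrm{ss}}\times_{\cB}\cH\times_{\cB}\cC$. Finally we compare them on the open substack $\mathcal{U}$ and invoke Lemma~\ref{lem:MCM-extension}.

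\textbf{First kernel.} The clockwise composition has kernel
\[
\cP_{\mathrm{W}}^R=p_{12}^*\cP\otimes p_{13}^*\cE^{\vee}[1].
\]
Its pull-back along $\tau_E\times\id$ is $\cP_E\boxtimes E^{\vee}[1]$. Since $\cC_b$ is Gorenstein, $E^{\vee}$ is again a rank one torsion-free sheaf on $\cC_b$, namely $\mathcal{E}xt^1_S(E,\omega_S)$, which is Cohen--Macaulay. Then Lemma~\ref{lem:CM-family} applies as before and gives the required property of $\cP^R_{\mathrm{W}}[-1]$.

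\textbf{Second kernel.} The anticlockwise composition has kernel
\[
\cP_{\mathrm{H}}^R=(p_3,q_3')_*\bigl(p_2^*\cP\otimes\omega_{\ev_1}\bigr)[1],
\]
where the Hecke stack now maps to $\cH$ via $\ev_1$ and to $\cH\times S$ via $(\ev_2,\ev_3)$. By Lemma~\ref{lem:omegaev}, the twist $\omega_{\ev_1}$ is pulled back from $S\times\cH$ along $(\ev_3,\ev_2)$. Hence it can be moved outside the push-forward by the projection formula, and it does not affect Cohen--Macaulayness. The essential input is the analogue of Proposition~\ref{prop:PE} and Lemma~\ref{lem:heckeQ} with the roles of $E_1$ and $E_2$ exchanged. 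That is, we need
\[
(\ev_2,\ev_3)_*\ev_1^*\cP_E
\]
to be a sheaf that is maximal Cohen--Macaulay on $\cH_b\times\cC_b$.

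To prove this, we use the same stable pair and Hilbert scheme reduction. The roles of $\ev^{\mathrm H}_2$ and $(\ev^{\mathrm H}_1,\ev^{\mathrm H}_3)$ on the nested Hilbert scheme are swapped, which is a correspondence $\Hilb^{d-1}(S)\leftarrow\Hilb^{d,d-1}(S)\to\Hilb^d(S)\times S$. We then apply the formal local comparison over $\mathrm{Sym}^d(S)$ with an integral plane curve $D\subset\mathbb{P}^2$ given by Lemma~\ref{lem:intmodel}. On the integral model the needed fact is the Hecke eigen-property for the right adjoint Hecke operator. It follows from~\cite[Theorem~6.3.6]{Gdual}, because Arinkin's transform is an equivalence over the elliptic locus. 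Taking right adjoints of the eigen-property (\ref{eigen:D}) therefore gives
\[
\widetilde{\mathrm{H}}^R_D(\cP_F)\cong\cP_F\boxtimes F^{\vee}[1],
\]
up to a line bundle twist from $\omega$. This is maximal Cohen--Macaulay up to shift, and the formal local identification (\ref{isom:formal}) then transfers the property to $\cC_b$.

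\textbf{Comparison on $\mathcal{U}$.} Let $\mathcal{U}$ be the locus where $F_1$ is a line bundle and $F_2$ is a line bundle at $x$. The relevant fiber now parametrizes sub-objects $F_2'\subset F_2$ with quotient $\mathcal{O}_x$, that is, points of $\mathbb{P}\Hom(F_2,\mathcal{O}_x)$. Since $F_2$ is locally free at $x$, this is a single point. Both kernels then become line bundles (shifted by $[1]$). Their fibers are identified using the sequence
\[
0\to F_2'\to F_2\to\mathcal{O}_x\to 0,
\]
which gives a factor $\det\chi(F_1|_x)^{-1}$ together with the contribution of $\omega_{\ev_1}$ computed in Lemma~\ref{lem:omegaev}. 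These should match the fiber of $\cE^{\vee}$ at $x$, namely $(F_1|_x)^{\vee}\otimes\omega$. The complement of $\mathcal{U}$ has codimension at least two, so Lemma~\ref{lem:MCM-extension} concludes the argument.

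\textbf{Main obstacle.} We expect the main obstacle to be making this line bundle bookkeeping canonical. We must check that $\omega_{\ev_1}$ together with the duality twist on $S$ (the class $\omega_S$, which is trivial since $S$ is symplectic) exactly matches $\cE^{\vee}$, and not only matches it up to a line bundle pulled back from $\cB$. If this matching fails, we would absorb the discrepancy by noting that any such line bundle is trivial on the fibers of $\cB$ and then trivializing it locally on $\cB$.
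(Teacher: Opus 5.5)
The overall structure of your proof is the paper's: compute both kernels, show each is (up to $[1]$) a maximal Cohen--Macaulay sheaf on $\cH^{\mathrm{ss}}\times_{\cB}\cH\times_{\cB}\cC$, match them on $\mathcal{U}$, and conclude by Lemma~\ref{lem:MCM-extension}. Your first kernel and your handling of $\omega_{\ev_1}$ via Lemma~\ref{lem:omegaev} are also fine.

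\textbf{The gap.} It lies in the key input, the analogue of Proposition~\ref{prop:PE} for $(\ev_2,\ev_3)_*\ev_1^*\cP_E$. The stable pair reduction does not work with the roles of $E_1$ and $E_2$ exchanged.

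Identity (\ref{isom:HQP}) rests on one fact: a generically surjective section $\mathcal{O}_S\to E_1$ composes to a generically surjective section of $E_2$. Hence, after base changing the \emph{target} map $(\ev_1,\ev_3)$ along $p^{\dag}\times\id_S$, the source map $\ev_2$ lifts to $\cH^{\dag}$, cf.\ (\ref{dia:dag}).

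For your operator the target map is $(\ev_2,\ev_3)$. Base change therefore equips $E_2$ with a section, and a section of $E_2$ does not induce a section of $E_1$. So $\ev_1$ has no lift to $\cH^{\dag}$.

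Concretely, the swapped correspondence $\Hilb^{d-1}(S)\leftarrow\Hilb^{d,d-1}(S)\to\Hilb^d(S)\times S$ only sees those $E_1\subset E_2$ that contain the image of the section. Its output is supported on the universal subscheme $\{x\in Z_2\}$, a proper closed subset of codimension one in $\Hilb^d(\cC_b)\times\cC_b$. By contrast, the fiber of $(\ev_2,\ev_3)$ over $(E_2,x)$ is nonempty for every $x\in\cC_b$. The sheaf you want to control (which a posteriori is $\cP_E\boxtimes E^{\vee}[1]$ up to twist) has full support. So there is no analogue of (\ref{isom:HQP}), and your formal-local comparison with the integral model never gets started.

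\textbf{The missing idea} is to dualize, so that sections propagate in the direction you need. The paper conjugates by the involution $\psi\colon\cH\to\cH$, $E\mapsto\mathbb{D}_S(E)[1]$. This reverses Hecke modifications, sending $(0\to E_1\to E_2\to\mathcal{O}_x\to 0)$ to $(0\to E_2^{\vee}\to E_1^{\vee}\to\mathcal{O}_x\to 0)$. It therefore intertwines $(\ev_1,(\ev_2,\ev_3))$ with $(\ev_2,(\ev_1,\ev_3))$, as in (\ref{dia:heckedual}).

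Combined with $\psi_*\cP_E\cong\cP_{E^{\vee}\otimes\omega_{\cC_b}^{-1}}$, this gives $(\psi\times\id_S)_*(\ev_2,\ev_3)_*\ev_1^*\cP_E\cong\mathrm{H}^{\vee}(\cP_{E^{\vee}\otimes\omega_{\cC_b}^{-1}})$. Proposition~\ref{prop:PE} then applies verbatim. You need no new Hilbert scheme or integral-model computation, and no adjoint eigen-property from \cite{Gdual}. (Working with co-sections $E\hookrightarrow L$ instead of sections is the same idea in disguise.)

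With this in place the rest of your argument goes through. One further point: do the line-bundle matching on $\mathcal{U}$ canonically fiberwise, as in (\ref{PWHU}), rather than by your fallback of local trivializations over $\cB$. Local trivializations by themselves do not glue to a global isomorphism of kernels.
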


\section{Dolbeault Langlands for reduced spectral curves}
In this section, we prove Conjecture~\ref{conj:dl} over an open subset of $\rB^{\mathrm{red}}$, 
under two additional hypotheses; Whittaker normalization and some restriction of singularities of spectral curves.  
\subsection{Higgs bundles with reduced spectral curves}
Recall that $\rB^{\mathrm{red}} \subset \rB$ is the open subset corresponding 
to reduced spectral curves. 
We fix an open subset 
\begin{align}\label{Bcirc}
    \cB \subset \rB^{\mathrm{red, cl}}.
\end{align}
As in (\ref{sH}), we write 
\begin{align}\label{H:simple}
    \cH=\Hig_{G}\times_{\rB} \cB \subset \Hig_G^{\cl},
\end{align}
and $\cC \to \cB$ denotes the universal curve over $\cB$
 with projection 
$\pi \colon \cC \to C\times \cB$. 
Note that $\cH^{\mathrm{greg}}=\cH$ by the condition (\ref{Bcirc}), and it is classical.

Let $\mathcal{N} \subset \Omega_{\cH}[-1]$
be the nilpotent cone, and 
\begin{align*}
    \LL_{\mathcal{N}}(\cH(\chi))_w \subset \LL(\cH(\chi))_w
\end{align*}
the subcategory of nilpotent singular supports. 
In this case, we have the following: 
\begin{lemma}
    \label{lem:nilp}
    We have 
    \begin{align*}
         \Coh_{\mathcal{N}}(\cH(\chi))=\mathrm{Perf}(\cH(\chi)).
    \end{align*}
    In particular, we have 
      \begin{align*}
         \LL_{\mathcal{N}}(\cH(\chi))_w=\LL(\cH(\chi))_w \cap \mathrm{Perf}(\cH(\chi)).
    \end{align*}
\end{lemma}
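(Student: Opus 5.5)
The plan is to show that the nilpotent cone $\mathcal{N}$ coincides with the zero section of $\Omega_{\cH}[-1]$. Once this holds, the characterization of singular support in~\cite{AG} gives $\Coh_{\mathcal{N}}(\cH(\chi))=\mathrm{Perf}(\cH(\chi))$. Recall that for a quasi-smooth stack, $\Coh$ objects with singular support in the zero section are exactly the perfect complexes. The second statement then follows formally by intersecting with $\LL(\cH(\chi))_w$, since $\LL_{\mathcal{N}}$ is by definition the subcategory of $\LL$ with singular support in $\mathcal{N}$.

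A subtlety is that $\cH$ is classical and need not be smooth. I would therefore use the convention of Subsection~\ref{subsec:variants}: $\cH$ is the classical part $\mathfrak{M}_\circ$, and $\Omega_{\cH}[-1]$ is replaced by $\Omega_{\mathfrak{M}_\circ}[-1]$, whose fiber at $x$ is $(\mathfrak{g}_x)_0$ (we removed the $k[-1]$ factor). The fiber of $\Omega_{\mathfrak{M}}[-1]$ over a point $x=(F,\theta)$ is $\cH^{-1}(\mathbb{T}|_x)^\vee\cong \mathfrak{g}_x=\mathrm{End}(F,\theta)$ by self-duality. So I need that every nilpotent endomorphism of the Higgs bundle, modulo the scalar direction, is zero.

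The key step is therefore the following. Let $E\in\Coh^{\heartsuit}(\cC_b)$ be the spectral sheaf, with $\cC_b$ reduced. Then $\mathrm{End}(F,\theta)=\mathrm{End}_{\cC_b}(E)$. Since $E$ is rank one torsion-free on the reduced curve $\cC_b$, restricting to the generic points embeds $\mathrm{End}(E)$ into $\prod_i \mathrm{End}_{K_i}(E_{\eta_i})=\prod_i K_i$, where $K_i$ are the function fields of the components. This is a product of fields, which is reduced. Hence $\mathrm{End}(E)$ is a finite-dimensional reduced commutative $k$-algebra, i.e.\ a product of copies of $k$, and it has no nonzero nilpotents. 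So $\mathcal{N}$ is the zero section, pointwise and hence set-theoretically, which is all singular support needs.

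The main obstacle is bookkeeping about which cotangent stack and which zero section are meant. The paper's $\Omega_{\cH}[-1]$ must be interpreted via the variant in Subsection~\ref{subsec:variants}, so that the $\mathbb{A}^1$ factor from $k[-1]$, where $\mathcal{N}=\mathcal{N}_\circ\times\{0\}$, is handled correctly. If instead the full $\Omega_{\Hig_G}[-1]$ is used, one can apply the same argument after Remark~\ref{rmk:Bcl} and Lemma~\ref{lem:reduced}, since tensoring with $\QCoh(k[-1])$ is compatible. I also need to confirm that the identification of singular support zero with perfectness from~\cite{AG} applies to quasi-smooth Artin stacks. It does, because it is smooth-local and holds on a quasi-smooth atlas.
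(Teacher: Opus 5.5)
Your proof is correct and has the same skeleton as the paper's. Both show that $\mathcal{N}$ is the zero section of $\Omega_{\cH}[-1]$ and then apply \cite[Theorem~4.2.6]{AG}.

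The difference is in the key step.
\begin{itemize}
\item The paper filters $E$ so that the associated graded is $\bigoplus_i F_i$, with $F_i$ rank one torsion-free on the component $C_i$. It then uses $\Hom(F_i,F_j)=0$ for $i\neq j$ to embed $\Aut(E)$ into the torus $\mathbb{G}_m^k$, whose Lie algebra has no nonzero nilpotents.
\item You restrict to the generic points instead. This gives $\mathrm{End}(E)\hookrightarrow\prod_i K_i$, which is injective by purity of $E$. So $\mathfrak{g}_x=\mathrm{End}(E)$ is a commutative subring of a product of fields, hence reduced.
\end{itemize}

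Your route works directly with the Lie algebra and needs no choice of filtration. It also avoids two points the paper only asserts: that automorphisms preserve the filtration, and that they act faithfully on the graded pieces. Nothing is lost by doing this. Your argument recovers the paper's torus bound, since $\mathrm{End}(E)\cong k^m$ and counting orthogonal idempotents in $\prod_{i=1}^k K_i$ gives $m\leq k$, so $\Aut(E)\cong\mathbb{G}_m^m\subset\mathbb{G}_m^k$.

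One minor slip, which changes nothing: the fiber of $\Omega_{\mathfrak{M}}[-1]$ at $x$ is $\cH^{-1}(\mathbb{L}_{\mathfrak{M}}|_x)$, and self-duality identifies this with $\mathfrak{g}_x$. What you wrote, $\cH^{-1}(\mathbb{T}|_x)^{\vee}$, is $\mathfrak{g}_x^{\vee}$.
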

\begin{proof}
    For $b\in \cB$, let $\cC_b$ be the reduced spectral curve with 
    irreducible components $C_1, \ldots, C_k$.
    Then any torsion-free object $E \in \Coh^{\heartsuit}(\cC_b)$ 
    with rank one admits a filtration whose associated graded 
    is $\oplus_{i=1}^k F_i$ for a rank one torsion-free sheaf $F_i$ on $C_i$. 
    Since $\Hom_{\cC_b}(F_i, F_j)=0$ for $i\neq j$, any
automorphism of $E$ acts faithfully on the associated graded
$\bigoplus_i F_i$.
    Therefore 
    we have 
    \begin{align*}
        \Aut(E) \subset \Aut(\oplus_{i=1}^k F_i)\cong\mathbb{G}_m^k. 
    \end{align*}
    The last isomorphism follows from $\mathrm{Aut}(F_i)=\mathbb{G}_m$, as 
    $C_i$ is irreducible and $F_i$ is torsion free of rank one. 
    Since the Lie algebra of the torus does not have non-trivial 
    nilpotent elements, the nilpotent cone 
    $\mathcal{N} \subset \Omega_{\cH}[-1]$
    is the same as the zero section of $\Omega_{\cH}[-1] \to \cH$. Therefore the lemma holds from~\cite[Theorem~4.2.6]{AG} and noting that any compact
    object of $\QCoh(\cH(\chi))$ is a perfect complex.
\end{proof}

\subsection{Whittaker normalization}\label{subsec:whit}
We set 
\begin{align*}
    \chi_0 := \deg \pi_{*}\mathcal{O}_{\cC_b}=(r^2-r)(1-g).
\end{align*}
We have the following \textit{Hitchin section}
\begin{align}\label{sec:s}
    s \colon \rB \to \Hig_G(\chi_0). 
\end{align}
It is induced by the structure sheaf of the universal spectral curve, i.e. 
it sends $b\in \rB$ to the structure sheaf
$\mathcal{O}_{\cC_b}$. 
Note that its image lies in $\Hig_G(\chi_0)^{\mathrm{reg}}$. 
For the Arinkin sheaf $\mathrm{P}$ in (\ref{ArsheafP}), we have the following: 
\begin{lemma}\label{lem:pulls}
For the morphism 
\begin{align*}
    \id \times s \colon \Hig_G(w) \to \Hig_G(w)\times_{\rB} \Hig_G(\chi_0)^{\mathrm{reg}},
\end{align*}
we have $(\id \times s)^* \mathrm{P}\cong
\mathcal{O}_{\Hig_G(w)}$.
\end{lemma}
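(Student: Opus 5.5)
The plan is to reduce to the classical part and then to the explicit formula for $\cP^{\sharp}$ on the locus where one argument is a line bundle. By Lemma~\ref{lem:reduced} and the definition (\ref{ArsheafP}), $\mathrm{P}=\cP\boxtimes \mathcal{O}_{k[-1]}$. The map $\id\times s$ is the base change over $\rB$ of a map of classical stacks, times $\id_{k[-1]}$, so it suffices to prove the classical statement
\begin{align*}
(\id\times s)^*\cP\cong \mathcal{O}_{\Hig_G(w)^{\mathrm{cl}}}.
\end{align*}
Here $\cP$ is the sheaf defined on $\Hig_G(w)^{\mathrm{cl}}\times_{\rB^{\mathrm{cl}}}\Hig_G(\chi_0)^{\mathrm{reg}}$.

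The key observation is that $s$ lands in the regular locus. So the image of $\id\times s$ lies in $\Hig_G(w)\times_{\rB}\Hig_G(\chi_0)^{\mathrm{reg}}\subset (\cH\times_{\cB}\cH)^{\sharp}$, where the Arinkin sheaf is the line bundle $\cP^{\sharp}$ given by the determinant-of-cohomology formula. Since $\cP^{\flat}$ restricts to $\cP^{\sharp}$ there, and $\cP^{\sharp}$ is a line bundle, the pullback is underived and equals $(\id\times s)^*\cP^{\sharp}$.

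Next I would compute this pullback using the relative formula. Pulling back along $\id\times s$ replaces $p_{23}^*\cE$ by the structure sheaf of the universal spectral curve. Writing $\cE_1$ for the universal sheaf on $\cC\times_{\cB}\Hig_G(w)$ and $q$ for the projection to $\Hig_G(w)$, we get
\begin{align*}
(\id\times s)^*\cP^{\sharp}\cong \det q_*(\cE_1\otimes\mathcal{O}) \otimes (\det q_*\cE_1)^{-1}\otimes (\det q_*\mathcal{O})^{-1}\otimes \det q_*\mathcal{O}.
\end{align*}
Here $\mathcal{O}$ denotes the structure sheaf of $\cC\times_{\cB}\Hig_G(w)$. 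The first two factors cancel canonically, as do the last two, giving $\mathcal{O}_{\Hig_G(w)}$. Because the identifications are made via canonical isomorphisms of determinant lines of the same perfect complexes, they are functorial and glue globally. They are also compatible with the $\mathbb{G}_m$-weights: the weight coming from the second factor is zero, consistent with the trivial bi-weight contribution of $\mathcal{O}_{\cC_b}$.

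The one point needing care is that $\Hig_G(w)$ is the whole stack, not just the semistable or generically regular locus, while $\cP$ was defined on $\cH^{\mathrm{ss}}\times\cH^{\mathrm{greg}}$. However, $\cP^{\sharp}$ is defined on $\cH\times_{\cB}\cH^{\mathrm{reg}}$ for all of $\cH$, since the formula only needs one factor to be a line bundle. So the statement should be read using the extension $\cP^{\flat}\supset\cP^{\sharp}$, and I would make that remark explicitly.

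The main obstacle is bookkeeping rather than substance. One must confirm that the formula (\ref{def:Psharp}) is literally the relative determinant expression over the derived base, and that the classical-to-derived reduction through Lemma~\ref{lem:reduced} is compatible with $s$. The splitting $\Hig_G\simeq\Hig_G^{\mathrm{cl}}\times k[-1]$ must be compatible with $\rB=\rB^{\mathrm{cl}}\times k[-1]$ and with $s$. I would check this via the trace splitting used in that lemma, noting that $s$ is defined over $\rB^{\mathrm{cl}}$ and then base changed.
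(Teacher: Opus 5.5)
Your proposal is correct and follows the paper's argument. The paper also pulls back the explicit determinant-line-bundle description of $\cP^{\sharp}$ on $(\cH\times_{\cB}\cH)^{\sharp}$ (for $\cB=\rB^{\mathrm{cl}}$) along $\id\times s$, notes that $\det\chi(E\otimes\mathcal{O}_{\cC_b})\otimes\det\chi(E)^{-1}$ and $\det\chi(\mathcal{O}_{\cC_b})^{-1}\otimes\det\chi(\mathcal{O}_{\cC_b})$ cancel canonically, and then applies $\boxtimes\mathcal{O}_{k[-1]}$. Your relative (rather than fiberwise) formulation, and your remark that one must use $\cP^{\sharp}$ on $\cH\times_{\cB}\cH^{\mathrm{reg}}$ because the first factor is all of $\Hig_G(w)$ rather than the semistable locus, only make explicit what the paper does implicitly.
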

\begin{proof}
    The lemma immediately follows by the construction of Arinkin 
    sheaf $\cP^{\sharp}$ on $(\cH\times_{\cB} \cH)^{\sharp}$ for $\cB=\rB^{\mathrm{cl}}$, see Subsection~\ref{subsec:Arsheaf}. 
    Indeed for the Hitchin section $s \colon \cB \to \cH^{\mathrm{reg}}$, 
    and a rank one torsion-free sheaf $E\in \Coh(\cC_b)$, we have 
    \begin{align*}
        (\id \times s)^* \cP^{\sharp}|_{E}=\det \chi(E\otimes \mathcal{O}_{\cC_b})\otimes \det \chi(E)^{-1}\otimes \det \chi(\mathcal{O}_{\cC_b})^{-1}\otimes \det \chi(\mathcal{O}_{\cC_b})
    \end{align*}
    which is canonically identified with $k$. Therefore 
     $(\id \times s)^* \cP^{\sharp}\cong \mathcal{O}_{\cH}$.
     The lemma then follows by applying $\boxtimes \mathcal{O}_{k[-1]}$.
\end{proof}

The Hitchin section (\ref{sec:s}) is quasi-smooth, and it also factors through 
\begin{align}\notag
    s \colon \rB \stackrel{\overline{s}}{\to}
    \Hig_G(\chi_0)^{\mathrm{ss}} \stackrel{j}{\hookrightarrow} \Hig_G(\chi_0),
\end{align}
since $\mathcal{O}_{\cC_b} \in \Coh^{\heartsuit}(\cC_b)$ is semistable (indeed stable). 

\begin{lemma}\label{lem:s-left}
The functor 
\begin{align*}
    \overline{s}^* \colon \LL(\Hig_G(\chi_0)^{\mathrm{ss}})_{w'} \to \Coh(\rB)
\end{align*}
admits a left adjoint 
\begin{align}\label{sleft}
    \overline{s}_{!} \colon \Coh(\rB) \to \LL(\Hig_G(\chi_0)^{\mathrm{ss}})_{w'}.
\end{align}
\end{lemma}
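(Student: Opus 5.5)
The plan is to build $\overline{s}_!$ from the standard left adjoint $\overline{s}_{\natural}$ of $\overline{s}^*$ on coherent sheaves, and then project into the limit category. Everything lives on the QCA stack $\Hig_G(\chi_0)^{\mathrm{ss}}$, where $\LL(\Hig_G(\chi_0)^{\mathrm{ss}})_{w'}\subset \Coh(\Hig_G(\chi_0)^{\mathrm{ss}})_{w'}$ is the quasi-BPS category.

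The map $\overline{s}\colon \rB\to \Hig_G(\chi_0)^{\mathrm{ss}}$ is quasi-smooth. It is also proper: its composite with the good moduli space $\Hig_G(\chi_0)^{\mathrm{ss}}\to \rB$ is the identity, the good moduli space map is separated over $\rB$, and a section of a separated morphism is a closed immersion up to the gerbe structure. Since $\overline{s}$ is quasi-smooth and proper, the conventions in Subsection~\ref{subsec:notation} give a left adjoint
\begin{align*}
\overline{s}_{\natural}(-)=\overline{s}_{*}(-\otimes \omega_{\overline{s}}[\dim \overline{s}])\colon \Coh(\rB)\to \Coh(\Hig_G(\chi_0)^{\mathrm{ss}})
\end{align*}
of $\overline{s}^*\colon \Coh(\Hig_G(\chi_0)^{\mathrm{ss}})\to\Coh(\rB)$. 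The point $\mathcal{O}_{\cC_b}$ has automorphism group exactly the scalar $\mathbb{G}_m$, so $\overline{s}$ lands in the stable locus. Hence $\overline{s}$ factors through the $\mathbb{G}_m$-gerbe, and $\overline{s}_{\natural}$ lands in a single weight piece. Projecting to the $w'$-summand (the adjunction respects the orthogonal weight decomposition), we obtain a left adjoint of $\overline{s}^*$ restricted to $\Coh(\Hig_G(\chi_0)^{\mathrm{ss}})_{w'}$.

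The main step is to show that the inclusion $\iota\colon\LL(\Hig_G(\chi_0)^{\mathrm{ss}})_{w'}\hookrightarrow \Coh(\Hig_G(\chi_0)^{\mathrm{ss}})_{w'}$ has a left adjoint $\iota^L$. Granting this, set $\overline{s}_!:=\iota^L\circ \overline{s}_{\natural}$. Then
\begin{align*}
\Hom(\iota^L\overline{s}_{\natural}A, \cE)=\Hom(\overline{s}_{\natural}A,\cE)=\Hom(A,\overline{s}^*\cE)
\end{align*}
for $\cE$ in $\LL$, which is the required adjunction. For $\iota^L$ I would use the semiorthogonal decompositions of quasi-BPS categories from \cite{PTlim, PThiggs}. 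In these, $\Coh(\Hig_G(\chi_0)^{\mathrm{ss}})_{w'}$ is decomposed into $\LL(\Hig_G(\chi_0)^{\mathrm{ss}})_{w'}$ together with images of categorical Hall products (parabolic inductions) of quasi-BPS categories of smaller rank, with the other pieces sitting on one side. This comes from the magic window / Halpern-Leistner--\v{S}penko--Van den Bergh theory applied to the local quotient models $\mathfrak{g}^{\vee}[-1]\times V/G$ given by \'etale slices at closed points. On each local model the window subcategory is admissible, so it has both adjoints. The adjoints glue over the good moduli space because the weight conditions are local on it.

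I expect this admissibility to be the main obstacle, since the window categories are defined by weight conditions on all maps from $\bgm$ and one must check that local projectors glue. The fallback is to use the fact that $\LL$ in the quasi-compact case is the intersection of window categories, each admissible by \cite{hls}. Over $\rB^{\mathrm{red}}$ only finitely many strata are relevant, which would reduce the construction to iterating finitely many left projections. If admissibility is only available in $\Ind$, I would construct the left adjoint on $\Ind\LL$ and then check that $\overline{s}_{\natural}A$ maps to a compact object, using that $\overline{s}^*$ commutes with filtered colimits.
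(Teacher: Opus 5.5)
\textbf{The main gap: admissibility of $\LL$ is neither established nor needed.} Your argument rests on a left adjoint $\iota^L$ to the inclusion $\iota\colon \LL(\Hig_G(\chi_0)^{\mathrm{ss}})_{w'}\hookrightarrow \Coh(\Hig_G(\chi_0)^{\mathrm{ss}})_{w'}$, and nothing in your sketch produces one. Being a semiorthogonal summand does not by itself make a component left admissible in a non-proper category. Local window projectors on slice models need not glue. Intersections of admissible subcategories need not be admissible. The ``finitely many strata over $\rB^{\mathrm{red}}$'' fallback does not apply to a statement about the semistable stack over all of $\rB$.

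The missing observation is that the coherent left adjoint already lands in $\LL$. Its image is supported on $\overline{s}(\rB)$, which lies over $\overline{s}'(\rB)\subset \mathrm{H}_G(\chi_0)^{\mathrm{st}}$, and this set is closed in $\mathrm{H}_G(\chi_0)^{\mathrm{ss}}$. Since the left square of (\ref{dia:gmoduli}) is Cartesian, the support is a closed substack of $\Hig_G(\chi_0)^{\mathrm{ss}}$ consisting of stable points.
\begin{itemize}
\item At a stable point $x$, $\mathrm{Aut}(x)$ is the scalar $\mathbb{G}_m$, which acts trivially on $T_x$ and $\mathfrak{g}_x$. For $\nu$ given by $t\mapsto t^n$, the window in (\ref{wt:nu}) collapses to the single weight $c_1(\nu^*\delta)=nw'$, which is exactly the weight of any object of weight $w'$.
\item For $\nu$ centered at any other point, $\nu^{\mathrm{reg}*}$ of the object vanishes, because the support is closed and misses that point.
\end{itemize}
So the image lies in $\LL$. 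A left adjoint whose image lies in a full subcategory is automatically left adjoint to the restricted functor, and this is the whole of the paper's argument. No projection onto $\LL$ is required. If your $\iota^L$ existed, it would act as the identity on these objects anyway.

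\textbf{The coherent left adjoint is also constructed incorrectly.} The map $\overline{s}$ is not proper. The Hitchin map on the stack is not separated, since its diagonal is not proper because the stabilizers contain $\mathbb{G}_m$. Moreover, $\overline{s}$ is a $\mathbb{G}_m$-torsor onto its image, locally like $\mathrm{pt}\to\bgm$. Consequently:
\begin{itemize}
\item $\overline{s}_*$ does not preserve coherence;
\item $\overline{s}_*A$ has components in every $\mathbb{G}_m$-weight, not in a single one;
\item the formula $\overline{s}_*(-\otimes\omega_{\overline{s}}[\dim\overline{s}])$ has the wrong shift, off by one, because the left adjoint of pullback along $\mathrm{pt}\to\bgm$ involves no shift.
\end{itemize}
The paper avoids this by passing to the good moduli space. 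There $\overline{s}'\colon\rB\hookrightarrow\mathrm{H}_G(\chi_0)^{\mathrm{st}}$ is a closed immersion of smooth schemes times $k[-1]$, and $\Coh(\Hig_G(\chi_0)^{\mathrm{st}})_{w'}\simeq\Coh(\mathrm{H}_G(\chi_0)^{\mathrm{st}},\beta^{w'})$, with $(\overline{s}')^*\beta$ trivialized by the section. The left adjoint is then $\overline{s}'_*[r^2(1-g)-1]$. Your instinct to project to the weight-$w'$ part is right, but the projection must come before any claim of coherence, and the twisted-sheaf formulation is what makes that precise.
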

\begin{proof}
Recall the diagram of good moduli spaces (\ref{dia:gmoduli}). 
    The composition 
    \begin{align*}
        \rB\stackrel{\overline{s}}{\to} \Hig_G(\chi_0)^{\mathrm{ss}} \to 
         \mathrm{H}_G(\chi_0)^{\mathrm{ss}}
    \end{align*}
    is a closed immersion, which further factors through the quasi-smooth
    closed immersion into the open subset $\mathrm{H}_G(\chi_0)^{\mathrm{st}} \subset \mathrm{H}_G(\chi_0)^{\mathrm{ss}}$
    \begin{align*}
        \overline{s}' \colon \rB \hookrightarrow  \mathrm{H}_G(\chi_0)^{\mathrm{st}}.
    \end{align*}
     Indeed it is a closed immersion between smooth varieties multiplied with $k[-1]$.
     
    The left vertical arrow in (\ref{dia:gmoduli}) is a $\mathbb{G}_m$-gerbe, 
    classified by a Brauer class $\beta$ on  $ \mathrm{H}_G(\chi_0)^{\mathrm{st}}$.
    Since $(\overline{s}')^*\beta$ is trivial on $\rB$, 
    we have the functor 
    \begin{align*}
       (\overline{s}')^* \colon 
        \Coh( \mathrm{H}_G(\chi_0)^{\mathrm{st}}, \beta^{w'}) \to \Coh(\rB).
    \end{align*}
    Here the left-hand side is the category of $\beta^{w'}$-twisted coherent sheaves on $\mathrm{H}_G(\chi_0)^{\mathrm{st}}$.
    Since $\overline{s}'$ is a closed immersion, it admits a left adjoint 
    \begin{align}\label{funct:left}
        (\overline{s}')_!\colon 
        \Coh(\rB) \to 
        \Coh( \mathrm{H}_G(\chi_0)^{\mathrm{st}}, \beta^{w'})
        \simeq\Coh(\Hig_G(\chi_0)^{\mathrm{st}})_{w'}.
    \end{align}
It is given by the formula 
    \begin{align}\label{formula:s!}(\overline{s}')_!=\overline{s}'_{*}[r^2(1-g)-1],
    \end{align}
    where the shift is determined by the relative dimension of $\overline{s}'$. 
    
    Since $\overline{s}'$ is a closed immersion whose image is also closed in $\mathrm{H}_G(\chi_0)^{\mathrm{ss}}$, the image of the 
    functor (\ref{funct:left}) naturally defines an object in the limit category of 
    $\Hig_G(\chi_0)^{\mathrm{ss}}$. 
    Therefore the functor (\ref{funct:left})
    determines a functor (\ref{sleft}), which is a left adjoint of $\overline{s}^*$ by the construction. 
\end{proof}

We have the pull-back functor 
\begin{align}\label{s:ast}
    s^* \colon \LL(\Hig_G(\chi_0))_{w'} \to 
    \Coh(\rB). 
\end{align}
By Lemma~\ref{lem:s-left}, 
the functor (\ref{s:ast}) admits left adjoint 
\begin{align*}
    s_{!} \colon \Coh(\rB) \stackrel{\overline{s}_!}{\to} \LL(\Hig_G(\chi_0)^{\mathrm{ss}})_{w'} \stackrel{j_{!}}{\hookrightarrow}
    \LL(\Hig_G(\chi_0))_{w'}
\end{align*}
where $j_{!}$ is the left adjoint of $j^*$, see Theorem~\ref{thm:left}.

Since the above constructions are linear over $\rB$, 
the same construction and the proof of Theorem~\ref{thm:left} in~\cite[Theorem~7.19]{PTlim} also apply after
the base change $\cB \hookrightarrow \rB$ given by (\ref{H:simple}). 
It gives the functor 
\begin{align}\label{sshrink}
    s_{!} \colon \Coh(\cB) \stackrel{\overline{s}_!}{\to} \LL(\cH(\chi_0)^{\mathrm{ss}})_{w'} \stackrel{j_{!}}{\hookrightarrow}
    \LL(\cH(\chi_0))_{w'}. 
\end{align}
The image of the above functor lies in the subcategory of 
nilpotent singular supports
\begin{align*}
    s_! \colon \Coh(\cB) \to \LL_{\mathcal{N}}(\cH(\chi_0))_{w'}
\end{align*}
since the image of $\overline{s}$ lies in the smooth locus of $\cH(\chi_0)$ and $j_{!}$ preserves nilpotent singular support condition, see~\cite[Section~8.10]{PTlim}. It gives a left adjoint of the 
functor 
\begin{align*}
    s^* \colon \LL_{\mathcal{N}}(\cH(\chi_0))_{w'} \to \Coh(\cB).
\end{align*}

We conjecture the following property of the functor $\Phi$ in (\ref{induce:Phi}), which corresponds to Whittaker normalization. 
\begin{conj}\label{conj:norm}
There is an isomorphism 
\begin{align*}
    \Phi(\mathcal{O}_{\cH(w)^{\mathrm{ss}}}) \cong s_{!}\mathcal{O}_{\cB} \in \LL_{\mathcal{N}}(\cH(\chi_0))_{w'}.
\end{align*}
\end{conj}

\begin{remark}\label{rmk:sshrink}
The object $s_{!}\mathcal{O}_{\cB}$ is more exotic than 
$\overline{s}_{!}\mathcal{O}_{\cB}$. Indeed in the situation of Example~\ref{exam:g_2}, 
the support of $\overline{s}_! \mathcal{O}_{\cB}$ on $h^{-1}(b)$ is just one point
corresponding to $\mathcal{O}_{\cC_b}$ in $\cP^{0, 0}$. However after applying $j_!$, 
the object $s_{!}\mathcal{O}_{\cB}=j_{!} \overline{s}_{!}\mathcal{O}_{\cB}$ is supported 
beyond $\cP^{0, 0}$; indeed for $l\geq 2$ and $1\leq m\leq l-1$, let $p_{l, m}$ be the point 
\begin{align*}
    p_{l, m}&={(\mathcal{O}_{C_1}((l-m)x_1+mx_2), \mathcal{O}_{C_2}(-mx_1+(m-l)x_2)}) \\ &\in \mathrm{Pic}^{l}(C_1)\times \mathrm{Pic}^{-l}(C_2).
\end{align*}
    Then one can calculate that 
    \begin{align*}
        \mathrm{Supp}(s_! \mathcal{O}_{\cB})\cap \cP^{l, -l}=
        \{p_{l, m} : 1\leq m\leq l-1\} \times \mathbb{G}_m \times \bgm,
    \end{align*}
    in particular $s_{!}\mathcal{O}_{\cB}$ spreads beyond the semistable locus. 
\end{remark}

\subsection{Wilson/Hecke compatibility under Whittaker normalization}
In the rest of this section, under Conjecture~\ref{conj:norm}, we show that the functor 
(\ref{induce:Phi}) restricts to the functor 
\begin{align*}
    \Coh_{\mathcal{N}}(\cH(w)^{\mathrm{ss}})_{-\chi'} \to 
    \LL_{\mathcal{N}}(\cH(\chi))_{w'}
\end{align*}
which is fully faithful, and indeed an equivalence under an additional hypothesis in Assumption~\ref{assum:Bcirc}. We will prove it using the compatibility of Wilson/Hecke operators in the previous section. 

For $d\geq 0$, we consider the functor induced by the iterated Wilson operators (\ref{funct:W})
applied to $\mathcal{O}_{\cH(w)^{\mathrm{ss}}}\in \Coh(\cH(w)^{\mathrm{ss}})_0$
\begin{align*}
    \mathrm{W}_d  \colon \Coh(S^{\times d}) \to \Coh(\cH(w)^{\mathrm{ss}})_{d}.
\end{align*}
We also consider the functor induced by iterated Hecke operators (\ref{hecke:greg}) applied to 
$s_{!}\mathcal{O}_{\cB}\in \LL_{\mathcal{N}}(\cH(\chi_0))_{w'}$, 
\begin{align}\label{funct:Hd}
    \mathrm{H}_d \colon \Coh(S^{\times d}) \to 
    \Coh(\cH(\chi_0-d))_{w'}.
\end{align}
We have the following corollary of the result in the 
previous section: 
\begin{cor}\label{cor:comWH}
Under Conjecture~\ref{conj:norm}, the following diagram commutes: 
\begin{align}\notag
    \xymatrix{
\Coh(S^{\times d}) \ar[r]^-{\mathrm{W}_d} \ar@{=}[d] & 
\Coh(\cH(w)^{\mathrm{ss}})_{d} \ar[d]^-{\Phi} \\
\Coh(S^{\times d}) \ar[r]^-{\mathrm{H}_d} & 
 \Coh(\cH(\chi_0-d))_{w'}.
    }
\end{align}
\end{cor}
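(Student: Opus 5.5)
Corollary~\ref{cor:comWH} follows by induction on $d$, using the Wilson/Hecke compatibility of Theorem~\ref{prop:WH} (in the classical form (\ref{com:WH3}) over $\cB\subset\rB^{\mathrm{red,cl}}$) at each step, and Conjecture~\ref{conj:norm} as the base case. First I will make the iterated functors precise. Write $\mathrm{W}_A$ and $\mathrm{H}_A$ for the operators attached to $A\in\Coh(S)$, as in the proof of Lemma~\ref{lem:dual}. On external products the iterated operators are
\begin{align*}
\mathrm{W}_d(A_1\boxtimes\cdots\boxtimes A_d)=\mathrm{W}_{A_d}\cdots\mathrm{W}_{A_1}(\mathcal{O}_{\cH(w)^{\mathrm{ss}}}),\quad
\mathrm{H}_d(A_1\boxtimes\cdots\boxtimes A_d)=\mathrm{H}_{A_d}\cdots\mathrm{H}_{A_1}(s_!\mathcal{O}_{\cB}).
\end{align*}
Intrinsically, $\mathrm{W}_d$ is the $S^{\times d}$-family obtained by composing $\mathrm{W}\colon\Coh(S)\otimes\Coh(\cH^{\mathrm{ss}})\to\Coh(\cH^{\mathrm{ss}})$ with itself $d$ times, relative over $S^{\times d}$, and similarly for $\mathrm{H}_d$. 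This gives functors out of $\Coh(S^{\times d})$ that are defined by kernels and do not depend on a choice of generators.

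Next comes the inductive step. Suppose $\Phi\circ\mathrm{W}_{d-1}\cong\mathrm{H}_{d-1}$ as functors $\Coh(S^{\times(d-1)})\to\Coh(\cH)$. By construction, $\mathrm{W}_d=\mathrm{W}\circ(\id_{\Coh(S)}\otimes\mathrm{W}_{d-1})$, and the same holds for $\mathrm{H}_d$. Then
\begin{align*}
\Phi\circ\mathrm{W}_d=\Phi\circ\mathrm{W}\circ(\id\otimes\mathrm{W}_{d-1})\cong\mathrm{H}\circ(\id\otimes\Phi)\circ(\id\otimes\mathrm{W}_{d-1})\cong\mathrm{H}\circ(\id\otimes\mathrm{H}_{d-1})=\mathrm{H}_d.
\end{align*}
The middle isomorphism is the commutativity of (\ref{com:WH3}). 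Because $\cH^{\mathrm{greg}}=\cH$, that diagram applies on the whole of $\cH$. The diagram is $\cB$-linear and given by kernels, so it extends to $\Coh(S)\otimes(-)$ families, that is, after tensoring with $\Coh(S^{\times(d-1)})$. Concretely, $\id_{\Coh(S^{\times(d-1)})}\otimes(\ref{com:WH3})$ commutes because the kernels $\cP_{\mathrm{W}}\cong\cP_{\mathrm{H}}$ agree by (\ref{isom:PWH}).

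The base case $d=0$ is exactly Conjecture~\ref{conj:norm}: $\Phi(\mathcal{O}_{\cH(w)^{\mathrm{ss}}})\cong s_!\mathcal{O}_{\cB}$, viewed as functors $\Coh(\mathrm{pt})\to\Coh(\cH(\chi_0))$. Here I use that $s_!\mathcal{O}_{\cB}$, an object of $\LL_{\mathcal{N}}(\cH(\chi_0))_{w'}$, is a perfect complex by Lemma~\ref{lem:nilp}. It therefore defines an honest object of $\Coh(\cH(\chi_0))$, and the Hecke operator (\ref{hecke:greg}) applies to it. Taking the restriction of $s_!\mathcal{O}_{\cB}$ to QCA opens compatibly would also suffice.

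I also need the weight and degree bookkeeping: each Wilson operator raises the weight by one and each Hecke operator lowers the degree $\chi$ by one. This lands the functors in $\Coh(\cH(w)^{\mathrm{ss}})_d$ and $\Coh(\cH(\chi_0-d))_{w'}$. I will read $\Phi$ on weight $d$ through the periodicity of Lemma~\ref{lem:commute} and Remark~\ref{rmk:period}.

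I expect the main obstacle to be the $d=0$ step. The issue is making sure the isomorphism in Conjecture~\ref{conj:norm}, which lives in the limit category, is an isomorphism of the underlying coherent objects on the non-quasi-compact stack $\cH(\chi_0)$, to which $\mathrm{H}$ is applied. I would handle this by working on each QCA open $\mathcal{U}$, using the limit description and the fact that $\mathrm{H}$ commutes with restriction to opens. For the latter I use the Cartesian squares as in (\ref{dia:Hgreg}), after noting that $\ev_1$ is proper.
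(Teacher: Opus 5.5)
Your proposal is correct and is essentially the paper's argument. The paper's proof is the single line ``apply Theorem~\ref{prop:WH} $d$ times'', which unpacks to your induction: Conjecture~\ref{conj:norm} supplies $\Phi(\mathcal{O}_{\cH(w)^{\mathrm{ss}}})\cong s_!\mathcal{O}_{\cB}$ at $d=0$, and the commutativity of (\ref{com:WH3}), extended to $\Coh(S^{\times(d-1)})$-families through the kernel isomorphism, gives each inductive step. Your appeal to periodicity is not needed: $\Phi$ is the Fourier--Mukai transform whose kernel lives on all of $\cH^{\mathrm{ss}}\times_{\cB}\cH$, so an object of weight $d$ on $\cH(w)^{\mathrm{ss}}$ is sent to the component $\cH(\chi_0-d)$ automatically, without any reindexing.
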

\begin{proof}
    The corollary follows by applying Theorem~\ref{prop:WH} $d$-times. 
\end{proof}

The image of the functor $\mathrm{H}_d$ lies in the limit category by the following lemma: 
\begin{lemma}\label{lem:functHd}
    The image of the functor (\ref{funct:Hd}) lies in 
    \begin{align*}
        \LL_{\mathcal{N}}(\cH(\chi_0-d))_{w'} \subset \Coh(\cH(\chi_0-d))_{w'}.
    \end{align*}
\end{lemma}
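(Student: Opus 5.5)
Since $\mathrm{H}_d$ is the $d$-fold iterate of the Hecke operator (\ref{hecke:greg}) applied to $s_!\mathcal{O}_{\cB}$, the plan is an induction on $d$, checking two things at each step: the limit (weight) condition and the nilpotent singular support condition.

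For $d=0$ there is nothing to prove, since $s_!\mathcal{O}_{\cB}\in \LL_{\mathcal{N}}(\cH(\chi_0))_{w'}$ by the construction (\ref{sshrink}) and the discussion following it.

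For the inductive step we need that, for $A\in \Coh(S)$ and $\cF\in \LL_{\mathcal{N}}(\cH(\chi))_{w'}$, the object $\mathrm{H}(A\boxtimes \cF)$ lies in $\LL_{\mathcal{N}}(\cH(\chi-1))_{w'}$. For a general object of $\Coh(S^{\times d})$ we would first write $\mathrm{H}_d$ as the composite $\Coh(S^{\times d})=\Coh(S)\otimes \Coh(S^{\times (d-1)})\to \Coh(S)\otimes \LL_{\mathcal{N}}(\cH(\chi_0-d+1))_{w'}\to \LL_{\mathcal{N}}(\cH(\chi_0-d))_{w'}$. This uses the $\Coh(S)\otimes(-)$ form of the statement, so we apply Hecke operators with coefficients in $\Coh(S)\otimes$ the target category, not just to exterior products.

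\emph{Weight condition.} This is Lemma~\ref{lem:HeckeL}: $\mathrm{H}$ sends $\Coh(S)\otimes \LL(\Hig_G(\chi))_w$ into $\LL(\Hig_G(\chi-1))_w$. The construction of Lemma~\ref{lem:HeckeL} is $\rB$-linear (it only uses maps $\bgm\to\Hig_G$ and the quasi-smooth proper correspondence of Lemma~\ref{lem:qsmooth}), and by Subsection~\ref{subsec:variants} and Remark~\ref{rmk:variant} the weight bounds are unaffected by removing the $k[-1]$-factor and by base change along $\cB\subset \rB^{\mathrm{cl}}$. So the same statement holds over $\cB$ for the limit categories of $\cH$. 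The limit category on the non-quasi-compact $\cH(\chi)$ is defined as a limit over QCA opens $\mathcal{U}$, and $\ev_1$ is proper, so $\ev_1^{-1}$ of a QCA open lies over a QCA open. It is therefore enough to check the weight condition on each QCA open, which is what Lemma~\ref{lem:HeckeL} gives.

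\emph{Singular support.} Over $\cB\subset\rB^{\mathrm{red,cl}}$, Lemma~\ref{lem:nilp} identifies $\Coh_{\mathcal{N}}(\cH(\chi))$ with $\mathrm{Perf}(\cH(\chi))$, so it suffices to show that $\mathrm{H}$ preserves perfect complexes. Here $(\ev_3,\ev_2)^*$ of a perfect object is perfect. For $\ev_{1*}$, Lemma~\ref{lem:qsmooth} gives that $\ev_1$ is proper and quasi-smooth. In general this does not preserve perfectness, but here we argue as follows. $\cH$ is a classical lci (hence Gorenstein) stack, and $\mathrm{H}$ has right adjoint $\widetilde{\mathrm{H}}^R=(\ev_2,\ev_3)_*\ev_1^!$, which by (\ref{formula:ev1}) and Lemma~\ref{lem:omegaev} is again a push–pull along quasi-smooth proper maps. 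This right adjoint preserves $\Coh$, so $\mathrm{H}$ preserves compact objects of $\IndCoh$ that lie in $\QCoh$-compact position. The cleanest route is the singular-support estimate of \cite{AG}: pulling back along a map and then pushing forward along a quasi-smooth proper map transforms singular supports by the induced correspondence on $\Omega[-1]$, which sends the zero section to the zero section. An automorphism of a Hecke triple is compatible with automorphisms of the end points, and stabilizers are tori by the argument of Lemma~\ref{lem:nilp}, so the zero section maps to the zero section.

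The main obstacle I expect is this singular-support/perfectness step, in particular verifying that the relevant map on $(-1)$-shifted cotangents does not create non-zero singular codirections. I would first try the direct perfectness argument: locally over $\cB$, $\ev_1$ is a family of proper lci morphisms between classical lci stacks. Perfectness of $\ev_{1*}$ of a perfect complex can then be checked fiberwise over points of $\cH$ via base change, where the fiber $\mathbb{P}(\Ext^1(\mathcal{O}_x,F))$-type stacks are smooth enough that finite Tor-amplitude is clear. If that fails, I would fall back on the \cite{AG} functoriality, using that $\mathcal{N}$ is the zero section here.
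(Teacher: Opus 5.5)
Your skeleton matches the paper's. The weight condition comes from Lemma~\ref{lem:HeckeL} transported to $\cB$, Lemma~\ref{lem:nilp} turns the $\mathcal{N}$-condition into perfectness, and you iterate $d$ times.

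The gap is in the perfectness step, and it rests on a false premise. Pushforward along a proper quasi-smooth morphism \emph{does} preserve perfect complexes in general: this is To\"{e}n's theorem \cite[Theorem~0.3]{Toenpush}. The paper applies it directly to $\ev_1$, which is proper and quasi-smooth by Lemma~\ref{lem:qsmooth}. Moreover $(\ev_3,\ev_2)^*(A\boxtimes P)$ is perfect for $A\in\Coh(S)=\mathrm{Perf}(S)$ ($S$ smooth) and $P$ perfect. So $\mathrm{H}$ preserves $\mathrm{Perf}$, and the step you flag as the main obstacle is immediate.

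Your workarounds fare unevenly.
\begin{itemize}
\item The adjunction argument does not work as stated. That $\widetilde{\mathrm{H}}^R$ preserves $\Coh$ says nothing about $\mathrm{H}$ preserving $\mathrm{Perf}$. What you would need is continuity of the right adjoint on $\QCoh$, which is the same fact in disguise.
\item The fiberwise route is sound once phrased via derived fibers. $\ev_{1*}P$ is coherent, and it is perfect iff its derived fibers are bounded. By base change these fibers are pushforwards of $P$ restricted to the derived fibers of $\ev_1$, which are quasi-smooth and proper, so they are bounded. This is essentially To\"{e}n's result in this case; smoothness of the classical fibers is irrelevant.
\item The \cite{AG} route also works, but for a different reason than you give. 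The zero section is preserved because $\ev_1$ is quasi-smooth, so $H^{-2}(\mathbb{L}_{\ev_1})=0$ and $\mathrm{Sing}(\ev_1)$ is injective on fibers. The torus stabilizers play no role in that step; they matter only for identifying $\mathcal{N}$ with the zero section in Lemma~\ref{lem:nilp}.
\end{itemize}

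A smaller point concerns the weight step. Checking the weight condition on each QCA open only places the image in $\widetilde{\LL}$ of Remark~\ref{rmk:Ltilde}, not in the compact objects $\LL$, and these can differ. Compactness is supplied by Lemma~\ref{lem:HeckeL} itself, so rely on that citation rather than on the QCA-open reduction.
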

\begin{proof}
As in Lemma~\ref{lem:HeckeL}, 
the Hecke operator in (\ref{hecke:greg})
    \begin{align}\label{hecke:H}
    \mathrm{H} \colon 
        \Coh(S) \otimes \Coh(\cH) \to \Coh(\cH)
    \end{align}
    restricts to the functor of limit categories
    \begin{align}\label{hecke:H2}
        \mathrm{H}\colon   \Coh(S) \otimes \LL(\cH(\chi))_{w'} \to \LL(\cH(\chi-1))_{w'}.
    \end{align}
Indeed as the Hecke operator is linear over $\cB$, the argument of Lemma~\ref{lem:HeckeL} goes through by replacing $\Hig_G$ with preimages of 
open subsets of $\rB$. Moreover the trivial derived structure $k[-1]$
does not affect the $\mathbb{G}_m$-weights in the definition of limit categories, therefore the functor $\mathrm{H}$ restricts to the functor (\ref{hecke:H2}). 

    Since $\ev_1$ in the diagram (\ref{redHecke}) is quasi-smooth and proper by 
    Lemma~\ref{lem:qsmooth}, the push-forward $\ev_{1*}$ preserves perfect complexes, see~\cite[Theorem~0.3]{Toenpush}. Therefore 
    the functor (\ref{hecke:H}) also restricts to the functor 
    \begin{align*}
         \mathrm{H} \colon 
        \Coh(S) \otimes \mathrm{Perf}(\cH) \to \mathrm{Perf}(\cH). 
    \end{align*}
    It follows that, by Lemma~\ref{lem:nilp}, the functor (\ref{hecke:H2}) further restricts to the functor 
    \begin{align}\label{funct:heckeN}
        \mathrm{H}\colon   \Coh(S) \otimes \LL_{\mathcal{N}}(\cH(\chi))_{w'} \to \LL_{\mathcal{N}}(\cH(\chi-1))_{w'}.
    \end{align}
    Iterating this $d$-times, we obtain the lemma. 
\end{proof}

Together with the compact generation under Wilson operators proved in the next subsection, the Arinkin sheaf determines, assuming Conjecture~\ref{conj:norm}, a functor to the limit category that also satisfies Wilson/Hecke compatibility:
\begin{cor}\label{cor:rest}
Under Conjecture~\ref{conj:norm}, the
functor $\Phi$ restricts to the functor 
\begin{align}\notag
    \Phi \colon \Coh_{\mathcal{N}}(\cH(w)^{\mathrm{ss}})_{-\chi'} \to 
    \LL_{\mathcal{N}}(\cH(\chi))_{w'}, 
\end{align}
and we have the commutative diagram 
\begin{align}\label{dia:WH1.5}
    \xymatrix{
\Coh(S^{\times d}) \ar[r]^-{\mathrm{W}_d} \ar@{=}[d] & 
\Coh_{\mathcal{N}}(\cH(w)^{\mathrm{ss}})_{d} \ar[d]^-{\Phi} \\
\Coh(S^{\times d}) \ar[r]^-{\mathrm{H}_d} & 
 \LL_{\mathcal{N}}(\cH(\chi_0-d))_{w'}.
    }
\end{align}
\end{cor}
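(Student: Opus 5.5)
The plan is to combine Corollary~\ref{cor:comWH} and Lemma~\ref{lem:functHd} with a generation statement for the source category. The sentence preceding the corollary refers to the compact generation under Wilson operators proved in the next subsection. Concretely, $\Coh_{\mathcal{N}}(\cH(w)^{\mathrm{ss}})_{-\chi'}$ should be the idempotent-complete thick closure of the essential images of $\mathrm{W}_d$ for the relevant $d$. Here the weight $-\chi'$ corresponds to $d$ with $d\equiv -\chi' \pmod r$, after the identification of Lemma~\ref{lem:commute} and Remark~\ref{rmk:period}, which reduces $\chi$ and $w$ modulo $r$.

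Granting this, I would proceed in three steps.

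\textbf{Step 1: commutativity.} By Corollary~\ref{cor:comWH} we have $\Phi\circ \mathrm{W}_d\cong \mathrm{H}_d$ as functors $\Coh(S^{\times d})\to \Coh(\cH(\chi_0-d))_{w'}$. Lemma~\ref{lem:functHd} shows that the image of $\mathrm{H}_d$ lies in $\LL_{\mathcal{N}}(\cH(\chi_0-d))_{w'}$. Hence the image of $\mathrm{W}_d$ is sent into the limit category with nilpotent singular support. This already gives the commutative square (\ref{dia:WH1.5}), once we know the image of $\mathrm{W}_d$ lies in $\Coh_{\mathcal{N}}$. That holds by Lemma~\ref{lem:nilp} applied on the semistable side: $\mathrm{W}_d$ produces perfect complexes because $\cE$ is perfect and proper over $\cH^{\mathrm{ss}}$.

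\textbf{Step 2: passing to the thick closure.} I check that $\LL_{\mathcal{N}}(\cH(\chi))_{w'}\subset \Coh(\cH(\chi))_{w'}$ is closed under shifts, cones and direct summands. Shifts and cones are fine because the weight conditions (\ref{wt:nu}) are stable under extensions and the singular support condition is thick. Since $\Phi$ is exact, the preimage $\Phi^{-1}(\LL_{\mathcal{N}})$ is then a thick subcategory of $\Coh(\cH(w)^{\mathrm{ss}})_{-\chi'}$ containing the images of all $\mathrm{W}_d$. By the generation statement it contains $\Coh_{\mathcal{N}}(\cH(w)^{\mathrm{ss}})_{-\chi'}$. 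One subtlety is that $\LL_{\mathcal{N}}(\cH)$ is defined as compact objects of $\IndL_{\mathcal{N}}$ rather than as a subcategory of $\Coh$. I would use that $\cH(\chi)$ is not QCA, and compare with $\widetilde{\LL}_{\mathcal{N}}$ from Remark~\ref{rmk:Ltilde}: compact objects are closed under finite colimits and retracts, so thick closure stays inside compacts.

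\textbf{Step 3: the generation statement.} This is the main obstacle. I would prove it by showing that the right adjoints $\widetilde{\mathrm{W}}^R$ iterated $d$ times, for $d\gg0$, detect zero on $\IndCoh_{\mathcal{N}}(\cH(w)^{\mathrm{ss}})$; this is equivalent to generation of the compact objects. The approach is to test against points: via the Abel–Jacobi-type relation, $\mathrm{W}_d$ applied to skyscrapers at $(x_1,\dots,x_d)$ yields $\bigotimes_i \cE|_{x_i}$. For $d\gg 0$ these tensor products, twisted by $\mathcal{O}(n)$ pulled back from the good moduli space, should form an ample family in the sense of the stack with its $\mathbb{G}_m$-weight. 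This uses that $\det \mathrm{F}_c$ and $\mathrm{F}_c$ itself are relatively ample, with weights controlled as in Lemma~\ref{lem:omegaev}. I expect controlling the stabilizers at strictly semistable points, where tensor powers of $\mathrm{F}_c$ must generate all representations of $\mathbb{G}_m^k$ of the right central weight, to be the delicate part.
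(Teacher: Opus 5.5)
Your Steps 1 and 2 are the paper's proof. The paper cites Corollary~\ref{cor:compact} (deduced from Proposition~\ref{prop:cgen}) to get that $\mathrm{Perf}(\cH(w)^{\mathrm{ss}})=\Coh_{\mathcal{N}}(\cH(w)^{\mathrm{ss}})$ is split generated by iterated Wilson images of $\mathcal{O}_{\cH(w)^{\mathrm{ss}}}$, up to twists by powers of $L_c=\det\cF_c$. It then combines Corollary~\ref{cor:comWH} with Lemma~\ref{lem:functHd}. Your thickness argument for $\LL_{\mathcal{N}}$ spells out what the paper leaves implicit.

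One small correction: passing between the weights $d$ and $-\chi'$ needs the analogue of Lemma~\ref{lem:commute} with the two factors exchanged. That is, $\otimes\det\cF_c$ on $\cH(w)^{\mathrm{ss}}$ must be matched against $\otimes\mathcal{O}_C(\pm c)$ on $\cH(\chi)$. It is proved the same way, by comparing kernels on the regular locus and using uniqueness of the Cohen--Macaulay extension.

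Your Step 3 is not needed, because the generation statement is proved separately and the corollary only cites it. As sketched, though, it would not work.

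\textbf{Why Step 3 fails.} The objects $\mathrm{W}_d(\mathcal{O}_{(x_1,\ldots,x_d)})=\bigotimes_i\cE|_{x_i\times\cH}$ are torsion over $\cB$. They are supported over the proper closed loci $\{b : x_i\in\cC_b\}$, and twisting by line bundles from the good moduli space keeps them torsion. Hence $\Hom(W,M)=0$ for all such $W$ when $M$ is the localization at the generic point of $\cB$ of a nonzero weight-$d$ vector bundle. So testing $\mathrm{W}_d^R$ only against closed points cannot detect zero, and these objects cannot generate.

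\textbf{What the paper uses instead.}
\begin{enumerate}
\item[(i)] Wilson operators whose outputs are vector bundles: $\mathrm{W}_{f_c}(\mathcal{O})=\cF_c$ with $f_c=\mathcal{O}_{\pi^{-1}(c)}$, and $\mathrm{W}_{\pi^*(T(mc))}(\mathcal{O})=p_{\cH*}(p_C^*T(mc)\otimes\cF)$.
\item[(ii)] The weight-zero theta line bundle $\mathcal{L}_T=\det p_{\cH*}(p_C^*T\otimes\cF)$, where $T$ is chosen with $\chi(T\otimes F)=0$. It descends to an ample line bundle on the good moduli space. It is also a direct summand of $(\mathrm{W}_{\pi^*(T(mc))})^{\circ mr^2}(-)\otimes L_c^{-mr}$. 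Hence the generated subcategory is stable under $\otimes\mathcal{L}_T^{k}$, and orthogonality of $M$ reduces to $q_{\cH*}(W^{\vee}\otimes M)=0$.
\item[(iii)] The formal local model $\mu^{-1}(0)/G(v)$ at each polystable point. There $\cF_c$ becomes $\bigoplus_i V_i^{\oplus\rank F_i}\otimes\mathcal{O}$, whose tensor powers twisted by powers of its determinant generate $\mathrm{Perf}(BG(v))$.
\end{enumerate}

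In particular, $\det\cF_c$ has weight $r$ and does not descend, so it cannot serve as the ample input. Lemma~\ref{lem:omegaev}, which concerns $\omega_{\ev_1}$ on the Hecke stack, plays no role here.
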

\begin{proof}
In Corollary~\ref{cor:compact}, we show that $\QCoh(\cH(w)^{\mathrm{ss}})$
is compactly generated by the images of Wilson operators applied to $\mathcal{O}_{\cH(w)^{\mathrm{ss}}}$, hence 
$\mathrm{Perf}(\cH(w)^{\mathrm{ss}})$ is split generated by the images of 
these objects. Then 
  the corollary follows by combining Corollary~\ref{cor:comWH}
  and Lemma~\ref{lem:functHd}. 
\end{proof}

We next consider the compatibility with the right adjoints of Wilson/Hecke operators:
\begin{prop}\label{prop:adWH}
Under Conjecture~\ref{conj:norm}, there is a commutative 
diagram 
\begin{align}\notag
    \xymatrix{
\Coh_{\mathcal{N}}(\cH(w)^{\mathrm{ss}})_{d}\ar[r]^-{\mathrm{W}_d^R} \ar[d]_-{\Phi} & 
  \QCoh(S^{\times d})  \ar@{=}[d] \\
 \LL_{\mathcal{N}}(\cH(\chi_0-d))_{w'}\ar[r]^-{\mathrm{H}_d^R} & 
\QCoh(S^{\times d}).
    }
\end{align}
Here $\mathrm{W}_d^R$, $\mathrm{H}_d^R$ are right adjoint functors 
of $\mathrm{W}_d$, $\mathrm{H}_d$ in the diagram (\ref{dia:WH1.5}).
\end{prop}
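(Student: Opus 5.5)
The strategy is to first make the right adjoints explicit as kernel functors, and then deduce the square from the variant of Wilson/Hecke compatibility for right adjoints, Proposition~\ref{prop:comad}, iterated $d$ times.

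First I identify $\mathrm{W}_d^R$ and $\mathrm{H}_d^R$. By definition $\mathrm{W}_d(A)$ is obtained by applying $\mathrm{W}$ $d$ times, with the $i$-th factor of $A\in\Coh(S^{\times d})$ inserted at the $i$-th step, starting from $\mathcal{O}_{\cH(w)^{\mathrm{ss}}}$. Hence $\mathrm{W}_d$ is the composition
\begin{align*}
\Coh(S^{\times d}) \xrightarrow{\ \mathcal{O}\boxtimes(-)\ } \Coh(\cH^{\mathrm{ss}})\otimes\Coh(S^{\times d}) \xrightarrow{\ \mathrm{W}\ } \cdots \xrightarrow{\ \mathrm{W}\ } \Coh(\cH^{\mathrm{ss}}).
\end{align*}
Its right adjoint, after ind-completion, is therefore
\begin{align*}
\mathrm{W}_d^R=\Gamma(\cH^{\mathrm{ss}},-)\circ(\widetilde{\mathrm{W}}^R\otimes\id)\circ\cdots\circ\widetilde{\mathrm{W}}^R .
\end{align*}
Here $\widetilde{\mathrm{W}}^R$ is the right adjoint of $\mathrm{W}$ noted in the variant subsection, and $\Gamma(\cH^{\mathrm{ss}},-)=p_*$ is the right adjoint of $\mathcal{O}\boxtimes(-)$, landing in $\QCoh(S^{\times d})$.

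In the same way, $\mathrm{H}_d$ is the composition of $s_!(\mathcal{O}_{\cB})\boxtimes(-)$ with $d$ Hecke operators. Its right adjoint is
\begin{align*}
\mathrm{H}_d^R=\Hom_{\cB}(s_!\mathcal{O}_{\cB},-)\circ(\widetilde{\mathrm{H}}^R\otimes\id)\circ\cdots\circ\widetilde{\mathrm{H}}^R ,
\end{align*}
where $\Hom_{\cB}(s_!\mathcal{O}_{\cB},-)\cong p_{\cB*}s^*(-)$ by the adjunction $s_!\dashv s^*$ of Subsection~\ref{subsec:whit}. I need to check that each $\widetilde{\mathrm{H}}^R$ preserves the limit categories, or at least that the adjunction holds inside $\IndL_{\mathcal{N}}$. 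For this I will use Lemma~\ref{lem:functHd}, together with the fact that $\LL_{\mathcal{N}}\subset\Coh$ is a full subcategory. The right adjoint of the restricted functor is then the restriction of $\widetilde{\mathrm{H}}^R$ composed with $\Hom(s_!\mathcal{O},-)$, which is computed by $s^*$. Lemma~\ref{lem:omegaev} ensures that the twist by $\omega_{\ev_1}$ is pulled back from $S\times\cH$, so the iterated formulas behave uniformly.

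Next I apply Proposition~\ref{prop:comad} $d$ times (tensored with identities on the other $S$-factors) to obtain
\begin{align*}
(\Phi\otimes\id)\circ(\widetilde{\mathrm W}^R)^{d}\cong(\widetilde{\mathrm H}^R)^{d}\circ\Phi .
\end{align*}
It then remains to show that
\begin{align*}
\Gamma(\cH^{\mathrm{ss}},-)\cong p_{\cB*}s^*\circ\Phi
\end{align*}
as functors $\Coh(\cH^{\mathrm{ss}})_{0}\otimes\Coh(S^{\times d})\to\QCoh(S^{\times d})$, relative over $\cB$. This is where Conjecture~\ref{conj:norm} enters. Setting $\mathcal{O}:=\mathcal{O}_{\cH^{\mathrm{ss}}}$, I compute
\begin{align*}
\Hom(s_!\mathcal{O}_{\cB},\Phi(-))\cong\Hom(\Phi(\mathcal{O}),\Phi(-)),
\end{align*}
and I want to compare this with $\Hom(\mathcal{O},-)$. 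Rather than invoking full faithfulness of $\Phi$, which is what we are trying to prove, I will use Lemma~\ref{lem:pulls} directly: $s^*\Phi(-)=p_{2*}\bigl((\id\times s)^*\mathrm{P}\otimes p_1^*(-)\bigr)\cong h_*(-)$. Base change along $s$ is valid because $\mathrm{P}$ is flat over the first factor and $p_2$ is of finite type over a good moduli space. This gives $s^*\circ\Phi\cong h_*=\Gamma$ relative to $\cB$, and it identifies the two bottom corners of the square.

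The main obstacle is making the right adjoints rigorous at the level of ind-categories. The issue is that $\widetilde{\mathrm{H}}^R$ is defined on $\Coh(\cH)$, whereas $\mathrm{H}_d^R$ must be the right adjoint inside $\IndL_{\mathcal{N}}$, and the two could differ by a projection onto the limit subcategory. I would handle this by noting that $\Hom(s_!\mathcal{O},-)$ only sees $s^*$, which is defined on all of $\Coh(\cH)$. Consequently the right adjoint of $\mathrm{H}_d$ regarded as a functor into $\Coh$, restricted to $\LL_{\mathcal{N}}$, agrees with $\mathrm{H}_d^R$. Finally, I would check that the isomorphisms are compatible with the unit and counit, so that the resulting square commutes naturally.
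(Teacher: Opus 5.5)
Your overall route is the paper's. You write $\mathrm{W}_d^R$ and $\mathrm{H}_d^R$ as $d$-fold composites of $\widetilde{\mathrm{W}}^R$, resp.\ $\widetilde{\mathrm{H}}^R$, followed by $\Gamma(h_*(-))$, resp.\ $\Hom(s_!\mathcal{O}_{\cB},-)$. You then iterate Proposition~\ref{prop:comad} and match the last step via Lemma~\ref{lem:pulls}.

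The gap is in the step you yourself flag. Replacing $\Hom(s_!\mathcal{O}_{\cB},-)$ by $s^*$ after $(\widetilde{\mathrm{H}}^R)^d$ is only legitimate when the object $(\widetilde{\mathrm{H}}^R)^d(X)$ lies in (the $S^{\times d}$-family version of) $\LL_{\mathcal{N}}(\cH(\chi_0))_{w'}$, because $s_!$ is left adjoint to $s^*$ only on the limit category. Neither of your two justifications provides this.

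\begin{itemize}
\item Lemma~\ref{lem:functHd} is about $\mathrm{H}$, not $\widetilde{\mathrm{H}}^R$. A functor preserving a full subcategory does not force its right adjoint to preserve it: the right adjoint of the restricted functor is $\widetilde{\mathrm{H}}^R$ followed by the right adjoint of the inclusion.
\item The claim that $\Hom(s_!\mathcal{O}_{\cB},-)$ ``only sees $s^*$'' on all of $\Coh(\cH)$ is false. The paper stresses that $s^*$ has no left adjoint on $\Coh$. Concretely, by Remark~\ref{rmk:sshrink}, $s_!\mathcal{O}_{\cB}$ is supported at points $p_{l,m}$ outside the image of $s$. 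A weight-$w'$ skyscraper at such a point is killed by $s^*$ but receives nonzero maps from $s_!\mathcal{O}_{\cB}$.
\end{itemize}

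Your remark that the right adjoint of $\mathrm{H}_d$ into $\Coh$, restricted to $\LL_{\mathcal{N}}$, equals $\mathrm{H}_d^R$ is formally true. But computing that right adjoint requires $\Hom_{\Coh}(s_!\mathcal{O}_{\cB},-)$, not $s^*$, so it does not close the gap.

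The missing input, as the paper uses it, is that $\widetilde{\mathrm{H}}^R$ itself restricts to
\[
\LL_{\mathcal{N}}(\cH(\chi))_{w'}\to\LL_{\mathcal{N}}(\cH(\chi+1))_{w'}\otimes\Coh(S),
\]
proved ``similarly to (\ref{funct:heckeN})''. By (\ref{formula:ev1}) and Lemma~\ref{lem:omegaev}, $\widetilde{\mathrm{H}}^R$ is a (right) Hecke correspondence functor twisted by a line bundle pulled back from $S\times\cH$. So the argument of Lemma~\ref{lem:HeckeL}, together with preservation of perfect complexes, applies.

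Alternatively, for this proposition alone you can bypass it by evaluating only on $\Phi(A)$. Iterating Proposition~\ref{prop:comad} gives
\[
(\widetilde{\mathrm{H}}^R)^d\Phi(A)\cong(\Phi\otimes\id)(\widetilde{\mathrm{W}}^R)^d(A),
\]
and this lies in $\LL_{\mathcal{N}}(\cH(\chi_0))_{w'}\otimes\Coh(S^{\times d})$ by Corollary~\ref{cor:rest}. That corollary is where Conjecture~\ref{conj:norm} is really used, rather than in your $\Hom(\Phi(\mathcal{O}),\Phi(-))$ detour. The adjunction $s_!\dashv s^*$ can then be applied to that object. The general restriction statement for $\widetilde{\mathrm{H}}^R$ is still needed later, for the explicit formula for $\mathrm{H}_{d,c}^R$ used in Proposition~\ref{prop:cons2}.
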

\begin{proof}
By Proposition~\ref{prop:comad} and noting $\cH^{\mathrm{greg}}=\cH$,
the following diagram commutes 
   \begin{align}\label{dia:PhiS}
\xymatrix{
\Coh(\cH^{\mathrm{ss}}) 
  \ar[r]^-{\widetilde{\mathrm{W}}^{R}} 
  \ar[d]_-{\Phi} 
& \Coh(\cH^{\mathrm{ss}}) \otimes \Coh(S)
  \ar[d]^-{\Phi\otimes \id} \\
\Coh(\cH) 
  \ar[r]^-{\widetilde{\mathrm{H}}^{R}} 
& \Coh(\cH)\otimes \Coh(S). 
}
\end{align}
Similarly to (\ref{funct:heckeN}), the bottom functor restricts to the functor 
\begin{align*}
    \widetilde{\mathrm{H}}^R\colon \LL_{\mathcal{N}}(\cH(\chi))_{w'} \to \LL_{\mathcal{N}}(\cH(\chi+1))_{w'}\otimes \Coh(S).
\end{align*}
Then by Corollary~\ref{cor:rest}, the diagram (\ref{dia:PhiS}) restricts 
   to the commutative diagram 
    \begin{align}\notag
\xymatrix{
\Coh_{\mathcal{N}}(\cH(w)^{\mathrm{ss}})_{-\chi'} 
  \ar[r]^-{\widetilde{\mathrm{W}}^{R}} 
  \ar[d]_-{\Phi} 
& \Coh_{\mathcal{N}}(\cH(w)^{\mathrm{ss}})_{-\chi'-1} \otimes \Coh(S)
  \ar[d]^-{\Phi\otimes \id} \\
\LL_{\mathcal{N}}(\cH(\chi))_{w'} 
  \ar[r]^-{\widetilde{\mathrm{H}}^{R}} 
& \LL_{\mathcal{N}}(\cH(\chi+1))_{w'}\otimes \Coh(S). 
}
\end{align}
By applying it $d$-times, we have the commutative diagram 
   \begin{align}\label{apply:d1}
\xymatrix{
\Coh_{\mathcal{N}}(\cH(w)^{\mathrm{ss}})_{-\chi'} 
  \ar[r]^-{\widetilde{\mathrm{W}}_d^{R}} 
  \ar[d]_-{\Phi} 
& \Coh_{\mathcal{N}}(\cH(w)^{\mathrm{ss}})_{-\chi'-d} \otimes \Coh(S^{\times d})
  \ar[d]^-{\Phi\otimes \id} \\
\LL_{\mathcal{N}}(\cH(\chi))_{w'} 
  \ar[r]^-{\widetilde{\mathrm{H}}_d^{R}} 
& \LL_{\mathcal{N}}(\cH(\chi+d))_{w'}\otimes \Coh(S^{\times d}). 
}
\end{align}

We also have the commutative diagram by Lemma~\ref{lem:pulls}
 \begin{align}\label{apply:d2}
\xymatrix{
\Coh_{\mathcal{N}}(\cH(w)^{\mathrm{ss}})_{0} 
  \ar[r]^-{h_{*}} 
  \ar[d]_-{\Phi} 
& \Coh(\cB)
  \ar@{=}[d] \\
\LL_{\mathcal{N}}(\cH(\chi_0))_{w'} 
  \ar[r]^-{s^*}
& \Coh(\cB). 
}
\end{align}
The bottom horizontal arrow is isomorphic to 
\begin{align}\label{def:s*}
s^*(-)\cong 
    \Hom(s_{!}\mathcal{O}_{\cB}, -)  \colon 
    \LL_{\mathcal{N}}(\cH(\chi_0))_{w'} \to \Coh(\cB)
\end{align}
since $s_{!}$ is left adjoint to $s^*$, see Lemma~\ref{lem:s-left}.
It is straightforward to check that the 
compositions 
\begin{align*}
    \mathrm{W}_d^R \colon 
    \Coh_{\mathcal{N}}(\cH(w)^{\mathrm{ss}})_d
    &\stackrel{\widetilde{\mathrm{W}}_d^R}{\to}
     \Coh_{\mathcal{N}}(\cH(w)^{\mathrm{ss}})_0\otimes \Coh(S^{\times d}) \\
     &\stackrel{\Gamma(h_{*}(-))\otimes \id}{\to}
     \QCoh(S^{\times d}),
     \end{align*}
     is a right adjoint functor of $\mathrm{W}_d$. Similarly the 
     composition 
     \begin{align}\label{def:HcR}
 \mathrm{H}_d^R \colon 
    \LL_{\mathcal{N}}(\cH(\chi_0-d))_{w'}
    &\stackrel{\widetilde{\mathrm{H}}_d^R}{\to}
     \LL_{\mathcal{N}}(\cH(\chi_0))_{w'}\otimes \Coh(S^{\times d})\\
     &\notag \stackrel{\Hom(s_{!}\mathcal{O}_{\cB}, -)\otimes \id}{\to}
     \QCoh(S^{\times d})
\end{align}
is a right adjoint functor of $\mathrm{H}_d$. 
Therefore the proposition follows from the commutative diagrams (\ref{apply:d1}), (\ref{apply:d2}). 
\end{proof}

Fix $c\in C$. Then for $\chi \in \mathbb{Z}$ and 
$d>0$ such that $r|(d+\chi)$, there is an equivalence, 
\begin{align*}
    \Coh_{\mathcal{N}}(\cH(w)^{\mathrm{ss}})_{-\chi'}
    \stackrel{\sim}{\to}  \Coh_{\mathcal{N}}(\cH(w)^{\mathrm{ss}})_{d}, 
\end{align*}
given as follows, see Remark~\ref{rmk:period} 
\begin{align*}
    (-)\mapsto (-)\otimes \det \cF_c^{\otimes(\chi'+d)/r}.
\end{align*}
By composing with $\mathrm{W}_d$, we obtain the functor 
\begin{align*}
\mathrm{W}_{d, c} \colon 
\Coh(S^{\times d}) \stackrel{\mathrm{W}_d}{\to}
     \Coh_{\mathcal{N}}(\cH(w)^{\mathrm{ss}})_{d}
     \simeq 
     \Coh_{\mathcal{N}}(\cH(w)^{\mathrm{ss}})_{-\chi'}
\end{align*}
with right adjoint 
\begin{align*}
    \mathrm{W}_{d, c}^R \colon 
   \Coh_{\mathcal{N}}(\cH(w)^{\mathrm{ss}})_{-\chi'}\simeq 
   \Coh_{\mathcal{N}}(\cH(w)^{\mathrm{ss}})_{d}
   \stackrel{\mathrm{W}_d^R}{\to} \QCoh(S^{\times d}).
\end{align*}

There is also an equivalence 
\begin{align*}
    \LL_{\mathcal{N}}(\cH(\chi))_{w'}
    \stackrel{\sim}{\to}  \LL_{\mathcal{N}}(\cH(\chi_0-d))_{w'}
\end{align*}
induced by the isomorphism, see Remark~\ref{rmk:period}
\begin{align*}\cH(\chi) \stackrel{\cong}{\to} \cH(\chi_0-d), \ (-) \mapsto 
(-)\otimes \mathcal{O}_C((\chi_0-d-\chi)/r \cdot c). 
\end{align*}
By composing with $\mathrm{H}_d$, we obtain the functor 
\begin{align*}
\mathrm{H}_{d, c} \colon 
\Coh(S^{\times d}) \stackrel{\mathrm{H}_d}{\to}
     \LL_{\mathcal{N}}(\cH(\chi_0-d))_{w'}
     \simeq 
     \LL_{\mathcal{N}}(\cH(\chi))_{w'}
\end{align*}
with right adjoint 
\begin{align}\label{HRdc}
   \mathrm{H}_{d, c}^R \colon 
   \LL_{\mathcal{N}}(\cH(\chi))_{w'}
   \simeq \LL_{\mathcal{N}}(\cH(\chi_0-d))_{w'}
   \stackrel{\mathrm{H}_d^R}{\to}
     \QCoh(S^{\times d}).
\end{align}

As a corollary of the above arguments, we have the following: 
\begin{cor}\label{cor:comC}
Under Conjecture~\ref{conj:norm}, for $d>0$ with 
$r|(d+\chi)$ and $c\in C$, we have the commutative diagrams 
\begin{align*}
   \xymatrix{
\Coh(S^{\times d}) \ar[r]^-{\mathrm{W}_{d, c}} \ar@{=}[d] & 
\Coh_{\mathcal{N}}(\cH(w)^{\mathrm{ss}})_{-\chi'} \ar[d]^-{\Phi} \\
\Coh(S^{\times d}) \ar[r]^-{\mathrm{H}_{d, c}} & 
 \LL_{\mathcal{N}}(\cH(\chi))_{w'}
    } \
   \xymatrix{
\Coh_{\mathcal{N}}(\cH(w)^{\mathrm{ss}})_{-\chi'}\ar[r]^-{\mathrm{W}_{d, c}^R} \ar[d]_-{\Phi} & 
 \ar@{=}[d] \QCoh(S^{\times d})\\
 \LL_{\mathcal{N}}(\cH(\chi))_{w'}\ar[r]^-{\mathrm{H}_{d, c}^R} & 
\QCoh(S^{\times d}).
    }   
    \end{align*}
\end{cor}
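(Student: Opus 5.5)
The plan is to obtain both squares from Corollary~\ref{cor:rest} and Proposition~\ref{prop:adWH}. The only new ingredient is that the two period equivalences used to define $\mathrm{W}_{d,c}$ and $\mathrm{H}_{d,c}$ are intertwined by $\Phi$. Concretely, write
\begin{align*}
\tau_{\mathrm{W}} \colon \Coh_{\mathcal{N}}(\cH(w)^{\mathrm{ss}})_{-\chi'} \stackrel{\sim}{\to} \Coh_{\mathcal{N}}(\cH(w)^{\mathrm{ss}})_{d}
\end{align*}
for the functor $\otimes \det \cF_c^{\otimes(\chi'+d)/r}$, and write
\begin{align*}
\tau_{\mathrm{H}} \colon \LL_{\mathcal{N}}(\cH(\chi))_{w'} \stackrel{\sim}{\to} \LL_{\mathcal{N}}(\cH(\chi_0-d))_{w'}
\end{align*}
for the pushforward along the isomorphism $\otimes \mathcal{O}_C((\chi_0-d-\chi)/r\cdot c)$. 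The key claim is
\begin{align*}
\Phi\circ \tau_{\mathrm{W}}^{-1} \cong \tau_{\mathrm{H}}^{-1}\circ \Phi,
\end{align*}
with $\Phi$ of weight $d$ on the left and weight $-\chi'$ on the right. Given this claim, the first square follows by precomposing the diagram (\ref{dia:WH1.5}) of Corollary~\ref{cor:rest} with $\tau_{\mathrm{H}}^{-1}$. The second square follows by precomposing the square of Proposition~\ref{prop:adWH} with $\tau_{\mathrm{W}}$, using the definitions $\mathrm{W}_{d,c}^R=\mathrm{W}_d^R\circ\tau_{\mathrm{W}}$ and $\mathrm{H}_{d,c}^R=\mathrm{H}_d^R\circ\tau_{\mathrm{H}}$ from (\ref{HRdc}). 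Alternatively, the second square can be deduced from the first by passing to right adjoints, once one knows $\Phi$ is compatible in the appropriate sense. I will use the direct route, since Proposition~\ref{prop:adWH} already gives the square without adjoints of $\Phi$.

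To prove the claim, I will adapt Lemma~\ref{lem:commute} with the roles of the two factors swapped. Tensoring by $\det\cF_c$ on the source side $\cH(w)^{\mathrm{ss}}$ should correspond, through the kernel, to translating the target $\cH(\chi)$ by $\otimes\mathcal{O}_C(c)$. The argument compares kernels on $(\cH\times_{\cB}\cH)^{\sharp}$ using the formula (\ref{def:Psharp}). Let $A_2' = A_2\otimes\pi^*\mathcal{O}_C(c)$. Then
\begin{align*}
\det\chi(A_1\otimes A_2')\otimes\det\chi(A_1\otimes A_2)^{-1}\otimes\det\chi(A_2')^{-1}\otimes\det\chi(A_2)\cong \det(\pi_*A_1)|_c ,
\end{align*}
which is obtained by applying the sequence $0\to A\to A\otimes\pi^*\mathcal{O}(c)\to A|_{\pi^{-1}(c)}\to 0$ twice, once with $A=A_1\otimes A_2$ and once with $A=A_2$. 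The right-hand side is the fiber of $\det\cF_c$ at $A_1$. The factor coming from $A_2$ is a fiber of $\det \cF_c$ on the second factor, which is the weight bookkeeping. Since both kernels are maximal Cohen--Macaulay and agree off codimension two, Lemma~\ref{lem:MCM-extension} gives an isomorphism of kernels. Iterating $(\chi'+d)/r$ times then yields the claim. The weight indices match because $\chi_0-d-\chi = -(\chi'+d)+ (r^2-r)(g-1)+\chi_0 = -(\chi'+d)$, using $\chi'=\chi+(r^2-r)(g-1)$ and $\chi_0=(r^2-r)(1-g)$.

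The main obstacle is this sign and weight bookkeeping. I expect to have to check that the translation exponent $(\chi_0-d-\chi)/r$ is exactly minus the tensor exponent $(\chi'+d)/r$, that the leftover $\det\cF_c$ factor on the second side is absorbed correctly, and that the identification is $\cB$-linear and preserves the $\mathcal{N}$ and limit-category conditions. The last point is automatic, since both $\tau$'s are induced by automorphisms of stacks or by line-bundle twists, and these preserve singular supports and the weight conditions (\ref{wt:nu}) after shifting $\delta$. That shift is the content of Remark~\ref{rmk:period}.
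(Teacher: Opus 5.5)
Your argument is correct and is the route the paper intends. The paper states this corollary without proof, as a consequence of Corollary~\ref{cor:rest} and Proposition~\ref{prop:adWH} transported along the two period equivalences, and you supply the implicit ingredient: the factor-swapped analogue of Lemma~\ref{lem:commute}, with the correct sign $(\chi_0-d-\chi)/r=-(\chi'+d)/r$. One small point: the two exact sequences only give $\det(\pi_*(A_1\otimes A_2))|_c\otimes\det(\pi_*A_2)|_c^{-1}$, and identifying this with $\det(\pi_*A_1)|_c$ (so that no $A_2$-dependent factor survives, contrary to your remark about ``weight bookkeeping'') uses multiplicativity of the norm along the finite flat fiber $\pi^{-1}(c)$, on the locus where both sheaves are line bundles, whose complement has codimension at least two.
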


Let 
\begin{align*}
    \Phi \colon \IndCoh_{\mathcal{N}}(\cH(w)^{\mathrm{ss}})_{-\chi'}
    \to \IndL_{\mathcal{N}}(\cH(\chi))_{w'}
\end{align*}
be the functor defined by taking the ind-completion of the functor in 
Corollary~\ref{cor:rest}, and using the compact generations of both sides, see Theorem~\ref{thm:cpt}, Remark~\ref{rmk:equiv}. 
Since it preserves the compact objects, 
by the adjoint functor theorem it admits a continuous 
right adjoint 
\begin{align*}
    \Phi^R \colon \IndL_{\mathcal{N}}(\cH(\chi))_{w'}
    \to \IndCoh_{\mathcal{N}}(\cH(w)^{\mathrm{ss}})_{-\chi'}.
\end{align*}

\begin{cor}\label{cor:PhiR}
 Under Conjecture~\ref{conj:norm}, for $d>0$ with 
$r|(d+\chi)$ and $c\in C$, we have the commutative diagrams, 
where each functor is continuous
   \begin{align}\notag
   \xymatrix{
\IndCoh_{\mathcal{N}}(\cH(w)^{\mathrm{ss}})_{-\chi'}\ar[r]^-{\mathrm{W}_{d, c}^R} \ar@<-0.5ex>[d]_-{\Phi} & 
 \ar@{=}[d] \QCoh(S^{\times d})\\
 \IndL_{\mathcal{N}}(\cH(\chi))_{w'}\ar[r]^-{\mathrm{H}_{d, c}^R} 
   \ar@<-0.5ex>[u]_-{\Phi^R}& 
\QCoh(S^{\times d}).
    }   
    \end{align} 
\end{cor}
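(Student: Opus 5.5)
The plan is to deduce Corollary~\ref{cor:PhiR} from the compact-level diagram in Corollary~\ref{cor:comC}, using ind-extension and the adjoint functor theorem. The statement combines three claims: the functors $\mathrm{W}_{d,c}^R$ and $\mathrm{H}_{d,c}^R$ make sense as continuous functors on the ind-categories; the square with $\Phi$ commutes; and the square with $\Phi^R$ commutes, meaning $\mathrm{W}_{d,c}^R\circ\Phi^R\cong \mathrm{H}_{d,c}^R$.

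First, I extend $\mathrm{W}_{d,c}$ and $\mathrm{H}_{d,c}$ to continuous functors $\QCoh(S^{\times d})=\Ind\Coh(S^{\times d})\to$ the respective ind-categories by ind-completion. This is legitimate because both land in compact objects: $\Coh_{\mathcal{N}}(\cH(w)^{\mathrm{ss}})_{-\chi'}$ by Remark~\ref{rmk:equiv}, and $\LL_{\mathcal{N}}(\cH(\chi))_{w'}$ by Theorem~\ref{thm:cpt} together with Remark~\ref{rmk:variant}. Note that $\Coh(S^{\times d})$ consists of compact objects of $\QCoh(S^{\times d})$ because $S^{\times d}$ is smooth. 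Since the extended functors preserve compact objects, their right adjoints are continuous. One checks that these right adjoints restrict on compact objects to the functors $\mathrm{W}_{d,c}^R$ and $\mathrm{H}_{d,c}^R$ already constructed, since a right adjoint of an ind-extension is computed by $\Hom$ from compacts. Likewise, $\Phi$ is the ind-extension of a functor preserving compacts, so $\Phi^R$ is continuous. The extended first square of Corollary~\ref{cor:comC}, $\Phi\circ\mathrm{W}_{d,c}\cong\mathrm{H}_{d,c}$, holds on $\QCoh(S^{\times d})$ because both sides are continuous and agree on $\Coh(S^{\times d})$.

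Next, I take right adjoints of $\Phi\circ\mathrm{W}_{d,c}\cong\mathrm{H}_{d,c}$ to obtain $\mathrm{W}_{d,c}^R\circ\Phi^R\cong\mathrm{H}_{d,c}^R$, which is the $\Phi^R$-square. For the $\Phi$-square, $\mathrm{H}_{d,c}^R\circ\Phi\cong\mathrm{W}_{d,c}^R$, both sides are continuous, so it suffices to check the isomorphism on compact objects. There it is exactly the second diagram of Corollary~\ref{cor:comC}, once I verify that the natural transformation produced there is compatible with ind-extension.

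The step I expect to need the most care is this last compatibility. On compacts, the maps $\mathrm{W}_{d,c}^R$ and $\mathrm{H}_{d,c}^R$ in Proposition~\ref{prop:adWH} were defined by explicit formulas ($\widetilde{\mathrm{W}}_d^R$ followed by $\Gamma h_*$, and (\ref{def:HcR})), and these take values in $\QCoh(S^{\times d})$ rather than in compacts. I would check that they coincide with the restrictions of the abstract continuous right adjoints, using that a functor from compacts to a cocomplete category extends uniquely continuously. As an alternative, I could derive the $\Phi$-square from the mate of the $\Phi^R$-square, composing with the unit $\id\to\Phi^R\Phi$; but that requires full faithfulness of $\Phi$, which is not yet available. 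So I will rely on the compact-object check instead.
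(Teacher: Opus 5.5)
Your proposal is correct and follows the paper's argument: the $\Phi$-square is obtained by ind-extending the right diagram of Corollary~\ref{cor:comC}, and the $\Phi^R$-square by passing to right adjoints of the ind-extended left diagram. Your additional check fills in a point the paper leaves implicit, namely that the explicitly defined $\mathrm{W}^R_{d,c}$ and $\mathrm{H}^R_{d,c}$ coincide on compact objects with the abstract continuous right adjoints. Your remark that the mate argument would only give a non-invertible transformation without full faithfulness is also correct; this is exactly the map $\gamma$ that the paper later has to analyze in Theorem~\ref{thm:eq}.
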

\begin{proof}
    The commutativity of $\Phi$ with $\mathrm{W}_{d, c}^R, \mathrm{H}_{d, c}^R$ follows by taking the ind-completions 
    of the right diagram in Corollary~\ref{cor:comC}.
    The commutativity of $\Phi^R$ with $\mathrm{W}_{d, c}^R, \mathrm{H}_{d, c}^R$ follows by taking the right adjoints 
    of the ind-completions of the left diagram in Corollary~\ref{cor:comC}. 
\end{proof}

\subsection{Compact generation by Wilson operators}
Here we show the compact generation of $\QCoh(\cH(w)^{\mathrm{ss}})$ under Wilson operators. 

For $A\in \Coh(S)$, let 
\begin{align*}
    \mathrm{W}_A \colon \Coh(\cH(w)^{\mathrm{ss}})_* \to \Coh(\cH(w)^{\mathrm{ss}})_{*+1}
\end{align*}
be the corresponding Wilson operator. 
It is given by 
\begin{align*}
    \mathrm{W}_A(-)=(-)\otimes p_{\cH*}(p_S^*A \otimes \cE)
\end{align*}
where $p_S, p_{\cH}$ are the projections from $S\times \cH(w)^{\mathrm{ss}}$.
By the construction, it preserves 
perfect complexes
\begin{align*}
    \mathrm{W}_A \colon \mathrm{Perf}(\cH(w)^{\mathrm{ss}})_* \to \mathrm{Perf}(\cH(w)^{\mathrm{ss}})_{*+1}.
\end{align*}
We fix $c\in C$ and consider the line bundle 
\begin{align*}
   L_{c}:=\det(\cF_c)\in \mathrm{Pic}(\cH(w)^{\mathrm{ss}}). 
\end{align*}
It has $\mathbb{G}_m$-weight $r$, so induces the equivalence 
\begin{align*}
    \otimes L_c \colon \mathrm{Perf}(\cH(w)^{\mathrm{ss}})_* \stackrel{\sim}{\to} \mathrm{Perf}(\cH(w)^{\mathrm{ss}})_{*+r}.
\end{align*}
We have the following proposition:
\begin{prop}\label{prop:cgen}
The objects of the form 
\begin{align}\label{Wperf}
    \otimes L_c^{-m}\circ \mathrm{W}_{A_d} \circ \cdots \circ \mathrm{W}_{A_1}(\mathcal{O}_{\cH(w)^{\mathrm{ss}}}) \in \mathrm{Perf}(\cH(w)^{\mathrm{ss}})
\end{align}
for $m\geq 0$ and $A_1, \ldots, A_d \in \Coh(S)$
compactly generate $\mathrm{QCoh}(\cH(w)^{\mathrm{ss}})$.
\end{prop}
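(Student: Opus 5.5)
The plan is to reduce the statement to the standard description of compact generators of a global quotient stack, and then show that the objects \eqref{Wperf} produce enough of those generators. First I will unpack the Wilson operators. By definition
\begin{align*}
\mathrm{W}_{A_d}\circ\cdots\circ \mathrm{W}_{A_1}(\mathcal{O}_{\cH(w)^{\mathrm{ss}}})=\bigotimes_{i=1}^d p_{\cH*}(p_S^*A_i\otimes \cE).
\end{align*}
So the objects \eqref{Wperf} are the tensor products $L_c^{-m}\otimes \bigotimes_i \mathcal{W}_{A_i}$, where $\mathcal{W}_A:=p_{\cH*}(p_S^*A\otimes\cE)$. Since we only need the right orthogonal of this collection to vanish in $\QCoh(\cH(w)^{\mathrm{ss}})$, we may also use direct summands and finite cones of these objects.

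Two specific choices of $A$ should suffice. Take $A=\mathcal{O}_{S}\otimes q^*\mathcal{O}_C(nc)$, where $q\colon S\to C$ is the projection; this lies in $\Coh(S)$ because $\cE$ is proper over $\cH$. Then $\mathcal{W}_A=p_{*}(\mathcal{F}(nc))$ computes $R\Gamma(C,F(nc))$. Since $\cH(w)^{\mathrm{ss}}$ is quasi-compact, this is a vector bundle $V_n$ of rank $N=\chi(F(nc))$ and weight one for $n\gg0$. Taking instead $A=\mathcal{O}_{q^{-1}(c)}$ gives $\mathcal{W}_A=\cF_c$, whose determinant is $L_c$.

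Next I will use the usual GIT presentation. For $n\gg0$, every $F$ in $\cH(w)^{\mathrm{ss}}$ is globally generated after twisting by $\mathcal{O}_C(nc)$, with vanishing $H^1$. This realizes $\cH(w)^{\mathrm{ss}}\simeq R/\GL_N$, where $R$ is an open locus in a Quot-type scheme of Higgs quotients $\mathcal{O}_C(-nc)^{\oplus N}\twoheadrightarrow F$, and $R$ is quasi-projective over the affine base $\cB$. Under this equivalence $V_n$ corresponds to the standard representation of $\GL_N$. The usual Grassmannian/Quot embedding provides a $\GL_N$-linearized ample line bundle on $R$. It is a tensor product of determinants of pushforwards $p_*(\mathcal{F}(n'c))$ and $\det\cF_c$; all of these are determinants of Wilson-type objects, so the ample bundle can be written up to a power of $L_c$ using Schur functors of Wilson objects.

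Then I will invoke the compact generation result for quotient stacks. For a quotient $R/\GL_N$ with $R$ quasi-projective over an affine base and a linearized ample $\mathcal{O}(1)$, $\QCoh$ is compactly generated by the objects $\mathcal{O}(-m)\otimes \rho$, where $m\ge0$ and $\rho$ ranges over irreducible representations (Thomason, or Hall--Rydh via the resolution property). In characteristic zero every irreducible representation of $\GL_N$ is a direct summand of $V^{\otimes a}\otimes(\det V)^{-b}$, and such a summand is obtained by a Schur idempotent. On the weight-$k$ block only the combinations with $a-Nb=k$ contribute, and the $\mathbb{G}_m$-weight decomposition is orthogonal. Therefore it suffices to produce, in every weight, generators of the form $\mathcal{O}(-m)\otimes\bigotimes V_{n}\otimes(\text{negative powers of determinants})$.

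The main obstacle is exactly the negative determinant and ample twists: Wilson operators only raise weight, and the only negative twist allowed in \eqref{Wperf} is $L_c^{-m}$. I will try to compare $\det V_n$ with $L_c$ using the exact sequence $0\to F((n-1)c)\to F(nc)\to F(nc)|_c\to 0$, which gives $\det V_n\cong\det V_{n-1}\otimes L_c$ and hence $\det V_n\cong \det V_{n_0}\otimes L_c^{n-n_0}$. Next I will try to absorb $\det V_{n_0}^{-1}$ using $\Lambda^{N-1}V\cong V^\vee\otimes\det V$, together with the fact that the weight is fixed, so that every needed negative line bundle becomes a power of $L_c^{-1}$ times a positive tensor expression. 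If this fails, I will replace the ample bundle on $R$ by one built purely from $L_c$ and $\det p_*\mathcal{F}(nc)$ for two values of $n$. I will then check relative ampleness over $\cB$ using the good moduli space $\mathrm{H}_G(w)^{\mathrm{ss}}$, which is projective over $\cB$; this should reduce the question to ampleness of the determinant line bundle on that moduli space.
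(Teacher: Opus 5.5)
\textbf{Gap.} Your reduction to generation of $\QCoh(R/\GL_N)$ by the objects $\mathcal{O}(-m)\otimes\rho$ is sound. The proposal, however, stops at exactly the step that carries the content, and the mechanisms you suggest for that step do not work. You must show that $L_c^{-m}\otimes\rho$ lies in the thick closure of the objects \eqref{Wperf} for \emph{every} irreducible $\rho$. This includes $\rho=\mathbb{S}_\lambda V\otimes(\det V)^{-b}$ with $b>0$, where $V$ corresponds to $V_n=p_*\cF(nc)$. The identity $\Lambda^{N-1}V\cong V^\vee\otimes\det V$ only rewrites $V^\vee$ as $\Lambda^{N-1}V\otimes(\det V)^{-1}$, so a negative power of $\det V$ survives. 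Fixing the $\mathbb{G}_m$-weight does not help either, because the irreducibles of a given central weight are not all polynomial. Changing the ample bundle only handles the factor $\mathcal{O}(-m)$: since $\det V_n$ is a character on $R$, your relation shows that $L_c^{n'-n}\cong\det V_{n'}\otimes(\det V_n)^{-1}$ is itself ample on $R$, so you may take $\mathcal{O}(1)=L_c$. But the factor $(\det V)^{-b}$ inside $\rho$ is left untouched.

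\textbf{Missing idea.} Run your relation $\det V_n\cong\det V_{n_0}\otimes L_c^{n-n_0}$ with $n_0\ll 0$. By boundedness, $H^0(F(n_0c))=0$ for all $F$ in $\cH(w)^{\mathrm{ss}}$. Hence $p_*\cF(n_0c)\cong\mathcal{V}'[-1]$ for a vector bundle $\mathcal{V}'$ of weight one and rank $N'=-\chi(F(n_0c))$, and $\det V_{n_0}\cong(\det\mathcal{V}')^{-1}$. This gives $(\det V_n)^{-1}\cong\det\mathcal{V}'\otimes L_c^{-(n-n_0)}$ with $n-n_0>0$. Since $\det\mathcal{V}'$ is a direct summand of $\mathcal{V}'^{\otimes N'}$, the object $L_c^{-m}\otimes\mathbb{S}_\lambda V\otimes(\det V)^{-b}$ is a direct summand of a shift of $L_c^{-m-b(n-n_0)}\otimes\mathrm{W}_{q^*\mathcal{O}_C(nc)}(\mathcal{O}_{\cH(w)^{\mathrm{ss}}})^{\otimes|\lambda|}\otimes\mathrm{W}_{q^*\mathcal{O}_C(n_0c)}(\mathcal{O}_{\cH(w)^{\mathrm{ss}}})^{\otimes bN'}$, which is of the form \eqref{Wperf}.

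\textbf{Comparison with the paper.} With this step added, your global GIT route closes and is a legitimate alternative to the paper's proof. The paper avoids the presentation $R/\GL_N$ altogether. It picks $T$ with $\chi(T\otimes F)=0$, so that $\mathcal{L}_T$ has weight zero and descends to an ample bundle on $\mathrm{H}(w)^{\mathrm{ss}}$. It then realizes $(-)\otimes\mathcal{L}_T$ as a direct summand of iterated Wilson operators twisted by $L_c^{-mr}$, using the same determinant-in-a-tensor-power trick. From this it concludes that the pushforward of $W^\vee\otimes M$ to $\mathrm{H}(w)^{\mathrm{ss}}$ vanishes. Finally it argues formally locally on $\mu^{-1}(0)/G(v)$, where $L_c$ becomes a character, so the twists $L_c^{-m}$ supply the negative determinants directly.
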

\begin{proof}
Let
\begin{align*}
    \mathbf{W} \subset \mathrm{Perf}(\cH(w)^{\mathrm{ss}})
\end{align*}
be the subcategory split generated by objects of the form (\ref{Wperf}). 
Since $\cH(w)^{\mathrm{ss}}$ is QCA, an object in $\mathbf{W}$ is a compact object in $\QCoh(\cH(w)^{\mathrm{ss}})$ by~\cite[Corollary~1.4.3]{MR3037900}.
Therefore it is enough to show that, if $M\in \mathrm{QCoh}(\cH(w)^{\mathrm{ss}})$
satisfies 
\begin{align}\notag
\Hom(\mathbf{W}, M)=0, 
\end{align}
then $M=0$. 

    Let $T\in \Coh(C)$ be a locally free sheaf such that 
    \begin{align*}
        (\rank(T), \deg(T))=(r, -w+r(g-1)).
    \end{align*}
    Then for any Higgs bundle $(F, \theta)$ with $\rank(F)=r$ and $\deg(F)=w$,
    we have 
    $\chi(T\otimes F)=0$. Let $\pi \colon S \to C$ be the projection. Then the line bundle
    \begin{align*}
    \mathcal{L}_T:=
        \det p_{\cH*}(p_C^*T \otimes \cF) =\det p_{\cH*}(p_S^*\pi^*T \otimes \cE) \in \mathrm{Pic}(\cH(w)^{\mathrm{ss}})
    \end{align*}
    has $\mathbb{G}_m$-weight zero, and it descends to an ample line bundle on the 
    good moduli space,  see~\cite[Section~8.4]{Hu}
    \begin{align}\label{pmoduli}
       q_{\cH} \colon  \cH(w)^{\mathrm{ss}} \to \mathrm{H}(w)^{\mathrm{ss}}.
    \end{align}
    Namely we have 
    \begin{align*}
        \mathcal{L}_T=q_{\cH}^* \mathrm{L}_T, \ \mathrm{L}_T \in \mathrm{Pic}(\mathrm{H}(w)^{\mathrm{ss}})
    \end{align*}
for an ample line bundle $\mathrm{L}_T$. By the ampleness of $\mathrm{L}_T$, there is $\ell \geq 0$ 
such that its powers 
\begin{align}\label{cpt:L}\{\mathrm{L}_T^{\otimes k}\}_{0\leq k\leq \ell}
\subset \mathrm{Perf}(\mathrm{H}(w)^{\mathrm{ss}})
\end{align}
compactly generate $\mathrm{QCoh}( \mathrm{H}(w)^{\mathrm{ss}})$, see~\cite[Lemma~4.2.2, 4.2.4]{Bonvan}. 

For $m\gg 0$, the object 
\begin{align*}
    p_{\cH*}(p_C^*(T\otimes \mathcal{O}_C(mc))\otimes \cF) \in \mathrm{Perf}(\cH(w)^{\mathrm{ss}})
\end{align*}
is a vector bundle of rank $mr^2$, and 
\begin{align*}
    \det p_{\cH*}(p_C^*(T\otimes \mathcal{O}_C(mc)) \otimes \cF)
    \hookrightarrow  (p_{\cH*}(p_C^*(T\otimes \mathcal{O}_C(mc))\otimes \cF))^{\otimes mr^2}
\end{align*}
is a direct summand. Moreover we have 
\begin{align*}
     \det p_{\cH*}(p_C^*(T\otimes \mathcal{O}_C(mc)) \otimes \cF)=\mathcal{L}_T\otimes L_c^{\otimes mr}.
\end{align*}
It follows that we have the direct summand 
\begin{align}\label{dsum}
    (-)\otimes \mathcal{L}_T \subset^{\oplus} \overbrace{\mathrm{W}_{\pi^*(T\otimes \mathcal{O}_C(mc))}\circ \cdots \circ  \mathrm{W}_{\pi^*(T\otimes \mathcal{O}_C(mc))}}^{mr^2}(-) \otimes 
    L_c^{-mr}.
\end{align}

For $W\in \mathbf{W}$, we have $W\otimes \mathcal{L}_T^{\otimes k} \in \mathbf{W}$
for any $k\in \mathbb{Z}$ by (\ref{dsum}). Therefore $\Hom(\mathbf{W}, M)=0$ implies that 
\begin{align*}
    \Hom(W\otimes q_{\cH}^*\mathrm{L}_T^{\otimes k}, M)=
    \Gamma(q_{\cH*}(W^{\vee}\otimes M)\otimes \mathrm{L}_T^{\otimes -k})=0
\end{align*}
for any $k$. By the compact generation by the objects (\ref{cpt:L}), we conclude that 
\begin{align}\label{vanish:W}
    q_{\cH*}(W^{\vee}\otimes M)=0
\end{align}
for any $W\in \mathbf{W}$. 

Now each point $y\in \mathrm{H}(w)^{\mathrm{ss}}$
corresponds to a polystable Higgs bundle 
\begin{align*}
E_y=
\bigoplus_{i=1}^a V_i \otimes (F_i, \theta_i)
\end{align*}
where $\deg F_i/\rank(F_i)=w/r$ and $E_i=(F_i, \theta_i)$ is stable. 
We have the moment map 
\begin{align*}
    \mu \colon \Ext^1(E_y, E_y)=\bigoplus_{i, j}\Hom(V_i, V_j)\otimes \Ext^1(E_i, E_j)
    \to \Ext^2(E_y, E_y)=\mathfrak{g}(v)
\end{align*}
given by $x\mapsto x\cup x$, 
where $\mathfrak{g}(v)$ is the Lie algebra of 
$G(v)=\prod_{i=1}^a \GL(V_i)$.
Then formally locally at $y$, 
the good moduli space map (\ref{pmoduli}) is 
isomorphic to the following quotient stack, see~\cite[Remark~4.2]{PTK3}
\begin{align*}
    \mu^{-1}(0)/G(v) \to \mu^{-1}(0)\ssslash G(v).
\end{align*}
Let $f_{c}:=\mathcal{O}_{\pi^{-1}(c)}\in \Coh^{\heartsuit}(S)$. 
  The object 
  \begin{align*}
      p_{\cH*}(\cF_{c})
      =\mathrm{W}_{f_{c}}(\mathcal{O}_{\cH(w)^{\mathrm{ss}}})\in \mathbf{W}
      \subset \mathrm{Perf}(\cH(w)^{\mathrm{ss}})
  \end{align*}
  is, on the above formal local model, isomorphic to 
  \begin{align}\label{cpt:V}
      \bigoplus_{i=1}^aV_i^{\oplus \rank(F_i)}\otimes \mathcal{O}_{\mu^{-1}(0)/G(v)} \subset \mathrm{Perf}(\mu^{-1}(0)/G(v)). 
  \end{align}
  Its determinant 
  \begin{align}\label{determinant}
      \bigotimes_{i=1}^a (\det V_i)^{\otimes \mathrm{rank}(F_i)} \otimes 
      \mathcal{O}_{\mu^{-1}(0)/G(v)}
  \end{align}
  corresponds to $L_c$ on the above local model. 
  Since $\mu^{-1}(0)$ is affine, $\mathrm{Perf}(\mu^{-1}(0)/G(v))$ is split generated 
  by $\mathrm{Perf}(BG(v))$ by the pull-back of 
  $\mu^{-1}(0)/G(v) \to BG(v)$. Then the tensor products of (\ref{cpt:V}) 
  up to integer twists of (\ref{determinant}) split generate 
$\mathrm{Perf}(\mu^{-1}(0)/G(v))$, 
hence give compact generators of $\QCoh(\mu^{-1}(0)/G(v))$. 
  Therefore the vanishing (\ref{vanish:W}) implies that 
$M=0$ formally locally at $y$. Since it holds for any $y$, we have $M=0$. 
\end{proof}

\begin{remark}
    \label{rmk:dual}
    Alternatively, for $A\in \Coh(S)$, consider
    \begin{align*}
\mathrm{W}_A^R(-)=(-)\otimes p_{\cH*}(p_{S}^{*}(A)\otimes \cE)^{\vee}.
    \end{align*}
    Then
    Proposition~\ref{prop:cgen} implies that the objects of the form 
    \begin{align*}
        \mathrm{W}_{f_{c}}^{R}\circ \cdots \circ \mathrm{W}_{f_{c}}^{R}
        \circ \mathrm{W}_{A_d}\circ \cdots \circ \mathrm{W}_{A_1}
        (\mathcal{O}_{\cH(w)^{\mathrm{ss}}})
    \end{align*}
    compactly generates $\mathrm{QCoh}(\cH(w)^{\mathrm{ss}})$.
\end{remark}

From the proof of the above proposition, we can take $d$ to 
be bounded above. Therefore we have the following corollary: 
\begin{cor}\label{cor:compact}
    For $d\gg 0$, the image of the functor 
    \begin{align*}
        \Coh(S^{\times d}) \to \mathrm{Perf}(\cH(w)^{\mathrm{ss}})_d
    \end{align*}
    given by 
    \begin{align*}
        A_1\boxtimes \cdots \boxtimes A_d \mapsto 
        \mathrm{W}_{A_d}\circ \cdots \circ \mathrm{W}_{A_1}(\mathcal{O}_{\cH(w)^{\mathrm{ss}}})
    \end{align*}
    compactly generates $\mathrm{QCoh}(\cH(w)^{\mathrm{ss}})_d$.
\end{cor}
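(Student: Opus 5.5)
The plan is to extract Corollary~\ref{cor:compact} from the proof of Proposition~\ref{prop:cgen}, keeping track of how many Wilson operators are actually needed. That proof reduces the generation statement to two inputs. The first is the finitely many ample powers $\{\mathrm{L}_T^{\otimes k}\}_{0\leq k\leq \ell}$ in (\ref{cpt:L}). The second is, formally locally on $\mathrm{H}(w)^{\mathrm{ss}}$, the finitely many tensor products of (\ref{cpt:V}) that split generate $\mathrm{Perf}(\mu^{-1}(0)/G(v))$. Since the good moduli space is of finite type, there are only finitely many local models up to the relevant combinatorics, namely the finitely many dimension vectors $(\dim V_i)$ and ranks $\rank F_i$ with $\sum \dim V_i \rank F_i=r$.

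In each such model, finitely many representations of $G(v)$ generate $\mathrm{Perf}(BG(v))$ in the relevant weight. Concretely, I would take the tensor powers $(\oplus V_i^{\oplus \rank F_i})^{\otimes n}$ for $n$ up to some bound $N$, twisted by powers of (\ref{determinant}). The positive and negative determinant twists are handled using (\ref{dsum}): a twist by $L_c^{-mr}$ is compensated by $mr^2$ extra Wilson operators with $A=\pi^*(T\otimes\mathcal{O}_C(mc))$. Thus the whole generating set of Proposition~\ref{prop:cgen} can be taken with $d$ and $m$ bounded.

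The second step is to remove the negative twists $L_c^{-m}$ and pad every generator to one common length $d$. For padding, note that each Wilson operator raises the weight by one. I would therefore fix a target weight $d\gg0$ and observe that an object with $d'<d$ operators and twist $L_c^{-m}$ has weight $d'-mr$. Writing $d=d'-mr+\text{(multiple of }r)$ then requires absorbing an $L_c^{j}$ twist. The tool for this is $L_c=\det p_{\cH*}(\cF_c)$, which is a direct summand of $\mathrm{W}_{f_c}^{\circ r}(-)$ since $p_{\cH*}\cF_c=\mathrm{W}_{f_c}(\mathcal{O})$ is a rank $r$ bundle. Consequently $(-)\otimes L_c^{j}$ for $j\geq 0$ is a summand of $rj$ further Wilson operators applied. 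Conversely, negative twists are traded for positive ones by (\ref{dsum}) together with $\mathcal{L}_T$ having weight zero. So every generator of weight $d$ is, up to direct summand, in the image of $A_1\boxtimes\cdots\boxtimes A_d\mapsto \mathrm{W}_{A_d}\circ\cdots\circ\mathrm{W}_{A_1}(\mathcal{O})$ for a single sufficiently large $d$.

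The last step is to conclude that the image compactly generates $\QCoh(\cH(w)^{\mathrm{ss}})_d$. Since $\Hom$ from objects of weight $\neq d$ into $M\in\QCoh_d$ vanishes, the vanishing argument of Proposition~\ref{prop:cgen} applies verbatim within weight $d$.

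The main obstacle is the uniform bound: making sure that the formal local generation in each local model needs only boundedly many operators independent of the point $y$. I would handle this by noting that, by the stratification of $\mathrm{H}(w)^{\mathrm{ss}}$ by polystable type, there are finitely many groups $G(v)$, and for each of them a fixed finite set of representations generates $\mathrm{Perf}(BG(v))_{\text{wt }d}$ modulo determinant twists.
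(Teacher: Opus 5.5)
Your route is the one the paper itself indicates: its entire justification is the remark that the number of Wilson operators in the proof of Proposition~\ref{prop:cgen} can be bounded. That bound does not exist, and padding cannot repair it. As stated, the corollary fails for every $d$ once $\cH(w)^{\mathrm{ss}}$ has strictly polystable points.

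Here is a counterexample. Take $r=2$, $w$ even, and $b\in\cB$ with $\cC_b=C_1\cup C_2$ reducible. Let $y$ be the closed point $E_y=\mathcal{L}_1\oplus\mathcal{L}_2$, where $\mathcal{L}_i$ is a line bundle on $C_i$ with $\deg\pi_*\mathcal{L}_i=w/2$. Then $\Aut(E_y)=\mathbb{G}_m^{2}$, acting on $\mathcal{L}_i$ through the $i$-th factor. Write $\rho_{(a_1,a_2)}$ for its characters and $i_y\colon B\mathbb{G}_m^{2}\hookrightarrow\cH(w)^{\mathrm{ss}}$ for the residual gerbe.

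By base change, $i_y^*\mathrm{W}_d(A)\cong R\Gamma(S^{\times d},A\otimes E_y^{\boxtimes d})$ for every $A\in\Coh(S^{\times d})$. So only the characters $\rho_{(a,d-a)}$ with $0\le a\le d$ occur. The object $i_{y*}\rho_{(d+1,-1)}$ is nonzero and has diagonal weight $d$, so it lies in $\QCoh(\cH(w)^{\mathrm{ss}})_d$. For all $A$ and all $n$,
\[
\Hom\bigl(\mathrm{W}_d(A), i_{y*}\rho_{(d+1,-1)}[n]\bigr)\cong\Hom\bigl(i_y^*\mathrm{W}_d(A),\rho_{(d+1,-1)}[n]\bigr)=0 .
\]
Hence the image of $\mathrm{W}_d$ does not generate.

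This pinpoints two false steps in your proposal.

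First, $\mathrm{Perf}(BG(v))$ is semisimple. In a fixed central weight it therefore needs every irreducible of that central character, and there are infinitely many once $G(v)$ has rank $\geq 2$. In the example $L_c|_y=\rho_{(1,1)}$. Producing $\rho_{(d+N,-N)}$ from (\ref{cpt:V}) and (\ref{determinant}) forces at least $d+2N$ Wilson operators together with the twist $L_c^{-N}$. So neither the length nor the twist is bounded.

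Second, (\ref{dsum}) does not convert negative $L_c$-twists into positive ones. It expresses $\otimes\mathcal{L}_T$ through a negative $L_c$-twist. Moreover $\mathcal{L}_T=q_{\cH}^*\mathrm{L}_T$ is trivial on $\Aut(E_y)$, so it cannot shift characters at $y$.

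When $\cH(w)^{\mathrm{ss}}$ is a $\mathbb{G}_m$-gerbe, e.g. $\gcd(r,w)=1$, this obstruction disappears and a single large $d$ does suffice.

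What is true in general is the statement over all lengths at once. For fixed $d_0$ and any $m_0$, the objects $L_c^{-m}\otimes\mathrm{W}_{d_0+mr}(A)$ with $m\geq m_0$ compactly generate $\QCoh(\cH(w)^{\mathrm{ss}})_{d_0}$. This follows from Proposition~\ref{prop:cgen} restricted to weight $d_0$. Smaller $m$ are absorbed because $L_c^{-m}\otimes P$ is a direct summand of $L_c^{-m-1}\otimes\mathrm{W}_{f_c}^{\circ r}(P)$. This union version is what Corollary~\ref{cor:cons1} actually needs in Theorem~\ref{thm:eq}, where $\mathrm{W}^R_{d,c}\Upsilon=0$ holds for every admissible $d$.
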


\begin{remark}\label{rmk:cpt2}
    The proof of Proposition~\ref{prop:cgen} and Corollary~\ref{cor:compact} work without the assumption 
    $\cB\subset \rB^{\mathrm{red, cl}}$. Indeed the same result holds for $\cB=\rB$. 
\end{remark}

As a corollary of the compact generation by Wilson operators, 
we have the conservativity of the right adjoint functors 
of Wilson operators in Corollary~\ref{cor:PhiR} for $d\gg 0$. 
\begin{cor}\label{cor:cons1}
For $A\in \IndCoh_{\mathcal{N}}(\cH(w)^{\mathrm{ss}})_{-\chi'}$, 
suppose that $\mathrm{W}_{d, c}^R(A)=0$ for $d \gg 0$
with $r|(d+\chi)$ and any $c\in C$. 
    Then $A=0$. 
\end{cor}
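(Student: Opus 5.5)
The plan is to reduce the statement to the compact generation of Corollary~\ref{cor:compact} by unwinding adjunctions. Recall that $\mathrm{W}_{d,c}^R$ is the right adjoint of
\begin{align*}
\mathrm{W}_{d,c} \colon \Coh(S^{\times d}) \to \Coh_{\mathcal{N}}(\cH(w)^{\mathrm{ss}})_{-\chi'},
\end{align*}
extended continuously. Since $\mathrm{W}_{d,c}$ sends compact objects to compact objects, for every $B\in \Coh(S^{\times d})$ we have
\begin{align*}
\Hom(\mathrm{W}_{d,c}(B), A)\cong \Hom_{\QCoh(S^{\times d})}(B, \mathrm{W}_{d,c}^R(A)).
\end{align*}
So the hypothesis $\mathrm{W}_{d,c}^R(A)=0$ gives $\Hom(\mathrm{W}_{d,c}(B),A)=0$ for all such $B$. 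It therefore suffices to show that the essential images of $\mathrm{W}_{d,c}$, for a suitable fixed $d\gg 0$, compactly generate $\IndCoh_{\mathcal{N}}(\cH(w)^{\mathrm{ss}})_{-\chi'}$.

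For the generation, I would choose $d\gg 0$ with $r\mid(d+\chi)$ so that Corollary~\ref{cor:compact} applies. This is possible because the $d$ for which that corollary holds form a cofinal set, and the divisibility only restricts $d$ to one residue class. Then the objects $\mathrm{W}_d(A_1\boxtimes\cdots\boxtimes A_d)$ compactly generate $\QCoh(\cH(w)^{\mathrm{ss}})_d$, and hence split generate $\mathrm{Perf}(\cH(w)^{\mathrm{ss}})_d$. The functor $\mathrm{W}_{d,c}$ is $\mathrm{W}_d$ followed by the twist equivalence $\otimes \det\cF_c^{\otimes -(\chi'+d)/r}$, which preserves perfect complexes and shifts the weight from $d$ to $-\chi'$. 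So the image of $\mathrm{W}_{d,c}$ split generates $\mathrm{Perf}(\cH(w)^{\mathrm{ss}})_{-\chi'}$.

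The remaining step is to pass from $\mathrm{Perf}$ to $\IndCoh_{\mathcal{N}}$, and this is where the main care is needed. By Lemma~\ref{lem:nilp}, which holds on the semistable open locus because the automorphism groups are again contained in tori, we have $\Coh_{\mathcal{N}}(\cH(w)^{\mathrm{ss}})=\mathrm{Perf}(\cH(w)^{\mathrm{ss}})$. Since $\cH(w)^{\mathrm{ss}}$ is QCA, Remark~\ref{rmk:equiv} gives
\begin{align*}
\IndCoh_{\mathcal{N}}(\cH(w)^{\mathrm{ss}})_{-\chi'}=\Ind(\Coh_{\mathcal{N}}(\cH(w)^{\mathrm{ss}})_{-\chi'})=\Ind(\mathrm{Perf}(\cH(w)^{\mathrm{ss}})_{-\chi'}),
\end{align*}
which identifies this category with $\QCoh(\cH(w)^{\mathrm{ss}})_{-\chi'}$. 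An object of a compactly generated category that is right orthogonal to a set of objects split generating the compacts must vanish. Hence $A=0$. For this argument a single $d$ and a single $c$ already suffice.
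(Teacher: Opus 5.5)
Your proposal is correct and follows the paper's proof. The paper deduces the corollary directly from Corollary~\ref{cor:compact} and Lemma~\ref{lem:cptC}; the easy direction of Lemma~\ref{lem:cptC} is exactly your adjunction argument. It then uses Lemma~\ref{lem:nilp} to identify $\IndCoh_{\mathcal{N}}(\cH(w)^{\mathrm{ss}})_{-\chi'}$ with $\QCoh(\cH(w)^{\mathrm{ss}})_{-\chi'}$, as you do, and likewise needs only a single admissible $d$ and $c$.
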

\begin{proof}
    This corollary is a direct consequence of Corollary~\ref{cor:compact} and Lemma~\ref{lem:cptC}, 
    noting the following from Lemma~\ref{lem:nilp}
    under the assumption $\cB \subset \rB^{\mathrm{red, cl}}$
    \begin{align*}
        \IndCoh_{\mathcal{N}}(\cH(w)^{\mathrm{ss}})_{-\chi'}=
        \QCoh(\cH(w)^{\mathrm{ss}})_{-\chi'}.
    \end{align*}
\end{proof}
We have used the following obvious lemma: 
\begin{lemma}\label{lem:cptC}
Let $\cC_1, \cC_2$ be compactly generated dg-categories 
and $F \colon \cC_1 \to \cC_2$ be a continuous functor which 
preserves compact objects $\cC_i^{\mathrm{cp}}$. Let $F^R \colon \cC_2 \to \cC_1$ be the continuous right adjoint of $F$.  
Then $\cC_2$ is compactly generated by $F(\cC_1^{\mathrm{cp}})$ if and only if $F^R$ is conservative. 
\end{lemma}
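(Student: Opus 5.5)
We argue through the standard compact-object description of generation, using the adjunction $\Hom_{\cC_2}(F(x), y) \cong \Hom_{\cC_1}(x, F^R(y))$ for $x\in \cC_1^{\mathrm{cp}}$ and $y\in \cC_2$. Recall that a set $\mathcal{G}\subset \cC_2^{\mathrm{cp}}$ compactly generates $\cC_2$ exactly when its right orthogonal vanishes: if $y\in \cC_2$ satisfies $\Hom(g[n], y)=0$ for all $g\in \mathcal{G}$ and all $n\in\mathbb{Z}$, then $y=0$. This is the definition of compact generation we will use; it is equivalent to $\cC_2=\Ind$ of the idempotent-complete stable hull of $\mathcal{G}$. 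The hypothesis that $F$ preserves compact objects ensures $F(\cC_1^{\mathrm{cp}})\subset \cC_2^{\mathrm{cp}}$, so this criterion applies to $\mathcal{G}=F(\cC_1^{\mathrm{cp}})$. It also ensures that $F^R$ is continuous, as stated.

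First suppose $F(\cC_1^{\mathrm{cp}})$ generates, and let $F^R(y)=0$. Then for every $x\in \cC_1^{\mathrm{cp}}$ and every $n$, the adjunction gives $\Hom(F(x)[n], y)=\Hom(x[n], F^R(y))=0$. By generation this forces $y=0$. Since $F^R$ is exact, vanishing of objects detects isomorphisms: a map is an isomorphism exactly when its cone is zero, and $F^R$ commutes with cones. Hence $F^R$ is conservative.

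Conversely, suppose $F^R$ is conservative, and let $y$ be right orthogonal to all shifts of $F(x)$ with $x\in \cC_1^{\mathrm{cp}}$. By adjunction, $\Hom(x[n], F^R(y))=0$ for all compact $x$ and all $n$. Since $\cC_1$ is compactly generated, this gives $F^R(y)=0$. This means $F^R$ sends the map $0\to y$ to an isomorphism, so conservativity gives $y=0$. The orthogonality criterion then shows that $F(\cC_1^{\mathrm{cp}})$ compactly generates $\cC_2$.

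There is no real obstacle here. The only point needing care is which notion of generation is meant: in the stable setting, "compactly generate" in the orthogonality sense coincides with $\cC_2=\Ind(\langle F(\cC_1^{\mathrm{cp}})\rangle^{\mathrm{idem}})$, and we use this standard equivalence implicitly.
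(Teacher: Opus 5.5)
Your proposal is correct and is the same argument as the paper's: by adjunction, $y$ is right orthogonal to $F(\cC_1^{\mathrm{cp}})$ if and only if $F^R(y)$ is right orthogonal to $\cC_1^{\mathrm{cp}}$, and compact generation of $\cC_1$ then forces $F^R(y)=0$. The only addition is that you spell out that conservativity of the exact functor $F^R$ is equivalent to $F^R$ detecting zero objects, via cones; the paper leaves this step implicit.
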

\begin{proof}
    The lemma follows that, for $A\in \cC_2$, we have 
    $\Hom(F(\cC_1^{\mathrm{cp}}), A)=0$ if and only if 
    $\Hom(\cC_1^{\mathrm{cp}}, F^R(A))=0$, if and only if 
    $F^R(A)=0$ by the compact generation of $\cC_1$. 
\end{proof}

\subsection{Compact generation by iterated Hecke operators}
We prove similar statements as in the previous subsection for the Hecke operators. However we need to impose the following condition for the open subset (\ref{Bcirc}):  
\begin{assum}\label{assum:Bcirc}
For $b\in \cB$, 
let $x\in \cC_b$ be a singular point
and $I_x \subset \widehat{\mathcal{O}}_{\cC_b, x}$ the conductor ideal. 
Then we assume that $\widehat{\mathcal{O}}_{\cC_b, x}/I_x$ is a chain ring for any $b$ and $x$, i.e.  
it is of the form $k[t]/t^n$ for some $n \geq 1$. 
\end{assum}
\begin{example}\label{exam1}
(1) If $G=\GL_2$, then $\cB=\rB^{\mathrm{red, cl}}$ satisfies the above condition. 
Indeed in this case, since $\cC_b \to C$ is a double cover, the ring $\widehat{\mathcal{O}}_{\cC_b, x}$
is of the form $k[[x, y]]/(y^2-x^m)$ for some $m\geq 2$. 
In particular, 
$\widehat{\mathcal{O}}_{\cC_b, x}/I_x$ is isomorphic to $k[t]/(t^{m/2})$ when $m$ is even, 
and $k[t]/(t^{(m-1)/2})$ when $m$ is odd, as a ring. 

(2) If $\cC_b$ has at worst nodal singularities, then 
it satisfies the condition in Assumption~\ref{assum:Bcirc}. 
\end{example}

\begin{remark}\label{rmk:ideal}
    The condition in Assumption~\ref{assum:Bcirc} is equivalent to 
    that $\widehat{\mathcal{O}}_{\cC_b, x}/I_x$ has only a finite number of 
    ideals. This property will be used in the proof of Lemma~\ref{lem:pq}.
\end{remark}

Contrary to the case of Wilson operators, we first prove that the right adjoints of Hecke operators are conservative under Assumption~\ref{assum:Bcirc}: 
\begin{prop}\label{prop:cons2}
For $A \in \IndL_{\mathcal{N}}(\cH(\chi))_{w'}$, 
suppose that $\mathrm{H}_{d, c}^R(A)=0$ for $d\gg 0$
with $r|(d+\chi)$ and any $c\in C$. Then,  
under Assumption~\ref{assum:Bcirc}, $A=0$. 
\end{prop}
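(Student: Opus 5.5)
By Lemma~\ref{lem:cptC}, the statement is equivalent to saying that the objects $\mathrm{H}_{d,c}(B)$, for $B\in\Coh(S^{\times d})$, $d\gg 0$ and $c\in C$, compactly generate $\IndL_{\mathcal{N}}(\cH(\chi))_{w'}$. I would not prove generation directly. Instead I would compute $\mathrm{H}^R_{d,c}$ explicitly and show it is conservative.

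\textbf{Step 1: rewrite $\mathrm{H}^R_d$ as a pullback.} By (\ref{def:HcR}) and the adjunction (\ref{def:s*}), $\mathrm{H}_d^R$ is the composite of $\widetilde{\mathrm{H}}^R_d$ with $s^*\otimes\id$. This formula extends to all of $\IndL_{\mathcal{N}}$ by continuity, since $s_!\mathcal{O}_{\cB}$ is compact. Unwinding $\widetilde{\mathrm{H}}^R=(\ev_2,\ev_3)_*\ev_1^!$ $d$ times and base changing along $s$ gives
\begin{align*}
\mathrm{H}_d^R(A)\cong \rho_{d*}\bigl(\sigma_d^*A\otimes \mathcal{K}_d\bigr).
\end{align*}
Here $\mathcal{F}l_d$ is the stack of flags $E_d\subset\cdots\subset E_1\subset E_0=\mathcal{O}_{\cC_b}$ with successive quotients $\mathcal{O}_{x_i}$. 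The map $\sigma_d\colon\mathcal{F}l_d\to\cH(\chi_0-d)$ records $E_d$, and $\rho_d\colon \mathcal{F}l_d\to S^{\times d}$ records $(x_1,\ldots,x_d)$. The twist $\mathcal{K}_d$ is a line bundle coming from $\omega_{\ev_1}$ and the shifts; by Lemma~\ref{lem:omegaev} it is pulled back from $S\times\cH$, so it is harmless.

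\textbf{Step 2: reduce to conservativity of $\sigma_d^*$.} I would show that if $\rho_{d*}(\sigma_d^*A\otimes\mathcal{K}_d)=0$ for all $d\gg0$ and all twists by $\mathcal{O}_C(mc)$, then $A=0$. Vanishing of $\rho_{d*}$ does not formally give vanishing of $\sigma_d^*A$. To get it, I would use the $S^{\times d}$-linearity: tensoring with objects of $\Coh(S^{\times d})$ is allowed, and the fibers of $\rho_d$ over $\cC_b^{\times d}$ are finite away from the singular points. So first show $\sigma_d^*A=0$ over the locus where the $x_i$ are distinct smooth points of $\cC_b$. Then pass to the rest by induction on $d$ and by varying $c$, using that collisions can be undone by an extra Hecke step.

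\textbf{Step 3: show that $\sigma_d$ detects objects.} The final task is to show that for each QCA open $\U\subset\cH(\chi)$, which after twisting sits in $\cH(\chi_0-d)$, the map $\sigma_d$ is smooth and surjective over $\U$ for $d\gg0$. Then $\sigma_d^*$ is conservative on $\Ind\LL(\U)$, and $A$ vanishes because $\IndL$ is the limit of these categories. Surjectivity holds because every rank one torsion-free sheaf on a reduced planar curve, after twisting by $\mathcal{O}_C(mc)$ with $m\gg0$, embeds into $\mathcal{O}_{\cC_b}$ with finite-length cokernel; a flag of such a cokernel gives a preimage. Smoothness is the analogue of the smoothness of the Abel--Jacobi map (\ref{AJ}). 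It requires $d$ large relative to the HN type, which is possible on each quasi-compact $\U$ separately.

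\textbf{Main obstacle.} I expect the main obstacle in Step 3 to be the points where $E_d$ fails to be locally free. There the flag stack is not a projective bundle over its image, and the fibers of $\sigma_d$ depend on the local type of $E_d$ at the singular points. Assumption~\ref{assum:Bcirc} would enter here: by Remark~\ref{rmk:ideal}, $\widehat{\mathcal{O}}_{\cC_b,x}/I_x$ has finitely many ideals. Hence there are only finitely many local isomorphism types of torsion-free modules between the local ring and its normalization. This gives a finite stratification of $\cH$ by local type, with each stratum a torsor under a generalized Jacobian. I would then check smoothness and surjectivity of $\sigma_d$ stratum by stratum, presumably as in the lemma this remark refers to.
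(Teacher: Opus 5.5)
Your Step~1 matches the paper's reduction. After base change and Lemma~\ref{lem:omegaev}, it suffices to show that $q_*p^*A=0$ for all $d\gg0$ forces $A=0$, where $p,q$ are your $\sigma_d,\rho_d$. The paper uses $\cC^{\times_{\cB}d}$ rather than $S^{\times d}$ as target, which differs only by a conservative push-forward.

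\textbf{Step~2 is the gap.} It carries all the content, and the mechanism you sketch does not work.
\begin{itemize}
\item If all $x_i$ are smooth points of $\cC_b$, then $E_d=\mathcal{O}_{\cC_b}(-\sum_i x_i)$ is a line bundle. So vanishing on that locus only constrains $A$ on the regular locus, and an $A$ supported on the non-regular locus is invisible there.
\item Where a singular point occurs more than once, $\rho_d$ has positive-dimensional fibres. At a node, $\Hom(\mathfrak{m}_x,\mathcal{O}_x)=k^2$ already gives a $\mathbb{P}^1$ of flags. So $\rho_{d*}$ is not conservative there, and ``undoing collisions by an extra Hecke step'' is no substitute.
\end{itemize}
The missing idea is an induction on the defect stratification $\cH(\chi)_{\leq k}$: sheaves $E$ admitting $0\to E\to\mathcal{L}\to Q\to 0$ with $\mathcal{L}$ a line bundle and $\chi(Q)\leq k$. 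The argument runs as follows.
\begin{itemize}
\item Assume $A$ is supported on $\cH(\chi)_{\geq k}$, and restrict to the open $\mathcal{U}_\circ$ of flags with at most $k$ of the $x_i$ singular.
\item The defect of $E_d$ is always at most the number of singular $x_i$, via $E_d\subset L:=\mathcal{O}_{\cC_b}(-\sum_{x_i\ \mathrm{smooth}}x_i)$. Hence on $\mathcal{U}_\circ$ the object $p^*A$ is supported on the locus $\mathcal{U}_k$ where the two numbers are equal.
\item On $\mathcal{U}_k$ the map $q$ is finite. Minimality forces $L^{\vee}\subset E_d^{\vee}\subset\nu_*\nu^*L^{\vee}$, with $\nu$ the normalization (Lemma~\ref{lem:delta}), and Assumption~\ref{assum:Bcirc} leaves only finitely many such modules.
\item This gives $p^*A|_{\mathcal{U}_\circ}=0$, hence $A$ is supported on $\cH(\chi)_{\geq k+1}$ (Lemmas~\ref{lem:pq}--\ref{lem:pq3}).
\end{itemize}
So the Assumption is needed for finiteness of $\rho_d$ on strata, not in Step~3. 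Surjectivity (Lemma~\ref{lem:pq2}) only uses very ampleness after twisting.

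\textbf{Step~3 rests on a false premise: $\sigma_d$ is not smooth.} It factors as two maps:
\begin{itemize}
\item the map $\mathcal{U}\to\overline{\mathcal{U}}$ forgetting the intermediate terms of the flag, which is a base change of the full-flag map $H^{1_d}\to H^d$ of Hilbert schemes of $S$;
\item the Abel--Jacobi-type map $\overline{\mathcal{U}}\to\cH(\chi)$, which is smooth for $d\gg0$ over a quasi-compact open.
\end{itemize}
The first map is not smooth. Its fibre over $\mathfrak{m}_x^2\subset\mathcal{O}_{\cC_b}$ at a node is a $\mathbb{P}^1$, whereas over nearby reduced subschemes it is finite. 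Its pull-back is nevertheless conservative, but this needs Haiman's isospectral Hilbert scheme. The paper factors $H^{1_d}\to X^d\to H^d$ with $f^d_*\mathcal{O}_{H^{1_d}}=\mathcal{O}_{X^d}$ and $X^d\to H^d$ finite flat (Lemma~\ref{lem:consH}).

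Finally, even granting conservativity of $\sigma_d^*$, you cannot deduce $\sigma_d^*A=0$ directly from $\rho_{d*}(\sigma_d^*A\otimes\mathcal{K}_d)=0$. The paper never proves $p^*A=0$ globally. It proves it only on $\mathcal{U}_\circ$ under the inductive support hypothesis, and that is what makes the argument close.
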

\begin{proof}
Let $\cC \to \cB$ be the universal spectral curve over 
$\cB$. 
By the construction, the functor $\mathrm{H}_{d, c}^R$ factors through 
\begin{align}\label{factor}
    \QCoh((\cC)^{\times_{\cB}d}) \stackrel{i_{*}}{\to}\QCoh(\cB\times S^{\times d})
    \to \QCoh(S^{\times d})
\end{align}
where the first functor is the push-forward along the closed immersion 
\begin{align*}
    i \colon (\cC)^{\times_{\cB}d} \hookrightarrow \cB\times S^{\times d}
\end{align*}
and the second functor is the push-forward along the second projection. 
Both functors in (\ref{factor}) are conservative; the first functor is conservative as $i$ is a closed immersion and the second one is conservative as $\cB$ is quasi-affine. 
Therefore its composition is also conservative. 
Then it is enough to show the claim by replacing $\mathrm{H}_{d, c}^R$ with 
\begin{align*}
    \widehat{\mathrm{H}}_{d, c}^R \colon \IndL_{\mathcal{N}}(\cH(\chi))_{w'} \to 
     \QCoh((\cC)^{\times_{\cB}d}).
\end{align*}

Let $\cHecke^{(d)}$ be the correspondence of the $d$-times iterated Hecke operator; 
\begin{align*}
    \cHecke^{(d)}=\overbrace{\cHecke\times_{\ev_1, \cH, \ev_2} \cHecke \times_{\ev_1, \cH, \ev_2} \cdots \times_{\ev_1, \cH, \ev_2}\cHecke}^d 
\end{align*}
where $\cHecke$ is as in the diagram (\ref{redHecke}) (with notation `greg' omitted). 
We have the following diagram 
\begin{align*}
    \xymatrix{
\mathcal{U} \ar[d]_-{q} \inclusion \diasquare  \ar@/^18pt/[rr]^-{p}&
\cHecke^{(d)} \ar[d]^-{((\ev_3)^d, \ev_2)} \ar[r]^-{\ev_1} & 
\cH(\chi_0-d) \underset{\otimes \mathcal{O}(ac)}{\cong} \cH(\chi) \\
(\cC)^{\times_{\cB}d} \inclusion_-{\id\times s}
& (\cC)^{\times_{\cB}d}\times_{\cB}\cH(\chi_0). &
    }
\end{align*}
Here the right isomorphism is as in Remark~\ref{rmk:period}, where $a$ is given by 
\begin{align}\label{denote:a}a=\frac{\chi-\chi_0+d}{r}\in \mathbb{Z}, 
\ r|(d+\chi), \ 
d\gg 0.
\end{align}
The stack $\mathcal{U}$ over $b\in \cB$ consists of filtrations in $\Coh^{\heartsuit}(\cC_b)$
\begin{align}\label{filt:F}
    F_0 \subset F_1 \subset \cdots \subset F_d=\mathcal{O}_{\cC_b}
\end{align}
such that $F_i/F_{i-1} \cong \mathcal{O}_{x_i}$ for $x_i \in \cC_b$, and the 
map $p$ sends (\ref{filt:F}) to $F_0(ac):=F_0\otimes \pi^* \mathcal{O}_C(ac)$. 
By the construction of $\mathrm{H}_{d, c}^R$ in (\ref{def:HcR}), (\ref{HRdc}) and noting (\ref{def:s*}), 
we have 
\begin{align*}
    \widehat{\mathrm{H}}_{d, c}^R(-)=(\id\times s)^*((\ev_3)^d, \ev_2)_{*}(\ev_1)^{!}(-).
\end{align*}
By the base change, it is enough to show that for $A \in \QCoh(\cH(\chi))$ satisfying 
$q_{*}p^* A=0$ for $d\gg 0$, we have $A=0$. 
Here we have used the fact that $(\ev_1)^!$ and $(\ev_1)^*$ differ by 
taking the tensor line bundle (up to shift) pulled back via $((\ev_3)^d, \ev_2)$, see Lemma~\ref{lem:omegaev}. 

We define 
\begin{align}\label{Hk}
    \cH(\chi)_{\leq k} \subset \cH(\chi)
\end{align}
to be the substack whose fiber at $b\in \cB$ consists of 
$E \in \Coh^{\heartsuit}(\cC_b)$ which admits an exact sequence 
\begin{align}\label{exact:ELQ}
    0\to E \to \mathcal{L} \to Q \to 0
\end{align}
where $\mathcal{L}$ is a line bundle on $\cC_b$ and $Q$ is a zero-dimensional 
sheaf on $\cC_b$ with $\chi(Q) \leq k$; for example see Remark~\ref{rmk:nodal} when $\cC_b$ is nodal. 
Note that $\cH(\chi)_{\leq 0}$ is the regular locus of $\cH(\chi)$. 
One can easily check that (\ref{Hk}) is an open substack, 
and $\cH(\chi)_{\leq k}=\cH(\chi)$ for $k\gg 0$. 
We have the open immersions 
\begin{align*}
    \cH(\chi)_{\leq 0} \subset  \cH(\chi)_{\leq 1} \subset \cdots 
    \subset \cH(\chi). 
\end{align*}
We define the following closed substack 
with reduced structure 
\begin{align*}
    \cH(\chi)_{\geq k}:=\cH(\chi)\setminus \cH(\chi)_{\leq k-1}
    \subset \cH(\chi).
\end{align*}
Then we have the sequence of closed substacks 
\begin{align*}
  \cH(\chi) \hookleftarrow      \cH(\chi)_{\geq 1} \hookleftarrow  \cH(\chi)_{\geq 2} \hookleftarrow \cdots
\end{align*}
We set 
\begin{align*}
    \cH(\chi)_k := \cH(\chi)_{\geq k}\setminus  \cH(\chi)_{\geq k+1} \subset 
    \cH(\chi)_{\geq k}
\end{align*}
which is an open substack of $\cH(\chi)_{\geq k}$. 

Let $p_{\geq k}$ be the induced map from $p$
\begin{align}\label{map:pk}
    p_{\geq k} \colon \mathcal{U}_{\geq k}:=(p^{-1}(\cH(\chi)_{\geq k}))^{\mathrm{red}} \to \cH(\chi)_{\geq k}. 
\end{align}
Then the restriction of $q$ to 
$\mathcal{U}_{\geq k}$
factors through 
\begin{align*}
    q_{\geq k} \colon  \U_{\geq k} \to  (\cC)^{\times_{\cB}d}_{\geq k}
    \subset  (\cC)^{\times_{\cB}d}
\end{align*}
where 
$(\cC)^{\times_{\cB}d}_{\geq k}$ is the 
reduced closed subscheme consisting of points 
$(x_1, \ldots, x_d)$ on $\cC_b$ such that the number of $1\leq i\leq d$ with 
$x_i \in \cC_b^{\mathrm{sg}}$ is bigger than or equal to $k$, and 
$\cC_b^{\mathrm{sg}} \subset \cC_b$ is the set of singular 
points. 
We set 
\begin{align*}
     (\cC)^{\times_{\cB}d}_k:= (\cC)^{\times_{\cB}d}_{\geq k}
     \setminus  (\cC)^{\times_{\cB}d}_{\geq k+1},
\end{align*}
and define the following open substack of $\U_{\geq k}$
\begin{align*}
    \mathcal{U}_k :=q_{\geq k}^{-1}((\cC)^{\times_{\cB}d}_k) \subset 
    \mathcal{U}_{\geq k}. 
\end{align*}
Then the map (\ref{map:pk}) restricts to the map 
\begin{align*}
    p_k \colon \mathcal{U}_k \to \cH(\chi)_k. 
\end{align*}
Indeed for a filtration (\ref{filt:F}), suppose that there is $I \subset \{1, \ldots, d\}$ with $\sharp I=k$ and $x_i \in \mathcal{C}_{b}^{\mathrm{sg}}$
if and only if $i\in I$. 
 Then we have the exact sequence on $\Coh^{\heartsuit}(\cC_b)$
\begin{align}\label{exact:F}
    0\to F_0(ac)\to L(ac) \to Q \to 0
\end{align}
where $L=\mathcal{O}_{\cC_b}(-\sum_{i\notin I}x_i)$ is a line bundle 
on $\cC_b$ and $Q$ is supported on $x_i$ for $i\in I$. This implies that 
$F_0(ac)$ lies in $\cH(\chi)_k$.
The above construction is summarized in the following diagram 
\begin{align}\label{dia:Uk}
\xymatrix{
  (\cC)^{\times_{\cB}d}_k \dinclusion \diasquare & \ar[l]_-{q_k} \U_k \ar[r]^-{p_k} \dinclusion & 
  \cH(\chi)_k \dinclusion \\
   (\cC)^{\times_{\cB}d}_{\geq k} \dinclusion  & \ar[l]_-{q_{\geq k}} \U_{\geq k} \dinclusion \ar[r]^-{p_{\geq k}} \diasquare^{\mathrm{red}}& 
  \cH(\chi)_{\geq k} \dinclusion \\
   (\cC)^{\times_{\cB}d}  & \ar[l]_-{q} \U \ar[r]^-{p} & 
  \cH(\chi).
}
\end{align}

We also define 
\begin{align*}
      (\cC)^{\times_{\cB}d}_{\leq k}:=  (\cC)^{\times_{\cB}d}\setminus   (\cC)^{\times_{\cB}d}_{\geq k+1}
\end{align*}
which is an open subset of $(\cC)^{\times_{\cB}d}$. 
We define the open substack $\U_{\circ} \subset \U$ by the Cartesian square 
\begin{align*}
\xymatrix{
\U_{\circ} \inclusion \ar[d]_-{q|_{\U_{\circ}}}  \diasquare & \U \ar[d]_-{q} \\
 (\cC)^{\times_{\cB}d}_{\leq k} \inclusion & 
  (\cC)^{\times_{\cB}d}.
}
\end{align*}
Note that 
\begin{align}\label{Uintk}
\U_k=\U_{\geq k} \cap \U_{\circ}
\end{align}
and $\U_k$ is an open substack of $\U_{\geq k}$ and is a closed substack of $\U_{\circ}$.

Now by induction on $k$, suppose that 
\begin{align*}A \in \QCoh_{\cH(\chi)_{\geq k}}(\cH(\chi))
\end{align*}
and it satisfies $q_{*}p^*A=0$. 
Then we have
\begin{align*}
    (p^* A)|_{\U_{\circ}} \in \QCoh_{\U_k}(\U_{\circ}), \ 
    (q|_{\U_{\circ}})_{*}((p^* A)|_{\U_{\circ}})=0.
\end{align*}
By Lemma~\ref{lem:pq} below, it follows that $(p^* A)|_{\U_{\circ}}=0$. 
Take a quasi-compact open substack $\cH(\chi)^{\mathrm{qc}} \subset \cH(\chi)$
and below we use the notation in Lemma~\ref{lem:pq2}. 
Then 
\begin{align*}
(p^{*}A)|_{\U_{\circ}^{\mathrm{qc}}}=(p|_{\U_{\circ}^{\mathrm{qc}}})^* (A|_{\cH(\chi)^{\mathrm{qc}}})=0.
\end{align*}
By Lemma~\ref{lem:pq3}, for $d\gg 0$, we conclude that 
\begin{align*}
 A|_{\cH(\chi)^{\mathrm{qc}}} \in \QCoh_{\cH(\chi)_{\geq k+1}^{\mathrm{qc}}}(\cH(\chi)^{\mathrm{qc}}).   
\end{align*}
Since the above holds for any quasi-compact 
$\cH(\chi)^{\mathrm{qc}} \subset \cH(\chi)$, we conclude that 
$A$ is supported on $\cH(\chi)_{\geq k+1}$. By the induction on $k$, 
we have $A=0$. 
\end{proof}

We have used the following lemmas: 
\begin{lemma}\label{lem:pq}
 The map $q_k$ in the diagram (\ref{dia:Uk}) is finite. In particular, the functor 
 \begin{align*}
(q|_{\U_{\circ}})_{*} \colon \QCoh_{\U_k}(\U_{\circ}) \to \QCoh( (\cC)^{\times_{\cB}d}_{\leq k})
\end{align*}
is conservative.
\end{lemma}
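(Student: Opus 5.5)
The plan is to show that $q_k$ is finite, i.e.\ proper and quasi-finite, and to prove properness and quasi-finiteness separately; conservativity will then follow formally.

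\emph{Properness.} The map $q$ is the base change of $((\ev_3)^d,\ev_2)$ along $\id\times s$. By Lemma~\ref{lem:qsmooth}, $(\ev_3,\ev_2)$ is proper, so the iterate is proper and hence $q$ is proper. The composite $\U_{\geq k}\hookrightarrow \U\to (\cC)^{\times_{\cB}d}$ is a closed immersion followed by $q$, so it is proper, and it factors through the closed subscheme $(\cC)^{\times_{\cB}d}_{\geq k}$. The map $q_k$ is its base change to the open subset $(\cC)^{\times_{\cB}d}_k$, hence $q_k$ is proper.

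\emph{Quasi-finiteness.} I will show that $q_k$ has finitely many $k$-points in each fiber and that stabilizers are trivial. Fix $(x_1,\dots,x_d)$ over $b$ in which exactly $k$ of the $x_i$, indexed by $I$, are singular. A point of the fiber is a filtration \eqref{filt:F} ending at $F_d=\mathcal{O}_{\cC_b}$ with $F_i/F_{i-1}\cong\mathcal{O}_{x_i}$. Since $F_d$ is fixed, automorphisms are trivial and the fiber is an algebraic space; in fact it is a closed subscheme of an iterated Quot scheme.

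At a smooth point $x_i$, every step is forced. If $F_i$ is invertible at $x_i$, then there is a unique subsheaf $F_{i-1}$ with quotient $\mathcal{O}_{x_i}$, namely $F_i(-x_i)$ locally. The steps at distinct points are local and commute. Localizing at each singular point $x\in\cC_b^{\mathrm{sg}}$ therefore reduces the count to the following claim.

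\emph{Claim.} Starting from $\widehat{\mathcal{O}}_{\cC_b,x}$, the chains of length $m_x$, where $m_x$ is the number of $i\in I$ with $x_i=x$, in which each step is a colength-one torsion-free submodule, are finite in number.

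Because the point lies in $(\cC)^{\times_{\cB}d}_k$, which bounds the total number of singular entries, I would argue as follows. The resulting $F_0$ lies in $\cH(\chi)_k$, so its local colength in a line bundle equals exactly the total singular multiplicity. I would use this, together with the standard description of torsion-free rank one modules over $\widehat{\mathcal{O}}_{\cC_b,x}$ sandwiched between $\widehat{\mathcal{O}}$ and $I_x$, to see that each $F_j$ in such a chain satisfies $I_x\cdot\widetilde{\mathcal{O}}\subset F_j$ up to a twist by a power of a uniformizer. The set of possible $F_j$ is then controlled by ideals of $\widehat{\mathcal{O}}_{\cC_b,x}/I_x$, or modules over it. By Assumption~\ref{assum:Bcirc} and Remark~\ref{rmk:ideal}, this ring is $k[t]/t^n$ and has finitely many ideals, which yields finiteness. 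This step is the main obstacle: it requires the precise combinatorics of colength-one submodules at a singular point, together with an argument that the exact count ``$k$ singular points'' forbids the positive-dimensional families of submodules that exist in general (for example, non-Gorenstein-type choices). I would first verify the claim for $k[[x,y]]/(y^2-x^m)$ by hand, using the explicit classification of its fractional ideals, and then abstract the argument.

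\emph{Conservativity.} Since $q_k$ is finite, it is affine, so $(q_k)_*$ is conservative on $\QCoh$. Let $M\in\QCoh_{\U_k}(\U_\circ)$ satisfy $(q|_{\U_\circ})_*M=0$. By \eqref{Uintk}, $\U_k$ is closed in $\U_\circ$, so $M$ is set-theoretically supported on $\U_k$. I will filter $M$ by the subobjects $\Gamma$ of successive nilpotent thickenings of $\U_k$, written as a colimit of pushforwards from thickenings $\U_k^{(n)}$. Each thickening is still finite over $(\cC)^{\times_{\cB}d}_{\leq k}$, since finiteness only depends on the reduced structure and properness. Next I will apply a t-structure argument. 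The functor $(q|_{\U_\circ})_*$ restricted to objects supported on $\U_k$ is t-exact, because the map is affine on this support, and it commutes with colimits. The lowest nonzero cohomology sheaf of $M$ then maps to a nonzero sheaf, which is a contradiction. Hence $M=0$, and $(q|_{\U_\circ})_*$ is conservative on $\QCoh_{\U_k}(\U_\circ)$.
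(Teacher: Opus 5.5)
Your overall route is the paper's. Properness is inherited from $q$. Quasi-finiteness comes from the fact that over $(\cC)^{\times_{\cB}d}_k$ the embedding $F_0\subset L=\mathcal{O}_{\cC_b}(-\sum_{i\notin I}x_i)$ has minimal colength. Finiteness then gives conservativity. Properness and the formal deduction of conservativity are fine, with two small caveats. First, $q|_{\U_\circ}$ itself is not affine; only $\U_k$ and its thickenings are finite over $(\cC)^{\times_{\cB}d}_{\leq k}$. Second, for unbounded objects you should argue with every cohomology sheaf rather than the lowest one.

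\textbf{The gap is in quasi-finiteness}, which is the only non-formal content of the lemma.

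\begin{itemize}
\item Your Claim, as stated, is false. At a node $R=k[[x,y]]/(xy)$ with $m_x=2$, the first step is forced to be $\mathfrak{m}\cong\widetilde{R}$. But the colength-one submodules of $\mathfrak{m}$ with quotient $k$ form a $\mathbb{P}^1$.
\item You then assert that minimality forces $I_x\subset F_j$ ``up to a twist'' and defer this as ``the main obstacle''. That is exactly the statement that has to be proved.
\item The ``up to twist'' form is also too weak. The fiber of $q_k$ parametrizes actual subsheaves of $\mathcal{O}_{\cC_b}$, and in the node example the members of that $\mathbb{P}^1$ are generically all isomorphic to $R$.
\end{itemize}

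The missing ingredient is Lemma~\ref{lem:delta} combined with Gorenstein duality.

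\begin{enumerate}
\item Minimal colength can be tested one singular point at a time. If some stalk $F_{0,x}$ embedded into a free module with smaller colength, you could modify $L$ near $x$ alone and contradict $F_0(ac)\in\cH(\chi)_k$.
\item Dualizing on the Gorenstein curve turns $0\to F_0\to L\to Q\to 0$ into $0\to L^\vee\to F_0^\vee\to \mathcal{E}xt^1(Q,\mathcal{O}_{\cC_b})\to 0$. The colength is the same, and minimality is preserved.
\item Lemma~\ref{lem:delta} says a minimal embedding $R\subset M$ satisfies $M\subset\widetilde{R}$. Otherwise $\widetilde{R}M=\prod_i t_i^{-a_i}k[[t_i]]$ with some $a_i>0$. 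Rescaling the embedding by an element $u\in M$ of valuations $(-a_i)_i$ then lowers the colength by $\sum_i a_i$.
\item Hence $R\subset F_{0,x}^\vee\subset\widetilde{R}$. Dualizing back, using $\Hom_R(\widetilde{R},R)=I_x$, gives the literal inclusions $I_x\subset F_{0,x}\subset R$ inside $L_x=R$.
\item Since $F_0\subset F_j\subset\mathcal{O}_{\cC_b}$, every $F_{j,x}/I_x$ is an ideal of $R/I_x\cong k[t]/t^n$ by Assumption~\ref{assum:Bcirc}. So there are only finitely many chains. In fact $R/F_{0,x}$ is uniserial, so the intermediate steps are even unique.
\end{enumerate}

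With this step filled in, your argument coincides with the paper's.
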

\begin{proof}
  First, the map $q_k$ is proper; this follows since $q$ is proper by iterated applications of Lemma~\ref{lem:qsmooth}, and 
  $q_k$ is obtained from $q$ by restrictions to closed subschemes and base-change. Therefore 
  it is enough to show that $q_k$ is quasi-finite. 
  
    For a point 
    $(x_i)_{1\leq i\leq d} \in  (\cC_b)^d$, 
suppose that there is a subset $I\subset \{1, \ldots, d\}$ with $\sharp I=k$ such that 
$x_i \in \cC_b^{\mathrm{sg}}$ if and only if $i \in I$. 
Let a filtration (\ref{filt:F}) 
    corresponds to a fiber at $(x_1, \ldots, x_d)$ in $\mathcal{U}_k$. 
    Then we have the exact sequence on $\Coh^{\heartsuit}(\cC_b)$
\begin{align}\notag
    0\to F_0(ac)\to L(ac) \to Q \to 0
\end{align}
as in (\ref{exact:F}). 
Then by the definition of $\mathcal{U}_k$, we have that 
\begin{align}\notag
k=\min\{ \delta :\mbox{there is } 0 \to F_0(ac) \to \mathcal{L}' \to Q' \to 0, \chi(Q')=\delta  \}
\end{align}
where $\mathcal{L}'$ is a line bundle on $\cC_b$ and $Q'$ is a zero-dimensional sheaf. 
Since (\ref{exact:F}) attains the smallest in the right-hand side, 
by Lemma~\ref{lem:delta} we have 
\begin{align}\label{F0:emb}
    L^{\vee} \subset F_0^{\vee} \subset \nu_{*}\nu^* L^{\vee}
\end{align}
where $\nu \colon \widetilde{\cC}_b \to \cC_b$ is the normalization.

Since $\nu_{*}\nu^* L^{\vee}/L^{\vee}$
is a direct sum of the form $k[t]/t^{m_i}$ by Assumption~\ref{assum:Bcirc}, there are only finitely many 
$\mathcal{O}_{\cC_b}$-modules $F_0$ satisfying (\ref{F0:emb}) up to isomorphisms. 
Then a filtration (\ref{filt:F}) is uniquely reconstructed from the data $F_0 \subset L$ together with 
$x_i\in \cC_b$ for $i\neq I$ as they are non-singular points.  
Therefore $q_k$ is quasi-finite. 
\end{proof}

The following lemma will be used in Lemma~\ref{lem:pq3}: 
\begin{lemma}\label{lem:pq2}
For any quasi-compact open substack $\cH(\chi)^{\mathrm{qc}} \subset \cH(\chi)$, 
 there is $d\gg 0$ with $r|(d+\chi)$ such that 
the map $p_k$ in the diagram (\ref{dia:Uk})
is surjective on $\cH(\chi)_k^{\mathrm{qc}} \subset \cH(\chi)_{\leq k}^{\mathrm{qc}}$.  
Here $(-)^{\mathrm{qc}}$ means the pull-back with respect to $\cH(\chi)^{\mathrm{qc}} \subset \cH(\chi)$. 
\end{lemma}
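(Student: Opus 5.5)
Fix a point $E\in \cH(\chi)_k^{\mathrm{qc}}$ over $b\in\cB$; we must produce a filtration (\ref{filt:F}) in $\U_k$ with $p(F_\bullet)=E$, i.e.\ $F_0(ac)\cong E$, for a single $d$ (with $r\mid d+\chi$) that works uniformly on $\cH(\chi)^{\mathrm{qc}}$. By definition of $\cH(\chi)_k$ there is an exact sequence $0\to E\to \mathcal{L}\to Q\to 0$ as in (\ref{exact:ELQ}), with $\mathcal{L}$ a line bundle and $\chi(Q)=k$ minimal. First twist: set $E':=E(-ac)$ and $\mathcal{L}':=\mathcal{L}(-ac)$, where $a$ is given by (\ref{denote:a}). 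Then $0\to E'\to\mathcal{L}'\to Q\to0$ still holds, and $\chi(E')=\chi(\mathcal{O}_{\cC_b})-d$.

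Next, find an injection $\mathcal{L}'\hookrightarrow\mathcal{O}_{\cC_b}$ whose cokernel is supported on smooth points of $\cC_b$. This amounts to choosing a section of $\mathcal{L}'^{\vee}$ that is nonvanishing at all singular points and on each component, i.e.\ a Cartier divisor $D\in|\mathcal{L}'^{\vee}|$ avoiding $\cC_b^{\mathrm{sg}}$, of degree $d-k$. Uniformity comes from quasi-compactness: on $\cH(\chi)^{\mathrm{qc}}$ the multidegrees of the line bundles $\mathcal{L}$ appearing (after fixing $k$, which is bounded) lie in a finite set. Indeed, $E$ ranges over a bounded family, and $\mathcal{L}$ is determined up to finitely many choices by $E$ via (\ref{F0:emb})-type constraints ($\mathcal{L}^{\vee}\subset E^\vee\subset \nu_*\nu^*\mathcal{L}^\vee$, Lemma~\ref{lem:delta}). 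Hence for $d\gg0$ the bundle $\mathcal{L}'^{\vee}=\mathcal{L}^\vee(ac)$ has degree $\gg 2p_a$ on every irreducible component. Such a bundle is globally generated, and a general section avoids the finite set $\cC_b^{\mathrm{sg}}$ and is not identically zero on any component. So $D$ is a reduced divisor of $d-k$ distinct smooth points $x_i$, $i\notin I$, and $\mathcal{L}'\cong\mathcal{O}_{\cC_b}(-D)$.

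Now compose to get $E'\subset \mathcal{L}'\subset\mathcal{O}_{\cC_b}$, with $\mathcal{O}_{\cC_b}/E'$ of length $d$. Refine this to a full flag $F_0=E'\subset F_1\subset\cdots\subset F_d=\mathcal{O}_{\cC_b}$ with one-dimensional successive quotients: first through $Q$ (length $k$, supported on singular points), then through $\mathcal{O}_D$ (points $x_i$). Any finite-length module admits such a composition series, and the steps inside $Q$ have $x_i\in\cC_b^{\mathrm{sg}}$. Exactly $k$ indices therefore lie at singular points, so $q(F_\bullet)\in(\cC)^{\times_{\cB}d}_k$ and $F_\bullet\in\U_k$. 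Moreover $p(F_\bullet)=F_0(ac)=E$. The divisibility $r\mid d+\chi$ is arranged by taking $d$ in the right residue class.

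The main obstacle is the boundedness step: showing that on $\cH(\chi)^{\mathrm{qc}}_k$ the multidegrees of the minimal $\mathcal{L}$ are uniformly bounded, so that one $d$ suffices. I would deduce this from quasi-compactness by a constructibility argument. The locus of pairs $(E\subset\mathcal{L})$ realizing the minimum is a finite-type stack mapping surjectively onto $\cH(\chi)_k^{\mathrm{qc}}$, which bounds the multidegrees. Equivalently, one bounds the degree of $E$ on each component, since $\deg\mathcal{L}|_{C_i}$ differs from it by at most the local $\delta$-invariants.
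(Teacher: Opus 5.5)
Your proposal is correct and is essentially the paper's proof: twist by $-ac$, embed $\mathcal{L}(-ac)$ into $\mathcal{O}_{\cC_b}$ via a section of $\mathcal{L}^{\vee}(ac)$ that avoids $\cC_b^{\mathrm{sg}}$, and refine $E(-ac)\subset\mathcal{L}(-ac)\subset\mathcal{O}_{\cC_b}$ to a full flag with exactly $k$ steps at singular points. Your boundedness discussion spells out the uniform choice of $d$, which the paper simply attributes to quasi-compactness; there, $E$ does not determine $\mathcal{L}$ up to finitely many choices (gluing data at several singular points can vary continuously), but it does determine the multidegree, since minimality forces $\nu^{*}\mathcal{L}^{\vee}\cong \nu^{*}E^{\vee}/\mathrm{tors}$, and the multidegree is all your argument needs.
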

\begin{proof}
For $E\in \Coh^{\heartsuit}(\cC_b)$
corresponding to a point in $\cH(\chi)_k^{\mathrm{qc}}$, there is an exact sequence 
\begin{align*}
    0\to E \to \mathcal{L} \to Q \to 0
\end{align*}
as in 
(\ref{exact:ELQ})
such that $\chi(Q)=k$. Note that $Q$ is supported on $\cC_b^{\mathrm{sg}}$, 
as otherwise $E$ lies in $\cH(\chi)_{\leq k-1}$. 

Then 
by setting $a\in \mathbb{Z}$ as in (\ref{denote:a}), 
we have 
an injection $\mathcal{L}(-a \cdot c) \hookrightarrow \mathcal{O}_{\cC_b}$
whose cokernel is supported on the smooth locus of $\cC_b$, 
as $\mathcal{L}^{\vee}(a \cdot c)$ is very ample for $d\gg 0$.
Since $\cH(\chi)^{\mathrm{qc}}$ is quasi-compact, we can take $d\gg 0$ to be independent of a point in $\cH(\chi)_k^{\mathrm{qc}}$. 
Then we have an injection $E(-a \cdot c) \hookrightarrow \mathcal{O}_{\cC_b}$ 
whose cokernel at singular points have length $k$, which implies the lemma.
\end{proof}

\begin{lemma}\label{lem:pq3}
Let $\cH(\chi)^{\mathrm{qc}} \subset \cH(\chi)$ be a quasi-compact open substack. 
Take $d \gg 0$ as in Lemma~\ref{lem:pq2}, and consider the open substack
$\mathcal{U}_{\circ}^{\mathrm{qc}} \subset \mathcal{U}^{\mathrm{qc}}$. 
Then, for the morphism
\[
p|_{\mathcal{U}_{\circ}^{\mathrm{qc}}} \colon \mathcal{U}_{\circ}^{\mathrm{qc}} \to \cH(\chi)^{\mathrm{qc}},
\]
the pullback functor
\[
(p|_{\mathcal{U}_{\circ}^{\mathrm{qc}}})^* \colon 
\QCoh_{\cH(\chi)_{\geq k}^{\mathrm{qc}}}(\cH(\chi)^{\mathrm{qc}}) 
\to \QCoh(\mathcal{U}_{\circ}^{\mathrm{qc}})
\]
is conservative on the open substack 
$\cH(\chi)_k^{\mathrm{qc}} \subset \cH(\chi)_{\leq k}^{\mathrm{qc}}$. 
That is, if an object
\[
A \in \QCoh_{\cH(\chi)_{\geq k}^{\mathrm{qc}}}(\cH(\chi)^{\mathrm{qc}})
\]
is sent to zero under this functor, then $A$ is supported on
$\cH(\chi)_{\geq k+1}^{\mathrm{qc}}$.
\end{lemma}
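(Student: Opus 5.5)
The plan is to reduce to a statement about the restriction of $A$ to the locally closed stratum $\cH(\chi)_k^{\mathrm{qc}}$, and then use that $p_k$ is surjective (Lemma~\ref{lem:pq2}) together with a flatness or smoothness property of $p$ near $\U_k$.

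First I would reduce to the open substack $\cH(\chi)^{\mathrm{qc}}_{\leq k}$. The claim is that $A$ is supported on $\cH(\chi)^{\mathrm{qc}}_{\geq k+1}$, which is equivalent to $A|_{\cH(\chi)^{\mathrm{qc}}_{\leq k}}=0$. On $\cH(\chi)^{\mathrm{qc}}_{\leq k}$ the object $A':=A|_{\cH(\chi)^{\mathrm{qc}}_{\leq k}}$ is set-theoretically supported on the closed substack $\cH(\chi)^{\mathrm{qc}}_k$. Since $q_{\geq k}$ sends $\U_{\geq k}$ into $(\cC)^{\times_{\cB}d}_{\geq k}$, the preimage of $\cH(\chi)_{\leq k}$ meets the $p$-preimage of $\cH(\chi)_{\geq k}$ only inside $\U_{\circ}$, up to the points of $\U_{\geq k}$ lying over $(\cC)^{\times_{\cB}d}_{\geq k+1}$. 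By (\ref{exact:F}) those points map to $\cH(\chi)_{\geq k+1}$. Hence $p|_{\U_\circ}$ restricted over $\cH(\chi)_{\leq k}$ still hits every point of $\cH(\chi)^{\mathrm{qc}}_k$ by Lemma~\ref{lem:pq2}. So the hypothesis gives $(p')^*A'=0$ for $p'\colon p^{-1}(\cH(\chi)^{\mathrm{qc}}_{\leq k})\cap \U_\circ\to \cH(\chi)^{\mathrm{qc}}_{\leq k}$, and the image of $p'$ contains the support stratum $\cH(\chi)^{\mathrm{qc}}_k$.

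Next I would show $A'=0$ by a support argument. Suppose $A'\neq 0$. Choose a point $y\in\cH(\chi)^{\mathrm{qc}}_k$ in the support of $A'$; such points exist since $A'$ is supported on this stratum. Pass to a smooth atlas and consider the lowest nonvanishing cohomology of $A'$ near $y$, which is a nonzero quasi-coherent sheaf $\mathcal{H}$ supported on the stratum. Here I would use that $p$ is flat, or better smooth. The map $p$ is the composite of iterated Hecke maps $\ev_1$, and on the regular-ish locus $\U_\circ$ one can try to check flatness directly. The fibers of $p$ over $E$ are parametrized by the choices of an injection $E(-ac)\hookrightarrow\mathcal{O}_{\cC_b}$ together with a flag of the cokernel. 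For $d\gg0$ the space of injections is an open subset of $\mathbb{P}(\Hom(E(-ac),\mathcal{O}_{\cC_b}))$ of constant dimension, since $H^1$ vanishes. This gives a smooth map to $\cH(\chi)^{\mathrm{qc}}$ from the stable-pair-type stack, followed by a finite flag data map.

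Granting flatness of $p'$ at some point $u$ over $y$ (guaranteed by surjectivity), the pullback is t-exact there, so $\mathcal{H}^i(p'^*A')=p'^*\mathcal{H}^i(A')$. A flat pullback of a nonzero sheaf is nonzero at points mapping into its support. This contradicts $p'^*A'=0$. For an unbounded $A'$ I would instead use that faithfully flat descent gives conservativity of $p'^*$ over the image, which is an open neighbourhood of the stratum.

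The main obstacle is the flatness of $p$ along $\U_k\cap\U_\circ$. The natural source of flatness is the isomorphism $(\cH^\dag)^{\mathrm{cl}}\cong\bigcup\Hilb^d(\cC/\cB)$ together with smoothness of the Abel--Jacobi map (\ref{AJ}) for $d\gg0$. I would first try to identify $\U$ over $\cH(\chi)^{\mathrm{qc}}$ with a nested-Hilbert-type space mapping to $\Hilb^d(\cC/\cB)$, and then use that the full flag data over the smooth points is étale locally trivial. If flatness fails along the singular-point flags, the fallback is to replace $p^*$-conservativity by the finite-type argument: restrict to $\U_k$, where $q_k$ is finite by Lemma~\ref{lem:pq}, and argue by dévissage along the reduced stratum.
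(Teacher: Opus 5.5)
Your reduction is fine. $A':=A|_{\cH(\chi)^{\mathrm{qc}}_{\leq k}}$ is supported on the closed stratum $\cH(\chi)^{\mathrm{qc}}_k$. It is killed by pullback along $p|_{\U_\circ^{\mathrm{qc}}}$ over $\cH(\chi)^{\mathrm{qc}}_{\leq k}$, and the image of that map contains the stratum because $\U_k\subset\U_\circ$ and $p_k$ is surjective by Lemma~\ref{lem:pq2}. Your side remark that points of $\U_{\geq k}$ over $(\cC)^{\times_{\cB}d}_{\geq k+1}$ land in $\cH(\chi)_{\geq k+1}$ is false, since (\ref{exact:F}) only bounds the stratum from above, but you never use it.

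The gap is the next step. It rests entirely on $p$ being flat at some point over each $y\in\cH(\chi)^{\mathrm{qc}}_k$, and this is never established.
\begin{itemize}
\item Surjectivity does not give it: a blow-up is surjective but flat at no point of the exceptional fibre.
\item You describe $p$ as a smooth map followed by a \emph{finite} flag map. But $\beta\colon\U\to\overline{\U}$ is a base change of $H^{1_d}\to H^d$, which has positive-dimensional fibres. For $k\geq 3$, a quotient $\mathcal{O}_{\cC_b}\twoheadrightarrow Q$ whose part at a node $x$ is $\mathcal{O}/\mathfrak{m}_x^2$ (length three) lies in $\overline{\U}_\circ$, and the flags through it form a $\mathbb{P}^1$.
\item Étale local triviality over smooth points says nothing about the singular points, which is exactly where flatness is in question.
\item The fallback via finiteness of $q_k$ concerns the other leg of the correspondence, not $p^*$.
\end{itemize}

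The missing idea is to get conservativity of $\beta^*$ without flatness. The paper factors $p=\alpha\circ\beta$ through the stable-pair stack $\overline{\U}$.
\begin{itemize}
\item For $\alpha$: it is smooth for $d\gg0$, since its fibres are open in $H^0(E^{\vee}(a\cdot c))$ of constant dimension over $\cH(\chi)^{\mathrm{qc}}$. By Lemma~\ref{lem:pq2} and (\ref{Uintk}), $\alpha_\circ$ is smooth and surjective over an open $\cH'$ with $\cH'\cap\cH(\chi)^{\mathrm{qc}}_{\geq k}=\cH(\chi)^{\mathrm{qc}}_k$. Faithful flatness then handles $\alpha_\circ^*$.
\item For $\beta_\circ^*$: the paper base-changes along closed immersions to $\beta_H\colon H^{1_d}\to H^d$. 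It then factors $\beta_H$ through Haiman's isospectral Hilbert scheme as $H^{1_d}\xrightarrow{f^d}X^d\xrightarrow{\rho^d}H^d$. Here $\rho^d$ is finite flat surjective, and $f^d_*\mathcal{O}_{H^{1_d}}\cong\mathcal{O}_{X^d}$ (proved inductively from Haiman's flatness). So $f^{d*}$ is fully faithful, hence conservative.
\end{itemize}

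Alternatively, your surjectivity idea can be completed with no flatness at all, though your write-up supplies neither this nor the paper's ingredient. By Hopkins--Neeman, an object $M\in\QCoh(V)$ on a noetherian scheme $V$ vanishes iff all its derived fibres $M\otimes^{\mathbf{L}}k(y)$ vanish. Derived fibres of a pullback are field-extension base changes of derived fibres. So, on a smooth atlas, pullback along any map whose image contains every point of the support detects vanishing. Here that image condition follows from Lemma~\ref{lem:pq2} together with Chevalley's theorem.
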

\begin{proof}
    We have the factorization of the map $p\colon \U \to \cH(\chi)$
    \begin{align*}
        p \colon \U \stackrel{\beta}{\to} \overline{\U} \stackrel{\alpha}{\to} \cH(\chi)
    \end{align*}
    where $\overline{\U}$ is the moduli stack which classifies exact sequences in $\Coh^{\heartsuit}(S)$
    \begin{align}\label{F0Q}
        0\to F_0 \to \mathcal{O}_{\cC_b} \to Q \to 0
    \end{align}
    where $Q$ is zero-dimensional with length $d$. As a derived scheme, it is 
    equivalent to the derived moduli stack of PT stable pairs~\cite{MR2545686, PT3} $(F_0^{\vee}, \eta)$, where 
    $F_0^{\vee}$ is a pure one dimensional sheaf on $S$ with support $\cC_b$ for $b\in \cB$ and 
    $\eta \colon \mathcal{O}_S \to F_0^{\vee}$ is generically 
    surjective whose cokernel has length $d$. 
    The map $\beta$ sends (\ref{filt:F})
    to $F_0 \subset F_d=\mathcal{O}_{\cC_b}$, and $\alpha$
    sends (\ref{F0Q}) to $F_0(ac)$. 

By taking the pull-back along $\cH(\chi)^{\mathrm{qc}} \subset \cH(\chi)$, we obtain 
\begin{align*}
       p \colon \U^{\mathrm{qc}} \stackrel{\beta}{\to} \overline{\U}^{\mathrm{qc}} \stackrel{\alpha}{\to} \cH(\chi)^{\mathrm{qc}}.
\end{align*}
The map $\alpha$ is a smooth map for $d\gg 0$. 
Indeed for $E\in \Coh^{\heartsuit}(\cC_b)$ corresponding to a point in 
$\cH(\chi)^{\mathrm{qc}}$, the fiber of $\alpha$ is an open subset of 
$H^0(E^{\vee}(a \cdot c))$ where $a$ is given by (\ref{denote:a}), 
whose dimension is constant for $d\gg 0$ as $\cH(\chi)^{\mathrm{qc}}$ is quasi-compact. 
Therefore $\overline{\U}^{\mathrm{qc}}$ is an open substack of a vector bundle over 
$\cH(\chi)^{\mathrm{qc}}$, hence $\alpha$ is smooth. 

    We have the commutative diagram 
    \begin{align}\label{com:Uqc}
        \xymatrix{
\U^{\mathrm{qc}} \ar[r]^-{\beta} \ar[d] & \overline{\U}^{\mathrm{qc}} \ar[d] \\
(\cC)^{\times_{\cB}d} \ar[r] & \mathrm{Sym}_{\cB}^d(\cC).
        }
    \end{align}
    Here $\mathrm{Sym}_{\cB}^d(\cC)$ is the relative symmetric product of $\cC$ over $\cB$. 
We take the open subset 
\begin{align}\notag
    \mathrm{Sym}_{\cB}^d(\cC)_{\leq k} \subset 
    \mathrm{Sym}_{\cB}^d(\cC)
\end{align}
to be consisting of unordered tuples $(x_1, \ldots, x_d)$
with $x_i \in \cC_b$ such that the number of $1\leq i \leq d$ with $x_i \in \cC_b^{\mathrm{sg}}$ is at most $k$. 
We denote by $\overline{\U}_{\circ}^{\mathrm{qc}}\subset \overline{\U}^{\mathrm{qc}}$
the pull-back with respect to the above open subset. 
Then the diagram (\ref{com:Uqc}) restricts to the commutative diagram 
\begin{align}\notag
       \xymatrix{
\U^{\mathrm{qc}}_{\circ} \ar[r]^-{\beta_{\circ}} \ar[d] & \overline{\U}^{\mathrm{qc}}_{\circ} \ar[d] \\
(\cC)^{\times_{\cB}d}_{\leq k} \ar[r] & \mathrm{Sym}_{\cB}^d(\cC)_{\leq k}.
        }
\end{align}
We obtain the diagram 
\begin{align*}
    \xymatrix{
\U^{\mathrm{qc}}_{\circ} \ar[r]^-{\beta_{\circ}} \dinclusion \diasquare & \overline{\U}^{\mathrm{qc}}_{\circ} \dinclusion \ar[rd]^-{\alpha_{\circ}} & \\
\U^{\mathrm{qc}} \ar[r]^-{\beta} & \overline{\U}^{\mathrm{qc}} \ar[r]^-{\alpha} & \cH(\chi)^{\mathrm{qc}}.
    }
\end{align*}

By Lemma~\ref{lem:pq2} and the property (\ref{Uintk}),
the map $\alpha_{\circ}$ is surjective on $\cH(\chi)_{k}^{\mathrm{qc}}$. 
Together with the smoothness of $\alpha_{\circ}$, 
this implies that the functor 
\begin{align}\label{funct:alpha}
    \alpha_{\circ}^* \colon \QCoh_{\cH(\chi)_{\geq k}^{\mathrm{qc}}}(\cH(\chi)^{\mathrm{qc}}) \to \QCoh(\overline{\U}_{\circ}^{\mathrm{qc}})
\end{align}
is conservative in a neighborhood of $\cH(\chi)_k^{\mathrm{qc}}$. 
Indeed as $\alpha_{\circ}$ is smooth, there is an open substack 
$\cH' \subset \cH(\chi)^{\mathrm{qc}}$
such that 
\begin{align}\label{chK}\cH' \cap \cH(\chi)^{\mathrm{qc}}_{\geq k}=\cH(\chi)_k^{\mathrm{qc}}
\end{align}
and the map 
$\alpha_{\circ}^{-1}(\cH') \to \cH'$ is smooth and surjective, in particular faithfully flat. 
Then the functor 
\begin{align}\label{funct:alpha2}
    \alpha_{\circ}^* \colon \QCoh(\cH') \to \QCoh(\alpha_{\circ}^{-1}(\cH'))
\end{align}
is conservative. Now if $A \in \QCoh_{\cH(\chi)_{\geq k}^{\mathrm{qc}}}(\cH(\chi)^{\mathrm{qc}})$
is sent to zero under the functor (\ref{funct:alpha}), 
then $\alpha_{\circ}^* A|_{\alpha_{\circ}^{-1}(\cH')}=0$, hence 
$A|_{\cH'}=0$ as (\ref{funct:alpha2}) is conservative. 
Therefore $A$ is supported on $\cH(\chi)_{\geq k+1}^{\mathrm{qc}}$ by the condition (\ref{chK}). 

It remains to show that the functor 
\begin{align}\notag
\beta_{\circ}^* \colon 
    \QCoh(\overline{\U}_{\circ}^{\mathrm{qc}}) \to \QCoh(\U_{\circ}^{\mathrm{qc}})
\end{align}
is conservative. 
Below we use the short hand notation of (nested) Hilbert schemes of points, see the proof of Proposition~\ref{prop:PE} 
\begin{align*}
    H^d =\Hilb^d(S), \ H^{d, d-1}=\Hilb^{d, d-1}(S). 
\end{align*}
We define 
\begin{align*}
    H^{1_d}:=H^{d, d-1}\times_{H^{d-1}}H^{d-1, d-2}\times_{H^{d-2}} 
    \cdots\times_{H^2} H^{2, 1}.
\end{align*}
It is the derived moduli stack which classifies flags of zero-dimensional subschemes 
\begin{align}\label{filt:H}
    Z_1 \subset Z_2 \subset \cdots \subset Z_d \subset S
\end{align}
where $Z_i$ has length $i$. 
Then we have the Cartesian square 
\begin{align*}
    \xymatrix{
    \U_{\circ}^{\mathrm{qc}} \dinclusion \ar[r]^{\beta_{\circ}} \diasquare & \overline{\U}_{\circ}^{\mathrm{qc}} \dinclusion \\
    \cB\times H^{1_d} \ar[r]^-{1\times \beta_H} & \cB\times H^d.
    }
\end{align*}
Here vertical arrows are closed embeddings, and the bottom horizontal arrow 
sends (\ref{filt:H}) to $Z_d \subset S$. 
Since the push-forward functors of quasi-coherent sheaves along with the vertical arrows are conservative, 
by the base change it is enough to show that 
\begin{align}\label{betaH}
\beta_H^* \colon \QCoh(H^d) \to \QCoh(H^{1_d})
\end{align}
is conservative. It is proved in Lemma~\ref{lem:consH}. 
\end{proof}

\begin{lemma}\label{lem:consH}
    The functor (\ref{betaH}) is conservative. 
\end{lemma}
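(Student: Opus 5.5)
The functor $\beta_H^*$ in (\ref{betaH}) is pullback along the map $\beta_H \colon H^{1_d}\to H^d$, $(Z_1\subset\cdots\subset Z_d)\mapsto Z_d$, where $H^d=\Hilb^d(S)$ is smooth of dimension $2d$. My plan is to show that $\mathcal{O}_{H^d}$ is a direct summand of $\beta_{H*}\mathcal{O}_{H^{1_d}}$. Then conservativity is formal: for $M\in\QCoh(H^d)$, the projection formula for the proper map $\beta_H$ gives
\begin{align*}
\beta_{H*}\beta_H^*M\cong M\otimes \beta_{H*}\mathcal{O}_{H^{1_d}}.
\end{align*}
This object contains $M$ as a direct summand, so $\beta_H^*M=0$ forces $M=0$.

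The first step is to realize $\beta_H$ as a composition of the simpler maps
\begin{align*}
\ev_2^{\mathrm{H}} \colon H^{i,i-1}\to H^i,
\end{align*}
base changed along the iterated fiber products. Precisely, $H^{1_d}\to H^{1_{d-1}}\times\cdots$ is built by successively adjoining the next smaller subscheme. It is cleaner to write $\beta_H$ as the composite of
\begin{align*}
H^{1_d}=H^{d,d-1}\times_{H^{d-1}}H^{1_{d-1}}\to H^{d,d-1}\stackrel{\ev_2^{\mathrm H}}{\to} H^d.
\end{align*}
Here the first arrow is the base change of $\beta_H^{(d-1)}\colon H^{1_{d-1}}\to H^{d-1}$ along $\ev_1^{\mathrm H}\colon H^{d,d-1}\to H^{d-1}$. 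By induction on $d$, it then suffices to prove two things. First, $\mathcal{O}_{H^i}\to \ev^{\mathrm H}_{2*}\mathcal{O}_{H^{i,i-1}}$ is split injective. Second, the split injectivity statement for $\beta^{(d-1)}_H$ is preserved under base change along $\ev_1^{\mathrm H}$; this is automatic if the base change is Tor-independent. The fiber products here are derived, so base change holds on the nose and gives $(\ev_1^{\mathrm H})^*$ of a split injection, which is still a split injection. Pushing forward along $\ev^{\mathrm H}_2$ and composing with the splitting for $\ev^{\mathrm H}_2$ finishes the induction.

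The key input, and the main obstacle, is the splitting for $\ev_2^{\mathrm H}\colon H^{i,i-1}\to H^i$. Haiman/Ellingsrud–Strømme identify $H^{i,i-1}$ as $\mathbb{P}(\mathcal{O}^{[i]})$, or equivalently as the blow-up of $H^{i-1}\times S$ along the universal subscheme. Since $\mathcal{O}^{[i]}$ is a rank $i$ vector bundle, I would identify $H^{i,i-1}$ with the closed subscheme of the projectivization of the tautological bundle cut out by the multiplication condition. Haiman's results (nested Hilbert scheme of expected dimension, quasi-smooth hence lci and Cohen–Macaulay, with the fiber over a generic point of $H^i$ being $i$ reduced points) show that $\ev^{\mathrm H}_2$ is finite flat of degree $i$ over $H^i$. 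Indeed, it is finite because a point is forced to lie in $Z_i$ and the quotient choices are finite. It is flat by miracle flatness, since the source is Cohen–Macaulay of dimension $2i$, the target is smooth of dimension $2i$, and the fibers are finite. Then $\ev^{\mathrm H}_{2*}\mathcal{O}$ is a rank $i$ vector bundle containing $\mathcal{O}_{H^i}$. In characteristic $0$, the normalized trace $\frac{1}{i}\mathrm{tr}$ gives a splitting. The delicate point is verifying the finiteness and flatness claim carefully. If finiteness fails because fibers are positive dimensional over non-curvilinear points, I would fall back to a fiberwise argument: conservativity of pullback along a surjective proper map onto a smooth target can be checked on stalks via $H^0(\beta_H^{-1}(z),\mathcal{O})\ni 1\neq 0$, combined with the derived Nakayama lemma on the local rings of $H^d$.
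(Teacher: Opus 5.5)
\textbf{The main route fails; the fallback works once corrected.}

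Your main route breaks at its key input. The map $\ev_2^{\mathrm{H}}\colon H^{i,i-1}\to H^i$ is not finite, so miracle flatness does not apply, and it is not flat. Your own description of $H^{i,i-1}$ as lines in $\mathcal{O}^{[i]}$ satisfying the multiplication condition shows what the fibers are. The fiber over $Z_i$ is the union of the projectivized socles $\mathbb{P}(\Hom(\mathcal{O}_x,\mathcal{O}_{Z_i}))$, which is positive dimensional at non-curvilinear points. For example, the fiber over $\Spec \mathcal{O}_{S,x}/\mathfrak{m}_x^2\in H^3$ is $\mathbb{P}^1$. Also, the Ellingsrud--Str{\o}mme blow-up description is relative to $(\ev_1^{\mathrm{H}},\ev_3^{\mathrm{H}})$, not to $\ev_2^{\mathrm{H}}$, so it gives no information about $\ev_2^{\mathrm{H}}$. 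Hence the trace splitting has no justification.

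The summand statement you aim for is nonetheless true. Proving it, however, requires Haiman's isospectral input, which is exactly what the paper uses. The map $\beta_H$ factors as $H^{1_d}\stackrel{f^d}{\to}X^d\stackrel{\rho^d}{\to}H^d$ through the isospectral Hilbert scheme. Here $f^d_*\mathcal{O}_{H^{1_d}}\cong\mathcal{O}_{X^d}$ (built inductively from Haiman's Lemma~3.8.5 and base change), and $\rho^d$ is finite flat. Thus $\beta_{H*}\mathcal{O}_{H^{1_d}}\cong\rho^d_*\mathcal{O}_{X^d}$, which contains $\mathcal{O}_{H^d}$ as its $\mathfrak{S}_d$-invariants. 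The paper phrases this as $\rho^{d*}$ being conservative (faithfully flat) and $f^{d*}$ fully faithful.

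Your fallback is a genuinely different and much shorter route, but the tool you name is wrong. Derived Nakayama on local rings only detects vanishing of pseudo-coherent complexes. Conservativity is needed on all of $\QCoh(H^d)$, and the lemma is applied to arbitrary quasi-coherent objects in Lemma~\ref{lem:pq3}. Already for a DVR, the fraction field has zero derived fiber at the closed point.

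The correct version runs as follows:
\begin{enumerate}
\item $\beta_H$ is surjective on points. Every $Z_d$ has a composition series, and properness (or Chevalley) upgrades surjectivity on closed points to all points.
\item Suppose $\beta_H^*M=0$. Pulling back to a point $x$ over an arbitrary $z\in H^d$ gives $M|_z\otimes_{k(z)}k(x)=0$.
\item Hence the derived fiber $M|_z$ vanishes for \emph{every} $z$, including non-closed ones.
\item On a Noetherian scheme this forces $M=0$, by Neeman's theorem that residue fields at all points detect vanishing in the unbounded derived category.
\end{enumerate}
This uses neither smoothness nor Haiman's flatness theorem. The paper's factorization instead gives finer structure than the bare conservativity required here.
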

\begin{proof}
    Let $X^d=\widetilde{\Hilb}^d(S)$ be the isospectral Hilbert scheme, see Subsection~\ref{subsec:hecke}. We show that, by the induction on $d$, that 
    there is a factorization of the projection $H^{1_d} \to H^d$ 
    \begin{align}\label{fact:Xd}
      H^{1_d} \stackrel{f^d}{\to} X^d \stackrel{\rho^d}{\to} H^d
    \end{align}
    such that $f^{d}_{*} \mathcal{O}_{H^{1_d}}=\mathcal{O}_{X^d}$. 
    The case of $d=1$ is obvious; we can take $f^1=\id$. 

    Assume that $f^{d-1}$ is constructed. We have the following Cartesian squares 
    \begin{align}\label{dia:Hd}
        \xymatrix{
        H^{1_{d}} \ar[r]^-{h^{d}} \ar[d] \diasquare & Y^d \ar[r] \ar[d] \diasquare & H^{d, d-1} \ar[d] \\
        H^{1_{d-1}} \ar[r]^-{f^{d-1}} & X^{d-1} \ar[r]^-{\rho^{d-1}} & H^{d-1}. 
        }
    \end{align}
    Here $Y^d$ satisfies that 
    \begin{align*}
Y^d=(Y^d)^{\mathrm{cl}}=(Y^d)^{\mathrm{cl, red}}.
    \end{align*}
    The first identity follows since $\rho^{d-1}$ is flat (which is the key property of 
    isospectral Hilbert scheme~\cite{Ha}), and the second identity is 
    also proved in the proof of~\cite[Lemma~3.8.5]{Ha}. 
Then as $Y^d$ is a reduced scheme, we have the induced morphism 
$g^d \colon Y^d \to X^d$, and it satisfies 
$g^d_{*}\mathcal{O}_{Y^d}=\mathcal{O}_{X^d}$, see the proof of~\cite[Lemma~3.8.5]{Ha}.
By the diagram (\ref{dia:Hd}), derived base change and induction hypothesis, 
we have $h^d_{*}\mathcal{O}_{H^{1_d}}=\mathcal{O}_{Y^d}$. 
Therefore the map 
\begin{align*}
f^d=g^d \circ h^d \colon H^{1_{d}} \to Y^d \to X^d
\end{align*}
satisfies the desired property. 

The functor 
\begin{align*}
(\rho^d)^* \colon \QCoh(H^d) \to \QCoh(X^d)
\end{align*}
is conservative as $\rho^d$ is finite flat cover of $H^d$, hence faithfully flat. 
The functor 
\begin{align*}
    (f^d)^* \colon \QCoh(X^d) \to \QCoh(H^{1_d})
\end{align*}
is conservative (indeed fully-faithful) since it admits a right adjoint 
$(f^d)_{*}$ satisfying 
$(f^d)_{*} (f^d)^{*}=\id$ because of $f^d_{*}\mathcal{O}_{H^{1_d}}=\mathcal{O}_{X^d}$. 
Therefore by the factorization (\ref{fact:Xd}), the functor (\ref{betaH}) is conservative. 
\end{proof}
The following is probably well known and follows from a standard argument.
We will give its proof in Subsection~\ref{subsec:delta} because we cannot find a reference. 
\begin{lemma}\label{lem:delta}
Let $R=k[[x, y]]/(f)$ be a planar singularity, and 
$M$ be a rank one torsion-free $R$-module. Let 
\begin{align*}
    \delta:=\min \{ \dim M/R : R\hookrightarrow M\}
\end{align*}
where the minimum is after all injective 
$R$-module homomorphisms $R\hookrightarrow M$, 
so that $M/R$ is finite dimensional. Then if an 
embedding $R \hookrightarrow M$ 
satisfies $\delta=\dim M/R$, then 
we have 
\begin{align*}
    R \hookrightarrow M \hookrightarrow \widetilde{R}=\bigoplus_{i=1}^{\ell}k[[t_i]]
\end{align*}
where $\Spec \widetilde{R} \to \Spec R$ is the normalization. 
\end{lemma}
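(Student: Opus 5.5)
Fix an embedding $R\hookrightarrow M$ with $\dim M/R=\delta$. We argue by contradiction: if $M\not\subset \widetilde{R}$, we will produce another embedding $R\hookrightarrow M$ with strictly smaller colength.

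\emph{Setup.} Since $M$ is rank one torsion-free, $M\otimes_R K\cong K$, where $K=\prod_i k((t_i))$ is the total ring of fractions. The embedding $R\hookrightarrow M$ sends $1$ to an element $m_0$ that generates $M\otimes K$, so we may identify $M$ with a finitely generated $R$-submodule of $K$ containing $1$. Any other embedding $R\hookrightarrow M$ is multiplication by some $u\in M$ that is a non-zero-divisor of $K$, with colength
\begin{align*}
\dim M/uR=\dim M/R+\dim R/uR-\dim \widetilde{R}/u\widetilde{R}\cdot(\text{correction}).
\end{align*}
It is cleaner to compute this by lattice indices inside $K$. For $R$-lattices $A,B\subset K$, set $[A:B]=\dim A/C-\dim B/C$ for any lattice $C\subset A\cap B$. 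Then $\dim M/uR=[M:R]-[uR:R]$, and $[uR:R]=[u\widetilde{R}:\widetilde{R}]=\sum_i v_i(u)$, where $v_i$ is the valuation on $k[[t_i]]$. Multiplication by $u$ preserves indices, and $\widetilde{R}/R$ has finite length, which justifies the equality. So
\begin{align*}
\dim M/uR=\dim M/R-\sum_i v_i(u).
\end{align*}
Minimality of $R\hookrightarrow M$ therefore says: every non-zero-divisor $u\in M$ has $\sum_i v_i(u)\le 0$.

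\emph{Key step.} Suppose $M\not\subset \widetilde{R}$. Then some $m\in M$ has $v_j(m)<0$ for some $j$. We want a non-zero-divisor $u\in M$ with $\sum_i v_i(u)>0$, or more directly some $u$ with all $v_i(u)\ge 0$ and $u$ a unit-free element. The intended construction is $u=1+\lambda m^{-1}$-type combinations; a cleaner route uses the conductor $\mathfrak{c}$. Choose an element $x\in\mathfrak{c}$ of large valuations in every branch, so that $xm\in\widetilde{R}\cdot x\cdots$.

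Rather than fight this, I would use the standard alternative. Set $M'=M\cap\widetilde{R}\supset R$. Consider the $\widetilde{R}$-module $\widetilde{R}M$; it is a fractional ideal $\prod_i t_i^{-a_i}k[[t_i]]$ with all $a_i\ge 0$ and some $a_j>0$. Pick $u\in M$ such that $v_i(u)=-a_i$ for all $i$. Such a $u$ exists because $M$ surjects onto $\widetilde{R}M/\mathfrak{m}_{\widetilde{R}}\widetilde{R}M$ componentwise (a generic $k$-linear combination of generators attains the minimum valuation on each branch simultaneously, since $k$ is infinite). Then $u$ is a non-zero-divisor and $u^{-1}M\subset \widetilde{R}$. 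This gives an embedding $R\hookrightarrow M$, $1\mapsto u$, with $\dim M/uR=\delta+\sum_i a_i>\delta$. That is the wrong direction, so the inequality has to be reorganised.

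\emph{Correct inequality.} Index against $u^{-1}$: the embedding $R\xrightarrow{u}M$ identifies $M$ with $N=u^{-1}M$, where $1\in N\subset\widetilde{R}$. The embedding $R\xrightarrow{1}M$ corresponds in $N$ to $u^{-1}\in N$. So the problem reduces to: given a lattice $N$ with $R\subset N\subset \widetilde{R}$, among all $w\in N$ the colength $\dim N/wR=[N:R]-\sum_i v_i(w)$ is minimised exactly by elements with $v_i(w)=0$ for all $i$. Such elements exist, for example $w=1$, and these are the units of $\widetilde{R}$ lying in $N$. Elements $w$ with some $v_i(w)>0$ give strictly larger colength. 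Since $N\subset \widetilde{R}$, every $w$ has $v_i(w)\ge 0$, so the minimum is $[N:R]$ and it is attained precisely when $w\in\widetilde{R}^\times$. Translating back, a minimal embedding $1\mapsto m_0$ satisfies $m_0\in u\widetilde{R}^\times$, hence
\begin{align*}
M=uN\subset u\widetilde{R}=m_0\widetilde{R}.
\end{align*}
With $R$ identified via $m_0$, this gives $R\subset M\subset\widetilde{R}$, as the lemma claims.

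\emph{Main obstacle.} The main obstacle is the existence of $u$ realising all the minimal valuations simultaneously, together with justifying the index formula. Planarity is not really needed for this; only reducedness and $k$ infinite are used.
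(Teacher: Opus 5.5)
Your underlying idea is the paper's: write $\widetilde{R}M=\prod_i t_i^{-a_i}k[[t_i]]$, pick $u\in M$ with $v_i(u)=-a_i$ for every $i$, and compare the colength of the embedding $1\mapsto u$ with $\delta$. However, the key index computation has the wrong sign. With your convention $[A:B]=\dim A/C-\dim B/C$ one gets $[u\widetilde{R}:\widetilde{R}]=-\sum_i v_i(u)$; for example, $R=\widetilde{R}=k[[t]]$ and $u=t$ give $-1$. Hence
\begin{align*}
\dim M/uR=\dim M/R+\sum_i v_i(u),
\end{align*}
and minimality means $\sum_i v_i(u)\geq 0$ for every non-zero-divisor $u\in M$. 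Your stated consequence, $\sum_i v_i(u)\leq 0$, cannot hold for any minimal embedding: every non-zero-divisor $u\in\mathfrak{m}_R$ lies in $R\subset M$ and has positive total valuation.

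As a result, the element $u$ you construct actually gives $\dim M/uR=\delta-\sum_i a_i$, not $\delta+\sum_i a_i$. This contradicts the minimality of $\delta$ unless all $a_i=0$, and that is exactly the paper's proof (written there as $\dim I/R=\sum_i a_i+\dim u^{-1}I/R$). The ``wrong direction'' and the subsequent reorganisation are artefacts of the sign error.

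Your last paragraph repeats the incorrect formula $\dim N/wR=[N:R]-\sum_i v_i(w)$. It then draws the conclusions of the correct formula, namely that elements with some $v_i(w)>0$ have strictly larger colength. As written it is therefore self-contradictory. With the sign corrected it becomes a valid, if roundabout, version of the same argument in the frame $N=u^{-1}M$. The abandoned fragments should be deleted: the formula with a ``correction'' term, the $1+\lambda m^{-1}$ attempt, and the conductor attempt.

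Two things you got right. Your argument that a single $u\in M$ attains $v_i(u)=-a_i$ on all branches at once is sound: each set $\{m\in M : v_i(m)>-a_i\}$ is a proper $k$-subspace of $M$, and $k$ is infinite. This supplies a step the paper asserts without proof. You are also right that planarity is not used.
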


\begin{remark}\label{rmk:nodal}
If $\cC_b$ has at worst nodal singularities, then 
$\cH(\chi)_{\leq k}$ corresponds to $E\in \Coh(\cC_b)$ such
that the number of points in $\cC_b$ at which $E$ is not a line 
bundle is at most $k$. 
\end{remark}

As a corollary of the above proposition together with Theorem~\ref{thm:cpt}, 
Lemma~\ref{lem:cptC}, we have the following: 
\begin{cor}\label{cor:Hcpt}
    Under Assumption~\ref{assum:Bcirc},
    for $d\gg 0$ the image of the functor 
    \begin{align*}
        \Coh(S^{\times d}) \to \LL_{\mathcal{N}}(\cH(\chi))_{w'}
    \end{align*}
    given by 
    \begin{align}\label{objects:Ai}
        A_1\boxtimes \cdots \boxtimes A_d \mapsto  (\otimes\mathcal{O}((\chi-\chi_0+d)/r \cdot c))_{*}\circ \mathrm{H}_{A_d}\circ 
        \cdots \circ\mathrm{H}_{A_1}(s_{!}\mathcal{O}_{\cB})
    \end{align}
    compactly generates $\IndL_{\mathcal{N}}(\cH(\chi))_{w'}$.
\end{cor}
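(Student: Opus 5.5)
The plan is to apply Lemma~\ref{lem:cptC} to the single functor $F:=\mathrm{H}_{d,c}$ for one fixed $c\in C$ and one fixed large $d$ with $r\mid(d+\chi)$. After ind-completion this is a continuous functor
\begin{align*}
\QCoh(S^{\times d})\to \IndL_{\mathcal{N}}(\cH(\chi))_{w'}.
\end{align*}
It preserves compact objects by Lemma~\ref{lem:functHd}, and the target is compactly generated by Theorem~\ref{thm:cpt} together with Remark~\ref{rmk:variant}. Its continuous right adjoint is $\mathrm{H}^R_{d,c}$ from (\ref{HRdc}), whose Ind-extension is used in Corollary~\ref{cor:PhiR}. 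By Lemma~\ref{lem:cptC}, the statement is therefore equivalent to conservativity of $\mathrm{H}^R_{d,c}$ for this one pair $(d,c)$.

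Proposition~\ref{prop:cons2} only gives conservativity under the hypothesis that $\mathrm{H}^R_{d,c}(A)=0$ for all $d\gg0$ and all $c$. So the first step is to re-read its proof to see what it actually uses.

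For $c$: the point $c$ enters only through the twist $\otimes\mathcal{O}(ac)$ and through very ampleness of $\mathcal{L}^\vee(ac)$ in Lemma~\ref{lem:pq2}. Both work for a fixed $c$, so a single $c$ suffices.

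For $d$: the requirement $d\gg0$ is only imposed relative to a chosen quasi-compact open $\cH(\chi)^{\mathrm{qc}}$, in Lemmas~\ref{lem:pq2} and \ref{lem:pq3}. The argument therefore yields the following. If $\mathrm{H}^R_{d,c}(A)=0$ with $d\geq d_0(\cH(\chi)^{\mathrm{qc}})$, then $A|_{\cH(\chi)^{\mathrm{qc}}}=0$. This uses the induction on the stratification $\cH(\chi)_{\geq k}$, which terminates because $\cH(\chi)^{\mathrm{qc}}\subset\cH(\chi)_{\leq k}$ for some $k$.

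The second step, which I expect to be the main obstacle, is a monotonicity statement in $d$. Let $\mathbf{H}_{d}\subset\LL_{\mathcal{N}}(\cH(\chi))_{w'}$ be the thick subcategory generated by the objects (\ref{objects:Ai}). I want to show
\begin{align*}
\mathbf{H}_{d}\subset \mathbf{H}_{d+r}.
\end{align*}
Granting this, suppose $\mathrm{H}^R_{d,c}(A)=0$. Then $\Hom(\mathbf{H}_d,A)=0$, hence $\Hom(\mathbf{H}_{d+mr},A)\supset$-type vanishing is not automatic, so I will instead run the argument in the other direction. By Lemma~\ref{lem:cptC} applied at each level, $\Hom(\mathbf{H}_{d'},A)=0$ for all $d'\leq d$ in the same residue class. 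Thus what I really need is the reverse inclusion $\mathbf{H}_{d+r}\subset\mathbf{H}_{d}$ for $d\gg0$, or better an equality $\mathbf{H}_{d}=\mathbf{H}_{d+r}$ for $d\gg0$. Once that holds, vanishing at one level gives vanishing at every larger level. Then for every quasi-compact open we get $A|_{\cH(\chi)^{\mathrm{qc}}}=0$, so $A=0$ because $\IndL$ is a limit over QCA opens.

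To prove the two inclusions I would imitate (\ref{dsum}) on the Hecke side.

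For $\mathbf{H}_d\subset\mathbf{H}_{d+r}$: apply the $r$-fold Hecke operator with $f_c^{\boxtimes r}$, where $f_c=\mathcal{O}_{\pi^{-1}(c)}$. Its correspondence parametrizes chains $E\supset E_1\supset\cdots\supset E_r$ with quotients skyscrapers on $\pi^{-1}(c)$. This contains the locus $E_r=E(-c)$, which is a full-flag-type fibre, and I would try to show that $\otimes\mathcal{O}(-c)$ splits off from $\mathrm{H}_{f_c}^{\circ r}$ in the appropriate $\mathfrak{S}_r$-anti-invariant part. The twist by $\mathcal{O}(-c)$ is exactly absorbed by the normalizing twist in (\ref{objects:Ai}).

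For the reverse inclusion: use Corollary~\ref{cor:rest} and Corollary~\ref{cor:comWH} to transport the question through $\Phi$ to Wilson operators on $\cH(w)^{\mathrm{ss}}$. There the analogous statement (twists of $L_c$ as summands of iterated $\mathrm{W}$, as in the proof of Proposition~\ref{prop:cgen}) together with Corollary~\ref{cor:compact} is available. Commutation of $\Phi$ with $\mathrm{W}_d$ and $\mathrm{H}_d$ then identifies $\mathbf{H}_d$ with $\Phi$ of the Wilson images, which stabilize for $d\gg0$. This transport is where I expect the real difficulty, since it relies on Conjecture~\ref{conj:norm}. If it cannot be avoided, I would state the corollary under Conjecture~\ref{conj:norm}, which is the setting in which it is applied in Theorem~\ref{thm:eq}.
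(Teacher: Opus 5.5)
\textbf{The reduction to a single pair $(d,c)$ is where the argument breaks.} Conservativity of one functor $\mathrm{H}^R_{d,c}$ is false in general. Equivalently, the stabilization $\mathbf{H}_{d+r}\subset\mathbf{H}_d$ you need fails, so no argument can supply it.

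For fixed $d$, the stack $\U$ of flags $F_0\subset\cdots\subset F_d=\mathcal{O}_{\cC_b}$ is of finite type over $\cB$. By the base-change identity in the proof of Proposition~\ref{prop:cons2}, $\widehat{\mathrm{H}}^R_{d,c}(A)$ is $q_*p^*A$ up to a line bundle and shift. Hence $\mathrm{H}^R_{d,c}(A)$ depends only on $A$ over a quasi-compact open containing $p(\U)$. It is therefore blind to objects of $\IndL_{\mathcal{N}}(\cH(\chi))_{w'}$ supported on deeper Harder--Narasimhan strata.

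Your transport step fails for the matching reason on the Wilson side. With $m=(\chi'+d)/r$ one has $\mathrm{W}_{d,c}(-)=L_c^{-m}\otimes\mathrm{W}_d(-)$. Take a polystable point $y=M_1\oplus M_2$ of $\cH(w)^{\mathrm{ss}}$ with $\Aut(y)=\mathbb{G}_m^2$; for instance $r=2$, $w$ even, over a reducible $\cC_b$. There these objects have $\mathbb{G}_m^2$-weights $(a-m,d-a-m)$ with $0\le a\le d$. So the pushforward from $B\Aut(y)$ of a character $(b_1,b_2)$ with $b_1+b_2=-\chi'$ and $b_1>d-m$ is a nonzero object killed by $\mathrm{W}^R_{d,c}$.

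Thus the Wilson images for one $d$ do not generate. Corollary~\ref{cor:compact} can only be used collectively in $d$, as in Corollary~\ref{cor:cons1}. Via Corollary~\ref{cor:comC} and Theorem~\ref{thm:eq}, the same failure then holds for the Hecke images. Your fallback of assuming Conjecture~\ref{conj:norm} would also weaken a statement that is claimed only under Assumption~\ref{assum:Bcirc}.

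\textbf{How the statement should be read and proved.} The corollary is meant collectively: the objects (\ref{objects:Ai}) for all $d\gg0$ with $r\mid(d+\chi)$ together compactly generate. That is all Theorem~\ref{thm:eq} uses, and in this form nothing remains to be done:
\begin{itemize}
\item The objects are compact by Lemma~\ref{lem:functHd}.
\item The target is compactly generated by Theorem~\ref{thm:cpt} and Remark~\ref{rmk:variant}.
\item Suppose $A$ is right orthogonal to all of them. Since $\Coh(S^{\times d})$ compactly generates $\QCoh(S^{\times d})$, this gives $\mathrm{H}^R_{d,c}(A)=0$ for all such $d$.
\item Proposition~\ref{prop:cons2} then gives $A=0$.
\end{itemize}
This is Lemma~\ref{lem:cptC} applied to the family of functors $\mathrm{H}_{d,c}$, which is exactly the paper's proof. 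You had already identified this input before setting it aside.
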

\subsection{Equivalence of categories}
The following is the main result in this section: 
\begin{thm}\label{thm:eq}
Under Conjecture~\ref{conj:norm}
the functor $\Phi$ in (\ref{induce:Phi}) is fully-faithful 
\begin{align}\label{funct:main}
\Phi \colon \IndCoh_{\mathcal{N}}(\cH(w)^{\mathrm{ss}})_{-\chi'} \hookrightarrow
\IndL_{\mathcal{N}}(\cH(\chi))_{w'}. 
\end{align}
Moreover, under Assumption~\ref{assum:Bcirc}, the functor (\ref{funct:main}) is an equivalence 
i.e. the DL holds over $\cB$. 
\end{thm}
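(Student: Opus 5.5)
The plan is to deduce both assertions formally from the commutative square of Corollary~\ref{cor:PhiR}, the conservativity statements Corollary~\ref{cor:cons1} and Proposition~\ref{prop:cons2}, and Lemma~\ref{lem:cptC}. Throughout we assume Conjecture~\ref{conj:norm}, so that $\Phi$ is the ind-extension of the functor of Corollary~\ref{cor:rest}. It is continuous, preserves compact objects, and has a continuous right adjoint $\Phi^R$.

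\emph{Full faithfulness.} It suffices to show that the unit $\eta\colon \id\Rightarrow \Phi^R\Phi$ is an isomorphism. Let $A\in \IndCoh_{\mathcal{N}}(\cH(w)^{\mathrm{ss}})_{-\chi'}$ and let $K$ be the cone of $\eta_A$. By Corollary~\ref{cor:cons1}, it is enough to prove $\mathrm{W}^R_{d,c}(K)=0$ for all $d\gg0$ with $r\mid(d+\chi)$ and all $c\in C$. Corollary~\ref{cor:PhiR} gives natural isomorphisms $\mathrm{W}^R_{d,c}\cong \mathrm{H}^R_{d,c}\circ\Phi$ and $\mathrm{H}^R_{d,c}\cong \mathrm{W}^R_{d,c}\circ\Phi^R$. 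Combining them yields $\mathrm{W}^R_{d,c}\Phi^R\Phi\cong \mathrm{W}^R_{d,c}$. The main point to check is that, under this identification, $\mathrm{W}^R_{d,c}(\eta_A)$ is the identity, or at least an isomorphism. To arrange this, I would write the second isomorphism as the mate of the left square of Corollary~\ref{cor:comC}, namely $\Phi\circ \mathrm{W}_{d,c}\cong \mathrm{H}_{d,c}$, taken on ind-completions. The first isomorphism is then the corresponding mate of $\mathrm{W}_{d,c}\cong \Phi^R\Phi\mathrm{W}_{d,c}$, which should hold once we know $\Phi$ is fully faithful on the image of $\mathrm{W}_{d,c}$.

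I would obtain that full faithfulness directly on generators. For $X,Y\in\Coh(S^{\times d})$,
\[
\Hom(\Phi\mathrm{W}_{d,c}X,\Phi\mathrm{W}_{d,c}Y)\cong\Hom(\mathrm{H}_{d,c}X,\mathrm{H}_{d,c}Y)\cong \Hom(X,\mathrm{H}^R_{d,c}\mathrm{H}_{d,c}Y),
\]
and by Corollary~\ref{cor:PhiR} we have $\mathrm{H}^R_{d,c}\mathrm{H}_{d,c}Y\cong \mathrm{H}^R_{d,c}\Phi\mathrm{W}_{d,c}Y\cong \mathrm{W}^R_{d,c}\mathrm{W}_{d,c}Y$. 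The right-hand side is therefore $\Hom(\mathrm{W}_{d,c}X,\mathrm{W}_{d,c}Y)$. By Corollary~\ref{cor:compact}, for $d\gg0$ the objects $\mathrm{W}_{d,c}X$, twisted into the weight $-\chi'$ block, compactly generate the source. Full faithfulness on a set of compact generators, together with preservation of compact objects, gives full faithfulness of the ind-extension. The obstacle here is compatibility: the composite isomorphism must be the map induced by $\Phi$ on Hom spaces, not merely an abstract isomorphism. I would verify this by checking that the isomorphism of Corollary~\ref{cor:PhiR} for $\mathrm{W}^R,\mathrm{H}^R$ is the mate of that for $\mathrm{W},\mathrm{H}$. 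This holds because both are built from the same kernel identification $\cP_{\mathrm{W}}\cong\cP_{\mathrm{H}}$ in Theorem~\ref{prop:WH} and Proposition~\ref{prop:comad}, and from Lemma~\ref{lem:pulls} for the Whittaker part.

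\emph{Essential surjectivity under Assumption~\ref{assum:Bcirc}.} Since $\Phi$ is fully faithful, continuous, and preserves compacts, its essential image is a cocomplete subcategory closed under colimits and containing $\Phi$ of all compacts. By the left square of Corollary~\ref{cor:comC}, this image contains $\mathrm{H}_{d,c}(\Coh(S^{\times d}))$. By Corollary~\ref{cor:Hcpt}, which uses Proposition~\ref{prop:cons2} and Lemma~\ref{lem:cptC}, these objects compactly generate $\IndL_{\mathcal{N}}(\cH(\chi))_{w'}$ for $d\gg0$. Alternatively, one can argue directly. If $B$ satisfies $\Phi^R B=0$, then $\mathrm{H}^R_{d,c}B\cong \mathrm{W}^R_{d,c}\Phi^R B=0$, so $B=0$ by Proposition~\ref{prop:cons2}. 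Hence $\Phi^R$ is conservative, and together with $\Phi^R\Phi\cong\id$ this makes $\Phi$ an equivalence. Finally, $\cB$-linearity holds because $\Phi$ is a $\cB$-relative Fourier--Mukai functor, so DL holds over $\cB$.
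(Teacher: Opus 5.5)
Your reduction is the same as the paper's: the unit of $\Phi\dashv\Phi^R$, conservativity of $\mathrm{W}^R_{d,c}$ from Corollary~\ref{cor:cons1}, and then essential surjectivity via Corollary~\ref{cor:Hcpt}. Your alternative for essential surjectivity, conservativity of $\Phi^R$ via Proposition~\ref{prop:cons2}, is also fine once full faithfulness is known.

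\textbf{The gap.} The gap is exactly the compatibility you flag, and the reason you give for it is not correct.
\begin{itemize}
\item The isomorphism $\mathrm{H}^R_{d,c}\Phi\cong\mathrm{W}^R_{d,c}$ of Corollary~\ref{cor:PhiR} is built from Proposition~\ref{prop:comad} and Lemma~\ref{lem:pulls}.
\item The isomorphism $\Phi\mathrm{W}_{d,c}\cong\mathrm{H}_{d,c}$ is built from Theorem~\ref{prop:WH} and the abstract isomorphism of Conjecture~\ref{conj:norm}.
\item These are not the same kernel identification. Theorem~\ref{prop:WH} compares $p_{12}^*\cP\otimes p_{13}^*\cE$ with $(p_3,q_3)_*p_2^*\cP$. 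Proposition~\ref{prop:comad} compares different kernels, involving $\cE^{\vee}[1]$, $\ev_1^!$ and the duality $\psi$ of Subsection~\ref{subsec:modify}.
\item Each isomorphism is produced separately, by Cohen--Macaulay extension from a fibrewise identification on an open substack.
\end{itemize}

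What you actually need is that the mate $\mathrm{W}^R_{d,c}\to\mathrm{W}^R_{d,c}\Phi^R\Phi\cong\mathrm{H}^R_{d,c}\Phi$ of the left square of Corollary~\ref{cor:comC} is invertible. This is not formal. Given Corollary~\ref{cor:cons1}, it is equivalent to the full faithfulness you are trying to prove. Nothing in your argument relates it to the independently constructed isomorphism of the right square.

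The Whittaker part has the same issue. Conjecture~\ref{conj:norm} gives only an abstract isomorphism, which would have to be the adjunction map coming from Lemma~\ref{lem:pulls}. Consequently your Hom computation yields only an abstract isomorphism $\Hom(\mathrm{W}_{d,c}X,\mathrm{W}_{d,c}Y)\cong\Hom(\Phi\mathrm{W}_{d,c}X,\Phi\mathrm{W}_{d,c}Y)$. It does not show that the map induced by $\Phi$ is bijective.

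\textbf{How the paper closes it.} Let $\gamma$ be the endomorphism of $\mathrm{W}^R_{d,c}$ obtained by composing that mate with the inverse of the right-square isomorphism. The paper proves directly that $\gamma=\id$:
\begin{itemize}
\item By Lemma~\ref{lem:Funcon}, $\gamma-\id$ is given, up to shift, by an endomorphism of the kernel $\cE^{(d)}=\bigotimes_i(p_i\times\id)^*\cE^{\vee}$ of $\mathrm{W}^R_{d,c}$. This sheaf is flat over $\cH(w)^{\mathrm{ss}}$, hence over $\cB$.
\item Over $\cB^{\mathrm{sm}}$ everything reduces to the classical Poincar\'e bundle on the relative Jacobian, where $\gamma=\id$ is checked directly.
\item So the image of $\gamma-\id$ is a subsheaf of $\cE^{(d)}$ supported over $\cB\setminus\cB^{\mathrm{sm}}$. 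It is therefore killed by a power of that ideal, which contradicts $\cB$-flatness.
\end{itemize}
Subsection~\ref{subsec:Hcpt} gives a variant. There $\gamma$ is invertible over $\cB^{\mathrm{ell}}$ by Arinkin's equivalence, lies in $\cH^0\Gamma(\mathcal{O}_{\cB})$, and extends invertibly across codimension $\geq 2$; this works except when $g=2$, $G=\GL_2$.

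You could instead try to prove your mate compatibility by a similar purity-plus-generic-check argument on kernels, but some such argument is required. Once $\gamma$ is invertible, $\mathrm{W}^R_{d,c}\Upsilon=0$ and the rest of your proof goes through.
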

\begin{proof}
    We consider the adjoint pair of functors in Corollary~\ref{cor:PhiR}
    \begin{align}\notag
        \xymatrix{
        \IndCoh_{\mathcal{N}}(\cH(w)^{\mathrm{ss}})_{-\chi'} \ar@<0.5ex>[r]^-{\Phi} & \IndL_{\mathcal{N}}(\cH(\chi))_{w'} \ar@<0.5ex>[l]^-{\Phi^R}
        }
    \end{align}
    We have the distinguished triangle of functors 
    \begin{align}\label{seq:up}
        \id \to \Phi^R \Phi \to \Upsilon
    \end{align}
    where the first arrow is given by the adjunction. 
    By applying $\mathrm{W}_{d, c}^R$, we obtain 
    \begin{align}\label{seq:W}
        \mathrm{W}_{d, c}^R \to \mathrm{W}_{d, c}^R\Phi^R \Phi \to \mathrm{W}_{d, c}^R \Upsilon.
    \end{align}
    By the diagram in Corollary~\ref{cor:PhiR}, we have 
    \begin{align}\label{isom:WH}
        \mathrm{W}_{d, c}^R\Phi^R \Phi \cong \mathrm{H}_{d, c}^R \Phi \cong \mathrm{W}_{d, c}^R
    \end{align}
so that the distinguished triangle (\ref{seq:W}) is of the form 
\begin{align}\label{seq:PhiR2}
        \mathrm{W}_{d, c}^R \stackrel{\gamma}{\to} \mathrm{W}_{d, c}^R \to \mathrm{W}_{d, c}^R \Upsilon.
\end{align}

We show that $\gamma=\id$. The kernel object of $\mathrm{W}_{d, c}^R$ is
$\cE^{(d)}[d]$, where $\cE^{(d)}$ is given by 
\begin{align*}
\cE^{(d)}:=
    \bigotimes_{i=1}^d (p_i \times \id)^* \cE^{\vee} \in \Coh^{\heartsuit}(S^{\times d}\times \cH(w)^{\mathrm{ss}}).
\end{align*}
Here $p_i$ is the projection from $S^{\times d}$ onto the corresponding factor. 
Since $\cE^{\vee}$ is a $\cH(w)^{\mathrm{ss}}$-flat family of one-dimensional 
sheaves on $S$ with support in $\cC\times_{\cB}\cH(w)^{\mathrm{ss}}$, 
the sheaf $\cE^{(d)}$ is flat over $\cH(w)^{\mathrm{ss}}$ and supported over 
$\cC^{\times_{\cB}d}\times_{\cB}\cH(w)^{\mathrm{ss}}$. Then $(\gamma-\id)[-d]$ is induced by a
morphism of $\cH(w)^{\mathrm{ss}}$-flat sheaves $\cE^{(d)} \to \cE^{(d)}$, see Lemma~\ref{lem:Funcon}. 
Since $h \colon\cH(w)^{\mathrm{ss}}\to \cB$ is also flat by~\cite[Theorem~2.2.4]{BD0}, 
the sheaf $\cE^{(d)}$ is also flat over $\cB$. 

 On the other hand, the isomorphisms (\ref{isom:WH}) are induced from the isomorphism (\ref{PWHU}), which is canonical by its construction. Over 
 $\cB^{\mathrm{sm}}:=\cB \cap \mathrm{B}^{\mathrm{sm}}$,  we can identify $\cH(w)^{\mathrm{ss}}$ with the relative 
 Jacobian and $\cE^{\vee}$ with the dual of Poincar\'e line bundle. From the above description it is straightforward to check that, over $\cB^{\mathrm{sm}}$, we have $\gamma|_{S^{\times d}\times\cH(w)^{\mathrm{ss}}\times_{\cB}\cB^{\mathrm{sm}}}=\id$.

 This implies that, if $\gamma \neq \id$, then the image of $(\gamma-\id)[-d]$ is a non-zero subsheaf 
 of $\cE^{(d)}$ supported over 
 $\cC^{\times_{\cB}d}\times_{\cB}\cH(w)^{\mathrm{ss}}\times_{\cB}\cB^{\mathrm{sg}}$,
 where $\cB^{\mathrm{sg}}:=\cB\setminus \cB^{\mathrm{sm}}$. However, as $\cB$ is smooth and irreducible and $\cB^{\mathrm{sg}} \subsetneq \cB$ is a closed subset, such a subsheaf is annihilated by
 $I^N$ for $N\gg 0$, where $I\subset \cB$ is the ideal sheaf which defines $\cB^{\mathrm{sg}}$. 
 This contradicts that $\cE^{(d)}$ is flat over $\cB$, therefore $\gamma=\id$. 

As $\gamma=\id$, from the triangle (\ref{seq:PhiR2}) we have $\mathrm{W}_{d, c}^R \Upsilon=0$. 
    Then by Corollary~\ref{cor:cons1}, we have $\Upsilon=0$. Therefore from 
    (\ref{seq:up}), we have the isomorphism 
    $\id \stackrel{\cong}{\to} \Phi^R \Phi$, i.e. $\Phi$ is fully-faithful. 

The objects of the form (\ref{objects:Ai}) come from the image of the functor (\ref{funct:main})
by Corollary~\ref{cor:comC}. Therefore by Corollary~\ref{cor:Hcpt}, under Assumption~\ref{assum:Bcirc}, 
the fully-faithful functor (\ref{funct:main}) is also essentially surjective, hence an equivalence.
\end{proof}


\begin{remark}\label{rmk:check}
In Subsection~\ref{subsec:Hcpt} we give another proof that $\gamma$ is an isomorphism, 
which, however, works except $g=2$ and $G=\GL_2$. 
\end{remark}
\section{Whittaker normalization for \texorpdfstring{$G=\GL_2$}{G=GL2}}
In this section, we restrict to the case of $G=\GL_2$ and show that 
Conjecture~\ref{conj:norm} holds in this case, see Proposition~\ref{prop:norm}.
Together with Theorem~\ref{thm:eq} and Example~\ref{exam1} (1), 
we obtain the main result: 
\begin{thm}\label{thm:gl2}
The DL holds over $\rB^{\mathrm{red}} \subset \rB$ for $G=\GL_2$. 
\end{thm}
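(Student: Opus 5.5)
The reduction to the Whittaker normalization is formal, and the normalization itself is where the work lies. Given Theorem~\ref{thm:eq} and Example~\ref{exam1}~(1), every open $\cB\subset \rB^{\mathrm{red,cl}}$ satisfies Assumption~\ref{assum:Bcirc} when $G=\GL_2$. So once Conjecture~\ref{conj:norm} is established for $\GL_2$ (Proposition~\ref{prop:norm}), we get an equivalence $\IndCoh_{\mathcal{N}}(\cH(w)^{\mathrm{ss}})_{-\chi'}\simeq \IndL_{\mathcal{N}}(\cH(\chi))_{w'}$ over $\cB=\rB^{\mathrm{red,cl}}$. By Remark~\ref{rmk:period}, both sides depend only on $(\chi,w)$ modulo $2$. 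The shift $(\chi,w)\mapsto(\chi',w')$ is by $(r^2-r)(g-1)=2(g-1)$, which is even, so this is DL over $\rB^{\mathrm{red,cl}}$ for every $(\chi,w)$. Remark~\ref{rmk:Bcl} then promotes it to $\rB^{\mathrm{red}}$.

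To prove Conjecture~\ref{conj:norm} for $\GL_2$, I would proceed in three steps.

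\textbf{Step 1: $\Phi(\mathcal{O})$ lies in the image of $j_!$.} Put $\cE_0:=\Phi(\mathcal{O}_{\cH(w)^{\mathrm{ss}}})$. This is $p_{2*}\cP$, and it is perfect by Lemma~\ref{lem:nilp} since $\Phi$ lands in $\LL_{\mathcal{N}}$ (Corollary~\ref{cor:rest}). I would build an explicit resolution of the rank-two Arinkin sheaf by vector bundles, using a Koszul-type resolution via the Abel--Jacobi presentation. This would let me compute $\wt(\nu^*\cE_0)$ for each Harder--Narasimhan map $\nu$ as in (\ref{map:nupt}); for $\GL_2$ these are $\nu\colon\mathrm{pt}\mapsto E_1\oplus E_2$ with $\mu(E_1)>\mu(E_2)$. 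I would then check the strict inequality of Remark~\ref{rmk:jshrink}. That gives $\cE_0\cong j_!j^*\cE_0$, so $\cE_0$ is determined by its restriction to the semistable locus, which lies in the quasi-BPS category.

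\textbf{Step 2: compare on the semistable locus.} By Step~1 it suffices to show $j^*\cE_0\cong \overline{s}_!\mathcal{O}_{\cB}$. Lemma~\ref{lem:pulls} and adjunction produce a canonical map $\overline{s}_!\mathcal{O}_{\cB}\to j^*\cE_0$, adjoint to the identification $s^*\Phi(\mathcal{O})\cong h_*\mathcal{O}$ in (\ref{apply:d2}). Strictly semistable points prevent a direct use of Proposition~\ref{prop:MRV}. So I would pass to an étale cover $T\to\cB$ on which the two components of reducible spectral curves are distinguished. On $T$, a slightly perturbed polarization $\underline{q}$ is generic, and the quasi-BPS category for Gieseker stability, with the weights fixed by the window, is identified with $\Coh$ of the $\underline{q}$-stable stack. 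This identification is made by restricting the windows, using the weight description of $\LL(\cH(\chi_0)^{\mathrm{ss}})_{w'}$ at the polystable points $E_1\oplus E_2$.

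\textbf{Step 3: conclude from MRV.} After the base change to $T$, Proposition~\ref{prop:MRV} says $\Phi_{X/T}$ is an equivalence, with inverse given by the kernel (\ref{kernel:inv}). Applying that inverse to $\overline{s}_!\mathcal{O}$, which is a shifted structure sheaf of the Hitchin section, gives the structure sheaf up to $\omega_{M/T}$ and shift. Remark~\ref{rmk:dualizing} and the shift (\ref{formula:s!}) cancel these. The canonical map is therefore an isomorphism on $T$, and by étale descent it is an isomorphism on $\cB$.

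I expect the main obstacle to be the weight computation in Step~1, together with the matching of the window with the perturbed stability in Step~2. Both require explicit control of the Arinkin sheaf near strictly semistable and Harder--Narasimhan points. For $\GL_2$ this control should be available because there is only one destabilizing type $(1,1)$ and the singularities are $A_m$.
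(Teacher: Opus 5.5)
You reduce the theorem to the Whittaker normalization via Theorem~\ref{thm:eq}, Example~\ref{exam1}~(1), the parity of the weight shift and Remark~\ref{rmk:Bcl}; this is exactly the paper's reduction. Your outline of the normalization also matches the paper's architecture: show $\Phi(\mathcal{O})\in\mathrm{Im}(j_!)$, take the adjunction map $\overline{s}_!\mathcal{O}_{\cB}\to j^*\Phi(\mathcal{O})$, pass to an \'etale cover $T$, perturb the polarization, and apply Proposition~\ref{prop:MRV}. What the proposal never addresses is the strictly semistable locus, on either side, which exists exactly when $w$ (equivalently $w'$) is even. That is where the proof lives.

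\textbf{Source side.} For $w$ even, $\cH_T(w)^{\mathrm{ss}}$ contains the $\Theta$-stratum $\cS$ of extensions $0\to\mathcal{L}_2\to E\to\mathcal{L}_1\to 0$, which are not $h_{T,\varepsilon}$-semistable.
\begin{itemize}
\item Proposition~\ref{prop:MRV} is an equivalence with source $\Coh(\cH_T(w)^{\varepsilon\text{-ss}})_0$, not $\Coh(\cH_T(w)^{\mathrm{ss}})_0$. So in Step~3 you cannot simply invert it.
\item $\Phi_T(\mathcal{O}_{\cH_T(w)^{\mathrm{ss}}})$ and $\Phi_T^{\varepsilon}(\mathcal{O}_{\cH_T(w)^{\varepsilon\text{-ss}}})$ differ by the local cohomology term $p_{T*}\Gamma_{q_T^{-1}(\cS)}(\cP_T)$. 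This must be shown to vanish at the relevant target points.
\item The paper proves this vanishing in Step~4 of Proposition~\ref{prop:induceT} and in Lemma~\ref{lem:vanish2}. It shows that $\cHom(\mathcal{O}_{\cS_n},\mathcal{V}_M)$ has strictly negative weights for the cocharacter $\lambda$ of the center of $\cS$, using the resolution of $\cP_M$ by $\mathcal{V}_M$ and the degree inequalities on $M$. Hence the term is killed by pushforward to the good moduli space.
\end{itemize}
The same issue breaks your Step~1. A resolution of $\cP$ only gives pointwise bounds on $\wt(\nu^*\cP_E)$. For strictly semistable $E$ the lower endpoint can be attained, since Proposition~\ref{prop:VL} is strict only for stable $E$. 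So no such computation yields the strict inequality of Remark~\ref{rmk:jshrink}. The paper first removes $\cS$ using the vanishing above. It then uses $h_{T,\varepsilon}$-stability of each $E_y$, together with the semiorthogonal-summand argument of Lemma~\ref{lem:sodsum}, to pass from the objects $\Phi_T^{\varepsilon}(\mathcal{O}_y)$ to $\Phi_T^{\varepsilon}(\mathcal{O})$. Separately, deriving perfectness of $\Phi(\mathcal{O})$ from Corollary~\ref{cor:rest} is circular, since that corollary assumes Conjecture~\ref{conj:norm}. Perfectness can be obtained directly, but not that way.

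\textbf{Target side.} Your Step~2 claims that restriction identifies $\LL(\cH_T(\chi_0)^{\mathrm{ss}})_{w'}$ with $\Coh(\cH_T(\chi_0)^{\varepsilon\text{-ss}})_{w'}$. This fails for $w'$ even.
\begin{itemize}
\item At $M_1\oplus M_2$ with $\deg M_i=1-g$, the closed quasi-BPS window $[-(g-1),g-1]+w'/2$ contains $2g-1$ integral weights. The window for the $\Theta$-stratum has width $2g-2$.
\item The paper accordingly obtains only $\LL(\cH_T(\chi_0)^{\mathrm{ss}})_{w'}=\langle p_*q^*\LL(\mathcal{Z})_{\delta},\mathcal{W}\rangle$ with $\mathcal{W}\simeq\LL(\cH_T(\chi_0)^{\varepsilon\text{-ss}})_{w'}$. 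The first piece is invisible on the $\varepsilon$-stable locus.
\item So checking the adjunction map on one perturbed-stable locus does not show it is an isomorphism.
\end{itemize}
The paper uses both perturbations $h_\varepsilon$ and $h'_\varepsilon$. Their stable loci cover $\cH_T(\chi_0)^{\mathrm{ss}}\setminus\mathcal{Z}$, so the cone of the adjunction map is supported on the polystable locus $\mathcal{Z}$. The paper then shows, via the decomposition above, that restricting the quasi-BPS category to the complement of $\mathcal{Z}$ is conservative. The point is that $p_*q^*A'$ vanishing on $\mathcal{S}\setminus\mathcal{Z}$ forces $A'=0$, because $\mathcal{S}\to\mathcal{Z}$ is a vector bundle.
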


Below we assume that $G=\GL_2$. By Remark~\ref{rmk:Bcl}, it is enough to prove 
the case of
\begin{align*}
    \cB=\rB^{\mathrm{red, cl}} \subset \rB^{\mathrm{cl}}.
\end{align*}
As in the previous sections, we use the notation
\begin{align*}
    \cH=\Hig_G \times_{\rB} \cB.
\end{align*}
Recall that we have the universal families
\begin{align}\label{univ:E}
    \cE \in \Coh^{\heartsuit}(\cC\times_{\cB}\cH), \ 
    \cF \in \Coh^{\heartsuit}(C \times \cH).\end{align}
They are related by $\pi_{*}\cE=\cF$, where 
$\pi \colon \cC\times_{\cB}\cH \to C\times \cH$ is the projection. 

We take $b\in \cB$ such that the corresponding spectral curve $\cC_b \subset S$ is not irreducible; so it is of the form
\begin{align*}
    \cC_b=C_1 \cup C_2, \ C_1 \cap C_2=\sum_{i=1}^{\ell} m_i x_i
\end{align*}
for $x_i \in C_1 \cap C_2$ such that $m_1+\cdots+m_{\ell}=2g-2$, and $C_i \stackrel{\cong}{\to} C$.

\subsection{Resolution of Arinkin sheaf}\label{subsec:resol}
Let $E\in \Coh(\cC_b)$ be a rank one (not necessarily semistable) 
torsion-free sheaf. As in (\ref{PE}), 
the pull-back of the Arinkin sheaf (\ref{Arsheaf0}) yields 
the Cohen--Macaulay sheaf 
\begin{align*}
    \cP_E \in \Coh^{\heartsuit}(\cH_b).
\end{align*}
Below we construct a resolution of the above sheaf by vector bundles, which will be used to show that it lies in the limit category. 

\begin{lemma}\label{lem:LEQ}
    There is an exact sequence
    of the form 
    \begin{align}\label{exact:LEQ0}
0\to \mathcal{L} \to E \to Q \to 0
    \end{align}
    where $\mathcal{L}$ is a line bundle on $\cC_b$ and 
    $Q\cong\oplus_{i=1}^{\ell} \mathcal{O}_{Z_i}$, where 
    $Z_i$ is a closed subscheme 
    \begin{align*}
        Z_i \hookrightarrow (C_1 \cap C_2)_{x_i}=\Spec k[t]/t^{m_i}, 
    \end{align*}
    i.e. $Z_i=\Spec k[t]/t^{n_i}$ for some $0\leq n_i \leq m_i$.
\end{lemma}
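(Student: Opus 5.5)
Since the statement is local at the singular points of $\cC_b$, the plan is to reduce to a problem about modules over the complete local ring at each singular point and then glue. First, I would note that for $G=\GL_2$ with $\cC_b=C_1\cup C_2$ and $C_i\cong C$ smooth, the singular points of $\cC_b$ are exactly the points $x_i\in C_1\cap C_2$. At such a point, as in Example~\ref{exam1}(1), we have
\begin{align*}
\widehat{\mathcal{O}}_{\cC_b,x_i}\cong k[[u,y]]/(y^2-u^{2m_i})\cong k[[u,v]]/(v(v-u^{m_i}))=:R_i.
\end{align*}
Its normalization is $\widetilde{R}_i=k[[t_1]]\oplus k[[t_2]]$, and $\widetilde{R}_i/R_i\cong k[t]/t^{m_i}\cong \mathcal{O}_{(C_1\cap C_2)_{x_i}}$. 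Away from the $x_i$ the curve is smooth, so $E$ is already a line bundle there.

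The key step is local: for a rank one torsion-free $R_i$-module $M=\widehat{E}_{x_i}$, I want an embedding $R_i\hookrightarrow M$ whose cokernel is a cyclic module $k[t]/t^{n_i}$, with $0\le n_i\le m_i$, killed by the conductor. To get it, I would use Lemma~\ref{lem:delta}, applied after dualizing, or directly: choose $R_i\hookrightarrow M$ that minimizes $\dim M/R_i$. Lemma~\ref{lem:delta} then gives
\begin{align*}
R_i\subset M\subset \widetilde{R}_i.
\end{align*}
Because $\widetilde{R}_i/R_i\cong k[t]/t^{m_i}$ is a chain ring, as in Assumption~\ref{assum:Bcirc} and Remark~\ref{rmk:ideal}, the $R_i$-submodule $M/R_i$ of it is an ideal $t^{m_i-n_i}k[t]/t^{m_i}\cong k[t]/t^{n_i}$. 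Concretely, $M/R_i$ is an $R_i$-submodule of $\widetilde{R}_i/R_i$, and the $R_i$-module structure on $\widetilde{R}_i/R_i$ factors through $R_i/I\cong k[t]/t^{m_i}$, so its submodules are exactly its ideals. Moreover, $M/R_i$ is supported scheme-theoretically on $(C_1\cap C_2)_{x_i}$, since $R_i/I=\mathcal{O}_{C_1\cap C_2,x_i}$.

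Next I globalize. The local embeddings $R_i\hookrightarrow \widehat{E}_{x_i}$ correspond to local generators $s_i$ of $E$ near $x_i$ up to completion. Since the cokernel has finite length, a section of $E$ that agrees with $s_i$ to high enough order generates the same submodule, so it suffices to approximate each $s_i$ modulo $\mathfrak{m}_{x_i}^N E$. I would define $\mathcal{L}\subset E$ as the subsheaf that equals $E$ away from the $x_i$ and near $x_i$ is generated by a local lift of $s_i$. This is a gluing of local subsheaves, which can be done via Beauville--Laszlo, or simply as the kernel of $E\to\bigoplus_i \widehat{E}_{x_i}/R_i s_i$. The result $\mathcal{L}$ is invertible, because it is locally free of rank one at each $x_i$ and agrees with $E$ elsewhere, and $E/\mathcal{L}\cong\bigoplus_i\mathcal{O}_{Z_i}$ with $Z_i=\Spec k[t]/t^{n_i}$. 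This yields the exact sequence (\ref{exact:LEQ0}).

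The main obstacle is the local classification: showing that a minimizing embedding lands inside $\widetilde{R}_i$ and that the quotient is cyclic and killed by the conductor ideal. The first part is Lemma~\ref{lem:delta}. For the second, one needs to check that $R_i$ acts on $\widetilde{R}_i/R_i$ through $k[t]/t^{m_i}$, identifying $t$ with $u$ modulo the conductor, and then invoke the chain-ring property. The gluing step is routine.
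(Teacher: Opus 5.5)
Your proposal is correct and follows essentially the same route as the paper. Lemma~\ref{lem:delta} gives $\mathcal{L}\subset E\subset \nu_{*}\nu^{*}\mathcal{L}$, and since $\nu_{*}\nu^{*}\mathcal{L}/\mathcal{L}$ is $k[t]/t^{m_i}$ at each $x_i$, the quotient $E/\mathcal{L}$ has the stated form; the only difference is that you spell out the completion-and-gluing step (taking the kernel of $E\to\bigoplus_i \widehat{E}_{x_i}/R_i s_i$), which the paper leaves implicit.
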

\begin{proof}
From Lemma~\ref{lem:delta}, there is a line bundle $\mathcal{L}$ with 
an injection $\mathcal{L} \hookrightarrow E$ such that 
$E/\mathcal{L} \subset \nu_{*}\nu^* \mathcal{L}$, where 
$\nu \colon \widetilde{\cC}_b \to \cC_b$ is the normalization. 
Then the lemma follows from the fact that 
$\nu_{*}\nu^{*}\mathcal{L}/\mathcal{L}$ is supported at $\{x_1, \ldots, x_{\ell}\}$, where at each $x_i$ its stalk is 
isomorphic to $k[t]/t^{m_i}$.
\end{proof}

If $\chi(Q)>0$, choose an index \(j\) with \(n_j>0\); after renumbering assume \(j=1\). Then choose a surjection
$\mathcal{O}_{Z_1} \twoheadrightarrow \mathcal{O}_{x_1}$ and consider the 
exact sequence 
\begin{align}\label{EprimeEOx}
    0\to E' \to E \to \mathcal{O}_{x_1} \to 0. 
\end{align}
Here $E'$ fits into an exact sequence 
\begin{align}\label{Qprime}
   0\to \mathcal{L} \to E' \to Q' \to 0
\end{align}
such that $\chi(Q')=\chi(Q)-1$. 
The sequence (\ref{EprimeEOx}) corresponds to a map 
\begin{align}\label{point:hecke}
   \tau_0 \colon \Spec k\to  \cHecke:=\mathrm{Hecke}_G'\times_{\rB}\cB . 
\end{align}
Here $\mathrm{Hecke}_G'$ is the stack of Hecke correspondences in the diagram (\ref{dia:gregprime}).

\begin{lemma}\label{lem:Delta0}
For each $D\geq 1$, there is a pointed smooth and irreducible affine variety $(\Delta, 0)$ for $0\in \Delta$ with $\dim \Delta=D$
and an extension of the map (\ref{point:hecke}) 
\begin{align}\label{iota}
\tau \colon \Delta \to  \cHecke, \ \tau|_{0}=\tau_0
\end{align}
such that the composition 
\begin{align*}
    \Delta \to  \cHecke
    \to \cB
\end{align*}
sends the generic point of $\Delta$ to $\cB^{\mathrm{sm}} \subset \cB$, the open subset 
corresponding to smooth spectral curves.
\end{lemma}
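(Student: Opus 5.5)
The plan is to build $\tau$ from a smoothing of the whole Hecke datum: deform the spectral curve together with the two sheaves and the point $x_1$. The target $\cHecke$ is a stack over $\cB$. Its points over $b$ are sequences $0\to E_1\to E_2\to \mathcal{O}_x\to 0$ with $E_i$ rank one torsion-free on $\cC_b$. So it suffices to produce a family over a smooth pointed affine $(\Delta,0)$ of such sequences whose curves are generically smooth, specializing at $0$ to (\ref{EprimeEOx}).

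\emph{Step 1: an open neighbourhood that dominates $\cB$.} The map $\ev_1\colon \cHecke\to\cH$ is proper and quasi-smooth by Lemma~\ref{lem:qsmooth}. Consider the open locus of $\cHecke$ where $x$ is a smooth point of $\cC_b$ and $E_2$ is a line bundle. There the fiber of $\ev_1$ over $E_2$ is the curve $\cC_b$ itself, one choice of extension for each $x$. Hence $\cHecke$ is generically a $\cC$-bundle over $\cH^{\mathrm{reg}}$. Moreover $\cH\to\cB$ is flat (Theorem~\ref{thm:higgs:basic}), and $\cH^{\mathrm{reg}}$ is dense in every fiber (Remark~\ref{rmk:dense}). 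So it is enough to show that $\tau_0$ lies in the closure of the preimage of $\cB^{\mathrm{sm}}$, that is, in an irreducible component of $\cHecke$ dominating $\cB$.

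\emph{Step 2: smoothing $\tau_0$.} I would realize $\tau_0$ through the nested Hilbert scheme picture from the proof of Proposition~\ref{prop:PE}. Twist by a sufficiently ample line bundle pulled back from $C$, using Remark~\ref{rmk:period} (this doesn't change the problem). After the twist, $E^{\vee}$ and $E'^{\vee}$ admit compatible generically surjective sections. The datum $\tau_0$ then comes from a point of $\cHecke^{\dag}\subset \cB\times\Hilb^{d,d-1}(S)$, namely a flag $Z_1\subset Z_2\subset \cC_b$ with $Z_2/Z_1$ supported at $x_1$.

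The nested Hilbert scheme $\Hilb^{d,d-1}(S)$ is smooth and irreducible of dimension $2d$ (Cheah, cf.~\cite{Ha}). Curves in the linear system through a given flag form the fiber of $\cHecke^{\dag}\to \Hilb^{d,d-1}(S)$. For $d$ small relative to the ampleness, this is an open subset of a linear subspace of $\cB$ of constant codimension. So $\cHecke^{\dag}$ is smooth and irreducible near our point, and smooth over $\cHecke$.

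The general point of $\cHecke^{\dag}$ has $Z_2$ reduced and general. A general curve through $d$ general points is smooth, since $\cB^{\mathrm{sm}}$ is dense and passing through points imposes only linear conditions. Hence the image of $\cHecke^{\dag}$ in $\cB$ meets $\cB^{\mathrm{sm}}$.

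\emph{Step 3: extracting $\Delta$.} Take an affine open neighbourhood $V$ of the lift of $\tau_0$ in the smooth irreducible $\cHecke^{\dag}$, and compose with $p^{\dag}$ to get a map to $\cHecke$. Inside $V$, choose a $D$-dimensional smooth irreducible closed subvariety $\Delta\ni 0$ whose generic point maps to $\cB^{\mathrm{sm}}$. For example, cut $V$ by general hypersurfaces through $0$. By Bertini and the openness of the preimage of $\cB^{\mathrm{sm}}$, such cuts stay smooth at $0$ and meet the dense open preimage. This works as long as $D\leq\dim V$, and $\dim V$ grows with $d$, so any $D$ is reached.

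\emph{Main obstacle.} The hard step is the smoothness and irreducibility of $\cHecke^{\dag}$ near a flag supported at a singular point of $\cC_b$. The point $x_1$ lies on $C_1\cap C_2$, so the flag is highly non-generic. The condition ``$Z_2\subset$ curve'' must impose independent linear conditions uniformly. I would handle this by choosing the twist so that $H^0$ of the relevant twisted ideal sheaves surjects, making the fibers vector-space-like of constant dimension.
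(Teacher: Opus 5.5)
Your reduction (it suffices that $\tau_0$ lie on an irreducible component of $\cHecke$ dominating $\cB$, and then cut down) is correct. Step~2, which is supposed to prove the domination, does not work. The fiber of $\cHecke^{\dag}\to\Hilb^{d,d-1}(S)$ over a flag is $\{b\in\cB : Z_2\subset\cC_b\}$. The spectral curves form the \emph{fixed} affine space $\rB^{\mathrm{cl}}$ of dimension $N=(g-1)r^2+1$. So these fibers have constant codimension $d$ only if $Z_2$ imposes independent conditions, which needs at least $d\le N$.

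But $d=\deg_{\cC_b}E'(m)$ is dictated by the twist. Twisting by a line bundle from $C$ only increases $d$ and does not change the family of curves, so the remedy in your last paragraph cannot help. The lemma must cover sheaves whose bidegree is arbitrarily unbalanced, e.g.\ $E=M_1\oplus M_2$ with $\chi_1-\chi_2$ large, as used in Step~4 of Proposition~\ref{prop:induceT}. For such $E$, any twist giving a generically surjective section has $d>N$. A general flag of that length then lies on no spectral curve, so the claim that a general point of $\cHecke^{\dag}$ has $Z_2$ general is false.

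Even granting domination, the asserted smoothness of $\cHecke^{\dag}$ at the lift of $\tau_0$ fails in general. After a large twist, $\cHecke^{\dag}\to\cHecke$ is smooth. Also $\ev_2\colon\cHecke\to\cH$ is flat near $\tau_0$, being quasi-smooth with fibers of the expected dimension one. So smoothness at the lift would force $\cH$ to be smooth at $E$. But $\cH$ is singular at non-simple sheaves such as $M_1\oplus M_2$: there $\Ext^2_S(E,E)_0\neq 0$ and the quadratic obstruction is nondegenerate. So cutting by hypersurfaces in Step~3 need not produce a smooth $\Delta$.

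The missing idea is a dimension count rather than a smoothness statement. By Lemma~\ref{lem:qsmooth}, $\cHecke$ is quasi-smooth of virtual dimension $\dim\cH+1$, so every irreducible component through $\tau_0$ has dimension at least $\dim\cH+1$. On the other hand, the fibers of $\ev_2$ are at most one-dimensional. At a smooth point $x$ there is one surjection $E\to\mathcal{O}_x$ up to scalar, and at the finitely many singular points $\dim\Hom(E,\mathcal{O}_x)\le 2$ by Lemma~\ref{lem:LEQ}. Since $h\colon\cH\to\cB$ is flat, a component whose image lies in a proper closed subset of $\cB$ would have dimension at most $\dim\cH$. Hence every component through $\tau_0$ dominates $\cB$.

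To get a smooth $\Delta$, take a component of a smooth atlas through a lift of $\tau_0$ and resolve its singularities. Then take a smooth affine neighborhood of a lift, and cut by general hypersurfaces through it, or multiply by an affine space when $D$ exceeds its dimension. This is the paper's proof.
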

\begin{proof}
It is enough to show that each irreducible component of $\cHecke$ containing the image of $p=\tau_0(\Spec k)$ is
dominant over $\cB$ 
under the map $\cHecke \to \cB$. 
Indeed suppose that the above statement holds. Let $W\to \cHecke$ be an atlas with a point $p'\in W$ 
which maps to $p$, and take an irreducible component $p' \in W' \subset W$. 
We take a resolution of singularities $W'' \to W'$ and a lift $p'' \in W''$ of $p'$. Then we take a smooth affine open neighborhood 
$p''\in W'''$; by the construction $W''' \to \cB$ is dominant. A desired $(\Delta, 0)$ is 
constructed by taking a generic hypersurface $0=p'' \in \Delta \subset W'''$
if $D<\dim W'''$, or if $D \geq \dim W'''$ take $\Delta=W''' \times \mathbb{A}^{D-\dim W'''}$, $0=(p'', 0)$ and construct a map $\Delta \to \cHecke$ by 
composing with the projection $\Delta \to W'''$.

We have the factorization 
\begin{align}\label{map:heckeB}\cHecke
    \stackrel{\ev_2}{\to} \cH \stackrel{h}{\to} \cB. 
\end{align}
The map $h$ is flat and surjective, see~\cite[Theorem~2.2.4]{BD0}. 
On the other hand, 
the virtual dimension of a fiber of $\ev_2$ is one by Lemma~\ref{lem:qsmooth}, and it equals 
the dimension of its classical truncation. 
Indeed from the exact sequence (\ref{exact:LEQ0}), we see that $\Hom(E, \mathcal{O}_x)$ is at most two-dimensional, hence $\Hom(E, \mathcal{O}_x)/\mathbb{G}_m$ for the scaling action of $\mathbb{G}_m$ is at most one-dimensional (and zero-dimensional if $x\in \cC_b$ is a smooth point).
It follows that the fibers of 
$\ev_2$ are one-dimensional on $h^{-1}(U)$ for an open neighborhood $b\in U \subset \cB$. Therefore the fibers of (\ref{map:heckeB}) are classical 
in a neighborhood of $b\in \cB$; this does not happen if 
there is an irreducible component of $\cHecke$ which is not 
dominant onto $\cB$ in a neighborhood of $b\in \cB$. 
Therefore we obtain the lemma. 
\end{proof}
\begin{remark}\label{rmk:Heckeflat}
From the proof of the above lemma, the stack $\cHecke$ is classical. 
Indeed $\cH$ is classical and quasi-smooth, the stack $\cHecke$ is quasi-smooth, 
and the classical truncations of the fibers of $\ev_2$ has the expected dimension one;
this implies that $\cHecke^{\mathrm{cl}}$ has the expected dimension, hence 
$\cHecke$ is classical. 
\end{remark}
The map (\ref{iota}) corresponds to an exact sequence 
in $\Coh^{\heartsuit}(\cC_{\Delta})$
where $\cC_{\Delta}:=\cC\times_{\cB}\Delta$
\begin{align}\label{exact:EQ}
    0\to E_{\Delta}'\to E_{\Delta} \to R_{\Delta}\to 0
\end{align}
such that each term is flat over $\Delta$, and restricts to the 
sequence (\ref{EprimeEOx}) at $0\in \Delta$. 

Let $\tau_i$ for $i=1, 2$ be the composition 
\begin{align*}
\tau_i \colon
    \Delta \stackrel{\tau}{\to} \cHecke \stackrel{\ev_i}{\to}\cH.
\end{align*}
We set $\cH_{\Delta}:=\Delta\times_{\cB}\cH$ and define 
\begin{align*}
    \cP_{\Delta i}:=(\tau_i \times \id)^* \cP^{\flat} \in 
    \Coh^{\heartsuit}(\cH_{\Delta}), \ i=1, 2.
\end{align*}
Here $\cP^{\flat}$ is the Cohen--Macaulay sheaf in Theorem~\ref{thm:Pflat}. 
Since $\cP^{\flat}$ is flat over both factors of $\cH$, 
the sheaves $\cP_{\Delta i}$ are 
Cohen--Macaulay sheaves on $\cH_{\Delta}$ flat over $\Delta$, 
such that $\cP_{\Delta 2}|_{0}\cong \cP_E$ and 
$\cP_{\Delta 1}|_{0}\cong \cP_{E'}$.

We also consider the covering involution of $\mathcal{C}_{\Delta} \to C\times \Delta$, 
denoted by $\sigma$. Note that $x_1 \in \cC_b=\cC_{\Delta}\times_{\Delta}0$ is fixed by $\sigma$. 
We have the following lemma, whose proof will be given in Subsection~\ref{subsec:proofxy}:
\begin{lemma}\label{lem:sigma} After shrinking $\Delta$ and replacing the map (\ref{iota}) if necessary, there is an exact sequence 
    \begin{align}\label{ext:sigma}
        0\to E_{\Delta}' \to E_{\Delta}^{\sigma} \to \sigma^* R_{\Delta} \to 0
    \end{align}
    such that $E_{\Delta}^{\sigma}$ is a $\Delta$-flat family of pure one-dimensional sheaves on the fibers of 
    $\cC_{\Delta}\to \Delta$. Moreover it is isomorphic to (\ref{exact:EQ}) at $0\in \Delta$ under the 
    identifications 
    \begin{align}\label{identify:sigma}\sigma^* R_{\Delta}|_{0} \cong \sigma^* \mathcal{O}_{x_1} \cong 
    \mathcal{O}_{x_1} \cong R_{\Delta}|_{0}.
\end{align}
\end{lemma}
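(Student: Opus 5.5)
The plan is to obtain $E^{\sigma}_{\Delta}$ as a Hecke modification of $E'_{\Delta}$ at the moving point $\sigma(x(t))$, where $x\colon \Delta\to \cC_{\Delta}$ is the section with $R_{\Delta}=\mathcal{O}_{x(\Delta)}$. To produce this modification in a family, I will use the properness and quasi-smoothness of $(\ev_3,\ev_1)$ from Lemma~\ref{lem:qsmooth}. Concretely, $\sigma^*R_{\Delta}=\mathcal{O}_{\sigma x(\Delta)}$ is again a flat family of skyscraper sheaves, and $\sigma(x(0))=x_1$. The pair $(E'_{\Delta},\sigma\circ x)$ defines a morphism
\begin{align*}
\kappa \colon \Delta \to S\times \cH, \quad t\mapsto (\sigma x(t), E'_t).
\end{align*}
Define $\Delta':=\Delta\times_{\kappa, S\times\cH,(\ev_3,\ev_1)}\cHecke$. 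By Lemma~\ref{lem:qsmooth}, $\Delta'\to \Delta$ is proper and quasi-smooth of relative virtual dimension $-1+1=0$: the fibre over $t$ is $(\Ext^1(\mathcal{O}_{\sigma x(t)},E'_t)\setminus\{0\})/\mathbb{G}_m$, restricted to those extensions whose middle term is pure.

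The distinguished point of $\Delta'$ over $0$ is the original extension (\ref{EprimeEOx}). Via (\ref{identify:sigma}), $\sigma^*\mathcal{O}_{x_1}\cong\mathcal{O}_{x_1}$, so the class of (\ref{EprimeEOx}) in $\Ext^1(\mathcal{O}_{x_1},E')$ gives a point $p'\in\Delta'$ over $0$. Its middle term is $E$, which is pure, so $p'$ lies in the open locus where the middle term is pure.

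Next I control dimensions. By Gorenstein duality, $\Ext^1_{\cC_b}(\mathcal{O}_y,E')\cong\Hom(E',\mathcal{O}_y\otimes\omega)^{\vee}$. From (\ref{Qprime}), as in the proof of Lemma~\ref{lem:Delta0}, this space has dimension at most $2$. Hence fibres of $\Delta'\to\Delta$ have dimension at most $1$ near $0$. Over the generic point of $\Delta$, which lies over $\cB^{\mathrm{sm}}$, $E'_t$ is a line bundle on a smooth curve, so $\Ext^1$ is one-dimensional and the fibre is a single reduced point. Since $\Delta'$ is quasi-smooth of virtual dimension $D=\dim\Delta$ over $\Delta$, every irreducible component of $\Delta'^{\mathrm{cl}}$ through $p'$ has dimension at least $D$. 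Its image in $\Delta$ has fibres of dimension at most $1$, so for $D\geq 2$ the image has dimension at least $D-1$. I need to exclude a component with image exactly a divisor through $0$ and one-dimensional fibres over it.

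This dimension step is the one I expect to be the main obstacle. I will first try to bound the locus in $\Delta$ where $\Ext^1$ jumps to dimension $2$. This locus lies over the locus in $\cB$ where $\sigma x(t)$ is a singular point of the spectral curve, and one can choose $\tau$ in Lemma~\ref{lem:Delta0} so that this locus has codimension at least $2$ in $\Delta$, for instance by increasing $D$ and taking $\Delta$ generic. The fibres are then one-dimensional only over a codimension-$\geq 2$ locus, so any component through $p'$ dominates $\Delta$. Failing that, I will fall back on the argument of Lemma~\ref{lem:Delta0}: the fibres are classical of expected dimension near $0$, so $\Delta'$ is classical and flat of relative dimension $0$ generically, which again forces dominance.

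With a dominating component in hand, I argue as in the proof of Lemma~\ref{lem:Delta0}. Take a smooth affine pointed variety $(\Delta'',0'')$ mapping to that component, with $0''\mapsto p'$, and dominant over $\Delta$; cut it down to dimension $D$ if necessary. Then replace $\Delta$ by $\Delta''$, pulling back $\tau$, and hence (\ref{exact:EQ}), along $\Delta''\to\Delta$; this is the "replacing the map (\ref{iota})" permitted in the statement. Over the new base, the universal extension gives the sequence (\ref{ext:sigma}) with $\Delta$-flat terms. The new $\Delta''$ still has generic point over $\cB^{\mathrm{sm}}$, since it dominates the old $\Delta$. At $0$ the sequence is (\ref{EprimeEOx}) under (\ref{identify:sigma}), by the choice of $p'$.

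Finally, purity of the fibres of $E^{\sigma}_{\Delta}$ is an open condition in flat families and holds at $0$ because the fibre there is $E$. So after shrinking $\Delta$ to a neighbourhood of $0$, $E^{\sigma}_{\Delta}$ is a $\Delta$-flat family of pure one-dimensional sheaves, as required.
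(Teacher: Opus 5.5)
Your plan breaks down at exactly the step you flagged, the dominance step, and neither of your fixes repairs it.

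First, the dimension count is off. By Lemma~\ref{lem:qsmooth}, $(\ev_3,\ev_1)$ has relative virtual dimension $-1$. Its base change $\Delta'\to\Delta$ is therefore quasi-smooth of relative virtual dimension $-1$, which only tells you that components of $\Delta'^{\mathrm{cl}}$ through $p'$ have dimension $\geq D-1$, not $\geq D$.

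Your extra ``$+1$'' would require a reduced quasi-smooth structure relative to the divisor $\cC\times_{\cB}\cH\subset S\times\cH$. You do not construct one, and it is least clear exactly at the points that matter. When $n_1\geq 2$, $E'$ is not locally free at $x_1$. Then $\Ext^1(\mathcal{O}_{x_1},E')$ and $\Ext^2_S(\mathcal{O}_{x_1},E')\cong\Hom(E',\mathcal{O}_{x_1})^{\vee}$ are both $2$-dimensional, and the fibre of $\Delta'\to\Delta$ over $0$ is a whole $\mathbb{P}^1$.

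With the correct bound $D-1$, a $\mathbb{P}^1$-bundle over a codimension-two jump locus in $\Delta$ is an admissible non-dominant component through $p'$. So even your codimension-two claim would not be enough. That claim is also unsupported: the $\Delta$ of Lemma~\ref{lem:Delta0} comes from a resolution of a component of an atlas of $\cHecke$, and its exceptional divisors over $\tau_0$ may lie entirely in the jump locus.

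The fallback fails too, since the fibre over $0$ is $\mathbb{P}^1$ and not of expected dimension. The underlying issue is that, for a fixed base, whether $p'$ lies on the closure of the birational part of $\Delta'$ depends on how $\Delta$ approaches $\tau_0$. Here the birational part means the part over the locus where $E'_t$ is locally free at $\sigma x(t)$. For $D=1$, for instance, that closure meets the $\mathbb{P}^1$ over $0$ in a single limit point, which need not be $p'$.

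The missing idea, and the paper's route, is to stop fixing $\tau$ and instead deform both modifications simultaneously on a universal, equidimensional space.
\begin{itemize}
\item Let $\widetilde{\cHecke}=\cHecke\times_{\ev_1,\cH,\ev_1}\cHecke$ parametrize pairs of modifications of the same $E'$, one at $x$ and one at $y$. Let $\mathcal{Z}\subset\widetilde{\cHecke}$ be the locus $y=\sigma(x)$; by (\ref{identify:sigma}), $(\tau_0,\tau_0)$ lies in $\mathcal{Z}$.
\item $\cHecke$ is classical (Remark~\ref{rmk:Heckeflat}), and $\ev_1$ has one-dimensional fibres (dual to the $\ev_2$ count in Lemma~\ref{lem:Delta0}). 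Hence $\widetilde{\cHecke}\to\cH$ is flat of relative dimension $2$.
\item Over $\cH^{\mathrm{reg}}$ one has $\mathcal{Z}\cong\cC\times_{\cB}\cH^{\mathrm{reg}}$. So a component of $\mathcal{Z}$ not dominating $\cB$ lies over $\cH\setminus\cH^{\mathrm{reg}}$ and has codimension $\geq 2$ in $\widetilde{\cHecke}$.
\item On the other hand, such a component is an irreducible component of the preimage of the diagonal of $C\times C$ under $(\pi(x),\pi(y))$. That preimage is the pullback of a Cartier divisor, so the component has codimension $\leq 1$. This contradiction shows every component of $\mathcal{Z}$ through $(\tau_0,\tau_0)$ dominates $\cB$.
\item Finally, choose a new $(\Delta,0)\to\mathcal{Z}$ as in Lemma~\ref{lem:Delta0}. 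The first projection gives the new $\tau$, and the second gives (\ref{ext:sigma}).
\end{itemize}
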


An extension (\ref{ext:sigma}) defines a morphism 
\begin{align*}
    \tau_2^{\sigma} \colon \Delta \to \mathcal{H}. 
\end{align*}
We define 
\begin{align*}
    \cP_{\Delta 2}^{\sigma}:=(\tau_2^{\sigma}\times \id)^* \cP^{\flat}\in \Coh^{\heartsuit}(\cH_{\Delta}).
\end{align*}

Similarly to $\mathcal{P}_{\Delta i}$, the sheaf $\mathcal{P}_{\Delta 2}^{\sigma}$ is a Cohen--Macaulay sheaf on 
$\mathcal{H}_{\Delta}$ flat over $\Delta$ such that $\mathcal{P}_{\Delta 2}^{\sigma}|_{0} \cong \mathcal{P}_E$.
Moreover, we have the following lemma: 
\begin{lemma}\label{lem:inde}
The sheaf $\cP_{\Delta 2}^{\sigma}$ is 
independent of a choice of an extension (\ref{ext:sigma}) up to an isomorphism. 
\end{lemma}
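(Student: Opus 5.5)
The plan is to reduce the statement to a uniqueness result for $\Delta$-flat Cohen--Macaulay sheaves that agree generically. Let
\[
0\to E_{\Delta}' \to E_{\Delta}^{\sigma,1} \to \sigma^*R_{\Delta}\to 0, \qquad 0\to E_{\Delta}' \to E_{\Delta}^{\sigma,2} \to \sigma^*R_{\Delta}\to 0
\]
be two extensions as in Lemma~\ref{lem:sigma}, with associated maps $\tau_2^{\sigma,1},\tau_2^{\sigma,2}\colon \Delta\to \cH$. Write $\cP^{(1)},\cP^{(2)}$ for the corresponding pull-backs of $\cP^{\flat}$. Both are Cohen--Macaulay on $\cH_{\Delta}$ and flat over $\Delta$.

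\textbf{Step 1: the extensions agree over the generic point.} By Lemma~\ref{lem:Delta0}, the generic point of $\Delta$ maps to $\cB^{\mathrm{sm}}$. After shrinking $\Delta$, we may assume every point $t\ne 0$ lies over a smooth spectral curve, or at least that $\sigma^*R_{\Delta}|_t$ is a skyscraper at a smooth point of $\cC_t$. At such a point, Serre duality on the Gorenstein curve $\cC_t$ gives
\[
\Ext^1_{\cC_t}(\mathcal{O}_y, E'_t)\cong k,
\]
exactly as in the proof of Theorem~\ref{prop:WH}. Hence a nonsplit extension with torsion-free middle term is unique up to scaling on $\mathcal{O}_y$, which does not change the isomorphism class of the middle term. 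Consequently $E_{\Delta}^{\sigma,1}$ and $E_{\Delta}^{\sigma,2}$ are isomorphic over a dense open $\Delta^{\circ}\subset\Delta$, possibly after twisting by a line bundle pulled back from $\Delta^{\circ}$. The relative $\Ext^1$ sheaf is then a line bundle on $\Delta^\circ$, the two extension classes are nowhere-vanishing sections, and they differ by a unit.

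\textbf{Step 2: from the sheaves to the kernels.} A twist of the family $E$ by a line bundle from $\Delta$ changes $\cP^{\sharp}$ only by the factor $\det\chi(\mathcal{O})$-type terms. By formula (\ref{def:Psharp}), $\cP^{\sharp}$ is unchanged by rescaling, since those terms cancel. Therefore
\[
\cP^{(1)}|_{\cH_{\Delta^{\circ}}}\cong \cP^{(2)}|_{\cH_{\Delta^{\circ}}}.
\]

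\textbf{Step 3: extension across the complement.} This is the main obstacle. The complement $\cH_{\Delta}\setminus \cH_{\Delta^{\circ}}$ is a divisor, not of codimension two, so Lemma~\ref{lem:MCM-extension} does not apply directly. I would address this in two ways.

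First, I would make the isomorphism of Step 1 extend over the generic points of $\Delta\setminus\Delta^\circ$ other than $0$ by shrinking $\Delta$, so that $\Delta^{\circ}=\Delta\setminus\{0\}$. Then $\cH_0$ has codimension $D$ in $\cH_{\Delta}$, and choosing $D\geq 2$ in Lemma~\ref{lem:Delta0} makes this codimension at least two. Since $\cH_{\Delta}$ is Cohen--Macaulay of pure dimension, being flat over smooth $\Delta$ with Cohen--Macaulay fibers, Lemma~\ref{lem:MCM-extension} gives
\[
\cP^{(i)}\cong j^{\heartsuit}_*\bigl(\cP^{(i)}|_{\cH_{\Delta\setminus 0}}\bigr),
\]
and hence $\cP^{(1)}\cong\cP^{(2)}$.

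The delicate point is that the relative $\Ext^1$ might jump at $0$. At $0$ itself the point $x_1$ is singular, and $\Ext^1(\mathcal{O}_{x_1},E')$ can be $2$-dimensional. Because both extensions restrict to the same sequence at $0$ by (\ref{identify:sigma}), I would handle this by checking that the locus where the extension class fails to be determined is contained in $\{0\}$ after shrinking. If needed, I would also use the Hilbert-scheme description of $\cP^{\flat}$ via $\mathcal{Q}^d$ to verify directly that $\cP^{\flat}$ depends only on the family of torsion-free sheaves and not on the chosen extension class.
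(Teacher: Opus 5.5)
There is a genuine gap, in Steps~1 and~3. The whole argument depends on shrinking $\Delta$ until the locus where the two extensions might differ is just $\{0\}$, and shrinking cannot achieve that.

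Replacing $\Delta$ by a smaller neighbourhood of $0$ only discards closed subsets that avoid $0$; it cannot remove a positive-dimensional closed subset passing through $0$. For $D\geq 2$, the preimage of the discriminant $\cB^{\mathrm{sg}}$ is a divisor through $0$, so ``every $t\neq 0$ lies over a smooth spectral curve'' is unattainable. For $D=1$, the fibre $\cH_0$ is a divisor in $\cH_{\Delta}$, so Lemma~\ref{lem:MCM-extension} does not apply.

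The condition you actually need is uniqueness of the extension, i.e.\ $\Ext^1_{\cC_t}(\mathcal{O}_y,E'_t)\cong \Hom(E'_t,\mathcal{O}_y)^{\vee}$ one-dimensional at $y=\sigma\eta(t)$, i.e.\ $E'_t$ locally free at $y$. This fails at $0$ whenever $n_1\geq 2$. Nothing in the construction of $\Delta$ in Lemma~\ref{lem:Delta0}, which uses a resolution of singularities and generic hypersurface sections, bounds the codimension of this failure locus in $\Delta$ by two. So your sentence ``checking that the locus \ldots\ is contained in $\{0\}$ after shrinking'' carries the entire content of the lemma and is unsupported.

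The fallback via $\mathcal{Q}^d$ does not help. $\cP^{(i)}$ is by definition determined by the family $E^{\sigma,i}_{\Delta}$. The difficulty is precisely that two families with isomorphic fibres need not be isomorphic over $\Delta$.

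The missing idea is to compare the two kernels along the regular locus of the \emph{second} factor rather than along $\Delta$. Work on $\cH_{\Delta}^{\mathrm{reg}}=\Delta\times_{\cB}\cH^{\mathrm{reg}}$, over every $t\in\Delta$ including $t=0$. There each $\cP^{(i)}$ is the line bundle given by (\ref{def:Psharp}), with $A_1$ the family $E^{\sigma,i}_{\Delta}$ and $A_2$ a line bundle. The determinant of cohomology is multiplicative along the short exact sequence (\ref{ext:sigma}). Hence this line bundle is canonically the product of the corresponding terms for $E'_{\Delta}$ and for $\sigma^*R_{\Delta}$, exactly as in (\ref{isom:P2}), and so it does not depend on the extension class.

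You used this insensitivity of (\ref{def:Psharp}) in Step~2, but only after restricting to $\Delta^{\circ}$. By Lemma~\ref{lem:codimension}, the complement of $\cH_{\Delta}^{\mathrm{reg}}$ in $\cH_{\Delta}$ has codimension at least two. Both $\cP^{(i)}$ are maximal Cohen--Macaulay, so Lemma~\ref{lem:MCM-extension} identifies each with $j^{\mathrm{reg}\heartsuit}_{*}$ of the same line bundle, giving $\cP^{(1)}\cong\cP^{(2)}$. This is the paper's argument; it never compares the families $E^{\sigma,i}_{\Delta}$ themselves and never has to control how $\Ext^1$ jumps.
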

\begin{proof}
By the construction, 
the sheaf $\mathcal{P}_{\Delta 2}^{\sigma}$ is a Cohen--Macaulay extension of a line bundle on 
$\Delta \times_{\cB} \mathcal{H}^{\mathrm{reg}}$, which is determined by the K-theory class of 
$[E_{\Delta}^{\sigma}]$. 
Since the complement of $\Delta \times_{\cB} \mathcal{H}^{\mathrm{reg}}$ in $\mathcal{H}_{\Delta}$ has 
codimension at least two by Lemma~\ref{lem:codimension} below, the lemma follows from Lemma~\ref{lem:MCM-extension}. 
\end{proof}

\begin{lemma}\label{lem:codimension}
The open immersion 
\begin{align}\label{jreg}
    j^{\mathrm{reg}}\colon \cH^{\mathrm{reg}}_{\Delta}:=\Delta\times_{\cB}\cH^{\mathrm{reg}}
    \hookrightarrow \cH_{\Delta}     
\end{align}
    is an isomorphism in codimension one. 
\end{lemma}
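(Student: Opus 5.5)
The claim is that the complement $\cH_{\Delta}\setminus \cH_{\Delta}^{\mathrm{reg}}$ has codimension at least two in $\cH_{\Delta}$. I will bound the dimension of this complement fiberwise over $\Delta$ and then separate the generic fiber from the special locus.

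First, I need the dimension of $\cH_{\Delta}$ itself. The map $h\colon \cH\to\cB$ is flat by \cite[Theorem~2.2.4]{BD0}, so the base change $\cH_{\Delta}\to\Delta$ is flat. All its fibers are Hitchin fibers $\cH_{b}$ of pure dimension $N:=\dim \cH-\dim\cB$. Hence $\cH_{\Delta}$ has pure dimension $N+D$, and every irreducible component dominates $\Delta$.

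Next, I split the complement $Z:=\cH_{\Delta}\setminus\cH^{\mathrm{reg}}_{\Delta}$ according to where it lies over $\Delta$. Let $\Delta^{\mathrm{sg}}\subset\Delta$ be the preimage of $\cB\setminus\cB^{\mathrm{sm}}$.
\begin{itemize}
\item Over $\Delta\setminus\Delta^{\mathrm{sg}}$ the spectral curves are smooth. There every rank one torsion-free sheaf is a line bundle, so $Z$ is empty there.
\item By Lemma~\ref{lem:Delta0}, the generic point of $\Delta$ maps to $\cB^{\mathrm{sm}}$, so $\Delta^{\mathrm{sg}}$ is a proper closed subset of $\Delta$ and has dimension at most $D-1$.
\item Over each $t\in\Delta^{\mathrm{sg}}$, the fiber is a Hitchin fiber over a reduced spectral curve. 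By Remark~\ref{rmk:dense}, $\cH_{b}^{\mathrm{reg}}\subset\cH_{b}$ is dense there, so $Z_t$ has dimension at most $N-1$ in each fiber.
\end{itemize}
Combining these, $\dim Z\le (D-1)+(N-1)=N+D-2$, which is codimension at least two. The precise statement is that each irreducible component $Z'$ of $Z$ maps into the closure of its image $T\subset\Delta^{\mathrm{sg}}$. Then $\dim Z'\le \dim T+\max_t\dim Z'_t\le (D-1)+(N-1)$, using upper semicontinuity of fiber dimension on a finite type atlas.

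The main obstacle is the fiberwise bound $\dim Z_t\le N-1$, and it has to hold uniformly even though $\cH_b$ is not quasi-compact. I would handle this locally: codimension is a local notion, so it suffices to work on each quasi-compact open substack and pass to a smooth atlas. The fiber dimension estimate is then applied componentwise. Remark~\ref{rmk:dense} says every irreducible component of $\cH_b$ meets the regular locus, so the non-regular part has strictly smaller dimension in each component. Dimensions of stacks are measured after subtracting the relative dimension of the atlas, so the inequality transfers unchanged. Finally, $\Delta$ is smooth and irreducible, which makes $\dim\Delta^{\mathrm{sg}}\le D-1$ automatic once $\Delta^{\mathrm{sg}}$ is known to be a proper closed subset.
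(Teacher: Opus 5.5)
Your argument is correct and is essentially the paper's proof. The paper likewise observes that the non-regular locus lies over a proper closed subset of $\Delta$, which has codimension one in $\cH_\Delta$ by flatness of $\cH_\Delta\to\Delta$, and that it is fiberwise of codimension one there by Remark~\ref{rmk:dense}. It then adds the two codimension estimates; you combine them as an explicit dimension count $(D-1)+(N-1)$, which amounts to the same thing.
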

\begin{proof}
Since $\Delta$ is irreducible and $\Delta \to \cB$ sends its generic point to $\cB^{\mathrm{sm}}$, 
there is a closed subset $Z\subset \Delta$ with 
$\operatorname{codim}_{\Delta} Z \geq 1$ such that 
$j^{\mathrm{reg}}$ is an isomorphism over $\Delta \setminus Z$. 
Therefore $\cH_{\Delta} \setminus \cH_{\Delta}^{\mathrm{reg}}$ is contained in 
$\cH_{\Delta}\times_{\Delta} Z$. Since $\cH_{\Delta} \to \Delta$ is flat by 
Theorem~\ref{thm:higgs:basic}, the closed substack 
$\cH_{\Delta}\times_{\Delta} Z$ has codimension at least one in 
$\cH_{\Delta}$. Moreover, $\cH_{\Delta}\times_{\Delta} Z \to Z$ is flat, and 
each fiber contains the dense regular locus by Remark~\ref{rmk:dense}. Hence 
$\cH_{\Delta} \setminus \cH_{\Delta}^{\mathrm{reg}}$ has codimension at least 
one in $\cH_{\Delta}\times_{\Delta}Z$. Combining the two codimension estimates gives
$\operatorname{codim}_{\cH_{\Delta}}
(\cH_{\Delta} \setminus \cH_{\Delta}^{\mathrm{reg}})\geq 2$.
\end{proof}

The sheaf $R_{\Delta}$ in (\ref{exact:EQ}) is a $\Delta$-flat family of 
skyscraper sheaves of points on the fibers of $\cC_{\Delta} \to \Delta$. 
Therefore it determines a closed immersion
\begin{align*}
    \eta \colon \Delta \hookrightarrow \cC_{\Delta}
\end{align*}
corresponding to the support of $R_{\Delta} \in \Coh^{\heartsuit}(\cC_{\Delta})$. 
By shrinking $\Delta$ if necessary, we may assume that 
\begin{align*}
    R_{\Delta} \cong \eta_{*}\mathcal{O}_{\Delta}.
\end{align*}
We have the following commutative diagram 
\begin{align}\label{dia:delta1}
    \xymatrix{
& \Delta \ar[ld]_-{\eta} \ar[d]_-{\eta'} \ar[rd]^-{\eta''} & \\
\cC_{\Delta} \ar[r]^-{\pi} & C\times \Delta \ar[r]^-{\pi'} &
C.       }
\end{align}
Here the bottom arrows are projections. 
By taking $\times_{\cB}\cH$, it also induces the commutative diagram 
\begin{align}\label{dia:delta2}
    \xymatrix{
& \cH_{\Delta} \ar[ld]_-{\eta} \ar[d]_-{\eta'} \ar[rd]^-{\eta''} & \\
\cC_{\Delta}\times_{\Delta}\cH_{\Delta}\ar[r]^-{\pi} & C\times \cH_{\Delta} \ar[r]^-{\pi'} &
C \times \cH.       }
\end{align}
Here we have used the same symbols as in (\ref{dia:delta1}) by abuse of notation. 
We set $\cF_{\Delta}:=(\eta^{''})^{*}\cF$, which is a rank two 
vector bundle on $\cH_{\Delta}.$
We have the following proposition: 
\begin{prop}\label{prop:exact}
There is an exact sequence of vector bundles on $\cH_{\Delta}^{\mathrm{reg}}$
\begin{align}\label{exact:jF}
    0\to j^{\mathrm{reg}*} \cP_{\Delta 2}^{\sigma} \to j^{\mathrm{reg}*} \cF_{\Delta}
    \otimes j^{\mathrm{reg}*} \cP_{\Delta 1}\to j^{\mathrm{reg}*} \cP_{\Delta 2}\to 0.
\end{align}
Here $j^{\mathrm{reg}}$ is the open immersion (\ref{jreg}). 
\end{prop}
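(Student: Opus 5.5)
The plan is to work entirely on $\cH_{\Delta}^{\mathrm{reg}}$. There the three sheaves in (\ref{exact:jF}) are explicit line bundles (or a rank two bundle) described by determinant-of-cohomology formulas, so the sequence should come from a single short exact sequence on a length-two subscheme of $\cC_{\Delta}$.

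Let $\cE_{\Delta}$ be the universal sheaf on $\cC_{\Delta}\times_{\Delta}\cH_{\Delta}$. Over $\cH_{\Delta}^{\mathrm{reg}}$ it is a line bundle. By (\ref{def:Psharp}), since the second factor is regular, $j^{\mathrm{reg}*}\cP_{\Delta i}$ and $j^{\mathrm{reg}*}\cP_{\Delta 2}^{\sigma}$ are given by Knudsen--Mumford determinants of the form
\begin{align*}
\det p_{*}(A\otimes \cE_{\Delta})\otimes (\det p_{*}A)^{-1}\otimes(\det p_*\cE_{\Delta})^{-1}\otimes \det p_*\mathcal{O},
\end{align*}
with $A=E_{\Delta}', E_{\Delta}, E_{\Delta}^{\sigma}$ respectively. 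Here I am using that, by Lemma~\ref{lem:inde} and Lemma~\ref{lem:MCM-extension}, the restrictions of the Cohen--Macaulay extensions to the regular locus are exactly these line bundles.

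\textbf{Step 1.} Tensor (\ref{exact:EQ}) with the line bundle $\cE_{\Delta}$. This remains exact, and multiplicativity of $\det$ in exact sequences gives
\begin{align*}
j^{\mathrm{reg}*}\cP_{\Delta 2}\cong j^{\mathrm{reg}*}\cP_{\Delta 1}\otimes \eta^{*}\cE_{\Delta}.
\end{align*}
The factor $\det p_*R_{\Delta}=\mathcal{O}$ cancels. Applying the same argument to (\ref{ext:sigma}) gives
\begin{align*}
j^{\mathrm{reg}*}\cP_{\Delta 2}^{\sigma}\cong j^{\mathrm{reg}*}\cP_{\Delta 1}\otimes (\sigma\eta)^{*}\cE_{\Delta}.
\end{align*}
After twisting by $j^{\mathrm{reg}*}\cP_{\Delta1}^{-1}$, the desired sequence therefore becomes
\begin{align*}
0\to (\sigma\eta)^*\cE_{\Delta}\to \cF_{\Delta}\to \eta^*\cE_{\Delta}\to 0 .
\end{align*}

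\textbf{Step 2.} Identify the middle term. Since $\pi$ is finite and flat and $\cF=\pi_*\cE$, base change gives $\cF_{\Delta}=\eta'^{*}\pi_*\cE_{\Delta}\cong \cE_{\Delta}|_{Z_{\Delta}}$ pushed to $\cH_{\Delta}$. Here $Z_{\Delta}:=\pi^{-1}(\eta'(\Delta))\subset \cC_{\Delta}$ is a $\Delta$-flat family of length-two subschemes. Locally,
\begin{align*}
Z_{\Delta}=\Spec \mathcal{O}_{\Delta}[y]/(y^2-a),
\end{align*}
and $\eta$ is the section $y=s$ with $s^2=a$, while $\sigma\eta$ is $y=-s$.

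\textbf{Step 3.} Construct the sequence on $Z_\Delta$ and pull it back. There is an exact sequence
\begin{align*}
0\to (y-s)\mathcal{O}_{Z_{\Delta}}\to \mathcal{O}_{Z_{\Delta}}\to \mathcal{O}_{\eta(\Delta)}\to 0 .
\end{align*}
I would show that multiplication by $y-s$ identifies $\mathcal{O}_{Z_{\Delta}}/(y+s)=\mathcal{O}_{\sigma\eta(\Delta)}$ with $(y-s)\mathcal{O}_{Z_{\Delta}}$. Writing an element as $f+gy$, the product $(y-s)(f+gy)=(ga-fs)+(f-gs)y$ vanishes iff $f=gs$ and $g(a-s^2)=0$; the second condition is automatic, so the annihilator of $y-s$ is exactly $(y+s)$. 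Hence the map is injective, hence an isomorphism onto its image. Since $Z_{\Delta}\to\Delta$ is flat with fibres of length two and $\cE_{\Delta}$ is a line bundle near $Z_\Delta$ over $\cH_{\Delta}^{\mathrm{reg}}$, tensoring the sequence with $\cE_{\Delta}$ and pushing forward to $\cH_{\Delta}^{\mathrm{reg}}$ stays exact. This gives the displayed sequence from Step 1, and twisting back by $j^{\mathrm{reg}*}\cP_{\Delta 1}$ yields (\ref{exact:jF}). All terms are vector bundles.

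\textbf{Main obstacle.} I expect the hardest part to be matching the abstract sheaves with this local picture in a way compatible with the specific choice of (\ref{ext:sigma}) from Lemma~\ref{lem:sigma}. Concretely, I must check that the subsheaf $(\sigma\eta)^*\cE_{\Delta}$ produced above is exactly the determinant line governing $\cP^{\sigma}_{\Delta 2}$, i.e.\ that $E_{\Delta}^{\sigma}$ has the $K$-class of $E_{\Delta}'+\sigma^*R_{\Delta}$. I also need the identification to be canonical, not only fibrewise. I would settle both by working globally with determinant-of-cohomology functoriality over the integral base $\Delta$, rather than point by point, using that the generic point of $\Delta$ maps to $\cB^{\mathrm{sm}}$.
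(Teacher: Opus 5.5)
Your proposal is correct and follows the same route as the paper. The determinant computation from (\ref{exact:EQ}) and (\ref{ext:sigma}) gives $j^{\mathrm{reg}*}\cP_{\Delta 2}\cong j^{\mathrm{reg}*}\cP_{\Delta 1}\otimes \eta^*\widetilde{\cE}$ and $j^{\mathrm{reg}*}\cP_{\Delta 2}^{\sigma}\cong j^{\mathrm{reg}*}\cP_{\Delta 1}\otimes (\sigma\circ\eta)^*\widetilde{\cE}$. Then the sequence $0\to \sigma_*\eta_*\mathcal{O}_{\Delta}\to \pi^*\pi_*\eta_*\mathcal{O}_{\Delta}\to \eta_*\mathcal{O}_{\Delta}\to 0$ is tensored with the universal line bundle and pushed forward; this is your sequence on $Z_{\Delta}$, and your local computation supplies the check the paper omits.

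The ``main obstacle'' you flag is not one. The $K$-class of $E^{\sigma}_{\Delta}$ is read off directly from (\ref{ext:sigma}), exactly as you already do in Step~1. Only some isomorphism (not a canonical one) is needed, so no argument over the generic point of $\Delta$ is required.
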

\begin{proof}
   We have the following diagram
   \begin{align*}
    \xymatrix{
& \cH_{\Delta}^{\mathrm{reg}}  \ar[d]_-{\eta}  & \\
\cC_{\Delta} &
\cC_{\Delta}\times_{\Delta}\cH_{\Delta}^{\mathrm{reg}}
\ar[r]^-{p_2} \ar[l]_-{p_1}&
\cH_{\Delta}^{\mathrm{reg}}.       }
\end{align*}
Here bottom arrows are projections. Let 
\begin{align*}\widetilde{\cE}\in \Coh^{\heartsuit}(\cC_{\Delta}\times_{\Delta}\cH_{\Delta}^{\mathrm{reg}})
\end{align*}
be the pull-back of the universal family (\ref{univ:E}) 
via $\cC_{\Delta}\times_{\Delta}\cH_{\Delta}^{\mathrm{reg}} \to \cC\times_{\cB}\cH$, which is a line bundle by the definition of $\cH_{\Delta}^{\mathrm{reg}}$. 
Then by the construction and using (\ref{exact:EQ}), we have an isomorphism 
of line bundles on $\cH_{\Delta}^{\mathrm{reg}}$
\begin{align}\notag
    j^{\mathrm{reg}*}\cP_{\Delta 2}&=\det p_{2*}(p_1^*E_{\Delta}\otimes 
   \widetilde{\cE})\otimes (\det p_{2*}p_1^*E_{\Delta})^{-1} \otimes (\det p_{2*}\widetilde{\cE})^{-1}\otimes \det p_{2*}\mathcal{O}\\
   \notag &\cong j^{\mathrm{reg}*}\cP_{\Delta 1}\otimes \det p_{2*}(p_1^* R_{\Delta}\otimes \widetilde{\cE}) \\
   \label{isom:P}&\cong j^{\mathrm{reg}*}\cP_{\Delta 1}\otimes \eta^* \widetilde{\cE}.
\end{align}
In a similar way, we also have an isomorphism 
\begin{align}\label{isom:P2}
      j^{\mathrm{reg}*}\cP_{\Delta 2}^{\sigma} \cong j^{\mathrm{reg}*}\mathcal{P}_{\Delta 1} \otimes (\sigma \circ \eta)^* \widetilde{\mathcal{E}}.
\end{align}

Below we use the same notation as in (\ref{dia:delta2}) for the 
restricted diagram 
\begin{align}\label{dia:delta3}
    \xymatrix{
& \cH_{\Delta}^{\mathrm{reg}} \ar[ld]_-{\eta} \ar[d]_-{\eta'} \ar[rd]^-{\eta''} & \\
\cC_{\Delta}\times_{\Delta}\cH_{\Delta}^{\mathrm{reg}}\ar[r]^-{\pi} & C\times \cH_{\Delta}^{\mathrm{reg}} \ar[r]^-{\pi'} &
C \times \cH^{\mathrm{reg}}.       }
\end{align}
The map $\pi$ in the diagram~(\ref{dia:delta1}) is a double cover with covering involution~$\sigma$, and we have the exact sequence in $\Coh^{\heartsuit}(\cC_{\Delta})$.

\begin{align*}
    0\to \sigma_{*}\eta_{*}\mathcal{O}_{\Delta} \to \pi^* \pi_{*}\eta_{*}\mathcal{O}_{\Delta} \to \eta_{*}\mathcal{O}_{\Delta} \to 0.
\end{align*}
By pulling it back to $\cC_{\Delta}\times_{\Delta}\cH_{\Delta}^{\mathrm{reg}}$
and applying $\otimes \widetilde{\cE}$, 
we obtain the exact sequence in $\Coh^{\heartsuit}(\cC_{\Delta}\times_{\Delta}\cH_{\Delta}^{\mathrm{reg}})$
\begin{align*}
     0\to p_1^*\sigma_{*}\eta_{*}\mathcal{O}_{\Delta} \otimes \widetilde{\cE}\to p_1^*\pi^* \pi_{*}\eta_{*}\mathcal{O}_{\Delta}\otimes \widetilde{\cE} \to p_1^*\eta_{*}\mathcal{O}_{\Delta}\otimes \widetilde{\cE} \to 0. 
\end{align*}

Then applying $\pi_{*}$ in the diagram (\ref{dia:delta3}), we obtain 
the exact sequence in $\Coh^{\heartsuit}(C\times \cH_{\Delta}^{\mathrm{reg}})$
\begin{align*}
     0\to \pi_{*}(p_1^*\sigma_{*}\eta_{*}\mathcal{O}_{\Delta} \otimes \widetilde{\cE})\to \pi_{*}(p_1^*\pi^* \pi_{*}\eta_{*}\mathcal{O}_{\Delta}\otimes \widetilde{\cE}) \to \pi_{*}(p_1^*\eta_{*}\mathcal{O}_{\Delta}\otimes \widetilde{\cE}) \to 0. 
\end{align*}
By applying the projection formula and noting that 
$(\eta')^{*}\pi_{*}\widetilde{\cE}\cong j^{\mathrm{reg}*}\cF_{\Delta}$, 
the above exact sequence is identified with  
\begin{align*}
    0\to \eta^{'}_{*}(\sigma \circ \eta)^* \widetilde{\cE} \to \eta^{'}_{*}j^{\mathrm{reg}*}\cF_{\Delta}\to \eta^{'}_{*}\eta^{*}\widetilde{\cE} \to 0.
\end{align*}
Since $\eta'$ is a closed immersion (as it is a section of the projection)
we obtain the exact sequence in $\Coh^{\heartsuit}(\cH_{\Delta}^{\mathrm{reg}})$
\begin{align*}
     0\to (\sigma \circ \eta)^* \widetilde{\cE} \to j^{\mathrm{reg}*}\cF_{\Delta}\to \eta^{*}\widetilde{\cE} \to 0.
\end{align*}
By applying $j^{\mathrm{reg}*}\cP_{\Delta 1} \otimes$ and using (\ref{isom:P}), (\ref{isom:P2}), we
obtain the desired exact sequence (\ref{exact:jF}). 
\end{proof}

As a corollary of the above proposition, we have the following: 
\begin{cor}\label{cor:CM}
    The exact sequence (\ref{exact:jF}) extends to a left exact sequence of maximal Cohen--Macaulay 
    sheaves on $\cH_{\Delta}$
    \begin{align}\label{extend:CM}
        0\to \cP_{\Delta 2}^{\sigma}\to \cF_{\Delta}\otimes \cP_{\Delta 1}
        \to \cP_{\Delta 2}.
    \end{align}
\end{cor}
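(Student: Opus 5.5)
The plan is to push the exact sequence (\ref{exact:jF}) forward along the open immersion $j^{\mathrm{reg}}$ and identify each term using the reflexivity of maximal Cohen--Macaulay sheaves. Apply the left exact functor $j^{\mathrm{reg}}_{*}{}^{\heartsuit}:=\cH^0(j^{\mathrm{reg}}_{*})$ to (\ref{exact:jF}). This yields a left exact sequence
\begin{align*}
0\to j^{\mathrm{reg}\heartsuit}_{*}j^{\mathrm{reg}*}\cP_{\Delta 2}^{\sigma}\to j^{\mathrm{reg}\heartsuit}_{*}j^{\mathrm{reg}*}(\cF_{\Delta}\otimes\cP_{\Delta 1})\to j^{\mathrm{reg}\heartsuit}_{*}j^{\mathrm{reg}*}\cP_{\Delta 2}
\end{align*}
on $\cH_{\Delta}$. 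It then remains to identify each term with the corresponding sheaf in (\ref{extend:CM}).

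First I will check that $\cP_{\Delta 1}$, $\cP_{\Delta 2}$ and $\cP_{\Delta 2}^{\sigma}$ are maximal Cohen--Macaulay on $\cH_{\Delta}$. The stack $\cH_{\Delta}$ is Cohen--Macaulay and of pure dimension, since it is flat over the smooth variety $\Delta$ with Cohen--Macaulay fibers (Theorem~\ref{thm:higgs:basic}). Each of the three sheaves is $\Delta$-flat, and its fiber over $t\in\Delta$ is the restriction $\cP_{E_t}$ of $\cP^{\flat}$, which is maximal Cohen--Macaulay on the fiber $\cH_{b_t}$ by Theorem~\ref{thm:Pflat}, because $\cP^{\flat}$ is flat over the first factor. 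Lemma~\ref{lem:CM-family}, applied after passing to a smooth atlas and writing the family over $\Delta$ so that the fiberwise codimension is $0$, then shows these sheaves are maximal Cohen--Macaulay.

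Next, $\cF_{\Delta}$ is a rank two vector bundle, so $\cF_{\Delta}\otimes\cP_{\Delta 1}$ is locally isomorphic to $\cP_{\Delta 1}^{\oplus 2}$ and is also maximal Cohen--Macaulay. By Lemma~\ref{lem:codimension}, the complement of $\cH^{\mathrm{reg}}_{\Delta}$ has codimension at least two. Lemma~\ref{lem:MCM-extension} then gives $M\cong j^{\mathrm{reg}\heartsuit}_{*}j^{\mathrm{reg}*}M$ for each of the four sheaves $M$. Substituting these isomorphisms turns the pushed-forward sequence into (\ref{extend:CM}), and the maps restrict to those of (\ref{exact:jF}) on $\cH_{\Delta}^{\mathrm{reg}}$ by naturality of the unit.

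The main point needing care is the maximal Cohen--Macaulay property of the $\Delta$-families. Specifically, one must confirm that $(\tau_i\times\id)^*\cP^{\flat}$ is the underived pullback and is flat over $\Delta$. This follows from the flatness of $\cP^{\flat}$ over the first factor $\cH$, as already noted after the definition of $\cP_{\Delta i}$. The remaining steps are formal.
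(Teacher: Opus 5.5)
Your proposal is correct and follows the paper's proof. In both, one applies the left exact functor $\cH^0(j^{\mathrm{reg}}_{*})$ to (\ref{exact:jF}), and then uses Lemma~\ref{lem:codimension} with Lemma~\ref{lem:MCM-extension} to identify each pushed-forward term with $\cP_{\Delta 2}^{\sigma}$, $\cF_{\Delta}\otimes\cP_{\Delta 1}$ and $\cP_{\Delta 2}$. The only difference is that you re-derive the maximal Cohen--Macaulay property of these sheaves from flatness over $\Delta$; strictly this uses the flat-family version of Lemma~\ref{lem:CM-family}, since $\cH_{\Delta}$ is a fiber product rather than a product. The paper instead treats this property as already established when the sheaves were introduced.
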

\begin{proof}
    Let $j_{*}^{\mathrm{reg}\heartsuit}$ be the functor 
    \begin{align*}
    j_{*}^{\mathrm{reg}\heartsuit}:=\cH^0(j_{*}^{\mathrm{reg}}) \colon 
    \Coh^{\heartsuit}(\cH_{\Delta}^{\mathrm{reg}}) \to 
     \Coh^{\heartsuit}(\cH_{\Delta}).
    \end{align*}
    The above functor is left exact. 
    Note that the open immersion $\cH_{\Delta}^{\mathrm{reg}} \subset 
    \cH_{\Delta}$ is an isomorphism in codimension one by Lemma~\ref{lem:codimension}. Then as $\cP_{\Delta i}$ and $\cP_{\Delta2}^{\sigma}$ are 
    maximal Cohen--Macaulay sheaves and $\cF_{\Delta}$ is a vector bundle, we have 
    $j_{*}^{\mathrm{reg}\heartsuit}j^{\mathrm{reg}*} \cP_{\Delta i} \cong \cP_{\Delta i}$,  $j_{*}^{\mathrm{reg}\heartsuit}j^{\mathrm{reg}*} \cP_{\Delta 2}^{\sigma} \cong \cP_{\Delta 2}^{\sigma}$
    and $j_{*}^{\mathrm{reg}\heartsuit}j^{\mathrm{reg}*} \cF_{\Delta} \cong \cF_{\Delta}$.
    Therefore by applying $j_{*}^{\mathrm{reg}\heartsuit}$ to (\ref{exact:jF}), 
    we obtain the left exact sequence (\ref{extend:CM}). 
\end{proof}

In particular by taking the restriction of (\ref{extend:CM}) over $0\in \Delta$, we obtain 
the sequence in $\Coh^{\heartsuit}(\cH_b)$
\begin{align}\label{extend:CM2}
0\to \cP_E \to \cF_c \otimes \cP_{E'} \to \cP_E
\end{align}
which at this stage is neither left exact nor right exact. 
Here 
$c=\pi(x_1)\in C$ and $\cF_c :=\cF|_{c\times \cH_b}$. 
In what follows, we show that the sequence (\ref{extend:CM}) is right exact, which in particular implies that (\ref{extend:CM2}) is an 
exact sequence. 
We first show this in the case that $\cC_b$ has at worst nodal singularities. 
\begin{lemma}\label{lem:nodal}
If $\cC_b$ has at worst nodal singularities, the 
sequence (\ref{extend:CM}) is also right exact. In particular, the sequence (\ref{extend:CM2}) is an exact sequence. 
\end{lemma}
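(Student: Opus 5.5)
We have to show that the last map $\cF_{\Delta}\otimes \cP_{\Delta 1}\to \cP_{\Delta 2}$ in (\ref{extend:CM}) is surjective on $\cH_{\Delta}$. Surjectivity is local and can be checked at closed points. The sequence is already exact on $\cH_{\Delta}^{\mathrm{reg}}$ by Proposition~\ref{prop:exact}, and $\cH_{\Delta}\to\Delta$ is an isomorphism onto the regular locus over the generic point of $\Delta$. So by Nakayama it is enough to prove surjectivity after restriction to the central fiber $\cH_b$, at points $F\in\cH_b$ that are not line bundles. Since $\cC_b$ is nodal, such an $F$ fails to be locally free only at some of the nodes $x_i$ (all $m_i=1$). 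There $F$ is locally $\nu_*$ of a line bundle on a partial normalization $\nu_I$ at the nodes in $I$.

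\textbf{Step 1.} Compute the fibers of $\cP_{E}$ and $\cP_{E'}$ at $F$. For this I would use the explicit description of $\cP_E|_{\overline{\cP}^{l_1,l_2}}$ near the boundary strata, as in Example~\ref{exam:g_2}. Near $F=\nu_{I*}M$, the stack $\cH_b$ is smooth-locally a product of a smooth factor with $|I|$ copies of the node germ $\{uv=0\}$, one for each node in $I$. The Arinkin sheaf is a tensor product of local factors, one per node, by Arinkin's local computation for nodes, which is compatible with \cite{MRVF2} Fact~4.8. At a node $x$, the local factor of $\cP_E$ depends only on the local type of $E$ at $x$. If $E$ is a line bundle at $x$, it is the structure sheaf $\mathcal{O}_{\{uv=0\}}$ (a line bundle). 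If $E$ is not locally free at $x$, it is the MCM module of rank one: the ideal $(u)$ or $(v)$, equivalently $\mathcal{O}$ of one branch.

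\textbf{Step 2.} Reduce to a local statement at the relevant node $x_1$. By Lemma~\ref{lem:LEQ} with $m_1=1$, $E$ is non-locally free at $x_1$ while $E'$ is locally free there. The two sheaves differ only at $x_1$. Hence the factors at all other nodes coincide, and the map is the identity on those factors tensored with the local map at $x_1$. If $x_1\notin I$, then $F$ is locally free at $x_1$ and all three terms are locally free near $F$. Surjectivity then follows from the fiberwise sequence coming from $0\to\sigma_*\mathcal{O}_{x_1}\to \pi^*\pi_*\mathcal{O}_{x_1}\to\mathcal{O}_{x_1}\to0$, as in the proof of Proposition~\ref{prop:exact}; this argument works at any $F$ locally free at $x_1$, because $\eta^*\widetilde{\cE}$ makes sense there. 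The essential case is therefore $x_1\in I$. Here the local model near $F$ is the germ $A=k[[u,v]]/(uv)$, and $\cF_c$ restricts to a free rank-two $A$-module. The claim becomes that the induced map $A^{2}\to (u)\cong A/(v)$ is surjective; the two branch sheaves $\cP_E$ and $\cP^{\sigma}_{\Delta2}|_0$ correspond to the two branches, interchanged by $\sigma$.

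\textbf{Step 3 (main obstacle).} We must identify the map precisely enough to see that its image is not contained in $\mathfrak m\cdot(u)$. The plan is to use the one-parameter smoothing $\Delta$. Over the generic point the node smooths to $uv=t$, and the sequence becomes $0\to\mathcal{L}_\sigma\to \mathcal{O}^2\to\mathcal{L}\to0$ on the regular surface $\{uv=t\}\times(\text{smooth})$. This is the $\pi_*$ of eigen-decomposition of a double cover. In suitable bases the map is $(a,b)\mapsto a\,u+b\,\phi$, where $\phi$ restricts to a unit on the branch $\{v=0\}$. The reason is that the eigenvector projection of $\cF$ onto the $\eta$-eigenline is nondegenerate at the ramified point: $x_1$ is $\sigma$-fixed, and $\cF_c\to \eta^*\widetilde{\cE}$ is already surjective at the smooth points of $\cC_b$. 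Taking the flat limit $t\to 0$, the CM extension along the branch is the reflexive hull, and it receives $\phi$ as a generator. The delicate part is to check that the two reflexive-hull extensions do not lose the unit when passing to the limit. I would first verify this explicitly in the local model, where the branches are toric and everything is graded. The whole sequence (\ref{extend:CM}) then becomes a $0\to(v)\to A^2\to(u)\to0$-type sequence; its Euler characteristic balance (the ranks $1+1=2$ on each branch) forces exactness once surjectivity holds. Finally, left exactness on the fiber (\ref{extend:CM2}) follows because $\cP_{\Delta2}$ is $\Delta$-flat: Tor$_1$ of it against $\mathcal{O}_0$ vanishes, so restricting the now short exact sequence to $0$ stays exact.
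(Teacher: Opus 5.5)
Your proposal has a genuine gap. It sits at the points $F$ with $x_1\in I$, which is the only case with content.

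First, the local model in Step~1 is wrong. The sheaf $\cP_E$ restricts to a line bundle on $\cH_b^{\mathrm{reg}}$ (there it is $\cP^{\sharp}$). Since $\cH_b^{\mathrm{reg}}$ is dense, it meets every irreducible component of the chart at $F$, so $\cP_E$ has rank one on \emph{both} branches of $\{uv=0\}$. A module like $(u)\cong A/(v)$, supported on a single branch, is therefore impossible. In your product model the non-free factor would have to be $\mathfrak{m}=(u,v)\cong A/(u)\oplus A/(v)$. Your local sequence $0\to(v)\to A^{2}\to(u)\to 0$ has ranks $0+1\neq 2$ on each branch, so the ``rank balance'' you invoke fails and the sequence cannot be exact. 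Moreover $\cP^{\sigma}_{\Delta2}|_{0}\cong\cP_E$, so the outer terms are not two branches exchanged by $\sigma$.

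The node-by-node tensor factorization of the Arinkin sheaf is also only asserted; neither \cite{Ardual} nor \cite[Fact~4.8]{MRVF2} provides it. Most importantly, Step~3 never determines the map $\cF_c\otimes\cP_{E'}\to\cP_E$ at such $F$. The map in (\ref{extend:CM}) is produced by $j^{\mathrm{reg}\heartsuit}_{*}$, which is only left exact. So the claim that ``the reflexive hulls do not lose the unit in the limit'' restates the lemma rather than proving it.

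The missing idea is to realize the middle term globally as a pushforward along a double cover, so that right exactness is automatic. Since $m_1=1$, each $E'_t$ is a line bundle on a neighborhood $\cC'_{\Delta}$ of $x_1$. Sending $x$ to the unique non-trivial extension $0\to E'_t\to E_x\to\mathcal{O}_x\to 0$ then gives a morphism $\cC'_{\Delta}\to\cH_{\Delta}$.

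Pull this back along $\widetilde{\Delta}:=\Delta\times_{C\times\Delta}\cC'_{\Delta}=\Delta\cup\sigma(\Delta)$. Because $\cP^{\flat}$ is flat over the first factor, this produces a $\widetilde{\Delta}$-flat sheaf $\cP_{\widetilde{\Delta}}$. Tensor $0\to\mathcal{O}_{\sigma(\Delta)}\to\mathcal{O}_{\widetilde{\Delta}}\to\mathcal{O}_{\Delta}\to 0$ with it and push forward along the finite map $\widetilde{\pi}$. This yields an honest short exact sequence $0\to\cP^{\sigma}_{\Delta2}\to\widetilde{\pi}_{*}\cP_{\widetilde{\Delta}}\to\cP_{\Delta2}\to 0$.

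On $\cH^{\mathrm{reg}}_{\Delta}$ this sequence is (\ref{exact:jF}). Both $\widetilde{\pi}_{*}\cP_{\widetilde{\Delta}}$ and $\cF_{\Delta}\otimes\cP_{\Delta1}$ are maximal Cohen--Macaulay, so Lemmas~\ref{lem:codimension} and~\ref{lem:MCM-extension} identify them compatibly with the maps. This gives right exactness of (\ref{extend:CM}) with no local analysis of the Arinkin sheaf at non-free points.

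A smaller slip: when $x_1\notin I$, the three terms need not be locally free near $F$, since $E$ and $F$ can both fail to be free at another node. Your argument there still works once you identify $\cP_{\Delta2}\cong\cP_{\Delta1}\otimes\eta^{*}\widetilde{\cE}$ near $F$ by uniqueness of Cohen--Macaulay extensions.
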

\begin{proof}
    The assumption on $\cC_b$ implies that 
    $m_i=1$ and 
    the sheaf $E'$ on $\cC_b$ is a line bundle at $x_1$. Let $\cC_{\Delta}'\subset \cC_{\Delta}$ be an open neighborhood of $x_1 \in \cC_b=\cC_{\Delta}\times_{\Delta}0$ such that 
    $E_t'$ is a line bundle on $\cC_{\Delta}'\times_{\Delta} t$
    for all $t\in \Delta$. 
    Then there is a morphism 
    \begin{align}\label{mor:circ}
\cC_{\Delta}' \to \cH_{\Delta}
    \end{align}
    sending $x\in \cC_{\Delta}'\times_{\Delta}t$ 
    to $E_x \in \Coh(\cC_t)$ which fits into 
    the unique non-trivial extension (up to isomorphisms)
    \begin{align*}
        0\to E_t' \to E_x \to \mathcal{O}_x \to 0. 
    \end{align*}
    
    On the other hand, since $\eta(0)=x_1 \in \cC_{\Delta}'$,
    by shrinking $\Delta$ if necessary we may assume that 
    $\eta \colon \Delta \to \cC_{\Delta}$ factors through 
    $\eta \colon \Delta \to \cC_{\Delta}'$. We define 
    $\widetilde{\Delta}$ to be the Cartesian square 
    \begin{align}\label{dia:tdelta}
        \xymatrix{
\widetilde{\Delta} \ar[d]_-{\widetilde{\pi}} \ar[rr] \ar@{}[rrd]|\square & & \cC_{\Delta}' \ar[d]_-{\pi} \\
\Delta \ar[r]^-{\eta} & \cC_{\Delta}' \ar[r]^-{\pi} & C\times \Delta.
        }
    \end{align}
The map $\pi \colon \widetilde{\Delta} \to \Delta$ is a $2\colon 1$ cover 
with involution $\sigma$ together with the image of a section $\Delta \subset \widetilde{\Delta}$ induced by $\eta$ such that 
$\widetilde{\Delta}=\Delta\cup \sigma(\Delta)$
and $\Delta \cap \sigma(\Delta)$ is a divisor in $\Delta$. By shrinking 
$\Delta$ if necessary, we may assume that $\mathcal{O}_{\Delta}(\Delta \cap \sigma(\Delta))$ is trivial, so that we have the exact sequence 
\begin{align}\label{exact:sigma}
    0\to \mathcal{O}_{\sigma(\Delta)} \to \mathcal{O}_{\widetilde{\Delta}}
    \to \mathcal{O}_{\Delta} \to 0. 
\end{align}

   By composing (\ref{mor:circ}) with the top horizontal arrow in (\ref{dia:tdelta}), 
   we obtain the map 
   \begin{align*}
       \widetilde{\tau} \colon \widetilde{\Delta} \to \cH_{\Delta}. 
   \end{align*}
   It yields the maximal Cohen--Macaulay sheaf 
   \begin{align*}
       \cP_{\widetilde{\Delta}}:=(\widetilde{\tau}\times \id)^* \cP^{\flat}\in \Coh^{\heartsuit}(\cH_{\widetilde{\Delta}})
       \end{align*}
       Here $\widetilde{\tau}\times \id$ is 
       \begin{align*}
           \widetilde{\tau}\times \id \colon \cH_{\widetilde{\Delta}}=\widetilde{\Delta}\times_{\Delta}\cH_{\Delta} \to 
           \cH_{\Delta}\times_{\Delta} \cH_{\Delta} \to \cH\times_{\cB} \cH.
       \end{align*}
       The exact sequence (\ref{exact:sigma}) then induces the exact
       sequence in $\Coh^{\heartsuit}(\cH_{\Delta})$
       \begin{align}\label{exact:Pdelta}
        0\to \cP_{\Delta 2}^{\sigma} \to \widetilde{\pi}_{*}\cP_{\widetilde{\Delta}} \to \cP_{\Delta 2} \to 0. 
       \end{align}
       Here we have used the same notation $\widetilde{\pi}$ for the 
       map 
       $\cH_{\widetilde{\Delta}} \to \cH_{\Delta}$.
       
Unraveling the construction, the above exact sequence restricted to 
$\cH_{\Delta}^{\mathrm{reg}}$ is identified with (\ref{exact:jF}). 
Since $\cH_{\Delta}^{\mathrm{reg}} \subset \cH_{\Delta}$ is an isomorphism in codimension one, and both of $\widetilde{\pi}_{*}\cP_{\widetilde{\Delta}}$ and $\cF_{\Delta}\otimes \cP_{\Delta 1}$ are maximal Cohen--Macaulay sheaves, we 
conclude that \begin{align*}
    \widetilde{\pi}_{*}\cP_{\widetilde{\Delta}}\cong 
   \cF_{\Delta}\otimes \cP_{\Delta 1}. 
\end{align*}
Therefore from the exact sequence (\ref{exact:Pdelta}), we obtain the lemma. 
       
\end{proof}

Using the above lemma for the nodal case, we prove that (\ref{extend:CM2}) is exact 
in general:
\begin{prop}\label{prop:exact2}
The sequence (\ref{extend:CM2}) is an exact sequence
\begin{align}\notag
0\to \cP_E \to \cF_c\otimes \cP_{E'} \to \cP_E \to 0.
\end{align}
\end{prop}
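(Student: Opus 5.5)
The plan is to prove that the last arrow $\cF_{\Delta}\otimes \cP_{\Delta 1}\to \cP_{\Delta 2}$ in Corollary~\ref{cor:CM} is surjective near the fiber over $0\in \Delta$. The left exact sequence (\ref{extend:CM}) then becomes a short exact sequence of $\Delta$-flat sheaves, and restricting it to $0$ gives the claim, since $\cP_{\Delta 2}^{\sigma}|_0\cong \cP_{\Delta 2}|_0\cong \cP_E$ and $\cP_{\Delta 1}|_0\cong \cP_{E'}$. Write
\begin{align*}
0\to A\to B\to C\to K\to 0,\qquad A=\cP_{\Delta 2}^{\sigma},\ B=\cF_{\Delta}\otimes\cP_{\Delta 1},\ C=\cP_{\Delta 2}.
\end{align*}
Here $A,B,C$ are maximal Cohen--Macaulay and $\Delta$-flat. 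By Proposition~\ref{prop:exact} and Lemma~\ref{lem:codimension}, $K$ is supported in $\cH_{\Delta}\setminus \cH_{\Delta}^{\mathrm{reg}}$, which has codimension at least two. By Nakayama, it is enough to show $K|_0=0$. All statements are local, so throughout I work on a quasi-compact open of $\cH_{\Delta}$ containing the points of interest.

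\emph{Step 1: control the restriction to $0$.} Let $I=\Im(B\to C)$. From $0\to A\to B\to I\to 0$ with $A,B$ flat over $\Delta$, $I$ is $\Delta$-flat as soon as $A|_0\to B|_0$ is injective. I would check that injectivity directly: $A|_0=\cP_E$ is maximal Cohen--Macaulay on $\cH_b$, hence has no subsheaves supported in positive codimension. The map $A|_0\to B|_0$ is injective on the dense regular locus of $\cH_b$ (Remark~\ref{rmk:dense}), because there it is the restriction of the vector-bundle sequence (\ref{exact:jF}). So its kernel vanishes. Hence $0\to \cP_E\to \cF_c\otimes\cP_{E'}\to I|_0\to 0$ is exact, and from $0\to I\to C\to K\to 0$ we obtain the exact sequence
\begin{align*}
0\to \mathcal{T}or_1^{\Delta}(K,\mathcal{O}_0)\to I|_0\to \cP_E\to K|_0\to 0.
\end{align*}

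\emph{Step 2: a K-theoretic/cycle comparison on $\cH_b$.} In $K$-theory of $\cH_b$ (on a quasi-compact open) we have $[\cF_c\otimes \cP_{E'}]=2[\cP_E]$. The reason: $B|_t$ and $A|_t\oplus C|_t$ have equal classes for generic $t$, where the sequence is an exact sequence of vector bundles, and classes of $\Delta$-flat families are locally constant in $t$. The same locally-constant-class argument also applies directly to $\Delta$-flat families over a smooth irreducible base. Combining with Step~1 gives $[I|_0]=[\cP_E]$, hence
\begin{align*}
[K|_0]=[\mathcal{T}or_1^{\Delta}(K,\mathcal{O}_0)].
\end{align*}
Both sheaves live on $\cH_b\setminus\cH_b^{\mathrm{reg}}$. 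To conclude I would compare top-dimensional fundamental cycles. Choose $\Delta$ of dimension $D=1$ in Lemma~\ref{lem:Delta0}, so that $\mathcal{T}or_1$ is the kernel of multiplication by a uniformizer $t$ on $K$. Then the exact sequence $0\to \mathcal{T}or_1\to K\xrightarrow{t}K\to K|_0\to 0$ on a finite-type support shows that $K|_0$ and $\mathcal{T}or_1$ have the same cycle in each dimension. This by itself gives no contradiction, so the extra input must be that the multiplication by $t$ on $K$ is injective, i.e.\ $K$ is $\Delta$-flat, or that $K$ is generically zero over $\Delta$. The latter holds by Lemma~\ref{lem:nodal}, since the generic curve in $\Delta$ is smooth and $K$ vanishes over that locus. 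Thus $K$ is $t$-power torsion supported over $0$. Then $K|_0\neq0$ would force $K$ to be a nonzero sheaf on $\cH_b$ (up to nilpotents) of dimension at most $\dim\cH_b-1$, sitting as a quotient of $C=\cP_{\Delta 2}$.

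\emph{Step 3 (main obstacle): excluding such $K$.} Here I expect the real difficulty. The first thing I would try is a depth count. Since $A$ and $B$ are maximal Cohen--Macaulay, $\operatorname{depth} I\geq n-1$ with $n=\dim\cH_{\Delta}$. Then $0\to I\to C\to K\to0$ forces every associated point of $K$ to have depth at least $n-2$. A nonzero $K$ supported over $0$ inside the non-regular locus has dimension at most $n-3$: it lies in codimension one inside the fiber $\cH_b$, which is itself of codimension one. A module of dimension at most $n-3$ cannot have depth at least $n-2$, so $K=0$. If this count fails, for instance because $\cH_b\setminus\cH_b^{\mathrm{reg}}$ is not of the expected codimension near $E$, the fallback is a formal-local reduction as in Lemma~\ref{lem:heckeQ}. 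I would replace the singularity $y^2=x^{m_1}$ of $\cC_b$ at $x_1$ by the same singularity on an integral plane curve, and use the Hecke eigen property (\ref{eigen:D}), where the analogous sequence $0\to\cP_F\to\cF_c\otimes\cP_{F'}\to\cP_F\to0$ is exact. Proposition~\ref{prop:exact2} would then follow from that exactness.
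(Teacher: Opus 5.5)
Your Steps 1 and 2 are correct. As you note yourself, they do not yet constrain $K$, so everything rests on Step 3, and the dimension count there is wrong.

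\textbf{The gap.} With $\dim\Delta=1$, the fiber $\cH_b$ has codimension one in $\cH_\Delta$. The locus $\cH_b\setminus\cH_b^{\mathrm{reg}}$ has codimension one in $\cH_b$, and in general exactly one: in Example~\ref{exam:g_2} the strata $\nu_{i*}\cP_i^{l_1,l_2}$ are divisors in $\overline{\cP}^{l_1,l_2}$. Hence $\operatorname{Supp}K$ has dimension at most $n-2$, not $n-3$. Your depth estimate $\operatorname{depth}K\geq n-2$ only says that a nonzero $K$ would be Cohen--Macaulay of codimension two. Nothing prevents such a sheaf from living on a component of $\cH_b\setminus\cH_b^{\mathrm{reg}}$, so no contradiction arises. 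This failure is intrinsic to one-parameter families; it is not caused by the non-regular locus having unexpected codimension. For the same reason, your appeal to Lemma~\ref{lem:nodal} in Step 2 does no work. Over the generic point of a one-dimensional $\Delta$ the spectral curve is smooth, every point is regular, and $K$ vanishes there for trivial reasons.

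\textbf{The missing idea.} You need to apply Lemma~\ref{lem:nodal} along a divisor of the base, so that the bad locus is pushed into codimension two in $\Delta$. Since $\cB^{\mathrm{sg}}\subset\cB$ is a divisor whose generic points are nodal curves, you can take $\dim\Delta=2$ with $\Delta\setminus\{0\}$ mapping to smooth-or-nodal spectral curves. Lemma~\ref{lem:nodal} then makes (\ref{extend:CM}) exact over $\Delta\setminus\{0\}$, so $K$ is supported over $0$. Now $\operatorname{Supp}K\subset\cH_b\setminus\cH_b^{\mathrm{reg}}$ has codimension at least three in $\cH_\Delta$, and your depth count does force $K=0$.

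The paper makes the same point differently. It restricts to a generic smooth curve $\Delta'\ni 0$ in $\Delta$. Because all terms are $\Delta$-flat, the restricted complex is the analogous complex over $\Delta'$, which is left exact by Corollary~\ref{cor:CM}. This gives $\mathcal{T}or_1(K,\mathcal{O}_{\Delta'})=0$, which is impossible for a nonzero $K$ supported over $0\in\Delta'$.

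\textbf{The fallback.} Your formal-local reduction to an integral plane model plus (\ref{eigen:D}) does not fill the gap. The plane model is not a spectral double cover of $C$, so it has no analogue of $\sigma$, $\cF_c$ or $\cP^{\sigma}_{\Delta 2}$. The eigen-property $\mathrm{H}^{\vee}_D(\cP_F)\cong\cP_F\boxtimes F$ alone does not produce a short exact sequence of the required shape.
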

\begin{proof}
Let $\cB^{\mathrm{sg}} \subset \cB$ be the closed subset corresponding 
to the singular spectral curves. It is of codimension one, and its 
generic point corresponds to those with at worst nodal singularities (cf.~\cite[Corollary 1.5 and Remark 1.7]{KouvidakisPantev1995}). 
Then we can take $\Delta$ so that $\dim \Delta =2$ and 
the image of the composition 
\begin{align*}\Delta \setminus \{0\} \hookrightarrow \Delta \to \cB
\end{align*}
corresponds to the locus of spectral curves which are either smooth or have at worst nodal singularities. Then from Lemma~\ref{lem:nodal}, the sequence (\ref{extend:CM}) is 
exact on $\Delta \setminus \{0\}$. 

Assume that the sequence (\ref{extend:CM}) is not exact.
Let $\mathbb{P}_{\Delta}$ be the complex (\ref{extend:CM})
regarded as an object in $\Coh(\cH_{\Delta})$, where 
$\cP_{\Delta 2}$ is located in degree zero. 
Since (\ref{extend:CM}) is left exact, the object $\mathbb{P}_{\Delta}$ is a non-zero object
in $\Coh^{\heartsuit}(\cH_{\Delta})$ and supported 
over $0\in \Delta$. 
Let $i \colon \Delta' \subset \Delta$ be a generic smooth divisor 
such that $0\in \Delta'$.
We use the same notation $i$ for the induced map $i \colon \cH_{\Delta'} \hookrightarrow \cH_{\Delta}$. 
Then as $\mathbb{P}_{\Delta}$ is supported over $0\in \Delta'$, 
it follows that 
\begin{align*}
0\neq \cH^{-1}(i^* \mathbb{P}_{\Delta}) \in \Coh^{\heartsuit}(\cH_{\Delta'}).
\end{align*}

On the other hand, 
since each term of $\mathbb{P}_{\Delta}$ is flat over $\Delta$, 
we have $i^* \mathbb{P}_{\Delta}=\mathbb{P}_{\Delta'}$ and it 
lies in $\Coh^{\heartsuit}(\cH_{\Delta'})$ by
the left exactness of
the sequence 
   \begin{align}\notag
        0\to \cP_{\Delta' 2}^{\sigma}\to \cF_{\Delta'}\otimes \cP_{\Delta' 1}
        \to \cP_{\Delta' 2}
    \end{align}
    which is obtained as in 
(\ref{extend:CM}) by replacing $\Delta$ with $\Delta'$. 
It contradicts $\cH^{-1}(i^*\mathbb{P}_{\Delta})\neq 0$,
hence the sequence (\ref{extend:CM}) is exact. 
Then by taking the restriction of (\ref{extend:CM}) to $0\in \Delta$, 
we conclude that the sequence (\ref{extend:CM2}) is exact. 
\end{proof}

Recall the line bundle $\mathcal{L}$ on $\cC_b$ in 
Lemma~\ref{lem:LEQ}. As a corollary, we obtain the following explicit 
resolution of $\cP_E$ in terms of vector bundles: 
\begin{cor}\label{cor:resolP}
Let $c_i=\pi(x_i)\in C$, and $\mathcal{V}_E$ be the following 
vector bundle on $\cH_b$
\begin{align}\label{def:VE}
    \mathcal{V}_E:=\bigotimes_{i=1}^{\ell} \cF_{c_i}^{\otimes n_i} \otimes 
    \cP_{\mathcal{L}}. 
\end{align}
There is a left resolution of $\cP_E$ in $\Coh^{\heartsuit}(\cH_b)$
of the form for some $k_i \in \mathbb{Z}_{\geq 1}$
\begin{align}\notag
\cdots \to \mathcal{V}_E^{\oplus k_2} \to \mathcal{V}_E^{\oplus k_1} \to \cP_E \to 0. 
\end{align}
\end{cor}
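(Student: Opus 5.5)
We argue by induction on $\chi(Q)=\sum_i n_i$, where $0\to \mathcal{L}\to E\to Q\to 0$ is the sequence of Lemma~\ref{lem:LEQ}. The base case $\chi(Q)=0$ means $E\cong\mathcal{L}$, so $\mathcal{V}_E=\cP_{\mathcal{L}}$. Since $\mathcal{L}$ is a line bundle, $\cP_{\mathcal{L}}$ is the restriction of $\cP^\sharp$ along a point of $\cH^{\mathrm{reg}}$ in the first factor, hence a line bundle on $\cH_b$. The resolution is then $\cP_E$ itself, with $k_1=1$ and all higher terms zero. If one insists on $k_i\geq 1$, one may append trivial pieces $\mathcal{V}_E\xrightarrow{\id}\mathcal{V}_E$; we read the statement as allowing this.

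For the inductive step, with $n_1>0$, we use the sequence (\ref{EprimeEOx}) and the exact sequence of Proposition~\ref{prop:exact2}:
\begin{align*}
0\to \cP_E \to \cF_{c_1}\otimes \cP_{E'}\to \cP_E\to 0.
\end{align*}
By (\ref{Qprime}), $E'$ has the same line bundle $\mathcal{L}$ and the same $Z_i$ for $i\neq 1$, with $n_1$ replaced by $n_1-1$. Hence
\begin{align*}
\mathcal{V}_E\cong \cF_{c_1}\otimes \mathcal{V}_{E'}.
\end{align*}
By induction, $\cP_{E'}$ has a resolution by sums of copies of $\mathcal{V}_{E'}$. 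Tensoring it with the vector bundle $\cF_{c_1}$, which is exact, gives a resolution
\begin{align*}
\cdots\to\mathcal{V}_E^{\oplus a_2}\to \mathcal{V}_E^{\oplus a_1}\to \cF_{c_1}\otimes\cP_{E'}\to 0.
\end{align*}

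Splicing follows the construction of a free resolution for a periodic extension. The composite $\mathcal{V}_E^{\oplus a_1}\to \cF_{c_1}\otimes\cP_{E'}\to\cP_E$ is surjective, since both maps are. Let $K_1$ be its kernel. Then $K_1$ is an extension
\begin{align*}
0\to K'_1\to K_1\to \cP_E\to 0,
\end{align*}
where $K'_1=\ker(\mathcal{V}_E^{\oplus a_1}\to \cF_{c_1}\otimes\cP_{E'})$ is a quotient of $\mathcal{V}_E^{\oplus a_2}$, and the copy of $\cP_E$ is the kernel of $\cF_{c_1}\otimes\cP_{E'}\to\cP_E$. The next step therefore needs a surjection from a sum of copies of $\mathcal{V}_E$ onto $K_1$. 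The horseshoe lemma requires a lift $\mathcal{V}_E^{\oplus a_1}\to K_1$ of the surjection $\mathcal{V}_E^{\oplus a_1}\to\cP_E$, and such a lift exists exactly when the obstruction in $\Ext^1(\mathcal{V}_E,K'_1)=H^1(\cH_b,\mathcal{V}_E^\vee\otimes K'_1)$ vanishes.

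This obstruction is the main difficulty. Since $\cH_b$ is not quasi-compact and not affine, the vanishing is not automatic. We would avoid it by not resolving $\cP_E$ directly, and instead resolve at the level of generation: show that $\cP_E$ lies in the thick subcategory generated by $\mathcal{V}_E$, and also that every term arising in the induction is a quotient of a finite sum of copies of $\mathcal{V}_E$. Concretely, we would iterate the exact sequence $0\to\cP_E\to \cF_{c_1}\otimes\cP_{E'}\to\cP_E\to0$ infinitely often to get a periodic resolution
\begin{align*}
\cdots\to \cF_{c_1}\otimes\cP_{E'}\to\cF_{c_1}\otimes\cP_{E'}\to \cP_E\to0,
\end{align*}
then replace each term by its resolution in copies of $\mathcal{V}_E$ and take the totalization, a Cartan--Eilenberg-type double complex, to obtain a complex of sums of $\mathcal{V}_E$ quasi-isomorphic to $\cP_E$.

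Making the totalization an honest left resolution in $\Coh^{\heartsuit}$ with terms $\mathcal{V}_E^{\oplus k_i}$, rather than only a quasi-isomorphism, is the step we expect to need the most care. Each differential must be a map between sums of copies of $\mathcal{V}_E$, which requires lifting the maps between consecutive resolutions. We would try to construct these lifts universally over $\Delta$ from the sequences of Corollary~\ref{cor:CM}, where the needed maps $\cF_\Delta\otimes\cP_{\Delta1}\to\cdots$ already exist as morphisms of families. We would then restrict to $0\in\Delta$, using flatness over $\Delta$ to keep exactness.
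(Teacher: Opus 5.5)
Your outline is exactly the paper's proof: induction on $\chi(Q)$, the sequence of Proposition~\ref{prop:exact2} iterated into the periodic resolution $\cdots\to\cF_{c_1}\otimes\cP_{E'}\xrightarrow{a}\cF_{c_1}\otimes\cP_{E'}\to\cP_E\to 0$, the induction hypothesis for $\cP_{E'}$, and $\cF_{c_1}\otimes\mathcal{V}_{E'}\cong\mathcal{V}_E$. The paper then simply asserts the conclusion.

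You are right that the splicing step is not formal. To totalize, you need lifts of $a$ to the $\mathcal{V}_E$-resolutions, and the obstructions live in $H^1$ on the non-affine, non-quasi-compact stack $\cH_b$. However, your proposal leaves exactly this step open, and your suggested remedy does not supply the lifts. Corollary~\ref{cor:CM} only gives the maps $\cP^{\sigma}_{\Delta2}\to\cF_\Delta\otimes\cP_{\Delta1}\to\cP_{\Delta2}$; it gives no chain maps between resolutions of $\cP_{\Delta1}$. Flatness over $\Delta$ lets you restrict exact sequences to $t=0$, but it produces no lifts. So, as written, the argument does not yield the stated resolution.

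There are two ways to close it.

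\emph{The version actually used later.} Every later use of the corollary (Corollary~\ref{cor:PE} via Lemmas~\ref{lem:perfect} and~\ref{lem:perfect2}, Step~\ref{sstep4} of Proposition~\ref{prop:induceT}, and Lemma~\ref{lem:vanish2}) only needs $\cP_E$ to lie in the localizing subcategory of $\QCoh(\cH_b)$ generated by $\mathcal{V}_E$. This is because the conditions imposed (weights of $\iota_*\nu^{\mathrm{reg}*}(-)$ in a prescribed set on QCA opens, and vanishing of $\Gamma(\Gamma_{\cS_t}(-))$) are kernels of exact continuous functors. That statement follows from what you already have, with no lifting. Indeed, $\cP_E$ is the colimit of the brutal truncations of the periodic complex, each of which is an iterated extension of shifts of $\cF_{c_1}\otimes\cP_{E'}$. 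Moreover, $\cF_{c_1}\otimes(-)$ carries the localizing subcategory generated by $\mathcal{V}_{E'}$ into the one generated by $\mathcal{V}_E$.

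\emph{The literal resolution.} Here the missing idea is that $a$ can be chosen of the form $\phi\otimes\id_{\cP_{E'}}$. Let $y$ be a fibre coordinate of $S\to C$ near $c_1$ and $\lambda_1=y(x_1)$, and set $\phi=\vartheta_{c_1}-\lambda_1\in\mathrm{End}(\cF_{c_1})$, where $\vartheta$ is the universal Higgs field. Since $\pi^{-1}(c_1)=2x_1$, Cayley--Hamilton gives $\phi^2=0$.

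To justify this choice of $a$, argue as follows.
\begin{itemize}
\item On $\cH^{\mathrm{reg}}_\Delta$, the sequence of Proposition~\ref{prop:exact} is induced by $0\to\sigma_*\eta_*\mathcal{O}_\Delta\to\pi^*\pi_*\eta_*\mathcal{O}_\Delta\to\eta_*\mathcal{O}_\Delta\to 0$, whose first map is multiplication by $y-y(\eta)$ up to a unit.
\item Hence $(\vartheta-y(\eta))\otimes\id$ on $\cF_\Delta\otimes\cP_{\Delta1}$ equals, up to a unit, the composite $\cF_\Delta\otimes\cP_{\Delta1}\twoheadrightarrow\cP^\sigma_{\Delta2}\hookrightarrow\cF_\Delta\otimes\cP_{\Delta1}$. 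The surjection comes from the same construction with $\eta$ and $\sigma\circ\eta$ interchanged; the injection is the one in Corollary~\ref{cor:CM}.
\item This equality holds first on $\cH^{\mathrm{reg}}_\Delta$, and then on all of $\cH_\Delta$ because the sheaves involved are maximal Cohen--Macaulay.
\item Restrict to $t=0$. Using $\phi^2=0$ and the fact that a surjective endomorphism of a coherent sheaf is an isomorphism, you get $\ker(\phi\otimes\id)=\mathrm{im}(\phi\otimes\id)$, with cokernel $\cP_E$.
\end{itemize}

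This differential lifts tautologically to $\phi\otimes\id_{W_\bullet}$ on $\cF_{c_1}\otimes W_\bullet$, for any resolution $W_\bullet\to\cP_{E'}$ by sums of $\mathcal{V}_{E'}$. The total complex of the resulting first-quadrant double complex is then a resolution of $\cP_E$. Its $n$-th term is $\bigoplus_{q\le n}\cF_{c_1}\otimes W_q$, which is a finite sum of copies of $\mathcal{V}_E$.
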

\begin{proof}
We prove the corollary by the induction on $\chi(Q)\geq 0$, where $Q$ is given in Lemma~\ref{lem:LEQ}. 
The base case, $\chi(Q)=0$, is obvious since $E=\mathcal{L}$ in this case. 

Suppose that $\chi(Q)>0$. 
    By Proposition~\ref{prop:exact2}, 
    there is an exact sequence of the form 
    \begin{align*}
        \cdots\to \cF_{c_1}\otimes \cP_{E'} \to \cF_{c_1}\otimes \cP_{E'} \to 
        \cF_{c_1}\otimes \cP_{E'} \to \cP_E \to 0.
    \end{align*}
    Since $\chi(Q')=\chi(Q)-1$ in the sequence (\ref{Qprime}), 
    we can apply the induction hypothesis for $\cP_{E'}$. Then the claim 
    for $\cP_E$ holds by the above exact sequence together with 
    $\mathcal{F}_{c_1}\otimes \mathcal{V}_{E'}\cong \mathcal{V}_E$. 
\end{proof}
\subsection{Arinkin sheaf and limit category}
For $E\in \Coh^{\heartsuit}(\cC_b)$, 
let $\deg \pi_{*}E=w$ so that $\deg_{\cC_b}(E)=w':=w+2g-2$. 
In this subsection, using the resolution of the Arinkin sheaf in the previous 
subsection, we show that it lies in the limit category. 
Let $\mathcal{V}_E$ be the vector bundle given by (\ref{def:VE}). 
We prove the following proposition: 
\begin{prop}\label{prop:VL}
    Suppose that $E\in \Coh^{\heartsuit}(\cC_b)$ is semistable. Then we have 
    \begin{align}\label{VEtilde}
        \mathcal{V}_E \in \widetilde{\LL}(\cH)_{w'},    
    \end{align}
        see Remark~\ref{rmk:Ltilde} for the right-hand side. 
        Here we have omitted the notation push-forward along with the closed immersion $\cH_b \hookrightarrow \cH$.
        Moreover if $E$ is stable, then 
        \begin{align*}
             \mathcal{V}_E|_{\cH(\chi)} \in \mathrm{Im}(j_{!}) \subset 
             \LL(\cH(\chi))_{w'}. 
        \end{align*}
        Here $j_{!}$ is the fully-faithful left adjoint of $j^*$
        for the open immersion 
        $j\colon \cH(\chi)^{\mathrm{ss}} \hookrightarrow \cH(\chi)$
        \begin{align*}
            j_{!} \colon
            \LL(\cH(\chi)^{\mathrm{ss}})_{w'} \hookrightarrow
            \LL(\cH(\chi))_{w'}. 
        \end{align*}
\end{prop}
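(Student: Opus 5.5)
The plan is to verify the weight condition of Definition~\ref{def:Lcat} directly, in the simplified form of Remark~\ref{rmk:perfect}, on every QCA open substack $\mathcal{U}\subset\cH$. This shows that the restrictions of $\mathcal{V}_E$ to all $\mathcal{U}$ lie in $\LL(\mathcal{U})_{w'}$, which gives (\ref{VEtilde}).

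First, perfectness. Since $h\colon \cH\to\cB$ is flat (Theorem~\ref{thm:higgs:basic}) and $\cB$ is smooth, the closed immersion $\cH_b\hookrightarrow\cH$ is a regular embedding with normal bundle $T_b\cB\otimes\mathcal{O}_{\cH_b}$. Hence the push-forward of the vector bundle $\mathcal{V}_E$ is perfect, and for any $\nu\colon\bgm\to\cH_b$ the pullback $\nu^*$ of this push-forward is $\nu^*\mathcal{V}_E\otimes\wedge^\bullet T_b^\vee\cB$. The Koszul factor has $\mathbb{G}_m$-weight zero, because $\cB$ carries the trivial action. So it suffices to bound $\wt(\nu^*\mathcal{V}_E)$.

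Second, the relevant cocharacters. A point of $\cH_b$ with a nontrivial cocharacter (modulo the central one) is a direct sum $F_1\oplus F_2$, where $F_i$ is rank one torsion free on $C_i$, with weights $\mu_1,\mu_2$. Indeed $\Aut\subset\mathbb{G}_m^2$ by the proof of Lemma~\ref{lem:nilp}. Over irreducible spectral curves only the central cocharacter occurs, and there the weight is $w'$ by construction.

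Third, the weight computation at $F_1\oplus F_2$.
\begin{itemize}
\item Each $\cF_{c_i}$ pulls back to $F_{1,c_i}(\mu_1)\oplus F_{2,c_i}(\mu_2)$, since $\pi_*F_j$ is a line bundle on $C_j\cong C$.
\item By (\ref{def:Psharp}), $\nu^*\cP_{\mathcal{L}}$ has weight $\mu_1 l_1+\mu_2 l_2$, where $l_j=\deg(\mathcal{L}|_{C_j})$. This uses $\chi(\mathcal{L}|_{C_j}\otimes F_j)-\chi(F_j)=l_j$.
\end{itemize}
Hence, with $N=\sum_i n_i$,
\[
\wt(\nu^*\mathcal{V}_E)=\{\mu_1(l_1+a)+\mu_2(l_2+N-a) : 0\le a\le N\}.
\]
On the other side, $\Ext^*(F_1,F_2)$ has Euler characteristic $-C_1\cdot C_2=-(2g-2)$. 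Therefore the interval in (\ref{wt:cond2}) is
\[
\Bigl[-(g-1)|\mu_1-\mu_2|,\;(g-1)|\mu_1-\mu_2|\Bigr]+\frac{w'}{2}(\mu_1+\mu_2).
\]
Using $l_1+l_2+N=w'$ (which follows from $\chi(Q)=N$ in Lemma~\ref{lem:LEQ}), the condition reduces to
\[
\Bigl|\,l_1+a-\tfrac{w'}{2}\,\Bigr|\le g-1 \qquad\text{for all } 0\le a\le N.
\]
The extreme values $a=0$ and $a=N$ should be identified, via the sequence (\ref{exact:LEQ0}), with the degrees of the maximal torsion-free quotient $E_{C_1}$ (of the form $l_1+N$, up to a constant) and of the kernel of $E\to E_{C_2}$ (of the form $l_1$, the two differing by $C_1\cdot C_2$ corrections). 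In this way the two inequalities become exactly the $\underline{q}$-semistability inequalities $\chi(E_{C_j})\ge \chi(E)/2$ for $j=1,2$ (Remarks~\ref{rmk:semistable} and~\ref{rmk:gieseker}).

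The main obstacle is this bookkeeping: getting the normalization $\deg_{\cC_b}$ versus $\chi$ right, keeping track of the sign conventions for $\mu_1>\mu_2$, and computing $\chi(E_{C_j})$ from the local structure $Q\cong\oplus_i\mathcal{O}_{Z_i}$ with $Z_i\subset (C_1\cap C_2)_{x_i}$. I would first treat the nodal case, where $n_i\in\{0,1\}$ and $E_{C_j}$ is easy to compute, and then handle general $m_i$ with the chain-ring description of Lemma~\ref{lem:LEQ}.

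Finally, suppose $E$ is stable. Then both inequalities are strict. The maps $\nu$ corresponding to Harder--Narasimhan filtrations in $\cH(\chi)$ are of the form (\ref{map:nupt}) with $\mu_1>\mu_2$, so the left endpoint is strictly avoided. By Remark~\ref{rmk:jshrink}, in its variant form Remark~\ref{rmk:variant}, $\mathcal{V}_E|_{\cH(\chi)}$ therefore lies in $\mathrm{Im}(j_!)$. In particular it is a compact object of $\LL(\cH(\chi))_{w'}$, which upgrades the membership in $\widetilde{\LL}$.
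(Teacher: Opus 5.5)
Your proposal follows the paper's proof. It uses the same weight formula $\mu_1(l_1+a)+\mu_2(l_2+N-a)$, the same normalization $l_1+l_2+N=w'$, the same reduction to $|l_1+a-w'/2|\leq g-1$, and the same appeal to Remark~\ref{rmk:jshrink} for stable $E$. Your Koszul argument for perfectness of the push-forward from $\cH_b$ is a point the paper leaves implicit.

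The one thing to fix is the step you defer, where your parenthetical degrees are interchanged.
\begin{itemize}
\item The proof of Lemma~\ref{lem:LEQ} gives $\mathcal{L}\subset E\subset \mathcal{L}|_{C_1}\oplus \mathcal{L}|_{C_2}$, the right-hand side being the push-forward of $\mathcal{L}$ from the normalization.
\item Each projection $E\to \mathcal{L}|_{C_j}$ is already surjective on $\mathcal{L}$. Its kernel is the maximal subsheaf of $E$ supported on the other component.
\item Hence $E_{C_j}\cong\mathcal{L}|_{C_j}$ has degree exactly $l_j$, not $l_1+N$ for $j=1$.
\item By contrast, $\ker(E\to E_{C_2})$ is a line bundle on $C_1$ of degree $l_1+N-(2g-2)$, not $l_1$. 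This is the sequence the paper uses.
\end{itemize}

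With $\chi(E)=w'+4-4g$, the two extreme weights then match the semistability inequalities directly:
\begin{itemize}
\item $\chi(E_{C_1})\geq \chi(E)/2$ is exactly $l_1\geq w'/2-(g-1)$, which is the endpoint $a=0$.
\item $\chi(E_{C_2})\geq \chi(E)/2$ is exactly $l_1+N\leq w'/2+(g-1)$, which is the endpoint $a=N$.
\end{itemize}
So your final claim holds verbatim, and no separate nodal or chain-ring case analysis is needed. Both inequalities become strict for stable $E$.
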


\begin{proof}
We first prove (\ref{VEtilde}) when $E$ is semistable. 
    Let $\nu$ be a map 
    \begin{align*}
    \nu \colon \bgm \to \cH, \ \mathrm{pt} \mapsto M_1 \oplus M_2
    \end{align*}
    for $M_i \in \Coh(\cC_b)$ with $\mathbb{G}_m$-weights $(\mu_1, \mu_2)$
    such that $\mu_1>\mu_2$. 
Since we have 
\begin{align*}
    \wt(\nu^* \mathbb{L}_{\cH}^{\nu>0})=(2g-2)(\mu_1-\mu_2), \
    \wt(\nu^* \mathbb{L}_{\cH}^{\nu<0})=(2g-2)(\mu_2-\mu_1), 
\end{align*}
it is enough to show that 
\begin{align}\label{wt:show}
    \wt(\nu^* \mathcal{V}_E) \subset 
    (g-1)(\mu_1-\mu_2) \cdot [-1, 1]
    +\frac{w'}{2}(\mu_1+\mu_2). 
\end{align} 

We may assume that $M_i$ is supported on $C_i$. 
Let $\mathcal{L}$ and $n_i$ be taken for $E\in \Coh^{\heartsuit}(\cC_b)$ as in Lemma~\ref{lem:LEQ}, 
and set $l_i:=\deg(\mathcal{L}|_{C_i})$. 
Then we have 
\begin{align}\label{eq:chiprime}
    l_1+l_2+\sum_{i=1}^{\ell} n_i=w'. 
\end{align}
By the definition of $\mathcal{V}_E$, a $\mathbb{G}_m$-weight $\mu$ of 
$\nu^*{\mathcal{V}_E}$ is of the form 
\begin{align}\label{wt:mu}
    \mu=l_1 \mu_1+l_2 \mu_2+\sum_{i=1}^{\ell} \left(a_i \mu_1+(n_i-a_i)\mu_2 \right)
\end{align}
for $0\leq a_i \leq n_i$. Here the first terms $l_1 \mu_1+l_2 \mu_2$ is the 
$\mathbb{G}_m$-weight of $\nu^* \cP_{\mathcal{L}}$, which is computed from 
(\ref{def:Psharp}). 
By using (\ref{eq:chiprime}), we can write (\ref{wt:mu}) as 
\begin{align*}
    \mu-\frac{w'}{2}(\mu_1+\mu_2)=\left(l_1+\sum_{i=1}^{\ell} a_i-\frac{w'}{2}\right)(\mu_1-\mu_2).
\end{align*}

We have the exact sequence 
\begin{align*}
    0 \to \Ker(\phi) \to E \stackrel{\phi}{\to} (E|_{C_2})^{\mathrm{fr}} \to 0,
\end{align*}
where $(-)^{\mathrm{fr}}$ is the torsion-free quotient. 
Using the exact sequence in Lemma~\ref{lem:LEQ}, the above exact sequence 
is identified with 
\begin{align*}
    0\to \mathcal{L}' \to E \to \mathcal{L}|_{C_2}\to 0, 
\end{align*}
where 
$\mathcal{L}'$ is a line bundle on $C_1$ with 
degree $l_1-2g+2+\sum_{i=1}^{\ell} n_i$. 
    By the semistability of $E$, we have 
    \begin{align}\label{ineq:stab}
        l_1-2g+2+\sum_{i=1}^{\ell} n_i \leq l_2. 
    \end{align}
    By substituting (\ref{eq:chiprime}), the above inequality is equivalent to 
    \begin{align*}
        l_1 \leq g-1-\sum_{i=1}^{\ell} n_i +\frac{w'}{2}. 
    \end{align*}
    Therefore 
    \begin{align*}
        \mu-\frac{w'}{2}(\mu_1+\mu_2)\leq (g-1)(\mu_1-\mu_2).
    \end{align*}
    
    The lower bound 
    \begin{align}\label{low:bound}
      (g-1)(\mu_2-\mu_1) \leq   \mu-\frac{w'}{2}(\mu_1+\mu_2)
    \end{align}
    is similarly obtained, 
    using the exact sequence 
    \begin{align}\label{exact:Lprime}
        0\to \mathcal{L}'' \to E \to \mathcal{L}|_{C_1} \to 0
    \end{align}
    where $\mathcal{L}''$ is a line bundle on $C_2$ of degree $l_2-2g+2+\sum_{i=1}^{\ell} n_i$. 
    Therefore (\ref{wt:show}) is proved. 

Suppose that $E$ is stable. The existence of $j_{!}$ and the characterization of its 
image for the open immersion
$j\colon \cH(\chi)^{\mathrm{ss}}\hookrightarrow\cH(\chi)$
is given in Theorem~\ref{thm:left} and Remark~\ref{rmk:jshrink}, Remark~\ref{rmk:variant}.  
    If $E$ is stable, the inequality (\ref{ineq:stab}) is strict, 
    hence we obtain the inclusion (\ref{wt:show}) without boundary. 
    Therefore we obtain the second statement, see Remark~\ref{rmk:jshrink}. 
\end{proof}

We obtain the following corollary, which gives a connection of the Arinkin's construction of Cohen--Macaulay extension and the limit category:
\begin{cor}\label{cor:PE}
    If $E$ is semistable, we have 
    \begin{align*}
        \cP_E \in \widetilde{\LL}(\cH)_{w'}.
    \end{align*}
    If $E$ is furthermore stable, we have 
    \begin{align*}
        \cP_E|_{\cH(\chi)} \in \Im(j_{!}) \subset \LL(\cH(\chi))_{w'}.
    \end{align*}
\end{cor}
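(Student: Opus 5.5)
We deduce Corollary~\ref{cor:PE} from Corollary~\ref{cor:resolP} together with Proposition~\ref{prop:VL}. By Corollary~\ref{cor:resolP}, $\cP_E$ has a left resolution
\begin{align*}
\cdots \to \mathcal{V}_E^{\oplus k_2} \to \mathcal{V}_E^{\oplus k_1} \to \cP_E \to 0
\end{align*}
by direct sums of the single vector bundle $\mathcal{V}_E$ on $\cH_b$. We work on a QCA open substack $\mathcal{U} \subset \cH$ (or on a component $\cH(\chi)$ intersected with $\mathcal{U}$). Since $\widetilde{\LL}(\cH)_{w'}$ is the limit of the categories $\LL(\mathcal{U})_{w'}$, membership can be checked on each such $\mathcal{U}$.

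The weight condition (\ref{wt:nu}) is tested by maps $\nu \colon \bgm \to \mathcal{U}$. For perfect objects it reduces to the form (\ref{wt:cond2}), which bounds $\wt(\nu^*(-))$ by an interval depending only on $\nu$. The key observation is that $\cP_E$, pushed forward to $\cH$, is the colimit of the brutal truncations of the resolution. For each $\nu$, the pullback $\nu^*$ (or its regularized version $\iota_*\nu^{\mathrm{reg}*}$) of each truncation is an iterated extension of shifts of $\nu^*\mathcal{V}_E$. Weights of $\bgm$-representations only occur among the weights of the pieces, so every truncation satisfies the same weight bound as $\mathcal{V}_E$, which holds by Proposition~\ref{prop:VL}. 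To pass to $\cP_E$ itself, we use that $\cP_E$ is coherent and that the weights of $\iota_*\nu^{\mathrm{reg}*}\cP_E$ are detected by finitely many cohomology sheaves. Concretely, for $N \gg 0$ the object $\cP_E$ is a direct summand (up to a shift) of, or at least a cone involving, the $N$-th truncation $\sigma_{\geq -N}$ and a highly shifted kernel term. Weights of objects in cohomological degrees $\ll 0$ do not affect the finitely many weights appearing in $\iota_*\nu^{\mathrm{reg}*}\cP_E$, because $\nu^{\mathrm{reg}}$ is quasi-smooth and thus has bounded Tor-amplitude. In other words, $\iota_*\nu^{\mathrm{reg}*}$ has finite cohomological amplitude, so its cohomology in a fixed range is computed by a finite truncation. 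Alternatively, one can argue via the $\bgm$-weight decomposition: $\Coh(\bgm)_{\mu}$ for $\mu$ outside the interval is a direct factor, and the projection of every truncation to it vanishes, hence so does the projection of the colimit.

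For the stable case we argue in the same way, but with the strict left inequality. Proposition~\ref{prop:VL} gives that $\mathcal{V}_E|_{\cH(\chi)}$ satisfies the strict condition of Remark~\ref{rmk:jshrink} for Harder--Narasimhan maps $\nu$. Since $\mathrm{Im}(j_!)$ is characterized by these weight inequalities, it is closed under the same truncation argument. Moreover, $\cP_E|_{\cH(\chi)}$ is already coherent and lies in $\LL(\cH(\chi))_{w'}$. Here $\cP_E$ is supported on the fiber $\cH_b$, which meets finitely many HN strata relevant for the weight test on a given quasi-compact open. Hence $\cP_E|_{\cH(\chi)}$ lies in $\mathrm{Im}(j_!)$.

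We expect the main obstacle to be the passage from the infinite resolution to $\cP_E$: the resolution is unbounded, and $\cH_b \hookrightarrow \cH$ need not be perfect. We plan to handle this using bounded Tor-amplitude of the regularizations $\nu^{\mathrm{reg}}$ (they are quasi-smooth), reducing to a finite truncation. A secondary point is compactness in the stable case. There we would use that $\cP_E|_{\cH(\chi)}$ is coherent with quasi-compact support, and that the image of $j_!$ consists of compact objects.
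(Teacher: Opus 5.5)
Your treatment of the semistable case follows the paper's argument. You resolve $\cP_E$ by sums of $\mathcal{V}_E$ (Corollary~\ref{cor:resolP}), and each brutal truncation is an iterated extension of shifts of $\mathcal{V}_E^{\oplus k}$, so it obeys the bound of Proposition~\ref{prop:VL}. You then pass to the colimit; your weight-projection variant is exactly Lemma~\ref{lem:perfect2}.

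Some side remarks are unnecessary or wrong, though none of this breaks the first part:
\begin{itemize}
\item The Tor-amplitude detour is not needed.
\item The claim that $\cP_E$ is a direct summand of a truncation is incorrect; $\cP_E$ is not perfect.
\item Your worry that $\cH_b\hookrightarrow\cH$ ``need not be perfect'' is unfounded. It is a regular immersion, being the base change of $\{b\}\hookrightarrow\cB$ along the flat map $h$ with $\cB$ smooth. So the truncations really are perfect on $\cH$.
\end{itemize}

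The stable case contains a false step. You assert that $\cP_E|_{\cH(\chi)}$ ``is already coherent and lies in $\LL(\cH(\chi))_{w'}$''. You plan to get compactness from ``quasi-compact support''. But $\cP_E$ restricts to a line bundle on the regular locus of the Hitchin fiber. For reducible $\cC_b$, that regular locus in $\cH_b(\chi)$ has infinitely many components $\cP^{l_1,l_2}$ (Example~\ref{exam:g_2}). So the support of $\cP_E|_{\cH(\chi)}$ is not quasi-compact. The first part only places $\cP_E|_{\cH(\chi)}$ in $\widetilde{\LL}(\cH(\chi))_{w'}$, which can be strictly larger than $\LL(\cH(\chi))_{w'}$ (Remark~\ref{rmk:Ltilde}).

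Compactness must be an output, not an input. Apply the strict-weight characterization of $\Im(j_!)$ (Remarks~\ref{rmk:jshrink} and~\ref{rmk:variant}) to $\cP_E|_{\cH(\chi)}$ viewed as an object of the limit. Concretely, take any QCA open $\mathcal{U}\supset\cH(\chi)^{\mathrm{ss}}$ with a finite HN stratification. The strict bounds at the HN centers force $\cP_E|_{\mathcal{U}}\cong (j_{\mathcal{U}})_!\, j^*\cP_E$, compatibly in $\mathcal{U}$. These bounds hold uniformly for all of the infinitely many strata, by Proposition~\ref{prop:VL} together with your colimit argument. Hence $\cP_E|_{\cH(\chi)}\cong j_!\, j^*\cP_E$.

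Compactness then follows from two facts:
\begin{itemize}
\item $j^*\cP_E$ is coherent on the QCA stack $\cH(\chi)^{\mathrm{ss}}$ and satisfies the weight bounds, so it lies in $\LL(\cH(\chi)^{\mathrm{ss}})_{w'}$.
\item $j_!$ lands in compact objects.
\end{itemize}
Your remark that $\cH_b$ meets only finitely many strata on a quasi-compact open is beside the point. What matters is the uniform bound over all HN maps, which is what makes the object lie in $\Im(j_!)$ despite its non-quasi-compact support.
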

\begin{proof}
    The corollary follows from Corollary~\ref{cor:resolP}, Proposition~\ref{prop:VL} and Lemma~\ref{lem:perfect}. 
\end{proof}

\subsection{Preservation of quasi-BPS categories}
Let 
\begin{align*}
 \LL(\cH(\chi)^{\mathrm{ss}})_w\subset \Coh(\cH(\chi)^{\mathrm{ss}})_{w}
\end{align*}
be the quasi-BPS category over $\cB$. 
Note that we have 
\begin{align*}
    \mathcal{O}_{\cH(w)^{\mathrm{ss}}} \in \LL(\cH(w)^{\mathrm{ss}})_0.
\end{align*}
Using the results in the previous subsections, 
we prove the following: 

\begin{prop}\label{prop:induceT}
The functor $\Phi$ in (\ref{induce:Phi})
satisfies that 
\begin{align*}
\Phi( \mathcal{O}_{\cH(w)^{\mathrm{ss}}}) \in j_{!}\LL(\cH(\chi_0)^{\mathrm{ss}})_{w'}.
\end{align*}
Here $j \colon \cH(\chi_0)^{\mathrm{ss}} \hookrightarrow \cH(\chi_0)$ 
is the open immersion. 
\end{prop}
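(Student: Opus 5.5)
The plan is to check the weight criterion for $\mathrm{Im}(j_!)$ from Remark~\ref{rmk:jshrink} (in the variant form of Remark~\ref{rmk:variant}) directly on $\Phi(\mathcal{O}_{\cH(w)^{\mathrm{ss}}})=p_{2*}\cP$. The input is the fiberwise information on $\cP_E$ from Corollary~\ref{cor:PE}.

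First I will check that $\Phi(\mathcal{O})$ is perfect, so that the simplified criterion (\ref{wt:cond2}) applies. By Lemma~\ref{lem:nilp} it suffices to see that $\Phi(\mathcal{O})$ has nilpotent (i.e.\ zero) singular support. This should follow from two facts: $\cP$ is flat over $\cH(w)^{\mathrm{ss}}$ by Theorem~\ref{thm:Pflat}, and the projection $\cH(w)^{\mathrm{ss}}\times_{\cB}\cH\to\cH$ has a proper good moduli space. Since $\cP$ is flat over the second factor as well, one can also argue directly with a locally free resolution of $\cP$. The statement is local over $\cB$, so I only need to treat maps $\nu\colon \bgm\to\cH(\chi_0)$ of Harder--Narasimhan type lying over a point $b$ with $\cC_b=C_1\cup C_2$ reducible. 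Indeed, over irreducible spectral curves there are no such maps with distinct weights, and the HN maps are exactly $\nu\colon \mathrm{pt}\mapsto M_1\oplus M_2$ with $M_i$ supported on $C_i$, weights $\mu_1>\mu_2$, and $M_1$ of larger slope.

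Next I will use base change along $\nu$, with $\bgm$ acting trivially on the first factor. This gives
\[
\nu^*\Phi(\mathcal{O})\cong R\Gamma\bigl(\cH(w)^{\mathrm{ss}}_b,\ (\id\times\nu)^*\cP\bigr).
\]
By the symmetry of the Arinkin sheaf, the $\mathbb{G}_m$-weights occurring on the right are bounded by the weights of $\nu^*\cP_E$ as $E$ ranges over the semistable sheaves on $\cC_b$. Concretely, I will filter $\cH(w)^{\mathrm{ss}}_b$ by the strata of Lemma~\ref{lem:LEQ} and use the resolutions of Corollary~\ref{cor:resolP} in families. A $\mathbb{G}_m$-equivariant pushforward along a map on which $\mathbb{G}_m$ acts trivially cannot create new weights. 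Therefore Proposition~\ref{prop:VL} gives the closed bound (\ref{wt:show}), which means $\Phi(\mathcal{O})$ lies in $\LL(\cH(\chi_0))_{w'}$. On the locus of stable $E$ the bound is strict, which is exactly the condition for $\mathrm{Im}(j_!)$.

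The main obstacle is the boundary weight on the left. It equals
\[
\tfrac{w'}{2}(\mu_1+\mu_2)-(g-1)(\mu_1-\mu_2),
\]
and it is attained only when $E$ is strictly semistable and (\ref{low:bound}) is an equality. That happens when all $a_i=0$ and the subsheaf $\mathcal{L}''\subset E$ on $C_2$ in (\ref{exact:Lprime}) has the same slope as $E$. I will isolate the boundary weight summand, which is a direct summand by the weight decomposition. I expect it to be the cohomology of a line bundle on the strictly semistable locus, which is a union of $\mathrm{Pic}(C_1)\times\mathrm{Pic}(C_2)$-torsors. Using (\ref{def:Psharp}), this line bundle is the Poincar\'e-type bundle pairing $E$ with $M_1$ or $M_2$. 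I will then aim to show that this line bundle is nontrivial, indeed of Mukai type, on the abelian-variety factors precisely because $M_1$ is destabilizing, i.e.\ $\deg M_1\ne\deg M_2$ in the appropriate normalization. Its cohomology then vanishes. If this direct vanishing is awkward, my fallback is to pass to an \'etale cover of $\cB$ and perturb to a generic polarization, comparing with Proposition~\ref{prop:MRV}.
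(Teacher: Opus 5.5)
Your set-up and the first half of the argument are sound. Because $\cP$ is flat over the second factor, you get $\nu^*\Phi(\mathcal{O})\cong R\Gamma(\cH(w)^{\mathrm{ss}}_b,\cP_M)$, where $\cP_M$ is the restriction to $\cH(w)^{\mathrm{ss}}_b\times\{M\}$ with its $\nu$-action. Corollary~\ref{cor:resolP} and Proposition~\ref{prop:VL} then bound its weights pointwise. This gives the closed bound, and it shows that the summand $F:=(\cP_M)_{\mu_{\min}}$ of lowest weight $\mu_{\min}=\frac{w'}{2}(\mu_1+\mu_2)-(g-1)(\mu_1-\mu_2)$ is supported on the locus $\cS_b$ of extensions $0\to\mathcal{L}_2\to E\to\mathcal{L}_1\to 0$ with $\mathcal{L}_i$ on $C_i$ of degree $w/2$. (For perfectness of $\Phi(\mathcal{O})$, the relevant input is flatness over the target factor together with finite cohomological dimension of $p_{2*}$, not flatness over $\cH(w)^{\mathrm{ss}}$.)

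The gap is the vanishing of $R\Gamma(F)$. This is the whole content of the proposition, and your mechanism for it rests on a wrong picture of $\cS_b$.
\begin{itemize}
\item $\cS_b$ is not a union of $\mathrm{Pic}(C_1)\times\mathrm{Pic}(C_2)$-torsors. That describes its center $\mathcal{Z}_b=\mathrm{Pic}^{w/2}(C_1)\times\mathrm{Pic}^{w/2}(C_2)\times\bgm\times\bgm$ of polystable objects $\mathcal{L}_1\oplus\mathcal{L}_2$.
\item $\cS_b$ itself is a rank $2g-2$ vector bundle over $\mathcal{Z}_b$ with fibers $\Ext^1_S(\mathcal{L}_1,\mathcal{L}_2)$. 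It has dimension $4g-4=\dim\cH(w)^{\mathrm{ss}}_b$, so it is an irreducible component.
\item It contains the line bundles of bidegree $(w/2,w/2+2g-2)$, and at these $\cP_M$ has weight exactly $\mu_{\min}$. So $F\neq 0$ is a sheaf on this component, not a line bundle on abelian torsors.
\item $R\Gamma(F)$ is computed on the good moduli space. There $\cS_b$ maps to $\mathrm{Pic}^{w/2}(C_1)\times\mathrm{Pic}^{w/2}(C_2)$ with affine fibers, and one takes invariants for the extra automorphism $\lambda$ of $\mathcal{L}_1\oplus\mathcal{L}_2$.
\end{itemize}
So the vanishing is a $\lambda$-weight phenomenon, not Mukai-type vanishing on abelian varieties. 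Likewise the numerical input is not $\deg M_1\neq\deg M_2$. Nontriviality of a degree-zero line bundle would depend on the points $M_1,M_2$, whereas the vanishing must hold on the whole stratum. In the paper the input is the inequality $\chi_1\geq 1-g$.

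The missing idea is Step~\ref{sstep4} of the paper's proof.
\begin{itemize}
\item $\Gamma_{\cS_b}$ commutes with the weight decomposition and $F$ is supported on $\cS_b$, so $F$ is a summand of $\Gamma_{\cS_b}(\cP_M)$. It therefore suffices to show $R\Gamma(\cH(w)^{\mathrm{ss}}_b,\Gamma_{\cS_b}(\cP_M))=0$.
\item Write $\Gamma_{\cS_b}$ as a colimit over thickenings and use the resolution by $\mathcal{V}_M$. The graded pieces $\mathcal{V}_M|_{\cS_b}\otimes\det\mathbb{L}_{\cS_b/\cH_b}[-d]\otimes\mathrm{Sym}^i(\mathbb{L}_{\cS_b/\cH_b}[-1])^{\vee}$ have $\lambda$-weights at most $(\chi_1+2g-2+\sum_i a_i)(\lambda_1-\lambda_2)<0$, so they vanish after pushforward to the good moduli space.
\item The factor $\det\mathbb{L}_{\cS_b/\cH_b}$, which comes from local cohomology, is essential. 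The $\lambda$-weights of $\mathcal{V}_M|_{\cS_b}$ alone are $(\chi_1+\sum_i a_i)(\lambda_1-\lambda_2)$, and these can be non-negative because $\chi_1$ can be negative.
\item The paper sets this up after the \'{e}tale base change and the perturbation $h_{\varepsilon}$, which make $\cS$ a $\Theta$-stratum with a well-defined derived structure. It treats the $h_{\varepsilon}$-stable complement by the strict pointwise bound plus Lemma~\ref{lem:sodsum}. Proposition~\ref{prop:MRV} plays no role here, so your fallback as stated does not deal with the strictly semistable stratum.
\end{itemize}
With this vanishing inserted, your pointwise support argument would complete the proof directly, without the semiorthogonal-summand step.
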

\begin{proof}
It is enough to check the weight bounds near points whose spectral curve is reducible, since any Higgs bundle with integral spectral curve is 
automatically (semi)stable. 
By Lemma~\ref{lem:commute}, we may assume that $w<0$. 
The proof is long, so we divide it into 6 steps. 

\begin{sstep}\label{sstep1}
    Reduction to an \'{e}tale neighborhood of each point in the Hitchin base. 
\end{sstep}
We take $b\in \cB$ such that the spectral curve 
$\cC_b$ is reducible 
\begin{align*}\cC_b=C_1 \cup C_2, \
C_1 \neq C_2.
\end{align*}
Let $T' \hookrightarrow \cC$ be a general hypersurface such that $T' \to \cB$ is 
\'{e}tale at $b$ and intersects at $\cC_b$ with its smooth locus. 
Let $0\in T'$ be a lift of $b$ such that it lies in $C_1$ 
under the embedding $T'\hookrightarrow \cC$. 
We take an open neighborhood $0\in T \subset T'$ such that $T \to \cB$ is 
\'{e}tale, and $T\cap \cC_{b'}$ for any $b'\in \cB$ lies in the smooth locus of $\cC_{b'}$. 
We have an \'{e}tale neighborhood 
    $(T, 0) \to (\cB, b)$, and set 
\begin{align*}\cC_T:=\cC\times_{\cB}T, \ 
\cH_T(w):=\cH(w)\times_{\cB} T.
\end{align*}
By the construction, the projection $\cC_T \to T$ admits a section 
\begin{align}\label{sec:eta}
    e \colon T \hookrightarrow \cC_T.
    \end{align}
For $t \in T$ such that $\cC_t$ is reducible, 
we write $\cC_t=C_{t1} \cup C_{t2}$ where $C_{t1}$ intersects with the section (\ref{sec:eta}). 

Let
\begin{align*}
    \cP_T \in \Coh^{\heartsuit}(\cH_T(w)^{\mathrm{ss}} \times_T \cH_T(\chi_0))
\end{align*}
    be the pull-back of the Arinkin sheaf (\ref{Arsheaf0})
    by the projection to $\cH^{\mathrm{ss}}\times_{\cB}\cH$.
    The Fourier--Mukai functor with kernel $\cP_T$ gives the functor 
    \begin{align*}
        \Phi_T \colon \Coh(\cH_T(w)^{\mathrm{ss}})_{0}
        \to \Coh(\cH_T(\chi_0))_{w'}. 
    \end{align*}
    By the characterization of the image of $j_{!}$ in Remark~\ref{rmk:jshrink}, it 
    is enough to show that 
    \begin{align}\notag
    \Phi_T(\mathcal{O}_{\cH_T(w)^{\mathrm{ss}}}) \in j_{T!}\LL(\cH_T(\chi_0)^{\mathrm{ss}})_{w'}
    \end{align}
    for the open immersion $j_T \colon \cH_T(\chi_0)^{\mathrm{ss}} \hookrightarrow \cH_T(\chi_0)$. 

Let $\nu$ be a map 
\begin{align}\label{nu:F12}
    \nu \colon \bgm \to \cH_T(\chi_0), \ \mathrm{pt} \mapsto M=M_1 \oplus M_2
\end{align}
with $\mathbb{G}_m$-weight $(\mu_1, \mu_2)$, $\mu_1>\mu_2$, 
such that $M_i$ is a line bundle on $C_{ti}$ for $t\in T$
satisfying 
\begin{align}\label{sum:chi}\chi_i=\deg M_i, \ \chi_1+\chi_2=\chi_0=2-2g, \  
\chi_1 \geq \chi_2.
\end{align}
For $\chi_1>\chi_2$, it corresponds to the center of a Harder--Narasimhan stratum of 
$\cH(\chi_0)$, see Remark~\ref{rmk:jshrink}.
We need to show the following weight bound 
\begin{align}\label{wt:bound}
    \wt(\iota_{*}\nu^{\mathrm{reg}*}\Phi_T(\mathcal{O}_{\cH_T(w)^{\mathrm{ss}}})) \subset (g-1)(\mu_1-\mu_2)\cdot [-1, 1] +\frac{w'}{2}(\mu_1+\mu_2), 
\end{align}
for $\chi_1=\chi_2$ and 
\begin{align}\label{wt:bound2}
    \wt(\iota_{*}\nu^{\mathrm{reg}*}\Phi_T(\mathcal{O}_{\cH_T(w)^{\mathrm{ss}}})) \subset (g-1)(\mu_1-\mu_2)\cdot (-1, 1] +\frac{w'}{2}(\mu_1+\mu_2), 
\end{align}
when $\chi_1>\chi_2$. 

\begin{sstep}
    Perturbation of stability condition. 
\end{sstep}
Let $h$ be an ample divisor on $C$ of degree one and we denote by $h_{b}$ 
its pull-back by the projection $\cC_b \to C$. 
We perturb it to a $\mathbb{Q}$-ample divisor $h_{\varepsilon}$ on $\cC_b$ such that 
\begin{align}\label{ineq:he}
\deg h_{\varepsilon}|_{C_1}=1-\varepsilon, \
\deg h_{\varepsilon}|_{C_2}=1+\varepsilon
\end{align}
for $\varepsilon \in \mathbb{Q}$ with $0<\varepsilon \ll 1$. 
By taking the \'{e}tale neighborhood $T\to \cB$ as in the previous step, 
we can extend  
$h_{\varepsilon}$ to a $\mathbb{Q}$-ample 
    divisor $h_{T, \varepsilon}$ on $\cC_T$. 
Indeed we can take 
\begin{align*}
    h_{T, \varepsilon}=(1+\varepsilon)\cdot h_T -2\varepsilon \cdot e(T)
\end{align*}
where $h_T$ is the pull-back of $h$ under the projection $\cC_T \to C$ and 
$e$ is the section (\ref{sec:eta}).
    We consider open substacks 
    \begin{align*}
        \cH_T(w)^{\varepsilon\text{-ss}} \subset \cH_T(w)^{\mathrm{ss}} \subset \cH_T(w)
    \end{align*}
    where $\cH_T(w)^{\varepsilon\text{-ss}}$ consists of $h_{T, \varepsilon}$-semistable sheaves on the fibers of $\cC_T\to T$. 
    By a choice of $h_{\varepsilon}$, it consists of 
    $h_{T, \varepsilon}$-stable sheaves; otherwise there is $a\in \mathbb{Z}$
    satisfying $a/(1-\varepsilon)=w/2$, which cannot happen for $0<\varepsilon \ll 1$. 
Therefore the morphism to the good moduli space 
\begin{align}\label{good:Gm}
    \cH_T(w)^{\varepsilon\text{-ss}} \to \mathrm{H}_T(w)^{\varepsilon\text{-ss}}
\end{align}
is a $\mathbb{G}_m$-gerbe and $\mathrm{H}_T(w)^{\varepsilon\text{-ss}}$ is a smooth variety.
Moreover we have an equivalence 
\begin{align}\notag
    \Coh( \cH_T(w)^{\varepsilon\text{-ss}})_{0} \simeq \Coh(\mathrm{H}_T(w)^{\varepsilon\text{-ss}}). 
\end{align}
    
    The complement 
    \begin{align*}
        \cS=\cH_T(w)^{\mathrm{ss}} \setminus 
         \cH_T(w)^{\varepsilon\text{-ss}}
    \end{align*}
    is a \textit{$\Theta$-stratum} of $\cH_T(w)^{\mathrm{ss}}$, see~\cite{Halpinstab} for the notation of 
    $\Theta$-stratum. It consists of 
    $E \in \Coh^{\heartsuit}(\cC_t)$ for $t\in T$
which admits an exact sequence
    \begin{align}\label{exact:LF}
        0\to \mathcal{L}_2 \to E \to \mathcal{L}_1 \to 0
    \end{align}
   where $\mathcal{L}_i$ is a line bundle on $C_{ti}$ with 
     degree $w/2$ (so $\cS=\emptyset$ if $w$ is odd). Note that by the condition (\ref{ineq:he})
     and $w<0$, we have 
     \begin{align}\label{ineq:slope}
         \frac{w/2}{h_{T, \varepsilon} \cdot C_{t2}}>  \frac{w/2}{h_{T, \varepsilon} \cdot C_{t1}}. 
     \end{align}
     This implies that the exact sequence (\ref{exact:LF}) is a Harder--Narasimhan filtration of $E$
     with respect to $h_{T, \varepsilon}$.

    Let $g\colon \cS \to \mathcal{Z}$ be the center of $\cS$. 
It corresponds to direct sums $\mathcal{L}_1 \oplus \mathcal{L}_2$, where $\mathcal{L}_i$ is 
a line bundle on $C_{ti}$ as above.
  Moreover the stack $\cS$ is quasi-smooth, the closed immersion 
  $\cS \hookrightarrow \cH_T(w)^{\mathrm{ss}}$ is quasi-smooth 
  and the natural map 
  $g\colon \cS \to \mathcal{Z}$
    is a vector bundle with fiber 
    \begin{align}\label{ext}
        \Ext_S^1(\mathcal{L}_1, \mathcal{L}_2) =\mathbb{A}^{2g-2}.
    \end{align}
These follow from the computations of tangent complexes
\begin{align}\label{tan:comp}
    \mathbb{T}_{\cS/\cH_T(w)^{\mathrm{ss}}}|_{(0\to \mathcal{L}_2 \to E \to \mathcal{L}_1 \to 0)} &=\Hom_S(\mathcal{L}_2, \mathcal{L}_1) \\
  \notag  \mathbb{T}_{\cS/\mathcal{Z}}|_{(0\to \mathcal{L}_2 \to E \to \mathcal{L}_1 \to 0)}&=\Hom_S(\mathcal{L}_1, \mathcal{L}_2)[1]
\end{align}
and noting that 
\begin{align*}
\Hom_S(\mathcal{L}_1, \mathcal{L}_2)=k^{2g-2}[-1], \ \Hom_S(\mathcal{L}_2, \mathcal{L}_1)=k^{2g-2}[-1].
\end{align*}

    For a point $z \in \mathcal{Z}(k)$ corresponding to the 
    direct sum $\mathcal{L}_1 \oplus \mathcal{L}_2$, there is a canonical 
    map $\bgm \to \mathcal{Z}$ with image $z$, and the $\mathbb{G}_m$-weight 
    is given by 
    \begin{align}\label{wt:lambda}
    \lambda=(\lambda_1, \lambda_2), \ \lambda_1<\lambda_2
    \end{align}
    (so that (\ref{ext}) has positive $\lambda$-weight $\lambda_2-\lambda_1>0$ and $g_{*}\mathcal{O}_{\cS}$ has non-positive $\lambda$-weights). 
    Since $\mathcal{Z}$ is connected, $\lambda$ is independent of $z$. 
    We have the semiorthogonal decomposition 
    \begin{align*}
        \Coh(\cS)=\left\langle \ldots, \Coh(\cS)_{-1}, \Coh(\cS)_0, \Coh(\cS)_1, \ldots   \right\rangle
    \end{align*}
    with an equivalence $g^* \colon \Coh(\mathcal{Z})_a \stackrel{\sim}{\to} \Coh(\cS)_a$, where 
    $\Coh(\mathcal{Z})_a \subset \Coh(\mathcal{Z})$ corresponds to the 
    $\lambda$-weight $a$-part, see~\cite[Proposition 1.1.2, Lemma 1.5.6]{HalpK32}.
\begin{sstep}
    Reduction to the case of perturbed stability condition. 
\end{sstep}

We consider the following diagram 
\begin{align*}
    \xymatrix{
& \cH_T(w)^{\varepsilon\text{-ss}} \times_T \cH_T(\chi_0) \ar[ld] \inclusion^-{\gamma} &  \cH_T(w)^{\mathrm{ss}} \times_T \cH_T(\chi_0)
\ar[rd]_-{p_T} \ar[ld]^-{q_T} & \\
 \cH_T(w)^{\varepsilon\text{-ss}} \inclusion
 & \cH_T(w)^{\mathrm{ss}}  &  & \cH_T(\chi_0).
    }
\end{align*}
    We set 
    \begin{align}\label{PTeps}\cP_T^{\varepsilon}:=\gamma^* \cP_T \in \Coh^{\heartsuit}(\cH_T(w)^{\varepsilon\text{-ss}} \times_T \cH_T(\chi_0))
    \end{align}
    and 
    consider the Fourier--Mukai functor with kernel $\cP_T^{\varepsilon}$
    \begin{align*}
        \Phi_T^{\varepsilon}\colon \Coh(\cH_T(w)^{\varepsilon\text{-ss}})_{0} \to \Coh(\cH_T(\chi_0))_{w'}.
    \end{align*}
Then we have the distinguished triangle in $\Coh(\cH_T(\chi_0))_{w'}$
\begin{align}\label{dist:A}
    p_{T*}(\Gamma_{q_T^{-1}(\cS)}(\cP_T))
    \to \Phi_T(\mathcal{O}_{\cH_T(w)^{\mathrm{ss}}}) \to \Phi_T^{\varepsilon}(\mathcal{O}_{\cH_T(w)^{\varepsilon\text{-ss}}}), 
\end{align}
see (\ref{loc:coh}) for the local cohomology functor. 
For the map $\nu$ in (\ref{nu:F12}), we have 
\begin{align}\label{vanish:Gamma}
    \nu^* p_{T*}(\Gamma_{q_T^{-1}(\cS)}(\cP_T))\cong 
    \Gamma(\cH_t(w)^{\mathrm{ss}}, \Gamma_{\cS_t}(\cP_M)) \in \QCoh(\bgm). 
\end{align}

In the next step, 
we show that the above object vanishes. 
Therefore in order to show the weight bounds (\ref{wt:bound}), (\ref{wt:bound2})
it is enough to show the same conditions for $\Phi_T^{\varepsilon}(\mathcal{O}_{\cH_T(w)^{\varepsilon\text{-ss}}})$, 
\begin{align}\label{wt:eps}
   \wt( \iota_{*}\nu^{\mathrm{reg}*}\Phi_T^{\varepsilon}(\mathcal{O}_{\cH_T(w)^{\varepsilon\text{-ss}}})) \subset (g-1)(\mu_1-\mu_2)\cdot [-1, 1] +\frac{w'}{2}(\mu_1+\mu_2)
\end{align}
where the left boundary weight is not attained when $\chi_1>\chi_2$.

\begin{sstep}\label{sstep4}
    The object (\ref{vanish:Gamma}) is zero. 
\end{sstep}
We show that (\ref{vanish:Gamma}) vanishes. 
Note that the object $M=M_1\oplus M_2$ in (\ref{nu:F12}) fits into 
an exact sequence 
\begin{align*}
    0\to \widetilde{\mathcal{L}} \to M \to \mathcal{O}_{C_{t1}\cap C_{t2}} \to 0. 
\end{align*}
Here
the line bundle $\widetilde{\mathcal{L}}$ on $\cC_t$ satisfies $\deg(\widetilde{\mathcal{L}}|_{C_{ti}})=\chi_i$. 
Therefore by Corollary~\ref{cor:resolP}, it is enough to show that 
\begin{align}\label{gamma:t}
\Gamma(\cH_t(w)^{\mathrm{ss}},\Gamma_{\cS_t}(\mathcal{V}_M))=0
\end{align}
where $\mathcal{V}_M$ is a vector bundle of the form 
\begin{align*}
    \mathcal{V}_M=\bigotimes_{i=1}^{\ell} \cF_{c_i}^{\otimes m_i} \otimes \cP_{\widetilde{\mathcal{L}}} \in \Coh^{\heartsuit}(\cH_t(w)^{\mathrm{ss}}).
\end{align*}
Here $c_i \in C$ for $1\leq i\leq \ell$ and $m_i \in \mathbb{Z}_{>0}$ satisfy 
$\sum_{i=1}^{\ell} m_i=2g-2$.

The local cohomology functor is calculated by 
\begin{align}\label{Gamma:St}
    \Gamma_{\cS_t}(\mathcal{V}_M)=\operatorname*{colim}_{\cS_t \subset \cS'}
    \cHom_{\cH_t}(\mathcal{O}_{\cS'}, \mathcal{V}_M).
\end{align}
Here the colimit is taken for closed substacks $\cS' \subset \cH_t(w)^{\mathrm{ss}}$ with $\mathrm{Supp}(\cS')=\mathrm{Supp}(\cS_t)$. 
We have 
\begin{align}\label{HomOS}
    \cHom_{\cH_t}(\mathcal{O}_{\cS_t},  \mathcal{V}_M)= \mathcal{V}_M|_{\cS_t} \otimes \det \mathbb{L}_{\cS_t/\cH_t}[-d]. 
\end{align}
Here $d$ is the codimension $\cS_t$ in $\cH_t$. 
We show that it has negative $\lambda$-weights, where $\lambda=(\lambda_1, \lambda_2)$
with $\lambda_1<\lambda_2$ is 
given by (\ref{wt:lambda}). Indeed, by (\ref{tan:comp})
$\mathbb{L}_{\cS_t/\cH_t}[-1]$ is a vector bundle on $\cS_t$ of rank $2g-2$ with $\lambda$-weights $(\lambda_2-\lambda_1)>0$. 
Therefore 
$\det \mathbb{L}_{\cS_t/\cH_t}$ has $\lambda$-weight 
$(2g-2)(\lambda_1-\lambda_2)<0$. 
It follows that, noting the identity (\ref{sum:chi}), a $\lambda$-weight of (\ref{HomOS}) is 
at most 
\begin{align*}
&\chi_1 \lambda_1+\chi_2 \lambda_2 
+\sum_{i=1}^{\ell} (a_i \lambda_1+(m_i-a_i)\lambda_2)+(2g-2)(\lambda_1-\lambda_2)\\
&=\left(\chi_1+2g-2+\sum_{i=1}^{\ell} a_i \right)(\lambda_1-\lambda_2) < 0
\end{align*}
since $\chi_1 \geq\chi_0/2=1-g$ and $a_i \geq 0$, $g\geq 2$. 
Therefore (\ref{HomOS}) has negative $\lambda$-weights. 

There is a sequence of closed immersions 
\begin{align*}
    \cS_t=\cS_0 \hookrightarrow \cS_1 \hookrightarrow \cS_2 \hookrightarrow 
    \cdots \hookrightarrow 
\end{align*}
where each $\cS_n$ is a $n$-th infinitesimal thickening of $\cS_t$ in $\cH_t$, 
whose structure sheaves fit into distinguished triangles 
\begin{align*}
    \mathrm{Sym}^{n}(\mathbb{L}_{\cS_t/\cH_t}[-1]) \to \mathcal{O}_{\cS_{n+1}} \to \mathcal{O}_{\cS_n}.
\end{align*}
Then $\cHom_{\cH_t}(\mathcal{O}_{\cS_n}, \mathcal{V}_M)$ is filtered by 
\begin{align}\label{filt:negative}
\mathcal{V}_M|_{\cS_t} \otimes \det \mathbb{L}_{\cS_t/\cH_t}[-d]\otimes\mathrm{Sym}^i(\mathbb{L}_{\cS_t/\cH_t}[-1])^{\vee}    
\end{align}
for $0\leq i\leq n-1$. Since the first two factors and the last factor have negative $\lambda$-weights, 
we conclude that (\ref{filt:negative}) has negative 
$\lambda$-weights. 

Now for the good moduli space map 
$r_t \colon \cH_t(w)^{\mathrm{ss}} \to \mathrm{H}_t(w)^{\mathrm{ss}}$, we have 
the commutative diagram 
\begin{align*}
    \xymatrix{
    \cH_t(w)^{\mathrm{ss}} \ar[rr]^-{r_t} & &\mathrm{H}_t(w)^{\mathrm{ss}} \\
    \cS_t \uinclusion \ar[r]^-{g_t} & \mathcal{Z}_t \ar[r] & Z_t \uinclusion
    }
\end{align*}
Here the top and the right bottom horizontal arrows are good moduli space morphisms.  
Since (\ref{filt:negative}) has negative $\lambda$-weights, 
its push-forward $g_{t*}$ has also negative $\lambda$-weights, therefore 
its push-forward to $Z_t$ is zero. 
Therefore from the above commutative diagram, we have 
\begin{align}\label{vanish:rt}
    r_{t*}\cHom_{\cH_t}(\mathcal{O}_{\cS_n}, \mathcal{V}_M)=0. 
\end{align}

Note that we have 
\begin{align*}
\operatorname*{colim}_{\cS_t \subset \cS'}
    \cHom_{\cH_t}(\mathcal{O}_{\cS'}, \mathcal{V}_M)
    = \operatorname*{colim}_{n\geq 0}\cHom_{\cH_t}(\mathcal{O}_{\cS_n}, \mathcal{V}_M).
    \end{align*}
    Since $r_{t*}$ is continuous, by (\ref{Gamma:St}) and (\ref{vanish:rt}) we obtain the vanishing 
    \begin{align*}
        r_{t*}\Gamma_{\cS_t}(\mathcal{V}_M)=0.
    \end{align*}
Therefore (\ref{gamma:t}) holds by taking the global section over $\mathrm{H}_t(w)^{\mathrm{ss}}$.

\begin{sstep}
Proof of (\ref{wt:eps}).     
\end{sstep}
It remains to prove that (\ref{wt:eps}) holds. 
For a closed point $y\in \mathrm{H}_T(w)^{\varepsilon\text{-ss}}$ which 
lies over $t\in T$, 
let $\mathcal{O}_y$ be the corresponding object 
\begin{align*}
    \mathcal{O}_y \in 
    \Coh^{\heartsuit}(\mathrm{H}_T(w)^{\varepsilon\text{-ss}})\simeq
    \Coh^{\heartsuit}(\cH_T(w)^{\varepsilon\text{-ss}})_{0}. 
\end{align*}
Then we show that 
\begin{align}\label{wt:Oy}
    \wt(\iota_{*}\nu^{\mathrm{reg}*}\Phi_T^{\varepsilon}(\mathcal{O}_y))
    \subset (g-1)(\mu_1-\mu_2)\cdot [-1, 1] +\frac{w'}{2}(\mu_1+\mu_2)
\end{align}
and the left boundary weight is not attained when $\chi_1>\chi_2$. 

   Indeed let $E_y\in \Coh^{\heartsuit}(\cC_t)$ be the corresponding $h_{\varepsilon}$-stable sheaf. 
In the proof of Proposition~\ref{prop:VL}, the lower bound (\ref{low:bound}) 
of $\mathcal{V}_{E_y}$ 
is achieved only if there is an exact sequence as in (\ref{exact:Lprime})
\begin{align*}
    0\to \mathcal{L}_2 \to E_y \to \mathcal{L}_1 \to 0
\end{align*}
where $\mathcal{L}_i$ is a line bundle on 
$C_{ti}$ with $\deg \mathcal{L}_i=w/2<0$. 
By a choice of $h_{\varepsilon}$ satisfying (\ref{ineq:he}), it violates the $h_{\varepsilon}$-stability of $E_y$, because of the inequality (\ref{ineq:slope}).
Therefore (\ref{wt:Oy}) holds. 

Let \(\cH_T(\chi_0)' \subset \cH_T(\chi_0)\) be the open substack obtained by adjoining to \(\cH_T(\chi_0)^{\mathrm{ss}}\) the HN stratum whose associated graded is of the form \(M_1\oplus M_2\) where $M_i$ is supported on $C_{ti}$. 
Let $j_T'$ be the  
open immersion $j_T' \colon \cH_T(\chi_0)^{\mathrm{ss}} \hookrightarrow \cH_T(\chi_0)'$. 
The property (\ref{wt:Oy}) implies the following, see Remark~\ref{rmk:jshrink}
\begin{align}\label{PhiJT}\Phi_T^{\varepsilon}(\mathcal{O}_y) \in 
\mathrm{Im}((j_{T}')_{!}) \subset \LL(\cH_T(\chi_0)')_{w'}. 
\end{align}

Now the image of 
\begin{align}\label{idj_T}
    \LL(\mathrm{H}_T(w)^{\varepsilon\text{-ss}}\times \cH_T(\chi_0)^{\mathrm{ss}})_{(0, w')} 
   &\stackrel{(\id \times j_T')_{!} }{\hookrightarrow} \LL(\mathrm{H}_T(w)^{\varepsilon\text{-ss}}\times \cH_T(\chi_0)')_{(0, w')} \\
   \notag&\hookrightarrow  \Coh(\mathrm{H}_T(w)^{\varepsilon\text{-ss}}\times \cH_T(\chi_0)')
\end{align}
is a semiorthogonal summand 
\begin{align*}
    \Coh(\mathrm{H}_T(w)^{\varepsilon\text{-ss}}\times \cH_T(\chi_0)')=
    \langle \ldots,  \mathrm{Im}((\id\times j_T')_!), \ldots\rangle
\end{align*}
by Lemma~\ref{lem:sodsum}.

Let $\cP_T^{'\varepsilon}$ be the restriction of the kernel object
$\cP_T^{\varepsilon}$ of $\Phi_T^{\varepsilon}$ in (\ref{PTeps})
\begin{align*}
    \cP_T^{'\varepsilon}:=\cP_T^{\varepsilon}|_{\cH_T(w)^{\varepsilon\text{-ss}}\times_T \cH_T(\chi_0)'} \in \Coh(\cH_T(w)^{\varepsilon\text{-ss}}\times_T \cH_T(\chi_0)').
\end{align*}
It has weight zero in the first factor, so its push-forward to 
$\cH_T(w)^{\varepsilon\text{-ss}}\times \cH_T(\chi_0)'$ descends to an object 
\begin{align*}
    \cP_T^{''\varepsilon} \in \Coh(\mathrm{H}_T(w)^{\varepsilon \text{-ss}} \times \cH_T(\chi_0)').
\end{align*}
Then (\ref{PhiJT}) implies that $\cP_T^{''\varepsilon}$ lies 
in the image of (\ref{idj_T}) as in the argument of~\cite[Lemma~6.2]{PThiggs}; indeed 
let $Q$ be its semiorthogonal factor not contained in the image of $(\id \times j_T')_{!}$. 
Then the object $Q$ restricted to $\{y\} \times \cH_T(\chi_0)'$ is zero for any $y$, 
hence $Q=0$. 
In particular 
the image of $\Phi_T^{\varepsilon}(-)|_{\cH_T(\chi_0)'}$ lies in 
$\mathrm{Im}((j_T')_!)$. 
Therefore (\ref{wt:eps}) holds.

\begin{sstep}\label{step6}
    Conclusion of the proof of the proposition. 
\end{sstep}
By the arguments as above, 
for a map $\nu$ as in (\ref{nu:F12}) we have obtained the required weight bound. For a map $\nu$ with opposite ordering $\chi_2>\chi_1$ and $\mu_2>\mu_1$, 
let $h_{\varepsilon}'$ be a $\mathbb{Q}$-ample divisor on $\cC_b$ such that 
\begin{align}\notag
\deg h^{\prime}_{\varepsilon}|_{C_1}=1+\varepsilon, \
\deg h^{\prime}_{\varepsilon}|_{C_2}=1-\varepsilon.
\end{align}
We similarly define a $\mathbb{Q}$-ample divisor $h_{T}^{\varepsilon'}$
on $\cC_T$, and 
denote the corresponding semistable locus by \(\cH_T(\chi_0)^{\varepsilon'\text{-ss}}\).
The
required weight bound for the opposite ordering follows by the same argument, by replacing 
$\cH_T(\chi_0)^{\varepsilon\text{-ss}}$ with $\cH_T(\chi_0)^{\varepsilon'\text{-ss}}$. 
Therefore the proposition holds. 
\end{proof}

\subsection{Proof of Whittaker normalization for \texorpdfstring{$\GL_2$}{GL2}}
Recall the functor (\ref{induce:Phi}) induced by the Arinkin sheaf
\begin{align*}
    \Phi \colon \Coh(\cH(w)^{\mathrm{ss}})_{-\chi'}
    \to \Coh(\cH(\chi))_{w'}. 
\end{align*}
Also recall the Hitchin section $s$ and its $!$-push forward in (\ref{sshrink}). 
We show that Conjecture~\ref{conj:norm} is satisfied in this case: 
\begin{prop}\label{prop:norm}
For $G=\GL_2$, there is an isomorphism 
\begin{align*}
   \Phi(\mathcal{O}_{\cH(w)^{\mathrm{ss}}})\cong s_{!}\mathcal{O}_{\cB}. 
\end{align*} 
\end{prop}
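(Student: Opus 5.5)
The plan is to reduce the claimed isomorphism to a statement on the semistable locus, and then to compare two objects of a quasi-BPS category via their adjunctions. By Proposition~\ref{prop:induceT}, $\Phi(\mathcal{O}_{\cH(w)^{\mathrm{ss}}})$ lies in $j_!\LL(\cH(\chi_0)^{\mathrm{ss}})_{w'}$. By construction, $s_!\mathcal{O}_{\cB}=j_!\overline{s}_!\mathcal{O}_{\cB}$. Since $j_!$ is fully faithful with $j^*j_!\cong\id$ (Theorem~\ref{thm:left}, Remark~\ref{rmk:variant}), it suffices to produce an isomorphism
\begin{align*}
j^*\Phi(\mathcal{O}_{\cH(w)^{\mathrm{ss}}})\cong \overline{s}_!\mathcal{O}_{\cB}\in \LL(\cH(\chi_0)^{\mathrm{ss}})_{w'}.
\end{align*}
To do this, I will first construct a morphism. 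Lemma~\ref{lem:pulls}, in the form of the commutative square (\ref{apply:d2}), gives $s^*\Phi(\mathcal{O})\cong h_*\mathcal{O}_{\cH(w)^{\mathrm{ss}}}$. This object carries the unit $\mathcal{O}_{\cB}\to h_*\mathcal{O}$, which is an isomorphism in degree $0$ because the Hitchin fibers are connected and proper. By adjunction, this unit yields a map $\alpha\colon s_!\mathcal{O}_{\cB}\to\Phi(\mathcal{O}_{\cH(w)^{\mathrm{ss}}})$. The goal is to show that $\alpha$, or equivalently $j^*\alpha$, is an isomorphism.

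The second step is to check that $j^*\alpha$ is an isomorphism by working locally on $\cB$ and comparing with the equivalence of Proposition~\ref{prop:MRV}. As in Step~\ref{sstep1} of the proof of Proposition~\ref{prop:induceT}, I will pass to an \'etale neighborhood $T\to\cB$. Over $T$, the triangle (\ref{dist:A}), together with the vanishing of Step~\ref{sstep4}, identifies $\Phi_T(\mathcal{O})$ near the relevant strata with $\Phi_T^{\varepsilon}(\mathcal{O}_{\cH_T(w)^{\varepsilon\text{-ss}}})$. The functor $\Phi_T^{\varepsilon}$ is the relative Fourier--Mukai transform for generic polarizations; I will restrict the target to $\cH_T(\chi_0)^{\varepsilon'\text{-ss}}$ for a generic $\varepsilon'$, chosen so that $\mathcal{O}_{\cC_t}$ is $\varepsilon'$-stable. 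By Proposition~\ref{prop:MRV}, this restricted functor is an equivalence whose inverse has the explicit kernel (\ref{kernel:inv}). Applying that inverse to the skyscraper-type object $\overline{s}'_*\mathcal{O}_T[\dots]$, and using Lemma~\ref{lem:pulls} together with the triviality of $\omega_{M/T}$ (Remark~\ref{rmk:dualizing}), gives $\mathcal{O}_{\cH_T(w)^{\varepsilon\text{-ss}}}$ up to the shift appearing in (\ref{formula:s!}). Hence $\Phi_T^{\varepsilon}(\mathcal{O})|_{\varepsilon'\text{-ss}}\cong\overline{s}_!\mathcal{O}_T|_{\varepsilon'\text{-ss}}$, and this isomorphism is compatible with $\alpha$ because both sides are characterized by the same adjunction to $s^*$.

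The final step globalizes over $\cH(\chi_0)^{\mathrm{ss}}$. The set $\cH(\chi_0)^{\mathrm{ss}}\setminus\cH(\chi_0)^{\varepsilon'\text{-ss}}$ consists of strictly semistable loci $M_1\oplus M_2$ with $\chi_1=\chi_2$, and the two objects must also be compared there. I will use that both objects lie in the quasi-BPS category. For $\overline{s}_!\mathcal{O}$, whose support lies in the stable locus, this holds trivially. For $\Phi(\mathcal{O})$, it follows from the weight bounds (\ref{wt:bound}). The argument is then the window/semiorthogonal-decomposition argument of Lemma~\ref{lem:sodsum} for the $\Theta$-stratum: restriction from the quasi-BPS category to the $\varepsilon'$-stable open substack is fully faithful on objects satisfying the weight bounds. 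Therefore $\mathrm{Cone}(j^*\alpha)$, which vanishes on the $\varepsilon'$-stable locus and lies in the window, must vanish.

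I expect the main obstacle to be this last step. Showing that the restriction is fully faithful requires the weight window for the perturbed stratum to match exactly the quasi-BPS window, and the boundary weights at $\chi_1=\chi_2$ are delicate. If this fails, the fallback is to combine two perturbations $\varepsilon'$ and $\varepsilon''$ of opposite sign, which together cover the strictly semistable strata, as in Step~\ref{step6}.
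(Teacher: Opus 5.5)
Your first two steps follow the paper: the reduction to $\cH(\chi_0)^{\mathrm{ss}}$ via Proposition~\ref{prop:induceT}, the adjunction map, and the \'etale-local identification with $\overline{s}_!\mathcal{O}_T$ on a perturbed stable locus via Proposition~\ref{prop:MRV}. Two small points there. You need the local-cohomology vanishing at \emph{every} point of the perturbed stable target locus, as in Lemma~\ref{lem:vanish2}, not only at the points $M_1\oplus M_2$ of Step~\ref{sstep4}. Also, using different perturbations on source and target requires the general form of \cite[Theorem~A]{MRVF2}.

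\textbf{The gap is in the globalization step.} Restriction from $\LL(\cH_T(\chi_0)^{\mathrm{ss}})_{w'}$ to a single perturbed stable locus is not fully faithful in general, and not even conservative. At a strictly polystable point $M_1\oplus M_2$ with $(\mu_1,\mu_2)=(1,0)$, the quasi-BPS condition (\ref{wt:bound}) is a \emph{closed} interval of length $2g-2$. The window of the $\Theta$-stratum in the sense of \cite{HalpK32}, which makes restriction to its complement an equivalence, is a half-open interval of the same length. When $w$ is even (equivalently $w'$ is even, which is exactly when $\cH(w)^{\mathrm{ss}}$ has strictly semistable points), the endpoints are integers, so the quasi-BPS category admits one extra weight. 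This gives a nonzero component in the decomposition $\LL(\cH_T(\chi_0)^{\mathrm{ss}})_{w'}=\langle p_*q^*\LL(\mathcal{Z})_\delta,\mathcal{W}\rangle$ with $\mathcal{W}\simeq\LL(\cH_T(\chi_0)^{\varepsilon\text{-ss}})_{w'}$. Here $p\colon\mathcal{S}\hookrightarrow\cH_T(\chi_0)^{\mathrm{ss}}$ is the $\Theta$-stratum and $q\colon\mathcal{S}\to\mathcal{Z}$ is the projection to its center. Every object of $p_*q^*\LL(\mathcal{Z})_\delta$ restricts to zero on the $\varepsilon$-stable locus. So vanishing of $\mathrm{Cone}(j^*\alpha)$ on one perturbed stable locus does not force it to vanish.

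Your fallback does not close this as stated. The two oppositely perturbed stable loci do not cover the semistable locus, because the strictly polystable locus $\mathcal{Z}$ lies in neither. Two perturbations therefore only show that the cone is supported on $\mathcal{Z}$. The missing statement is that any $A\in\LL(\cH_T(\chi_0)^{\mathrm{ss}})_{w'}$ with $A|_{\cH_T(\chi_0)^{\mathrm{ss}}\setminus\mathcal{Z}}=0$ is zero. The paper proves this using the decomposition above:
\begin{enumerate}
\item Since $A|_{\cH_T(\chi_0)^{\varepsilon\text{-ss}}}=0$, the decomposition forces $A\cong p_*q^*A'$.
\item The non-split extensions in $\mathcal{S}\setminus\mathcal{Z}$ are stable for the opposite perturbation, so $q^*A'$ vanishes there.
\item Since $q\colon\mathcal{S}\to\mathcal{Z}$ is a vector bundle, this gives $A'=0$.
\end{enumerate}
With this conservativity argument, and the identifications on both perturbed stable loci, your proof goes through.
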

\begin{proof}
Recall in Subsection~\ref{subsec:whit} that the Hitchin section $s$ factors through 
\begin{align*}
    s\colon \cB \stackrel{\overline{s}}{\to} \cH(\chi_0)^{\mathrm{ss}} \stackrel{j}{\hookrightarrow}
    \cH(\chi_0). 
\end{align*} 
By Proposition~\ref{prop:induceT}, 
it is enough to show that 
\begin{align*}
    \Phi(\mathcal{O}_{\cH(w)^{\mathrm{ss}}})|_{\cH(\chi_0)^{\mathrm{ss}}}\cong \overline{s}_{!}\mathcal{O}_{\cB}.
\end{align*}
By Lemma~\ref{lem:pulls}, we have 
\begin{align*}
    \overline{s}^*(\Phi(\mathcal{O}_{\cH(w)^{\mathrm{ss}}})|_{\cH(\chi_0)^{\mathrm{ss}}})\cong h_{*}\mathcal{O}_{\cH(w)^{\mathrm{ss}}}
\end{align*}
where $h \colon \cH(w)^{\mathrm{ss}}\to \cB$ is the Hitchin map. 
By the adjunction, we have the map 
\begin{align}\label{adj}
    \overline{s}_{!}\mathcal{O}_{\cB} \to \Phi(\mathcal{O}_{\cH(w)^{\mathrm{ss}}})|_{\cH(\chi_0)^{\mathrm{ss}}}.
\end{align}
In what follows, we show that (\ref{adj}) is an isomorphism. 

We use the notation in the proof of Proposition~\ref{prop:induceT}.
By applying (\ref{dist:A}) to $A=\mathcal{O}_{\cH(w)^{\mathrm{ss}}}$, 
we obtain the distinguished triangle 
\begin{align*}
    p_{T*}\Gamma_{q_T^{-1}(\cS)}(\cP_T) \to \Phi_T(\mathcal{O}_{\cH_T(w)^{\mathrm{ss}}}) \to 
    \Phi_T^{\varepsilon}
    (\mathcal{O}_{\cH_T(w)^{\varepsilon\text{-ss}}}).
\end{align*}
We take a map 
\begin{align*}
    \nu \colon \Spec k \to \cH_T(\chi_0)^{\varepsilon\text{-ss}}
\end{align*}
and let $M\in \Coh^{\heartsuit}(\cC_t)$ be the corresponding $h_{T, \varepsilon}$-semistable sheaf for $t\in T$. 
In Lemma~\ref{lem:vanish2} below, we show that 
\begin{align}\label{show:nu}
    \nu^*  p_{T*}\Gamma_{q_T^{-1}(\cS)}(\cP_T)
    =\Gamma(\cH_t(w)^{\mathrm{ss}}, \Gamma_{\cS_t}(\cP_M))=0. 
\end{align}
It follows that we have the isomorphism 
\begin{align}\label{isom:eps}
    \Phi_T(\mathcal{O}_{\cH_T(w)^{\mathrm{ss}}})|_{\cH_T(\chi_0)^{\varepsilon\text{-ss}}} \stackrel{\cong}{\to}
    \Phi_T^{\varepsilon}
    (\mathcal{O}_{\cH_T(w)^{\varepsilon\text{-ss}}})|_{\cH_T(\chi_0)^{\varepsilon\text{-ss}}}.
\end{align}

Since the
$h_{T, \varepsilon}$-semistability is equivalent to 
the $h_{T, \varepsilon}$-stability, 
the Fourier--Mukai functor 
\begin{align}\label{equiv:Phi}
    \Phi_T^{\varepsilon, \varepsilon} \colon 
    \Coh(\cH_T(w)^{\varepsilon\text{-ss}})_{0}
    \to  \Coh(\cH_T(\chi_0)^{\varepsilon\text{-ss}})_{w'}
\end{align}
given by the kernel sheaf 
\begin{align*}
    \cP_T^{\varepsilon, \varepsilon}:=\cP_T|_{\cH_T(w)^{\varepsilon\text{-ss}}\times_T \cH_T(\chi_0)^{\varepsilon\text{-ss}}}
    \in \Coh^{\heartsuit}(\cH_T(w)^{\varepsilon\text{-ss}}\times_T \cH_T(\chi_0)^{\varepsilon\text{-ss}})
\end{align*}
is an equivalence by Proposition~\ref{prop:MRV}.
The kernel object of its inverse 
functor $(\Phi_T^{\varepsilon, \varepsilon})^{-1}$ is given by 
\begin{align}\label{Pdual2}
     (\cP_T^{\varepsilon, \varepsilon})^{\vee}[4g-3] \in 
     \Coh^{\heartsuit}(\cH_T(w)^{\varepsilon\text{-ss}}\times_T \cH_T(\chi_0)^{\varepsilon\text{-ss}}).
\end{align}

The base-change of the Hitchin section $\overline{s}$ to $T$, denoted $\overline{s}_T$, further factors through
\begin{align*}
    \overline{s}_T \colon T \stackrel{\overline{s}_T^{\varepsilon}}{\to} 
    \cH_T(\chi_0)^{\varepsilon\text{-ss}} \subset\cH_T(\chi_0)^{\mathrm{ss}}
\end{align*}
where $\overline{s}_T^{\varepsilon}$ is a closed immersion 
after composing with the good moduli space map $\cH_T(\chi_0)^{\varepsilon\text{-ss}} \to \mathrm{H}_T(\chi_0)^{\varepsilon\text{-ss}}$.
Then Lemma~\ref{lem:pulls} together with the description of the kernel object (\ref{Pdual2}) of $(\Phi_T^{\varepsilon, \varepsilon})^{-1}$ imply that 
\begin{align*}
    (\Phi_T^{\varepsilon, \varepsilon})^{-1}(\overline{s}_{T!}^{\varepsilon}\mathcal{O}_T)
    \cong \mathcal{O}_{\cH_T(w)^{\varepsilon\text{-ss}}}.
\end{align*}
Here $\overline{s}_{T!}^{\varepsilon}=\overline{s}_{T*}^{\varepsilon}[3-4g]$
as in (\ref{formula:s!}). 
By the equivalence (\ref{equiv:Phi}), it follows that 
\begin{align*}
    \Phi_T^{\varepsilon, \varepsilon}(\mathcal{O}_{\cH_T(w)^{\varepsilon\text{-ss}}})
    \cong \overline{s}_{T!}^{\varepsilon}\mathcal{O}_T.
\end{align*}
Together with (\ref{isom:eps}), we obtain the isomorphism 
\begin{align*}
     \Phi_T(\mathcal{O}_{\cH_T(w)^{\mathrm{ss}}})|_{\cH_T(\chi_0)^{\varepsilon\text{-ss}}} \stackrel{\cong}{\to}\overline{s}_{T!}\mathcal{O}_T|_{\cH_T(\chi_0)^{\varepsilon\text{-ss}}}.
\end{align*}

By replacing the $h_{\varepsilon}$-stability with the $h_{\varepsilon}'$-stability in the proof of Proposition~\ref{prop:induceT} Step~\ref{step6}, 
we also obtain 
\begin{align*}
     \Phi_T(\mathcal{O}_{\cH_T(w)^{\mathrm{ss}}})|_{\cH_T(\chi_0)^{\varepsilon'\text{-ss}}} \stackrel{\cong}{\to}\overline{s}_{T!}\mathcal{O}_T|_{\cH_T(\chi_0)^{\varepsilon'\text{-ss}}}.
\end{align*}
Note that we have 
\begin{align*}
    \cH_T(\chi_0)^{\mathrm{ss}}=
    \cH_T(\chi_0)^{\varepsilon\text{-ss}} \cup
    \cH_T(\chi_0)^{\varepsilon'\text{-ss}} \cup \mathcal{Z}
\end{align*}
where $\mathcal{Z}$ is a closed substack of $\cH_T(\chi_0)^{\mathrm{ss}}$
corresponding to strictly polystable sheaves (which also appears as a center of $\cS$, see the proof of Proposition~\ref{prop:induceT}). 
Since the image of $\overline{s}_T$ is a closed substack of $\cH_T(\chi_0)^{\mathrm{ss}}$ disjoint from $\mathcal{Z}$, 
we have the distinguished triangle 
\begin{align}\label{PhiR}
R \to (\overline{s}_{T})_{!}\mathcal{O}_T  \to 
     \Phi_T(\mathcal{O}_{\cH_T(w)^{\mathrm{ss}}})
\end{align}
where $R$ is supported on $\mathcal{Z}$. 

Now we show that 
the restriction functor
\begin{align*}
    \LL(\cH_T(\chi_0)^{\mathrm{ss}})_{w'} \to \Coh(\cH_T(\chi_0)^{\mathrm{ss}}\setminus \mathcal{Z})
\end{align*}
is conservative. We have the following diagram 
\begin{align*}
    \mathcal{Z} \stackrel{q}{\leftarrow} \mathcal{S} \stackrel{p}{\hookrightarrow} \cH_T(\chi_0)^{\mathrm{ss}}
\end{align*}
where $\mathcal{S}$ is the $\Theta$-stratum with support 
the complement of $\cH_T(\chi_0)^{\varepsilon\text{-ss}}$. 
Then as in the proof of~\cite[Theorem~7.18]{PTlim}, we have the semiorthogonal decomposition
\begin{align*}
    \LL(\cH_T(\chi_0)^{\mathrm{ss}})_{w'}=\langle 
    p_{*}q^* \LL(\mathcal{Z})_{\delta}, \mathcal{W} \rangle, \ 
    \mathcal{W} \stackrel{\sim}{\to} \LL(\cH_T(\chi_0)^{\varepsilon\text{-ss}})_{
    w'}
\end{align*}
for some $\delta$. Then if $A \in  \LL(\cH_T(\chi_0)^{\mathrm{ss}})_{w'}$
satisfies $A|_{\cH_T(\chi_0)^{\mathrm{ss}}\setminus \mathcal{Z}}=0$, 
then $A|_{\cH_T(\chi_0)^{\varepsilon\text{-ss}}}=0$ as $\cH_T(\chi_0)^{\varepsilon\text{-ss}}\subset \cH_T(\chi_0)^{\mathrm{ss}}\setminus \mathcal{Z}$. Then from the above semiorthogonal decomposition, $A=p_{*}q^{*}A'$ for some $A' \in \LL(\mathcal{Z})_{\delta}$. 
Then $q^*A'$ is zero on $\mathcal{S} \setminus \mathcal{Z}$, and $\mathcal{S} \to \mathcal{Z}$ is a vector bundle, therefore $A'=0$. 


It follows that (\ref{adj}) is an isomorphism for any \'{e}tale neighborhood of 
$b\in \cB$, therefore it is an isomorphism. 
\end{proof}

\begin{lemma}\label{lem:vanish2}
    We have the vanishing (\ref{show:nu}). 
\end{lemma}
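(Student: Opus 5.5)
I plan to follow Step~\ref{sstep4} in the proof of Proposition~\ref{prop:induceT}, with one change. There $M=M_1\oplus M_2$ was polystable of a special form. Here $M\in \Coh^{\heartsuit}(\cC_t)$ is an arbitrary $h_{T,\varepsilon}$-semistable (equivalently $h_{T,\varepsilon}$-stable) rank one torsion-free sheaf of degree $\chi_0$ on $\cC_t$. If $\cC_t$ is irreducible, then $\cS_t=\emptyset$ and there is nothing to prove, so assume $\cC_t=C_{t1}\cup C_{t2}$.

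\textbf{Reduction to vector bundles.} By Corollary~\ref{cor:resolP}, $\cP_M$ has a left resolution by direct sums of
\[
\mathcal{V}_M=\bigotimes_i \cF_{c_i}^{\otimes n_i}\otimes \cP_{\mathcal{L}},
\]
where $0\to \mathcal{L}\to M\to Q\to 0$ is as in Lemma~\ref{lem:LEQ}. Write $l_j=\deg(\mathcal{L}|_{C_{tj}})$, so that $l_1+l_2+\sum n_i=\chi_0+2g-2=0$. Both $\Gamma_{\cS_t}$ and $r_{t*}$ are continuous and exact, so it suffices to prove $r_{t*}\Gamma_{\cS_t}(\mathcal{V}_M)=0$. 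Some care is needed because the resolution is infinite, but the same passage was already used in Step~\ref{sstep4}, so I take it as given. As there, write $\Gamma_{\cS_t}$ as the colimit over the thickenings $\cS_n$. Each $\cHom(\mathcal{O}_{\cS_n},\mathcal{V}_M)$ is filtered by
\[
\mathcal{V}_M|_{\cS_t}\otimes\det\mathbb{L}_{\cS_t/\cH_t}[-d]\otimes \mathrm{Sym}^i(\mathbb{L}_{\cS_t/\cH_t}[-1])^{\vee}.
\]
If every piece has strictly negative $\lambda$-weights, then $g_{t*}$ of it has negative weights and its push-forward to the good moduli space $Z_t$ vanishes. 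The argument then concludes exactly as before.

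\textbf{Weight computation.} The center $\mathcal{Z}$ of $\cS$ parametrises $\mathcal{L}_1\oplus\mathcal{L}_2$, with $\lambda$-weights $\lambda_1<\lambda_2$. The sheaf $\mathcal{V}_M$ restricted there has weights
\[
l_1\lambda_1+l_2\lambda_2+\sum_i\bigl(a_i\lambda_1+(n_i-a_i)\lambda_2\bigr),\qquad 0\le a_i\le n_i,
\]
computed from \eqref{def:Psharp} as in Proposition~\ref{prop:VL}. The factor $\det\mathbb{L}$ contributes $(2g-2)(\lambda_1-\lambda_2)$, and the $\mathrm{Sym}$ factors contribute non-positive weights. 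Using $l_1+l_2+\sum n_i=0$, the total weight is at most
\[
\Bigl(l_1+\sum_i a_i+2g-2\Bigr)(\lambda_1-\lambda_2).
\]
It therefore suffices to show $l_1+\sum a_i+2g-2>0$, and since $a_i\ge 0$ it is enough to show $l_1>2-2g$.

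\textbf{Stability input.} This comes from the $h_{T,\varepsilon}$-semistability of $M$, via the sequence $0\to\mathcal{L}''\to M\to\mathcal{L}|_{C_{t1}}\to 0$ as in \eqref{exact:Lprime}. The quotient $\mathcal{L}|_{C_{t1}}$ has degree $l_1$. Semistability for the perturbed polarization (degrees $1-\varepsilon$, $1+\varepsilon$) forces its slope to be at least that of $M$, that is
\[
\frac{l_1-(g-1)}{1-\varepsilon}\ \ge\ \frac{\chi_0}{2}\ \text{(suitably normalised)}.
\]
After normalising the Euler characteristics this gives roughly $l_1\ge 1-g$ up to an $\varepsilon$-correction, and hence $l_1+2g-2\ge g-1>0$ because $g\ge 2$.

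\textbf{Main obstacle.} The delicate point is bookkeeping: degree conventions on $\cC_t$ versus $C_{ti}$ (including the shift by $g-1$ coming from the nodes), and the direction of the inequality produced by the $\varepsilon$-perturbation. Strictness near the boundary case $l_1=1-g$ must be checked carefully. I expect the margin $g-1\ge 1$ to absorb these corrections, just as it does in Step~\ref{sstep4}.
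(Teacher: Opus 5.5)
Your proposal is correct and is essentially the paper's proof. You reduce via Corollary~\ref{cor:resolP} to $\mathcal{V}_M$, bound the $\lambda$-weights of $\cHom_{\cH_t}(\mathcal{O}_{\cS_n},\mathcal{V}_M)$ by $\bigl(l_1+\sum_i a_i+2g-2\bigr)(\lambda_1-\lambda_2)$ exactly as in Step~\ref{sstep4}, and conclude from the $h_{T,\varepsilon}$-stability of $M$ applied to the quotient $M\twoheadrightarrow \mathcal{L}|_{C_{t1}}$. The one slip is in your displayed inequality, which uses the $\chi$-normalisation on the left and the $\deg\pi_*$-normalisation on the right; normalised consistently it reads either $l_1/(1-\varepsilon)>\tfrac12\deg\pi_*M=1-g$ (as in the paper) or $(l_1+1-g)/(1-\varepsilon)>\tfrac12\chi(M)=2-2g$, and both give $l_1\geq 2-g$, hence $l_1+2g-2\geq g>0$.
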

\begin{proof}
    The proof follows as in Step~\ref{sstep4} of Proposition~\ref{prop:induceT}, 
    and we use the notation in loc. cit. 
    Indeed, it is enough to prove 
    \begin{align}\notag
    \Gamma(\cH_t(w)^{\mathrm{ss}}, \Gamma_{\cS_t}(\mathcal{V}_M))=0.
    \end{align}
 Let 
 \begin{align*}
     0\to \mathcal{L} \to M \to Q \to 0
 \end{align*}
 be an exact sequence as in Lemma~\ref{lem:LEQ}, where 
 $\mathcal{L}$ is a line bundle on $\cC_t$ and $Q=\oplus_{i=1}^{\ell} Q_i$ with $\chi(Q_i)=n_i$. Let $l_i=\deg(\mathcal{L}|_{C_{ti}})$. 
 Note that $l_1+l_2+\sum_{i=1}^{\ell} n_i=0$ since $\deg \pi_{*}M=\chi_0=2-2g$. 
 Then as in the proof of Step~\ref{sstep4} of Proposition~\ref{prop:induceT}, the $\lambda$-weights of $\cHom_{\cH_t}(\mathcal{O}_{\cS_n}, \mathcal{V}_M)$
 are at most 
 \begin{align}
    \notag &\sum_{i=1}^{\ell}(a_i \lambda_1+(n_i-a_i)\lambda_2) +l_1 \lambda_1+l_2 \lambda_2+(2g-2)(\lambda_1-\lambda_2) \\
   \label{eq:lambda12}  &=\left( \sum_{i=1}^{\ell} a_i +l_1+2g-2  \right)(\lambda_1-\lambda_2).
 \end{align}
 We have the exact sequence
 \begin{align*}
     0\to \mathcal{L}' \to M \to \mathcal{L}|_{C_{t1}}\to 0
 \end{align*}
 where $\mathcal{L}'$ is a line bundle on $C_{t2}$. By the $h_{T, \varepsilon}$-stability of $M$, we have 
 \begin{align*}
 \frac{l_1}{h_{T, \varepsilon}\cdot C_{t1}}=\frac{l_1}{1-\varepsilon}
 >\frac{1}{2}\deg \pi_{*}M=1-g.
 \end{align*}
 Together with $\lambda_1<\lambda_2$, it follows that 
 (\ref{eq:lambda12}) is negative, therefore $\cHom_{\cH_t}(\mathcal{O}_{\cS_n}, \mathcal{V}_M)$
 has negative $\lambda$-weights. Then the vanishing (\ref{show:nu}) holds 
 as in the proof of Step~\ref{sstep4} of Proposition~\ref{prop:induceT}. 
\end{proof}

\section{Proofs of some lemmas}
\subsection{Limit categories for perfect complexes}

\begin{lemma}\label{lem:perfect}
In the setting of Definition~\ref{def:Lcat}, suppose that
$\cE \in \Coh(\mathfrak{M})$ is perfect. Then for a map
$\nu \colon \bgm \to \mathfrak{M}$ with image $x\in \mathfrak{M}(k)$, 
$\mathrm{wt}(\nu^*\cE)$ is contained in a bounded interval $I \subset \mathbb{R}$ 
if and only if 
$\mathrm{wt}(\iota_{*}\nu^{\mathrm{reg}*}\cE)$ is contained in 
$I+[c_1(\mathfrak{g}_x^{<0}), c_1(\mathfrak{g}_x^{>0})]$. 

In particular, $\nu^* \cE$ 
satisfies condition~(\ref{wt:nu})
if and only if $\iota_{*}\nu^{\mathrm{reg}*}\cE$ satisfies condition~(\ref{wt:cond2}).
\end{lemma}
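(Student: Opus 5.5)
The plan is to compare $\nu^*\cE$ and $\iota_*\nu^{\mathrm{reg}*}\cE$ directly, using a Koszul-type description of the structure sheaf of the regularization. Write $V:=\mathfrak{g}_x^{\vee}[-1]$, so that $V/\mathbb{G}_m=\Spec \mathrm{Sym}(\mathfrak{g}_x[1])/\mathbb{G}_m$. Its structure sheaf is the exterior algebra $\bigwedge^{\bullet}\mathfrak{g}_x$, placed in cohomological degrees $\le 0$ with zero differential. We have $\nu=\nu^{\mathrm{reg}}\circ 0$, where $0\colon \bgm\to V/\mathbb{G}_m$ is the origin, and $\iota\circ 0=\id$. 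Since $\cE$ is perfect, $\nu^{\mathrm{reg}*}\cE$ is a perfect complex on the affine derived scheme $V$, equivariant for $\mathbb{G}_m$.

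The key step is to show that
\begin{align*}
\iota_*\nu^{\mathrm{reg}*}\cE\cong \nu^*\cE\otimes \textstyle\bigwedge^{\bullet}\mathfrak{g}_x
\end{align*}
as objects of $\QCoh(\bgm)$, at least after passing to an associated graded. To get this, I would filter $\nu^{\mathrm{reg}*}\cE$ by powers of the augmentation ideal of $\bigwedge^{\bullet}\mathfrak{g}_x$. Alternatively, one can use that a perfect complex on $V/\mathbb{G}_m$ is locally a complex of free modules $\bigwedge^{\bullet}\mathfrak{g}_x\otimes W_i$, where the $W_i$ are $\mathbb{G}_m$-representations. Restricting to the origin recovers $\nu^*\cE$, which is built from the $W_i$, while pushing forward by $\iota$ gives $\bigoplus_i W_i\otimes\bigwedge^{\bullet}\mathfrak{g}_x$.

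\textbf{Main obstacle.} Weights can cancel in cohomology, so the identification above only holds on an associated graded, not as complexes. To handle this, I would use the minimal free model, with differentials vanishing modulo the augmentation ideal. In that model the $W_i$ are exactly the cohomology of $\nu^*\cE$, so $\wt(\nu^*\cE)=\bigcup_i\wt(W_i)$.

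The extreme weights should then survive in $\iota_*\nu^{\mathrm{reg}*}\cE$. Tensoring the extreme part of $W$ with $\det\mathfrak{g}_x^{>0}$ (respectively $\det\mathfrak{g}_x^{<0}$) gives a weight that no other term can hit, so it cannot cancel. Hence
\begin{align*}
\wt(\iota_*\nu^{\mathrm{reg}*}\cE)\subset \wt(\nu^*\cE)+[c_1(\mathfrak{g}_x^{<0}),c_1(\mathfrak{g}_x^{>0})],
\end{align*}
and both endpoints of this interval are attained. This gives both directions of the equivalence for intervals $I$.

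For the ``in particular'' part, I would use the self-duality (\ref{L:cotangent}). In the fibers it gives
\begin{align*}
\nu^*\mathbb{L}_{\mathfrak{M}}=T_x-\mathfrak{g}_x-\mathfrak{g}_x^{\vee}
\end{align*}
in $K(\bgm)$, hence
\begin{align*}
c_1(\nu^*\mathbb{L}^{>0})=c_1(T_x^{>0})-c_1(\mathfrak{g}_x^{>0})-c_1((\mathfrak{g}_x^{\vee})^{>0}).
\end{align*}
Halving these quantities and adding the width $[c_1(\mathfrak{g}^{<0}),c_1(\mathfrak{g}^{>0})]$ converts the interval in (\ref{wt:cond2}) into the interval in (\ref{wt:nu}), including the shift $\tfrac12 c_1(\mathfrak{g}_x)$. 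This last step is a routine bookkeeping check.
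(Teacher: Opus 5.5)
Your forward direction and the ``in particular'' bookkeeping follow the paper's own route. The paper also writes $\iota_*\nu^{\mathrm{reg}*}\cE$ as an iterated extension of the pieces $\nu^*\cE\otimes\wedge^j\mathfrak{g}_x[j]$ (via $i^*\iota^*\iota_*$ and the Koszul resolution). This gives $\wt(\iota_*\nu^{\mathrm{reg}*}\cE)\subset\wt(\nu^*\cE)+[c_1(\mathfrak{g}_x^{<0}),c_1(\mathfrak{g}_x^{>0})]$, and the paper then uses $\nu^*\mathbb{L}_{\mathfrak{M}}=T_x-\mathfrak{g}_x-\mathfrak{g}_x^{\vee}$ exactly as you do. The paper reads the ``if and only if'' directly off this filtration, so you are right that the converse needs a non-cancellation step. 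However, the one you give fails as stated.

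The gap is your claim that $W_b\otimes\det(\mathfrak{g}_x^{>0})$, with $b=\max\wt(W)$, sits in a weight ``that no other term can hit''. This is false whenever $\mathfrak{g}_x^{0}\neq 0$, which always holds here: $\mathfrak{g}_x^0$ contains the Lie algebra of the image of the cocharacter, e.g.\ the scalars for Higgs bundles.
\begin{itemize}
\item In the minimal model $\wedge^\bullet\mathfrak{g}_x\otimes W$, the whole weight space of weight $b+c_1(\mathfrak{g}_x^{>0})$ is $\det(\mathfrak{g}_x^{>0})\otimes\wedge^\bullet(\mathfrak{g}_x^{0})\otimes W_b$. These are several copies of $W_b$ in different cohomological degrees.
\item The minimal differential has components in $\wedge^{\geq 1}\mathfrak{g}_x^0\otimes W_b$, so it can connect these copies. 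Partial cancellation really occurs: over $\wedge^\bullet k=k[\epsilon]$, take $W=kw_1\oplus kw_2$ with $\deg w_2=\deg w_1+2$ and $dw_1=\epsilon w_2$.
\item Euler characteristics do not help either, since $\sum_j(-1)^j\binom{n}{j}=0$.
\end{itemize}

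The repair is to use the cohomological degree as well. Take $0\neq w\in W_b$ in the top cohomological degree of $W_b$.
\begin{itemize}
\item $\det(\mathfrak{g}_x^{>0})\cdot w$ is a cycle. Any term $m_i\otimes w_i$ of $dw$ surviving multiplication by $\det(\mathfrak{g}_x^{>0})$ must have $m_i\in\wedge^{\geq1}\mathfrak{g}_x^0$ and $\wt(w_i)=b$. Then $\deg w_i\geq\deg w+2$, which is impossible.
\item It is not a boundary. Every boundary $a\cdot dw'$ has zero component along $\det(\mathfrak{g}_x^{>0})\otimes 1\otimes W_b$: a monomial $m\in\wedge^{\geq1}\mathfrak{g}_x$ with $am=\pm\det(\mathfrak{g}_x^{>0})$ has positive weight, which would force $\wt(w')>b$.
\end{itemize}
Equivalently, the extreme-weight subcomplex is a nonzero minimal semi-free module over $\wedge^\bullet\mathfrak{g}_x^0$, hence not acyclic by Nakayama. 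The lower endpoint is symmetric, using $\det(\mathfrak{g}_x^{<0})$ and $\min\wt(W)$. With this fix both endpoints are attained and your argument proves the lemma.
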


\begin{proof}
Consider the following commutative diagram:
\begin{align*}
    \xymatrix{
\bgm \ar[r]^-{i} \ar[rd]_-{\nu} &
\mathfrak{g}_x^{\vee}[-1]/\mathbb{G}_m
\ar[r]^-{\iota} \ar[d]^{\nu^{\mathrm{reg}}} &
\bgm \\
& \mathfrak{M}. &
    }
\end{align*}
Here $i$ is the map from the classical truncation. We have
\begin{align*}
    \iota_{*}\nu^{\mathrm{reg}*}\mathcal{E}
    \cong
    i^* \iota^* \iota_{*}\nu^{\mathrm{reg}*}\mathcal{E}.
\end{align*}
The stack $\mathfrak{g}_x^{\vee}[-1]/\mathbb{G}_m$ is the derived zero locus
of the vector bundle
$\mathfrak{g}_x^{\vee}/\mathbb{G}_m \to \bgm$. Hence, by the Koszul resolution,
the object
$
    i^* \iota^* \iota_{*}\nu^{\mathrm{reg}*}\mathcal{E}
$
is obtained by iterated extensions of objects
\begin{align*}
        i^*\bigl(\nu^{\mathrm{reg}*}\mathcal{E}
        \otimes \wedge^j \mathfrak{g}_x[j]\bigr)
        \cong
        \nu^*\mathcal{E} \otimes \wedge^j \mathfrak{g}_x[j]
\end{align*}
for $0\leq j\leq \dim \mathfrak{g}_x$. Since $\mathcal{E}$ is perfect, the set
of weights of $\nu^*\mathcal{E}$ is finite. It follows from the above
isomorphisms that the weights of $\nu^*\mathcal{E}$ are contained in
$[a,b]$ if and only if the weights of
$\iota_{*}\nu^{\mathrm{reg}*}\mathcal{E}$ are contained in
\begin{align*}
        [a,b]+
        [c_1(\mathfrak{g}_x^{<0}), c_1(\mathfrak{g}_x^{>0})].
\end{align*}

The second claim follows from the computation
\begin{align*}
        \frac{1}{2}c_1(T_x^{>0})
        +\frac{1}{2}c_1(\mathfrak{g}_x)
        -c_1(\mathfrak{g}_x^{>0})
        &=
        \frac{1}{2}c_1(T_x^{>0})
        -\frac{1}{2}c_1((\mathfrak{g}_x^{\vee})^{>0})
        -\frac{1}{2}c_1(\mathfrak{g}_x^{>0})  \\
        &=
        \frac{1}{2}c_1(\nu^*\mathbb{L}^{>0}_{\mathfrak{M}}).
\end{align*}
The identity for the lower bound is similarly calculated. 
Here we used the self-duality of $\mathbb{L}_{\mathfrak{M}}$, together with the
description of $\nu^*\mathbb{L}_{\mathfrak{M}}$ in terms of
$T_x$ and $\mathfrak{g}_x$.
\end{proof}

\begin{lemma}\label{lem:perfect2}
In the setting of Definition~\ref{def:Lcat}, suppose that
$\cE \in \Coh(\mathfrak{M})$ admits an expression as a colimit in
$\QCoh(\mathfrak{M})$
\begin{align*}
    \cE = \operatorname*{colim}_{i} A_i
\end{align*}
such that each $A_i$ is perfect. For a map $\nu \colon \bgm \to \mathfrak{M}$, 
suppose that there is a bounded interval $I \subset \mathbb{R}$ such that 
each $\wt(\nu^* A_i)$ is contained in $I$. Then 
$\wt(\iota_{*}\nu^{\mathrm{reg}*}\cE)$ is contained in $I+[c_1(\mathfrak{g}_x^{<0}), c_1(\mathfrak{g}_x^{>0})]$.

In particular if each $A_i$ satisfies the condition (\ref{wt:cond2}), then $\cE$ satisfies the condition (\ref{wt:nu}). 
\end{lemma}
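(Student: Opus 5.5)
The plan is to reduce to Lemma~\ref{lem:perfect} by passing the colimit through the relevant functors and using that weight conditions on $\QCoh(\bgm)$ are closed under colimits. First I will record that pull-back $\nu^{\mathrm{reg}*}$ and push-forward $\iota_*$ are continuous on quasi-coherent sheaves. For $\iota_*$ this holds because $\iota$ is affine: it is the projection from the derived zero locus of a vector bundle on $\bgm$. Hence
\begin{align*}
\iota_*\nu^{\mathrm{reg}*}\cE\cong \operatorname*{colim}_i \iota_*\nu^{\mathrm{reg}*}A_i
\end{align*}
in $\QCoh(\bgm)$.

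Next, since $A_i$ is perfect and $\wt(\nu^*A_i)\subset I$, the first part of Lemma~\ref{lem:perfect} gives
\begin{align*}
\wt(\iota_*\nu^{\mathrm{reg}*}A_i)\subset J:=I+[c_1(\mathfrak{g}_x^{<0}),c_1(\mathfrak{g}_x^{>0})].
\end{align*}
I only need the ``only if'' direction. Its proof goes through the Koszul filtration, whose graded pieces are $\nu^*A_i\otimes\wedge^j\mathfrak{g}_x[j]$, and this works for each $A_i$ individually. Here $J$ is independent of $i$, which is the key point.

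Now I use the orthogonal weight decomposition $\QCoh(\bgm)=\bigoplus_w \QCoh(\bgm)_w$. The projection to each weight summand is exact and continuous. The full subcategory of objects whose weights lie in $J$ is therefore closed under arbitrary colimits, since a weight component of a colimit is the colimit of the weight components. This gives $\wt(\iota_*\nu^{\mathrm{reg}*}\cE)\subset J$.

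For the final claim, suppose each $A_i$ satisfies (\ref{wt:cond2}). Then $I$ is the interval $\left[\frac{1}{2}c_1(\nu^*\mathbb{L}^{<0}_{\mathfrak{M}}),\frac{1}{2}c_1(\nu^*\mathbb{L}^{>0}_{\mathfrak{M}})\right]+c_1(\nu^*\delta)$. By the $c_1$ computation at the end of the proof of Lemma~\ref{lem:perfect}, $J$ is exactly the interval in (\ref{wt:nu}), so (\ref{wt:nu}) holds for $\cE$.

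The main obstacle I expect is the first step, namely justifying the commutation of $\nu^{\mathrm{reg}*}$ and $\iota_*$ with the colimit, and making sure the colimit is taken in $\QCoh$ rather than $\IndCoh$. Since $\cE$ is coherent and the colimit is specified in $\QCoh(\mathfrak{M})$, both functors are the standard quasi-coherent ones and are continuous. Once that is settled, the rest is formal.
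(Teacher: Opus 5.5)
Your proposal is correct and is essentially the paper's proof: you use continuity of $\nu^{\mathrm{reg}*}$ and $\iota_*$ to pass the colimit through, apply Lemma~\ref{lem:perfect} to each $A_i$ with the uniform interval $I+[c_1(\mathfrak{g}_x^{<0}), c_1(\mathfrak{g}_x^{>0})]$, and note that this weight condition is preserved under colimits. Your additional justifications only make explicit what the paper leaves implicit: the affineness of $\iota$, the componentwise computation of colimits in $\QCoh(\bgm)$, and the $c_1$ identification of the interval for the final claim.
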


\begin{proof}
Since $\nu^{\mathrm{reg}*}$ and $\iota_*$ are continuous on $\QCoh$, we have
an isomorphism in $\QCoh(\bgm)$
\begin{align*}
        \iota_{*}\nu^{\mathrm{reg}*}(\cE)
        \cong
        \operatorname*{colim}_{i}
        \iota_{*}\nu^{\mathrm{reg}*}A_i.
\end{align*}
By Lemma~\ref{lem:perfect}, if each $\wt(\nu^* A_i)$
is contained in $I$, then $\wt(\iota_{*}\nu^{\mathrm{reg}*}A_i)$ is contained in
$I+[c_1(\mathfrak{g}_x^{<0}), c_1(\mathfrak{g}_x^{>0})]$.
Therefore, after taking the colimit,
the weights of $\iota_{\ast}\nu^{\mathrm{reg}\ast}(\cE)$ are also contained
in $I+[c_1(\mathfrak{g}_x^{<0}), c_1(\mathfrak{g}_x^{>0})]$. This proves the lemma.
\end{proof}
\subsection{Proof of Lemma~\ref{lem:qsmooth}}\label{subsec:qsmooth}
\begin{proof}
We 
show that $\ev_1$, $(\ev_3, \ev_2)$ are quasi-smooth and proper 
of relative virtual dimensions $1, -1$ respectively; the other statements 
are also proved similarly. 
In general for a diagram of smooth stacks 
\begin{align*}
    \mathcal{X} \stackrel{f}{\leftarrow} \mathcal{Z} \stackrel{g}{\to} \mathcal{Y}
\end{align*}
we have the following diagram 
\begin{align*}
    \xymatrix{
    & \Omega_{(f, g)}[-1] \ar[r] \ar[d] \diasquare & g^* \Omega_{\mathcal{Y}} \ar[r]  \ar[d] & \Omega_{\mathcal{Y}} \\
    \Omega_{\mathcal{X}} & \ar[l] f^* \Omega_{\mathcal{X}} \ar[r] & \Omega_{\mathcal{Z}}.  &
    }
\end{align*}
Suppose that $f$ is smooth and $g$ is smooth and proper. Then all the arrows in the 
middle squares are quasi-smooth closed immersions. Therefore, in the diagram 
\begin{align*}
     \Omega_{\mathcal{X}} \stackrel{f'}{\leftarrow} \Omega_{(f, g)}[-1] \stackrel{g'}{\to} \Omega_{\mathcal{Y}}
\end{align*}
the map $f'$ is quasi-smooth and $g'$ is proper. 

The diagram (\ref{dia:greg})
is obtained from the above construction from the left Hecke diagram of moduli stacks of 
bundles, see~\cite[Section~9.4]{PTlim}. 
Namely let $\mathrm{Hecke}_G^{\mathrm{B}}$ be the moduli stack which classifies exact 
sequences 
\begin{align*}0\to F_1 \to F_2 \to \mathcal{O}_x \to 0
\end{align*}
where $F_i$ are vector bundles on $C$ of rank $r$
and $x\in C$. We have the evaluation morphisms
\begin{align}\label{dia:Bun1}
    \cC(1)\times \Bun_G \stackrel{(\ev_3^{\rB}, \ev_2^{\rB})}{\leftarrow} \mathrm{Hecke}_G^{\mathrm{B}} \stackrel{\ev_1^{\rB}}{\to} \Bun_G.
\end{align}
Here $\cC(1)=C\times \bgm$ is the moduli stack of skyscraper sheaves $\mathcal{O}_x$ for $x\in C$. 
Then as in the proof of~\cite[Lemma~9.4]{PTlim}, the resulting diagram 
\begin{align}\label{dia:Bun2}
  \Omega_{\mathcal{C}(1)}\times   \Omega_{\Bun_G} \stackrel{(\ev_3^{\rB}, \ev_2^{\rB})'}{\leftarrow}
  \Omega_{(\ev_3^{\rB}, \ev_2^{\rB}, \ev_1^{\rB})}[-1] \stackrel{(\ev_1^{\rB})'}{\to}
  \Omega_{\Bun_G}
\end{align}
is identified with the diagram (\ref{dia:greg}). 
Therefore $\ev_1^{\sharp}$ is proper and $(\ev_3^{\sharp}, \ev_2^{\sharp})$ is quasi-smooth. By the base change, the map $(\ev_3, \ev_2)$ is quasi-smooth. Also as $S \hookrightarrow S\times k[-1]$ is a closed 
immersion, the map $\ev_1$ is proper. 

In a similar way, the diagram 
\begin{align}\notag
    \xymatrix{
\mathrm{Hecke}_G \ar[d]_{(\ev_1^{\sharp}, \ev_3^{\sharp})} \ar[r]^{\ev_2^{\sharp}} & \Hig_G \\
\Hig_G\times \mathfrak{M}(1) &
    }
\end{align}
is obtained by the above construction from the right Hecke diagram, see~\cite[Subsection~9.3]{PTlim}. Therefore $\ev_2^{\sharp}$ is proper and $(\ev_1^{\sharp}, \ev_3^{\sharp})$ is quasi-smooth. 
By the same argument as above, the map $(\ev_1, \ev_3)$ is quasi-smooth and $\ev_2$ is proper. 

Since $S\times \Hig_G\to \Hig_G$ is smooth, 
we conclude that $\ev_1$ is quasi-smooth. 
It remains to show that $(\ev_3, \ev_2)$ is proper. 
By the same argument as above, we know that 
$\ev_2$ is proper and quasi-smooth. 
The map $(\ev_3, \ev_2)$ factors through 
\begin{align*}
\mathrm{Hecke}^{\prime}_G\to
    \mathrm{C}\times_{\mathrm{B}}\Hig_G\hookrightarrow 
    S\times \Hig_G.
\end{align*}
Since $\mathrm{C}\to \mathrm{B}$ is proper, the 
morphism 
$\mathrm{C}\times_{\mathrm{B}}\Hig_G\to \Hig_G$ is also proper, therefore 
we conclude that $(\ev_3, \ev_2)$ is proper. 

The map $\ev_1^{\sharp}$ in (\ref{dia:greg}) has relative virtual dimension zero,  
since the $\ev_1^{\sharp}$-relative cotangent complex at (\ref{seq:Ei}) is $\Hom^*_S(E_1, \mathcal{O}_x)$ whose Euler characteristic is zero. 
By the base change (\ref{Hecke2}), we have 
\begin{align*}
\operatorname{vdim}\mathrm{Hecke}_G^{\prime}=\operatorname{vdim}\mathrm{Hecke}_G+1,
\end{align*}
therefore 
we conclude that the map $\ev_1$ has relative virtual dimension one. 
In a similar way, the map $\ev_2$ has relative virtual dimension one, 
hence $(\ev_3, \ev_2)$, $(\ev_3, \ev_1)$ have virtual dimension $-1$. 
\end{proof}

\subsection{Proof of Lemma~\ref{lem:HeckeL}}\label{subsec:HeckeL}
\begin{proof}
In~\cite[Section~9.4]{PTlim}, it is shown that the diagram (\ref{dia:Bun1}) induces the functor 
\begin{align*}
    (\ev_1^{\rB})_{*}^{\Omega} (\ev_3^{\rB}, \ev_2^{\rB})^{\Omega !}\colon 
    \LL(\mathfrak{M}(1))_{0} \otimes & \LL(\Hig_G(\chi))_{\delta \otimes \omega_{\Bun_G}^{1/2}}\\
    &\to \LL(\Hig_G(\chi-1))_{\delta \otimes \omega_{\Bun_G}^{1/2}}.
\end{align*}
Here we refer to loc. cit. for the notation $(-)_{*}^{\Omega}$ and $(-)^{\Omega !}$. 
Unraveling their definitions and identifications of the diagram (\ref{dia:Bun1}) with (\ref{dia:Bun2}), 
the above functor is identified with 
\begin{align}\label{evOmega}
    (\ev_1^{\rB})_{*}^{\Omega} (\ev_3^{\rB}, \ev_2^{\rB})^{\Omega !}=  (\ev_1^{\sharp})_{*}((\ev_3^{\sharp}, \ev_2^{\sharp})^* \otimes \omega_{(\ev_3^{\rB}, \ev_2^{\rB})}[d])
\end{align}
where $d$ is the relative virtual dimension of $(\ev_3^{\rB}, \ev_2^{\rB})$. Note that $\omega_{\Bun_G}^{1/2}$
exists as a line bundle on $\Bun_G$ once we fix $\omega_C^{1/2} \in \mathrm{Pic}(C)$, 
see~\cite[Section~4]{BD0}. Also note that
\begin{align*}
    \LL(\mathfrak{M}(1))_{0}=\Coh(S\times k[-1] \times \bgm)_0 \stackrel{\sim}{\leftarrow} \Coh(S\times k[-1]).
\end{align*}

We consider the functor 
\begin{align}\label{funct:cohL}
    \Coh(S\times k[-1]) \otimes \LL(\Hig_G(\chi))_w \to \LL(\Hig_G(\chi-1))_w
\end{align}
given by 
\begin{align*}
(-)\mapsto 
    (\ev_1^{\rB})^{\Omega}_{*} (\ev_3^{\rB}, \ev_2^{\rB})^{\Omega !}((-)\boxtimes \omega_{\Bun_G}^{1/2})\otimes\omega_{\Bun_G}^{-1/2}.
\end{align*}
Then from (\ref{evOmega}), the above functor is of the form 
\begin{align*}
(-)\mapsto 
    (\ev_1^{\sharp})_*((\ev_3^{\sharp}, \ev_2^{\sharp})^*(-)\otimes \mathcal{L}[d])
\end{align*}
where $\mathcal{L}$ is a line bundle on $\mathrm{Hecke}_G$ of the form 
\begin{align*}
    \mathcal{L}=\omega_{(\ev_3^{\rB}, \ev_2^{\rB})}\otimes \omega_{\Bun_G(\chi)}^{1/2}
    \otimes \omega_{\Bun_G(\chi-1)}^{-1/2}. 
\end{align*}
Here we have omitted the relevant pull-backs to $\mathrm{Hecke}_G$. 
The fibers of each line bundle are given by 
\begin{align*}
    &\omega_{(\ev_3^{\rB}, \ev_2^{\rB})}|_{(F_1, \theta_1) \subset (F_2, \theta_2)}=\det \chi_C(F_2, \mathcal{O}_x)^{\vee}, \\
    &\omega_{\Bun_G(\chi)}^{1/2}|_{(F_1, \theta_1) \subset (F_2, \theta_2)}=\det (\chi_C(F_2, F_2)[1])^{-1/2}, \\
    &\omega_{\Bun_G(\chi-1)}^{1/2}|_{(F_1, \theta_1) \subset (F_2, \theta_2)}=\det (\chi_C(F_1, F_1)[1])^{-1/2}. 
\end{align*}
Since $[F_1]=[F_2]-[\mathcal{O}_x]$ in K-theory, by substituting it to $\chi_C(F_1, F_1)$, 
a simple calculation shows that 
\begin{align*}
    &\mathcal{L}|_{(F_1, \theta_1) \subset (F_2, \theta_2)} \\
    &=\det \chi_C(F_2, \mathcal{O}_x)^{-1/2}\otimes \det \chi_C(\mathcal{O}_x, F_2)^{1/2}\otimes \det \chi_C(\mathcal{O}_x, \mathcal{O}_x)^{1/2} \\
    &\cong \det \chi_C(F_2, \mathcal{O}_x)^{-1/2}\otimes \det \chi_C(F_2, \omega_C \otimes \mathcal{O}_x)^{1/2}\otimes \det \chi_C(\mathcal{O}_x, \mathcal{O}_x)^{1/2}.
\end{align*}
From the above description, the line bundle $\mathcal{L}$ is pulled back from a $\mathbb{Q}$-line bundle $\mathcal{L}'$ 
on $\mathfrak{M}(1)\times \Hig_G(\chi)$, which is trivial on $\Omega_{U\times \bgm} \times \Hig_G(\chi)$
for any affine open subset $U\subset C$ such that $\omega_C|_{U} \cong \mathcal{O}_U$. In particular it 
satisfies that for any $\nu \colon \bgm \to \mathfrak{M}(1)\times \Hig_G(\chi)$ we have $c_1(\nu^* \mathcal{L}')=0$. 

It follows that the twist by $\mathcal{L}'$ does not change the source of (\ref{funct:cohL}), 
therefore we obtain the functor 
\begin{align*}
      (\ev_1^{\sharp})_*(\ev_3^{\sharp}, \ev_2^{\sharp})^*(-) \colon \Coh(S\times k[-1]) \otimes \LL(\Hig_G(\chi))_w \to \LL(\Hig_G(\chi-1))_{w}.
\end{align*}
Finally, the functor $\mathrm{H}$ is given as the composition of the above functor with
$i_{*} \colon \Coh(S) \to \Coh(S\times k[-1])$ for the closed immersion $i \colon S \hookrightarrow S\times k[-1]$, therefore we obtain the lemma. 
\end{proof}

\subsection{A lemma on the existence of an integral model}
The following lemma should be well-known, but we include it here because of a lack of a reference. 
\begin{lemma}\label{lem:intmodel}
    Let $\cC \subset S$ be a reduced curve in a smooth surface $S$ and take $x_1, \ldots, x_d \in \cC$.
    Then there is an integral projective curve $D\subset \mathbb{P}^2$ with $y_1, \ldots, y_d \in D$ such that 
    \begin{align}\label{isom:formC}\widehat{\mathcal{O}}_{\cC, x_i} \cong \widehat{\mathcal{O}}_{D, y_i}.
    \end{align}
    
    Moreover for a rank one torsion-free sheaf $E$ on $\cC$ there is a rank 
    one torsion-free sheaf $F$ on $D$ which are formally locally isomorphic under the isomorphisms (\ref{isom:formC}). 
\end{lemma}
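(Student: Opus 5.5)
We build $D$ by Artin approximation combined with Bertini-type genericity. First, each complete local ring $\widehat{\mathcal{O}}_{\cC,x_i}$ is isomorphic to $k[[u,v]]/(f_i)$ for a reduced power series $f_i$, since $\cC$ sits in the smooth surface $S$. A reduced plane curve singularity is finitely determined (its Tjurina number $\tau_i$ is finite). Hence there is $N_i$ such that any $g\in k[[u,v]]$ with $g\equiv f_i \bmod \mathfrak{m}^{N_i}$ gives a contact-equivalent singularity, that is, $k[[u,v]]/(g)\cong k[[u,v]]/(f_i)$. Replacing $f_i$ by its truncation, we may assume each $f_i$ is a polynomial.

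Choose distinct points $y_1,\ldots,y_d\in\mathbb{A}^2\subset\mathbb{P}^2$. We may assume the $x_i$ are distinct; if they are not, we use the same $y$ for repeated points. For $m\gg 0$, the restriction map
\[
H^0(\mathcal{O}_{\mathbb{P}^2}(m))\to \bigoplus_i \mathcal{O}_{\mathbb{P}^2,y_i}/\mathfrak{m}_{y_i}^{N}
\]
is surjective, with $N=\max N_i$. So the homogeneous degree-$m$ polynomials $h$ whose $N$-jet at $y_i$ equals the translated $f_i$ form a nonempty affine-linear subspace $V_m$, and every $D=\{h=0\}$ with $h\in V_m$ satisfies (\ref{isom:formC}).

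The main obstacle is to show that a general $h\in V_m$ defines an integral curve. The plan has two parts. \emph{Smoothness away from the $y_i$.} The linear system $V_m$ has no base points outside $\{y_i\}$ and separates tangent directions there for $m\gg N$, because $H^0(\mathcal{O}(m)\otimes I_{\cup y_i}^{N})$ is very ample away from the $y_i$ in the relevant sense. A Bertini argument then shows a general member is smooth off $\{y_1,\ldots,y_d\}$. \emph{Irreducibility.} Suppose a general member were reducible. Its components must all pass through the $y_i$, since the curve is smooth elsewhere and two components would meet in a singular point away from the $y_i$ otherwise. We rule this out by a dimension count: curves of the form $D_1\cup D_2$ with $\deg D_j=m_j$ and prescribed jets form a family of dimension at most $\dim|\mathcal{O}(m_1)|+\dim|\mathcal{O}(m_2)|$, which is much smaller than $\dim V_m\sim m^2/2-O(N^2)$. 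An alternative is to use that a general member of a linear system whose base locus is finite and which is not composite with a pencil is irreducible. Reducedness follows from smoothness on a dense open set.

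For the sheaf statement, fix a rank one torsion-free sheaf $E$ and set $M_i:=E\otimes\widehat{\mathcal{O}}_{\cC,x_i}$. This $M_i$ is a finitely generated module over $\widehat{\mathcal{O}}_{D,y_i}$, of rank one and torsion-free, and by Lemma~\ref{lem:delta} it satisfies
\[
R_i\subset M_i\subset\widetilde{R}_i,
\]
where $R_i=\widehat{\mathcal{O}}_{D,y_i}$, up to the choice of embedding. In particular $M_i$ is determined by the finite-length submodule $M_i/R_i$ of $\widetilde{R}_i/R_i$. This quotient is the completion of $\nu_*\mathcal{O}_{\widetilde D}/\mathcal{O}_D$ at $y_i$, so $M_i/R_i$ lifts to a submodule $F\subset\nu_*\mathcal{O}_{\widetilde D}$ containing $\mathcal{O}_D$, which equals $\mathcal{O}_D$ away from the $y_i$. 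This $F$ is rank one and torsion-free, and it is formally locally isomorphic to $E$ at each point by construction.
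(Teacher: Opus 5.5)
Your proposal is correct. The construction of $D$ follows the paper's route, while the sheaf statement is proved by a genuinely different, purely local argument.

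\textbf{Construction of $D$.} As in the paper, you match the $N$-jets of the local equations $f_i$ at chosen points $y_i$ by a degree-$m$ form and then invoke finite determinacy for contact equivalence. You also supply the integrality argument that the paper compresses into ``by Bertini''.

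One correction to your dimension count: the reducible locus is not ``much smaller'' than your family. In the worst case $m_1=1$ it has dimension $\frac{m^2+m+2}{2}$. The family $\{h=0\}_{h\in V_m}$ has dimension $\binom{m+2}{2}-d\binom{N+1}{2}$, so the gap is only $m-d\binom{N+1}{2}$. This is still positive for $m\gg 0$, which is all you need.

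Every non-integral member is cut out by a product $h_1h_2$, so this count alone already gives integrality, including reducedness. The step showing smoothness away from the $y_i$ is therefore superfluous.

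\textbf{The sheaf statement.} You argue locally. Lemma~\ref{lem:delta} places $\widehat{E}_{x_i}$ between $R_i$ and $\widetilde{R}_i$. Since normalization commutes with completion for these excellent rings, $M_i/R_i$ sits inside the finite-length sheaf $\nu_*\mathcal{O}_{\widetilde{D}}/\mathcal{O}_D$ at $y_i$. Its preimage gives $\mathcal{O}_D\subset F\subset \nu_*\mathcal{O}_{\widetilde{D}}$.

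The paper instead uses the global presentation $E\cong \cL\otimes I_Z$ from Remark~\ref{rmk:dense}. Since $\cL$ is trivial near each $x_i$, it suffices to transport the germ of $Z$ at $x_i$ to a zero-dimensional $Z'\subset D$ and take $F=I_{Z'}$.

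\textbf{Comparison.} The paper's argument is shorter once that presentation is available. Yours needs no global input on $\cC$: it only uses that a rank one torsion-free module over $R_i$ is a fractional ideal determined by finite-length data. It produces $F$ as an intermediate sheaf between $\mathcal{O}_D$ and the normalization rather than as an ideal sheaf.

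You do not even need the minimality in Lemma~\ref{lem:delta}. Clearing denominators by a non-zero-divisor puts $M_i$ inside $R_i$ with finite colength, and that is exactly the local content of the paper's $I_{Z'}$ construction.
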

\begin{proof}
We choose points
$y_1,\ldots,y_d\in \mathbb{P}^2$,
and isomorphisms 
\begin{align}\label{form:yi}
    \widehat{\mathcal{O}}_{S,x_i}
    \cong
    \widehat{\mathcal{O}}_{\mathbb{P}^2,y_i}.
\end{align}
The local defining equations of $\cC\subset S$ at $x_i$ define 
elements $f_i \in \widehat{\mathcal{O}}_{\mathbb{P}^2, y_i}$ under the 
above isomorphism. 
We take
$D\in |\mathcal{O}_{\mathbb{P}^2}(m)|$
such that its defining equation $f$ satisfies 
$f\equiv f_i$ modulo $m_i^N$ for $N\gg 0$, where $m_i \subset \widehat{\mathcal{O}}_{\mathbb{P}^2, y_i}$ are the maximal ideals;
this is possible for $m\gg 0$ by Bertini theorem. Then 
by the finite determinacy theorem for contact equivalence~\cite[Theorem I.2.23 and Corollary I.2.24]{GLS}, 
we have $(f)=(f_i)$ in 
$\widehat{\mathcal{O}}_{\mathbb{P}^2, y_i}$ after a change of formal 
coordinates. 
Therefore the first statement follows. 

For the second statement, by Remark~\ref{rmk:dense}, we may write $E\cong \cL\otimes I_Z$
for a zero-dimensional closed subscheme $Z\subset \cC$ and a line bundle $\cL$ on $\cC$. Since $\cL$ is trivialized at each $x_i$,  we can 
take $F=I_{Z'}$ for a zero-dimensional closed subscheme $Z'\subset D$
which is formally isomorphic to $Z$ at each $y_i$ under the above
isomorphisms (\ref{form:yi}). Therefore the lemma is proved. 
\end{proof}
\subsection{Proof of Proposition~\ref{prop:comad}}\label{subsec:modify}
\begin{proof}
We explain how to modify the proof of the commutativity of the diagram (\ref{com:WH4}). The only point for the modification is to prove a variant of Proposition~\ref{prop:PE}, namely we need to show that 
\begin{align}\label{HPE}
    \widetilde{\mathrm{H}}^R(\cP_E)[-1] \in \Coh^{\heartsuit}(\cH\times S)
\end{align}
and it is a maximal Cohen--Macaulay sheaf on $\cH_b\times \cC_b$. 
By (\ref{formula:ev1}) and Lemma~\ref{lem:omegaev}, we have 
\begin{align}\label{evL}
      \widetilde{\mathrm{H}}^R(\cP_E)[-1]=(\ev_2, \ev_3)_{*}\ev_1^*(\cP_E) \otimes \mathcal{L}.
\end{align}
Let $\psi$ be the isomorphism given by the shifted derived dual on $S$
\begin{align*}
    \psi \colon \cH \stackrel{\cong}{\to} \cH, \ E \mapsto E^{\vee}
\end{align*}
where $E^{\vee}=\mathbb{D}_S(E)[1]$. 
Then we have the commutative diagram 
\begin{align}\label{dia:heckedual}
    \xymatrix{
\cH \ar[d]^-{\psi}_-{\cong} & \cHecke^{\mathrm{greg}} \ar[r]^-{(\ev_2, \ev_3)} \ar[d]^-{\psi}_-{\cong} \ar[l]_-{\ev_1} & \cH \times S \ar[d]^-{\psi \times \id}_-{\cong} \\
\cH & \cHecke^{\mathrm{greg}} \ar[l]_-{\ev_2} \ar[r]^-{(\ev_1, \ev_3)} & \cH \times S.
    }
\end{align}
Here the middle vertical arrow is given by 
\begin{align*}
    (0\to E_1 \to E_2 \to \mathcal{O}_x \to 0) \mapsto (0\to E_2^{\vee} \to E_1^{\vee} \to \mathcal{O}_x \to 0)
\end{align*}
Then (\ref{HPE}) holds 
from (\ref{evL}), Proposition~\ref{prop:PE} the diagram (\ref{dia:heckedual})
and 
noting that 
$\psi_{*}\cP_E \cong \cP_{E^{\vee}\otimes \omega_{\cC_b}^{-1}}$. 

Using (\ref{HPE}), we can show that the diagram (\ref{com:WH3ad}) commutes; the arguments are 
the same as those in the proof of the commutative diagram (\ref{com:WH4})
in Subsection~\ref{subsec:proofWH}, namely we compare the kernel 
objects of both compositions, show that both are Cohen--Macaulay sheaves 
which are isomorphic in codimension one, hence an isomorphism.
Since the argument is the same, we omit details.   
\end{proof}

\subsection{Proof of Lemma~\ref{lem:delta}}\label{subsec:delta}
\begin{proof}
    Let $K$ be the total ring of fractions of $R$
    \begin{align*}
        K=\prod_{i=1}^{\ell}k((t_i)).
    \end{align*}
    Then for a given embedding $\phi \colon R \hookrightarrow M$, 
it induces the isomorphism $\phi_K \colon K\stackrel{\cong}{\to} M\otimes_R K$.
    By setting $I=\phi_K^{-1}(M)$, we have 
    $R\subset I \subset K$.
Let $I^{\mathrm{sat}}:=\widetilde{R} I \subset K$. It is a rank one 
torsion-free module of $\widetilde{R}$ which contains $\widetilde{R}$, 
therefore it is written as 
\begin{align}\label{I:sat}
    I^{\mathrm{sat}}=\prod_{i=1}^{\ell} t^{-a_i} k[[t_i]], \ a_i \geq 0.
\end{align}
It is enough to show that if $\phi \colon R\hookrightarrow M$ attains $\delta$, 
then $a_i=0$. 

By (\ref{I:sat}), there is $u\in I$ such that $\mathrm{val}_i(u)=-a_i$, 
where $\mathrm{val}_i(u)$ is the valuation of the $i$-th component of $u$. 
Then by replacing $\phi$ with $\phi' \colon R\hookrightarrow M$ with $\phi'(1)=u\phi(1)$, 
we obtain $R \subset I'=u^{-1}I \subset K$. 
We have the exact sequences 
\begin{align*}
   & 0\to I/R \to I^{\mathrm{sat}}/R \to I^{\mathrm{sat}}/I \to 0, \\
   & 0\to I'/R \to \widetilde{R}/R \to \widetilde{R}/I' \to 0. 
\end{align*}
We also have the isomorphism $u \colon \widetilde{R}/I' \stackrel{\cong}{\to}I^{\mathrm{sat}}/I$.
Therefore 
\begin{align*}
    \dim I/R &=\dim I^{\mathrm{sat}}/R-\dim \widetilde{R}/R+\dim I'/R \\
    &=\dim I^{\mathrm{sat}}/\widetilde{R}+\dim I'/R \\
    &=\sum_{i=1}^{\ell}a_i+\dim I'/R.
\end{align*}
By the definition of $\delta$, we have $\dim I/R \leq \dim I'/R$, 
therefore $a_i=0$ for all $i$. 
\end{proof}

\subsection{Categories of functors}
For cocomplete dg-categories $\cC_1, \cC_2$, let 
$\mathrm{Funct}_{\mathrm{cont}}(\cC_1, \cC_2)$ be the dg-category of 
continuous functors $\cC_1 \to \cC_2$. In the proof of Theorem~\ref{thm:eq}, 
we have used the following lemma: 

\begin{lemma}\label{lem:Funcon}
In the setting of Theorem~\ref{thm:eq}, we have 
\begin{align*}
    \mathrm{Funct}_{\mathrm{cont}}(\IndCoh_{\mathcal{N}}(\cH(w)^{\mathrm{ss}}), \QCoh(S^{\times d})) \simeq 
    \IndCoh_{\mathcal{N}}(\cH(w)^{\mathrm{ss}}\times S^{\times d}).
\end{align*}
Here the right-hand side is the subcategory 
with singular supports $\mathcal{N}\times S^{\times d}$. 
\end{lemma}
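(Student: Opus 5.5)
The statement is a Lurie/Gaitsgory-type kernel theorem for ind-coherent sheaves with nilpotent singular support. I would reduce it to two standard inputs: the self-duality of $\IndCoh_{\mathcal{N}}$ for quasi-smooth QCA stacks, and the fact that tensor products of module categories over a QCA stack with $\QCoh$ of a scheme are computed geometrically.

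First, by Lemma~\ref{lem:nilp}, since $\cB\subset\rB^{\mathrm{red,cl}}$ we have $\IndCoh_{\mathcal{N}}(\cH(w)^{\mathrm{ss}})=\QCoh(\cH(w)^{\mathrm{ss}})$. The analogous statement holds on the product, because the automorphism groups are unchanged by the factor $S^{\times d}$. So $\IndCoh_{\mathcal{N}}(\cH(w)^{\mathrm{ss}}\times S^{\times d})=\QCoh(\cH(w)^{\mathrm{ss}}\times S^{\times d})$. The lemma therefore reduces to
\[\mathrm{Funct}_{\mathrm{cont}}(\QCoh(\mathcal{X}),\QCoh(Y))\simeq\QCoh(\mathcal{X}\times Y),\qquad \mathcal{X}=\cH(w)^{\mathrm{ss}},\ Y=S^{\times d}.\]

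Second, $\mathcal{X}$ is QCA and $\QCoh(\mathcal{X})$ is compactly generated (already used in Proposition~\ref{prop:cgen}). Hence it is dualizable, and $\mathrm{Funct}_{\mathrm{cont}}(\QCoh(\mathcal{X}),\QCoh(Y))\simeq\QCoh(\mathcal{X})^{\vee}\otimes\QCoh(Y)$. I would identify $\QCoh(\mathcal{X})^{\vee}\simeq\QCoh(\mathcal{X})$ through the pairing $(A,B)\mapsto\Gamma(\mathcal{X},A\otimes B)$. Under this pairing the compact objects $\mathrm{Perf}(\mathcal{X})$ go to their linear duals $A\mapsto A^{\vee}$, and $\Gamma(\mathcal{X},-)$ is continuous because $\mathcal{X}$ is QCA (\cite{MR3037900}). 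The Künneth equivalence $\QCoh(\mathcal{X})\otimes\QCoh(Y)\simeq\QCoh(\mathcal{X}\times Y)$ recalled in Subsection~\ref{subsec:notation} then gives the claim.

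Third, I would make the equivalence explicit. A kernel $\mathcal{K}$ is sent to the functor $A\mapsto p_{Y*}(p_{\mathcal{X}}^*A\otimes\mathcal{K})$. This is the form in which the lemma is used in the proof of Theorem~\ref{thm:eq}: the natural transformation $\gamma$ of $\mathrm{W}^R_{d,c}$ is induced by an endomorphism of the kernel $\cE^{(d)}[d]$. Since the equivalence is an equivalence of dg-categories, natural transformations correspond to morphisms of kernels. To place $\gamma-\id$ in the heart, I would use that $\cE^{(d)}$ is a flat sheaf: $\Hom(\cE^{(d)},\cE^{(d)})$ lives in non-negative degrees, and degree zero consists of sheaf maps.

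The main obstacle is the self-duality step for a QCA stack with nontrivial (though toric) stabilizers, in particular checking that the unit and counit of the pairing are given by the diagonal. I would first try to cite \cite[Corollary~4.3.4]{MR3037900} together with the rigidity of $\QCoh$ for QCA stacks of this form. As a fallback, I would pass to the $\mathbb{G}_m$-gerbe description via the good moduli space and use that $\QCoh$ of a quotient of an affine by a reductive group is rigid.
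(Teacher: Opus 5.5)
Your argument is correct and is essentially the paper's: compact generation, then $\mathrm{Funct}_{\mathrm{cont}}(\cC,\QCoh(S^{\times d}))\simeq\cC^{\vee}\otimes\QCoh(S^{\times d})$, then identifying $\cC^{\vee}$ with $\cC$ via a duality on compact objects, then K\"unneth; the only difference is that you first use Lemma~\ref{lem:nilp} to replace $\IndCoh_{\mathcal{N}}$, $\Coh_{\mathcal{N}}$ and $\mathbb{D}$ by $\QCoh$, $\mathrm{Perf}$ and $(-)^{\vee}$, so the standard $\QCoh$ K\"unneth formula for QCA stacks suffices where the paper invokes a K\"unneth formula for $\IndCoh_{\mathcal{N}}$. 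The step you flag as the main obstacle is not one, since the pairing $\Gamma(\mathcal{X},-\otimes-)$ is forced by $\mathrm{Perf}(\mathcal{X})^{\mathrm{op}}\simeq\mathrm{Perf}(\mathcal{X})$ together with continuity of $\Gamma$ on a QCA stack, and the unit is never needed (your gerbe fallback would also fail, because $\cH(w)^{\mathrm{ss}}$ is not a $\mathbb{G}_m$-gerbe over its good moduli space once strictly semistable objects occur).
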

\begin{proof}
Since $\cH(w)^{\mathrm{ss}}$ is quasi-smooth and QCA of global complete intersection, the dg-category $\IndCoh_{\mathcal{N}}(\cH(w)^{\mathrm{ss}})$ is compactly generated by $\Coh_{\mathcal{N}}(\cH(w)^{\mathrm{ss}})$, see~\cite[Corollary~9.2.7]{AG}. 
Then from~\cite[Section 1.1.1, 1.1.2]{GaiFun}, and noting 
$\IndCoh(S^{\times d})=\QCoh(S^{\times d})$ as $S$ is smooth, 
we have 
\begin{align*}
      \mathrm{Funct}_{\mathrm{cont}}(\IndCoh_{\mathcal{N}}(\cH(w)^{\mathrm{ss}}), \QCoh(S^{\times d})) 
      &\simeq 
      \IndCoh_{\mathcal{N}}(\cH(w)^{\mathrm{ss}})^{\vee} \otimes 
      \IndCoh(S^{\times d}) \\
      &\simeq \Ind(\Coh_{\mathcal{N}}(\cH(w)^{\mathrm{ss}})^{\mathrm{op}})\otimes 
      \IndCoh(S^{\times d}) \\
      &\simeq \Ind(\Coh_{\mathcal{N}}(\cH(w)^{\mathrm{ss}}))\otimes 
      \IndCoh(S^{\times d}) \\
      &\simeq \IndCoh_{\mathcal{N}}(\cH(w)^{\mathrm{ss}})\otimes \IndCoh(S^{\times d}) \\
      &\simeq \IndCoh_{\mathcal{N}}(\cH(w)^{\mathrm{ss}}\times S^{\times d}).
\end{align*}
Here the 3rd equivalence is given by the derived dual 
\begin{align*}
    \mathbb{D}_{\cH(w)^{\mathrm{ss}}} \colon 
    \Coh_{\mathcal{N}}(\cH(w)^{\mathrm{ss}})^{\mathrm{op}} \stackrel{\sim}{\to} \Coh_{\mathcal{N}}(\cH(w)^{\mathrm{ss}}).
\end{align*}
\end{proof}
\subsection{Another proof of Theorem~\ref{thm:eq}}\label{subsec:Hcpt}
We give another proof that $\gamma$ is an isomorphism in the proof of Theorem~\ref{thm:eq}, which works except for the case $g=2$ and $G=\GL_2$.
The latter condition is equivalent to that 
\begin{align}\label{Bell:codim}
    \operatorname{codim}(\cB \setminus \cB^{\mathrm{ell}})=2(r-1)(g-1)-1 \geq 2.
\end{align}
\begin{proof}
Below we show that $\gamma$ is an isomorphism except $g=2$ and $G=\GL_2$.
 Since $\Phi$ is an equivalence over $\cB^{\mathrm{ell}}:=\mathrm{B}^{\mathrm{ell}} \cap \cB$, the functor $\Upsilon$ is zero over $\cB^{\mathrm{ell}}$. It follows that $\gamma$ is an isomorphism over $\cB^{\mathrm{ell}}$. However the kernel object of $\mathrm{W}_{d, c}^R$ is $(\cE^{\vee}[1])^{\boxtimes d}$, which is a Cohen--Macaulay extension of a line bundle on 
 \begin{align*}j\colon (\cC \setminus \cC^{\mathrm{sg}})^{\times_{\cB} d}\times_{\cB}\cH(w)^{\mathrm{ss}}\subset \cC^{\times_{\cB}d}\times_{\cB}\cH(w)^{\mathrm{ss}} \end{align*}
 up to shift, say $j_{*}^{\heartsuit}L$ for a line bundle $L$ and $j^{\heartsuit}_{*}=\cH^0(j_{*})$. Therefore $\gamma$ is given by an element \begin{align*}
 \gamma \in \Hom^0(j^{\heartsuit}_{*}L, j^{\heartsuit}_{*}L)&\cong \Hom^0(L, L) \\&\cong \cH^0\Gamma(\mathcal{O}_{(\cC \setminus \cC^{\mathrm{sg}})^{\times_{\cB} d}\times_{\cB}\cH(w)^{\mathrm{ss}}}) \\&\cong \cH^0\Gamma(\mathcal{O}_{\cC^{\times_{\cB} d}\times_{\cB}\cH(w)^{\mathrm{ss}}})\cong \cH^0\Gamma(\mathcal{O}_{\cB}).
 \end{align*} 
 
 Here the 3rd isomorphism follows since $\cC^{\times_{\cB}d}\times_{\cB}\cH(w)^{\mathrm{ss}}$ is normal (this follows from that it is smooth in codimension one and has at worst lci singularities; the latter is a consequence that it is both quasi-smooth and classical). The last isomorphism follows from $\cH^0(\Gamma(\mathcal{O}_{\cC}))=\cH^0(\Gamma(\mathcal{O}_{\cH(w)^{\mathrm{ss}}}))=\cH^0 \Gamma(\mathcal{O}_{\cB})$. As $\gamma$ is invertible over $\cB^{\mathrm{ell}}$, and the complement of $\cB^{\mathrm{ell}}$ in $\cB$ is of codimension bigger than or equal to two by (\ref{Bell:codim}), it follows that $\gamma$ is invertible over $\cB$. Therefore $\gamma$ is an isomorphism.    
\end{proof}

\subsection{Proof of Lemma~\ref{lem:sigma}}\label{subsec:proofxy}
\begin{proof}
    We define $\widetilde{\mathcal{H}ecke}$ to be the fiber product
    \begin{align*}
    \xymatrix{
       \widetilde{\mathcal{H}ecke} \ar[r] \ar[d] \ar@{}[dr]|{\square} & \mathcal{H}ecke \ar[d]^-{\ev_1} \\
       \mathcal{H}ecke \ar[r]_-{\ev_1} & \mathcal{H}.
    }
    \end{align*}
    It consists of diagrams in $\Coh^{\heartsuit}(\cC_b)$ for $b\in \mathcal{B}$
    \begin{align}\label{dia:xy}
    \xymatrix{
    0\ar[r] & E' \ar[r] \ar@{=}[d] & E \ar[r] & \mathcal{O}_x \ar[r] & 0 \\
    0\ar[r] & E' \ar[r] & \widetilde{E} \ar[r] & \mathcal{O}_y \ar[r] & 0
    }
    \end{align}
    where $x, y \in \cC_b$, and each horizontal row is exact.

    Let
    \begin{align*}
        \mathcal{Z} \subset \widetilde{\mathcal{H}ecke}
    \end{align*}
    be the classical closed substack consisting of \eqref{dia:xy} such that $y=\sigma(x)$. It is enough to show that each irreducible component $\mathcal{Z}' \subset \mathcal{Z}$ is dominant over $\mathcal{B}$.

Recall that $\cHecke$ is classical, see Remark~\ref{rmk:Heckeflat}. 
    By the argument in the proof of Lemma~\ref{lem:Delta0} and the duality argument in Subsection~\ref{subsec:modify}, the fibers of $\ev_1$ are also classical of dimension one, hence $\ev_1$ is flat. Therefore $\widetilde{\mathcal{H}ecke}$ is classical, and the map
    \[
        \widetilde{\mathcal{H}ecke} \to \mathcal{H}
    \]
    is flat of relative dimension two.

    Over the regular locus $\mathcal{H}^{\mathrm{reg}} \subset \mathcal{H}$, the fibers of the map $\mathcal{Z} \to \mathcal{H}$ consist of $(x,y)\in \mathcal{C}_b \times \mathcal{C}_b$ such that $y=\sigma(x)$; therefore each fiber is isomorphic to $\cC_b$. The above argument shows that $\mathcal{Z}$ is isomorphic to $\mathcal{C}\times_{\mathcal{B}} \mathcal{H}$ over $\mathcal{H}^{\mathrm{reg}}$, whose irreducible components are dominant over $\cB$. Therefore, if an irreducible component $\mathcal{Z}'$ is not dominant over $\cB$, then it lies in the complement of $\mathcal{H}^{\mathrm{reg}}$ in $\mathcal{H}$. Using Lemma~\ref{lem:codimension}, this implies that
    \begin{align*}
        \dim \mathcal{Z}' \leq \dim(\mathcal{H} \setminus \mathcal{H}^{\mathrm{reg}})+2 \leq \dim \mathcal{H},
    \end{align*}
    namely $\mathcal{Z}' \subset \widetilde{\mathcal{H}ecke}$ has codimension at least two.

    However, $\mathcal{Z}'$ is an irreducible component of the classical stack $\widetilde{\mathcal{Z}}$, defined by the (classical) Cartesian square
    \begin{align*}
    \xymatrix{
    \widetilde{\mathcal{Z}} \inclusion \ar[d] & \widetilde{\mathcal{H}ecke} \ar[d] \\
    C \inclusion^-{\Delta} & C\times C
    }
    \end{align*}
    where the right vertical arrow sends \eqref{dia:xy} to $(\pi(x),\pi(y))$. Namely, $\widetilde{\mathcal{Z}}=\mathcal{Z}_1 \cup \mathcal{Z}_2$ where each $\mathcal{Z}_i$ is a union of irreducible components, $\mathcal{Z}_1$ corresponds to \eqref{dia:xy} such that $y=x$, resp.\ 
    $\mathcal{Z}_2$ corresponds to \eqref{dia:xy} such that $y=\sigma(x)$, 
    and $\mathcal{Z}'$ is one of the irreducible components of $\mathcal{Z}_2$. The closed substack $\widetilde{\mathcal{Z}}$ has codimension at most one in $\widetilde{\cHecke}$ as it is a pull-back of the Cartier divisor $\Delta(C) \subset C\times C$, which is a contradiction.
\end{proof}

\subsection{Quasi-BPS categories as semiorthogonal summands}
We used the following lemma in the proof of Proposition~\ref{prop:induceT}.
We keep the notation in Proposition~\ref{prop:induceT}. 

\begin{lemma}\label{lem:sodsum}
 For 
any quasi-compact open substack 
$\cH_T(\chi)^{\mathrm{qc}} \subset \cH_T(\chi)$ which admits a finite Harder--Narasimhan 
stratification with an open immersion $j_T^{\mathrm{qc}} \colon \cH_T(\chi)^{\mathrm{ss}}\hookrightarrow \cH_T(\chi)^{\mathrm{qc}}$, the image
of 
\begin{align}\notag
 \LL(\cH_{T}(\chi)^{\mathrm{ss}})_{w'} \stackrel{(j_T^{\mathrm{qc}})_!}{\to} \LL(\cH_{T}(\chi)^{\mathrm{qc}})_{w'}
 \subset \Coh(\cH_{T}(\chi)^{\mathrm{qc}})
\end{align}
is a semiorthogonal summand of $\Coh(\cH_{T}(\chi)^{\mathrm{qc}})$.
\end{lemma}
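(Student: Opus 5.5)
The plan is to build the semiorthogonal decomposition of $\Coh(\cH_T(\chi)^{\mathrm{qc}})$ by the usual ``window'' construction along the finite Harder--Narasimhan stratification, à la Halpern-Leistner, and to identify the ``window'' piece with the image of $(j_T^{\mathrm{qc}})_!$. Concretely, write the finitely many HN strata $\cS_1,\ldots,\cS_N$ of $\cH_T(\chi)^{\mathrm{qc}}\setminus \cH_T(\chi)^{\mathrm{ss}}$, ordered so that each $\cH_T(\chi)^{\mathrm{ss}}\cup\cS_1\cup\cdots\cup\cS_i$ is open. Each $\cS_i$ is a $\Theta$-stratum with center $\mathcal{Z}_i$ and canonical one-parameter subgroup $\lambda_i$, and $\cS_i\to\mathcal{Z}_i$ is an affine bundle. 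By \cite[Proposition~1.1.2, Lemma~1.5.6]{HalpK32} and the quasi-smooth version used in \cite{PTlim}, for each choice of integer cutoffs $m_i$ there is a semiorthogonal decomposition
\begin{align*}
\Coh(\cH_T(\chi)^{\mathrm{qc}})=\bigl\langle \ldots, \Coh_{\cS_i}(\cH_T(\chi)^{\mathrm{qc}})_{<m_i}, \mathbb{G}_{m_\bullet}, \Coh_{\cS_i}(\cH_T(\chi)^{\mathrm{qc}})_{\geq m_i},\ldots\bigr\rangle,
\end{align*}
where the window $\mathbb{G}_{m_\bullet}$ consists of objects whose $\lambda_i$-weights restricted to $\mathcal{Z}_i$ lie in a prescribed half-open interval of length equal to the $\lambda_i$-weight of $\det(\mathbb{L}^{\lambda_i>0})$, and restriction $\mathbb{G}_{m_\bullet}\stackrel{\sim}{\to}\Coh(\cH_T(\chi)^{\mathrm{ss}})$. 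The pieces supported on $\cS_i$ are of the form $p_{i*}q_i^*\Coh(\mathcal{Z}_i)_a$, semiorthogonally ordered by weight $a$.

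Next I refine the window. Inside $\mathbb{G}_{m_\bullet}\simeq\Coh(\cH_T(\chi)^{\mathrm{ss}})_{w'}$ the quasi-BPS category $\LL(\cH_T(\chi)^{\mathrm{ss}})_{w'}$ is itself a semiorthogonal summand: by \cite{PThiggs, PTlim} there is a semiorthogonal decomposition of $\Coh(\cH_T(\chi)^{\mathrm{ss}})_{w'}$ into $\LL(\cH_T(\chi)^{\mathrm{ss}})_{w'}$ and categorical Hall products of quasi-BPS categories of smaller pieces (the \'etale base change and removal of $k[-1]$ do not affect this, as in Remark~\ref{rmk:variant}). Transport this decomposition to $\mathbb{G}_{m_\bullet}$ via the inverse of restriction. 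Choosing the cutoffs $m_i$ so that the window interval is exactly the one in Remark~\ref{rmk:jshrink} (strict on the left, closed on the right, shifted by $c_1(\nu^*\delta)$ and half of $c_1(\nu^*\mathbb{L}^{<0})$), the proof of \cite[Theorem~7.18, 7.19]{PTlim} shows that $(j_T^{\mathrm{qc}})_!$ is exactly the inverse of restriction on $\LL$, so its image is the transported copy of $\LL(\cH_T(\chi)^{\mathrm{ss}})_{w'}$. This yields it as a semiorthogonal summand of $\mathbb{G}_{m_\bullet}$, hence of $\Coh(\cH_T(\chi)^{\mathrm{qc}})$. The other weight pieces $w\neq w'$ split off orthogonally anyway.

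The main obstacle is the middle step: matching the window interval of the Halpern-Leistner decomposition (which is intrinsically half-open of a specific length) with the weight condition defining $\mathrm{Im}(j_!)$ at each HN center, including the regularization issue for non-perfect objects. I would handle it by first checking it for perfect objects via Lemma~\ref{lem:perfect} and Remark~\ref{rmk:jshrink}, then using that the window is generated by its intersection with objects supported near centers and that both conditions are tested only on the maps $\nu$ through the centers $\mathcal{Z}_i$; the self-duality $\mathbb{L}\simeq\mathbb{T}$ makes the length of the window equal $c_1(\nu^*\mathbb{L}^{>0})$, which is the needed consistency.
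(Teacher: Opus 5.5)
Your proposal is correct and follows the paper's proof. Both arguments use three ingredients: the window decomposition of $\Coh(\cH_T(\chi)^{\mathrm{qc}})$ along the finite HN stratification \cite[Theorem~3.3.1]{HalpK32}; the fact that the quasi-BPS category is a semiorthogonal summand of $\Coh(\cH_T(\chi)^{\mathrm{ss}})$ \cite[Theorem~1.1]{PThiggs}; and the identification of $\mathrm{Im}((j_T^{\mathrm{qc}})_!)$ with the transported copy of $\LL(\cH_T(\chi)^{\mathrm{ss}})_{w'}$ for the cutoff $k_{\mathcal{Z}}=-c_1(\nu^*\det\mathbb{L}^{>0})/2$. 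The paper takes that last identification directly from the construction of $j_!$ in \cite{PTlim}, so the perfect-complex verification you sketch for your ``main obstacle'' is not needed.
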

\begin{proof}
There is a semiorthogonal 
decomposition of the form 
\begin{align}\label{sod:W}
    \Coh(\cH_{T}(\chi)^{\mathrm{qc}})=\langle \ldots, \mathcal{W}, \ldots \rangle, \ (j_{T}^{\mathrm{qc}})^* \colon \mathcal{W} \stackrel{\sim}{\to} \Coh(\cH_{T}(\chi)^{\mathrm{ss}})
\end{align}
by~\cite[Theorem~3.3.1]{HalpK32}, which depends on a choice of $k_{\mathcal{Z}}\in \mathbb{R}$
at each center of HN stratum $\mathcal{Z}$, and $\LL(\cH_{T}(\chi)^{\mathrm{ss}})_{w'}$
is a semiorthogonal summand of $\Coh(\cH_{T}(\chi)^{\mathrm{ss}})$, see~\cite[Theorem~1.1]{PThiggs}. 

From the construction of $(j_T^{\mathrm{qc}})_!$, the image $\mathrm{Im}((j_T^{\mathrm{qc}})_!)$
coincides with the image of the inclusion 
\begin{align*}\LL(\cH_{T}(\chi)^{\mathrm{ss}})_{w'} \subset \Coh(\cH_{T}(\chi)^{\mathrm{qc}})
\end{align*}
determined by (\ref{sod:W}) for a choice 
$k_{\mathcal{Z}}=-c_1(\nu^* \det \mathbb{L}_{\cH_{T}(\chi)}^{>0})/2$, where $\nu$ is a map $\nu \colon \bgm \to \mathcal{Z}\to 
\cH_{T}(\chi)$ as in Remark~\ref{rmk:jshrink}.
Therefore the image of $(j_T^{\mathrm{qc}})_!$ is 
a semiorthogonal summand of 
$\Coh(\cH_{T}(\chi)^{\mathrm{qc}})$. 
\end{proof}

\newcommand{\etalchar}[1]{$^{#1}$}
\providecommand{\bysame}{\leavevmode\hbox to3em{\hrulefill}\thinspace}
\providecommand{\MR}{\relax\ifhmode\unskip\space\fi MR }
\providecommand{\MRhref}[2]{%
	\href{http://www.ams.org/mathscinet-getitem?mr=#1}{#2}
}
\providecommand{\href}[2]{#2}


\begin{thebibliography}{PTVV13}
	
	\bibitem[ABC{\etalchar{+}}a]{GLC2}
	D.~Arinkin, D.~Beraldo, J.~Campbell, L.~Chen, J.~Faergeman, D.~Gaitsgory, K.~Lin, S.~Raskin, and N.~Rozenblyum, \emph{Proof of the geometric {L}anglands conjecture {II}: {K}ac-{M}oody localization and the {FLE}}, arXiv:2405.03648.
	
	\bibitem[ABC{\etalchar{+}}b]{GLC4}
	D.~Arinkin, D.~Beraldo, L.~Chen, J.~Faergeman, D.~Gaitsgory, K.~Lin, S.~Raskin, and N.~Rozenblyum, \emph{Proof of the geometric {L}anglands conjecture {IV}: ambidexterity}, arXiv:2409.08670.
	
	\bibitem[AG15]{AG}
	D.~Arinkin and D.~Gaitsgory, \emph{Singular support of coherent sheaves and the geometric {L}anglands conjecture}, Selecta Math. (N.S.) \textbf{21} (2015), no.~1, 1--199.
	
	\bibitem[AHPS]{ahlqvist2023good}
	E.~Ahlqvist, J.~Hekking, M.~Pernice, and M.~Savvas, \emph{Good {M}oduli {S}paces in {D}erived {A}lgebraic {G}eometry}, arXiv:2309.16574.
	
	\bibitem[Alp13]{MR3237451}
	J.~Alper, \emph{Good moduli spaces for {A}rtin stacks}, Ann. Inst. Fourier (Grenoble) \textbf{63} (2013), no.~6, 2349--2402.
	
	\bibitem[Ari13]{Ardual}
	D.~Arinkin, \emph{Autoduality of compactified {J}acobians for curves with plane singularities}, J. Algebraic Geom. \textbf{22} (2013), no.~2, 363--388.
	
	\bibitem[BD]{BD0}
	A.~Beilinson and V.~Drinfeld, \emph{Quantization of {H}itchin’s integrable system and {H}ecke eigensheaves}, available at http://people.math.harvard.edu/gaitsgde/grad 2009/.
	
	\bibitem[BNR89]{BeNaRa}
	A.~Beauville, M.S. Narasimhan, and S.~Ramanan, \emph{Spectral curves and the generalised theta divisor}, J.~Reine Angew.~Math.~ \textbf{398} (1989), 169--179.
	
	\bibitem[BvdB03]{Bonvan}
	A.~Bondal and M.~van~den Bergh, \emph{Generators and representability of functors in commutative and noncommutative geometry}, Mosc. Math. J. \textbf{3} (2003), no.~1, 1--36, 258.
	
	\bibitem[BZN18]{ZN}
	D.~Ben-Zvi and D.~Nadler, \emph{Betti geometric {L}anglands}, Algebraic geometry: {S}alt {L}ake {C}ity 2015, Proc. Sympos. Pure Math., vol. 97.2, Amer. Math. Soc., Providence, RI, 2018, pp.~3--41.
	
	\bibitem[CCF{\etalchar{+}}]{GLC3}
	J.~Campbell, L.~Chen, J.~Faergeman, D.~Gaitsgory, K.~Lin, S.~Raskin, and N.~Rozenblyum, \emph{Proof of the geometric {L}anglands conjecture {III}: compatibility with parabolic induction}, arXiv:2409.07051.
	
	\bibitem[CW]{CW}
	S.~Cautis and H.~Williams, \emph{Canonical bases for {C}oulomb branches of 4d $n = 2$ gauge theories}, arXiv:2306.03023.
	
	\bibitem[DG13]{MR3037900}
	V.~Drinfeld and D.~Gaitsgory, \emph{On some finiteness questions for algebraic stacks}, Geom. Funct. Anal. \textbf{23} (2013), no.~1, 149--294.
	
	\bibitem[DP12]{DoPa}
	R.~Donagi and T.~Pantev, \emph{Langlands duality for {H}itchin systems}, Invent. Math. \textbf{189} (2012), no.~3, 653--735.
	
	\bibitem[Fal93]{Faltings1993}
	G.~Faltings, \emph{Stable {G}-bundles and projective connections}, Journal of Algebraic Geometry \textbf{2} (1993), no.~3, 507--568.
	
	\bibitem[Gai16]{GaiFun}
	D.~Gaitsgory, \emph{Functors given by kernels, adjunctions and duality}, J. Algebraic Geom. \textbf{25} (2016), no.~3, 461--548.
	
	\bibitem[Gin01]{Ginzburg2001}
	V.~Ginzburg, \emph{The global nilpotent variety is {L}agrangian}, Duke Mathematical Journal \textbf{109} (2001), no.~3, 511--519.
	
	\bibitem[GLS07]{GLS}
	G-M. Greuel, C.~Lossen, and E.~Shustin, \emph{Introduction to singularities and deformations}, Springer Monographs in Mathematics, Springer, Berlin, 2007.
	
	\bibitem[GRa]{GLC1}
	D.~Gaitsgory and S.~Raskin, \emph{Proof of the geometric {L}anglands conjecture {I}: construction of the functor}, arXiv:2405.03599.
	
	\bibitem[GRb]{GLC5}
	\bysame, \emph{Proof of the geometric {L}anglands conjecture {V}: the multiplicity one theorem}, arXiv:2409.09856.
	
	\bibitem[GR17]{MR3701352}
	D.~Gaitsgory and N.~Rozenblyum, \emph{A study in derived algebraic geometry. {V}ol. {I}. {C}orrespondences and duality}, Mathematical Surveys and Monographs, vol. 221, American Mathematical Society, Providence, RI, 2017.
	
	\bibitem[Gro13]{Gdual}
	M.~Groechenig, \emph{Autoduality of the {H}itchin system and the geometric {L}anglands programme}, Dphil thesis, University of Oxford, 2013.
	
	\bibitem[Hai01]{Ha}
	M.~Haiman, \emph{Hilbert schemes, polygraphs and the {M}acdonald positivity conjecture}, J. Amer. Math. Soc. \textbf{14} (2001), no.~4, 941--1006.
	
	\bibitem[Hau22]{HauICM}
	T.~Hausel, \emph{Enhanced mirror symmetry for {L}anglands dual {H}itchin systems}, I{CM}---{I}nternational {C}ongress of {M}athematicians. {V}ol. 3. {S}ections 1--4, EMS Press, Berlin, 2022, pp.~2228--2249.
	
	\bibitem[Hit87a]{Hitchin1987}
	N.~Hitchin, \emph{The self-duality equations on a {R}iemann surface}, Proceedings of the London Mathematical Society \textbf{55} (1987), no.~1, 59--126.
	
	\bibitem[Hit87b]{Hitchin}
	\bysame, \emph{Stable bundles and integrable systems}, Duke Math. J. \textbf{54} (1987), no.~1, 91--114.
	
	\bibitem[HLa]{HalpK32}
	D.~Halpern-Leistner, \emph{Derived {$\Theta$}-stratifications and the {$D$}-equivalence conjecture}, Ann. of Math. (2), to appear, arXiv:2010.01127.
	
	\bibitem[HLb]{Halpinstab}
	\bysame, \emph{On the structure of instability in moduli theory}, arXiv:1411.0627.
	
	\bibitem[HL97]{Hu}
	D.~Huybrechts and M.~Lehn, \emph{Geometry of moduli spaces of sheaves}, Aspects in Mathematics, vol. E31, Vieweg, 1997.
	
	\bibitem[HLS20]{hls}
	D.~Halpern-Leistner and S.~V. Sam, \emph{Combinatorial constructions of derived equivalences}, J. Amer. Math. Soc. \textbf{33} (2020), no.~3, 735--773.
	
	\bibitem[HT03]{HauTha}
	T.~Hausel and M.~Thaddeus, \emph{Mirror symmetry, {L}anglands duality, and the {H}itchin system}, Invent. Math. \textbf{153} (2003), no.~1, 197--229.
	
	\bibitem[KP95]{KouvidakisPantev1995}
	A.~Kouvidakis and T.~Pantev, \emph{The automorphism group of the moduli space of semi stable bundles}, Mathematische Annalen \textbf{302} (1995), 225--268.
	
	\bibitem[KW07]{KapWit}
	A.~N. Kapustin and E.~Witten, \emph{Electric-{M}agnetic {D}uality and the {G}eometric {L}anglands {P}rogram}, Communications in {N}umber {T}heory and {P}hysics \textbf{1} (2007), no.~1, 1--236.
	
	\bibitem[Li21]{MLi}
	M.~Li, \emph{Construction of {P}oincar\'{e} sheaf on stack of rank 2 {H}iggs bundles}, Adv. Math. \textbf{376} (2021), Paper No. 107437, 55.
	
	\bibitem[Lia]{LiangSatakeTypeA}
	S.~Liang, \emph{A {C}oherent {V}ersion of {G}eometric {S}atake {E}quivalence for type {A}}, arXiv:2601.07390.
	
	\bibitem[Lie07]{MR2309155}
	M.~Lieblich, \emph{Moduli of twisted sheaves}, Duke Math. J. \textbf{138} (2007), no.~1, 23--118.
	
	\bibitem[MRV17]{MRF3}
	M.~Melo, A.~Rapagnetta, and F.~Viviani, \emph{Fine compactified {J}acobians of reduced curves}, Trans. Amer. Math. Soc. \textbf{369} (2017), no.~8, 5341--5402.
	
	\bibitem[MRV19]{MRVF2}
	\bysame, \emph{Fourier-{M}ukai and autoduality for compactified {J}acobians. {II}}, Geometry and Topology \textbf{23} (2019), 2335--2395.
	
	\bibitem[Muk81]{Mu1}
	S.~Mukai, \emph{Duality between ${D}({X})$ and ${D}(\hat{X})$ with its application to picard sheaves}, Nagoya Math.~J.~ \textbf{81} (1981), 101--116.
	
	\bibitem[PS23]{PoSa}
	M.~Porta and F.~Sala, \emph{Two dimensional categorified {H}all algebras}, J. Eur. Math. Soc. \textbf{25} (2023), no.~3, 1113--1205.
	
	\bibitem[PTa]{PThiggs}
	T.~P{\u{a}}durariu and Y.~Toda, \emph{Quasi-{BPS} categories for {H}iggs bundles}, arXiv:2408.02168.
	
	\bibitem[PTb]{PTlim}
	T.~P\u{a}durariu and Y.~Toda, \emph{The {D}olbeault geometric {L}anglands conjecture via limit categories}, arXiv:2508.19624.
	
	\bibitem[PT09]{MR2545686}
	R.~Pandharipande and R.~P. Thomas, \emph{Curve counting via stable pairs in the derived category}, Invent. Math. \textbf{178} (2009), no.~2, 407--447.
	
	\bibitem[PT10]{PT3}
	R.~Pandharipande and R.~P. Thomas, \emph{Stable pairs and {BPS} invariants}, J.~Amer.~Math.~Soc.~ \textbf{23} (2010), 267--297.
	
	\bibitem[PT25]{PThiggs2}
	T.~P{\u{a}}durariu and Y.~Toda, \emph{Topological {K}-theory of quasi-{BPS} categories for {H}iggs bundles}, J. Topol. \textbf{18} (2025), no.~4, Paper No. e70049, 72.
	
	\bibitem[PT26]{PTK3}
	T.~P\u{a}durariu and Y.~Toda, \emph{Quasi-{BPS} categories for {K}3 surfaces}, Forum of Mathematics, Pi \textbf{14} (2026), e11, 69pp.
	
	\bibitem[PTVV13]{PTVV}
	T.~Pantev, B.~To$\ddot{\textrm{e}}$n, M.~Vaquie, and G.~Vezzosi, \emph{Shifted symplectic structures}, Publ.~Math.~IHES \textbf{117} (2013), 271--328.
	
	\bibitem[{\v S}dB17]{SvdB}
	{\v S}.~{\v S}penko and M.~Van den Bergh, \emph{Non-commutative resolutions of quotient singularities for reductive groups}, Invent. Math. \textbf{210} (2017), no.~1, 3--67.
	
	\bibitem[Tod24]{T}
	Y.~Toda, \emph{Categorical {D}onaldson-{T}homas theory for local surfaces}, Lecture Notes in Mathematics \textbf{2350} (2024), 1--309.
	
	\bibitem[To{\"e}]{Toenpush}
	B.~To{\"e}n, \emph{Proper local complete intersection morphisms preserve perfect complexes}, arXiv:1210.2827.
	
	\bibitem[Viv]{Viviani}
	F.~Viviani, \emph{On the classification of fine compactified {J}acobians of nodal curves}, arXiv:2310.20317v6.
	
\end{thebibliography}

\vspace{5mm}

\textsc{\small Yukinobu Toda: Kavli Institute for the Physics and Mathematics of the Universe (WPI), University of Tokyo, 5-1-5 Kashiwanoha, Kashiwa, 277-8583, Japan.}\\
\textsc{\small 
Inamori Research Institute for Science, 620 Suiginya-cho, Shimogyo-ku, Kyoto 600-8411, Japan.} \\
\textit{\small E-mail address:} \texttt{\small yukinobu.toda@ipmu.jp}\\

\end{document}